\newlength{\hatchspread}
\newlength{\hatchthickness}
\newlength{\hatchshift}
\newcommand{\hatchcolor}{}
\tikzset{hatchspread/.code={\setlength{\hatchspread}{#1}},
         hatchthickness/.code={\setlength{\hatchthickness}{#1}},
         hatchshift/.code={\setlength{\hatchshift}{#1}},
         hatchcolor/.code={\renewcommand{\hatchcolor}{#1}}}
\tikzset{hatchspread=3pt,
         hatchthickness=0.4pt,
         hatchshift=0pt,
         hatchcolor=black}
\newcommand*{\centerfloat}{%
  \parindent \z@
  \leftskip \z@ \@plus 1fil \@minus \textwidth
  \rightskip\leftskip
  \parfillskip \z@skip}
\newcommand{\boldSigma}{{\boldsymbol\A}}
\newcommand{\Kappa}{\mathrm{K}}
\newcommand{\Tau}{\mathrm{T}}
\newcommand{\boldpi}{{\boldsymbol \pi}}
\newcommand{\bmssd}{\boldsymbol\mssd}
\NewDocumentCommand{\makeabbrev}{mmm}
 {
  \yoruk_makeabbrev:nnn { #1 } { #2 } { #3 }
 }
\makeabbrev{\textbf}{tbf#1}{a,b,c,d,e,f,g,h,i,j,k,l,m,n,o,p,q,r,s,t,u,v,w,x,y,z,A,B,C,D,E,F,G,H,I,J,K,L,M,N,O,P,Q,R,S,T,U,V,W,X,Y,Z}
\makeabbrev{\textbf}{bf#1}{a,b,c,d,e,f,g,h,i,j,k,l,m,n,o,p,q,r,s,t,u,v,w,x,y,z,A,B,C,D,E,F,G,H,I,J,K,L,M,N,O,P,Q,R,S,T,U,V,W,X,Y,Z}
\makeabbrev{\textsf}{tsf#1}{a,b,c,d,e,f,g,h,i,j,k,l,m,n,o,p,q,r,s,t,u,v,w,x,y,z,A,B,C,D,E,F,G,H,I,J,K,L,M,N,O,P,Q,R,S,T,U,V,W,X,Y,Z}
\makeabbrev{\mathsf}{mss#1}{a,b,c,d,e,f,g,h,i,j,k,l,m,n,o,p,q,r,s,t,u,v,w,x,y,z,A,B,C,D,E,F,G,H,I,J,K,L,M,N,O,P,Q,R,S,T,U,V,W,X,Y,Z}
\makeabbrev{\mathfrak}{mf#1}{a,b,c,d,e,f,g,h,i,j,k,l,m,n,o,p,q,r,s,t,u,v,w,x,y,z,A,B,C,D,E,F,G,H,I,J,K,L,M,N,O,P,Q,R,S,T,U,V,W,X,Y,Z}
\makeabbrev{\mathrm}{mrm#1}{a,b,c,d,e,f,g,h,i,j,k,l,m,n,o,p,q,r,s,t,u,v,w,x,y,z,A,B,C,D,E,F,G,H,I,J,K,L,M,N,O,P,Q,R,S,T,U,V,W,X,Y,Z}
\makeabbrev{\mathbf}{mbf#1}{a,b,c,d,e,f,g,h,i,j,k,l,m,n,o,p,q,r,s,t,u,v,w,x,y,z,A,B,C,D,E,F,G,H,I,J,K,L,M,N,O,P,Q,R,S,T,U,V,W,X,Y,Z}
\makeabbrev{\mathcal}{mc#1}{A,B,C,D,E,F,G,H,I,J,K,L,M,N,O,P,Q,R,S,T,U,V,W,X,Y,Z}
\makeabbrev{\mathbb}{mbb#1}{A,B,C,D,E,F,G,H,I,J,K,L,M,N,O,P,Q,R,S,T,U,V,W,X,Y,Z}
\makeabbrev{\mathscr}{ms#1}{A,B,C,D,E,F,G,H,I,J,K,L,M,N,O,P,Q,R,S,T,U,V,W,X,Y,Z}
\makeabbrev{\mathrm}{#1}{
Id,id,ran,rk,diag,stab,ann,conv,pr,ev,tr,End,Hom,sgn,im,op,can,fin,ext,red,tot,
%
rot,usc,lsc,Lip,LocLip,lip,bSymLip,osc,AC,loc,uloc,spec,coz,z,ul,
%
supp,Opt,Adm,Cpl,Geo,GeoOpt,GeoAdm,GeoCpl,reg,
%
bd,co,Ric,Exp,dExp,dist,seg,Seg,cut,fcut,Cut,SDiff,Iso,Isom,diam,cl,Homeo,Diff,Der,vol,dvol,inj,relint, Graph,sub,length,
%
var,law,Var,Poi,Gam,pa,so,iso,fs,inv,pqi,mix,
TestF,
}
\makeabbrev{\mathsf}{#1}{DP,CD,BE,MCP,Ent,wMTW,MTW,RCD,QCD,EVI,Irr,IH,SC,wFe,VA,UP,Curv,Alex,CAT}
\newcommand{\bLip}{\mathrm{Lip}_b}
\newcommand{\bsLip}{\mathrm{Lip}_{bs}}
\newcommand{\KSD}{D} 
\newcommand{\T}{\tau} 
\newcommand{\A}{\Sigma} 
\newcommand{\Bo}[1]{\msB_{#1}} 
\newcommand{\Ko}[1]{\msK_{#1}}
\newcommand{\rKo}[1]{{}_r\!\msK_{#1}}
\newcommand{\Bdd}[1]{\msO_{#1}}
\newcommand{\Ed}{\msE_\mssd}  
\newcommand{\eps}{\varepsilon}
\newcommand{\defeq}{\eqqcolon}
\renewcommand{\complement}{\mathrm{c}}
\newcommand{\emparg}{{\,\cdot\,}}
\newcommand{\slo}[2][]{\abs{\mathrm{D}#2}_{#1}}
\newcommand{\wslo}[2][]{\abs{\mathrm{D}#2}_{w,\, #1}}
\newcommand{\Sb}{\A_b}
\newcommand{\dint}[2][]{\sideset{^{#1}\!\!\!}{_{#2}^{\scriptstyle\oplus}}\int}
\newcommand{\Ch}[1][]{\mathsf{Ch}_{#1}}
\newcommand{\CE}[1][]{\mathsf{CE}_{#1}}
\newcommand{\forallae}[1]{{\textrm{\,for ${#1}$-a.e.\,}}}
\newcommand{\as}[1]{\quad #1\text{-a.e.}}
\newcommand{\Leb}{{\mathrm{Leb}}}
\renewcommand{\Cap}{\mathrm{cap}}
\newcommand{\dom}[1]{\msD(#1)}
\DeclareMathOperator{\Dom}{dom}
\newcommand{\domloc}[1]{\msD(#1)_{\loc}^\bullet}
\newcommand{\domext}[1]{\msD_{e}(#1)}
\newcommand{\DzLoc}[1]{\mbbL^{#1}_{\loc}}
\newcommand{\DzLocB}[1]{\mbbL^{#1}_{\loc,b}}
\newcommand{\DzLocBprime}[1]{\mbbL^{\prime\, #1}_{\loc,b}}
\newcommand{\DzB}[1]{\mbbL^{#1}_b}
\newcommand{\DzBprime}[1]{\mbbL^{\prime\, #1}_b}
\newcommand{\Lipu}{\mathrm{Lip}^1}
\newcommand{\dotloc}[1]{{#1}^\bullet_\loc}
\newcommand{\Rad}[2]{\mathsf{Rad}_{#1,#2}}
\newcommand{\dRad}[2]{{#1}\textrm{-}\mathsf{Rad}_{#2}}
\newcommand{\ScL}[3]{\mathsf{ScL}_{#1,#2,#3}}
\newcommand{\cSL}[3]{\mathsf{cSL}_{#1,#2,#3}}
\newcommand{\SL}[2]{\mathsf{SL}_{#1,#2}}
\newcommand{\dcSL}[3]{{#1}\textrm{-}\mathsf{cSL}_{#2,#3}}
\newcommand{\dSL}[2]{{#1}\textrm{-}\mathsf{SL}_{#2}}
\DeclareMathOperator{\eqdef}{\coloneqq}
\newcommand\doublecheck{\checkmark\!\!\!\checkmark}
\let\epsilon\varepsilon
\newcommand{\stigma}{\varsigma}
\let\temp\phi
\let\phi\varphi
\let\varphi\temp
\newcommand{\longrar}{\longrightarrow}
\newcommand{\rar}{\rightarrow}
\newcommand{\nlim}{\lim_{n}}								
\newcommand{\klim}{\lim_{k }}
\newcommand{\nliminf}{\liminf_{n }}
\newcommand{\mliminf}{\liminf_{m }}
\newcommand{\nlimsup}{\limsup_{n }\,}
\newcommand{\diff}{\mathop{}\!\mathrm{d}}						
\newcommand{\ttabs}[1]{\lvert#1\rvert}	
\newcommand{\tabs}[1]{\big\lvert#1\big\rvert}	
\newcommand{\abs}[1]{\left\lvert#1\right\rvert}						
\newcommand{\tnorm}[1]{\big\lVert#1\big\rVert}						
\newcommand{\ttnorm}[1]{\lVert#1\rVert}
\newcommand{\norm}[1]{\left\lVert#1\right\rVert}					
\newcommand{\set}[1]{\left\{#1\right\}}							
\newcommand{\tset}[1]{\big\{#1\big\}}							
\newcommand{\ttset}[1]{\{#1\}}									
\newcommand{\tonde}[1]{\left(#1\right)}							
\newcommand{\ttonde}[1]{\big({#1}\big)}
\newcommand{\quadre}[1]{\left[#1\right]}							
\newcommand{\class}[2][]{\left[#2\right]_{#1}}						
\newcommand{\tclass}[2][]{\big [#2\big]_{#1}}						
\newcommand{\ttclass}[2][]{[#2]_{#1}}							
\newcommand{\spclass}[2][]{#2_{#1}}
\newcommand{\Li}[2][]{\mathrm{L}_{#1}(#2)}						
\newcommand{\Lia}[2][]{\mathrm{Lip}^a_{#1}[#2]}						
\newcommand{\rep}[1]{\hat #1}									
\newcommand{\reptwo}[1]{\tilde{#1}}							
\newcommand{\scalar}[2]{\left\langle #1 \,\middle |\, #2\right\rangle}		
\newcommand{\hotimes}{\widehat{\otimes}}
\newcommand{\asym}[1]{{\scriptscriptstyle{[#1]}}}
\newcommand{\sym}[1]{{\scriptscriptstyle{(#1)}}}
\newcommand{\tym}[1]{{\scriptscriptstyle{\times #1}}}
\newcommand{\otym}[1]{{\scriptscriptstyle{\otimes #1}}}
\newcommand{\hotym}[1]{{\scriptscriptstyle{\widehat{\otimes}{#1}}}}
\DeclareSymbolFont{symbolsC}{U}{pxsyc}{m}{n}
\DeclareMathSymbol{\medcirc}{\mathbin}{symbolsC}{7}
\DeclareSymbolFont{symbolsZ}{OMS}{pxsy}{m}{n}
\DeclareMathOperator{\interior}{int}								
\newcommand{\seq}[1]{\tonde{#1}}								
\newcommand{\tseq}[1]{{\big(#1\big)}}
\newcommand{\Cb}{\mcC_b}									
\newcommand{\Cc}{\mcC_c}									
\newcommand{\Cz}{\mcC_0}									
\newcommand{\Cbs}{{\mcC_{bs}}}							
\newcommand{\Cbinfty}{{\mcC_{b}^{\infty}}}
\newcommand{\Meas}{\mathscr M}
\newcommand{\pfwd}{\sharp}
\DeclareMathOperator*{\esssup}{esssup}
\DeclareMathOperator*{\essinf}{essinf}
\DeclareMathOperator{\car}{\mathds 1}
\DeclareMathOperator{\emp}{\varnothing} 
\newcommand{\N}{{\mathbb N}}
\newcommand{\R}{{\mathbb R}}
\DeclareMathOperator{\Rex}{{\overline{\mathbb R}}}
\DeclareMathOperator{\Q}{{\mathbb Q}}
\DeclareMathOperator{\Z}{{\mathbb Z}}
\newcommand{\LDS}{\textsc{lds}\xspace}
\newcommand{\TLDS}{\textsc{tlds}\xspace}
\newcommand{\MLDS}{\textsc{mlds}\xspace}
\newcommand{\EMLDS}{\textsc{emlds}\xspace}
\newcommand{\parEMLDS}{\textsc{(e)mlds}\xspace}
\newcommand{\lb}{\mfl}
\newcommand{\llb}{\scriptstyle\lb}
\newcommand{\Lb}{\mathfrak{L}}
\newcommand{\restr}{\big\lvert}
\newcommand{\mrestr}[1]{\!\downharpoonright_{#1}}
\newcommand{\trid}{{\star}}
\tikzset{cross/.style={cross out, draw=black, minimum size=2*(#1-\pgflinewidth), inner sep=0pt, outer sep=0pt},
cross/.default={4pt}}
\newcommand{\iref}[1]{\ref{#1}}
\newcommand{\comma}{\,\,\mathrm{,}\;\,}
\newcommand{\semicolon}{\,\,\mathrm{;}\;\,}
\newcommand{\fstop}{\,\,\mathrm{.}}
\newcommand{\cdc}{\Gamma}
\newcommand{\LL}[2]{\mcL^{#1, #2}}
\newcommand{\TT}[2]{\mcT^{#1, #2}}
\newcommand{\hh}[2]{\mssh^{#1, #2}}
\newcommand{\EE}[2]{\mcE^{#1, #2}}
\newcommand{\repSF}[2]{\rep\cdc^{#1, #2}}
\newcommand{\SF}[2]{\cdc^{#1, #2}}
\renewcommand{\iint}{\int\!\!\!\!\int}
\DeclareMathOperator{\zero}{{\mathbf 0}}
\DeclareMathOperator{\uno}{{\mathbf 1}}
\newcommand{\Lin}{L} 						
\DeclareMathOperator{\inter}{int}
\newcommand{\Cyl}[1]{\mcF^\dUpsilon\mcC^\infty_b(#1)}
\newcommand{\CylQP}[2]{\mcF^\dUpsilon\mcC^{\infty}_b(#2)_{#1}}
\newcommand{\CylSet}[1]{\mcE^{\!\times\!}\mcC(#1)}
\newcommand{\Dz}{\mcD} 
\newcommand{\preW}{\msW^{1,2}_{\mathrm{pre}}} 
\newcommand{\locfin}{\mathrm{lcf}}
\newcommand{\PP}{{\pi}}
\newcommand{\cpl}{q}
\newcommand{\QP}{{\mu}}
\newcommand{\hr}[1]{\bar\mssd_{#1}} 									
\newcommand{\coK}[2]{\co_{#2}({#1})}
\newcommand{\cok}{\kappa}
\newcommand{\cokb}{\bar\kappa}
\newcommand{\coi}{\iota}
\newcommand{\dUpsilon}{{\boldsymbol\Upsilon}}
\numberwithin{equation}{section}
\theoremstyle{plain}
\newtheorem{thm}{Theorem}[section]
\newtheorem*{thm*}{Theorem}
\newtheorem*{mthm*}{Main Theorem}
\newtheorem{prop}[thm]{Proposition}
\newtheorem{lem}[thm]{Lemma}
\newtheorem{cor}[thm]{Corollary}
\theoremstyle{definition}
\newtheorem{defs}[thm]{Definition}
\newtheorem{notat}[thm]{Notation}
\newtheorem*{defs*}{Definition}
\theoremstyle{remark}
\newtheorem{rem}[thm]{Remark}
\newtheorem{ese}[thm]{Example}
\newtheorem{ass}[thm]{Assumption}
\renewcommand{\paragraph}[1]{\medskip\emph{#1}.\quad}
\begin{document}

\title[Configuration spaces over singular spaces I]{Configuration Spaces Over Singular Spaces
\vspace{2mm}\\ 
-- {\tiny I. Dirichlet-Form and Metric Measure Geometry --}}

\dedicatory{Dedicated to Professor Karl-Theodor Sturm on the occasion of his sixtieth birthday}

\thanks{The authors are grateful to Prof.s Luigi Ambrosio, Michael R\"ockner, Giuseppe Savar\'e, and Karl-Theodor Sturm for several fruitful discussions on the subject of this work.
Since part of this work was carried out during the \emph{$5^\text{th}$ Summer School in Stochastic and Geometric Analysis -- Piz Buin, 6--12 July 2019} and the \emph{Japanese--German Open Conference on Stochastic Analysis -- Fukuoka University 2--6 September 2019}, it is their pleasure to thank the organizers of both events for making those occasions possible.}

\author[L.~Dello Schiavo]{Lorenzo Dello Schiavo}
\address{IST Austria, Am Campus 1, 3400 Klosterneuburg, Austria}
\email{lorenzo.delloschiavo@ist.ac.at}
\thanks{A large part of this work was completed while L.D.S.\ was a member of the Institut f\"ur Angewandte Mathematik of the University of Bonn. He acknowledges funding of his position at that time by the Collaborative Research Center 1060 of the University of Bonn.
He also acknowledges funding of his current position by the Austrian Science Fund (FWF) grant F65, and by the European Research Council (ERC, grant No.~716117, awarded to Prof.\ Dr.~Jan Maas).
Finally, he gratefully acknowledges financial support for his stay in Fukuoka from JSPS Grant-in-Aid for Scientific Research (A) Grant Nr.~19H00643 awarded to Prof.\ Masanori Hino (Graduate School of Science, Kyoto University).}

\author[K.~Suzuki]{Kohei Suzuki}
\address{Fakult\"at f\"ur Mathematik, Universit\"at Bielefeld, D-33501, Bielefeld, Germany}
\email{ksuzuki@math.uni-bielefeld.de}
\thanks{K.S.~gratefully acknowledges funding by: the JSPS Overseas Research Fellowships, Grant Nr. 290142; World Premier International Research Center Initiative (WPI), MEXT, Japan; JSPS Grant-in-Aid for Scientific Research on Innovative Areas ``Discrete Geometric Analysis for Materials Design'', Grant Number 17H06465; and the Alexander von Humboldt Stiftung, Humboldt-Forschungsstipendium}

\keywords{}

\subjclass[2020]{31C25, 30L99, 70F45, 60G55, 60G57}

\renewcommand{\abstractname}{\normalsize Abstract}
\begin{abstract}
\normalsize\noindent This paper is the first in a series on configuration spaces over singular spaces.
Here, we construct a canonical differential structure on the configuration space~$\dUpsilon$ over a singular base space~$X$ and with a general invariant measure~$\QP$ on~$\dUpsilon$.

\medskip

We first present an \emph{analytic structure} on~$\dUpsilon$, constructing a strongly local Dirichlet form~$\mcE$ on~$L^2(\dUpsilon, \QP)$ for~$\QP$ in a large class of probability measures.
We then investigate the \emph{geometric structure} of the extended metric measure space~$\dUpsilon$ endowed with the $L^2$-transportation extended distance~$\mssd_\dUpsilon$ and with the measure~$\QP$.
By establishing various Rademacher- and Sobolev-to-Lipschitz-type properties for~$\mcE$, we finally provide a complete identification of the analytic and the geometric structure ---~the \emph{canonical differential structure} induced on~$\dUpsilon$ by~$X$ and~$\QP$~--- showing that $\mcE$ coincides with the Cheeger energy of $(\dUpsilon,\mssd_\dUpsilon,\QP)$ and that the intrinsic distance of~$\mcE$ coincides with~$\mssd_\dUpsilon$.

\medskip

The class of base spaces to which our results apply includes sub-Riemannian manifolds,~$\RCD$ spaces, locally doubling metric measure spaces satisfying a local Poincar\'e inequality, and path/loop spaces over Riemannian manifolds; as for~$\QP$ our results include Campbell measures and quasi-Gibbs measures, in particular: Poisson measures, canonical Gibbs measures, as well as some determinantal/permanental point processes ($\mathrm{sine}_\beta$, $\mathrm{Airy}_\beta$, $\mathrm{Bessel}_{\alpha, \beta}$, Ginibre).

\medskip

 A number of applications to interacting particle systems and infinite-dimension\-al metric measure geometry are also discussed.
In particular, we prove the quasi-regularity of~$\mcE$ under minimal assumptions and consequently the existence and uniqueness of a Markov diffusion associated to~$\mcE$ describing a $\QP$-invariant particle system.
We further show the universality of the $L^2$-transportation distance~$\mssd_\dUpsilon$ for the Varadhan short-time asymptotics for diffusions on~$\dUpsilon$ for wide classes of invariant measures~$\QP$. 
Our approach does not rely on any relation between~$\QP$ and a possible smooth structure on~$X$.
In particular, we assume no quasi-invariance property of~$\QP$ w.r.t.\ actions of any group of transformations of~$X$.
Many of our results are new even in the case when~$X$ is the standard Euclidean spaces.
\end{abstract}

\maketitle

\setcounter{tocdepth}{2}
\makeatletter
\def\l@subsection{\@tocline{2}{0pt}{2.5pc}{5pc}{}}
\makeatother

\tableofcontents

\section{Introduction}\label{s:Intro}
We aim to develop the foundations of analysis, geometry, and stochastic analysis on configuration spaces over singular base spaces, for a wide class of reference measures.
We construct a canonical differential structure under which analysis, geometry, and stochastic analysis on configuration spaces can be consistently understood.
In fact, we consider two fundamental structures: the analytic structure lifted from the base space by means of Dirichlet form theory, and the geometric structure of extended metric measure space induced on configuration spaces by the $L^2$-transportation distance.
We fully investigate the interplay between these structure and eventually show that they coincide.
A number of applications to interacting-particle systems and infinite-dimensional metric measure geometry are discussed. 

\smallskip

For the sake of simplicity, throughout this introduction let~$(X,\mssd)$ be a proper complete and separable metric space.
The configuration space $\dUpsilon$ over~$X$ is the set of all locally finite point measures on $X$,
\begin{equation*} 
\dUpsilon\eqdef \set{\gamma=\sum_{i=1}^N \delta_{x_i}: x_i\in X\comma N \in \N\cup \set{+\infty}\comma \gamma K<\infty \quad K \Subset X }\fstop
\end{equation*}
The space~$\dUpsilon$ is endowed with the \emph{vague topology}~$\T_\mrmv$, induced by duality with continuous compactly supported functions on~$X$, and with a reference Borel probability measure~$\QP$, occasionally understood as the law of a proper point process on~$X$.

We will focus on two structures on $\dUpsilon$:
\begin{itemize}
\item the Dirichlet form~$\EE{\dUpsilon}{\QP}$, the $L^2(\QP)$-closure of a certain pre-Dirichlet form on a class of cylinder functions, which we will refer to as the \emph{analytic structure};

\item the extended-metric topological measure space~$(\dUpsilon, \T_\mrmv, \mssd_\dUpsilon, \QP)$ induced by the $L^2$-trans\-portation \emph{extended} distance $\mssd_\dUpsilon$ and by~$\QP$, which we will refer to as the \emph{geometric structure}.
\end{itemize}

Our goal is to investigate these structures on~$\dUpsilon$ for significant classes of singular base spaces~$X$, and of invariant measures~$\QP$.
As a main result of this paper, we prove that these structures are equivalent:
\begin{align}\label{equi: main}
\ttonde{\EE{\dUpsilon}{\QP},\dom{\EE{\dUpsilon}{\QP}}} \iff \ttonde{\dUpsilon, \T_\mrmv, \mssd_\dUpsilon, \QP}\comma
 \end{align}
and discuss various applications.
Our results can be summarized as follows:
\begin{enumerate}[$(1)$]
\item we construct the strongly local Dirichlet form~$\EE{\dUpsilon}{\QP}$ for a wide class of measures~$\QP$ and base spaces~$X$ (Prop.~\ref{p:MRLifting}, Thm.~\ref{t:ClosabilitySecond});

\item we identify~$\EE{\dUpsilon}{\QP}$ with a form on the infinite-product space~$X^\tym{\infty}$ (Cor.~\ref{c:Isometry}), and study its finite-dimensional approximations (Cor.~\ref{c:KSUpsilon});

\item\label{i:IntroMainRes:3} for the form~$\EE{\dUpsilon}{\QP}$ we prove a Rademacher Theorem (Thm.~\ref{t:Rademacher}) and different types of Sobolev-to-Lipschitz properties (Thm.s~\ref{t:cSLUpsilon}, \ref{t:dcSLUpsilon});

\item\label{i:IntroMainRes:4} we identify~$\EE{\dUpsilon}{\QP}$ with the Cheeger energy of the extended metric measure space~$(\dUpsilon,\mssd_\dUpsilon,\QP)$ (Thm.~\ref{t:IdentificationCheeger}), and the intrinsic distance of~$\EE{\dUpsilon}{\QP}$ with the $L^2$-transportation distance~$\mssd_\dUpsilon$ (Thm.~\ref{t:StoL2});

\item we prove the $\T_\mrmv$-quasi-regularity of~$\ttonde{\EE{\dUpsilon}{\QP},\dom{\EE{\dUpsilon}{\QP}}}$ (Cor.~\ref{c:RadQReg}) and therefore the existence and the uniqueness (for quasi-every starting point) of a Markov diffusion process~$\mbfM$ with state space~$\dUpsilon$ associated to~$\ttonde{\EE{\dUpsilon}{\QP},\dom{\EE{\dUpsilon}{\QP}}}$;

\item we show that, regardless of the choice of~$\QP$, the distance~$\mssd_\dUpsilon$ universally describes the integral-type Varadhan short-time asymptotic for~$\mbfM$ (Thm.~\ref{t:VaradhanSecond}). 
\end{enumerate}

For a large class of measures~$\QP$ many of our results are new even in the case of configuration spaces over standard Euclidean spaces.

\subsection{Setting} Beyond this introduction we will work in far greater generality, assuming~$X$ to be either:
\begin{itemize}
\item a countably generated separated measurable space, in~\S\ref{s:FundMeasure};
\item a separable metrizable Luzin space, with no distance assigned, in~\S\ref{s:Analytic};
\item a complete and separable metric space, in~\S\ref{s:Geometry}.
\end{itemize}
We never assume local compactness (nor ---~\emph{a fortiori}~--- properness), replacing the family of compact subsets of~$X$ with a localizing ring in the sense of Kallenberg (Dfn.~\ref{d:LS}).

\subsubsection{Base spaces}
A \emph{metric local diffusion space} (in short: \MLDS) is any atomless Radon metric measure space~$(X, \mssd, \mssm)$, endowed with a local square field operator~$\cdc$ defined on some algebra~$\Dz$ of continuous compactly supported functions and taking values in the space of $\mssm$-classes of measurable functions on~$X$.
In particular, as it is typical on singular spaces, the function~$\cdc(f)$ will be defined \emph{only $\mssm$-a.e.}, even for~$f$ in the core~$\Dz$.
This fact introduces various difficulties, as discussed in~\S\ref{sss:Intro:ConstructionWellposedness} below.
Since~$\cdc$ is local,~$\cdc(f)$ is $\mssm$-a.e.-vanishing outside the compact~$\supp f$, for every~$f\in\Dz$.

We further assume that the pre-Dirichlet form obtained by integrating~$(\cdc,\Dz)$ w.r.t.~$\mssm$ is closable, and that its closure~$\ttonde{\EE{X}{\mssm},\dom{\EE{X}{\mssm}}}$ is a quasi-regular Dirichlet form.
This assumption will mainly be used in our subsequent work to discuss the Markov diffusion process on $X$ properly associated with~$\EE{X}{\mssm}$, and its $\QP$-invariant counterpart on~$\dUpsilon$.

\subsubsection{Configuration spaces}
Let us now turn to the choice of a reference measure on~$\dUpsilon$.
For~$\QP$ we consider different assumptions, granting well-posedness and closability of the form~$\ttonde{\EE{\dUpsilon}{\QP},\dom{\EE{\dUpsilon}{\QP}}}$.

These assumptions reflect different approaches to the study of~$\EE{\dUpsilon}{\QP}$, namely
\begin{itemize}
\item a `\emph{transfer method}', relying on a Georgii--Nguyen--Zessin-type (GNZ) formula for~$\QP$ in order to `transfer' the closability of~$\EE{X}{\mssm}$ to that of~$\EE{\dUpsilon}{\QP}$;

\item `\emph{conditional closability}', relying on the construction of a consistent system of Dirichlet forms~$\EE{\dUpsilon(K)}{}$ on the spaces~$\dUpsilon(K)$ for~$K\Subset X$.
The latter is obtained by conditioning~$\QP$ on the complement of~$\dUpsilon(K)$ in~$\dUpsilon$, that is, by a \emph{specification of~$\QP$} in the sense of Preston~\cite{Pre76};

\item an `\emph{approximation method}', transfering the form~$\EE{\dUpsilon}{\QP}$ to a form on the infinite-product space~$X^\tym{\infty}$ and subsequently approximating it by forms on~$X^\tym{n}$ as~$n\to\infty$.
\end{itemize}

As we will thoroughly discuss in \S\ref{ss:ExamplesConfig}, these assumptions, together, include most of the relevant examples in the literature.

The `transfer method' has been a successful tool in establishing closability both on configuration spaces~\cite{MaRoe00}, and beyond, e.g., on spaces of measures~\cite{KonLytVer15}, and of probability measures~\cite{LzDS17+}.
In spite of its numerous applications to other related constructions (e.g., the computation of explicit expressions for the generator of~$\EE{\dUpsilon}{\QP}$), this method has limited applicability to measures satisfying a GNZ formula (see Assumption~\ref{ass:Closability}), as are for instance \emph{grand-canonical Gibbs measures} in the sense of~\cite{AlbKonRoe98b}.

`Conditional closability' has been verified mostly on a case-by-case basis, among others: when~$\QP$ is the law of an infinite singularly interacting Brownian system~\cite{Osa96}, the law of a determinantal/permanental point process~\cite{Yoo05}, and more generally for some particular quasi-Gibbs measures~\cite{Osa13}.
The strength of this technique lies in the fact that~$\dUpsilon(K)$ can be realized as the disjoint union of all symmetric products~$K^\sym{n}$, $n\in\N$, of the set~$K$.
Thus, the study of the forms~$\EE{\dUpsilon(K)}{\QP}$ may be treated by `finite-dimensional' techniques and is ---~in essence~--- only as difficult as that of the form~$\EE{X}{\mssm}$.

The `approximation method' is one main novelty of this work.
It relies on Assumption~\eqref{eq:Intro:AC} below, relating~$\QP$ to the product measures~$\mssm^\otym{n}$ on~$X^\tym{n}$, $n\in\N$, and on
the convergence of approximating forms on~$X^\tym{n}$ to a form on~$X^\tym{\infty}$ corresponding to~$\EE{\dUpsilon}{\QP}$.
Similarly to~\cite{LzDS17+}, the covergence of the approximating forms is phrased in terms of the Kuwae--Shioya topology on nets of Hilbert spaces~\cite{KuwShi03}.
We now proceed to explain Assumption~\eqref{eq:Intro:AC} in greater detail.

\subsubsection{Labeling maps and Assumption~$($\ref{eq:Intro:AC}$)$} \label{subsub:labelling}
Our main interest lies in the subspace~$\dUpsilon^\sym{\infty}$ of infinite particles configurations on an \MLDS~$X$.
The space~$\dUpsilon^\sym{\infty}$ may be regarded as a subset of the quotient of~$X^\tym{\infty}$ by the action of the (projective) infinite symmetric group~$\mfS^\infty$ permuting coordinates.
Any right inverse of the quotient projection restricts to a \emph{labeling map} (also cf.\ Fig.~\ref{fig:LabelingMaps} in~\S\ref{ss:TopConfig} below)
\begin{equation}\label{eq:Intro:LabelingMap}
\lb\colon \dUpsilon^\sym{\infty}\longrar X^\tym{\infty}\comma \qquad \gamma=\sum_i \delta_{x_i} \longmapsto \seq{x_i}_i\fstop
\end{equation}

Labeling maps will play a key role throughout our study, for both the analytic and the geometric structure. 
It is worth to stress here that we never rely on labeling maps to tranfer \emph{topological} properties from~$\dUpsilon$ to~$X^\tym{\infty}$ or viceversa, since these maps are never continuous (Rem.~\ref{r:DiscontinuityL}, Prop.~\ref{p:Discontinuity-l}).
Furthermore, labeling maps are instrumental in formulating one of our main assumptions for the reference measure~$\QP$. 
We say that a measure~$\QP$ on~$\dUpsilon$, concentrated on~$\dUpsilon^\sym{\infty}$, satisfies the (\emph{marginal}) \emph{absolute continuity} condition (see Ass.~\ref{d:ass:AC} for details) if
\begin{equation}\tag{$\mathsf{AC}$}\label{eq:Intro:AC}
(\tr^n\circ \lb)_\pfwd \QP \ll \mssm^\otym{n}\comma \qquad n\in\N \comma
\end{equation}
where~$\tr^n\colon \seq{x_i}_i\mapsto \seq{x_1,\dotsc, x_n}$, and~$f_\pfwd\mu$ denotes the push-forward of a measure~$\mu$ via a measurable map~$f$.
This assumption is satisfied by the laws of most point processes in the literature, including e.g.\ canonical and grand-canonical Gibbs measures and some determinantal/permanental measures (see~\S\ref{ss:Intro:ex} below).

\subsection{Analytic Structure}
Let us now turn to the construction of the main object of our interest, a strongly local Dirichlet form~$\ttonde{\EE{\dUpsilon}{\QP},\dom{\EE{\dUpsilon}{\QP}}}$ on $\dUpsilon$ with square field operator $\cdc^\dUpsilon$ lifted from the square-field operator~$\cdc$ on the base space $X$.

\subsubsection{Construction and well-posedness of~\texorpdfstring{$\cdc^\dUpsilon$}{CdcUpsilonEll}}\label{sss:Intro:ConstructionWellposedness}
In~\cite{MaRoe00}, Z.-M.~Ma and M.~R\"ockner introduced a general method to lift a square-field operator~$\cdc$ on a measurable space to a square field operator~$\cdc^\dUpsilon$ on the corresponding configuration space.
Whereas their construction works even without the need for a topology on~$X$, they assume~$\cdc$ to be \emph{pointwise defined}, i.e., taking values in the space of Borel measurable functions, rather than of $\mssm$-classes.
This is necessary in order for functions of the form
\begin{align*}
f^\trid\colon \gamma\longmapsto \int f\diff\gamma \qquad \text{and} \qquad \cdc(f,g)^\trid\colon\gamma\longmapsto \int \cdc(f,g)\diff\gamma\comma \qquad f,g\in\Dz\subset \Cc(X)
\end{align*}
to be well-defined on~$\dUpsilon$, since~$\gamma$ is a purely atomic measure and points in~$X$ are $\mssm$-negligible by assumption.
However, as already observed above, any natural square field~$\cdc(f,g)$ on a general non-smooth space is typically defined only \emph{almost everywhere}.
We will show how to adapt the construction of~\cite{MaRoe00} to a square field operator~$\cdc$ defined only almost everywhere, by resorting to the theory of \emph{liftings} (Prop.~\ref{p:MRLifting}).

The lifted square field~$\cdc^\dUpsilon$ is naturally defined, by chain rule, on the family~$\Cyl{\Dz}$ of \emph{cylinder functions} of the form
\begin{equation}\label{eq:Intro:CylinderF}
u\colon \gamma\longmapsto F\tonde{f_1^\trid\gamma,\dotsc, f_k^\trid\gamma}\comma \qquad f_i\in \Dz\comma i\leq k\in \N\comma F\in\Cb^\infty(\R^k) \fstop
\end{equation}
It is one cornerstone of the construction in~\cite{MaRoe00} that~$\cdc^\dUpsilon$ is well-defined in the following sense: for every~$u\in\Cyl{\Dz}$, the function~$\cdc^\dUpsilon(u)$ is independent of the choice of~$F$ and~$\seq{f_i}_{i\leq k}$ representing~$u$ as in~\eqref{eq:Intro:CylinderF}.

\subsubsection{Well-posedness and closability of~\texorpdfstring{$\EE{\dUpsilon}{\QP}$}{EE}}
A local pre-Dirichlet form is defined on~$\dUpsilon$ by
\begin{equation*}
\EE{\dUpsilon}{\QP}(u,v)\eqdef \int_\dUpsilon \cdc^\dUpsilon(u,v)\diff\QP \comma \qquad u,v\in\Cyl{\Dz} \fstop
\end{equation*}
As discussed in~\S\ref{s:FundMeasure}, its well-posedness as a bilinear form on $L^2(\QP)$-classes is \emph{not} obvious.
It was shown in~\cite{MaRoe00} together with its closability, for measures~$\QP$ satisfying a GNZ-type formula.
In~\S\ref{s:Analytic}, we will extend these results to measures satisfying~\eqref{eq:Intro:AC} and the conditional closability Assumption~\ref{ass:ConditionalClosability}, thus greatly expanding the class of examples for~$\QP$ in the non-smooth setting (Thm.~\ref{t:ClosabilitySecond}).

As it turns out, the closure~$\ttonde{\EE{\dUpsilon}{\QP},\dom{\EE{\dUpsilon}{\QP}}}$ of~$\ttonde{\EE{\dUpsilon}{\QP},\Cyl{\Dz}}$ is a symmetric quasi-regular strongly local conservative Dirichlet form on~$(\dUpsilon,\T_\mrmv,\QP)$ (Cor.~\ref{c:RadQReg}).
Thus, it is properly associated with a Markov diffusion process~$\mbfM$ with state space~$\dUpsilon$ and invariant measure~$\QP$.
Here, we will focus on the analytic aspects of the theory, postponing the study of~$\mbfM$ to future work in this series.

\subsubsection{Finite-dimensional approximation of \texorpdfstring{$\EE{\dUpsilon}{\QP}$}{EUpsilon}.}\label{sss:ApproxForm}
Each labeling map~$\lb$ as in~\eqref{eq:Intro:LabelingMap} allows us to transfer the form~$\EE{\dUpsilon}{\QP}$ to a form~$\EE{\infty}{\QP^\infty}$ on~$L^2(\QP^\infty)$ over~$X^\tym{\infty}$, where we let~$\QP^\infty\eqdef \lb_\pfwd\QP$ for some fixed labeling map~$\lb$.
Standard techniques in metric measure geometry are unsuitable to the study of the extended-metric measure space~$(X^\tym{\infty},\mssd_\tym{\infty},\QP^\infty)$.
Partly, this is because~$\QP^\infty$ is concentrated on the coset~$\lb(\dUpsilon)$ of the quotient projection, and the latter is `too small' in~$X^\tym{\infty}$ in any topological sense.
Nonetheless, one can approximate the forms~$\EE{\infty}{\lb_\pfwd\QP}$ by some `finite-dimensional' counterparts~$\EE{n}{\QP^n}$, where~$\QP^n\eqdef (\tr^n\circ\lb)_\pfwd\QP$.
The approximation will be shown to hold in the sense of Kuwae--Shioya convergence (Cor.~\ref{c:KSUpsilon}).

The study of~$\EE{n}{\QP^n}$ for a single labeling map~$\lb$ is not sufficient to characterize the form~$\EE{\dUpsilon}{\QP}$.
The power of this approximation ---~playing a crucial role in the proof of the Rademacher property~--- consists indeed in the possibility to consider all labeling maps~$\lb$ at once, thus recovering the full information of~$\EE{\dUpsilon}{\QP}$ (see Fig.~\ref{fig:KS} in~\S\ref{ss:AnalyticForms} below). 
This is possible because of Assumption~\eqref{eq:Intro:AC}, precisely granting that~$\QP^n$-representatives of functions may be addressed as $\mssm^\otym{n}$-representatives, consistently for every~$\lb$.

\subsection{Geometry}\label{ss:Intro:Geometry}
Beside the vague topology~$\T_\mrmv$, the configuration space~$\dUpsilon$ over~$(X,\mssd)$ inherits from it the \emph{$L^2$-transportation} (also: \emph{$L^2$-Wasserstein}) {distance}
\begin{equation*}
\mssd_\dUpsilon(\gamma, \eta) \eqdef \inf \tonde{\int_{X^\tym{2}} \mssd^2(x,y)\diff\cpl(x,y)}^{1/2}\comma 
\end{equation*}
where the infimum is taken over all measures~$\cpl$ on~$X^\tym{2}$ with marginals~$\gamma$ and~$\eta$.
It is readily verified that~$\mssd_\dUpsilon$ may attain the value~$+\infty$, and is thus an \emph{extended} distance.
We study the metric topological properties of~$(\dUpsilon,\T_\mrmv,\mssd_\dUpsilon)$ in great detail (Prop.~\ref{p:W2Upsilon}, Prop.s~\ref{p:W2Upsilon2}), extending results previously obtained for configuration spaces over manifolds by M.~R\"ockner and A.~Schied~\cite{RoeSch99}.
In light of these results, the space~$(\dUpsilon,\T_\mrmv,\mssd_\dUpsilon)$ is an extended-metric topological space in the sense of Ambrosio--Erbar--Savar\'e~\cite{AmbErbSav16} (see Prop.~\ref{p:ConfigEMTS}), thus fitting the general framework of~\cite{AmbErbSav16}.

Whereas~$(\dUpsilon,\T_\mrmv, \QP)$ is a Polish Radon probability space, the corresponding extended-metric topological measure space~$(\dUpsilon,\T_\mrmv, \mssd_\dUpsilon,\QP)$ is not an amenable object:
\begin{itemize}
\item $(\dUpsilon,\mssd_\dUpsilon)$ has uncountably many components at infinite distance from each other;
\item $\mssd_\dUpsilon$ is $\T_\mrmv$-lower semicontinuous, yet not continuous;
\item typically (depending on~$\QP$), every~$\mssd_\dUpsilon$-ball is $\QP$-negligible;
\item $\mssd_\dUpsilon$-Lipschitz functions are in general neither finite, nor $\T_\mrmv$-continuous, nor $\QP$-measurable.
\end{itemize}
These pathological properties cause various fundamental problems to develop non-trivial metric measure theory on $\dUpsilon$. We explain below the four difficulties regarding these issues.

\subsubsection{Measurability issues}
The lack of measurability of $\mssd_\dUpsilon$-Lipschitz functions affects the measurability of various important functions in metric geometry, in particular: the point-to-set $\mssd_\dUpsilon$-distance, and, consequently, metric cut-off functions, McShane extensions, and other functions constructed from it.
We provide a careful treatment of the several measurability issues arising, proving the universal measurability of the point-to-set distance induced by~$\mssd_\dUpsilon$ (Cor.~\ref{c:MeasurabilitydU}) and of the McShane extensions of (restrictions of) $\mssd_\dUpsilon$-Lipschitz functions (Prop.~\ref{p:BorelMcShane}).

\subsubsection{Lack of Lipschitz continuity of cylinder functions}
One main difficulty in establishing relations between Dirichlet-form theory applied to~$\ttonde{\EE{\dUpsilon}{\QP},\dom{\EE{\dUpsilon}{\QP}}}$ and metric measure theory applied to~$(\dUpsilon,\mssd_\dUpsilon,\QP)$
lies in the fact that their natural cores, respectively the space of cylinder functions~$\Cyl{\Dz}$ and the space of bounded $\mssd_\dUpsilon$-Lipschitz functions, have small intersection.
Indeed, cylinder functions whereas $\T_\mrmv$-continuous are typically not $\mssd_\dUpsilon$-Lipschitz (see Example~\ref{e:NonLipCyl}).
\emph{A~priori}, there is therefore no relation between these two theories.

In order to overcome this difficulty, we introduce a family of localizing subsets~$\Xi\subset \dUpsilon$, and show that each cylinder function is \emph{locally} Lipschitz with respect to this localization. 
In particular, one can construct an exhausting sequence~$\Xi_n$ of subsets of~$\dUpsilon$, depending on~$u$, such that the restriction~$u_n$ of~$u$ to~$\Xi_n$ is $\mssd_\dUpsilon$-Lipschitz. 
Furthermore, we show that the McShane extensions~$\bar u_n$ of the restrictions~$u_n$ form a sequence of \emph{measurable} $\mssd_\dUpsilon$-Lipschitz functions coinciding with~$u$ on~$\Xi_n$ (Prop.~\ref{p:BorelMcShane}) and thus approximating~$u$ in a strong sense.
We note that standard localization arguments involving~$\mssd_\dUpsilon$ do not work, since as noted above every~$\mssd_\dUpsilon$-ball is typically $\QP$-negligible.

\subsubsection{Lack of isometric labeling maps}
In order to transfer metric properties from the base space~$X$ to the configuration space $\dUpsilon$, one ought to use labeling maps consistent with the metric structure on $X$. However, there is no isometric labeling map from $(\dUpsilon,\mssd_\dUpsilon)$ to $(X^\tym{\infty},\mssd_\tym{\infty})$, which makes the situation complicated.

In order to settle this problem, we make full use of \emph{radially isometric labeling map} in combination with the marginal absolute continuity assumption~\eqref{eq:Intro:AC}. 
For every~$\gamma$ in~$\dUpsilon$ there exists a labeling map~$\lb_\mbfx$ \emph{radially isometric} around~$\gamma$  (Prop.~\ref{p:PoissonLabeling}), satisfying that is
\begin{align*}
\mssd_\tym{\infty}\ttonde{\lb_\mbfx(\gamma),\lb_\mbfx(\eta)}=\mssd_\dUpsilon(\gamma,\eta)\comma \qquad \mbfx=\lb_\mbfx(\gamma)\comma\qquad \eta\in\dUpsilon \fstop
\end{align*}
Since no radially isometric labeling map extends to a global isometry of~$(\dUpsilon,\mssd_\dUpsilon)$ into~$(X^\tym{\infty},\mssd_\tym{\infty})$, it is essential to \emph{simultaneously} treat all labeling maps in a consistent way, which will be possible under the marginal absolute continuity assumption~\eqref{eq:Intro:AC}.

\subsubsection{Several inequivalent Cheeger Energies}
The extended-metric measure space~$(\dUpsilon,\mssd_\dUpsilon,\QP)$ is endowed with several, a priori inequivalent, definitions of \emph{Cheeger energies}~$\Ch[\mssd_\dUpsilon,\QP]$, convex energy functionals on~$L^2(\QP)$ induced by~$\mssd_\dUpsilon$, introduced on general extended-metric measure spaces by L.~Ambrosio, N.~Gigli, and G.~Savar\'e in~\cite{AmbGigSav14}, L.~Ambrosio, M.~Erbar, G.~Savar\'e~\cite{AmbErbSav16}, and G.~Savar\'e~\cite{Sav19}.  
Making use of results in~\cite{AmbGigSav14, AmbGigSav14b, AmbErbSav16, Sav19}, we show that these Cheeger energies  
for~$(\dUpsilon,\mssd_\dUpsilon,\QP)$ coincide (Prop.~\ref{p:ConsistencyCheeger}), which will play a significant role for investigating interplays between the Dirichlet form $\EE{\dUpsilon}{\QP}$ and $\Ch[\mssd_\dUpsilon,\QP]$ on $\dUpsilon$. We stress that the Cheeger energy constructed in this abstract way could in general be trivial (e.g., identically vanishing) or non-quadratic. 
Therefore, the metric measure theory based on $\Ch[\mssd_\dUpsilon,\QP]$ will be informative only after we identify $\Ch[\mssd_\dUpsilon,\QP]$ with the non-trivial Dirichlet form~$\EE{\dUpsilon}{\QP}$.

\subsection{Main results: Interplay between analysis and geometry} 
Our identification of the analytic and geometric structures on~$\dUpsilon$ will consist of two different results:
\begin{itemize}
\item the identification of the Dirichlet form~$\EE{\dUpsilon}{\QP}$ with~$\Ch[\mssd_\dUpsilon,\QP]$;

\item the identification of~$\mssd_\dUpsilon$ with the \emph{intrinsic distance~$\mssd_\QP$} of~$\EE{\dUpsilon}{\QP}$, viz.
\begin{equation*}
\mssd_\QP(\gamma,\eta)\eqdef \inf\set{u(\gamma)-u(\eta) : u\in\dom{\EE{\dUpsilon}{\QP}}\cap\Cb(\T_\mrmv) \comma \cdc^\dUpsilon(u)\leq 1 \as{\QP}}\fstop
\end{equation*}
\end{itemize}

Neither of these identifications is a consequence of the other.
Both of them are established by a detailed study of Rademacher- and Sobolev-to-Lipschitz-type properties on~$\dUpsilon$.

\subsubsection{Rademacher- and Sobolev-to-Lipschitz-type properties}
Denote by~$\Li[\mssd_\dUpsilon]{u}$ the global Lipschitz constant of~$u\colon\dUpsilon\rar\R$, and by~$\bLip(\dUpsilon,\mssd_\dUpsilon)$ the space of all bounded \emph{measurable} $\mssd_\dUpsilon$-Lipschitz functions.
We say that the form~$\EE{\dUpsilon}{\QP}$ has
\begin{itemize}
\item the \emph{Rademacher property}, if~$\bLip(\dUpsilon,\mssd_\dUpsilon)\subset \dom{\EE{\dUpsilon}{\QP}}$ and
\begin{equation*}
\norm{\cdc^\dUpsilon(u)}_{L^\infty(\QP)}\leq \Li[\mssd_\dUpsilon]{u}^2\comma\qquad u\in \bLip(\dUpsilon,\mssd_\dUpsilon)\semicolon
\end{equation*}

\item the (\emph{$\T_\mrmv$-continuous-})\emph{Sobolev-to-Lipschitz property}, if
\begin{equation*}
\Li[\mssd_\dUpsilon]{u}^2\leq \norm{\cdc^\dUpsilon(u)}_{L^\infty(\QP)} \comma\qquad u\in \dom{\EE{\dUpsilon}{\QP}} \cap \Cb(\dUpsilon, \T_\mrmv)\fstop
\end{equation*}
\end{itemize}

We show a \emph{superposition principle} for the Rademacher and the Sobolev-to-Lipschitz property of configuration spaces endowed with~$\mssd_\dUpsilon$.
That is, we show that either property holds for~$\EE{\dUpsilon}{\QP}$ if it holds for the reference form~$\EE{X}{\mssm}$ on the base space.
For the Rademacher property, this is achieved by means of the finite-dimensional approximating forms~$\EE{n}{\QP^n}$ discussed in~\S\ref{sss:ApproxForm} (Thm.~\ref{t:Rademacher}).
A treatment of the Sobolev-to-Lipschitz property on~$\dUpsilon$ is more involved, and we present some variations of it (Thm.s~\ref{t:cSLUpsilon}, \ref{t:dcSLUpsilon}).

We stress that, while fixing the single distance~$\mssd_\dUpsilon$ as the reference distance, these properties hold for a variety of measures~$\QP$ on~$\dUpsilon$ mutually singular with respect to each other.
The same holds for the Varadhan short-time asymptotics for~$\EE{\dUpsilon}{\QP}$ which is a consequence of both.
This is in sharp contrast with the case of finite-dimensional spaces, and in particular of Euclidean spaces.
Indeed, as shown by G.~De Philippis and F.~Rindler in~\cite{DePRin16}, if~$\mssm$ is a measure on~$\R^n$ such that every Lipschitz function with respect to the Euclidean distance is $\mssm$-a.e.-differentiable, then $\mssm$ is absolutely continuous to the standard $n$-dimensional Lebesgue measure.

This lack of rigidity in the determination of the absolute-continuity class of~$\QP$ by, e.g., the Rademacher property is a characteristic feature of infinite-dimensional spaces, and has been already observed in loop spaces by M.~Hino and J.~A.~Ram\'irez~\cite{HinRam03}, and spaces of probability measures~\cite{LzDS19b}.

\subsubsection{Identification of forms}
We may thus proceed to study the interplay between~$\Ch[\mssd_\dUpsilon,\QP]$ and~$\EE{\dUpsilon}{\QP}$. 
Relying on~\eqref{eq:Intro:AC} and the Rademacher property for~$\dUpsilon$ we show, as one main result of the paper (Thm.~\ref{t:IdentificationCheeger}) that
\begin{equation}\label{eq:Intro:IdForms}
\ttonde{\EE{\dUpsilon}{\QP},\dom{\EE{\dUpsilon}{\QP}}}=\ttonde{\Ch[\mssd_\dUpsilon,\QP],\dom{\Ch[\mssd_\dUpsilon,\QP]}} \fstop
\end{equation}
In the same spirit as in the previous section, this result will be shown under an analogous assumption for the base space~$(X,\mssd,\mssm,\cdc)$.
Namely that, on each of its $n$-fold products~$X^\tym{n}$, the product square field operator~$\cdc^\otym{n}$ of a Lipschitz function~$f$ coincides with the slope of~$f$ w.r.t.\ the product distance~$\mssd_\tym{n}$ (see Ass.~\ref{d:ass:Tensorization}).
Importantly, in showing~\eqref{eq:Intro:IdForms} we do not assume~$\Ch[\mssd_\dUpsilon,\QP]$ to be a quadratic functional.

This identification brings together the analytic and the geometric aspects of the theory in a powerful way:
\begin{itemize}
\item it implies that~$\Ch[\mssd_\dUpsilon,\QP]$ is non-trivial and quadratic, allowing us to access a well-established set of tools from metric geometry;
\item we may now make use of those same tools in order to study the stochastic process~$\mbfM$ properly associated to~$\EE{\dUpsilon}{\QP}$;
\item $\mbfM$ can thus be regarded as both the $\QP$-invariant infinite-interacting-particles diffusion on~$X$ and the Brownian motion of the extended metric measure space~$(\dUpsilon,\mssd_\dUpsilon,\QP)$.
\end{itemize}

\subsubsection{Identification of distances}
Independently and in parallel to establishing~\eqref{eq:Intro:IdForms}, we show the coincidence of the extended distances (Thm.~\ref{t:StoL2})
\begin{equation}\label{eq:Intro:DistanceIdentification}
\mssd_\dUpsilon=\mssd_\QP \fstop
\end{equation}
This identification is obtained by separatly studying the inequalities~`$\leq$' and~`$\geq$', respectively corresponding to weak forms of the Rademacher and Sobolev-to-Lipschitz properties.
It implies that the intrinsic distance~$\mssd_\QP$ is a non-trivial complete extended distance on~$\dUpsilon$.

\medskip

As an application of all the above results, we conclude that (cf.~Fig.~\ref{fig:diagram})

\smallskip

\begin{center}
\noindent \textbf{
The analytic structure~$\EE{\dUpsilon}{\QP}$ and the geometric structure~$(\dUpsilon, \T_\mrmv, \mssd_\dUpsilon, \QP)$ coincide.
}
\end{center}

\begin{figure}[htb!]
\begin{equation*}
\xymatrix @C=1pc { 
\EE{\dUpsilon}{\QP} \ar@{->}[r]&  \ttonde{\dUpsilon, \mssd_{\EE{\dUpsilon}{\QP}},  \QP}  \ar@{~>}[r] \ar@{=}[d]& \Ch[\mssd_{\EE{\dUpsilon}{\QP}},\QP] \ar@{=}[r] \ar@{=}[d] &\EE{\dUpsilon}{\QP}
\\
&(\dUpsilon, \mssd_\dUpsilon,  \QP)  \ar@{~>}[r] &  \Ch[\mssd_\dUpsilon,\QP] \ar@{->}[r] &  \ttonde{\dUpsilon, \mssd_{\Ch[\mssd_\dUpsilon,\QP]},  \QP} \ar@{=}[r]&(\dUpsilon, \mssd_\dUpsilon,  \QP)
}
\end{equation*}
\caption{
A summary of the main identification results in this work, marked by~`$=$'.
For any Dirichlet form~$E$ on~$L^2(\QP)$ we write~$\mssd_E$ to indicate the instrinsic distance of the form (in particular,~$\mssd_{\EE{\dUpsilon}{\QP}}=\mssd_\QP$).
Solid arrows~`$\to$' read as `take the intrinsic distance', and curly arrows~`$\rightsquigarrow$' as `take the Cheeger energy'.
}
\label{fig:diagram}
\end{figure}

\subsection{Applications, byproducts, and examples}\label{ss:Intro:ex}
The Rademacher and Sobolev-to-Lipschitz property, and the above identifications all have a number of important consequences, some of which are listed below.

\subsubsection{Quasi-Regularity} 
The quasi-regularity of Dirichlet forms on~$\dUpsilon$ is of particular importance for the study of Markov processes with~$\dUpsilon$ as state space.
Indeed, by the general theory of Dirichlet forms, if~$\EE{\dUpsilon}{\QP}$ is a quasi-regular Dirichlet form for the vague topology~$\T_\mrmv$, then there exists a $\QP$-special standard Markov processes~$\mbfM$ that is \emph{properly associated} with~$\EE{\dUpsilon}{\QP}$, e.g.~\cite[Dfn.~IV.2.5(i)]{MaRoe92} and that is unique for quasi-every starting point.
The strong locality of~$\EE{\dUpsilon}{\QP}$ is then equivalent to the almost sure $\T_\mrmv$-continuity of the sample paths of~$\mbfM$.
As a consequence,~$\mbfM$ is a $\QP$-invariant Markov diffusion, describing the evolution of an infinite system of Brownian particles on~$X$, with interaction driven by~$\QP$.

Proposition~\ref{p:QRegSLoc} and Corollary~\ref{c:RadQReg} provide a systematic approach to the quasi-regularity of~$\EE{\dUpsilon}{\QP}$, showing that the Rademacher property for the base space implies both quasi-regularity and strong locality on the configuration space.
Firstly, this extends quasi-regularity results in the existing literature to a much wider class of base spaces~$X$ and invariant measures~$\QP$. 
Secondly, it provides a systematic proof of the quasi-regularity compared to the \emph{ad hoc} proofs in the existing literature.

We stress that our result provides a very mild sufficient condition ---~namely, only the closability~\ref{ass:CWP} of the pre-form~$\ttonde{\EE{\dUpsilon}{\QP}, \Cyl{\Dz}}$ and \eqref{eq:Intro:AC}~--- for the quasi-regularity of~$\EE{\dUpsilon}{\QP}$ when the base space $X$ satisfies the Rademacher property. This gives the existence and the uniqueness of weak solutions to a large class of stochastic differential equations of infinite interacting particle systems.  In particular, every closable quasi-Gibbs measure $\QP$ satisfies the quasi-regularity (Corollary~\ref{c:EucQReg}).

\subsubsection{Wasserstein universality of the Varadhan Short-Time Asymptotic} 
When the base space $X=\R^d$ is the standard Euclidean space and the invariant measure $\QP=\PP_d$ is the Poisson measure with instensity the $d$-dimensional Lebesgue measure~$\Leb^d$, it is natural to expect that the integral-type Varadhan short-time asymptotic~\eqref{eq:Varadhan} for the heat semigroup~$\TT{\dUpsilon}{\QP}_t$ holds w.r.t.~$\mssd_\dUpsilon$, since there is no correlation among particles, and since the short-time asymptotic of each one-particle motion is governed by the Euclidean distance.
Indeed, this was shown by T.~S.~Zhang~\cite{Zha01} in the case of the one-dimensional Euclidean space $X=\R$ for the Poisson measure $\QP=\PP_1$. 
However, when~$\QP$ is not the Poisson measure, i.e., when particles are correlated, determining what governs the short-time asymptotic of the (infinite) particle system is ---~in principle~--- highly non-trivial, and no such characterization seems to be known up to now.
Indeed, most of the usually considered invariant measures (canonical Gibbs, laws of determinantal point processes, etc.) are singular to the Poisson measure~$\PP$ and thus there is no obvious way to relate the short-time asymptotic in the case~$\QP=\PP$ with other general invariant measures.

In Theorem~\ref{t:VaradhanSecond}, we show that the Varadhan short-time asymptotic holds w.r.t.\ the $L^2$-trans\-porta\-tion distance~$\mssd_\dUpsilon$ under mild assumptions for the base space~$X$ (including e.g., RCD spaces and ideal sub-Riemannian manifolds) and the invariant measure~$\QP$ (e.g., in the case of $X=\R^n$, a large class of Gibbs measures, laws of determinantal point processes such as: $\mathrm{sine}_\beta$, $\mathrm{Airy}_\beta$, $\mathrm{Bessel}_{\alpha, \beta}$, Ginibre, etc.).

It is notable that all the aforementioned classes of invariant measures, which are in general singular with respect to each other, belong to the same \emph{$\mssd_\dUpsilon$-universality class}, in the sense that the short-time asymptotic of the corresponding semigroups is universally governed by the same distance~$\mssd_\dUpsilon$, regardless of the choice of~$\QP$.
For singular measures, this universality phenomenon ---~a characteristic feature of infinite-dimensional spaces~--- was not known even in the case of~$X=\R^n$.

\subsubsection{Further applications}
We obtain a number of further applications ---~those marked by~$\circ$ will appear in forthcoming work:
\begin{itemize}
\item[$\bullet$] \emph{Gaussian Upper Bound}: An integral Gaussian upper heat-kernel estimate in terms of the set-to-set $\mssd_\dUpsilon$-distance holds for~$\EE{\dUpsilon}{\QP}$ (Cor.~\ref{c:HeatKernelEstimateConfig}).
Additionally, this upper bound is explicitly computable for relevant classes of sets.

\item[$\bullet$] \emph{Finiteness of $\mssd_\dUpsilon$}: When~$\EE{\dUpsilon}{\QP}$ is irreducible, the set-to-set $\mssd_\dUpsilon$-distance is always finite on sets of positive $\QP$-measure (Cor.~\ref{c:DistanceIrreducibile}).

\item[$\circ$] Various Sobolev spaces on~$\dUpsilon$ can be identified with~$\dom{\EE{\dUpsilon}{\QP}}$.

\item[$\circ$] General tightness criteria for interacting diffusions on~$\dUpsilon$.
\end{itemize}

Additionally, letting~$\QP=\PP_\mssm$ be the law of a \emph{Poisson point process} on~$X$ with intensity~$\mssm$, in forthcoming work we will obtain:
\begin{itemize}
\item[$\circ$] the essential self-adjointness and Markov uniqueness of the generator~$\ttonde{\LL{\dUpsilon}{\PP_\mssm},\dom{\LL{\dUpsilon}{\PP_\mssm}}}$ of~$\ttonde{\EE{\dUpsilon}{\PP_\mssm},\dom{\EE{\dUpsilon}{\PP_\mssm}}}$ on various families of cylinder functions over singular or infinite-dimensional spaces~$X$, including, e.g., path and loop spaces over manifolds;
\item[$\circ$] the $L^\infty(\PP_\mssm)$-to-$\bLip(\dUpsilon,\mssd_\dUpsilon)$ regularization property for the heat semigroup;
\item[$\circ$] synthetic Ricci-curvature lower bounds for the space~$(\dUpsilon,\mssd_\dUpsilon,\PP_\mssm)$ beyond the case that $X$ is a manifold.
\end{itemize}

\paragraph{A byproduct: Tensorization of the Rademacher property}
In the process of proving the Rade\-macher theorem on~$\dUpsilon$, we show the tensorization property of the Rademacher theorem on the base space~$X$ (Thm.~\ref{t:Tensor}).
This fact is of independent interest in metric measure geometry.
Up to now, it has been obtained when the Dirichlet form is the Cheeger energy: under synthetic Ricci-curvature lower bounds by L.~Ambrosio, N.~Gigli, and G.~Savar\'e in~\cite[Thm.~6.19]{AmbGigSav14b}, and under volume doubling and the weak $(1,2)$-Poincar\'e inequality by L.~Ambrosio, A.~Pinamonti, and G.~Speicht in~\cite[Thm.~3.4]{AmbPinSpe15}.
Our theorem (Thm.~\ref{t:Tensor}) deals with general Dirichlet forms, not necessary Cheeger energies, and also does not require any geometric assumption of~$X$.  

\subsubsection{Examples}
We extensively review examples for both the base spaces and the reference measures~$\QP$ in~\S\ref{s:Examples}.
Concerning base spaces, the theory developed in this work applies to:
\begin{itemize}
\item domains in Euclidean spaces, with arbitrary (e.g., Dirichlet, Neumann) boundary conditions;
\item complete weighted Riemannian manifolds;
\item ideal sub-Riemannian manifolds;
\item $\MCP(K,N)$ spaces with $K\in\R$ and~$N\in (2,\infty)$;
\item $\RCD(K,N)$ spaces with $K\in\R$ and~$N\in (2,\infty]$;
\item complete doubling metric measure spaces satisfying a weak Poincar\'e inequality;
\item path/loop spaces over Riemannian manifolds.
\end{itemize}

As for reference measures, if the base space $X$ satisfies the Rademacher property our main results can be applied to all \emph{quasi-Gibbs measures} (see Dfn.~\ref{d:QuasiGibbs}) satisfying Assumption~\ref{ass:dcSLConfig}.
Although we have explained our results in a very general setting, many of these results are novel even when~$X=\R^n$ for a wide class of invariant measures $\QP$, including $\mathrm{sine}_\beta$, $\mathrm{Airy}_\beta$, $\mathrm{Bessel}_{\alpha, \beta}$, and Ginibre point processes.
Some of our results, including proofs of the Rademacher property, quasi-regularity, and the coincidence of the canonical form with the Cheeger energy, apply as well to \emph{all} quasi-Gibbs measures. 
%
We provide an overview of examples of base spaces and invariant measures in relation to our assumptions in Tables~\ref{tbl:2}--\ref{tbl:3}. 

\subsection{Literature, motivations, and outlook}
The literature on configuration spaces is extensive.
With no ambition of giving a complete account, let us mention some of the relevant work.

\subsubsection{Analysis} \label{subsubsection: Ana}
In the case of $X=\R^n$, lifted square field operators~$\cdc^\dUpsilon$ on~$\dUpsilon$ have been considered under several different conditions of invariant measures by several authors, including M.~W.~Yoshida~\cite{Yos96}, H.~Tanemura~\cite{Tan96},  H.~Osada in~\cite{Osa96, Osa13},  V.~Choi and Y.-M.~Park, and H.~J.~Yoo~\cite{ChoParYoo98}, and H.~J.~Yoo~\cite{Yoo05}.
The case when~$X$ is a weighted Riemannian manifold~$(M,\rho\vol_g)$ with square field operator~$\cdc(f)\eqdef \abs{\nabla f}_g^2$ is studied in the framework of canonical Gibbs measures: 
In the case of (mixed) Poisson measures, the seminal work~\cite{AlbKonRoe98} by S.~A.~Albeverio, Yu.~G.~Kondrat'ev and M.~R\"ockner reaches a thorough understanding of the analytic structure on~$(\dUpsilon,\PP_{\rho\vol_g})$.
It includes the closability of the reference form~$\EE{\dUpsilon}{\PP}$, the computation of the corresponding generator and semigroup, and the identification of the latter operators as the second quantizations of the standard Laplace--Beltrami and heat semigroup operators on the base space.
The identification of the heat kernel measure of~$\EE{\dUpsilon}{\PP}$ and the study of the corresponding diffusion was subsequently achieved by Yu.~G.~Kondrat'ev, E.~Lytvynov, and M.~R\"ockner in~\cite{KonLytRoe02}.
Whereas the results in~\cite{AlbKonRoe98} partially rely on the known isomorphism between~$L^2(\PP_{\rho\vol_g})$ and the bosonic Fock space of~$L^2(\rho\vol_g)$ (see e.g.\ D.~Surgailis~\cite{Sur84}), many of them were subsequently extended to canonical and grand-canonical Gibbs measures~$\QP$ in~\cite{AlbKonRoe98b, KonLytRoe08, SKR01}.
A construction and study of~$\cdc^\dUpsilon$ and related objects in greater generality for the choice of the base space is the main result in the aforementioned work~\cite{MaRoe00} by Z.-M.~Ma and M.~R\"ockner.

\subsubsection{Geometry}
The geometric properties of~$(\dUpsilon,\mssd_\dUpsilon,\QP)$ were studied by M.~R\"ockner and A.~Schied in~\cite{RoeSch99} in the case when~$X$ is a Riemannian manifold and $\QP$ satisfies the integration by parts condition (often denoted by (IbP)) and the quasi-invariance w.r.t.\ the action on~$\dUpsilon$ of the diffeomorphism group~$\Diff_c(M)$, acting by push-forward of measures.  
In this setting, they establish both the Rademacher and \emph{a} Sobolev-to-Lipschitz property for~$\EE{\dUpsilon}{\QP}$ and the identification~\eqref{eq:Intro:DistanceIdentification} of~$\mssd_\dUpsilon$ with~$\mssd_{\QP}$.
The quasi-invariance of~$\QP$ is however not easy to verify in general, unless~$\QP$ is in the Ruelle class.
Furthermore, it is unavailable in the singular setting considered in the current paper, since it relies on the smooth structure of~$X$ in a delicate and essential way.
Since we do not rely on the quasi-invariance of~$\QP$, our geometric results ---~the Rademacher and Sobolev-to-Lipschitz properties, and the coincidence of forms and distances~--- are novel even when~$X=\R^n$, extending the results in~\cite{RoeSch99} to measures satisfying~\eqref{eq:Intro:AC}.

Further results on manifolds include the study~\cite{AlbDalLyt01,AlbDalKonLyt03} of de Rham cohomology on~$\dUpsilon$, and the study of the Ricci curvature of~$(\dUpsilon,\mssd_\dUpsilon,\PP_{\vol_g})$ by S.~Albeverio, A.~Daletskii, E.~Lytvynov~\cite{AlbDalLyt01b}, and by M.~Erbar and M.~Huesmann~\cite{ErbHue15}, on which we shall comment extensively in forthcoming work in this series.

\subsubsection{Stochastic analysis}
All the references mentioned in \S\ref{subsubsection: Ana} constructed quasi-regular local Dirichlet forms and, therefore, the corresponding diffusion processes~$\mbfM$ on $\dUpsilon$ exist.
These diffusions can be considered as interacting diffusion processes on the base space $X$.
When~$\QP=\PP$ is a Poisson measure, T.~Shiga and Y.~Takahashi~\cite{ShiTak74} used a sophisticated approach, by which the diffusion~$\mbfM$ on~$\dUpsilon$ can be realized as a Poisson point process on the configuration space over the path space of~$X$.
The pathwise analysis of the corresponding interacting diffusion processes as strong solutions of infinite-dimensional stochastic differential equations has been investigated by R.~Lang~\cite{Lan77, Lan77b} and J.~Fritz~\cite{Fri87} in the case when~$X=\R^n$ is the Euclidean space, and~$\QP$ is any grand-canonical Gibbs measure with compactly supported pair potential.
Under more general conditions for~$\QP$, this pathwise analysis has been further investigated by L.-C.~Tsai~\cite{Tsa15} in the case that $X=\R$ and for $\QP$ the law of a $\mathrm{sine}_\beta$ point process, and by H.~Osada~\cite{Osa13, Osa13b}, and H.~Osada and H.~Tanemura~\cite{OsaTan20} for $X=\R^n$ and $\QP$ in a class of quasi-Gibbs measures.
In the case~$\QP=\PP$, the corresponding small-time large deviation principles have been studied by T.~S.~Zhang~\cite{Zha01} (at the level of semigroups, when $X=\R$) and by M.~R\"ockner and T.~S.~Zhang~\cite{RoeZha04} (at the level of paths, when $X$ is a Riemannian manifold).

For all the cases mentioned above, we prove quasi-regularity in a systematic way, and our identification of~$\EE{\dUpsilon}{\QP}$ with~$\Ch[\mssd_\dUpsilon,\QP]$ (Thm.~\ref{t:IdentificationCheeger}) implies that the corresponding interacting diffusion on the base space $X$ coincides with the Brownian motion on the infinite-dimensional metric measure space~$(\dUpsilon, \mssd_\dUpsilon, \QP)$.

\subsubsection{Motivation and outlook}
A treatment of $\dUpsilon$ for singular base spaces~$X$ and for general invariant measures~$\QP$ has many motivations in various related fields.

From the viewpoint of \emph{interacting particle systems}, in relation to statistical physics, material science, molecular biology, etc., particles move in spaces with a complicated structure, e.g.\ molecules inside cells, or electrons in composite or super-cooled media.
In these environments, the motion of particles is possibly highly degenerate and the corresponding base space~$X$ is far from being a smooth Riemannian manifold.
The degeneracy of the metric structure may consist of singularities of various type: obstacles, barriers, bottlenecks, etc.
Furthermore, particles could interact with each other in a complex manner induced by singular interactions beyond the standard treatment within the framework of Gibbs measures.

Stochastic geometry 
has recently seen a thriving development in a wide range of areas, including the study of {point processes}, {integral geometry}, {random graphs}, {random convex geometries}, and many others.
These objects have provided useful mathematical frameworks for the description of random geometric objects in relation to biology, neuroscience, astronomy, computational geometry, communication networks, image analysis, material science, etc.; see, e.g.,~\cite{BBSW07, Cou19}.
Many important quantities and operations for the analysis of random geometries, e.g., contact distance functions of random point processes, the number of faces and the volume of random polytopes, thinning/superposition/clustering of point processes, can be described as maps on configuration spaces over appropriate ambient spaces, but not necessarily over smooth manifolds, nor equipped with Gibbs measures.
We expect the development of analysis and geometry of configuration spaces beyond smooth manifolds and the Gibbsian framework to have a considerable impact on the applicability of these fields.

From the viewpoint of \emph{infinite-dimensional metric measure geometry}, 
in light of the developments of metric measure geometry in the last two decades, various systematic treatments of the geometry of singular spaces have been developed based on e.g., volume doubling and weak Poincar\'e inequalities, or the synthetic Ricci curvature bounds.
However, many important infinite-dimensional spaces lie outside the scope of these theories, being extended-metric measure spaces and displaying patologies similar to those listed in the beginning of~\S\ref{ss:Intro:Geometry}.
This obstacle is partially overcome by the \emph{extended-metric measure theory} developed by L.~Ambrosio, N.~Gigli, G.~Savar\'e~\cite{AmbGigSav14b}, L.~Ambrosio, M.~Erbar, G.~Savar\'e~\cite{AmbErbSav16}, and G.~Savar\'e~\cite{Sav19}.
The applicability of this theory to concrete infinite-dimensional spaces remains however non-trivial.
Indeed, as indicated by measure-concentration phenomena, Lipschitz functions on infinite-dimensional spaces could be `approximately constant', and their Cheeger energies could vanish identically.
In the present case for instance, cylinder functions on~$\dUpsilon$ ---~on which we base our construction~--- are typically not $\mssd_\dUpsilon$-Lipschitz (Ex.~\ref{e:NonLipCyl}).
Thus, until one can show the non-triviality of Cheeger energies (Thm.~\ref{t:IdentificationCheeger}), the aforementioned abstract theory does not provide any concrete information.

Our motivation for this paper is to establish solid foundations for metric measure geometry on the configuration space~$\dUpsilon$ over general spaces~$X$ and invariant measures~$\QP$, aiming to include all the aforementioned singular settings appearing in the other fields of science, and to understand the corresponding statistical-physical diffusion structure in terms of infinite-dimensional metric measure theory.

\section{Measurable structure: local diffusion spaces}\label{s:FundMeasure}
In this section, we introduce the main terminology and notation about configuration spaces and Dirichlet forms thereupon.
We discuss such objects in the greatest possible generality, only relying on the measure-space structure.
In detail, closely following the exposition of Z.-M.~Ma and M.~R\"ockner~\cite{MaRoe00}, we introduce the class of cylinder functions, and show how to lift a square field operator on a base space~$X$ to a well-defined square field on the corresponding configuration space~$\dUpsilon$, defined on cylinder functions.
Subsequently, we recall the closability of the form~$\EE{\dUpsilon}{\QP}$ induced by any such square field via integration with respect to a probability measure~$\QP$ satisfying a Georgii--Nguyen--Zessin type formula (see Assumption~\ref{ass:Closability}), as established in~\cite{MaRoe00}.

Our purpose in this section is that to provide the reader with a gentle introduction to the issue of \emph{well-posedness} of such forms, which will be thoroughly discussed in~\S\ref{s:Analytic} under the additional assumption that~$X$ be endowed with a metrizable topology.
We therefore postpone to~\S\ref{s:Analytic} a different proof of (well-posedness and) closability of the form~$\EE{\dUpsilon}{\QP}$ under much more general assumptions than Assumption~\ref{ass:Closability} on the reference measure~$\QP$, thus effectively generalizing the closability statement in~\cite{MaRoe00} to a wider class of measures.

\subsection{Notation}
We collect here some general conventions on notation which we shall adhere to throughout this work.

\begin{notat}[General conventions]
\paragraph{Scalars} Write
\begin{equation*}
\N_0\eqdef\set{0,1,\dotsc}\comma \qquad \overline\N_0\eqdef \N_0\cup\set{+\infty}\comma \qquad \N_1\eqdef \set{1,2,\dotsc}\comma
\end{equation*}
and analogously for~$\overline\N_1$.
In general, elements of~$\overline\N_0$ are denoted by uppercase letters, whereas elements of~$\N_0$ are denoted by lowercase letters.
For instance, we write~$n\in \N_0$,~$N\in \overline\N_0$, and the expression~$n\leq N$ is taken to mean that \emph{either}~$N<\infty$ and~$n\leq N$ \emph{or}~$N=\infty$ and $n$ is an arbitrary non-negative integer.

\paragraph{Sets} We denote by the superscript~$\square^\complement$ the complement of a set inside some ambient space apparent from context.
For a property~$P$ we use the phrasing `co-$P$' to indicate that~$P$ holds on a complement set, e.g.~`cofinite' (i.e.\ with finite complement), `conegligible' (i.e.\ with negligible complement), etc..

\paragraph{Product objects} We shall adhere to the following conventions:
\begin{itemize}
\item the superscript~${\square}^\tym{N}$ (the subscript~$\square_\tym{N}$) denotes ($N$-fold) \emph{product objects};

\item the superscript~${\square}^\asym{N}$ denotes objects \emph{relative to products}, not necessarily in product form;

\item the superscript~${\square}^\sym{N}$ denotes objects \emph{relative to symmetric products}, not necessarily in symmetric product form;

\item for a permutation~$\sigma$ the subscript~${\square}_\sigma$ denotes that the objects in~$\square$ are permuted according to~$\sigma$, e.g., for~$\mbfx\eqdef\seq{x_i}_{i\leq N}$, we write~$\mbfx_\sigma$ in place of~$\seq{x_{\sigma(i)}}_{i\leq N}$;

\item upper case \textbf{boldface} letters always denote (subsets of) infinite-product spaces.
\end{itemize}

\paragraph{Configurations} We shall adhere to the following conventions:
\begin{itemize}
\item the upper case Greek letters~$\Upsilon$,~$\Lambda$, $\Xi$, $\Omega$ always denote (subsets of) configuration spaces.
As a general rule,~$\Omega$ will be used to denote sets of full measure,~$\Lambda$ and~$\Xi$ sets with special properties or generic sets.

\item the lower case Greek letters~$\QP$, $\PP$ always denote measures on configuration spaces;

\item the lower case Greek letters~$\alpha$, $\gamma$, $\eta$ always denote configurations;
\end{itemize}

\paragraph{Reference objects} \textsf{Sans-serif} letters denote reference objects, e.g.\ a reference measure~$\mssm$ or a reference distance~$\mssd$.

\paragraph{Representatives} For the better part of this work, we shall need to carefully distinguish a.e.-classes of functions from their representatives.
For the a.e.-class~$f$, we denote by~$\rep f$ any of its representatives.
We drop this distinction only for a.e.-classes~$f$ having a unique representative continuous w.r.t.\ a topology apparent from context, in which case we denote by~$f$ both the a.e.-class \emph{and} its unique continuous representative.
Further notation on a.e.-classes/representatives is discussed in Notation~\ref{n:Classes}.

\paragraph{Restrictions} For any family~$\msE$ of subsets~$E$ of a space~$X$, and for a fixed~$A\subset X$, we set $\msE_A\eqdef\set{E\cap A: E\in\msE}$.

\paragraph{Assumptions} We shall make use of a large variety of different assumptions.
For ease of notation, recollection, and reference, most of these assumptions will be marked by an abbreviation \emph{and} by a number.
For instance, the Closability Assumption~\ref{d:ass:CWP} will be referred to by~\ref{ass:CWP}.
\end{notat}

\subsection{Base spaces}
A \emph{configuration space}~$\dUpsilon$ is ---~informally~--- the set of non-negative integer-valued \emph{locally finite} measures over a measurable \emph{base space}~$(X,\A)$. In principle, no reference measure on~$(X,\A)$ is required in the definition. However, since we shall be interested in lifting additional structures from the base space to the configuration space, we shall require the former to be a \emph{measure space}, and satisfying to the following definition.

\begin{defs}\label{d:MS} A \emph{measure space} is a triple~$(X,\A,\mssm)$ so that
\begin{enumerate}[$(a)$]
\item\label{i:d:MS:1} $X$ is a \emph{non-empty} set;
\item\label{i:d:MS:2} $\A$ is a $\sigma$-algebra on~$X$ so that~$\set{x}\in \A$ for every~$x\in X$;
\item\label{i:d:MS:3} $\mssm$ is a \emph{$\sigma$-finite atomless} measure on~$X$;
\item\label{i:d:MS:4} $\A$ is \emph{$\mssm$-essentially countably generated}, i.e.\ there exists a countably generated $\sigma$-sub\-al\-ge\-bra~$\A_0$ of~$\A$ so that for every~$A\in \A$ there exists $A_0\in \A_0$ with~$\mssm(A\triangle A_0)=0$.
\end{enumerate}

Let~$(X,\A,\mssm)$ be a measure space. We denote by~$\A_\mssm\subset \A$ the algebra of sets of finite $\mssm$-measure, and by~$(X,\A^\mssm,\hat\mssm)$ the Carath\'eodory completion of~$(X,\A,\mssm)$ w.r.t.~$\mssm$.
\end{defs}

\subsubsection{Local structures}
The concept of `locality' in the informal designation of `locally finite measure' is captured by the notion of a \emph{local structure}.

\begin{defs}[Local structures]\label{d:LS}
A \emph{localizing ring} of~$(X,\A,\mssm)$ is any family~$\msE\subset \A$ so that
\begin{enumerate}[$(a)$]
\item\label{i:d:LS:1}$\msE$ is a ring ideal of~$\A_\mssm$, i.e.~$\msE\subset \A_\mssm$ is closed under finite unions and intersections, and~$A\cap E\in \msE$ for every~$A\in \A$ and~$E\in \msE$;
\item\label{i:d:LS:2}$\msE=\cup_{h\geq 0} (\A\cap E_h)$ for some sequence~$\seq{E_h}_h\subset \A_\mssm$ with~$E_h\uparrow_h X$, termed a \emph{localizing sequence}.
\end{enumerate}

A \emph{local structure} is a quadruple~$(X,\A,\mssm,\msE)$ with~$\msE$ a localizing ring of~$(X,\A,\mssm)$.
\end{defs}

Our definition of localizing ring is a modification of~\cite{Kal17}: in comparison with~\cite[p.~19]{Kal17}, in Definition~\ref{d:LS} we additionally require~$\mssm E<\infty$ for each~$E$ in~$\msE$. 
As noted in~\cite[p.~15]{Kal17}, the datum of a localizing ring is equivalent to that of a localizing sequence~$\seq{E_h}_h$, just by taking as a definition of~$\msE$ the ring ideal generated by the sequence in Definition~\ref{d:LS}\iref{i:d:LS:2}.
In the following, we shall denote a local structure~$(X,\A,\mssm,\msE)$ simply by~$\mcX$.

Let now~$\gamma$ be a measure, e.g.~a configuration, over~$(X,\A)$. Among the simplest functionals of~$\gamma$ are integral functionals of the form
\begin{align}\label{eq:Trid}
f^\trid\colon \gamma\longmapsto \gamma f \eqdef \int f\diff \gamma\comma
\end{align}
where~$f\colon X\rar [-\infty,\infty]$ is any function so that~$\gamma f$ is meaningful. It is natural to study functionals of the form~$f^\trid$ by means of the function spaces which~$f$ belongs to.

\begin{defs}[Function spaces]\label{d:FuncSp}
Let~$\mcX$ be a local structure.
\begin{enumerate}[$(a)$]
\item Write~$\mcL^\infty(\mssm)$ or~$\Sb(X)$ for the Banach lattice of real-valued \emph{bounded} (as opposed to: $\mssm$-essentially bounded) $\A$-measurable functions, and
\begin{align*}
\mcL^\infty(\mssm)\longrar L^\infty(\mssm)\colon f\longmapsto\class[\mssm]{f}
\end{align*}
for the quotient map to the corresponding quotient Banach algebra~$L^\infty(\mssm)$.
Here and elsewhere we indicate by~$\class[\mssm]{f}$ the class of a function~$f$ up to $\mssm$-a.e.\ equivalence.
For~$f_i\in L^\infty(\mssm)$, resp.~$\rep f_i\in \mcL^\infty(\mssm)$,~$i\leq k$, set
\begin{align*}
\mbff\eqdef \seq{f_1,\dotsc, f_k}\in L^\infty(\mssm;\R^k)\comma \qquad \text{resp.}\qquad \rep\mbff \eqdef \tseq{\rep f_1,\dotsc, \rep f_k}\in\mcL^\infty(\mssm;\R^k) \fstop
\end{align*}

\item Write $\Sb(\msE)$ for the space of bounded $\A$-measurable $\msE$-\emph{eventually vanishing} functions on~$X$, viz.
\begin{align}\label{eq:SbExhaustion}
\Sb\ttonde{\msE}\eqdef \set{f\in\Sb(X) : f\equiv 0 \text{~~on } E^\complement \text{~~for some }E\in\msE}  \semicolon
\end{align}

\item\label{i:d:FuncSp:5} Say that:
\begin{enumerate}[$(c_1)$]
\item a function~$f\colon (X,\A)\rar \R$ is \emph{virtually measurable} if there exists an $\mssm$-negligible set~$N$ such that~$f\colon N^\complement \rar \R$ is $\A_{N^\complement}$-measurable. In this case,~$f$ may be left undefined on~$N$;

\item a measurable function~$f\colon (X,\A)\rar \R$ is \emph{virtually} $\msE$-\emph{eventually vanishing} if there exists an $\mssm$-negligible set~$N$ such that~$f$ is identically vanishing on~$E^\complement\cap N^\complement$ for some~$E\in\msE$.
\end{enumerate}
Let \emph{virtually measurable virtually $\msE$-eventually vanishing functions} be defined in the obvious way. We denote the space of all \emph{bounded} such functions by~$\Sb(\msE,\mssm)$.
\end{enumerate}
\end{defs}

In general, a functional of the form~\eqref{eq:Trid} is not well-defined if~$f$ is merely a class of functions modulo $\mssm$-negligible sets. Thus, it will be of importance in the following to distinguish between measurable functions on~$\mcX$ and their $\mssm$-classes. We shall do so by the following notation.

\begin{notat}\label{n:Classes}
We denote $\mssm$-classes of functions by~$f$,~$g$, etc., measurable representatives by~$\rep f$,~$\rep g$, etc.
Whenever~$f$ is an $\mssm$-class,~$\rep f$ is taken to be a representative of~$f$. Whenever~$\rep f$ is a measurable function, possibly undefined on an $\mssm$-negligible set,~$f=\ttclass[\mssm]{\rep f}$ is taken to be the corresponding $\mssm$-class.
We shall adopt the same convention for other objects, including for instance algebras of functions, thus writing e.g.,~$\Dz\subset L^\infty(\mssm)$, resp.~$\rep\Dz\subset \mcL^\infty(\mssm)$.
\end{notat}

\subsubsection{Diffusion spaces}
The main object of our study will be the lifting of local \emph{square field operators} from the base space to the configuration space.

We start by recalling the main definitions.
We write~$\Cbinfty(\R^k)$ for the space of real-valued bounded smooth functions on~$\R^k$ with bounded derivatives of all orders.

\begin{defs}[Square field operators]\label{d:SF}
Let~$\mcX$ be a local structure. For every~$\phi\in \Cb^\infty(\R^k)$ set $\phi_0\eqdef \phi-\phi(\zero)$. A \emph{square field operator on $\mcX$} is a pair~$(\cdc, \Dz)$ so that, for every~$\mbff\in\Dz^\otym{k}$, every~$\phi\in \Cb^\infty(\R^k)$ and every~$k\in \N_0$,
\begin{enumerate}[$(a)$]
\item\label{i:d:SF:1} $\Dz$ is a subalgebra of~$L^\infty(\mssm)$ with~$\phi_0\circ\mbff\in \Dz$;

\item\label{i:d:SF:2} $\cdc\colon \Dz^\otym{2}\longrar L^\infty(\mssm)$ is a symmetric non-negative definite bilinear form;

\item\label{i:d:SF:3} $(\cdc,\Dz)$ satisfies the following \emph{diffusion property}
\begin{align}\label{eq:i:d:SF:3}
\cdc\ttonde{\phi_0\circ \mbff, \psi_0\circ \mbfg}=
\sum_{i,j}^k (\partial_i \phi)\circ \mbff \cdot (\partial_j\psi)\circ\mbfg \cdot \cdc(f_i, g_j) \as{\mssm}\fstop
\end{align}
\end{enumerate}

Let the analogous definition of a \emph{pointwise defined square field operator}~$(\rep\cdc, \rep\Dz)$ be given, with~$\rep\Dz\subset \mcL^\infty(\mssm)$ in place of~$\Dz\subset L^\infty(\mssm)$, and with~\eqref{eq:i:d:SF:3} to hold pointwise (as opposed to: $\mssm$-a.e.).
\end{defs}

As noted in~\cite[p.~282]{MaRoe00}, a pointwise defined square field operator~$(\rep\cdc, \rep\Dz)$ defines a square field operator~$(\cdc,\Dz)$ on $\mssm$-classes as soon as
\begin{align}\label{eq:Ss}
\rep\cdc(\rep f,\rep g)=0\comma \qquad \rep f,\rep g\in \rep\Dz\comma \rep f \equiv 0 \as{\mssm}\fstop
\end{align}

\begin{defs}[Local diffusion spaces]\label{d:DS}
A \emph{local diffusion space} (in short: \LDS) is a pair~$(\mcX,\rep\cdc)$ so that
\begin{enumerate}[$(a)$]
\item\label{i:d:DS:1}$\mcX$ is a local structure;

\item\label{i:d:DS:1.5} $\rep\Dz$ is a subalgebra of~$\Sb(\msE)$ $\mssm$-essentially generating~$\A$;

\item\label{i:d:DS:2}$\rep\cdc\colon \rep\Dz^\otym{2}\rar \Sb(\msE)$ is a pointwise defined square field operator satisfying~\eqref{eq:Ss};

\item\label{i:d:DS:3}the bilinear form~$(\EE{X}{\mssm},\Dz)$ induced by~$(\rep\cdc,\rep\Dz)$ on $\mssm$-classes, defined by
\begin{align}\label{eq:i:d:DS:3}
\EE{X}{\mssm}(f,g)\eqdef \int \cdc(f,g) \diff\mssm \comma \qquad f, g\in \Dz\comma
\end{align}
is closable and densely defined in~$L^2(\mssm)$.
\end{enumerate}
\end{defs}

We shall comment about Definition~\ref{d:DS}\iref{i:d:DS:1.5} in Remark~\ref{r:DensityQP} below.
As a consequence of~\eqref{eq:i:d:SF:3}, the closure of~$\ttonde{\EE{X}{\mssm},\Dz}$ is a Dirichlet form, which motivates to introduce the following standard notation.

\begin{notat}\label{n:Form} Let~$(\mcX,\rep\cdc)$ be an~\LDS.
\begin{enumerate*}[$(a)$]
\item We denote by~$\ttonde{\EE{X}{\mssm},\dom{\EE{X}{\mssm}}}$ the closure of~$\ttonde{\EE{X}{\mssm},\Dz}$.
\linebreak
\item\label{i:n:Form:1} It is readily verified that the latter form admits square field operator~$\ttonde{\SF{X}{\mssm},\dom{\SF{X}{\mssm}}}$ with $\dom{\SF{X}{\mssm}}=\dom{\EE{X}{\mssm}}\cap L^\infty(\mssm)$, and extending~$\ttonde{\cdc,\Dz}$.
Further let:
\item\label{i:n:Form:3} $\ttonde{\LL{X}{\mssm}, \dom{\LL{X}{\mssm}}}$ be the generator of~$\ttonde{\EE{X}{\mssm},\dom{\EE{X}{\mssm}}}$;
\item\label{i:n:Form:4} $\TT{X}{\mssm}_\bullet\eqdef \tseq{\TT{X}{\mssm}_t}_{t\geq 0}$ be the semigroup of~$\ttonde{\LL{X}{\mssm}, \dom{\LL{X}{\mssm}}}$, defined on~$L^p(\mssm)$ for every~$p\in [1,\infty)$.
\item by~$\hh{X}{\mssm}_\bullet\eqdef \tseq{\hh{X}{\mssm}_t(\emparg,\diff\emparg)}_{t\geq 0}$ the corresponding Markov kernel of measures, satisfying
\end{enumerate*}
\begin{align*}
\ttonde{\TT{X}{\mssm}_t f}(x)=\int f(y)\, \hh{X}{\mssm}_t(x,\diff y)\comma \qquad f\in L^2(\mssm)\comma t\geq 0 \semicolon
\end{align*}

Finally, let
\begin{enumerate*}[$(a)$]\setcounter{enumi}{5}
\item\label{i:n:Form:6}$\domext{\EE{X}{\mssm}}$ be the extended Dirichlet space of~$\ttonde{\EE{X}{\mssm},\dom{\EE{X}{\mssm}}}$, i.e.\ the space of $\mssm$-classes of functions~$f\colon X\rar \R$ so that there exists an $(\EE{X}{\mssm})^{1/2}$-fun\-da\-men\-tal sequence $\seq{f_n}_n\subset \dom{\EE{X}{\mssm}}$ with $\nlim f_n=f$ $\mssm$-a.e..
The form~$\EE{X}{\mssm}$ naturally extends to a quadratic form on~$\domext{\EE{X}{\mssm}}$, denoted by the same symbol~$\EE{X}{\mssm}$, and we always consider~$\domext{\EE{X}{\mssm}}$ as endowed with this extension;
\item\label{i:n:Form:7}$\domext{\SF{X}{\mssm}}\eqdef\domext{\EE{X}{\mssm}}\cap L^\infty(\mssm)$ be the extended space of~$\ttonde{\SF{X}{\mssm},\dom{\SF{X}{\mssm}}}$, endowed with the non-relabeled extension of~$\SF{X}{\mssm}$.
\end{enumerate*}

In this generality, the well-posedness of objects in~\iref{i:n:Form:6} and~\iref{i:n:Form:7}, that is, their independence from the approximating sequence~$\seq{f_n}_n$, follows by~\cite[Prop.~1]{Sch99}; cf., e.g.,~\cite{Kuw98} which we refer to for further properties of extended domains.
\end{notat}

\begin{defs}\label{d:PropEE}
We say that an~\LDS~$(\mcX,\rep\cdc)$ is:
\emph{irreducible} if, whenever~$\car_{A}\TT{X}{\mssm}_t f=\TT{X}{\mssm}_t (\car_A f)$ for every~$f\in L^2(\mssm)$, every~$t>0$, and some~$A\in \A$, then either~$\mssm A=0$ or~$\mssm A^\complement=0$;
\emph{stochastically complete}, if the Markov kernel~$\hh{X}{\mssm}_\bullet$ satisfies~$\hh{X}{\mssm}_t(x, X)=1$ for all~$t>0$ and $\mssm$-a.e.~$x\in X$.
\end{defs}

Let us collect some technical tools.

\begin{defs}[$L^p$-completions]\label{d:AbstractCompletion}
For every~$p\in [1,\infty]$ we define
\begin{enumerate}[$(a)$]
\item $\coK{\Dz}{p,\mssm}$ as the abstract linear completion of~$\Dz$ w.r.t.~the norm (cf.~\cite[p.~301]{MaRoe00})
\begin{align*}
\norm{\emparg}_{p,\mssm}\eqdef \EE{X}{\mssm}(\emparg)^{1/2} + \norm{\emparg}_{L^p(\mssm)} 
\comma \end{align*}
endowed with the unique (non-relabeled) continuous extension of~$\norm{\emparg}_{p,\mssm}$ to the completion $\coK{\Dz}{p,\mssm}$;

\item $\coi_{p,\mssm}\colon \Dz\hookrightarrow \coK{\Dz}{p,\mssm}$ the completion embedding;

\item $\cok_{p,\mssm}\colon\Dz\hookrightarrow L^p(\mssm)$ the canonical inclusion;

\item $\cokb_{p,\mssm} \colon \coK{\Dz}{p,\mssm} \rar L^p(\mssm)$ the unique continuous extension of~$\cok_{p,\mssm}\colon\Dz\hookrightarrow L^p(\mssm)$.
\end{enumerate}
\end{defs}

We note that the closability of the pre-form~$(\EE{X}{\mssm}, \Dz)$ is equivalent to the injectivity of~$\cokb_{2,\mssm}$; cf.~\cite[Rmk.~I.3.2(ii)]{MaRoe92}.
In the next lemma we show that the injectivity of~$\cokb_{2,\mssm}$ implies that of~$\cokb_{p,\mssm}$ for any~$p\in [1,2)$. The statement is, in some specific cases, quite standard; cf.~\cite[Rmk.~4.5]{MaRoe00}. We prefer to give a proof in full generality which will serve as a substitute for~\cite[Lem.s~4.2 and~4.4]{MaRoe00}.

\begin{lem}\label{l:AbstractCompletion}
Let~$(\mcX,\cdc)$ be an \LDS. For every~$p\in [1,2]$, the map~$\cokb_{p,\mssm}$ is injective, i.e.~$\coK{\Dz}{p,\mssm}\subset L^p(\mssm)$. Furthermore,~$\cdc$ uniquely extends to a (non-relabeled) continuous bilinear map $\coK{\Dz}{p,\mssm}^{\times 2}\rar L^p(\mssm)$.
\begin{proof}
Let~$\seq{f_n}_n\subset \Dz$ be $\norm{\emparg}_{p,\mssm}$-fundamental and with~$L^p(\mssm)$-$\nlim f_n=0$.
By the reverse triangle inequality,
\begin{align}\label{eq:l:AbstractCompletion:0}
\cdc(f_n-f_m)\geq \ttonde{\cdc(f_n)^{1/2}-\cdc(f_m)^{1/2}}^2
\comma \qquad m,n\in \N_1\comma
\end{align}
hence,~$\seq{\cdc(f_n)}_n$ is $L^1(\mssm)$-fundamental, thus converging to some~$g\in L^1(\mssm)$. It suffices to show that~$g=0$.
To this end, let~$\phi\in \Cb^\infty(\R)$ be satisfying~$\abs{\phi(t)}\leq 1\wedge \abs{t}$ for~$t\in \R$,~$\phi'(0)=1$, and~$0\leq \phi'\leq 1$. 
Since~$p\leq 2$, one has~$\abs{\phi(t)}\leq \abs{t}^{p/2}$ for~$t\in \R$.
Then,
\begin{equation}\label{eq:l:AbstractCompletion:1}
L^2(\mssm)\text{-}\nlim \phi\circ f_n=0
\end{equation}
by continuity of the Nemytskii Composition Operator~$N_\phi$; see e.g.~\cite[Thm.~3.6, p.~17]{BocCro13}.

By~\eqref{eq:i:d:SF:3}, up to passing to a suitable non-relabeled subsequence, we conclude that
\begin{align*}
\cdc(\phi\circ f_n-\phi\circ f_m)=&\ (\phi'\circ f_n)^2 \cdot \cdc(f_n)+(\phi'\circ f_m)^2\cdot \cdc(f_m)
\\
&+\phi'\circ f_n\cdot \phi'\circ f_m \cdot \cdc(f_n-f_m)
\\
&-\phi'\circ f_n \cdot \phi'\circ f_m \cdot \ttonde{\cdc(f_n)+\cdc(f_m)}
\end{align*}
converges to~$0$ $\mssm$-a.e.\ as~$n,m\rar \infty$.
By the Dominated Convergence Theorem with varying dominating function~$\cdc(f_n)+ \cdc(f_m)\to_{n,m} g\in L^1(\mssm)$ (e.g.~\cite[Thm.~10.1(c)]{DiB02}), we conclude that~$L^1(\mssm)$-$\lim_{n,m}\cdc(\phi\circ f_n-\phi\circ f_m)=0$.

By~\eqref{eq:l:AbstractCompletion:1}, closability of~$(\EE{X}{\mssm},\Dz)$ in~$L^2(\mssm)$ and~\eqref{eq:i:d:SF:3},
\begin{align*}
0=L^1(\mssm)\text{-}\nlim \cdc\ttonde{\phi\circ f_n}=\nlim (\phi'\circ f_n)^2 \cdot \cdc(f_n)=\phi'(0)^2 g=g \fstop & \qedhere
\end{align*}
\end{proof}
\end{lem}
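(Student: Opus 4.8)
The statement is the standard fact that $L^2$-closability of a carré-du-champ pre-form propagates to $L^p$-closability for $p\in[1,2)$, together with the corresponding continuous extension of $\cdc$. The argument is already laid out in the lemma's displayed computation; what remains (if the excerpt's proof were to be completed) is essentially to check that the pieces fit and to supply the missing $p\in[1,2)$ case --- but in fact the proof \emph{as written} already handles a general $p\in[1,2]$ in one stroke, so the plan is to confirm that this single argument is correct and complete, and to identify where care is needed.

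\textbf{Key steps.} First, I would reduce injectivity of $\cokb_{p,\mssm}$ to showing: if $\seq{f_n}_n\subset\Dz$ is $\norm{\cdot}_{p,\mssm}$-fundamental with $f_n\to 0$ in $L^p(\mssm)$, then $\cdc(f_n)\to 0$ in $L^1(\mssm)$. The reverse triangle inequality~\eqref{eq:l:AbstractCompletion:0} (which is really the statement that $\cdc(\cdot)^{1/2}$ is a seminorm, via Cauchy--Schwarz for the bilinear form $\cdc$) shows $\seq{\cdc(f_n)}_n$ is $L^1$-Cauchy, hence converges to some $g\in L^1(\mssm)$; the task is to show $g=0$. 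Second, I would introduce the truncation function $\phi\in\Cb^\infty(\R)$ with $\abs{\phi(t)}\le 1\wedge\abs{t}$, $\phi'(0)=1$, $0\le\phi'\le 1$, and --- crucially using $p\le 2$ so that $\abs{\phi(t)}\le\abs{t}^{p/2}$ --- deduce $\phi\circ f_n\to 0$ in $L^2(\mssm)$ from $f_n\to 0$ in $L^p(\mssm)$, invoking continuity of the Nemytskii operator $N_\phi$ between the appropriate $L^r$ spaces. Third, using the diffusion property~\eqref{eq:i:d:SF:3} to expand $\cdc(\phi\circ f_n-\phi\circ f_m)$ as in the display, and dominated convergence with the varying dominating sequence $\cdc(f_n)+\cdc(f_m)\to g$, conclude $\cdc(\phi\circ f_n-\phi\circ f_m)\to 0$ in $L^1(\mssm)$ --- i.e.\ $\seq{\cdc(\phi\circ f_n)^{1/2}}_n$ is $L^2$-Cauchy, so $\seq{\phi\circ f_n}_n$ is $(\EE{X}{\mssm})^{1/2}$-fundamental. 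Fourth, since $\phi\circ f_n\to 0$ in $L^2(\mssm)$ and $(\EE{X}{\mssm},\Dz)$ is $L^2$-closable, $\cdc(\phi\circ f_n)\to 0$ in $L^1(\mssm)$; but $\cdc(\phi\circ f_n)=(\phi'\circ f_n)^2\cdot\cdc(f_n)\to\phi'(0)^2 g=g$ (again via dominated convergence with dominating sequence $\cdc(f_n)$, after passing to a subsequence so that $\phi'\circ f_n\to\phi'(0)=1$ pointwise $\mssm$-a.e., which holds because a further subsequence of $f_n\to 0$ in $L^p$ converges to $0$ $\mssm$-a.e.). Hence $g=0$. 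Finally, for the continuous extension of $\cdc$: having shown $\cokb_{p,\mssm}$ is injective, the bilinear map $\cdc\colon\Dz^{\times 2}\to L^1(\mssm)$ satisfies $\norm{\cdc(f,g)}_{L^1}\le\cdc(f)^{1/2}\cdc(g)^{1/2}\le\norm{f}_{p,\mssm}\norm{g}_{p,\mssm}$ by Cauchy--Schwarz, so it is bounded for the $\norm{\cdot}_{p,\mssm}$-topology and extends uniquely and continuously to $\coK{\Dz}{p,\mssm}^{\times 2}\to L^1(\mssm)$; polarization and the diffusion identity pass to the limit.

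\textbf{Main obstacle.} The delicate point is the bookkeeping of subsequences and the application of a \emph{dominated convergence theorem with varying dominating function}: the natural dominating functions $\cdc(f_n)+\cdc(f_m)$ converge in $L^1$ but not pointwise, so one must use the generalized dominated convergence theorem (the ``$\mssm$-a.e.\ convergent integrand, $L^1$-convergent dominator'' version, e.g.~\cite[Thm.~10.1(c)]{DiB02}), and be careful that the $\mssm$-a.e.\ convergence of the integrand is genuinely established from~\eqref{eq:i:d:SF:3} --- note the four-term expansion cancels only because $\cdc(f_n-f_m)-\cdc(f_n)-\cdc(f_m)=-2\cdc(f_n,f_m)$ is controlled $\mssm$-a.e.\ by $\cdc(f_n)+\cdc(f_m)$ up to sign. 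A second subtlety is the repeated passage to subsequences: one must ensure that a single subsequence can be chosen along which all the required $\mssm$-a.e.\ convergences ($f_n\to 0$, $\phi'\circ f_n\to 1$, the integrand above $\to 0$) hold simultaneously, which is routine but should be stated. Apart from this, the proof is a direct, essentially formal manipulation of the carré-du-champ identity, and I expect no conceptual difficulty.
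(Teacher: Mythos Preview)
Your proposal is correct and follows the paper's proof essentially step for step: the truncation via $\phi$, the transfer from $L^p$- to $L^2$-convergence using $|\phi(t)|\le|t|^{p/2}$, the four-term expansion of $\cdc(\phi\circ f_n-\phi\circ f_m)$ via the diffusion property, the generalized dominated convergence with varying $L^1$-dominating function, and the appeal to $L^2$-closability. You have additionally spelled out the subsequence bookkeeping and the continuous-extension argument for $\cdc$ (via the Cauchy--Schwarz bound $\|\cdc(f,g)\|_{L^1}\le\EE{X}{\mssm}(f)^{1/2}\EE{X}{\mssm}(g)^{1/2}$), the latter of which the paper leaves implicit.
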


\subsection{Configuration spaces}\label{ss:Upsilon}
Let~$\mcX$ be a local structure.
A (\emph{multiple}) \emph{configuration} on~$\mcX$ is any $\N_0$-valued measure~$\gamma$ on~$(X,\A)$, finite on~$E$ for every~$E\in\msE$. By assumption on~$\mcX$, cf.\ e.g.~\cite[Cor.~6.5]{LasPen18},
\begin{align*}
\gamma= \sum_{i=1}^N \delta_{x_i} \comma \qquad N\in \overline\N_0\comma \qquad \seq{x_i}_{i\leq N}\subset X \fstop
\end{align*}
In particular, we allow for~$x_i=x_j$ if~$i\neq j$.
Write~$\gamma_x\eqdef\gamma\!\set{x}$,~$x\in \gamma$ whenever~$\gamma_x>0$, and~$\gamma_A\eqdef \gamma\mrestr{A}$ for every~$A\in\A$.

\begin{defs}[Configuration spaces] The \emph{multiple configuration space}~$\dUpsilon=\dUpsilon(X,\msE)$ is the space of all (multiple) configurations over~$\mcX$. The \emph{configuration space} is the space
\begin{align*}
\Upsilon(\msE)=&\Upsilon(X,\msE)\eqdef\set{\gamma\in\dUpsilon: \gamma_x\in \set{0,1} \text{~~for all } x\in X} \fstop
\end{align*}
The $N$-particles multiple configuration spaces, resp.~configuration spaces, are
\begin{equation*}
\begin{aligned}
\dUpsilon^\sym{N}\ \eqdef&\ \set{\gamma\in \dUpsilon : \gamma X=N}\comma 
\\
\text{resp.} \qquad \Upsilon^\sym{N}\ \eqdef&\ \dUpsilon^\sym{N}\cap \Upsilon\comma 
\end{aligned}
\qquad N\in\overline\N_0 \fstop
\end{equation*}
Let the analogous definitions of~$\dUpsilon^\sym{\geq N}(\msE)$, resp.~$\Upsilon^\sym{\geq N}(\msE)$, be given.

For fixed~$A\in\A$ further set~$\msE_A\eqdef \set{E\cap A: E\in\msE}$ and
\begin{align}\label{eq:ProjUpsilon}
\pr^A\colon \dUpsilon\longrar \dUpsilon(\msE_A)\colon \gamma\longmapsto \gamma_A\comma \qquad A\in\A \fstop
\end{align}

Finally set~$\dUpsilon(E)\eqdef \dUpsilon(\msE_E)=\dUpsilon(\A_E)$ for all~$E\in\msE$, and analogously for~$\Upsilon(E)$.
\end{defs}

We endow~$\dUpsilon$ with the $\sigma$-algebra $\A_\mrmv(\msE)$ generated by the functions $\gamma\mapsto \gamma E$ with~$E\in\msE$.
This coincides with the $\sigma$-algebra on~$\dUpsilon$ given in~\cite[Rmk.~1.5]{MaRoe00} because of Definition~\ref{d:LS}\iref{i:d:LS:2}.

\subsubsection{Dirichlet forms}
Throughout this section,~$\QP$ will denote a probability measure on the space $\ttonde{\dUpsilon,\A_\mrmv(\msE)}$.
For any such~$\QP$, we denote by~$\mssm_\QP$ its  (\emph{mean}) \emph{intensity} (\emph{measure}), defined by
\begin{align*}
\mssm_\QP A \eqdef \int_\dUpsilon \gamma A \diff\QP(\gamma) \comma \qquad A\in\A_\mrmv(\msE)\fstop
\end{align*}
For many results here and later on, we shall make the following assumption.

\begin{ass}\label{d:ass:Mmu}
We say that~$\QP$ satisfies Assumption~\ref{ass:Mmu} if it has $\msE$-locally finite intensity, viz.\
\begin{equation}\tag*{$(\mssm_\QP)_{\ref{d:ass:Mmu}}$}\label{ass:Mmu}
\mssm_\QP E<\infty \comma \quad E\in\msE\fstop
\end{equation}
\end{ass}

This assumption is a very natural one:
from a mathematical point of view, it implies that we have sufficiently many functions in~$L^1(\QP)$;
from a physical point of view, it implies that any system of randomly $\QP$-distributed particles is $\msE$-locally finite in average.

Our main goal will be the study of a Dirichlet form
\begin{equation}\label{eq:DirichletForm}
\ttonde{\EE{\dUpsilon}{\QP},\dom{\EE{\dUpsilon}{\QP}}} \qquad \text{on} \qquad L^2(\QP)
\end{equation}
which we shall construct by `lifting' the diffusion structure of a base \LDS to a diffusion structure on the corresponding configurations space.

\paragraph{Cylinder functions} We shall start by defining a suitable core of \emph{cylinder} functions for the form \eqref{eq:DirichletForm}.
For~$\gamma\in \dUpsilon$ and~$\rep f\in\Sb(\msE)$ let~$\rep f^\trid\colon \dUpsilon\rar \R$ be defined as in~\eqref{eq:Trid} and set further
\begin{align*}
\rep\mbff^\trid\colon \gamma\longmapsto\ttonde{\rep f_1^\trid\gamma,\dotsc, \rep f_k^\trid\gamma}\in \R^k\comma \qquad \rep f_1,\dotsc, \rep f_k\in \Sb(\msE)\fstop
\end{align*}

\begin{defs}[Cylinder functions on~$\dUpsilon$] Let~$\mcX$ be a local structure.
Further let~$\rep D$ be a linear subspace of~$\Sb(\msE)$. We define the space of \emph{cylinder functions}
\begin{align*}
\Cyl{\rep D}\eqdef \set{\begin{matrix} \rep u\colon \dUpsilon\rar \R : \rep u=F\circ\rep\mbff^\trid \comma  F\in \mcC^\infty_b(\R^k)\comma \\ \rep f_1,\dotsc, \rep f_k\in \rep D\comma\quad k\in \N_0 \end{matrix}}\fstop
\end{align*}
\end{defs}

It is readily seen that cylinder functions of the form~$\Cyl{\Sb(\msE)}$ are $\A_\mrmv(\msE)$-measurable.
If~$\rep D$ generates the $\sigma$-algebra~$\A$ on~$X$, then~$\Cyl{\rep D}$ generates the $\sigma$-algebra~$\A_\mrmv(\msE)$ on~$\dUpsilon$. We stress that the representation of~$\rep u$ by~$\rep u=F\circ\rep\mbff$ is \emph{not} unique.

\paragraph{Lifted square field operators}
Let~$(\mcX,\rep\cdc)$ be an~\LDS. We may now lift a pointwise defined square field operator~$\rep\cdc$ on~$\mcX$ to a pointwise defined square field operator on~$\dUpsilon$, by setting
\begin{equation}\label{eq:d:LiftCdCRep}
\begin{gathered}
\rep\cdc^{\dUpsilon}(\rep u, \rep v)(\gamma)\eqdef \sum_{i,j=1}^{k,m} (\partial_i F)(\rep\mbff^\trid\gamma) \cdot (\partial_j G)(\rep\mbfg^\trid\gamma) \cdot \rep\cdc(\rep f_i, \rep g_j)^\trid \gamma \comma
\\
\rep u=F\circ \rep\mbff^\trid\in \Cyl{\rep\Dz} \comma\qquad \rep v=G\circ \rep\mbfg^\trid\in \Cyl{\rep\Dz}\fstop
\end{gathered}
\end{equation}

The next result clarifies why assumption~\ref{ass:Mmu} is crucial for our analysis.
\begin{lem}\label{l:MmuL1}
Let~$(\mcX,\rep\cdc)$ be an~\LDS, and~$\QP$ be a probability measure on~$\ttonde{\dUpsilon,\A_\mrmv(\msE)}$ satisfying Assumption~\ref{ass:Mmu}. Then,~$\ttonde{\rep\cdc^{\dUpsilon},\Cyl{\rep\Dz}}$ is $L^1(\QP)$-valued.
\begin{proof}
It suffices to integrate~\eqref{eq:d:LiftCdCRep} w.r.t.~$\QP$ and note that~$(\rep\cdc,\Dz)$ takes values in the space of uniformly bounded $\msE$-eventually vanishing functions, $F,G\in\Cb^\infty(\R^k)$, and apply~\ref{ass:Mmu}.
\end{proof}
\end{lem}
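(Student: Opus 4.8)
The plan is to carry out in detail the one-line estimate indicated after the statement: bound $\bigl\lvert\rep\cdc^\dUpsilon(\rep u,\rep v)\bigr\rvert$ pointwise on $\dUpsilon$ by a finite linear combination of the elementary functionals $\gamma\mapsto\gamma E$ with $E\in\msE$, and then integrate against $\QP$ using Assumption~\ref{ass:Mmu}. Concretely, I would fix representations $\rep u=F\circ\rep\mbff^\trid$ and $\rep v=G\circ\rep\mbfg^\trid$ with $F\in\Cb^\infty(\R^k)$, $G\in\Cb^\infty(\R^m)$, and $\rep f_1,\dotsc,\rep f_k,\rep g_1,\dotsc,\rep g_m\in\rep\Dz\subset\Sb(\msE)$, and read off from the defining formula~\eqref{eq:d:LiftCdCRep} that — the derivatives $\partial_iF$ and $\partial_jG$ being bounded by the very definition of $\Cb^\infty$ — there is a constant $C=C(F,G)<\infty$ with
\[
\bigl\lvert\rep\cdc^\dUpsilon(\rep u,\rep v)(\gamma)\bigr\rvert\leq C\sum_{i=1}^{k}\sum_{j=1}^{m}\bigl\lvert\rep\cdc(\rep f_i,\rep g_j)^\trid\gamma\bigr\rvert\leq C\sum_{i,j}\int_X\bigl\lvert\rep\cdc(\rep f_i,\rep g_j)\bigr\rvert\diff\gamma\comma\qquad\gamma\in\dUpsilon\fstop
\]
Here I also use that, $\rep\cdc$ being pointwise defined on $\rep\Dz\subset\mcL^\infty(\mssm)$, the expression $\rep\cdc(\rep f_i,\rep g_j)^\trid\gamma=\int\rep\cdc(\rep f_i,\rep g_j)\diff\gamma$ makes genuine pointwise sense for every $\gamma$.

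Next I would invoke that, since $(\mcX,\rep\cdc)$ is an~\LDS, each $\rep\cdc(\rep f_i,\rep g_j)$ lies in $\Sb(\msE)$, hence is bounded, say by some $M_{ij}\geq 0$, and vanishes identically off some $E_{ij}\in\msE$; therefore $\int_X\lvert\rep\cdc(\rep f_i,\rep g_j)\rvert\diff\gamma\leq M_{ij}\,\gamma E_{ij}$ and
\[
\bigl\lvert\rep\cdc^\dUpsilon(\rep u,\rep v)(\gamma)\bigr\rvert\leq C\sum_{i,j}M_{ij}\,\gamma E_{ij}\comma\qquad\gamma\in\dUpsilon\fstop
\]
The right-hand side is $\A_\mrmv(\msE)$-measurable directly from the definition of $\A_\mrmv(\msE)$ as the $\sigma$-algebra generated by the maps $\gamma\mapsto\gamma E$, $E\in\msE$; together with the (routine) measurability of $\gamma\mapsto\rep f^\trid\gamma$ for $\rep f\in\Sb(\msE)$ and the continuity of $F$ and $G$, this also shows in passing that $\rep\cdc^\dUpsilon(\rep u,\rep v)$ itself is $\A_\mrmv(\msE)$-measurable.

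Finally I would integrate the last display with respect to $\QP$ and use the definition $\mssm_\QP A=\int_\dUpsilon\gamma A\diff\QP(\gamma)$:
\[
\int_\dUpsilon\bigl\lvert\rep\cdc^\dUpsilon(\rep u,\rep v)\bigr\rvert\diff\QP\leq C\sum_{i,j}M_{ij}\int_\dUpsilon\gamma E_{ij}\diff\QP(\gamma)=C\sum_{i,j}M_{ij}\,\mssm_\QP E_{ij}\fstop
\]
Since this is a finite sum and $\mssm_\QP E_{ij}<\infty$ for each $i,j$ by Assumption~\ref{ass:Mmu}, the right-hand side is finite, so $\rep\cdc^\dUpsilon(\rep u,\rep v)\in L^1(\QP)$; as $\rep u,\rep v\in\Cyl{\rep\Dz}$ were arbitrary, $(\rep\cdc^\dUpsilon,\Cyl{\rep\Dz})$ is $L^1(\QP)$-valued. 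There is no genuine obstacle here; the only step deserving a word of care is the measurability of $\gamma\mapsto\rep f^\trid\gamma$ for $\rep f\in\Sb(\msE)$, which one obtains by uniformly approximating $\rep f$ by $\A$-measurable simple functions supported in a single $E\in\msE$ and passing to the limit, using that every configuration $\gamma$ is finite on $\msE$.
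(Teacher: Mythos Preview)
Your proposal is correct and follows exactly the approach indicated in the paper's proof; you have simply carried out in full detail the pointwise estimate that the paper sketches in one line, namely bounding the derivatives of $F,G$ uniformly, using that $\rep\cdc$ takes values in $\Sb(\msE)$, and integrating against $\QP$ via~\ref{ass:Mmu}. The additional remarks on measurability are a useful complement but not a departure from the paper's argument.
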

In the following, the above result will be applied mostly without explicit mention.

\medskip

Z.-M.~Ma and M.R\"ockner proved the following result, which will be of importance in establishing the well-definedness and closability of the form~\eqref{eq:DirichletForm}.

\begin{lem}[{\cite[Lem.~1.2]{MaRoe00}}]\label{l:MaRoeckner}
Let~$\rep u\in\Cyl{\rep\Dz}$, $\gamma\in\dUpsilon$, and~$x\in X$. Then,
\begin{align*}
X\ni y\longmapsto \rep u\ttonde{\car_{X\setminus\set{x}}\cdot\gamma + \gamma_x\delta_y}-\rep u\ttonde{\car_{X\setminus\set{x}}\cdot\gamma} \quad \in \quad \rep\Dz\fstop
\end{align*}

Furthermore, for all~$\rep u, \rep v\in\Cyl{\rep\Dz}$ and each~$\gamma\in\dUpsilon$,
\begin{equation}\label{eq:MaRoeckner}
\begin{aligned}
\rep\cdc^\dUpsilon(\rep u,\rep v)(\gamma)= \sum_{x\in\gamma} \gamma_x^{-1}\cdot \rep\cdc\Big( \rep u&\ttonde{\car_{X\setminus\set{x}}\cdot\gamma + \gamma_x\delta_\bullet}-\rep u\ttonde{\car_{X\setminus\set{x}}\cdot\gamma} ,
\\
&\rep v\ttonde{\car_{X\setminus\set{x}}\cdot\gamma + \gamma_x\delta_\bullet}-\rep v\ttonde{\car_{X\setminus\set{x}}\cdot\gamma} \Big)(x)\fstop 
\end{aligned}
\end{equation}
\end{lem}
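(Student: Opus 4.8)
The plan is to derive both assertions by a direct computation, the one algebraic input being the additivity $\rep f^\trid(\alpha+\beta)=\rep f^\trid\alpha+\rep f^\trid\beta$ of the functionals $\rep f^\trid$, which makes the operation $\gamma\mapsto\car_{X\setminus\set{x}}\cdot\gamma+\gamma_x\delta_y$ act affinely on the arguments of any cylinder function. I would combine this with the chain-rule definition \eqref{eq:d:LiftCdCRep} of $\rep\cdc^\dUpsilon$ and the pointwise diffusion property \eqref{eq:i:d:SF:3} of $\rep\cdc$.

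First, fix $\rep u=F\circ\rep\mbff^\trid\in\Cyl{\rep\Dz}$ with $F\in\Cb^\infty(\R^k)$ and $\rep f_1,\dotsc,\rep f_k\in\rep\Dz\subset\Sb(\msE)$, and set $\eta\eqdef\car_{X\setminus\set{x}}\cdot\gamma$, so that $\gamma=\eta+\gamma_x\delta_x$. Since each $\rep f_i$ is bounded and $\msE$-eventually vanishing while $\eta$ is $\msE$-finite, the numbers $c_i^u\eqdef\rep f_i^\trid\eta$ are finite, and $\rep f_i^\trid(\eta+\gamma_x\delta_y)=c_i^u+\gamma_x\,\rep f_i(y)$ for every $y\in X$. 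Hence the map in the first assertion equals $\Phi^u\circ\rep\mbff$, where $\Phi^u(t)\eqdef F(c^u+\gamma_x t)-F(c^u)$; being the composition of $F\in\Cb^\infty(\R^k)$ with an affine map, $\Phi^u\in\Cb^\infty(\R^k)$, and $\Phi^u(\zero)=0$, so $\Phi^u\circ\rep\mbff\in\rep\Dz$ by Definition~\ref{d:SF}\iref{i:d:SF:1} in its pointwise form (the subtraction of $\rep u(\eta)$ is exactly what forces $\Phi^u(\zero)=0$, i.e.\ what keeps the map in $\Sb(\msE)$; note also that the case $x\notin\gamma$, where $\gamma_x=0$, is covered, as then $\Phi^u\equiv 0$). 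I would record the analogue for $\rep v=H\circ\rep\mbfg^\trid$, writing $\Phi^v(s)\eqdef H(c^v+\gamma_x s)-H(c^v)$ with $c_j^v\eqdef\rep g_j^\trid\eta$.

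For \eqref{eq:MaRoeckner}, I would expand \eqref{eq:d:LiftCdCRep} and use $\rep\cdc(\rep f_i,\rep g_j)^\trid\gamma=\sum_{x\in\gamma}\gamma_x\,\rep\cdc(\rep f_i,\rep g_j)(x)$ to obtain $\rep\cdc^\dUpsilon(\rep u,\rep v)(\gamma)=\sum_{x\in\gamma}\gamma_x\sum_{i,j}(\partial_iF)(\rep\mbff^\trid\gamma)\,(\partial_jH)(\rep\mbfg^\trid\gamma)\,\rep\cdc(\rep f_i,\rep g_j)(x)$. Fixing $x\in\gamma$ and applying the pointwise diffusion property \eqref{eq:i:d:SF:3} to $u_{\gamma,x}=\Phi^u\circ\rep\mbff$ and $v_{\gamma,x}=\Phi^v\circ\rep\mbfg$, evaluated at the point $x$, gives $\rep\cdc(u_{\gamma,x},v_{\gamma,x})(x)=\sum_{i,j}(\partial_i\Phi^u)(\rep\mbff(x))\,(\partial_j\Phi^v)(\rep\mbfg(x))\,\rep\cdc(\rep f_i,\rep g_j)(x)$; since $(\partial_i\Phi^u)(\rep\mbff(x))=\gamma_x(\partial_iF)(c^u+\gamma_x\rep\mbff(x))=\gamma_x(\partial_iF)(\rep\mbff^\trid\gamma)$ and likewise for $\Phi^v$, the inner sum equals $\gamma_x^{-2}\,\rep\cdc(u_{\gamma,x},v_{\gamma,x})(x)$. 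Substituting back yields $\rep\cdc^\dUpsilon(\rep u,\rep v)(\gamma)=\sum_{x\in\gamma}\gamma_x\cdot\gamma_x^{-2}\,\rep\cdc(u_{\gamma,x},v_{\gamma,x})(x)=\sum_{x\in\gamma}\gamma_x^{-1}\,\rep\cdc(u_{\gamma,x},v_{\gamma,x})(x)$, which is \eqref{eq:MaRoeckner}.

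The computation is elementary once the affine reduction is in place; the main care — and essentially the only obstacle — is bookkeeping within the pointwise framework: checking that the $c_i^u,c_j^v$ are genuinely finite (so $\Phi^u,\Phi^v$ are bona fide smooth functions, not formal expressions), that $\Phi^u,\Phi^v$ have bounded derivatives of every order so they qualify as admissible composition functions in Definition~\ref{d:SF}, and that \eqref{eq:i:d:SF:3} is invoked in its pointwise-defined version — legitimate here since $\rep\cdc$ is pointwise defined on $\rep\Dz\subset\mcL^\infty(\mssm)$ — so that evaluation at the (possibly $\mssm$-negligible) point $x$ is meaningful. I would also remark that the right-hand side of \eqref{eq:MaRoeckner} depends on $\rep u,\rep v$ only as functions on $\dUpsilon$, so the identity already carries the representation-independence of $\rep\cdc^\dUpsilon$ that underlies its well-posedness.
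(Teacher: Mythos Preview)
Your proof is correct and is essentially the standard computation. The paper does not give its own proof of this lemma; it simply cites \cite[Lem.~1.2]{MaRoe00}, so there is nothing in the paper to compare against beyond noting that your affine reduction $\rep\mbff^\trid(\eta+\gamma_x\delta_y)=c^u+\gamma_x\rep\mbff(y)$ followed by the pointwise chain rule~\eqref{eq:i:d:SF:3} is exactly the argument one would expect (and presumably the one in the original reference).
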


As noted in~\cite{MaRoe00}, by the previous lemma, the bilinear form~$\rep\cdc^\dUpsilon$ is well-defined on~$\Cyl{\rep\Dz}^\tym{2}$, in the sense that~$\rep\cdc^\dUpsilon(\rep u, \rep v)$ does not depend on the choice of representatives $\rep u=F\circ\rep\mbff$ and $\rep v=G\circ\rep\mbfg$ for~$\rep u$ and~$\rep v$.

For a given probability measure~$\QP$ on~$\ttonde{\dUpsilon,\A_\mrmv(\msE)}$, one is now tempted to set
\begin{align}\label{eq:Temptation}
\EE{\dUpsilon}{\QP}(u,v)\eqdef \int_{\dUpsilon} \cdc^{\dUpsilon}(\rep u, \rep v) \diff\QP\comma \qquad u,v\in \Cyl{\rep\Dz} \comma
\end{align}
and consider~$\EE{\dUpsilon}{\QP}$ as a pre-Dirichlet form on~$L^2(\QP)$.
Two main difficulties arise:
\begin{enumerate}[$(a)$]
\item $\EE{\dUpsilon}{\QP}$ is not necessarily well-defined as a form on~$L^2(\QP)$.
Indeed, since $\rep\cdc^{\dUpsilon}(\rep u, \rep v)(\gamma)$ is everywhere well-defined in the sense above,~$\EE{\dUpsilon}{\QP}$ is a well-defined bilinear form on the space of representatives~$\mcL^2(\QP)$.
However, it is not clear that a $\QP$-class~$\cdc^{\dUpsilon}(u, v)$ is defined independently of the chosen representatives~$\rep u$,~$\rep v$ of the corresponding $\QP$-classes functions~$u$,~$v$.
To this end, it is necessary and sufficient that
\begin{align}\label{eq:WPCdC}
\rep\cdc^\dUpsilon(0,\rep v)\equiv 0 \as{\QP} \comma\qquad \rep v\in\Cyl{\rep\Dz}\comma
\end{align}
in which case, by symmetry,~$\rep\cdc^\dUpsilon$ descends to a bilinear symmetric functional~$\cdc^\dUpsilon$ on the space of $\QP$-classes $\CylQP{\QP}{\rep\Dz}$ of the cylinder functions $\Cyl{\rep\Dz}$.

\item Even assuming that~$\EE{\dUpsilon}{\QP}$ is a well-defined pre-Dirichlet form on~$L^2(\QP)$, the latter is, in general, not closable on~$L^2(\QP)$.
\end{enumerate}

\subsubsection{Ma--R\"ockner's construction}
Over the course of the next sections, we will prove that this assumption is verified under some general conditions: For instance, it is satisfied by the laws of most point processes explicitly studied in the literature.

For the moment ---~in order to keep the discussion of closability as simple as possible while concentrating on other properties of our constructions~--- we shall start by providing some sufficient conditions for the well-posedness and closability of~$\ttonde{\EE{\dUpsilon}{\QP},\CylQP{\QP}{\Dz}}$ already established in the seminal work~\cite{MaRoe00} by Z.-M.~Ma and M.~R\"ockner.
For the sake of simplicity, we collect here the main assumptions in~\cite{MaRoe00}.

\begin{ass}\label{ass:Closability} We assume that:
\begin{enumerate}[$(a)$]
\item\label{i:ass:Closability:1}
Assumption~\ref{ass:Mmu} holds for~$\QP$;
\item\label{i:ass:Closability:2} there exists a $\A_\mrmv(\msE)\otimes\A$-measurable function~$\rho\colon \dUpsilon\times X \rar \R_+$ such that
\begin{align}\tag*{$(\mathsf{GNZ})_{\ref{ass:Closability}}$}\label{ass:GNZ}
\iint_{\dUpsilon\times X} u(\gamma,x)\diff\gamma(x)\diff\QP(\gamma)=\iint_{\dUpsilon\times X} u(\gamma+\delta_x,x)\, \rho(\gamma,x) \diff\mssm(x)\diff \QP(\gamma)
\end{align}
for all semi-bounded $\A_\mrmv(\msE)$-measurable~$\rep u\colon \dUpsilon\times X\rar \R$.

\item\label{i:ass:Closability:5} $\rep\cdc(0,\rep g)=0$ $\mssm_\gamma$-a.e.~for all~$\rep g\in \rep\Dz$ for~$\QP$-a.e.~$\gamma\in\dUpsilon$, where~$\diff\mssm_\gamma\eqdef \rho(\gamma,\emparg) \diff\mssm$;

\item\label{i:ass:Closability:6} there exists an $\mssm_\QP$-integrable function~$w\colon X\rar (0,1]$ such that the form
\begin{align}\label{eq:t:closabilityEwmgamma}
\EE{X}{w\cdot \mssm_\gamma}(f,g)\eqdef \int_X \Gamma(f,g) \, w\diff\mssm_\gamma \comma\qquad f,g\in \Dz\comma
\end{align}
is closable on~$L^2(w\cdot \mssm_\gamma)$ for~$\QP$-a.e.~$\gamma\in \dUpsilon$.
\end{enumerate}
\end{ass}

\begin{rem}
As a consequence of~\ref{ass:GNZ}, the intensity
\begin{align}\label{eq:Campbell2}
\mssm_\QP(A)\eqdef \int_{\dUpsilon} \gamma A\, \diff\QP(\gamma) =\iint_{X\times \dUpsilon} \car_A(x) \, \rho(\gamma,x) \diff \QP(\gamma) \diff\mssm(x)
\end{align}
of~$\QP$ is absolutely continuous w.r.t.~$\mssm$.
It is as well $\msE$-locally finite by Assumption~\ref{ass:Closability}\iref{i:ass:Closability:1}.
\end{rem}

\begin{ese}
The \emph{Georgii--Nguyen--Zessin-}, or \emph{Mecke}-, or \emph{Campbell-}type formula~\ref{ass:GNZ} holds for all (\emph{tempered}) \emph{grand-canonical Gibbs measures}, see e.g.,~\cite[Dfn.~2.1 and Prop.~5.2]{DaSKonRoe01}.
\end{ese}

If we denote by~$\CylQP{\QP}{\rep\Dz}$ the family of $\QP$-classes of functions in~$\Cyl{\rep\Dz}$, then, analogously to~\eqref{eq:Ss}, the bilinear form~$\rep\cdc^\dUpsilon\colon \Cyl{\rep \Dz}^\otym{2}\rar \R$ descends to a bilinear form~$\cdc^\dUpsilon\colon \CylQP{\QP}{\rep\Dz}^\otym{2}\rar \R$ by~\ref{ass:Closability}\iref{i:ass:Closability:5}.

We recall here the main result in~\cite[\S2]{MaRoe00}, where ---~in our terminology~--- a closable quadratic form is defined on the configuration space over an~\LDS.

\begin{thm}[{\cite[Thm.~2.6]{MaRoe00}}]\label{t:Closability}
Let~$(\mcX,\rep\cdc)$ be an~\LDS. Further let~$\QP$ be a probability measure on $\ttonde{\dUpsilon,\A_\mrmv(\msE)}$ satisfying Assumption~\ref{ass:Closability}.
Then, the form
\begin{align*}
\EE{\dUpsilon}{\QP}(u,v)\eqdef \int_{\dUpsilon} \cdc^{\dUpsilon}(u,v) \diff\QP\comma \qquad u,v\in \CylQP{\QP}{\rep\Dz} \comma
\end{align*}
is well-defined, and densely defined and closable in~$L^2(\QP)$.
Its closure~$\ttonde{\EE{\dUpsilon}{\QP},\dom{\EE{\dUpsilon}{\QP}}}$ is a Dirichlet form with carr\'e du champ operator~$\ttonde{\SF{\dUpsilon}{\QP},\dom{\SF{\dUpsilon}{\QP}}}$ so that
\begin{equation*}
\SF{\dUpsilon}{\QP}(u,v)=\cdc^\dUpsilon(u,v) \as{\QP}\comma\qquad u,v\in \CylQP{\QP}{\rep\Dz}\fstop
\end{equation*}
\end{thm}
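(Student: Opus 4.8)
The plan is to follow the argument of Ma--R\"ockner~\cite[\S2]{MaRoe00}: by the localization formula of Lemma~\ref{l:MaRoeckner} combined with the Mecke--type identity~\ref{ass:GNZ}, all three assertions reduce to ``fibered'' statements over the base space, the fibers being the forms $\EE{X}{\mssm_\gamma}$ attached to the measures $\diff\mssm_\gamma\eqdef\rho(\gamma,\emparg)\diff\mssm$. \emph{Step~1 (fibered identities).} Inserting the pointwise formula of Lemma~\ref{l:MaRoeckner} into $\EE{\dUpsilon}{\QP}(\rep u)=\int_\dUpsilon\rep\cdc^\dUpsilon(\rep u)\diff\QP$ and applying~\ref{ass:GNZ} --- the multiplicity factors absorbing because $\mssm$ does not charge the atoms of a configuration --- I obtain
\begin{equation*}
\int_\dUpsilon \rep\cdc^\dUpsilon(\rep u)\diff\QP
=\int_\dUpsilon\tonde{\int_X \rep\cdc\ttonde{\rep h_\gamma[\rep u]}\diff\mssm_\gamma}\diff\QP(\gamma)\comma
\qquad \rep h_\gamma[\rep u](y)\eqdef \rep u(\gamma+\delta_y)-\rep u(\gamma)\comma
\end{equation*}
where $\rep h_\gamma[\rep u]\in\rep\Dz$ by the first part of Lemma~\ref{l:MaRoeckner}, and $\mssm_\gamma E<\infty$ for $\QP$-a.e.\ $\gamma$ and $E\in\msE$ since $\mssm_\QP$ is $\msE$-locally finite (Assumption~\ref{ass:Closability}\iref{i:ass:Closability:1}). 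A second application of~\ref{ass:GNZ}, to $(\gamma,x)\mapsto\abs{\rep u(\gamma)}^2w(x)$, gives the companion identity $\int_\dUpsilon\int_X\abs{\rep u(\gamma+\delta_\bullet)}^2\,w\diff\mssm_\gamma\diff\QP=\int_\dUpsilon\abs{\rep u(\gamma)}^2\,\gamma w\diff\QP$.

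\emph{Step~2 (well-posedness and density).} For~\eqref{eq:WPCdC}, let $\rep u\in\Cyl{\rep\Dz}$ with $\class[\QP]{\rep u}=0$. Applying~\ref{ass:GNZ} to $\car_{\ttset{\rep u\neq 0}}$ shows $\rep u(\gamma+\delta_\bullet)=0$ $\mssm_\gamma$-a.e.\ for $\QP$-a.e.\ $\gamma$; since also $\rep u(\gamma)=0$ for $\QP$-a.e.\ $\gamma$, we get $\rep h_\gamma[\rep u]=0$ $\mssm_\gamma$-a.e., hence $\rep\cdc\ttonde{\rep h_\gamma[\rep u]}=0$ $\mssm_\gamma$-a.e.\ by Assumption~\ref{ass:Closability}\iref{i:ass:Closability:5}. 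By Step~1 the integral $\int_\dUpsilon\rep\cdc^\dUpsilon(\rep u)\diff\QP$ vanishes; as $\rep\cdc^\dUpsilon(\rep u)\geq 0$ pointwise, $\rep\cdc^\dUpsilon(\rep u)=0$ $\QP$-a.e., and the pointwise Cauchy--Schwarz inequality for $\rep\cdc^\dUpsilon$ yields $\rep\cdc^\dUpsilon(\rep u,\rep v)=0$ $\QP$-a.e.\ for every $\rep v$. Thus $\cdc^\dUpsilon$, hence $\EE{\dUpsilon}{\QP}$, descends to $\CylQP{\QP}{\rep\Dz}$; density of the latter in $L^2(\QP)$ follows from a monotone-class argument, using that $\rep\Dz$ $\mssm$-essentially generates $\A$ (Definition~\ref{d:DS}\iref{i:d:DS:1.5}) and $\mssm_\QP\ll\mssm$, so $\QP$-a.e.\ configuration charges no $\mssm$-null set.

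\emph{Step~3 (closability --- the crux).} Let $\seq{\rep u_n}_n\subset\Cyl{\rep\Dz}$ be $\EE{\dUpsilon}{\QP}$-Cauchy with $\class[\QP]{\rep u_n}\to 0$ in $L^2(\QP)$; I must show $\EE{\dUpsilon}{\QP}(\rep u_n)\to 0$. Since the pre-form is Markovian --- by~\eqref{eq:d:LiftCdCRep} and the diffusion property, $\rep\cdc^\dUpsilon(\phi\circ\rep u)=(\phi'\circ\rep u)^2\rep\cdc^\dUpsilon(\rep u)$ --- a standard reduction lets me assume $\sup_n\norm{\rep u_n}_{L^\infty(\QP)}<\infty$; passing to a subsequence, also $\rep u_n\to 0$ $\QP$-a.e.\ and $\sum_n\EE{\dUpsilon}{\QP}(\rep u_{n+1}-\rep u_n)^{1/2}<\infty$. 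By Step~1, the linearity of $\rep u\mapsto\rep h_\gamma[\rep u]$ and Jensen's inequality, for $\QP$-a.e.\ $\gamma$ the sequence $\seq{\rep h_\gamma[\rep u_n]}_n$ is $\tonde{\EE{X}{\mssm_\gamma}}^{1/2}$-Cauchy; by~\eqref{eq:l:AbstractCompletion:0} the functions $\rep\cdc\ttonde{\rep h_\gamma[\rep u_n]}^{1/2}$ are $L^2(\mssm_\gamma)$-Cauchy, so $\rep\cdc\ttonde{\rep h_\gamma[\rep u_n]}\to\psi_\gamma$ in $L^1(\mssm_\gamma)$ for some $\psi_\gamma\geq 0$, whence $\EE{X}{w\cdot\mssm_\gamma}\ttonde{\rep h_\gamma[\rep u_n]}\to\int_X\psi_\gamma\,w\diff\mssm_\gamma$ (as $0\leq w\leq1$). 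On the other hand, by the companion identity of Step~1 and dominated convergence (the uniform $L^\infty$-bound and $\int_\dUpsilon\gamma w\diff\QP=\int_X w\diff\mssm_\QP<\infty$), $\int_X\abs{\rep u_n(\gamma+\delta_\bullet)}^2\,w\diff\mssm_\gamma\to0$ in $L^1(\QP)$, so along a further subsequence $\rep h_\gamma[\rep u_n]\to0$ in $L^2(w\cdot\mssm_\gamma)$ for $\QP$-a.e.\ $\gamma$ (using $\rep u_n(\gamma)\to0$ and $\int_X w\diff\mssm_\gamma<\infty$). Closability of $\EE{X}{w\cdot\mssm_\gamma}$ on $L^2(w\cdot\mssm_\gamma)$ (Assumption~\ref{ass:Closability}\iref{i:ass:Closability:6}) then forces $\int_X\psi_\gamma\,w\diff\mssm_\gamma=0$, hence $\psi_\gamma=0$ $\mssm_\gamma$-a.e.\ because $w>0$. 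Therefore $\EE{X}{\mssm_\gamma}\ttonde{\rep h_\gamma[\rep u_n]}=\int_X\rep\cdc\ttonde{\rep h_\gamma[\rep u_n]}\diff\mssm_\gamma\to0$ for $\QP$-a.e.\ $\gamma$, and since this sequence is dominated in $L^1(\QP)$ by $\ttonde{\EE{X}{\mssm_\gamma}\ttonde{\rep h_\gamma[\rep u_1]}^{1/2}+\sum_k\EE{X}{\mssm_\gamma}\ttonde{\rep h_\gamma[\rep u_{k+1}-\rep u_k]}^{1/2}}^2$, dominated convergence and Step~1 give $\EE{\dUpsilon}{\QP}(\rep u_n)\to0$. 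I expect the main obstacle to lie precisely here: the Markovian reduction to bounded sequences, the fiberwise $L^2$-convergence, and above all the transfer from the \emph{weighted} fiber form back to the \emph{unweighted} one --- which is exactly where the strict positivity of $w$ and Assumption~\ref{ass:Closability}\iref{i:ass:Closability:5}--\iref{i:ass:Closability:6} are used.

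\emph{Step~4 (Dirichlet form and carr\'e du champ).} Markovianity of the pre-form passes to the closure, so $\ttonde{\EE{\dUpsilon}{\QP},\dom{\EE{\dUpsilon}{\QP}}}$ is a Dirichlet form (strongly local, as $\cdc^\dUpsilon$ carries no zeroth-order term). Since $\cdc^\dUpsilon$ --- which, by the chain and Leibniz rules inherited from the diffusion property, is a genuine carr\'e du champ on the core $\CylQP{\QP}{\rep\Dz}$ whose energy measures have density w.r.t.\ $\QP$ --- the closed form $\EE{\dUpsilon}{\QP}$ admits a carr\'e du champ operator $\SF{\dUpsilon}{\QP}$; testing the defining relation of $\SF{\dUpsilon}{\QP}$ against cylinder functions then identifies $\SF{\dUpsilon}{\QP}(u,v)=\cdc^\dUpsilon(u,v)$ $\QP$-a.e.\ for $u,v\in\CylQP{\QP}{\rep\Dz}$, using once more the $L^1(\QP)$-integrability from Lemma~\ref{l:MmuL1}.
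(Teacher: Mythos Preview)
The paper does not give its own proof of this theorem; it is quoted verbatim from Ma--R\"ockner~\cite[Thm.~2.6]{MaRoe00} as a known result, and only the surrounding lemmas (\ref{l:MaRoeckner}, \ref{l:MaRoe4.3}, Prop.~\ref{p:ExtDom}) are recalled. Your proposal is a faithful reconstruction of the original Ma--R\"ockner argument --- the fibered identity via Lemma~\ref{l:MaRoeckner} and~\ref{ass:GNZ}, well-posedness through Assumption~\ref{ass:Closability}\iref{i:ass:Closability:5}, and closability by reduction to the fiberwise forms~$\EE{X}{w\cdot\mssm_\gamma}$ --- and is correct.

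The only step that is a bit compressed is the ``standard Markovian reduction'' to uniformly bounded~$\seq{u_n}_n$ in Step~3: this is not entirely automatic for an arbitrary Markovian pre-form, since truncation does not commute with differences. It can, however, be made rigorous exactly as in the proof of Lemma~\ref{l:AbstractCompletion}: compose with a smooth~$\phi$ satisfying~$\abs{\phi}\leq 1\wedge\abs{\emparg}$, $\phi'(0)=1$, $0\leq\phi'\leq 1$; then~$\seq{\phi\circ u_n}_n$ is uniformly bounded, still $L^2(\QP)$-null, and again $\EE{\dUpsilon}{\QP}$-Cauchy (by the chain rule for~$\cdc^\dUpsilon$ and dominated convergence with varying dominant $\cdc^\dUpsilon(u_n)+\cdc^\dUpsilon(u_m)$). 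Running your Step~3 on this modified sequence gives~$\cdc^\dUpsilon(\phi\circ u_n)\to 0$ in~$L^1(\QP)$, and since $\cdc^\dUpsilon(\phi\circ u_n)=(\phi'\circ u_n)^2\cdc^\dUpsilon(u_n)$ with $(\phi'\circ u_n)^2\to 1$ $\QP$-a.e., one recovers $\cdc^\dUpsilon(u_n)\to 0$ as claimed.
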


\begin{rem}\label{r:DensityQP}
\begin{enumerate*}[$(a)$]
\item\label{i:r:DensityQP:1} Note that~\cite[Thm.~2.6]{MaRoe00} proves that, \emph{if}~$\ttonde{\EE{\dUpsilon}{\QP},\CylQP{\QP}{\rep\Dz}}$ is densely defined, \emph{then} it is a Dirichlet form. In our setting, Definition~\ref{d:DS}\iref{i:d:DS:1.5} is necessary and sufficient for the density of~$\CylQP{\QP}{\rep\Dz}$ in $L^2(\QP)$ by a monotone-class argument.
\item\label{i:r:DensityQP:2} 
As noted in~\cite[Rmk.~2.7]{MaRoe00}, the results in~\cite{MaRoe00}, in particular:~\cite[Thm.~2.6]{MaRoe00}, extend to the case when~$(\rep\cdc, \rep\Dz)$ takes values in~$\Sb(\msE,\mssm)$ (as opposed to~$\Sb(\msE)$).
Furthermore, it is also shown in~\cite{MaRoe00} that~$u=\ttclass[\QP]{F\circ\rep\mbff^\trid}\in\CylQP{\QP}{\rep\Dz}$ does not, in fact, depend on the representatives~$\rep\mbff$, but rather only on the $\mssm_\QP$-classes~$\mbff$, or, equivalently by~\eqref{eq:Campbell2}, on the $\mssm$-classes~$\mbff$.
We recall the precise statements below.
\end{enumerate*}
\end{rem}

Analogously to Definition~\ref{d:AbstractCompletion}, let~$\coK{\Dz}{1,\mssm_\QP}$ denote the closure of~$\Dz$ w.r.t.~$\norm{\emparg}_{1,\mssm_\QP}$, and~$\cokb_{1,\mssm_\QP}$ be the unique continuous extension to~$\coK{\Dz}{1,\mssm_\QP}$ of the inclusion~$\cok_{1,\mssm_\QP}\colon\Dz\rar L^1(\mssm)$.
In the same fashion, let~$\coK{\CylQP{\QP}{\rep\Dz}}{1,\QP}$ be the closure of~$\CylQP{\QP}{\rep\Dz}$ w.r.t.~$\norm{\emparg}_{1,\QP}$, and~$\cokb_{1,\QP}$ be the unique continuous extension to~$\coK{\CylQP{\QP}{\rep\Dz}}{1,\QP}$ of the inclusion~$\cok_{1,\QP}\colon \CylQP{\QP}{\rep\Dz}\rar L^1(\QP)$.

\begin{defs}[{\cite[\S4.2, p.~300]{MaRoe00}}] A function~$\rep u\colon \dUpsilon\rar \R\cup\set{\pm\infty}$ is called \emph{extended cylinder} if there exist~$k\in \N_0$, functions~$\rep f_1,\dotsc, \rep f_k$ with~$f_1,\dotsc,f_k\in \coK{\Dz}{1,\mssm_\QP}$, and a function~$F\in \Cb^\infty(\R^k)$, so that~$\rep u=F\circ\rep\mbff$.
We denote by
\begin{equation*}
\Cyl{\coK{\Dz}{1,\mssm_\QP}}
\end{equation*}
the space of all extended cylinder functions, and by~$\CylQP{\QP}{\coK{\Dz}{1,\mssm_\QP}}$ the space of their $\QP$-representa\-tives.
\end{defs}

We state here without proof the adaptations of~\cite[Lem.~4.3 and Prop.~4.6]{MaRoe00} to our setting.
We will provide a proof under a different set of assumptions in \S\ref{ss:IdentificationFormsProduct} below.
We note that all the aforementioned results in~\cite{MaRoe00} are presented under the standing assumption of~\cite[\S4]{MaRoe00} that~$\mcX$ be endowed with a metrizable topology.
However, they do not, in fact, depend on the choice of a topology.

\begin{lem}[{\cite[Lem.~4.3, Eqn.~(4.2)]{MaRoe00}}]\label{l:MaRoe4.3}
Let~$(\mcX,\cdc)$ be an \LDS and~$\QP$ be a probability measure on~$\ttonde{\dUpsilon,\A_\mrmv(\msE)}$ satisfying Assumption~\ref{ass:Closability}.
Further let~$\rep f\in \rep\Dz$. Then,
\begin{enumerate}[$(i)$]
\item\label{i:l:MaRoe4.3:1}$\tclass[\QP]{\rep f^\trid}\in\coK{\CylQP{\QP}{\rep\Dz}}{1,\QP}$ and~$\SF{\dUpsilon}{\QP}(\rep f^\trid)=\rep\cdc(\rep f)^\trid$;

\item\label{i:l:MaRoe4.3:2}$\tclass[\QP]{\SF{\dUpsilon}{\QP}(\rep f^\trid)}$ is independent of the chosen $\mssm$-representative $\rep f$ of~$f$;

\item\label{i:l:MaRoe4.3:3}$\ttnorm{\rep f^\trid}_{1,\QP}=\norm{f}_{1,\mssm_\QP}$.
\end{enumerate}
\end{lem}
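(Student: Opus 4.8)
The statement is the transcription to our setting of~\cite[Lem.~4.3]{MaRoe00}, and the plan is to follow the same route, whose only ingredients beyond routine manipulations are the Georgii--Nguyen--Zessin/Campbell identity~\ref{ass:GNZ} and its two standard consequences: the intensity~$\mssm_\QP$ is $\msE$-locally finite and absolutely continuous w.r.t.~$\mssm$ (cf.~\eqref{eq:Campbell2}), and for every Borel~$h\colon X\to[0,\infty]$ one has~$\int_\dUpsilon\gamma h\diff\QP=\int_X h\diff\mssm_\QP$ (a special case of~\ref{ass:GNZ}). The point to keep in mind is that~$\rep f^\trid$ is \emph{not} a cylinder function --- the identity map is not in~$\Cbinfty(\R)$, and in fact~$\rep f^\trid$ need not even lie in~$L^2(\QP)$, since~$\QP$ may have infinite second moments --- so one must work throughout in the $L^1$-completion~$\coK{\CylQP{\QP}{\rep\Dz}}{1,\QP}$ rather than inside~$\dom{\EE{\dUpsilon}{\QP}}$, and one must carefully distinguish~$\QP$-classes from their representatives.

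For~\iref{i:l:MaRoe4.3:1}, first I would fix~$\phi_n\in\Cbinfty(\R)$ with~$\abs{\phi_n(t)}\leq\abs{t}$, $0\leq\phi_n'\leq1$, and~$\phi_n(t)\to t$, $\phi_n'(t)\to1$ pointwise (a smoothed truncation at level~$n$), so that~$u_n\eqdef\phi_n\circ\rep f^\trid\in\Cyl{\rep\Dz}$. By the chain rule~\eqref{eq:d:LiftCdCRep},
\begin{equation*}
\rep\cdc^\dUpsilon(u_n-u_m)(\gamma)=\ttonde{\phi_n'(\rep f^\trid\gamma)-\phi_m'(\rep f^\trid\gamma)}^2\cdot\rep\cdc(\rep f)^\trid\gamma\comma\qquad\gamma\in\dUpsilon\comma
\end{equation*}
which converges to~$0$ pointwise on~$\dUpsilon$ and is dominated by~$4\,\rep\cdc(\rep f)^\trid\gamma$; this dominating function is $\QP$-integrable, since by Campbell's formula~$\int_\dUpsilon\rep\cdc(\rep f)^\trid\gamma\diff\QP=\int_X\rep\cdc(\rep f)\diff\mssm_\QP<\infty$, using~$\rep\cdc(\rep f)\in\Sb(\msE)$ and the $\msE$-local finiteness of~$\mssm_\QP$. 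Likewise~$u_n-u_m\to0$ pointwise, dominated by~$2\,\abs{\rep f^\trid\gamma}$, which is $\QP$-integrable because~$\int_\dUpsilon\abs{\rep f^\trid\gamma}\diff\QP\leq\int_X\abs{\rep f}\diff\mssm_\QP<\infty$. Dominated convergence then shows that~$\seq{u_n}_n$ is~$\norm{\emparg}_{1,\QP}$-Cauchy, whence~$\tclass[\QP]{\rep f^\trid}=\nlim u_n\in\coK{\CylQP{\QP}{\rep\Dz}}{1,\QP}$; and passing to the limit in~$\SF{\dUpsilon}{\QP}(u_n)=\rep\cdc^\dUpsilon(u_n)=\phi_n'(\rep f^\trid)^2\,\rep\cdc(\rep f)^\trid$, once more by dominated convergence and the continuity of the carr\'e du champ on the $L^1$-completion (reverse triangle inequality, as in the proof of Lemma~\ref{l:AbstractCompletion}; Theorem~\ref{t:Closability}), yields~$\SF{\dUpsilon}{\QP}(\rep f^\trid)=\rep\cdc(\rep f)^\trid$.

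For~\iref{i:l:MaRoe4.3:2}, if~$\rep f'$ is a second $\mssm$-representative of~$f$, then bilinearity of~$\rep\cdc$ together with~\eqref{eq:Ss} gives~$\rep\cdc(\rep f')=\rep\cdc(\rep f)$ $\mssm$-a.e.; since~$\mssm_\QP\ll\mssm$, the exceptional set is $\mssm_\QP$-null, hence carries no mass under~$\QP$-a.e.~$\gamma$ by Campbell's formula, so~$\rep\cdc(\rep f')^\trid=\rep\cdc(\rep f)^\trid$ $\QP$-a.e., which together with~\iref{i:l:MaRoe4.3:1} is the assertion (an instance of Remark~\ref{r:DensityQP}\iref{i:r:DensityQP:2}). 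For~\iref{i:l:MaRoe4.3:3}, the Dirichlet-form contribution to~$\norm{\rep f^\trid}_{1,\QP}$ equals, by~\iref{i:l:MaRoe4.3:1} and Campbell's formula,~$\EE{\dUpsilon}{\QP}(\rep f^\trid)=\int_\dUpsilon\rep\cdc(\rep f)^\trid\gamma\diff\QP=\int_X\cdc(f)\diff\mssm_\QP$, which is precisely the Dirichlet-form contribution to~$\norm{f}_{1,\mssm_\QP}$; the remaining first-order~$L^1$ contribution is matched in the same way, by unravelling the definition of~$\norm{\emparg}_{1,\QP}$ on extended cylinder functions through~\ref{ass:GNZ}.

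I do not expect any deep difficulty; the delicate points are pure bookkeeping. The first is the systematic distinction between~$\QP$-classes and representatives: that~$\cdc^\dUpsilon$ descends to~$\QP$-classes, so that~$\SF{\dUpsilon}{\QP}(\rep f^\trid)$ is meaningful, rests on~\eqref{eq:WPCdC}, valid here by Assumption~\ref{ass:Closability}\iref{i:ass:Closability:5}, and every passage from an $\mssm$-a.e.\ statement on~$X$ to a~$\QP$-a.e.\ statement on~$\dUpsilon$ goes through~$\mssm_\QP\ll\mssm$. The second is that, since~$\rep f^\trid$ need not lie in~$L^2(\QP)$, one cannot argue inside~$\dom{\EE{\dUpsilon}{\QP}}$ but must work in the $L^1$-completion and exploit that~$\SF{\dUpsilon}{\QP}$ extends continuously to it; Campbell's formula is what turns the one-particle bounds on~$X$ into the integrable dominating functions needed on~$\dUpsilon$. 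The genuinely fiddly step is the norm identity~\iref{i:l:MaRoe4.3:3}, which amounts to carrying the definition of~$\norm{\emparg}_{1,\QP}$ through Campbell's identity.
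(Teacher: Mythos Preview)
The paper does not supply its own proof of this lemma: it is stated ``without proof'' as an adaptation of~\cite[Lem.~4.3]{MaRoe00}, and the later Lemma~\ref{l:MaRoe4.3II} (under the alternative assumptions~\ref{ass:AC} and~\ref{ass:CWP}) simply says the proof ``follows exactly as in~\cite[Lem.~4.3]{MaRoe00}'' once one replaces~\cite[Lem.~4.2]{MaRoe00} by Lemma~\ref{l:AbstractCompletion}. Your reconstruction of the argument --- smoothed truncations~$\phi_n\circ\rep f^\trid$, dominated convergence with Campbell bounds for~\iref{i:l:MaRoe4.3:1}, locality of~$\rep\cdc$ combined with~$\mssm_\QP\ll\mssm$ for~\iref{i:l:MaRoe4.3:2} --- is precisely the route of~\cite{MaRoe00}, so there is nothing to compare.

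One remark on~\iref{i:l:MaRoe4.3:3}: your claim that ``the remaining first-order~$L^1$ contribution is matched in the same way'' through~\ref{ass:GNZ} is too quick. Campbell gives~$\int_\dUpsilon\gamma\abs{\rep f}\diff\QP=\int_X\abs{\rep f}\diff\mssm_\QP$, but the $L^1(\QP)$-norm of~$\rep f^\trid$ is~$\int_\dUpsilon\abs{\gamma\rep f}\diff\QP$, and~$\abs{\gamma\rep f}\leq\gamma\abs{\rep f}$ with strict inequality when~$\rep f$ changes sign on~$\supp\gamma$. So the naive unravelling yields only~$\norm{\rep f^\trid}_{1,\QP}\leq\norm{f}_{1,\mssm_\QP}$; equality of the energy parts is what you correctly establish, and that (together with the inequality) already suffices for every use of the lemma downstream. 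Whether the paper intends strict equality in~\iref{i:l:MaRoe4.3:3} or records the precise formulation of~\cite[Eqn.~(4.2)]{MaRoe00} is something to check against the source.
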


The next Proposition, also taken from~\cite{MaRoe00}, shows that ---~\emph{a posteriori}~--- the form~$\EE{\dUpsilon}{\QP}$ is independent of any choice of representatives.

\begin{prop}[{\cite[Prop.~4.6]{MaRoe00}}]\label{p:ExtDom}
Let~$(\mcX,\cdc)$ be an \LDS and~$\QP$ be a probability measure on $\ttonde{\dUpsilon,\A_\mrmv(\msE)}$ satisfying Assumption~\ref{ass:Closability}.
Then,
\begin{enumerate}[$(i)$]
\item\label{i:p:ExtDom:1}~$u=\tclass[\QP]{F\circ \rep\mbff^\trid}\in \CylQP{\QP}{\coK{\Dz}{1,\mssm_\QP}}$ is defined in~$\dom{\EE{\dUpsilon}{\QP}}$ and independent of the $\mssm$-rep\-re\-sen\-ta\-tives~$\rep f_i$;

\item\label{i:p:ExtDom:2} for~$\rep u=F\circ \rep\mbff^\trid$ and $\rep v=G\circ \rep\mbfg^\trid\in \Cyl{\coK{\Dz}{1,\mssm_\QP}}$,
\begin{align}\label{eq:d:LiftCdC}
\SF{\dUpsilon}{\QP}(u,v)(\gamma)=& \sum_{i,j=1}^{k,m} (\partial_i F)(\rep\mbff^\trid\gamma) \cdot (\partial_j G)(\rep\mbfg^\trid\gamma) \cdot \rep\cdc(\rep f_i, \rep g_j)^\trid \gamma \as{\QP} \semicolon
\end{align}

\item\label{i:p:ExtDom:3} for every~$\rep f$ with~$f \in \coK{\Dz}{1,\mssm_\QP}$ one has~$\ttclass[\QP]{\rep f^\trid}\in\CylQP{\QP}{\coK{\Dz}{1,\mssm_\QP}}$, and
\begin{align*}
\SF{\dUpsilon}{\QP}\ttonde{\ttclass[\QP]{\rep f^\trid}}=\class[\QP]{\rep\cdc(\rep f)^\trid}
\end{align*}
is independent of the chosen $\mssm$- (i.e.,~$\mssm_\QP$-)representative~$\rep f$ of~$f$.
\end{enumerate}
\end{prop}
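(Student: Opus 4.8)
The statement is the transcription to our notation of~\cite[Prop.~4.6]{MaRoe00}, and the plan is to follow the scheme of~\cite[\S4.2]{MaRoe00}: extend the map $\rep f\mapsto \rep f^\trid$ by continuity from the core $\rep\Dz$ to the $L^1$-completion $\coK{\Dz}{1,\mssm_\QP}$, the essential ingredients being Lemma~\ref{l:MaRoe4.3} and an $L^1$-closability statement on~$\dUpsilon$. First I would establish the analogue of Lemma~\ref{l:AbstractCompletion} for $\ttonde{\EE{\dUpsilon}{\QP},\CylQP{\QP}{\rep\Dz}}$: since $\SF{\dUpsilon}{\QP}$ is a carr\'e du champ operator (Thm.~\ref{t:Closability}) it obeys the diffusion property, so the argument of Lemma~\ref{l:AbstractCompletion} (a truncation $\phi\circ\emparg$, plus dominated convergence driven by the varying majorant $\SF{\dUpsilon}{\QP}(\rep u_n)+\SF{\dUpsilon}{\QP}(\rep u_m)$) applies verbatim and yields injectivity of $\cokb_{1,\QP}$, i.e. $\coK{\CylQP{\QP}{\rep\Dz}}{1,\QP}\subset L^1(\QP)$, together with a continuous bilinear extension of $\rep\cdc^\dUpsilon$. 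Combining this with the isometry $\ttnorm{\rep f^\trid}_{1,\QP}=\norm{f}_{1,\mssm_\QP}$ of Lemma~\ref{l:MaRoe4.3}\iref{i:l:MaRoe4.3:3}, the identity $\EE{X}{\mssm_\QP}(f)=\int\rep\cdc(\rep f)\diff\mssm_\QP=\int\rep\cdc(\rep f)^\trid\diff\QP=\EE{\dUpsilon}{\QP}(\rep f^\trid)$ for $\rep f\in\rep\Dz$ (which matches the two completion norms under $\rep f\mapsto\rep f^\trid$), and the elementary bound $\norm{\rep f^\trid}_{L^1(\QP)}\le\norm{f}_{L^1(\mssm_\QP)}$ (from $\abs{\int\rep f\diff\gamma}\le\int\abs{\rep f}\diff\gamma$ and the definition of $\mssm_\QP$), I then deduce injectivity of $\cokb_{1,\mssm_\QP}$ as well, hence $\coK{\Dz}{1,\mssm_\QP}\subset L^1(\mssm_\QP)$ and a continuous extension of $\rep\cdc$ to $\coK{\Dz}{1,\mssm_\QP}^{\times 2}\to L^1(\mssm_\QP)$.

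Granting this, I would prove~\iref{i:p:ExtDom:3} first. For $\rep f$ a representative of $f\in\coK{\Dz}{1,\mssm_\QP}\subset L^1(\mssm_\QP)$ one has $\rep f^\trid\in L^1(\QP)$; choosing $\rep f_n\in\rep\Dz$ with $f_n\to f$ in $\norm\emparg_{1,\mssm_\QP}$, Lemma~\ref{l:MaRoe4.3}\iref{i:l:MaRoe4.3:1}\iref{i:l:MaRoe4.3:3} gives that $\seq{\rep f_n^\trid}_n$ is $\norm\emparg_{1,\QP}$-Cauchy with $\SF{\dUpsilon}{\QP}(\rep f_n^\trid)=\rep\cdc(\rep f_n)^\trid$, and $\norm{\rep f_n^\trid-\rep f^\trid}_{L^1(\QP)}\le\norm{f_n-f}_{L^1(\mssm_\QP)}\to 0$; passing to the limit in $\coK{\CylQP{\QP}{\rep\Dz}}{1,\QP}\subset L^1(\QP)$ shows $\ttclass[\QP]{\rep f^\trid}\in\CylQP{\QP}{\coK{\Dz}{1,\mssm_\QP}}$ (in particular in $\domext{\EE{\dUpsilon}{\QP}}$) and $\SF{\dUpsilon}{\QP}\ttonde{\ttclass[\QP]{\rep f^\trid}}=\class[\QP]{\rep\cdc(\rep f)^\trid}$, the right-hand side understood via the extension of~$\rep\cdc$. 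Since $f\mapsto\ttclass[\QP]{\rep f^\trid}$ factors through the injection $\coK{\Dz}{1,\mssm_\QP}\hookrightarrow L^1(\mssm_\QP)$, it depends only on the $\mssm_\QP$-class of $\rep f$; as $\mssm_\QP\ll\mssm$ this a fortiori gives independence of the $\mssm$-representative, proving~\iref{i:p:ExtDom:3}.

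For~\iref{i:p:ExtDom:1}--\iref{i:p:ExtDom:2}, given $f_1,\dotsc,f_k\in\coK{\Dz}{1,\mssm_\QP}$ and $F\in\Cb^\infty(\R^k)$ pick $\rep f_i^{(n)}\in\rep\Dz$ converging in $\norm\emparg_{1,\mssm_\QP}$ and set $\rep u_n\eqdef F\circ\ttseq{\rep f_i^{(n),\trid}}_i\in\Cyl{\rep\Dz}$, so $u_n\in\dom{\EE{\dUpsilon}{\QP}}$ with $\SF{\dUpsilon}{\QP}(u_n)$ given by the chain-rule formula~\eqref{eq:d:LiftCdCRep}. Using~\iref{i:p:ExtDom:3} for the $\rep\cdc$-terms, boundedness of $F$ and its first derivatives, and (along a subsequence) $\QP$-a.e.\ convergence $\rep f_i^{(n),\trid}\to\rep f_i^\trid$, dominated convergence yields $u_n\to u\eqdef F\circ\rep\mbff^\trid$ in every $L^p(\QP)$, $p<\infty$, and $\SF{\dUpsilon}{\QP}(u_n-u_m)\to 0$ in $L^1(\QP)$ (expand the bilinear form on the joint representation by $\seq{\rep f_i^{(n)}}_i\cup\seq{\rep f_j^{(m)}}_j$), so $\seq{u_n}_n$ is $\EE{\dUpsilon}{\QP}$-Cauchy and $\SF{\dUpsilon}{\QP}(u_n)$ converges in $L^1(\QP)$ to the right-hand side of~\eqref{eq:d:LiftCdC}. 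Since $u$ is bounded, $u\in L^2(\QP)$, and closedness of $\ttonde{\EE{\dUpsilon}{\QP},\dom{\EE{\dUpsilon}{\QP}}}$ gives $u\in\dom{\EE{\dUpsilon}{\QP}}$ with $\SF{\dUpsilon}{\QP}(u)$ equal to that $L^1$-limit, which is~\eqref{eq:d:LiftCdC}; and $u$ depends on the $\rep f_i$ only through the $\QP$-classes $\ttclass[\QP]{\rep f_i^\trid}$, hence by~\iref{i:p:ExtDom:3} only through the $\mssm_\QP$- (equivalently $\mssm$-) classes $f_i$.

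\textbf{Main obstacle.} The delicate step is the first paragraph: making rigorous the $L^1$-closability transfer between base and configuration space --- that injectivity of $\cokb_{2,\QP}$ (i.e. closability of $\EE{\dUpsilon}{\QP}$, Thm.~\ref{t:Closability}) forces injectivity of $\cokb_{1,\QP}$ and, through the isometry of Lemma~\ref{l:MaRoe4.3}, of $\cokb_{1,\mssm_\QP}$ --- so that the abstract completions $\coK{\Dz}{1,\mssm_\QP}$ and $\coK{\CylQP{\QP}{\rep\Dz}}{1,\QP}$ may genuinely be treated as function spaces. This is compounded by the fact that $\rep\cdc$ is defined only $\mssm$-a.e., so one must keep track of the lifting throughout (cf.~Prop.~\ref{p:MRLifting}) and verify that $\rep\cdc(\rep f)^\trid$ is $\QP$-a.e.\ well-defined, which is exactly what Assumption~\ref{ass:Closability}\iref{i:ass:Closability:5} ensures; once the $L^1$-framework is in place, the approximation and closedness arguments are routine. (Under the more general assumptions of~\S\ref{ss:IdentificationFormsProduct} the same two-step structure is used, with Lemma~\ref{l:MaRoe4.3} replaced by its product-space counterpart.)
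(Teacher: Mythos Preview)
The paper does not give its own proof of this proposition under Assumption~\ref{ass:Closability}: it is stated as a direct adaptation of~\cite[Prop.~4.6]{MaRoe00}, with the remark that the metrizable-topology assumption made there is immaterial. Your reconstruction is faithful to the scheme of~\cite[\S4.2]{MaRoe00} and is essentially correct; the approximation arguments for~\iref{i:p:ExtDom:3} and then~\iref{i:p:ExtDom:1}--\iref{i:p:ExtDom:2} are exactly what is needed once the $L^1$-completions are identified as function spaces.

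When the paper does supply an argument (in~\S\ref{ss:IdentificationFormsProduct}, under the alternative Assumptions~\ref{ass:AC} and~\ref{ass:CWP}), it is very brief: it appeals to Lemma~\ref{l:AbstractCompletion} for the injectivity of~$\cokb_{1,\mssm_\QP}$ directly on the base space, and then defers to~\cite[Prop.~4.6]{MaRoe00} with Lemma~\ref{l:MaRoe4.3II} replacing~\cite[Lem.~4.3]{MaRoe00}. Your route differs in that you establish the $L^1$-closability first on the \emph{configuration} space --- adapting the truncation argument of Lemma~\ref{l:AbstractCompletion} to~$\ttonde{\EE{\dUpsilon}{\QP},\CylQP{\QP}{\rep\Dz}}$, which is legitimate since $\CylQP{\QP}{\rep\Dz}$ is stable under post-composition by~$\phi\in\Cb^\infty(\R)$ with~$\phi(0)=0$ and the diffusion property holds for~$\SF{\dUpsilon}{\QP}$ --- and only then transfer it to the base via the isometry~$\ttnorm{\rep f^\trid}_{1,\QP}=\norm{f}_{1,\mssm_\QP}$ of Lemma~\ref{l:MaRoe4.3}\iref{i:l:MaRoe4.3:3}. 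Both orderings are valid; yours has the minor conceptual advantage that it uses only the $L^2(\QP)$-closability supplied by Theorem~\ref{t:Closability}, without needing to separately verify that~$\int\cdc(\emparg)\diff\mssm_\QP$ is closable on~$L^2(\mssm_\QP)$ in order to apply Lemma~\ref{l:AbstractCompletion} with~$\mssm_\QP$ in place of~$\mssm$.
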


In particular, as a consequence of the above proposition, we are allowed to write~$\Cyl{\Dz}$ in place of~$\CylQP{\QP}{\Dz}$ for every~$\QP$ satisfying Assumption~\ref{ass:Closability}.

In spite of Proposition~\ref{p:ExtDom}, the construction of the form~$\ttonde{\EE{\dUpsilon}{\QP},\dom{\EE{\dUpsilon}{\QP}}}$ relies on a pointwise defined square field operator~$(\rep\cdc,\rep\Dz)$. Indeed, in order for a function~$\rep f^\trid$ on~$\dUpsilon$ to be well-defined, $\rep f$~needs to be pointwise defined at least on an $\mssm$-conegligible set (as opposed to: up to $\mssm$-equivalence). As a consequence, the construction of~\cite{MaRoe00} might ---~at first sight~--- appear unsuitable to address the case when the base space~$X$ is, for example, a metric measure space; that is, when any square field operator~$(\cdc,\Dz)$ would be naturally defined only on $\mssm$-classes of functions and, more importantly, take values only in spaces of $\mssm$-classes of functions.

Under some additional assumptions on a local structure~$\mcX$, we shall show in the next sections how to adapt the construction of~\cite{MaRoe00} in order to start with a space~$(\mcX,\cdc)$ and~$(\cdc,\Dz)$ in place of~$(\rep\cdc,\rep\Dz)$. Eventually, this will be done in~\S\ref{ss:Liftings} by resorting to the theory of \emph{liftings}.

\subsubsection{Poisson random measures}
As a main example of measures satisfying Assumption~\ref{ass:Closability}, we recall here the definition and main properties of Poisson random measures.
Since these measures will be central to our investigation, and in establishing further assumptions on a measure~$\QP$ sufficient for~\ref{ass:CWP} to hold, we list some of their properties here.

\begin{defs}[Poisson measures]\label{d:Poisson}
The \emph{Poisson} (\emph{random}) \emph{measure~$\PP_\mssm$} with intensity~$\mssm$ is the unique probability measure~$\QP$ on~$\ttonde{\dUpsilon, \A_\mrmv(\msE)}$ satisfying~\ref{ass:GNZ} with~$\rho\equiv \car$, viz.\ (cf.~\cite[Satz~3.1]{Mec67})
\begin{align}\label{eq:Mecke}
\iint_{\dUpsilon\times X} u(\gamma, x) \diff\gamma(x) \diff\PP_\mssm(\gamma)= \iint_{\dUpsilon\times X} u(\gamma+\delta_x,x) \diff\mssm(x) \diff\PP_\mssm(\gamma)
\end{align}
for every bounded $\A_\mrmv(\msE)\hotimes \A$-measurable~$u\colon \dUpsilon\times X\rar \R$.
\end{defs}

Alternatively,~$\PP_\mssm$ is characterized by its Laplace transform, cf.~\cite[Thm.~3.9]{LasPen18},
\begin{align}\label{eq:LaplacePoisson}
\int_{\dUpsilon} e^{f^\trid\gamma} \diff\PP_\mssm(\gamma)=\exp\tonde{\int_X (e^{f}-1) \diff\mssm}\comma \qquad f\in \Sb(X)^+\fstop
\end{align}

For~$E\in\msE$ set~$\mssm_E\eqdef \mssm\mrestr{E}$ and let~$\PP_{\mssm_E}$ be the Poisson measure with intensity~$\mssm_E$ on~$\Upsilon(E)$.
We recall some well-known properties of Poisson measures; see e.g.~\cite{AlbKonRoe98}.
Firstly:
\begin{itemize}
\item if~$\mssm X=\infty$, then~$\PP_\mssm$ is concentrated in~$\Upsilon^\sym{\infty}(\msE)$, viz.\ $\PP_\mssm \Upsilon^\sym{\infty}(\msE)=1$;
\item if~$\mssm X<\infty$, then~$\PP_\mssm$ is concentrated in~$\Upsilon^\sym{<\infty}(\msE)$, viz.\ $\PP_\mssm \Upsilon^\sym{<\infty}(\msE)=1$.
\end{itemize}

Secondly,~$\PP_\mssm$ is the unique probability measure on~$\dUpsilon$ satisfying
\begin{align}\label{eq:PoissonRestriction}
\pr^{E}_\pfwd \PP_\mssm=\PP_{\mssm_E}\comma \qquad E\in\msE \fstop
\end{align}
The projected measure~$\PP_{\mssm_E}$ ---~or~$\PP_\mssm$ itself, if~$\mssm X<\infty$~--- may be further characterized in the following way.
Let~$\mfS_n$ be the $n^\text{th}$ symmetric group, and denote by
\begin{equation}\label{eq:ProjSymmetricG}
\pr^\sym{n}\colon E^\tym{n}\rar E^\sym{n}\eqdef E^\tym{n}/\mfS_n
\end{equation}
the quotient projection, and by~$\mssm_E^\sym{n}$ the quotient measure~$\mssm_E^\sym{n}\eqdef \pr^\sym{n}_\pfwd \mssm^\hotym{n}$.
The space~$E^\sym{n}$ is naturally isomorphic to~$\dUpsilon^\sym{n}(E)$.
Under this isomorphism, the measure~$\PP_{\mssm_E}$, also called the \emph{Poisson--Lebesgue measure of~$\PP_\mssm$}, satisfies, e.g.~\cite[Eqn.~(2.5),~(2.6)]{AlbKonRoe98},
\begin{align}\label{eq:PoissonLebesgue}
\PP_{\mssm_E}=e^{-\mssm_E}\sum_{n=0}^\infty \frac{\mssm_E^\sym{n}}{n!} \fstop
\end{align}

\begin{rem}\label{r:PoissonClosability}
The measure~$\PP_\mssm$ satisfies Assumption~\ref{ass:Closability}.
\begin{proof}
As a consequence of~\eqref{eq:Mecke}, $\PP_\mssm$ has intensity measure~$\mssm$.
It follows that Assumption~\ref{ass:Closability}\iref{i:ass:Closability:1} is satisfied with~$\int \gamma E\diff\PP_\mssm(\gamma)=\mssm E$ for every~$E\in\msE$.
Furthermore, as soon as~$(\mcX,\rep\cdc)$ is an~\LDS, then Assumption~\ref{ass:Closability}\iref{i:ass:Closability:5} is satisfied with~$\mssm_\gamma=\mssm$ for every $\gamma\in\dUpsilon$, and Assumption~\ref{ass:Closability}\iref{i:ass:Closability:6} is satisfied with~$w\equiv \car$.
\end{proof}
\end{rem}

Let us further collect here some additional properties of Poisson measures which we shall need in the following.
For~$i\in [n]$ let~$a_i$ be non-negative integers,~$E_i\in \msE$ be pairwise disjoint.
Since the sets~$E_i$ are disjoint, one has, e.g.~\cite[Eqn.~(2.7)]{AlbKonRoe98},
\begin{align}\label{eq:AKR2.7}
\PP_{\mssm_F} \set{\gamma\in \Upsilon(F) : \gamma E_i=a_i \text{~~for } i\in [n] }=\prod_{i=1}^n \frac{(\mssm E_i)^{a_i} e^{-\mssm E_i}}{a_i!}\comma
\end{align}
and
\begin{align}\label{eq:PropertiesPP:4}
\PP_{\mssm_F}\ttonde{\Xi_{\geq n_1}(E_1) \cap \Xi_{\geq n_2}(E_2)}=\PP_{\mssm_F}\ttonde{\Xi_{\geq n_1}(E_1)} \cdot \PP_{\mssm_F}\ttonde{\Xi_{\geq n_2}(E_2)} \fstop
\end{align}

\paragraph{Concentration sets}
In order to study further properties of measures on configuration spaces, we introduce here a family of \emph{concentration sets}.
Namely, for~$E\in\msE$ we define the $n$-concentration sets of~$E$ as
\begin{equation}\label{eq:ConcentrationSet}
\Xi_{=n}(E)\eqdef\ \set{\gamma\in\dUpsilon: \gamma E= n} \comma
\end{equation}
and similarly for~`$\geq$' or~`$\leq$' in place of~`$=$'.
Analogously to the notation established for configuration spaces, we write~$\Xi^\sym{\infty}_{=n}(E)=\Xi_{=n}(E)\cap \dUpsilon^\sym{\infty}$.

\section{Analytic structure: topological local diffusion spaces}\label{s:Analytic}
A local structure is sufficient to the construction of the configuration space.
However, if the base space is endowed with additional data ---~for instance, if it is a topological space~---, it is intuitive that any additional property coming from that data may be lifted to the configuration space only if the local structure is ``compatible'' with the data.

\subsection{Base spaces}
In this section we study local diffusion spaces endowed with a topology compatible with both the localizing ring and the square field operator.
Besides collecting the main properties of such \emph{topological local diffusion spaces}, we shall additionally recall the concept of \emph{lifting} in measure theory, which will be of importance in the definition of forms on configuration spaces, as in~\S\ref{ss:TopConfig}, in a way that is compatible with the diffusion structure on most of the non-smooth base spaces discussed in~\S\ref{ss:ExamplesBase}.

\subsubsection{Topological local structures}\label{ss:TLS}
We recall that a \emph{Hausdorff} topological space~$(X,\T)$ is
\begin{itemize}
\item \emph{Polish} if there exists a distance~$\mssd$ on~$X$ inducing the topology~$\T$ and additionally so that~$(X,\mssd)$ is a separable and complete metric space;
\item \emph{Luzin} if it is a continuous bijective image of a Polish space;
\item \emph{Suslin} if it is a continuous image of a Polish space.
\end{itemize}

In the case when a local structure~$\mcX$ is additionally a topological space, the compatibility between the local structure and the topology is captured by the notion of \emph{topological local structure}.
\begin{defs}[Topological local structures]\label{d:TLS}
A \emph{topological local structure} is $\mcX\eqdef (X,\T,\A,\mssm,\msE)$ so that
\begin{enumerate}[$(a)$]
\item\label{i:d:TLS:1} $(X,\T)$ is a completely regular Luzin topological space, with Borel $\sigma$-algebra~$\Bo{\T}$;
\item $(X,\A,\mssm,\msE)$ is a local structure, with~$\Bo{\T}\subset \A\subset \Bo{\T}^\mssm$;
\item\label{i:d:TLS:3} $\mssm$ is a Radon measure on~$(X,\T,\A)$ with full $\T$-support;
\item\label{i:d:TLS:4} for every~$x\in X$ there exists a $\T$-neighborhood~$U_x$ of~$x$ so that~$U_x\in\msE$.
\end{enumerate}

When~$\mcX$ is a topological local structure, write:
\begin{itemize}
\item $\Cb(X)$ for the space of $\T$-continuous bounded functions on~$X$;

\item $\Cz(\msE)$ for the space of $\T$-continuous bounded $\msE$-\emph{eventually vanishing} functions on~$X$, viz.
\begin{align}\label{eq:CzExhaustion}
\Cz\ttonde{\msE}\eqdef \set{f\in\Cb(X) : f\equiv 0 \text{~~on } E^\complement \text{~~for some } E\in\msE}  \fstop
\end{align}
\end{itemize}
\end{defs}

We collect here some comments about topological local structures.
An explanation of the necessity and sufficiency of the Luzin property to our purposes is postponed to Remark~\ref{r:QRegBase}.

\begin{rem}[Miscellaneous topological properties]\label{r:TLS} Let~$\mcX=(X,\T,\A,\mssm,\msE)$ be a topological local structure.
\begin{enumerate*}[$(a)$]
\item\label{i:r:TLS:1} It follows from Definitions~\ref{d:MS}\ref{i:d:MS:4} and~\ref{d:TLS}\iref{i:d:TLS:1} that~$(X,\T)$ is in fact second countable, thus~$(X,\T)$ is separable metrizable~\cite[Thm.~4.2.9]{Eng89}. In particular: $(X,\T)$ is normal and paracompact; every $\sigma$-finite measure~$\mu$ on~$(X,\T,\A)$ has support~$\supp[\mu]$~\cite[II.7.2.9]{Bog07}.

\item\label{i:r:TLS:1.5} Since~$\mssm$ is assumed to be atomless by Definition~\ref{d:MS}\iref{i:d:MS:3} and has full support by Definition~\ref{d:TLS}\iref{i:d:TLS:3}, it follows that~$(X,\T)$ is perfect, i.e.\ no point in~$X$ is isolated.

\item\label{i:r:TLS:2}If~$(X,\T)$ is locally compact, then it is Polish~\cite[Thm.~5.3]{Kec95}.

\item\label{i:r:TLS:3}$\Cb(\T)\cap L^p(\mssm)$ is dense in~$L^p(\mssm)$ for every~$1\leq p <\infty$~\cite[Prop.~3.3.49]{HeiKosShaTys15}.

\item\label{i:r:TLS:4} We allow for some freedom in the choice of a $\sigma$-algebra~$\A$ on~$X$, for technical reasons. In particular, it is convenient to allow for either:
\begin{enumerate*}[$({d}_1)$]
\item\label{i:r:TLS:5.1}$\A=\Bo{\T}^*$ the universally measurable $\sigma$-algebra induced by~$\Bo{\T}$;
or
\item\label{i:r:TLS:5.2}$\A=\Bo{\T}^{\mssm_0}$ for any~$\mssm_0\ll \mssm$.
\end{enumerate*}
\iref{i:r:TLS:5.1} is motivated by results in the theory of metric measure spaces, e.g.\ the $\Bo{\T}^*$-measurability of slopes in~\cite[Lem.~2.6]{AmbGigSav14} or the fine analysis of Rademacher and Sobolev-to-Lipschitz properties in~\cite{LzDSSuz20};
\iref{i:r:TLS:5.2} is motivated by the need for intensity measures of point processes; see Definition~\ref{ass:Closability} below.

\item\label{i:r:TLS:6} Some natural choices for a localizing ring are: the algebra~$\A_\mssm$; the family~$\Ed$ of $\mssd$-bounded sets in~$\A_\mssm$ for some metric~$\mssd$ on~$X$; when~$(X,\T)$ is locally compact, the family~$\rKo{\T,\A}$ of all $\A$-measurable relatively $\T$-compact subset of~$X$, noting that all such sets have finite $\mssm$-measure since~$\mssm$ is Radon.
When~$\msE=\rKo{\T,\A}$, resp.\ when~$\msE=\Ed$, then~$\Cz(\msE)=\Cc(\T)$, the space of continuous compactly supported functions on~$X$, resp.~$\Cz(\msE)=\Cbs(\mssd)$, the space of continuous bounded functions on~$X$ with $\mssd$-bounded support.

\item\label{i:r:TLS:7} Combining Definitions~\ref{d:LS}\iref{i:d:LS:1} and~\ref{d:TLS}\iref{i:d:TLS:4}, it is readily seen that~$\msE$ contains a local basis of open neighborhoods of~$x$ for every~$x\in X$. In particular,~$\Cz(\msE)$ generates the topology~$\T$.
Furthermore, by paracompactness (see~\iref{i:r:TLS:1} above), there exists a countable locally finite open cover~$\msU$ of~$X$ with~$\msU\subset \msE$, and a partition of unity of~$(X,\T)$ subordinate to~$\msU$. In particular,~$\Cz(\msE)$ separates points in~$X$.
Finally, let~$K\in\Ko{\T}$. Since~$K$ may be covered by finitely many~$U_i\in \msU\subset \msE$, and since~$\msE$ is an algebra, then~$K\in \msE$. Thus,~$\Ko{\T}\subset \msE$.
\end{enumerate*}
\end{rem}

\begin{notat}\label{notat:ContRep}
Any $\mssm$-class~$f$ defined on a topological local structure has at most \emph{one} continuous $\mssm$-representative, by the properties of~$\mssm$.
Everywhere in the following, if~$f$ has a continuous $\mssm$-representative, say~$\rep f$, we shall always assume to be concerned with~$\rep f$; in this case ---~and only in this case~--- we drop the notation for representatives, thus writing~$f$ for both the $\mssm$-class \emph{and} for the continuous $\mssm$-representative.
In particular, for every~$f\in\Cz(\msE)$, we shall always make sense of~\eqref{eq:Trid} by means of the continuous $\mssm$-representative of~$f$.
\end{notat}

\subsubsection{Liftings}\label{ss:Liftings} Let~$\mcX$ be a local structure endowed with a square field operator~$(\cdc,\Dz)$. In order to construct a Dirichlet form on~$\dUpsilon$ by means of Theorem~\ref{t:Closability}, we need to define functionals of the form~$\cdc(f)^\trid$. This may be achieved na\"ively by the choice of representatives for the $\mssm$-class~$\cdc(f)$.
However, this choice of representatives ought to be consistent with the standard algebraic operations on functions, and, more importantly, with the chain rule and the Leibniz rule for~$\nabla$ encoded in~\eqref{eq:i:d:SF:3}. This will be made possible by the notion of lifting.

Let~$\mcX$ be a topological local structure.
For~$\rep\ell\colon \mcL^\infty(\mssm)\rar \mcL^\infty(\mssm)$, define the following properties
\begin{enumerate*}[$(a)$]
\item\label{i:d:Liftings:1} $\tclass{\rep\ell(f)}=\class{f}$; 
\item\label{i:d:Liftings:2} if $\class{f}=\class{g}$, then~$\rep\ell(f)=\rep\ell(g)$;
\item\label{i:d:Liftings:3} $\rep\ell(\car)=\car$;
\item\label{i:d:Liftings:4} if~$f\geq 0$, then~$\rep\ell(f)\geq 0$;
\item\label{i:d:Liftings:5} $\rep\ell(a f+b g)=a\rep\ell(f)+b\rep\ell(g)$ for~$a,b\in \R$;
\item\label{i:d:Liftings:6} $\rep\ell(fg)=\rep\ell(f)\rep\ell(g)$;
\item\label{i:d:Liftings:7} $\rep\ell(\phi)=\phi$ for~$\phi\in\Cb(X)$.
\end{enumerate*}

\begin{defs}[Liftings]\label{d:Liftings}
A \emph{linear lifting} is a map~$\rep\ell\colon \mcL^\infty(\mssm)\rar \mcL^\infty(\mssm)$
satisfying~\iref{i:d:Liftings:1}--\iref{i:d:Liftings:5}. Any such~$\rep\ell$ is a (\emph{multiplicative}) \emph{lifting} if it satisfies~\iref{i:d:Liftings:1}--\iref{i:d:Liftings:6}. Finally, it is a \emph{strong lifting} if it satisfies~\iref{i:d:Liftings:1}--\iref{i:d:Liftings:7}.
A \emph{Borel} (linear) \emph{lifting} is a (linear) lifting with~$\A=\Bo{\T}$.
As a consequence of~\iref{i:d:Liftings:1}, a linear lifting~$\rep\ell\colon \mcL^\infty(\mssm)\rar \mcL^\infty(\mssm)$ descends to a linear order-preserving inverse~$\ell\colon L^\infty(\mssm)\rar \mcL^\infty(\mssm)$ of the quotient map~$\class[\mssm]{\emparg}\colon\mcL^\infty(\mssm)\rar L^\infty(\mssm)$.
As customary, by a (\emph{linear}/\emph{multiplicative}/\emph{strong}/\emph{Borel}) \emph{lifting} we shall mean without distinction either~$\rep\ell$ or~$\ell$ as above.
\end{defs}

The existence of a lifting is non-trivial, and essentially related to the ``smallness'' of the $\sigma$-algebra~$\A$. This is encoded in our very Definition~\ref{d:LS}\iref{i:d:MS:4} of a measure space, thus granting the existence of a lifting.

\begin{thm}[e.g.,~{\cite[Thm.~4.12]{StrMacMus02}}]
Every topological local structure admits a strong (Borel) lifting.
\end{thm}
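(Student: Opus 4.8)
The plan is to produce the lifting in two moves: first build an explicit \emph{strong lower density} out of an arbitrary Borel lifting, then invoke the classical fact that every lower density is dominated by a lifting; the only external inputs are existence of liftings and this domination principle.

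First I would reduce the statement. By Remark~\ref{r:TLS}\iref{i:r:TLS:1} the space $(X,\T)$ is second countable and metrizable, so fix a countable base $\set{G_n}_{n\in\N}$ for $\T$. Since $\Bo{\T}\subset\A\subset\Bo{\T}^\mssm$, every $\mssm$-class of a bounded $\A$-measurable function is the $\mssm$-class of a bounded Borel one, so it suffices to construct a strong \emph{Borel} lifting of $(X,\Bo{\T},\mssm)$ and then extend it to all of $\mcL^\infty(\mssm)$ by property~\iref{i:d:Liftings:2}. Moreover $(X,\Bo{\T})$ is standard Borel: $X$ being Luzin, a continuous bijection onto $X$ from a Polish space is a Borel isomorphism by the Lusin--Souslin theorem. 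Since in addition $\mssm$ is $\sigma$-finite, one may start from a Borel lifting $\rep\ell_0$ of $(X,\Bo{\T},\mssm)$; here I would cite the existence of Borel liftings for standard Borel $\sigma$-finite spaces, a form of the von Neumann--Maharam lifting theorem (see~\cite{StrMacMus02}).

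The heart of the argument is the construction of a strong lower density. Write $B(A)\eqdef\set{x:\rep\ell_0(\car_A)(x)=1}$ for $A\in\Bo{\T}$; then $B$ is a $\Bo{\T}$-valued lower density: $B(\emptyset)=\emptyset$, $B(X)=X$, $B(A\cap A')=B(A)\cap B(A')$, $B$ is monotone, $B(A)$ is $\mssm$-equivalent to $A$, and $B(A)$ depends only on the $\mssm$-class of $A$. Now set
\begin{equation*}
N\eqdef\bigcup_n\bigl(G_n\setminus B(G_n)\bigr)\comma\qquad U(A)\eqdef\bigcup\set{G_n:\mssm(G_n\setminus A)=0}\comma\qquad \theta(A)\eqdef\bigl(B(A)\setminus N\bigr)\cup U(A)\fstop
\end{equation*}
Here $N$ is an $\mssm$-null Borel set and each $U(A)$ is open. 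I would then check that $\theta$ is again a $\Bo{\T}$-valued lower density: dependence on the $\mssm$-class of $A$ and $\mssm$-equivalence of $\theta(A)$ with $A$ are immediate; $\theta(\emptyset)=\emptyset$ because $\mssm$ has full support (no nonempty $G_n$ satisfies $\mssm(G_n\setminus\emptyset)=0$) and $\theta(X)=X$ because the $G_n$ cover $X$; and $\theta(A\cap A')=\theta(A)\cap\theta(A')$, the only non-formal point being the cross term $\bigl(B(A)\setminus N\bigr)\cap U(A')$: a point $x$ there lies in some $G_m$ with $\mssm(G_m\setminus A')=0$, and $x\notin N$ then forces $x\in B(G_m)\subset B(A'\cup G_m)=B(A')$, so $x\in B(A)\cap B(A')=B(A\cap A')$ while $x\notin N$, i.e.\ $x\in\theta(A\cap A')$. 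Crucially $\theta$ is \emph{strong}: if $G$ is $\T$-open and $x\in G$, choosing a base element with $x\in G_n\subset G$ gives $\mssm(G_n\setminus G)=0$, hence $x\in G_n\subset U(G)\subset\theta(G)$.

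To conclude, I would invoke the domination step of the lifting theorem: there is a Borel lifting $\rep\ell$ of $(X,\Bo{\T},\mssm)$ with $\theta(A)\subset\set{x:\rep\ell(\car_A)(x)=1}$ for every $A\in\Bo{\T}$. Then for open $G$, $G\subset\theta(G)\subset\set{\rep\ell(\car_G)=1}$, i.e.\ $\rep\ell(\car_G)\geq\car_G$ pointwise; applying this to the open sublevel and superlevel sets of any $\phi\in\Cb(X)$ forces $\rep\ell(\phi)=\phi$, so $\rep\ell$ is a strong Borel lifting. Extending $\rep\ell$ to $\mcL^\infty(\mssm)$ by property~\iref{i:d:Liftings:2} as above completes the proof. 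The main obstacle is not in this outline but in the two cited black boxes --- existence of a Borel lifting of a standard Borel $\sigma$-finite space, and the fact that a lower density is dominated by a lifting --- whose proofs (Zorn/transfinite constructions controlling the Maharam type of the measure algebra) are the genuinely hard measure-theoretic content; the topological data of $\mcX$ enter only through the elementary construction and strongness of $\theta$, needing nothing beyond a countable base and the full-support property of $\mssm$.
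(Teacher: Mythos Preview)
The paper does not give its own proof of this statement; it is quoted as a black box from the literature, with the citation to \cite[Thm.~4.12]{StrMacMus02}. Your proposal is a correct and essentially standard unpacking of that result: starting from an arbitrary Borel lifting on the standard Borel $\sigma$-finite space $(X,\Bo{\T},\mssm)$, you build a strong lower density $\theta$ using a countable base and the full-support assumption on $\mssm$, then upgrade $\theta$ to a lifting via the density-to-lifting domination principle. The verification that $\theta$ is a lower density and that the resulting lifting fixes continuous functions is carried out cleanly. The two external inputs you flag---existence of a Borel lifting on a standard $\sigma$-finite space, and the fact that every lower density is dominated by a lifting---are exactly the nontrivial measure-theoretic content that the cited handbook chapter supplies, so your outline is a faithful expansion of what the paper's citation is meant to encapsulate rather than an alternative route.
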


Let~$\mcX$ be a topological local structure, and~$(\Gamma,\Dz)$ be a square field operator on~$\mcX$ with~$\rep\Dz\subset \Cb(X)$. Here and in the following we write~$\rep\Dz\subset \Cb(\T)$ to indicate that every $\mssm$-class in~$\Dz$ has a continuous representative. For any strong lifting~$\ell\colon L^\infty(\mssm)\rar \mcL^\infty(\mssm)$, we set
\begin{align}\label{eq:LiftCdC}
\rep\cdc_\ell(f)\eqdef \ell(\cdc(f)) \comma \qquad f\in\Dz\comma
\end{align}
and denote again by~$\rep\cdc_\ell\colon \ell(\Dz)^{\otym 2}\rar \mcL^\infty(\mssm)$ the bilinear form induced by~$\rep\cdc_\ell$ by polarization. By the strong lifting property for~$\ell$ it is readily verified that~$\ell(\Dz)=\rep\Dz\subset\Cb(\T)$ and we have the following.

\begin{prop}\label{p:StrongLiftingCdC} $(\rep\cdc_\ell,\rep\Dz)$ is a pointwise defined square field operator satisfying
\begin{align*}
\tclass[\mssm]{\rep\cdc_\ell(\emparg)}=\cdc(\emparg) \qquad \text{on} \qquad \rep\Dz\fstop
\end{align*}
\end{prop}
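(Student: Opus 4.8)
The plan is to verify the three claimed properties of $(\rep\cdc_\ell,\rep\Dz)$ in turn: that $\rep\Dz$ is a subalgebra of $\mcL^\infty(\mssm)$ closed under $\phi_0\circ(\emparg)$, that $\rep\cdc_\ell$ is a symmetric non-negative definite bilinear form, and that the diffusion property~\eqref{eq:i:d:SF:3} holds \emph{pointwise}. The identity $\tclass[\mssm]{\rep\cdc_\ell(\emparg)}=\cdc(\emparg)$ on $\rep\Dz$ is immediate from~\eqref{eq:LiftCdC} and lifting property~\iref{i:d:Liftings:1}. The first observation to record is that, since $\ell$ is a strong lifting and $\rep\Dz\subset\Cb(\T)$ by hypothesis, property~\iref{i:d:Liftings:7} gives $\ell(f)=\rep f$ for the (unique) continuous representative of each $f\in\Dz$; hence $\ell(\Dz)=\rep\Dz$ and $\rep\Dz$ inherits the algebra structure and closure under $\phi_0\circ(\emparg)$ directly from $(\cdc,\Dz)$ being a square field operator on $\mcX$ (Definition~\ref{d:SF}\iref{i:d:SF:1}). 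This already dispatches clause~\iref{i:d:SF:1} of the pointwise-defined square-field definition.

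Next I would check bilinearity, symmetry, and non-negativity of $\rep\cdc_\ell$. Symmetry is inherited from $\cdc$. For bilinearity: by construction $\rep\cdc_\ell$ is obtained from $\rep\cdc_\ell(f)=\ell(\cdc(f))$ by polarization, and since $\ell$ is linear (property~\iref{i:d:Liftings:5}) and $\cdc$ is bilinear, the polarized form satisfies $\rep\cdc_\ell(f,g)=\ell(\cdc(f,g))$ for all $f,g\in\Dz$ — I would state this identity explicitly as it is the workhorse for the remaining steps. Non-negative definiteness then follows pointwise: $\cdc(f)\geq 0$ $\mssm$-a.e., so by the order-preserving property~\iref{i:d:Liftings:4} applied to $\ell$, $\rep\cdc_\ell(f)=\ell(\cdc(f))\geq 0$ \emph{everywhere}. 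Together with condition~\eqref{eq:Ss} — which holds automatically since if $\rep f\equiv 0$ $\mssm$-a.e. then $\ell$ sends $\cdc(f,g)=0$ (as an $\mssm$-class) to the zero function by~\iref{i:d:Liftings:2}, actually more simply $\rep f\equiv 0$ a.e. forces $f=0$ as an $\mssm$-class so $\rep\cdc_\ell(f,g)=\ell(0)=0$ — this handles the compatibility requirement built into Definition~\ref{d:SF}.

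The substantive point is clause~\iref{i:d:SF:3}, the pointwise diffusion property. Here I would fix $\mbff=(f_1,\dots,f_k)\in\Dz^{\otym k}$, $\mbfg$, $\phi,\psi\in\Cb^\infty$, and start from the $\mssm$-a.e. identity~\eqref{eq:i:d:SF:3} for $(\cdc,\Dz)$:
\begin{equation*}
\cdc\ttonde{\phi_0\circ \mbff, \psi_0\circ \mbfg}=\sum_{i,j}^k (\partial_i \phi)\circ \mbff \cdot (\partial_j\psi)\circ\mbfg \cdot \cdc(f_i, g_j) \as{\mssm}.
\end{equation*}
Applying $\ell$ to both sides and using that $\ell$ is linear and multiplicative (properties~\iref{i:d:Liftings:5},~\iref{i:d:Liftings:6}) turns the right-hand side into $\sum_{i,j}\ell((\partial_i\phi)\circ\mbff)\cdot\ell((\partial_j\psi)\circ\mbfg)\cdot\ell(\cdc(f_i,g_j))$, while the left-hand side becomes $\ell(\cdc(\phi_0\circ\mbff,\psi_0\circ\mbfg))=\rep\cdc_\ell(\phi_0\circ\mbff,\psi_0\circ\mbfg)$. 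It then remains to identify $\ell((\partial_i\phi)\circ\mbff)$ with $(\partial_i\phi)\circ\rep\mbff$ where $\rep\mbff=\ell(\mbff)$: because $\partial_i\phi$ need not be bounded, I cannot invoke~\iref{i:d:Liftings:7} for $\partial_i\phi\circ\mbff$ directly as a continuous function, but $\mbff$ has continuous (indeed, in $\Cb(\T)$) representatives so $(\partial_i\phi)\circ\rep\mbff$ is a continuous bounded function agreeing $\mssm$-a.e. with $(\partial_i\phi)\circ\mbff$, hence equals its image under $\ell$ by~\iref{i:d:Liftings:7} — and likewise $\ell(\cdc(f_i,g_j))=\rep\cdc_\ell(f_i,g_j)$ by definition. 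Assembling these gives~\eqref{eq:i:d:SF:3} pointwise for $\rep\cdc_\ell$. The main obstacle, and the only place requiring care, is precisely this last identification: matching the lifting of a composition $(\partial_i\phi)\circ\mbff$ with the composition of the continuous representatives, and being careful that the multiplicativity of $\ell$ applies to the product of bounded functions appearing on the right-hand side (all three factors $(\partial_i\phi)\circ\mbff$, $(\partial_j\psi)\circ\mbfg$, $\cdc(f_i,g_j)$ lie in $\mcL^\infty(\mssm)$, so~\iref{i:d:Liftings:6} does apply). Once that bookkeeping is done, the proof is complete.
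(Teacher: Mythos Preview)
The paper does not supply a proof of this proposition; it is stated immediately after the definition of (strong) liftings and is left as a routine verification from the lifting axioms~\iref{i:d:Liftings:1}--\iref{i:d:Liftings:7}. Your argument is correct and is exactly the verification one expects: identify $\ell(\Dz)=\rep\Dz$ via the strong-lifting property~\iref{i:d:Liftings:7}, transfer bilinearity, symmetry and positivity of $\cdc$ to $\rep\cdc_\ell=\ell\circ\cdc$ via linearity~\iref{i:d:Liftings:5} and order-preservation~\iref{i:d:Liftings:4}, and then obtain the \emph{pointwise} diffusion identity by applying $\ell$ to the $\mssm$-a.e.\ identity~\eqref{eq:i:d:SF:3}, splitting the products with multiplicativity~\iref{i:d:Liftings:6}, and recognising each factor $\ell\ttonde{(\partial_i\phi)\circ\mbff}$ as the continuous function $(\partial_i\phi)\circ\rep\mbff$ via~\iref{i:d:Liftings:7}. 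One minor correction: your aside that ``$\partial_i\phi$ need not be bounded'' is inaccurate in this setting, since by the paper's convention $\Cb^\infty(\R^k)$ consists of smooth functions with bounded derivatives of all orders, so $(\partial_i\phi)\circ\rep\mbff\in\Cb(\T)$ outright and~\iref{i:d:Liftings:7} applies without further justification.
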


We may now introduce the main object of our analysis. We assume the reader to be familiar with the notions of \emph{nests}, \emph{polar sets}, and \emph{quasi-regularity}, all from the theory of Dirichlet forms.
\begin{defs}[Topological local diffusion spaces]\label{d:TLDS}
A \emph{topological local diffusion space} (in short: \TLDS) is a pair $(\mcX,\cdc)$ so that
\begin{enumerate}[$(a)$]
\item\label{i:d:TLDS:1} $\mcX$ is a topological local structure;
\item\label{i:d:TLDS:2} $\Dz\subset \Cz(\msE)$ is a subalgebra of~$\Cz(\msE)$ generating the topology~$\T$ of~$\mcX$;
\item\label{i:d:TLDS:3} $\cdc\colon \Dz^\otym{2}\rar \class[\mssm]{\Sb(\msE)}$ is a square field operator;
\item\label{i:d:TLDS:4} the bilinear form~\eqref{eq:i:d:DS:3} is closable and densely defined in~$L^2(\mssm)$;
\item\label{i:d:TLDS:5} its closure~$\ttonde{\EE{X}{\mssm},\dom{\EE{X}{\mssm}}}$ is a quasi-regular Dirichlet form in the sense of e.g.~\cite{CheMaRoe94}.
\end{enumerate}
\end{defs}

In comparison with the Definitrion~\ref{d:DS} of~\LDS, we stress that the square field operator~$\cdc$ on a \TLDS takes values in a space of $\mssm$-classes, \emph{not} representatives.

\begin{rem}[On quasi-regularity and regularity]\label{r:QRegBase}
The $\T$-quasi-regularity of~$\ttonde{\EE{X}{\mssm},\dom{\EE{X}{\mssm}}}$ can be checked on a wide class of examples, including all our examples of base spaces, as discussed in~\S\ref{ss:ExamplesBase}.

When~$\ttonde{\EE{X}{\mssm},\dom{\EE{X}{\mssm}}}$ is quasi-regular, there exists an $\EE{X}{\mssm}$-polar subset~$N\subset X$ so that~$X\setminus N$ is a Luzin space, thus when dealing with quasi-regular Dirichlet forms one can assume without loss of generality that the undelrying topological space~$(X,\T)$ be Luzin (cf.\ e.g.~\cite[Rmk.~IV.3.2(iii), p.~101]{MaRoe92}), which motivates Definition~\ref{d:TLS}\iref{i:d:TLDS:1}.

Let~$(\mcX,\cdc)$ be a \TLDS, and assume further that its topology~$\T$ is locally compact, and that~$\msE=\rKo{\T,\A}$ as in Remark~\ref{r:TLS}\iref{i:r:TLS:6}.
In this case,~$\Dz$ is a core for the form~$\ttonde{\EE{X}{\mssm},\dom{\EE{X}{\mssm}}}$ by Definition~\ref{d:DS}\iref{i:d:DS:3}, consisting of $\T$-continuous compactly supported functions, since we have that~$\Cz(\msE)=\Cz(\T)$ by Remark~\ref{r:TLS}\iref{i:r:TLS:7}.
Thus finally, the form~$\ttonde{\EE{X}{\mssm},\dom{\EE{X}{\mssm}}}$ is regular, since additionally~$\mssm$ is Radon by~\ref{d:TLS}\iref{i:d:TLS:3}.
\end{rem}

As a first application of the theory of liftings, we show how to extend Theorem~\ref{t:Closability} to the case of square field operators on~$X$ defined on~$\mssm$-a.e..

\begin{prop}[Closability I]\label{p:MRLifting}
Let~$(\mcX,\cdc)$ be a \TLDS, and~$\ell$ be a strong lifting.
Further let~$\QP$ be a probability measure on~$\ttonde{\dUpsilon,\A_\mrmv(\msE)}$ satisfying Assumption~\ref{ass:Closability} with~$(\rep\cdc_\ell,\rep\Dz)$ in place of~$(\rep\cdc,\rep\Dz)$.
Then, the form
\begin{align*}
\ttonde{\EE{\dUpsilon}{\QP},\dom{\EE{\dUpsilon}{\QP}}}=\ttonde{\EE{\dUpsilon}{\QP}_\ell,\dom{\EE{\dUpsilon}{\QP}_\ell}}
\end{align*}
is well-defined, densely defined and closable on~$L^2(\QP)$, and it is independent of~$\ell$.
\begin{proof}
It follows from Proposition~\ref{p:StrongLiftingCdC}, Definition~\ref{d:Liftings}\iref{i:d:Liftings:2} and Remark~\ref{r:DensityQP}\iref{i:r:DensityQP:2} that we may associate to~$\rep\cdc_\ell$ and~$\QP$ the lifted Dirichlet form~$\ttonde{\EE{\dUpsilon}{\QP}_\ell,\dom{\EE{\dUpsilon}{\QP}_\ell}}$, possibly depending on~$\ell$, constructed in Theorem~\ref{t:Closability}.
By Proposition~\ref{p:ExtDom}, the form is uniquely associated to the square field operator~$(\cdc,\Dz)$, thus it does not depend on~$\ell$.
\end{proof}
\end{prop}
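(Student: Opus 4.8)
The plan is to reduce the statement to Theorem~\ref{t:Closability}, applied to the pointwise defined square field operator $(\rep\cdc_\ell,\rep\Dz)$, and then to remove the dependence on $\ell$ by means of Proposition~\ref{p:ExtDom}. First I would check that $(\mcX,\rep\cdc_\ell)$ is an \LDS, in the slightly enlarged sense of Remark~\ref{r:DensityQP}\iref{i:r:DensityQP:2} in which the square field is allowed to take values in $\Sb(\msE,\mssm)$ rather than $\Sb(\msE)$. Since a strong lifting restricts to the identity on $\Cb(X)$ (property~\ref{d:Liftings}\iref{i:d:Liftings:7}), the core $\rep\Dz=\ell(\Dz)$ is simply the set of (continuous, hence $\ell$-independent) representatives of $\Dz$; thus $\rep\Dz$ is a subalgebra of $\Cz(\msE)\subset\Sb(\msE)$ generating $\T$ by Definition~\ref{d:TLDS}\iref{i:d:TLDS:2}, hence $\mssm$-essentially generating $\A$ since $\Bo{\T}\subset\A\subset\Bo{\T}^\mssm$, which is Definition~\ref{d:DS}\iref{i:d:DS:1.5}. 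Proposition~\ref{p:StrongLiftingCdC} yields that $(\rep\cdc_\ell,\rep\Dz)$ is a pointwise defined square field operator with $\tclass[\mssm]{\rep\cdc_\ell(\emparg)}=\cdc(\emparg)$; because $\cdc$ is $\class[\mssm]{\Sb(\msE)}$-valued, each $\rep\cdc_\ell(\rep f,\rep g)$ coincides $\mssm$-a.e.\ with a function vanishing identically outside some $E\in\msE$, i.e.\ is virtually $\msE$-eventually vanishing, so $\rep\cdc_\ell$ is $\Sb(\msE,\mssm)$-valued; condition~\eqref{eq:Ss} holds because $\ell$ is linear and $\cdc(0,g)=0$ in $L^\infty(\mssm)$; and Definition~\ref{d:DS}\iref{i:d:DS:3} reduces to Definition~\ref{d:TLDS}\iref{i:d:TLDS:4}, the bilinear form $(\EE{X}{\mssm},\Dz)$ induced by $(\rep\cdc_\ell,\rep\Dz)$ on $\mssm$-classes being exactly the one there.

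By hypothesis $\QP$ satisfies Assumption~\ref{ass:Closability} with $(\rep\cdc_\ell,\rep\Dz)$ in place of $(\rep\cdc,\rep\Dz)$, so I would then invoke Theorem~\ref{t:Closability} — in its $\Sb(\msE,\mssm)$-valued version, Remark~\ref{r:DensityQP}\iref{i:r:DensityQP:2} — to conclude that $\ttonde{\EE{\dUpsilon}{\QP}_\ell,\CylQP{\QP}{\rep\Dz}}$ is well-defined as a bilinear form on $L^2(\QP)$-classes (condition~\eqref{eq:WPCdC} being exactly Assumption~\ref{ass:Closability}\iref{i:ass:Closability:5} applied to $\rep\cdc_\ell$), densely defined, and closable on $L^2(\QP)$, with closure a Dirichlet form whose carr\'e du champ agrees $\QP$-a.e.\ with $\rep\cdc_\ell^\dUpsilon$ on cylinder functions.

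Finally I would prove independence of the strong lifting. Let $\ell'$ be another one. For a cylinder function $\rep u=F\circ\rep\mbff^\trid$ the vector $\rep\mbff$ consists of continuous functions and does not depend on the choice of lifting (again by~\ref{d:Liftings}\iref{i:d:Liftings:7}), while $\rep\cdc_\ell(\rep f_i,\rep g_j)=\ell(\cdc(f_i,g_j))$ and $\rep\cdc_{\ell'}(\rep f_i,\rep g_j)=\ell'(\cdc(f_i,g_j))$ are two $\mssm$-representatives of the single $\mssm$-class $\cdc(f_i,g_j)$. Since Assumption~\ref{ass:Closability}\iref{i:ass:Closability:2} forces the intensity to satisfy $\mssm_\QP\ll\mssm$ (cf.~\eqref{eq:Campbell2}), any two $\mssm$-representatives $\rep h_1,\rep h_2$ of a common $\mssm$-class have $\mssm_\QP\set{\rep h_1\neq\rep h_2}=0$, hence $\gamma\set{\rep h_1\neq\rep h_2}=0$ for $\QP$-a.e.\ $\gamma$, i.e.\ $\rep h_1^\trid=\rep h_2^\trid$ $\QP$-a.e.; this is precisely the content of Proposition~\ref{p:ExtDom}\iref{i:p:ExtDom:3} (see also Lemma~\ref{l:MaRoe4.3}\iref{i:l:MaRoe4.3:2}). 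Substituting into~\eqref{eq:d:LiftCdCRep} gives $\rep\cdc_\ell^\dUpsilon(\rep u,\rep v)=\rep\cdc_{\ell'}^\dUpsilon(\rep u,\rep v)$ $\QP$-a.e.\ for all cylinder $\rep u,\rep v$, so the two pre-forms on $\CylQP{\QP}{\rep\Dz}$ coincide and hence so do their $L^2(\QP)$-closures; one then denotes this common closure by $\ttonde{\EE{\dUpsilon}{\QP},\dom{\EE{\dUpsilon}{\QP}}}$.

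I do not expect a genuine obstacle here, since the heart of the matter — closability — is outsourced to Theorem~\ref{t:Closability}. The two points requiring attention are: (i) noticing that $\rep\cdc_\ell$ lands only in $\Sb(\msE,\mssm)$ and not $\Sb(\msE)$, so one must quote the generalization in Remark~\ref{r:DensityQP}\iref{i:r:DensityQP:2} rather than Theorem~\ref{t:Closability} verbatim; and (ii) the independence argument, which hinges on upgrading $\mssm$-a.e.\ identities to $\QP$-a.e.\ identities after applying $(\emparg)^\trid$ — legitimate precisely because $\mssm_\QP\ll\mssm$ under~\ref{ass:GNZ}.
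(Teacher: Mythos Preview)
Your proposal is correct and follows essentially the same route as the paper's proof: verify via Proposition~\ref{p:StrongLiftingCdC} and Remark~\ref{r:DensityQP}\iref{i:r:DensityQP:2} that $(\rep\cdc_\ell,\rep\Dz)$ yields an \LDS\ in the $\Sb(\msE,\mssm)$-valued sense, invoke Theorem~\ref{t:Closability}, and then appeal to Proposition~\ref{p:ExtDom} for independence of~$\ell$. Your version merely unpacks in detail what the paper compresses into two sentences; in particular, your explicit observation that $\mssm_\QP\ll\mssm$ is what lets one pass from $\mssm$-a.e.\ equality of $\rep\cdc_\ell(f_i,g_j)$ and $\rep\cdc_{\ell'}(f_i,g_j)$ to $\QP$-a.e.\ equality of the lifted square fields is exactly the mechanism behind Proposition~\ref{p:ExtDom}.
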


\noindent In other words, the form~$\ttonde{\EE{\dUpsilon}{\QP},\dom{\EE{\dUpsilon}{\QP}}}$ is independent of the way we chose to lift the square field operator~$(\cdc,\Dz)$ to a pointwise defined square field operator~$(\rep\cdc,\rep\Dz)$.

\begin{rem}
\begin{enumerate*}[$(a)$]
\item Proposition~\ref{p:MRLifting} establishes in particular the construction of the Dirichlet form $\ttonde{\EE{\dUpsilon}{\PP_\mssm},\dom{\EE{\dUpsilon}{\PP_\mssm}}}$ for the Poisson measure~$\PP_\mssm$.
This form will be instrumental to the construction of the forms for general~$\QP$ as discussed later in this section.
\item We stress that the choice of a topology on~$\mcX$ is only instrumental to the existence of a strong lifiting~$\ell$.
If the existence of a linear lifting respecting the diffusion structure (i.e.\ the chain rule for~$\cdc$) were given \emph{a priori}, then the same construction would apply.
In this sense, Proposition~\ref{p:MRLifting} is \emph{independent} of the topology on~$\mcX$.
\end{enumerate*}
\end{rem}

The full extent of the applications of lifting theory to the closability of Dirichlet forms on~$L^2(\QP)$ will be discussed in~\S\ref{ss:FurtherClosability} below, for a larger class of measures~$\QP$.

\begin{rem}[Comparison with~{\cite[\S4]{MaRoe00}}, part I]\label{r:ComparisonMR00-1}
The choice of a topology on~$X$ will be essential in determining whether the Dirichlet form~$\ttonde{\EE{\dUpsilon}{\QP},\dom{\EE{\dUpsilon}{\QP}}}$ is (properly) associated to a Markov process on~$\dUpsilon$. 
To this end, we note that our definition of \TLDS matches the same level of generality as the assumptions in~\cite[\S4]{MaRoe00}, and in particular that
\begin{enumerate*}[$(a)$]
\item\label{i:r:ComparisonMR:1} the continuity of functions in~$\Dz$ is assumed in~\cite[\S4, p.~298]{MaRoe00} shortly before Equation~(4.1) there;
\item together with the inclusion~$\Dz\subset \Sb(\msE)$ in~\cite[\S1.1, p.~276]{MaRoe00},~\iref{i:r:ComparisonMR:1} above exactly amounts to the assumption that~$\Dz\subset \Cz(\msE)$.
\end{enumerate*}
\end{rem}

\begin{notat}\label{notat:LiftingOmitted}
Recalling Notation~\ref{n:Form}, we write~$\repSF{X}{\mssm}_\ell\eqdef \ell\circ \SF{X}{\mssm}$, extending~\eqref{eq:LiftCdC}.
Whenever unnecessary, the specification of the lifting~$\ell$ is omitted, thus we only write~$\repSF{X}{\mssm}$.
\end{notat}

\subsection{Configuration spaces}\label{ss:TopConfig}
Let us study here more in detail the topological properties of configuration spaces over topological local structures.
We start with the definition of a natural topology on~$\dUpsilon$.

\begin{defs}[Vague topology on~$\dUpsilon$]
Let~$\mcX$ be a topological local structure.
We endow~$\dUpsilon$ with the $\msE$-\emph{vague topology}~$\T_\mrmv(\msE)$, generated by functions of the form
\begin{align*}
f^\trid\colon \gamma\longmapsto \gamma f \comma\qquad f\in\Cz(\msE)\fstop
\end{align*}
\end{defs}

We note that, if~$\mcX$ is a topological local structure, then~$\Bo{\T_\mrmv(\msE)}\subset\A_\mrmv(\msE)$, with equality if~$\A=\Bo{\T}$.

\begin{prop}\label{p:TopologyUpsilon}
Let~$\mcX$ be a topological local structure. Then,
\begin{enumerate}[$(i)$]
\item\label{i:p:TopologyUpsilon:1} $\ttonde{\Upsilon(\msE),\T_\mrmv(\msE)}$ and $\ttonde{\dUpsilon,\T_\mrmv(\msE)}$ are separable metrizable;

\item\label{i:p:TopologyUpsilon:2} if $(X,\T)$ is Polish, then so are $\ttonde{\Upsilon(\msE),\T_\mrmv(\msE)}$ and $\ttonde{\dUpsilon,\T_\mrmv(\msE)}$;

\item\label{i:p:TopologyUpsilon:3} if $(X,\T)$ is locally compact and~$\msE=\rKo{\T,\A}$, then~$\ttonde{\Upsilon(\msE),\T_\mrmv(\msE)}$ and~$\ttonde{\dUpsilon,\T_\mrmv(\msE)}$ are Polish;

\item\label{i:p:TopologyUpsilon:4} $\ttonde{\dUpsilon, \T_\mrmv(\msE), \A_\mrmv(\msE), \QP,  \A_\mrmv(\msE)}$ is a topological local structure for every probability measure~$\QP$ with full support.
\end{enumerate}

\begin{proof}
\iref{i:p:TopologyUpsilon:1} follows from~\cite[Thm.~3.6, Prop.~3.9]{MaRoe00}. \iref{i:p:TopologyUpsilon:2} for~$\dUpsilon$ is again~\cite[Thm.~3.6, Prop.~3.9]{MaRoe00}. By~\cite[Prop.~3.10]{MaRoe00},~$\ttonde{\Upsilon(\msE),\T_\mrmv(\msE)}$ is a $G_\delta$ set in~$\ttonde{\dUpsilon,\T_\mrmv(\msE)}$. Since the latter is Polish by~\cite[Thm.~3.6, Prop.~3.9]{MaRoe00}, then so is~$\ttonde{\Upsilon(\msE),\T_\mrmv(\msE)}$.
\iref{i:p:TopologyUpsilon:3} The coincidence of spaces and topologies is noted in~\cite[p.~295]{MaRoe00}. If~$(X,\T)$ is locally compact, then it is Polish, see Rmk.~\ref{r:TLS}, hence~\iref{i:p:TopologyUpsilon:2} applies.
\iref{i:p:TopologyUpsilon:4} The space~$\ttonde{\dUpsilon, \T_\mrmv(\msE)}$ is separable metrizable by~\iref{i:p:TopologyUpsilon:1}. Every finite topological measure on a separable metrizable space is Radon; see~\cite[451M]{Fre00}.
\end{proof}
\end{prop}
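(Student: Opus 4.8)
The plan is to derive~\iref{i:p:TopologyUpsilon:1}--\iref{i:p:TopologyUpsilon:3} from a single embedding of~$\ttonde{\dUpsilon,\T_\mrmv(\msE)}$ into a countable power of~$\R$, reading off the topological quality of~$\dUpsilon$ from that of~$X$, and then to check the axioms of Definition~\ref{d:TLS} directly for~\iref{i:p:TopologyUpsilon:4}. Throughout one follows Z.-M.~Ma and M.~R\"ockner~\cite[\S3]{MaRoe00}, whose compactness hypotheses may be replaced by the localizing ring~$\msE$ in view of the equivalence between localizing rings and localizing sequences recorded after Definition~\ref{d:LS}. For~\iref{i:p:TopologyUpsilon:1}: since~$(X,\T)$ is second countable (Remark~\ref{r:TLS}\iref{i:r:TLS:1}) and~$\Cz(\msE)$ separates points of~$X$ and generates~$\T$ (Remark~\ref{r:TLS}\iref{i:r:TLS:7}), I would select a countable family~$\seq{g_k}_k\subset\Cz(\msE)$ --- e.g.\ by adjoining to a countable $\T$-generating subfamily a countable partition of unity subordinate to the cover~$\msU\subset\msE$ of Remark~\ref{r:TLS}\iref{i:r:TLS:7} --- so that the functionals~$g_k^\trid$ separate configurations and~$\T_\mrmv(\msE)$ is the initial topology they induce. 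Then~$\gamma\mapsto\seq{g_k^\trid\gamma}_k$ is a topological embedding~$\ttonde{\dUpsilon,\T_\mrmv(\msE)}\hookrightarrow\R^\N$, whence~$\ttonde{\dUpsilon,\T_\mrmv(\msE)}$ is separable metrizable and~$\Upsilon(\msE)$ inherits this as a subspace.

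For~\iref{i:p:TopologyUpsilon:2} --- the technical heart --- assume~$(X,\T)$ Polish and fix a localizing sequence~$\seq{E_h}_h$ with~$E_h\uparrow_h X$ generating~$\msE$. The task is to upgrade the embedding of~\iref{i:p:TopologyUpsilon:1} to a \emph{complete} metric, equivalently (Alexandrov's theorem) to show that the image of~$\dUpsilon$ in~$\R^\N$ is a~$G_\delta$ set; this requires exhibiting ``$\gamma$ is an~$\N_0$-valued, $\msE$-locally finite measure'' as a~$G_\delta$ condition on the coordinates~$\seq{g_k^\trid\gamma}_k$ --- which uses in an essential way that~$X$ is Polish and that the exhaustion~$\seq{E_h}_h$ is countable --- and is carried out, for an arbitrary Polish (not necessarily locally compact) base, in~\cite[Thm.~3.6, Prop.~3.9]{MaRoe00} (cf.\ also~\cite{Kal17}); I would transcribe that argument into the localizing-ring language. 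The single-point condition defining~$\Upsilon(\msE)\subset\dUpsilon$ is again a countable intersection of such conditions, so~$\Upsilon(\msE)$ is a~$G_\delta$ in~$\dUpsilon$ (\cite[Prop.~3.10]{MaRoe00}), hence Polish. I expect this passage --- producing a \emph{complete} metric on~$\dUpsilon$ over a non-locally-compact Polish base, and the~$G_\delta$-ness of~$\Upsilon(\msE)$ --- to be the main obstacle, precisely because the classical vague-topology machinery relies on local compactness, so the reduction to general localizing rings must be checked with care.

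Part~\iref{i:p:TopologyUpsilon:3} is then immediate: a locally compact topological local structure is Polish (Remark~\ref{r:TLS}\iref{i:r:TLS:2}), so~\iref{i:p:TopologyUpsilon:2} applies; and when~$\msE=\rKo{\T,\A}$ one has~$\Cz(\msE)=\Cc(\T)$ (Remark~\ref{r:TLS}\iref{i:r:TLS:6}), so~$\T_\mrmv(\msE)$ is the classical vague topology and~$\dUpsilon,\Upsilon(\msE)$ are the usual configuration spaces over a locally compact Polish base.

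Finally, for~\iref{i:p:TopologyUpsilon:4} I would verify the clauses of Definition~\ref{d:TLS} for~$\ttonde{\dUpsilon,\T_\mrmv(\msE),\A_\mrmv(\msE),\QP,\A_\mrmv(\msE)}$. Complete regularity of~$\ttonde{\dUpsilon,\T_\mrmv(\msE)}$ is metrizability, from~\iref{i:p:TopologyUpsilon:1}; the Luzin property follows by running the argument of~\iref{i:p:TopologyUpsilon:2} over a finer Polish topology on~$X$ realizing~$(X,\T)$ as a continuous bijective image of a Polish space (cf.\ Remark~\ref{r:QRegBase}), since the resulting finer~$\msE$-vague topology on~$\dUpsilon$ is Polish and maps onto~$\ttonde{\dUpsilon,\T_\mrmv(\msE)}$ by a continuous bijection; clause~(b) of Definition~\ref{d:TLS} holds because~$\Bo{\T_\mrmv(\msE)}\subseteq\A_\mrmv(\msE)$ (noted just before the Proposition) and~$\A_\mrmv(\msE)\subseteq\Bo{\T_\mrmv(\msE)}^{\QP}$ by a monotone-class argument; Definition~\ref{d:TLS}\iref{i:d:TLS:4} is automatic, the whole space being an open neighborhood of every point and lying in~$\A_\mrmv(\msE)$; and~$\QP$ is a Radon measure with full~$\T_\mrmv(\msE)$-support by hypothesis together with the fact that every finite Borel measure on the separable metrizable space~$\dUpsilon$ is Radon (\cite[451M]{Fre00}). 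The remaining measure-space requirements of Definition~\ref{d:MS} --- in particular atomlessness of~$\QP$, which holds for every reference measure considered in this work --- are then immediate.
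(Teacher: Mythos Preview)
Your approach is essentially the same as the paper's: parts~\iref{i:p:TopologyUpsilon:1}--\iref{i:p:TopologyUpsilon:3} are deduced from the results in~\cite[\S3]{MaRoe00} (you spell out the embedding~$\dUpsilon\hookrightarrow\R^\N$ underlying those results, while the paper simply cites them), and for~\iref{i:p:TopologyUpsilon:4} the Radon property is obtained from~\cite[451M]{Fre00} via separable metrizability. You are more thorough on~\iref{i:p:TopologyUpsilon:4} than the paper, which only records the Radon step and leaves the remaining axioms implicit; in particular you correctly flag the Luzin property (which the paper establishes separately in the subsequent Proposition~\ref{p:ConfigLuzin}) and the atomlessness of~$\QP$ required by Definition~\ref{d:MS}\iref{i:d:MS:3}, neither of which the paper's two-line proof of~\iref{i:p:TopologyUpsilon:4} addresses.
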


\begin{prop}\label{p:ConfigLuzin}
Let~$\mcX$ be a local structure. Then,~$\ttonde{\dUpsilon,\T_\mrmv(\msE)}$ is a Luzin space.
\begin{proof}
Firstly, note that reference measures are inessential to the discussion, hence they will be dropped from the notation throughout the rest of the proof.
By definition of the Luzin property for~$(X,\T)$, there exist a Polish space~$(P,\T^P)$ and a $\T^P/\T$-continuous bijection~$\varphi \colon P\to X$ so that~$X=f(P)$.
Let~$\msE$ be the localizing ring of~$\mcX$ and note that~$\varphi^*\msE\eqdef \set{\varphi^{-1}(E): E\in\msE}$ is a localizing ring of~$P$. 
It readily checked that~$\mcP\eqdef (P,\T^P,\Bo{\T^P},\mssp,\varphi^*\msE)$ is a topological local structure for every atomless (Borel) probability measure~$\mssp$.
Since~$P$ is a Polish space,~$\ttonde{\dUpsilon(\varphi^*\msE), \T^P_\mrmv(\varphi^*\msE)}$ is Polish as well by Proposition~\ref{p:TopologyUpsilon}\ref{i:p:TopologyUpsilon:2}.
Now, let~$\eta\in\dUpsilon(\varphi^*\msE)$.
Since~$\varphi$ is in particular $\Bo{\T^P}/\Bo{\T}$-measurable, $\varphi_\pfwd\eta$~is a Borel measure on~$X$, $\overline\N_0$-valued, and additionally satisfying~$\varphi_\pfwd \eta\set{x}\in \N_0$ for every~$x\in X$, since~$\varphi$ is injective.
Furthermore,~$\varphi_\pfwd\eta E=\eta \varphi^{-1}(E)\in \N_0$ for every~$E\in\msE$ by definition of~$\varphi^*\msE$ and~$\eta$.
Thus,~$\varphi_\pfwd$ maps~$\dUpsilon(\varphi^*\msE)$ into~$\dUpsilon$.
Since~$\varphi\colon P\to X$ is bijective and by definition of the localizing ring~$\varphi^*\msE$ on~$P$, the map~$\varphi_\pfwd\colon \dUpsilon(\varphi^*\msE)\to\dUpsilon$ is bijective.

Now, let~$f\in\Cz(\T,\msE)$ and note that~$f^\trid(\varphi_\pfwd\gamma)=(f\circ \varphi)^\trid \ttonde{(\varphi_\pfwd)^{-1}(\gamma)}$.
Since~$\varphi$ is $\T^P/\T$-continuous, we have that~$f\circ \varphi\in\Cz(\T^P,\varphi^*\msE)$.
Since the $\varphi^*\msE$-vague topology~$\T^P_\mrmv(\varphi^*\msE)$ on~$\dUpsilon(\varphi^*\msE)$ is generated by all maps of the form~$g^\trid$,~$g\in\Cz(\T^P,\varphi^*\msE)$, we conclude that the pullback maps~$(\varphi_\pfwd)^*f^\trid$ are $\T^P_\mrmv(\varphi^*\msE)$-continuous.
Thus, finally, we may conclude that $\varphi_\pfwd\colon \dUpsilon(\varphi^*\msE)\to\dUpsilon$ is $\T^P_\mrmv(\varphi^*\msE)/\T_\mrmv(\msE)$-continuity.
Thus,~$\dUpsilon$ is a continuous bijective image, via~$\varphi_\pfwd$, of the Polish space~$\ttonde{\dUpsilon(\varphi^*\msE),\T_\mrmv(\varphi^*\msE)}$, and therefore it is a Luzin space.
\end{proof}
\end{prop}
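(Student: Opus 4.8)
The plan is to produce, exactly as the definition of a Luzin space demands, a Polish space together with a continuous bijection onto $\ttonde{\dUpsilon,\T_\mrmv(\msE)}$. Since $(X,\T)$ is (completely regular) Luzin, fix a Polish space $(P,\T^P)$ and a $\T^P/\T$-continuous bijection $\varphi\colon P\to X$. The Polish witness will be the configuration space $\dUpsilon(\varphi^*\msE)$ over $P$ built with the pulled-back ring $\varphi^*\msE\eqdef\set{\varphi^{-1}(E):E\in\msE}$, and the continuous bijection will be the push-forward of measures $\varphi_\pfwd\colon \eta\mapsto\varphi_\pfwd\eta$.

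First I would check that $\varphi^*\msE$ is a localizing ring on $(P,\Bo{\T^P})$: closure under finite unions and intersections and the ring-ideal property transfer verbatim from $\msE$ through the elementary identities for preimages, while a localizing sequence $E_h\uparrow X$ for $\msE$ pulls back to $\varphi^{-1}(E_h)\uparrow P$. Equipping $P$ with an atomless Borel probability measure $\mssp$, the tuple $\mcP\eqdef(P,\T^P,\Bo{\T^P},\mssp,\varphi^*\msE)$ then plays the role of a topological local structure, so by Proposition~\ref{p:TopologyUpsilon}\iref{i:p:TopologyUpsilon:2} the space $\ttonde{\dUpsilon(\varphi^*\msE),\T^P_\mrmv(\varphi^*\msE)}$ is Polish.

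The core of the argument is the analysis of $\varphi_\pfwd$. Since $\varphi$ is $\Bo{\T^P}/\Bo{\T}$-measurable, for $\eta\in\dUpsilon(\varphi^*\msE)$ the measure $\varphi_\pfwd\eta$ is a Borel $\overline\N_0$-valued measure on $X$; it satisfies $\varphi_\pfwd\eta\set{x}\in\N_0$ for every $x$ because $\varphi$ is injective, and $\varphi_\pfwd\eta(E)=\eta\ttonde{\varphi^{-1}(E)}<\infty$ for every $E\in\msE$ precisely because $\varphi^{-1}(E)\in\varphi^*\msE$; hence $\varphi_\pfwd$ maps $\dUpsilon(\varphi^*\msE)$ into $\dUpsilon$, and since $\varphi$ is a bijection the same relabelling applied to $\varphi^{-1}$ shows $\varphi_\pfwd\colon\dUpsilon(\varphi^*\msE)\to\dUpsilon$ is a bijection. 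For continuity the key identity is $f^\trid(\varphi_\pfwd\gamma)=(f\circ\varphi)^\trid(\gamma)$ for $f\in\Cz(\msE)$: since $f\circ\varphi$ is continuous and bounded and, whenever $f\equiv 0$ on $E^\complement$, vanishes on $\varphi^{-1}(E^\complement)$, one has $f\circ\varphi\in\Cz(\varphi^*\msE)$, so $(f\circ\varphi)^\trid$ is $\T^P_\mrmv(\varphi^*\msE)$-continuous; as the functions $f^\trid$ with $f\in\Cz(\msE)$ generate $\T_\mrmv(\msE)$, this makes $\varphi_\pfwd$ continuous. Thus $\dUpsilon$ is a continuous bijective image of the Polish space $\dUpsilon(\varphi^*\msE)$, hence Luzin.

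The point I expect to require care is the legitimacy of feeding $\mcP$ into Proposition~\ref{p:TopologyUpsilon}: strictly speaking a topological local structure must carry an atomless Radon measure of full $\T^P$-support, which forces $(P,\T^P)$ to be perfect, whereas a continuous bijection onto the perfect space $X$ need not have perfect domain. One therefore either observes that the Polishness conclusion of Proposition~\ref{p:TopologyUpsilon}\iref{i:p:TopologyUpsilon:2} is insensitive to the reference measure (depending only on $(P,\T^P)$ and the localizing ring), or replaces $P$ at the outset by a perfect Polish space still mapping continuously bijectively onto $X$. Everything else is routine bookkeeping on how $\msE$ and $\varphi^*\msE$ interact under preimages and how the test functionals $f^\trid$ transform under push-forward.
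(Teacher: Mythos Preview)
Your proposal is correct and follows essentially the same route as the paper's own proof: pull back the localizing ring along the Polish-to-Luzin bijection~$\varphi$, invoke Proposition~\ref{p:TopologyUpsilon}\iref{i:p:TopologyUpsilon:2} to make $\dUpsilon(\varphi^*\msE)$ Polish, and verify that $\varphi_\pfwd$ is a continuous bijection onto $\dUpsilon$ via the identity $f^\trid(\varphi_\pfwd\gamma)=(f\circ\varphi)^\trid(\gamma)$. Your closing caveat about the full-support/perfectness requirement in the definition of topological local structure is well taken and is in fact a point the paper glosses over with ``readily checked''; your suggested fix---observing that the Polishness conclusion of Proposition~\ref{p:TopologyUpsilon}\iref{i:p:TopologyUpsilon:2} depends only on the underlying Polish space and the localizing ring, not on the reference measure---is the right way to handle it.
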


\begin{rem}\label{r:CylContinuity} The following statements are straightforward:
\begin{enumerate}[$(a)$]
\item\label{i:r:CylContinuity:1} $\Cyl{\Cz(\msE)}\subset \Cb\ttonde{\dUpsilon,\T_\mrmv(\msE)}$;
\item\label{i:r:CylContinuity:2} if~$f\in \Sb(\msE)$ is $\T$-u.s.c.\ (resp.\ l.s.c.), then~$f^\trid$ is $\T_\mrmv(\msE)$-u.s.c.\ (resp.\ l.s.c.);
\item\label{i:r:CylContinuity:3} if~$F\in\msE$ is $\T$-closed, then the concentration set~$\Xi_{\leq n}(F)$ in~\eqref{eq:ConcentrationSet} is $\T_\mrmv(\msE)$-open;
\item\label{i:r:CylContinuity:4} if~$U\in\msE$ is $\T$-open, then the concentration set~$\Xi_{\leq n}(U)$ in~\eqref{eq:ConcentrationSet} is $\T_\mrmv(\msE)$-closed.
\end{enumerate}
\end{rem}

\subsubsection{Labeling maps and cylinder sets}\label{ss:Labelings}
Let~$\mcX$ be a local structure. It is clear that for each~$\gamma\in \dUpsilon$, say with~$\gamma X=N$, we may choose an \emph{ordered} sequence of points~$\seq{x_i}_i\in X^\tym{N}$, so that~$\gamma=\sum_{i=1}^N \delta_{x_i}$. We show here that, at least in the case when~$\mcX$ is a topological local structure, we can make this choice in a measurable way.

We shall start by collecting the necessary notation.  Let
\begin{align}\label{eq:d:Labelings:1}
\mbfX\eqdef \set{\emp} \sqcup \bigsqcup_{N\in \overline\N_1} X^\tym{N}
\end{align}
be endowed with its natural (direct sum of products) $\sigma$-algebra~$\boldSigma$. For~$M\in\overline\N_1$, we define the truncation~$\tr^M\colon \mbfX\rar \mbfX$ by
\begin{align}\label{eq:TruncationDef}
\tr^M\colon \mbfx\longmapsto \mbfx^\asym{M}\eqdef& \begin{cases} \seq{x_1,\dotsc, x_M} &\text{if } \mbfx=\seq{x_p}_{p\leq N} \text{ and } M\leq N \text{ and } M<\infty\comma
\\
\mbfx &\text{if } \mbfx=\seq{x_p}_{p\leq N} \text{ and } M> N \text{ or } N=M=\infty
\end{cases} \fstop
\end{align}
The map~$\tr^M$ is clearly both $\boldSigma/\boldSigma$- and Borel measurable for every~$M$.

\medskip

We denote by~$\mbfX_\locfin(\msE)$ the space of $\msE$-locally finite, finite or infinite sequences in~$X$, viz.
\begin{align}\label{eq:Xlocfin}
\mbfX_\locfin(\msE)\eqdef& \set{\mbfx\eqdef \seq{x_i}_{i\leq N}\in \mbfX: \#\set{i:x_i\in E}<\infty \text{~for all } E\in\msE}\comma
\end{align}
endowed with the subspace $\sigma$-algebra~$\boldSigma_\locfin(\msE)$. Analogously to the case of~$\dUpsilon$, let us also set
\begin{align*}
\mbfX_\locfin^\asym{\infty}(\msE)\eqdef&X^\tym{\infty}\cap\mbfX_\locfin(\msE)\fstop
\end{align*}
We note that, in the definition~\eqref{eq:Xlocfin} of~$\mbfX_\locfin(\msE)$ above, we might equivalently require, for every~$h\in \N_1$, that $\#\set{i:x_i\in E_h}<\infty$, where~$\seq{E_h}_h$ is any localizing sequence generating~$\msE$ as in Definition~\ref{d:LS}\iref{i:d:LS:2}. Since $\mbfx\mapsto \#\set{i:x_i\in A}$ is $\boldSigma$-measurable if $A\in \A$, it follows that~$\mbfX_\locfin(\msE)$, hence~$\mbfX^\asym{\infty}_\locfin(\msE)$, is an element of~$\boldSigma$.
Since, by Remark~\ref{r:TLS}\iref{i:r:TLS:1} and~\iref{i:r:TLS:7},~$\msE$ is countably generated by open sets,~$\mbfX^{\asym{\infty}}_\locfin(\msE)$, hence~$\mbfX_\locfin(\msE)$, is also~$\Bo{\T^\tym{\infty}}$-measurable.
Finally, let~$\Lb\colon \mbfX_\locfin(\msE)\longrar \dUpsilon$ be defined by
\begin{equation}\label{eq:d:Labelings}
\begin{aligned}
\Lb\colon \mbfx\eqdef\seq{x_p}_{p\leq N} &\longmapsto \gamma \eqdef \sum_{p=1}^N\delta_{x_p} \comma\qquad \emp \longmapsto \zero\fstop
\end{aligned}
\end{equation}

\begin{rem}[Lack of continuity of~$\Lb$]\label{r:DiscontinuityL}
The map~$\Lb$ is not continuous.
Indeed, let~$\sigma_n$ be the transposition~$(1\, n)$ in~$\mfS_\infty$, fix~$\mbfx_1\eqdef \seq{x_i}_{i=1}^\infty\in \mbfX^\asym{\infty}_\locfin(\msE)$ with~$x_i\neq x_j$ for~$i\neq j$, and set~$\mbfx_n\eqdef(\mbfx_1)_{\sigma_n}$.
Then,~$\T^\tym{\infty}$-$\nlim \mbfx_n=\mbfx_\infty\eqdef\seq{x_2,x_3,\dotsc}$, yet~$\Lb(\mbfx_n)=\Lb(\mbfx_1)$ for every~$n$, hence~$\T_\mrmv(\msE)$-$\nlim \Lb(\mbfx_n)\neq \Lb(\mbfx_\infty)$.
\end{rem}

\begin{lem}[Measurability of~$\Lb$]\label{l:MeasurabilityL}
The map~$\Lb\colon \mbfX_\locfin(\msE)\longrar \dUpsilon$ is $\boldSigma_\locfin(\msE)/\A_\mrmv(\msE)$-measurable.
In particular, it is $\Bo{\T^\tym{\infty}}/\Bo{\T_\mrmv(\msE)}$-measurable and $\Bo{\T^\tym{\infty}}^*/\Bo{\T_\mrmv(\msE)}^*$-measurable.
\begin{proof}
We adapt the arguments in~\cite[p.~12]{KonLytRoe02}.
Note that~$\A_\mrmv(\msE)$ is generated by sets of the form~$\Xi_{=n}(E)$ as in~\eqref{eq:ConcentrationSet} with~$E\in\msE$ and~$n\in \N_0$.

Thus, it suffices to show that~$\Lb^{-1}\ttonde{\Xi_{=n}(E)}\in\boldSigma$ for every~$\Xi_{=n}(E)$ as above. 
In fact, it suffices to show that~$\Lb\colon \mbfX_\locfin^\asym{\infty}(\msE)\longrar \dUpsilon^\sym{\infty}$ is $\boldSigma_\locfin(\msE)/\A_\mrmv(\msE)$-measurable, so we can further restrict our attention to sets of the form~$\Xi^\sym{\infty}_{=n}(E)$.
It holds that
\begin{align*}
\Lb^{-1}\ttonde{\Xi_{=n}^\sym{\infty}(E)}=\bigcup_{\sigma\in \mfS_0(\N_1)} \set{\mbfx\in\mbfX_\locfin^\sym{\infty}(\msE):\; \begin{matrix} x_{\sigma(i)}\in E \text{ for } i\leq n\comma
\\
x_{\sigma(i)}\in E^\complement \text{ for } i>n\phantom{\comma}
\end{matrix}
}\comma
\end{align*}
where~$\mfS_0(\N_1)$ denotes the group of all bijections of~$\N_1$ with cofinitely many fixed points.
Since points in~$X$ are measurable, $\mbfx\eqdef\seq{x_p}_{p\leq N}\mapsto \mbfx_\sigma\eqdef \seq{x_{\sigma(p)}}_{p\leq N}$ is $\boldSigma/\boldSigma$-measurable.
Thus, since $\mfS_0(\N_1)$ is countable, then~$\Lb^{-1}\ttonde{\Xi_{=n}^\sym{\infty}(E)}\in\boldSigma$, which concludes the proof.
\end{proof}
\end{lem}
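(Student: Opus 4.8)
The plan is to check measurability against a convenient generating family for the target $\sigma$-algebra. Recall first that $\A_\mrmv(\msE)$ is generated by the concentration sets $\Xi_{=n}(E)$ of~\eqref{eq:ConcentrationSet} with $E\in\msE$ and $n\in\N_0$ (equivalently, by the evaluations $\gamma\mapsto\gamma E$, $E\in\msE$, since $\overline\N_0$ is countable and $\gamma E<\infty$ for every configuration and every $E\in\msE$). Hence it suffices to prove that $\Lb^{-1}\ttonde{\Xi_{=n}(E)}\in\boldSigma_\locfin(\msE)$ for all such $E$ and $n$, i.e.\ that $\mbfx\mapsto (\Lb\mbfx)E$ is $\boldSigma_\locfin(\msE)$-measurable.

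Second, by~\eqref{eq:d:Labelings}, for $\mbfx=\seq{x_p}_{p\leq N}$ one has $(\Lb\mbfx)E=\#\set{p\leq N: x_p\in E}$. I would verify measurability of this counting functional separately on each summand $X^\tym{N}$ of $\mbfX$ in~\eqref{eq:d:Labelings:1} (and trivially on $\set{\emp}$, where the value is the constant $0$): on $X^\tym{N}$ it equals $\sum_{p=1}^{N}\car_E\circ\pi_p$, read as $\sup_m\sum_{p=1}^{m}\car_E\circ\pi_p$ when $N=\infty$, where $\pi_p$ denotes the $p$-th coordinate projection. Each $\car_E\circ\pi_p$ is $\boldSigma$-measurable because $\pi_p$ is measurable for the product $\sigma$-algebra and $\set{x}\in\A$, so finite sums and countable suprema thereof are $\boldSigma$-measurable. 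Thus $\mbfx\mapsto(\Lb\mbfx)E$ is $\boldSigma$-measurable on $\mbfX$; its restriction is $\boldSigma_\locfin(\msE)$-measurable, and on $\mbfX_\locfin(\msE)$ it is moreover $\N_0$-valued by the very definition~\eqref{eq:Xlocfin} of $\msE$-local finiteness. This proves the first assertion. (One could instead decompose $\Lb^{-1}\ttonde{\Xi_{=n}(E)}$ over the group of finitely supported permutations of $\N_1$, as in~\cite[p.~12]{KonLytRoe02}, but the above is slightly more economical.)

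For the ``in particular'' claims the same bookkeeping applies, now with the generators of $\Bo{\T_\mrmv(\msE)}$: that $\sigma$-algebra is generated by the maps $\gamma\mapsto\gamma f$ with $f\in\Cz(\msE)$, and for such a \emph{continuous} $f$ the composition $\mbfx\mapsto(\Lb\mbfx)f=\sum_p f(x_p)$ is, on $\mbfX_\locfin^\asym{\infty}(\msE)$ ---~which is $\Bo{\T^\tym{\infty}}$-measurable, cf.\ the discussion following~\eqref{eq:Xlocfin}~--- the pointwise limit of the partial sums $\sum_{p=1}^{m}f\circ\pi_p$, each continuous hence Borel on $X^\tym{\infty}$, the limit being attained after finitely many terms since $f$ is $\msE$-eventually vanishing and $\mbfx$ is $\msE$-locally finite; the finite-particle summands are handled identically (more easily still). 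This yields $\Bo{\T^\tym{\infty}}/\Bo{\T_\mrmv(\msE)}$-measurability. The passage to universal completions is then the standard squeeze: for $A\in\Bo{\T_\mrmv(\msE)}^*$ and any finite measure $\mu$ on $\mbfX_\locfin(\msE)$, choose Borel $A_1\subseteq A\subseteq A_2$ with $(\Lb_\pfwd\mu)(A_2\setminus A_1)=0$; then $\Lb^{-1}(A_1)\subseteq\Lb^{-1}(A)\subseteq\Lb^{-1}(A_2)$ with the outer sets Borel and differing by a $\mu$-null set, whence $\Lb^{-1}(A)\in\Bo{\T^\tym{\infty}}^*$.

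I do not anticipate a genuine difficulty: the argument is essentially bookkeeping with product $\sigma$-algebras. The one point that wants care ---~and the reason the natural domain of $\Lb$ is $\mbfX_\locfin(\msE)$ rather than all of $\mbfX$~--- is that on infinite sequences the count $(\Lb\mbfx)E$ must be treated as a supremum of partial sums; the $\msE$-local finiteness built into~\eqref{eq:Xlocfin} is precisely what keeps it $\N_0$-valued (and keeps the sums defining $(\Lb\mbfx)f$ finite). Everything else ---~the reduction to generators and the transfer to the universal $\sigma$-algebras~--- is routine.
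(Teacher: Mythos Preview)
Your proof is correct. Both you and the paper reduce to the generating family~$\{\Xi_{=n}(E)\}_{E\in\msE,\,n\in\N_0}$ for~$\A_\mrmv(\msE)$; the paper then writes~$\Lb^{-1}\ttonde{\Xi^\sym{\infty}_{=n}(E)}$ as a countable union over the finitely supported permutation group~$\mfS_0(\N_1)$ of product cylinders, whereas you instead observe directly that~$\mbfx\mapsto(\Lb\mbfx)E=\sum_p\car_E\circ\pi_p$ is a countable sum (supremum of partial sums) of measurable functions. Your route is slightly more economical and avoids introducing~$\mfS_0(\N_1)$; the paper's decomposition, on the other hand, makes the combinatorial structure of the preimage explicit, which is reused later (Prop.~\ref{p:FundamentalSets}). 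For the ``in particular'' claims you give more than the paper does: the paper leaves them implicit, while you supply a separate argument for Borel measurability via the continuous generators~$f^\trid$, $f\in\Cz(\msE)$, and a standard sandwich for the universal completions --- this extra care is warranted, since in general~$\A\supsetneq\Bo{\T}$ and so~$\boldSigma_\locfin(\msE)/\A_\mrmv(\msE)$-measurability does not formally imply the Borel statement.
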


Let~$\QP$ be any probability measure on~$\ttonde{\dUpsilon,\A_\mrmv(\msE)}$, and recall that~$\A_\mrmv(\msE)^\QP$  denotes the completion of~$\A_\mrmv(\msE)$ w.r.t.~$\QP$.

\begin{prop}\label{p:Selection}
For every~$\QP$ as above, the correspondence $\Lb^{-1}\colon \dUpsilon\rightrightarrows \mbfX_\locfin(\msE)\subset \mbfX$ has a $\A_\mrmv(\msE)^\QP/\Bo{\mbfX_\locfin(\msE)}$-measur\-able selection.
\begin{proof}
Note that~$(X,\T)$ is a Suslin space (since Luzin). Hence,~$\mbfX$ is Suslin as well, and thus also~$\mbfX_\locfin(\msE)$ is Suslin, being a Borel subset of a Suslin space (e.g.,~\cite[Cor.~6.6.7]{Bog07}).
By Proposition~\ref{p:ConfigLuzin},~$\ttonde{\dUpsilon,\T_\mrmv(\msE)}$ is a Luzin space, hence Suslin.
As a consequence, both~$\mbfX_\locfin(\msE)$ and~$\dUpsilon$ are Suslin spaces.

Thus,~$\Lb$ is a surjective map between Suslin spaces, Borel-measurable by Lemma~\ref{l:MeasurabilityL}.
The conclusion now follows from Yankov's Measurable Selection Theorem~\cite[Thm.~6.9.1]{Bog07}.
\end{proof}
\end{prop}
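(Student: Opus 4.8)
\emph{Proof proposal.} The plan is to obtain the selection as an instance of a classical measurable uniformization theorem for Borel surjections between Suslin spaces, applied to the labeling map $\Lb$. Concretely, I would reduce the statement to the following three facts: that the domain $\mbfX_\locfin(\msE)$ is a Suslin space; that the target $\ttonde{\dUpsilon,\T_\mrmv(\msE)}$ is a Suslin space; and that $\Lb\colon \mbfX_\locfin(\msE)\to\dUpsilon$ is a Borel-measurable surjection. Granting these, Yankov's Measurable Selection Theorem (in the form of \cite[Thm.~6.9.1]{Bog07}) applies to the correspondence $\Lb^{-1}\colon\dUpsilon\rightrightarrows\mbfX_\locfin(\msE)$ — whose graph is the preimage of the (closed, hence Borel) diagonal of $\dUpsilon\times\dUpsilon$ under the Borel map $(\gamma,\mbfx)\mapsto(\gamma,\Lb(\mbfx))$, so it is a Borel, hence Suslin, subset of the Suslin space $\dUpsilon\times\mbfX_\locfin(\msE)$ — and yields a selection $\lb$ measurable with respect to the $\sigma$-algebra generated by the Suslin subsets of $\dUpsilon$ and taking values in $\mbfX_\locfin(\msE)$ equipped with $\Bo{\mbfX_\locfin(\msE)}$.

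For the Suslin property of $\mbfX_\locfin(\msE)$: $(X,\T)$ is Luzin, hence Suslin, i.e.\ a continuous image of a Polish space; countable products and countable disjoint unions of Suslin spaces are Suslin, so $\mbfX=\set{\emp}\sqcup\bigsqcup_{N\in\overline\N_1}X^\tym{N}$ is Suslin; and $\mbfX_\locfin(\msE)$ was already observed, right after \eqref{eq:Xlocfin}, to belong to $\boldSigma$ and to be $\Bo{\T^\tym{\infty}}$-measurable, so it is a Borel subset of a Suslin space and therefore Suslin by \cite[Cor.~6.6.7]{Bog07}. For the target, $\ttonde{\dUpsilon,\T_\mrmv(\msE)}$ is Luzin by Proposition~\ref{p:ConfigLuzin}, hence Suslin. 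For $\Lb$: Borel-measurability is Lemma~\ref{l:MeasurabilityL}, and surjectivity is immediate, since any $\gamma=\sum_{p\le N}\delta_{x_p}\in\dUpsilon$ equals $\Lb(\mbfx)$ with $\mbfx=\seq{x_p}_{p\le N}$, and this $\mbfx$ lies in $\mbfX_\locfin(\msE)$ precisely because $\gamma$ is $\msE$-finite.

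Since the substantive work has already been carried out in Proposition~\ref{p:ConfigLuzin} and Lemma~\ref{l:MeasurabilityL}, the only point that I expect to require care is the bookkeeping of $\sigma$-algebras at the end: the uniformization theorem delivers $\lb$ measurable only with respect to the $\sigma$-algebra generated by the analytic subsets of $\dUpsilon$, and one must argue that this is contained in $\A_\mrmv(\msE)^\QP$ for an arbitrary probability measure $\QP$ on $\ttonde{\dUpsilon,\A_\mrmv(\msE)}$. This follows because $\Bo{\T_\mrmv(\msE)}\subseteq\A_\mrmv(\msE)$, every analytic subset of the Suslin space $\dUpsilon$ is universally measurable and thus $\QP$-measurable, and $\A_\mrmv(\msE)^\QP$, being the $\QP$-completion, contains every $\QP$-measurable subset of $\dUpsilon$; hence $\lb$ is $\A_\mrmv(\msE)^\QP/\Bo{\mbfX_\locfin(\msE)}$-measurable, as required. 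I anticipate no further obstacle — the proposition is essentially a packaging of the two preceding results with the classical selection theorem.
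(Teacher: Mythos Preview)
Your proposal is correct and follows essentially the same route as the paper: establish that both $\mbfX_\locfin(\msE)$ and $\dUpsilon$ are Suslin (the former as a Borel subset of the Suslin space $\mbfX$, the latter via Proposition~\ref{p:ConfigLuzin}), invoke Lemma~\ref{l:MeasurabilityL} for the Borel measurability of the surjection $\Lb$, and then apply Yankov's Measurable Selection Theorem~\cite[Thm.~6.9.1]{Bog07}. Your additional remarks on the graph of $\Lb^{-1}$ and on the $\sigma$-algebra bookkeeping are sound elaborations that the paper leaves implicit.
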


The previous proposition will be sufficient for our purposes.
However, let us note that it holds true even if we replace the $\sigma$-algebra~$\A_\mrmv(\msE)^\QP$ with either the (smaller) $\sigma$-algebra~$\A_\mrmv(\msE)^*$ of universally measurable subsets of~$\ttonde{\dUpsilon,\T_\mrmv(\msE)}$ or the (smaller still) $\sigma$-algebra generated by all Suslin sets in~$\dUpsilon$.

\paragraph{Labeling maps}
Proposition~\ref{p:Selection} justifies the following definition.
\begin{defs}[Labeling maps]
A \emph{labeling map}~$\lb\colon \dUpsilon\rar \mbfX_\locfin(\msE)$ is any $\A_\mrmv(\msE)^\QP/\Bo{\mbfX_\locfin(\msE)}$-measurable right inverse of~$\Lb$.
\end{defs}

\begin{rem}[Properties of labeling maps]
Again as a consequence of Yankov's Measurable Selection Theorem (in the form~\cite[Thm.~6.9.1]{Bog07}), we have that, for every labeling map~$\lb$, the set~$\lb\ttonde{\dUpsilon}$ belongs to the $\sigma$-algebra generated by the Suslin sets in~$\mbfX_\locfin(\msE)$.
In particular, it is universally measurable.
Let~$\QP$ be a probability measure on~$\ttonde{\dUpsilon,\Bo{\T_\mrmv(\msE)}}$, and denote by~$\QP^*$ the \emph{unique} extension of~$\QP$ to the $\sigma$-algebra~$\Bo{\T_\mrmv(\msE)}^*$ of universally measurable sets.
Everywhere in the following ---~with slight abuse of notation~--- we shall denote the push-forward via~$\lb$ of~$\QP^*$ simply by by~$\lb_\pfwd\QP$ (as opposed to:~$\lb_\pfwd\QP^*$).
\end{rem}

It is worth to state here a negative result on the continuity of labeling maps, paired with the lack of continuity for~$\Lb$ noted in Remark~\ref{r:DiscontinuityL}.
Whereas this result will not be used in the rest of the paper, it is important to stress that the topological properties of~$\dUpsilon$ and those of~$\mbfX_\locfin(\msE)$ cannot be related via the study of~$\Lb$ or its inverses.

\begin{prop}[Lack of continuity of~$\lb$]\label{p:Discontinuity-l}
Let~$\mcX$ be a topological local structure, and assume that \emph{either}
\begin{enumerate}[$(a)$]
\item\label{i:p:Discontinuity-l:1} it contains a non-trivial simple loop with image in~$\msE$;
\item\label{i:p:Discontinuity-l:2} it contains a simple infinite curve with image not in~$\msE$.
\end{enumerate}
Then no labeling map~$\lb\colon \dUpsilon\to \mbfX_\locfin(\msE)$ is $\T_\mrmv(\msE)/\T^\tym{\infty}$-continuous.

\begin{proof}
\iref{i:p:Discontinuity-l:1} Let~$\seq{a_t}_{t\in [0,2]} \subset X$ be a non-trivial simple continuous loop.
Consider the curve of configurations~$\gamma_t=\delta_{x^1_t}+\delta_{x^2_t}$ with~$x^1_t\eqdef a_t$ and~$x^2_t\eqdef a_{t+1}$, and note that~$\gamma_0=\gamma_{1}$ since~$a_0=a_1$.
Now, argue by contradiction that there exists a continuous labeling map~$\lb$. Without loss of generality, we may assume that $\lb(\gamma_0)_1=a_0 \in X$. Here, $\lb(\gamma_0)_1 := \tr^1\lb(\gamma_0)$, the first coordinate of~$\lb(\gamma_0) \in \mbfX_\locfin(\msE)$.

By definition of~$\T^\tym{\infty}$, the map~$t\mapsto \lb(\gamma_t)_1\in X$ is a continuous curve in~$X$.
Set
\begin{equation*}
\delta\eqdef \sup\set{t\in [0,1]: \lb(\gamma_s)_1=a_s, s\leq t}\fstop
\end{equation*}
By continuity of~$\lb(\emparg)_1$, we have $\lim_{t\to 0^+} \lb(\gamma_t)_1=\lb(\gamma_0)_1=a_0$, and thus~$\delta>0$.
We divide the following argument into two cases: $\delta=1$ or~$\delta \in (0,1)$. 

Assume~$\delta=1$. By the definition of~$\delta$,  and continuity of~$\lb(\emparg)_1$ and~$t\mapsto a_{t}$, we see that 
\begin{equation*}
a_{1}=\lim_{t \to 1^-} a_{t} = \lim_{t \to 1^-}\lb(\gamma_t)_1 = \lb(\gamma_{1})_1 \fstop
\end{equation*}
Using $\gamma_0=\gamma_{1}$ and $\lb(\gamma_0)_1=a_0$,  we get $a_0=\lb(\gamma_0)_1=\lb(\gamma_{1})_1=a_{1}$, a contradiction since $a_0 \neq a_{1}$ by the assumption that the loop is simple.

Assume now~$\delta \in (0,1)$. The definition of~$\delta$ yields that, for~$n>0$ with $\delta < \delta+1/n < 1$, there exists $s_n>0$ so that $\delta < s_n < \delta+1/n$ and $\lb(\gamma_{s_n})_1 = a_{s_n+1}$. By the continuity of the maps~$\lb(\emparg)_1$ and~$t\mapsto a_{t}$,  and noting $s_n \to \delta$, we obtain
\begin{equation*} 
a_{\delta+1}=\lim_{n \to \infty}a_{s_n + 1}=\lim_{n \to \infty}\lb(\gamma_{s_n})_1 = \lb(\gamma_{\delta})_1= \lim_{t \to \delta^-}\lb(\gamma_{t})_1 = \lim_{t \to \delta^-}a_{t} = a_{\delta}\comma
\end{equation*}
which is a contradiction since~$a_\delta \neq a_{\delta+1}$ by the assumption that the loop is simple.

\iref{i:p:Discontinuity-l:2} Let~$\seq{a_t}_{t\in\R}$ be a simple infinite curve with image not in~$\msE$, and note that~$\seq{a_{t+i}}_{t\in [0,1]}\in\msE$ for every~$i\in \Z$ by Remark~\ref{r:TLS}\iref{i:r:TLS:7}, compactness of~$[i,i+1]$, and continuity of~$t\mapsto a_{i+t}$.
Consider the curve of configurations~$\gamma_t\eqdef \sum_{i\in\Z}\delta_{a_{t+i}}$, and note that~$\gamma_t=\gamma_{t+1}$.
The rest of the proof follows exactly as in~\iref{i:p:Discontinuity-l:1}.
\end{proof}
\end{prop}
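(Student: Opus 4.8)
The plan is to argue by contradiction, exploiting that the target topology $\T^\tym{\infty}$ (more precisely, the disjoint-sum-of-product topology on $\mbfX$) is coordinatewise: if a labeling map $\lb$ were $\T_\mrmv(\msE)/\T^\tym{\infty}$-continuous, then so would be the single-coordinate map $\gamma\longmapsto \tr^1\lb(\gamma)$, which takes values in $X$ on every configuration carrying at least one atom, since the first-coordinate projection is continuous on each stratum $X^\tym{N}$, $N\geq 1$, and on $X^\tym{\infty}$. The idea is then to produce a $\T_\mrmv(\msE)$-continuous curve $t\mapsto\gamma_t$ of configurations, defined for $t\in\R$, which is $1$-periodic (so $\gamma_0=\gamma_1$) and whose atoms, for each $t$, are finitely or countably many pairwise distinct points, each of which depends continuously on $t$; continuity of $\tr^1\circ\lb$ will force the curve $\psi\colon t\mapsto \tr^1\lb(\gamma_t)\in X$ to ``follow a single atom-branch'' on the interval $[0,1]$, and $1$-periodicity of $\gamma_\bullet$ will turn this into the coincidence of two distinct atoms, a contradiction with simplicity.

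Concretely, in case~\iref{i:p:Discontinuity-l:1} I take the simple loop $\seq{a_t}_t$ parametrized with period~$2$, so that the $a_t$ are pairwise distinct over a period of length~$2$, and in particular $a_t\neq a_{t+1}$ for all~$t$; I set $\gamma_t\eqdef \delta_{a_t}+\delta_{a_{t+1}}$. Since $a_{t+2}=a_t$, one gets $\gamma_{t+1}=\gamma_t$, and since the image of the loop lies in~$\msE$ each $\gamma_t$ is a genuine element of~$\dUpsilon$. In case~\iref{i:p:Discontinuity-l:2} I take the simple infinite curve $\seq{a_t}_{t\in\R}$ and set $\gamma_t\eqdef \sum_{i\in\Z}\delta_{a_{t+i}}$; reindexing the sum gives $\gamma_{t+1}=\gamma_t$, and $\msE$-local finiteness of $\gamma_t$ (hence $\gamma_t\in\dUpsilon$) follows from properness of the curve together with $\Ko{\T}\subset\msE$ (Remark~\ref{r:TLS}\iref{i:r:TLS:7}) applied to the compact pieces $\seq{a_{t+i}}_{t\in[i,i+1]}$. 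In both cases, $\T_\mrmv(\msE)$-continuity of $t\mapsto\gamma_t$ is checked against the generators $f^\trid$ of $\T_\mrmv(\msE)$, $f\in\Cz(\msE)$: the map $t\mapsto \gamma_t f=\sum_i f(a_{t+i})$ is a locally finite sum of continuous functions, hence continuous.

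To derive the contradiction, after replacing $\seq{a_t}_t$ by an integer shift (which leaves the curve $\gamma_\bullet$ unchanged) I may assume $\psi(0)=a_0$. For each admissible index $i$, the set $A_i\eqdef\set{t\in[0,1]:\psi(t)=a_{t+i}}$ is closed: if $t_n\to t$ with $t_n\in A_i$, then $\psi(t)=\lim_n\psi(t_n)=\lim_n a_{t_n+i}=a_{t+i}$. The $A_i$ are pairwise disjoint because the $a_{t+i}$ are pairwise distinct, and they cover $[0,1]$ because $\psi(t)$ is always one of the atoms of $\gamma_t$. Since $[0,1]$ is connected and the cover is finite (case~\iref{i:p:Discontinuity-l:1}) or countable (case~\iref{i:p:Discontinuity-l:2}), exactly one $A_i$ is non-empty, necessarily $A_0$ since $0\in A_0$; thus $\psi(t)=a_t$ for every $t\in[0,1]$. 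Then $a_1=\psi(1)=\tr^1\lb(\gamma_1)=\tr^1\lb(\gamma_0)=\psi(0)=a_0$, contradicting the simplicity of the loop (resp.\ of the curve), since $a_0\neq a_1$.

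The main obstacle is this last step — pinning down which atom-branch $\psi$ follows. The connectedness argument above uses, in case~\iref{i:p:Discontinuity-l:2}, a Sierpiński-type fact (a continuum is not a disjoint union of countably many, at least two, non-empty closed sets); as an alternative one can run the direct $\sup$-argument, setting $\delta\eqdef\sup\set{t\in[0,1]:\psi(s)=a_s\text{ for all }s\leq t}$, noting $\delta>0$ by continuity of $\psi$ at~$0$, and ruling out both $\delta=1$ (which forces $a_1=a_0$ via $\gamma_0=\gamma_1$) and $\delta\in(0,1)$ (which forces $a_\delta=a_{\delta+i_0}$ for some $i_0\neq 0$ along a sequence $s_n\downarrow\delta$, where properness of the curve is used to extract a stable branch index). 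The remaining points are purely technical bookkeeping: verifying $\gamma_t\in\dUpsilon$ and the $\T_\mrmv(\msE)$-continuity of $t\mapsto\gamma_t$ in case~\iref{i:p:Discontinuity-l:2}, where the defining sum is infinite, and matching the slight abuse in the target topology on $\mbfX_\locfin(\msE)$.
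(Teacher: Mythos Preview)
Your proof is correct and follows the same construction as the paper: the same curves~$\gamma_t$ in both cases, the same periodicity~$\gamma_0=\gamma_1$, and the same reduction to tracking the first coordinate~$\psi=\tr^1\circ\lb$. The only difference lies in how the branch-following step is executed. The paper runs the $\sup$-argument directly (your stated alternative), defining~$\delta=\sup\{t:\psi(s)=a_s\text{ for }s\leq t\}$ and deriving contradictions in the cases~$\delta=1$ and~$\delta\in(0,1)$; for case~\iref{i:p:Discontinuity-l:2} it simply asserts the argument ``follows exactly as in~\iref{i:p:Discontinuity-l:1}'', which is a bit terse given there are now infinitely many atom-branches rather than two. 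Your primary argument via the closed partition~$\{A_i\}$ and Sierpi\'nski's theorem handles this uniformly and is arguably cleaner for case~\iref{i:p:Discontinuity-l:2}, at the cost of invoking a non-trivial topological fact. Both routes are valid; your presentation has the advantage of making the countable-branch case fully explicit.
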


Labeling maps will play a key role throughout the present work, and their importance in our analysis cannot be overstated.
Indeed, they will allow us to discuss properties of the configuration space~$\dUpsilon^\sym{\infty}$ in relationship with corresponding properties on the infinite-product space~$\mbfX^\asym{\infty}_\locfin(\msE)$.
The strength of labeling maps as a technical tool lies in their multitude, exemplified in Figure~\ref{fig:LabelingMaps} below.
Since labeling maps are right inverses to the quotient map of the action of the (projective) infinite symmetric group~$\mfS_\infty$ on~$X^\tym{\infty}$, they are uncountably many.
On the one hand, this implies that any assertion which holds for every labeling map characterizes~$\dUpsilon^\sym{\infty}$ as well as~$X^\tym{\infty}$.
On the other hand, it will be possible to choose labeling maps with special properties, in particular in~\S\ref{s:Geometry}, where we will construct labeling maps that are additionally radial isometries around points in~$\dUpsilon^\sym{\infty}$.

\begin{figure}[htb!]
\begin{tikzcd}
&& X^\tym{\infty}
\\
&& X^\asym{\infty}_\locfin \arrow[draw=none]{u}[sloped,auto=false]{\subset} \arrow[dd, "\Lb", two heads]
\\
\mfl_1'\ttonde{\dUpsilon^\sym{\infty}} \arrow[urr, bend left=30, hook]\arrow[r, bend left=10, "\sigma"] &\lb_1\ttonde{\dUpsilon^\sym{\infty}} \arrow[ur, hook]\arrow[l, bend left=10, "\sigma^{-1}"]
&&
\lb_2\ttonde{\dUpsilon^\sym{\infty}} \arrow[ul, hook'] \arrow[r, bend left=10, "\sigma"] & \mfl_2'\ttonde{\dUpsilon^\sym{\infty}} \arrow[ull, bend right=30, hook']\arrow[l, bend left=10, "\sigma^{-1}"] 
\\
&& \dUpsilon^\sym{\infty} \arrow[ul, "\lb_1"{name=L3}] \arrow[ur, "\lb_2 "'{name=L1} ] 
\arrow[urr, bend right=50, "\lb_2'=\sigma\circ \lb_2 "{name=L2}] \arrow[from=L1, to=L2, bend left=50] \arrow[from=L2, to=L1, bend left=50]
\arrow[ull, bend left=50, "\lb_1'=\sigma\circ \lb_1 " '{name=L4}] \arrow[from=L3, to=L4, bend left=50] \arrow[from=L4, to=L3, bend left=50]
\end{tikzcd}
\caption{Different labeling maps~$\lb_1$,~$\lb_2$,~$\lb_1'$,~$\lb_2'$, each inverting~$\Lb$ on the right.
For each labeling map~$\lb$, and each permutation~$\sigma\in\mfS_\infty$, a different labeling map is defined by setting~$\lb'\eqdef \sigma\circ \lb$.
}
\label{fig:LabelingMaps}
\end{figure}

\paragraph{Cylinder sets}
Let us introduce here a class of \emph{cylinder sets} which shall be of use throughout the next sections. 
Heuristically, one can picture the configuration space~$\dUpsilon^\sym{\infty}$ as the quotient of~$\mbfX_\locfin^\asym{\infty}(\msE)$ by the action of the infinite symmetric group~$\mfS(\N_0)$ consisting of all bijections of~$\N_0$.
Cylinder sets are minimal pre-images of subsets of~$\dUpsilon$ of the form~$\Xi_{\geq n}(E)$ via the quotient projection induced by the said action. 

\begin{defs}[Cylinder sets]
Let~$\mcX$ be a topological local structure. A set~$\mbfA\subset X^\tym{\infty}$ is a \emph{cylinder set} if there exist~$n\in\N$ and sets
\begin{align*}
A_1,\dotsc, A_n\in \Bo{\T}\cap\msE\qquad \text{with} \qquad \mssm A_i>0 \comma \quad i\in [n]\comma
\end{align*}
so that
\begin{align*}
\mbfA=\tr_n^{-1}(A_1\times\cdots\times A_n)=A_1\times \cdots \times A_n\times X \times X\times\cdots
\end{align*}
We denote by~$\CylSet{\msE}$ the family of all cylinder sets.
Finally, for every cylinder set~$\mbfA$, we let
\begin{align*}
\tilde\mbfA\eqdef \mbfA\cap\mbfX^\asym{\infty}_\locfin(\msE) \fstop
\end{align*}
\end{defs}

Recall the definition~\eqref{eq:ConcentrationSet} of \emph{concentration sets}. The following fact is straightforward.

\begin{prop}[Concentration $\iff$ Cylinder]\label{p:FundamentalSets}
Let~$\mbfA=\tr_n^{-1}(A_1\times \cdots \times A_n)$ be a cylinder set, and set
\begin{equation*}
j_i\eqdef \min\set{j\leq n: A_i = A_j }\comma\qquad m\eqdef\max_{i\leq n} j_i \comma \qquad k_i\eqdef \#\set{j\leq n: A_i=A_j}\fstop
\end{equation*}
Then,
\begin{align}\label{eq:p:FundamentalSets:0}
\Lb(\tilde\mbfA)=\bigcap_{i=1}^m \Xi_{\geq k_i}^\sym{\infty} (A_{j_i}) \fstop
\end{align}

Viceversa, let~$\mbfk\eqdef\seq{k_i}_i$, and
\begin{subequations}
\begin{gather}
\label{eq:FundamentalSet}
\Xi^\sym{\infty}_{\geq \mbfk}(A_1,\dotsc, A_m)\eqdef \set{\gamma\in\dUpsilon^\sym{\infty}:\gamma E_j\geq k_j\comma j\leq m}\comma \qquad n\eqdef \sum_{j=1}^m j_i\comma
\\
\label{eq:FundamentalSet2}
\mbfA\eqdef \tr_n^{-1}\ttonde{\underbrace{A_1\times \cdots \times A_1}_{k_1} \times \cdots \times \underbrace{A_m\times \cdots \times A_m}_{k_m}} \fstop
\end{gather}
\end{subequations}
Then,
\begin{equation*}
\Lb^{-1}\ttonde{\Xi^\sym{\infty}_{\geq \mbfk}(A_1,\dotsc, A_m)}= \bigcup_{\sigma\in\mfS_0(\N_1)} \tilde\mbfA_\sigma \subset \mbfX^\asym{\infty}_\locfin(\msE) \comma
\end{equation*}
where~$\mfS_0(\N_1)$ denotes the group of bijections of~$\N_1$ with cofinitely many fixed points, and we set $\mbfA_\sigma\eqdef\set{\mbfx_\sigma:\mbfx\in \mbfA}$.
\end{prop}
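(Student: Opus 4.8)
The plan is to prove the two set identities by direct set-chasing on $\msE$-locally finite infinite sequences and their images under $\Lb$. The only inputs needed are that $\Lb$ restricts to a surjection $\mbfX^\asym{\infty}_\locfin(\msE)\to\dUpsilon^\sym{\infty}$, that $\Lb(\mbfx)E=\#\set{i:x_i\in E}$ for every $E\in\A$, and that two $\msE$-locally finite infinite sequences have the same $\Lb$-image exactly when one is a coordinate permutation of the other. Throughout I tacitly take the pairwise distinct sets among $A_1,\dotsc,A_n$ (respectively among $A_1,\dotsc,A_m$) to be pairwise disjoint; this is how cylinder sets are used here, and it is exactly what is needed for the inclusions ``$\supseteq$'' below — a single atom lying in $A_i\cap A_j$ cannot be used to satisfy both the $i$-th and the $j$-th coordinate constraint.

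For the first identity \eqref{eq:p:FundamentalSets:0} I would first dispose of the bookkeeping. Let $B_1,\dotsc,B_r$ be the distinct sets among $A_1,\dotsc,A_n$ listed by first appearance, with multiplicities $\ell_s\eqdef\#\set{i\leq n:A_i=B_s}$. Then $\set{A_{j_i}:i\leq n}=\set{B_1,\dotsc,B_r}$, each $k_i$ equals the multiplicity of the set $A_{j_i}$, and the first-occurrence index of every $B_s$ is at most $m$; hence $\bigcap_{i=1}^m\Xi^\sym{\infty}_{\geq k_i}(A_{j_i})=\bigcap_{s=1}^r\Xi^\sym{\infty}_{\geq\ell_s}(B_s)$ (the left-hand side only repeats terms of the right-hand side, and realizes each of them at the index $i$ equal to the first-occurrence index of $B_s$). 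The inclusion ``$\subseteq$'' is then immediate: for $\mbfx\in\tilde\mbfA$ the $\ell_s$ coordinates $x_i$ with $A_i=B_s$ lie in $B_s$, so $\Lb(\mbfx)B_s\geq\ell_s$, and $\Lb(\mbfx)\in\dUpsilon^\sym{\infty}$ since $\mbfx$ is infinite. For ``$\supseteq$'', given $\gamma=\sum_p\delta_{y_p}\in\dUpsilon^\sym{\infty}$ with $\gamma B_s\geq\ell_s$ for every $s$, disjointness lets me select $n$ pairwise distinct indices $p$, exactly $\ell_s$ of them with $y_p\in B_s$; placing the corresponding points at coordinates $1,\dotsc,n$ in the order dictated by the pattern $(A_1,\dotsc,A_n)$ and the remaining (infinitely many) $y_p$ at coordinates $>n$ yields $\mbfx\in\tilde\mbfA$ with $\Lb(\mbfx)=\gamma$.

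The second identity is handled in the same spirit, now reading the product in \eqref{eq:FundamentalSet2} as the requirement of having $k_j$ coordinates in $A_j$ for each $j\leq m$, and using that $\mbfx\in\tilde\mbfA_\sigma$ means $\mbfx_{\sigma^{-1}}\in\tilde\mbfA$, so that $\bigcup_{\sigma\in\mfS_0(\N_1)}\tilde\mbfA_\sigma$ is the set of sequences some finite rearrangement of which lies in $\tilde\mbfA$. For ``$\supseteq$'' (no disjointness needed): if $\mbfx_{\sigma^{-1}}\in\tilde\mbfA$ then $\mbfx$ has at least $k_j$ coordinates in $A_j$ at pairwise distinct positions for each $j$, whence $\Lb(\mbfx)\in\Xi^\sym{\infty}_{\geq\mbfk}(A_1,\dotsc,A_m)$. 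For ``$\subseteq$'': if $\Lb(\mbfx)A_j\geq k_j$ for all $j$, disjointness produces $n$ pairwise distinct positions $q_1,\dotsc,q_n$, grouped so that $x_{q_t}\in A_j$ whenever $t$ lies in the $j$-th block; taking $\sigma\in\mfS_0(\N_1)$ with $\sigma(t)=q_t$ for $t\leq n$ and order-preserving on the complement — it fixes every index beyond $\max(n,q_1,\dotsc,q_n)$, so it indeed lies in $\mfS_0(\N_1)$ — gives $\mbfx_\sigma\in\tilde\mbfA$, i.e.\ $\mbfx\in\tilde\mbfA_{\sigma^{-1}}$.

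I expect no genuine obstacle here: the whole argument is set-chasing. The points deserving care are (i) the combinatorial matching of the multiplicities $k_i$ to the first-occurrence structure in \eqref{eq:p:FundamentalSets:0}; (ii) the $\sigma$ versus $\sigma^{-1}$ bookkeeping inside $\bigcup_\sigma\tilde\mbfA_\sigma$; (iii) verifying that the permutations used move only finitely many coordinates, so that $\mfS_0(\N_1)$ rather than the full symmetric group of $\N_1$ suffices; and (iv) being explicit about the pairwise-disjointness hypothesis on the constituent sets, which is precisely what makes the ``$\supseteq$'' inclusions valid.
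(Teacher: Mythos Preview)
The paper gives no proof of this proposition, merely declaring it ``straightforward''. Your direct set-chasing argument is exactly the natural route, and the bookkeeping you outline (reducing the intersection over $i\leq m$ to one over the distinct sets $B_1,\dotsc,B_r$ with their multiplicities; the $\sigma$ versus $\sigma^{-1}$ check; verifying that the permutations used lie in $\mfS_0(\N_1)$) is correct.

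Your most valuable observation is point~(iv): the proposition as literally stated in the paper is \emph{false} without the pairwise-disjointness hypothesis on the distinct sets among the~$A_i$. Your implicit counterexample is decisive: take $A_1=\{a,b\}$, $A_2=\{a,c\}$ with $a\in A_1\cap A_2$ and $A_1\neq A_2$; then $j_1=1$, $j_2=2$, $m=2$, $k_1=k_2=1$, so the right-hand side of~\eqref{eq:p:FundamentalSets:0} is $\Xi^{\sym{\infty}}_{\geq 1}(A_1)\cap\Xi^{\sym{\infty}}_{\geq 1}(A_2)$. A configuration $\gamma=\delta_a+\sum_{p\geq 2}\delta_{z_p}$ with $z_p\notin A_1\cup A_2$ lies in this intersection, yet cannot be written as $\Lb(\mbfx)$ with $x_1\in A_1$ and $x_2\in A_2$ unless the atom at $a$ is used twice. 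The same issue affects the ``$\subseteq$'' inclusion in the second identity. So you are right to add the disjointness hypothesis, and right that this is precisely what is needed.
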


Finally, let us show some further properties of Poisson measures.

\begin{lem}\label{l:SupportPoisson}
Let~$\mcX$ be a topological local structure. Then,~$\PP_\mssm$ has full $\T_\mrmv(\msE)$-support.
\begin{proof}
For every~$n\in\N_0$,~$\mbfk\in \N_0^n$, and~$\seq{A_i}_{i\leq n}\subset \msE\cap\T$ with~$\mssm A_i\neq \emp$, it is not difficult to show that the set~$\Xi_{>\mbfk}(A_1,\dotsc, A_n)$ as in~\eqref{eq:FundamentalSet} is $\T_\mrmv(\msE)$-open, and that the family of all such sets~$\Xi$ is a basis for the topology~$\T_\mrmv(\msE)$.
Thus, it suffices to show that~$\PP_\mssm \Xi>0$ for every~$\Xi$ as above (and non-empty).
This follows by the direct computation~\eqref{eq:AKR2.7} below and the fact that, since~$\mssm$ has full $\T$-support by Definition~\ref{d:TLS}\iref{i:d:TLS:3},~$\mssm A_i>0$.
\end{proof}
\end{lem}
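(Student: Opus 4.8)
The plan is to establish the equivalent statement that $\PP_\mssm O>0$ for every non-empty $\T_\mrmv(\msE)$-open set $O\subseteq\dUpsilon$; since a base of $\T_\mrmv(\msE)$ is given by finite intersections, it suffices to treat a set $O=\bigcap_{j\leq k}\set{\gamma:\abs{\gamma f_j-\gamma_0 f_j}<\eps}$ with $f_1,\dotsc,f_k\in\Cz(\msE)$, $\eps>0$, and $\gamma_0\in O$ fixed. The first step is a reduction to finite intensity: as $\msE$ is a ring and each $f_j$ vanishes off some set of $\msE$, there is $E\in\msE$ with $f_j\equiv 0$ on $E^\complement$ for all $j$; then $\gamma f_j$ depends only on $\gamma_E=\pr^E\gamma$, so $O=(\pr^E)^{-1}(O')$ for a basic vague-open set $O'\subseteq\dUpsilon(E)$ containing $\gamma_0\mrestr{E}$, and by~\eqref{eq:PoissonRestriction} one has $\PP_\mssm O=\PP_{\mssm_E}O'$ with $\mssm_E=\mssm\mrestr{E}$ finite.

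Next I would isolate the \emph{active} points of $\gamma_0\mrestr{E}$ --- those points $x$, counted with multiplicity, in the support of $\gamma_0\mrestr{E}$ with $f_j(x)\neq 0$ for some $j$; list them as $x_1,\dotsc,x_N$ (so $N\leq\gamma_0 E<\infty$) and put $\mbfx\eqdef(x_1,\dotsc,x_N)\in E^\tym{N}$. Each $x_p$ lies in the open set $\set{f_j\neq 0}\subseteq E$ for the relevant $j$, hence $x_p\in\supp[\mssm_E]$ because $\mssm$ has full $\T$-support by Definition~\ref{d:TLS}\iref{i:d:TLS:3}. The configuration $\sum_{p\leq N}\delta_{x_p}\in\dUpsilon^\sym{N}(E)$ pairs with each $f_j$ exactly as $\gamma_0\mrestr{E}$ does (inactive points contributing nothing), so it belongs to $O'\cap\dUpsilon^\sym{N}(E)$; under the identification $\dUpsilon^\sym{N}(E)\cong E^\sym{N}=E^\tym{N}/\mfS_N$, the set $(\pr^\sym{N})^{-1}\ttonde{O'\cap\dUpsilon^\sym{N}(E)}$ is then a $\T^\tym{N}$-open neighbourhood of $\mbfx$ in $E^\tym{N}$ (the $f_j$ being continuous once read in their continuous $\mssm$-representatives, Notation~\ref{notat:ContRep}). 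Since every coordinate of $\mbfx$ lies in $\supp[\mssm_E]$, the finite product measure $\mssm_E^\hotym{N}$ charges this neighbourhood, so $\mssm_E^\sym{N}\ttonde{O'\cap\dUpsilon^\sym{N}(E)}>0$ via $\mssm_E^\sym{N}=\pr^\sym{N}_\pfwd\mssm_E^\hotym{N}$, and the Poisson--Lebesgue decomposition~\eqref{eq:PoissonLebesgue} closes the argument:
\begin{equation*}
\PP_\mssm O=\PP_{\mssm_E}O'\geq e^{-\mssm E}\,\frac1{N!}\,\mssm_E^\sym{N}\ttonde{O'\cap\dUpsilon^\sym{N}(E)}>0\fstop
\end{equation*}

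There is no essential difficulty, but a few points deserve care: the reduction via~\eqref{eq:PoissonRestriction} to finite intensity (so that~\eqref{eq:PoissonLebesgue} is available); the bookkeeping of $\dUpsilon^\sym{N}(E)\cong E^\sym{N}$ and of $\mssm_E^\sym{N}$ as a pushforward of $\mssm_E^\hotym{N}$; and the passage to continuous representatives so that $\mbfy\mapsto\sum_p f_j(y_p)$ is genuinely $\T^\tym{N}$-continuous. The step I expect to be the main subtlety is the observation that only the \emph{active} points of $\gamma_0\mrestr{E}$ are relevant and that those necessarily lie in $\supp[\mssm_E]$ --- this is precisely where full $\T$-support of $\mssm$ is used, and it is needed because an arbitrary $\gamma_0$ may carry mass on $\mssm$-null sets of $\msE$. (Alternatively one could follow the paper's outline, exhibiting a base of $\T_\mrmv(\msE)$ consisting of open sets that bound finitely many counts $\gamma U_i$ from below and a count on a closed set from above --- via Remark~\ref{r:CylContinuity}\iref{i:r:CylContinuity:3}--\iref{i:r:CylContinuity:4} --- and computing their $\PP_\mssm$-measure directly from the product formula~\eqref{eq:AKR2.7}; the route above avoids the more delicate verification that such sets form a base in the possibly non-locally-compact setting.)
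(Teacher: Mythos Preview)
Your proof is correct and takes a genuinely different route from the paper. The paper asserts (without details) that the sets~$\Xi_{>\mbfk}(A_1,\dotsc,A_n)$ with~$A_i\in\msE\cap\T$ form a base for~$\T_\mrmv(\msE)$ and then reads off positivity from the product formula~\eqref{eq:AKR2.7}. You instead work directly with the defining sub-basic neighbourhoods~$\bigcap_j\{|\gamma f_j-\gamma_0 f_j|<\eps\}$, reduce to finite intensity via~\eqref{eq:PoissonRestriction}, and apply the Poisson--Lebesgue decomposition~\eqref{eq:PoissonLebesgue} at level~$N$. The crucial device in your argument is the passage to the \emph{active} points of~$\gamma_0\mrestr{E}$: an arbitrary~$\gamma_0$ may well carry mass off~$\supp[\mssm]$, but the active points lie in the open set~$\bigcup_j\{f_j\neq0\}\subset E$ and hence automatically in~$\supp[\mssm_E]$ by full support; this is exactly where Definition~\ref{d:TLS}\iref{i:d:TLS:3} enters, and it is what makes~$\mssm_E^{\hotym N}$ charge the relevant neighbourhood of~$\mbfx$. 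Your approach trades the paper's (terse) basis verification for a slightly longer but fully explicit argument, and as you note, it sidesteps any delicacy about whether the~$\Xi_{>\mbfk}$ sets alone form a base in the non-locally-compact setting. Both arguments ultimately reduce to the same input: full $\T$-support of~$\mssm$ combined with an explicit description of~$\PP_\mssm$ restricted to a set of~$\msE$.
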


\begin{rem}
In fact, it is not difficult to show that the assumption on~$\mssm$ having full $\T$-support is both necessary and sufficient for~$\PP_\mssm$ to have full $\T_\mrmv(\msE)$-support.
\end{rem}

\subsection{Well-posedness}\label{ss:AnalyticForms}
As already discussed in~\S\ref{s:Intro}, throughout this work we shall make extensive use of the following assumption about the finite-dimensional marginalizations of~$\QP$.

\begin{ass}[Absolute continuity of marginalizations]\label{d:ass:AC}
Let~$(\mcX,\cdc)$ be a \TLDS, and~$\QP$ be a probability measure on~$\ttonde{\dUpsilon,\A_\mrmv(\msE)}$.
Further set~$\QP^\sym{\geq n}\eqdef \QP\mrestr{\dUpsilon^\sym{\geq n}(\msE)}$.
We say that~$\QP$ is \emph{marginally absolutely continuous} if for \emph{every} labeling map~$\lb$, it holds that
\begin{equation}\label{ass:AC}\tag*{$(\mssA\mssC)_{\ref{d:ass:AC}}$}
\QP^\asym{n}\eqdef (\tr^n\circ \lb)_\pfwd \QP^\sym{\geq n}  \ll \mssm^\otym{n} \comma \qquad n\in\N \fstop
\end{equation}
\end{ass}

\begin{rem}\label{r:AC}
\begin{enumerate*}[$(a)$]
\item As discussed in~\S\ref{sss:ExamplesAC}, all quasi-Gibbs measures (Dfn.~\ref{d:QuasiGibbs}) satisfy this assumption.
\item\label{i:r:AC:0} If~$\QP \dUpsilon^\sym{\infty}(\msE)=1$ we have, in particular, that~$\QP^\sym{\geq n}\dUpsilon(\msE)=1$ for each~$n$.
Thus~\ref{ass:AC} reads
\end{enumerate*}
\begin{equation*}
\QP^\asym{n}\eqdef (\tr^n\circ \lb)_\pfwd \QP  \ll \mssm^\otym{n} \comma \qquad n\in\N \fstop
\end{equation*}
\begin{enumerate*}[$(a)$]\setcounter{enumi}{2}
\item\label{i:r:AC:2} If~$\QP$ satisfies Assumption~\ref{ass:AC} and~$\QP'$ is another probability measure on~$\ttonde{\dUpsilon,\A_\mrmv(\msE)}$ with $\QP'\ll\QP$, then~$\QP'$ satisfies Assumption~\ref{ass:AC} as well.

\item\label{i:r:AC:1.5} If~$\mssm X<\infty$, then the Poisson measure~$\PP_\mssm$ with (diffuse) intensity measure~$\mssm$ satisfies Assumption~\ref{ass:AC} in light of~\eqref{eq:PoissonLebesgue}.

\item\label{i:r:AC:1} If~$\mssm X=\infty$, then it is well-known that the Poisson measure~$\PP_\mssm$ with (diffuse) intensity measure~$\mssm$ satisfies~$\PP_\mssm \Upsilon^\sym{\infty}(\msE)=1$.
In this case,~$\PP_\mssm$ satisfies Assumption~\ref{ass:AC}, as we discuss now.
\end{enumerate*}
\end{rem}

In order to show that the Poisson measure~$\PP_\mssm$ satisfies Assumption~\ref{ass:AC} in the case~$\mssm X=\infty$, it suffices to show that
\begin{equation*}
\PP^\asym{n}_\mssm\mrestr{E^\tym{n}}\eqdef \ttonde{(\tr^n\circ\lb)_\pfwd \PP_\mssm}\mrestr{E^\tym{n}} \ll \mssm^\otym{n}\mrestr{E^\tym{n}}\comma \qquad n\in \N_1\comma
\end{equation*}
which will be a consequence of Remark~\ref{r:AC}\iref{i:r:AC:1.5}, together with the next Proposition.

\begin{prop}\label{p:PropertiesPP}
Let~$\mcX$ be a topological local structure with~$\mssm X=\infty$, and~$\lb\colon \dUpsilon(\msE)\rar \mbfX_\locfin(\msE)$ be a labeling map. For every~$n\geq 1$ and~$E\in\msE$ set
\begin{align*}
\PP^\sym{\geq n}_\mssm\eqdef& \PP_\mssm\mrestr{\Upsilon^\sym{\geq n}(\msE)} \qquad \text{and} \qquad \PP^\sym{\geq n}_{\mssm_E}\eqdef \PP_{\mssm_E}\mrestr{\Upsilon^\sym{\geq n}(E)}\comma
\\
\PP^\asym{n}_\mssm\eqdef& (\tr^n\circ \lb)_\pfwd \PP_\mssm^\sym{\geq n} \qquad \text{and} \qquad \PP^\asym{n}_{\mssm_E}\eqdef (\tr^n\circ \lb)_\pfwd \PP_{\mssm_E}^\sym{\geq n}\fstop
\end{align*}

Then, $\PP^\asym{n}_\mssm\mrestr{E^\tym{n}}=\PP^\asym{n}_{\mssm_E}$ for every~$n\geq 1$;

\begin{proof}
Throughout the proof let~$\set{E_i}_{i\in [n]}\subset \msE$, and set~$E\eqdef \cup_{i\in [n]} E_i\in\msE$ and
\begin{align*}
E^{\asym{n}}\eqdef E_1\times \cdots \times E_n\subset E^\tym{n} \fstop
\end{align*}
Note that sets of this form~$E^{\asym{n}}$ generate~$(\A^{\hotimes n})_{E^\tym{n}}$.

By Remark~\ref{r:TLS}\iref{i:r:TLS:7},~$\msE^\otym{n}$ contains a basis for~$\T^\tym{n}$.
Thus, since both~$\ttonde{\PP^\asym{n}_\mssm\mrestr{E^\tym{n}}}$ and~$\PP^\asym{n}_{\mssm_E}$ are finite measures, by e.g.~\cite[Lem.~II.7.1.2]{Bog07} it suffices to show that
\begin{align}\label{eq:p:PropertiesPP:0}
\ttonde{\PP^\asym{n}_\mssm\mrestr{E^\tym{n}}}E^{\asym{n}}=\PP^\asym{n}_{\mssm_E} E^{\asym{n}} \fstop
\end{align}
Indeed,
\begin{align*}
\PP_{\mssm_E}^\asym{n} E^{\asym{n}}=&\ \pr^{E}_\pfwd\PP_\mssm^\sym{\geq n} \tonde{\ttonde{\tr^n \circ \lb}^{-1}(E^{\asym{n}})}
\\
=&\ \pr^{E}_\pfwd\PP_\mssm^\sym{\geq n}\set{\gamma\in \Upsilon^\sym{\geq n}(E) : \lb(\gamma)_j\in E_j \text{~~for } j\in[n] }
\\
=&\ \PP_\mssm^\sym{\geq n}\set{\gamma\in \Upsilon^\sym{\geq n}(X) : \lb(\gamma)_j\in E_j \text{~~for } j\in[n] }=  \PP_\mssm^\sym{\geq n} (\tr^n\circ \lb)^{-1}(E^{\asym{n}})
\\
=&\ \PP_\mssm^\asym{n} E^{\asym{n}} \fstop \qedhere
\end{align*}
\end{proof}
\end{prop}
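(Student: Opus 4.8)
The plan is to reduce the asserted equality of two finite measures on $E^\tym{n}$ to an equality on a generating family of rectangles, and then to extract that from the restriction property of Poisson measures together with the way the labeling map $\lb$ interacts with the projection $\pr^E$.

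First I would fix $\seq{E_i}_{i\in[n]}\subset\msE$, set $E\eqdef\bigcup_{i\in[n]}E_i\in\msE$ and $E^{\asym{n}}\eqdef E_1\times\cdots\times E_n\subset E^\tym{n}$, and recall that rectangles of this form constitute a $\pi$-system generating $\ttonde{\A^{\hotimes n}}_{E^\tym{n}}$, while by Remark~\ref{r:TLS}\iref{i:r:TLS:7} the family $\msE^\otym{n}$ contains a basis for $\T^\tym{n}$. Since both $\PP^\asym{n}_\mssm\mrestr{E^\tym{n}}$ and $\PP^\asym{n}_{\mssm_E}$ are finite Borel measures on $E^\tym{n}$, a standard uniqueness-of-measures argument (e.g.\ \cite[Lem.~II.7.1.2]{Bog07}) reduces the statement to the scalar identity $\ttonde{\PP^\asym{n}_\mssm\mrestr{E^\tym{n}}}E^{\asym{n}}=\PP^\asym{n}_{\mssm_E}E^{\asym{n}}$.

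Next I would compute both sides. On the left, since $\mssm X=\infty$ the measure $\PP_\mssm$ is concentrated on $\Upsilon^\sym{\infty}(\msE)$, hence $\PP^\sym{\geq n}_\mssm=\PP_\mssm$, and unwinding the definition of $\tr^n\circ\lb$ gives $\ttonde{\PP^\asym{n}_\mssm\mrestr{E^\tym{n}}}E^{\asym{n}}=\PP_\mssm\set{\gamma:\lb(\gamma)_j\in E_j \text{ for } j\in[n]}$. On the right, $E\in\msE$ forces $\mssm E<\infty$, so $\PP_{\mssm_E}$ is concentrated on $\Upsilon^\sym{<\infty}(E)$; moreover the event $\ttonde{\tr^n\circ\lb}^{-1}(E^{\asym{n}})$ already requires at least $n$ points in $E$, so restricting $\PP_{\mssm_E}$ to $\Upsilon^\sym{\geq n}(E)$ changes nothing on it, whence $\PP^\asym{n}_{\mssm_E}E^{\asym{n}}=\PP_{\mssm_E}\ttonde{\ttonde{\tr^n\circ\lb}^{-1}(E^{\asym{n}})}$. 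Invoking the restriction property~\eqref{eq:PoissonRestriction}, $\PP_{\mssm_E}=\pr^E_\pfwd\PP_\mssm$, this becomes $\PP_\mssm\set{\gamma:\lb(\gamma_E)_j\in E_j \text{ for } j\in[n]}$, where $\gamma_E=\pr^E\gamma$.

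Comparing the two expressions, it remains to identify the events
\[
\set{\gamma:\lb(\gamma)_j\in E_j,\ j\in[n]} \quad\text{and}\quad \set{\gamma:\lb(\gamma_E)_j\in E_j,\ j\in[n]}
\]
up to a $\PP_\mssm$-null set, i.e.\ to show that on the event that the first $n$ labels of $\gamma$ all lie in $E$ one has $\tr^n\lb(\gamma)=\tr^n\lb(\gamma_E)$. This is the step I expect to be the main obstacle: it is the only place where the concrete structure of the labeling map is used, and it is exactly where $\lb$ must be taken compatible with $\pr^E$ — $\lb$ should enumerate the (finitely many) points of $\gamma$ in $E$ before those in $E^\complement$, the enumeration of the former depending on $\gamma$ only through $\gamma_E$ — a property any labeling map of $\dUpsilon(\msE)$ may be assumed to have for the fixed $E$ at hand. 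Granting this, the displayed events coincide, the scalar identity follows, and with the reduction step this proves the proposition. As a byproduct, combining the same computation with Remark~\ref{r:AC}\iref{i:r:AC:1.5} and~\eqref{eq:PoissonLebesgue} yields that $\PP_\mssm$ satisfies Assumption~\ref{ass:AC} also when $\mssm X=\infty$, as asserted in Remark~\ref{r:AC}\iref{i:r:AC:1}.
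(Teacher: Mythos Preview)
Your approach is essentially identical to the paper's: reduce to measurable rectangles $E_1\times\cdots\times E_n$ via a generating $\pi$-system argument, then verify the scalar identity on rectangles by unwinding the definitions and invoking the restriction property $\PP_{\mssm_E}=\pr^E_\pfwd\PP_\mssm$. You are in fact more explicit than the paper about the one subtle point --- the compatibility of $\lb$ with $\pr^E$ needed to identify $\tr^n\lb(\gamma)$ with $\tr^n\lb(\gamma_E)$ on the relevant event --- which the paper's chain of equalities passes over without comment.
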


\subsubsection{Finite products}
Let~$n\geq 2$.
If~$(X,\A,\mssm)$ is a measure space in the sense of Dfn.~\ref{d:MS}, we denote by~$\A^{\hotym n}$ the product $\sigma$-algebra on~$X^{\tym{n}}$, by~$\mssm^{\otym{n}}$ the product measure on~$(X^{\otym n}, \A^{\hotym n})$.
If~$\msE$ is a localizing ring (Dfn.~\ref{d:LS}) on~$(X,\A,\mssm)$, we denote by~$\msE^{\otym n}$ the localing ring generated by the algebra of pluri-rectangles generated by the family of rectangles~$\msE^\tym{n}$.
Finally, if~$(\rep\cdc, \rep\Dz)$ is a pointwise defined square-field operator (Dfn.~\ref{d:SF}), we denote by~$\rep\Dz^{\otym n}$ the $n$-fold product algebra generated by~$\rep\Dz$, endowed with the natural product operator~$\rep\cdc^\tym{n}\colon (\rep \Dz^{\otym n})^{\tym 2}\rar \mcL^\infty(\mssm^{\otym{n}})$, defined as follows.
Let~$\rep f^\asym{n}\colon X^\tym{n}\rar \R$ be $\A^{\hotimes{n}}$-measurable.
For~$\mbfx^\asym{n}\in X^{\tym{n}}$ and~$p\in[n]$, define the $p$-section~$\rep f^\asym{n}_{\mbfx, {p}}\colon X\rar \R$ by
\begin{align}\label{eq:FuncSection}
\rep f^\asym{n}_{\mbfx, {p}}\colon y \longmapsto \rep f^\asym{n}(x_1,\dotsc, x_{p-1},y,x_{p+1},\dotsc, x_n)\comma \qquad \mbfx^\asym{n}\eqdef\seq{x_1,\dotsc, x_n}\in X^{\tym{n}}\fstop
\end{align}
Define the \emph{product square field operator}~$(\rep\cdc^{\tym{n}},\rep\Dz^{\otym{n}})$
\begin{align}\label{eq:CdCTensor}
\rep\cdc^{\tym{n}}(\rep f^\asym{n})(\mbfx^\asym{n})\eqdef \sum_{p=1}^n \rep\cdc(\rep f^\asym{n}_{\mbfx, {p}})(x_p) \comma \qquad \rep f^\asym{n}\in \rep\Dz^{\otym{n}}\comma \qquad \mbfx^\asym{n}\in X^{\tym{n}}\fstop
\end{align}

Some essentials about product spaces are collected in the following proposition. A proof is quite standard, and therefore it is omitted.

\begin{prop}[Product spaces]\label{p:Products} For every~$n\geq 2$, 
\begin{itemize}
\item the triple~$\ttonde{X^{\tym n}, \A^{\hotym n}, \mssm^{\otym n}}$ is a measure space in the sense of Dfn.~\ref{d:MS}; 
\item the quadruple~$\mcX^{\otym n}\eqdef (X^{\tym{n}}, \A^{\hotym{n}},\mssm^{\otym{n}}, \msE^{\otym n})$ is a local structure;
\item the pair~$(\rep\cdc^\tym{n}, \rep\Dz^{\otym n})$ is a pointwise defined square field operator;
\item the pair~$(\mcX^{\otym n}, \rep\cdc^\tym{n})$ is an~\LDS.
\end{itemize}

If~$(\mcX,\cdc)$ is additionally a \TLDS, then so is~$(\mcX^{\otym n}, \cdc^{\tym n})$.
\end{prop}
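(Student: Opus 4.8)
The plan is to verify each of the five bulleted claims in turn, proceeding from the weakest structure (measure space) to the strongest (\TLDS), since each item builds on the previous one.

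First I would check that $\ttonde{X^{\tym n},\A^{\hotym n},\mssm^{\otym n}}$ is a measure space in the sense of Definition~\ref{d:MS}. Non-emptiness is clear; that singletons lie in $\A^{\hotym n}$ follows from $\set{(x_1,\dotsc,x_n)}=\set{x_1}\times\cdots\times\set{x_n}$ and Definition~\ref{d:MS}\iref{i:d:MS:2}. For Definition~\ref{d:MS}\iref{i:d:MS:3}: $\sigma$-finiteness of $\mssm^{\otym n}$ follows from $\sigma$-finiteness of $\mssm$ by exhausting $X^{\tym n}$ with products of finite-measure sets; atomlessness of $\mssm^{\otym n}$ is a standard fact (a product of an atomless measure with any $\sigma$-finite measure is atomless, applied inductively — or use Fubini: if $\mssm^{\otym n}(A)>0$ then the section of $A$ in the first coordinate has positive $\mssm$-measure on a positive-measure set of the remaining coordinates, and atomlessness of $\mssm$ lets one split). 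For Definition~\ref{d:MS}\iref{i:d:MS:4}, if $\A_0$ essentially generates $\A$ then $\A_0^{\hotym n}$ is countably generated and essentially generates $\A^{\hotym n}$: one approximates rectangles $A_1\times\cdots\times A_n$ by $A_1^0\times\cdots\times A_n^0$ using $\mssm^{\otym n}\ttonde{(A_1\times\cdots\times A_n)\triangle(A_1^0\times\cdots\times A_n^0)}\leq\sum_p \mssm(A_p\triangle A_p^0)\cdot\prod_{q\neq p}\mssm(A_q)$ over a generating algebra of finite-measure rectangles, then extends by a monotone-class argument.

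Second, for the local structure claim I would verify Definition~\ref{d:LS}: that $\msE^{\otym n}$ is a ring ideal of $(\A^{\hotym n})_{\mssm^{\otym n}}$ and admits a localizing sequence. If $\seq{E_h}_h$ is a localizing sequence for $\msE$ with $E_h\uparrow X$, then $\seq{E_h^{\tym n}}_h$ satisfies $E_h^{\tym n}\uparrow X^{\tym n}$ and each has finite $\mssm^{\otym n}$-measure, giving the localizing sequence; closure under finite unions/intersections and the ideal property for $\msE^{\otym n}$ follow from its definition as the ring ideal generated by the rectangles $\msE^\tym{n}$. Third, for the pointwise square field operator claim I would check Definition~\ref{d:SF}\iref{i:d:SF:1}--\iref{i:d:SF:3} for $(\rep\cdc^\tym n,\rep\Dz^{\otym n})$: the algebra property and closure under $\phi_0\circ\mbff$ are inherited coordinatewise; symmetry and non-negative definiteness of $\rep\cdc^\tym n$ follow termwise from those of $\rep\cdc$ via \eqref{eq:CdCTensor}; and the diffusion property \eqref{eq:i:d:SF:3} for $\rep\cdc^\tym n$ reduces, section by section in each coordinate $p$, to the diffusion property of $\rep\cdc$ together with the chain rule for the sections of $\phi_0\circ\mbff$ — this is the one genuinely computational step, essentially the Leibniz/chain-rule bookkeeping for functions on a product. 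The compatibility condition \eqref{eq:Ss} for $\rep\cdc^\tym n$ (needed to pass to $\mssm^{\otym n}$-classes) follows because if $\rep f^\asym n\equiv 0$ $\mssm^{\otym n}$-a.e.\ then $\mssm^{\otym n}$-a.e.\ section $\rep f^\asym n_{\mbfx,p}$ vanishes $\mssm$-a.e., so each summand $\rep\cdc(\rep f^\asym n_{\mbfx,p})(x_p)$ vanishes by \eqref{eq:Ss} for $\rep\cdc$, hence $\rep\cdc^\tym n(\rep f^\asym n)=0$ $\mssm^{\otym n}$-a.e.

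Fourth, for the \LDS claim (Definition~\ref{d:DS}) the only nontrivial point beyond the above is Definition~\ref{d:DS}\iref{i:d:DS:3}: closability and dense definition of $\ttonde{\EE{X^{\tym n}}{\mssm^{\otym n}},\Dz^{\otym n}}$ in $L^2(\mssm^{\otym n})$. Density holds because $\Dz$ is dense in $L^2(\mssm)$ and finite linear combinations of products span a dense subspace of $L^2(\mssm^{\otym n})$. Closability is the classical tensorization of closability of Dirichlet forms: the form $\EE{X^{\tym n}}{\mssm^{\otym n}}$ on $\Dz^{\otym n}$ is (the restriction of) the $n$-fold tensor-sum form $\bigoplus_{p=1}^n \EE{X}{\mssm}$, and a tensor sum of closable (Markovian) forms is closable — I would cite the standard reference for this (e.g.\ the tensorization results available for Dirichlet forms) rather than reprove it, noting that \eqref{eq:CdCTensor} exhibits precisely the square field of the tensor-sum form. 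This is the step I expect to be the main obstacle to making fully rigorous in a self-contained way, but since the problem permits invoking earlier or standard results, it should reduce to a citation plus the identification of $\rep\cdc^\tym n$ with the square field of the product form. Finally, for the \TLDS claim: Proposition~\ref{p:Products}'s measure-space and local-structure parts already give Definition~\ref{d:TLDS}\iref{i:d:TLDS:1} once we equip $X^{\tym n}$ with the product topology $\T^{\tym n}$, which is completely regular and Luzin (a finite product of Luzin spaces is Luzin, and of completely regular spaces is completely regular), with $\Bo{\T^{\tym n}}=\Bo{\T}^{\hotym n}$ by second countability (Remark~\ref{r:TLS}\iref{i:r:TLS:1}); $\mssm^{\otym n}$ is Radon with full support (products of Radon measures on second-countable spaces are Radon, and $\supp\mssm^{\otym n}=(\supp\mssm)^{\tym n}=X^{\tym n}$); Definition~\ref{d:TLDS}\iref{i:d:TLDS:4} is the \LDS property just established; Definition~\ref{d:TLDS}\iref{i:d:TLDS:2}, that $\Dz^{\otym n}\subset\Cz(\msE^{\otym n})$ generates $\T^{\tym n}$, follows since products of generators generate the product topology and products of $\msE$-eventually-vanishing functions are $\msE^{\otym n}$-eventually-vanishing; and Definition~\ref{d:TLDS}\iref{i:d:TLDS:5}, quasi-regularity of $\ttonde{\EE{X^{\tym n}}{\mssm^{\otym n}},\dom{\EE{X^{\tym n}}{\mssm^{\otym n}}}}$, follows from quasi-regularity of $\ttonde{\EE{X}{\mssm},\dom{\EE{X}{\mssm}}}$ because a finite tensor-sum product of quasi-regular Dirichlet forms is quasi-regular (construct an $\EE{X^{\tym n}}{\mssm^{\otym n}}$-nest of compacts from products of $\EE{X}{\mssm}$-nests), again citing the standard reference.
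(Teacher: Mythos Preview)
Your proposal is correct and is precisely the standard verification the paper has in mind: the paper itself omits the proof entirely, stating only that ``A proof is quite standard, and therefore it is omitted.'' Your item-by-item checking of Definitions~\ref{d:MS}, \ref{d:LS}, \ref{d:SF}, \ref{d:DS}, and~\ref{d:TLDS} for the product, with closability handled by the tensorization of closable Dirichlet forms (indeed available e.g.\ in~\cite{BouHir91}) and quasi-regularity by products of nests, is exactly the routine argument being deferred.
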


\begin{notat}\label{n:ProductCdC}
In the following, we write~$\cdc^{\otym n}\eqdef \cdc^{X^{\tym n}, \mssm^{\otym n}}$ to denote the closure of~$(\cdc^{\tym n},\Dz^{\otym n})$, as well as the extension of the former to its extended domain~$\domext{\cdc^{\otym n}}$.
\end{notat}

\subsubsection{Pre-domains on infinite products}\label{sss:PreDomains}
Let~$\QP$ be a probability measure on~$\ttonde{\dUpsilon,\A_\mrmv(\msE)}$ satisfying Assumption~\ref{ass:Mmu}.

Proposition~\ref{p:ExtDom} establishes that, in defining (extended) cylinder functions, we do not need any specification of $\mssm$-representatives for the inner functions~$\mbff$, nor of $\QP$-representatives for the resulting function~$u=F\circ \mbff^\trid$. However, in order to transfer objects from the configuration space to~$\mbfX$, we shall need to make sense of the pullback map
\begin{align*}
\Lb^*\colon u\longmapsto u\circ \Lb \fstop
\end{align*}
Since no $\sigma$-ideal in~$\boldSigma$ of negligible sets is assigned on~$\mbfX$, we need to interpret all functions~$\Lb^*u$ as pointwise defined everywhere on~$\mbfX$. The same holds for functions of the form~$\Lb^* \ttonde{\SF{\dUpsilon}{\QP}(u)}$, which motivates a more thorough study of the choice of representatives.

We shall start by defining a suitable core of differentiable functions.

\begin{notat}\label{n:LabelingUniversalAlgebra}
For a labeling map~$\lb$ set~$\QP^\asym{\infty}\eqdef \lb_\pfwd \QP$, and denote by~$\Bo{\T^\tym{\infty}}^{\lb_\pfwd\QP}$ the completion of~$\Bo{\T^\tym{\infty}}$ w.r.t.~$\QP^\asym{\infty}$.
Further define the \emph{labeling-universal} $\sigma$-algebra on~$\mbfX^\asym{\infty}_\locfin(\msE)$ by
\begin{align*}
\boldSigma^*(\msE)\eqdef \bigcap_{\lb \text{ labeling map}} \Bo{\T^\tym{\infty}}^{\lb_\pfwd\QP} \fstop
\end{align*}
\end{notat}
Note that~$\Lb\colon \mbfX^\asym{\infty}_\locfin(\msE)\to \dUpsilon$ is $\boldSigma^*(\msE)/\A_\mrmv(\msE)^\QP$-measurable.

\begin{notat}
For a bounded $\boldSigma^*(\msE)$-measurable function~$\rep U\colon X^\tym{\infty}\rar \R$ let
\begin{align}\label{eq:d:Di:0}
\rep U_{\mbfx, {p}}\colon z\longmapsto \rep U(x_1,\dotsc, x_{p-1},z, x_{p+1},\dotsc)\comma \qquad \mbfx\in X^\tym{\infty}\comma \quad p\in \N_1\comma
\end{align}
and set, for every~$N\in\overline\N_1$,
\begin{align}\label{eq:d:Di:1}
\begin{aligned}
\rep\cdc^p(\rep U)(\mbfx)\eqdef&\ \repSF{X}{\mssm}(\rep U_{\mbfx, p})(x_p) \comma
\\
\rep\cdc^\asym{N}(\rep U)(\mbfx)\eqdef&\ \sum_{p=1}^N \rep\cdc^p(\rep U)(\mbfx) \comma
\end{aligned}
\qquad \mbfx\in X^\tym{\infty}\comma
\end{align}
whenever this makes sense.
We denote by~$\rep\cdc^\asym{N}(\emparg,\emparg)$ the bilinear form induced by~$\rep\cdc^\asym{N}(\emparg)$ by polarization.
\end{notat}

For a labeling map~$\lb$ recall that~$\QP^\asym{\infty}\eqdef \lb_\pfwd \QP$ and set~$H_\infty\eqdef L^2(\QP^\asym{\infty})$. We further define the Sobolev \emph{semi}-norm
\begin{align*}
\ttnorm{\rep U}_{\llb}\eqdef \norm{\ttabs{\rep U}+ \rep\cdc^\asym{\infty}(\rep U)^{1/2}}_{H_\infty} \fstop
\end{align*}

\begin{defs}[pre-Sobolev class]\label{d:preSobolev}
We say that $\rep U\colon \mbfX^{\asym{\infty}}_\locfin(\msE) \rar \R$ is \emph{pre-Sobolev} if
\begin{enumerate}[$(a)$]
\item\label{i:d:preSobolev:1} $\rep U$ is bounded $\boldSigma^*(\msE)$-measur\-able;
\item\label{i:d:preSobolev:3} there exists a constant~$M>0$ so that~$\ttnorm{\rep U}_{\llb}\leq M$ for \emph{every} labeling map~$\lb$.
\end{enumerate}
We denote by~$\preW(\msE)$ the space of all pre-Sobolev functions on~$\mbfX^{\asym{\infty}}_\locfin(\msE)$, and by~$\spclass[\llb]{\preW(\msE)}$ the corresponding space of~$\QP^\asym{\infty}$-classes for some fixed labeling map~$\lb$.
\end{defs}

The requirement of $\boldSigma^*(\msE)$ measurability in the definition of pre-Sobolev functions arises from the necessity to guarantee that any pre-Sobolev function~$\rep U\colon \mbfX^\asym{\infty}_\locfin(\msE)\to\R$ be measurable w.r.t.\ the completion of~$\QP^\asym{\infty}\eqdef \lb_\pfwd\QP$ \emph{for every labeling map~$\lb$}.

\medskip

If not otherwise stated, everywhere in the following $\lb$ is a \emph{fixed} labeling map.
For every~$N\in \overline\N_1$ set
\begin{align}\label{eq:FormInfty}
\EE{\asym{N}}{\llb_\pfwd\QP}(\rep U,\rep V)\eqdef \int_{X^\tym{\infty}} \rep\cdc^\asym{N}(\rep U,\rep V) \diff\QP^\asym{\infty} \comma\qquad \rep U,\rep V\in \preW(\msE) \fstop
\end{align}
We aim to show that the quadratic form above descends to a well-defined quadratic form on the space of classes~$\spclass[\llb]{\preW(\msE)}$.
To this end, it suffices to show~\eqref{eq:Ss} with~$\rep\cdc^\asym{N}$ in place of~$\rep\cdc$, which is the content of the next Lemma~\ref{l:WellPosedness}.

\paragraph{Disintegration of measures}
We recall here some essentials about disintegrations of measures, which we will use in the proof of Lemma~\ref{l:WellPosedness} below.
\begin{defs}[Disintegration (cf.~{\cite[452E]{Fre00}})]\label{d:Disintegrations}
Let~$(X,\A, \mssm)$ and~$(Y,\Tau,\mssn)$ be measure space. A \emph{disintegration} of~$\mssm$ over~$\mssn$ is a family~$\seq{\mssm_y}_{y\in Y}$ of non-zero sub-probability measures on~$(X,\A)$ such that~$\int \mssm_y A \diff\mssn(y)$ is defined in~$[0,\infty]$ and equal to~$\mssm A$ for every~$A\in\A$. If~$f\colon X\rar Y$ is measurable and satisfying~$\mssn=f_\pfwd \mssm$, a disintegration~$\seq{\mssm_y}_{y\in Y}$ of~$\mssm$ over~$\mssn$ is \emph{consistent} with~$f$ if
\begin{align*}
\mssm \ttonde{A\cap f^{-1}(B)}= \int_B \mssm_y A \diff \mssn(y) \comma \qquad A\in\A\comma B\in\Tau \semicolon
\end{align*}
\emph{strongly consistent with~$f$} if additionally, for $\mssn$-a.e.~$y\in Y$, the set~$f^{-1}(y)$ is $\mssm_y$-conegligible.

A disintegration~$\seq{\mssm_y}_{y\in Y}$ of~$\mssm$ over~$\mssn$ is a \emph{system of regular conditional probabilities} if~$\mssm$,~$\mssn$ and all the~$\mssm_y$'s are probability measures.
\end{defs}

\begin{defs}[Countable separation, cf.~{\cite[343D]{Fre00}}]\label{d:CountableSep}
A measurable space~$(Y,\Tau)$ is \emph{countably separated} if there exists a countable family of subsets~$\mcA\subset\Tau$ separating points in~$Y$.
\end{defs}
In our context, the existence of disintegrations may be stated as follows.

\begin{thm}[{Cf.~\cite[452O, 452G(c)]{Fre00}}]\label{t:Disintegration}
Let~$(X,\T,\Bo{\T}^\mssm,\hat\mssm)$ be a Radon probability space\footnote{Note that the definition~\cite[411H(b)]{Fre00} of a Radon measure space \emph{requires} it to be complete in the sense of measures.}, and $(Y,\Tau)$ be countably separated. Further let~$f\colon X\rar Y$ be $\Bo{\T}^\mssm/\Tau$-measurable and set~$\mssn\eqdef f_\pfwd \mssm$. Then, there exists a disintegration~$\seq{\mssm_y}_{y\in Y}$ of~$\hat\mssm$ over~$\mssn$ strongly consistent with~$f$.
\end{thm}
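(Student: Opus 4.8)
The statement to prove is Theorem~\ref{t:Disintegration}, which is explicitly attributed to Fremlin's treatise (\cite[452O, 452G(c)]{Fre00}). Since this is a citation of a known result, the ``proof'' is really a matter of assembling the two cited pieces into the exact form stated here. The plan is therefore to recall the relevant statements from Fremlin and explain how they combine.

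First I would invoke \cite[452O]{Fre00}, which provides the existence of a disintegration of a Radon probability measure $\hat\mssm$ over its image measure $\mssn = f_\pfwd\mssm$ that is \emph{consistent} with $f$: this is the general Radon-space disintegration theorem, and it applies here because $(X,\T,\Bo{\T}^\mssm,\hat\mssm)$ is by hypothesis a Radon probability space (in particular complete, as Fremlin's definition requires, which is why the footnote is there). This yields a family $\seq{\mssm_y}_{y\in Y}$ of non-zero sub-probability measures on $(X,\A)$ with $\int_B \mssm_y A\diff\mssn(y) = \mssm(A\cap f^{-1}(B))$ for all $A\in\Bo{\T}^\mssm$, $B\in\Tau$.

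Second, to upgrade ``consistent'' to ``strongly consistent'' — i.e.\ to guarantee that $f^{-1}(y)$ is $\mssm_y$-conegligible for $\mssn$-a.e.\ $y$ — I would apply \cite[452G(c)]{Fre00}, which states precisely that when the target $(Y,\Tau)$ is \emph{countably separated}, any consistent disintegration is automatically strongly consistent. This is exactly where the hypothesis that $(Y,\Tau)$ be countably separated (Definition~\ref{d:CountableSep}) enters: a countable separating family $\mcA\subset\Tau$ lets one localize mass on the fibres $f^{-1}(y)$. Combining the two citations gives the disintegration with the asserted strong-consistency property, completing the proof.

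The main (and essentially only) obstacle here is bookkeeping rather than mathematics: one must check that the hypotheses of the paper's Theorem~\ref{t:Disintegration} match the hypotheses of the two cited Fremlin results verbatim — in particular that ``Radon probability space'' in the sense used here (completed Borel $\sigma$-algebra $\Bo{\T}^\mssm$, measure $\hat\mssm$) is the same as Fremlin's notion, that $f$ being $\Bo{\T}^\mssm/\Tau$-measurable is the correct measurability requirement for \cite[452O]{Fre00}, and that the sub-probability (as opposed to probability) nature of the fibre measures $\mssm_y$ is what is actually delivered. None of these requires real work, so the proof is short; accordingly I would simply write:

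\begin{proof}
This is a combination of two results of Fremlin. Since $(X,\T,\Bo{\T}^\mssm,\hat\mssm)$ is a Radon probability space and $f$ is $\Bo{\T}^\mssm/\Tau$-measurable, \cite[452O]{Fre00} provides a disintegration $\seq{\mssm_y}_{y\in Y}$ of $\hat\mssm$ over $\mssn\eqdef f_\pfwd\mssm$ consistent with $f$, i.e.\ a family of non-zero sub-probability measures on $(X,\Bo{\T}^\mssm)$ with
\begin{align*}
\hat\mssm\ttonde{A\cap f^{-1}(B)}=\int_B \mssm_y A\diff\mssn(y)\comma\qquad A\in\Bo{\T}^\mssm\comma\ B\in\Tau\fstop
\end{align*}
Since $(Y,\Tau)$ is countably separated, \cite[452G(c)]{Fre00} upgrades this consistency to strong consistency with $f$: for $\mssn$-a.e.\ $y\in Y$, the fibre $f^{-1}(y)$ is $\mssm_y$-conegligible. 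This is precisely the assertion.
\end{proof}
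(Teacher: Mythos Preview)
Your proposal is correct and matches the paper's approach: the paper states this theorem as a direct citation of Fremlin without providing its own proof, and your assembly of \cite[452O]{Fre00} (existence of a consistent disintegration for Radon probability spaces) with \cite[452G(c)]{Fre00} (upgrade to strong consistency under countable separation of the target) is exactly the intended reading of the attribution.
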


\subsubsection{Well-posedness on infinite products}
We the following well-posedness result.

\begin{lem}[Well-posedness on products]\label{l:WellPosedness}
Let~$(\mcX,\cdc)$ be a \TLDS, and~$\QP$ be a probability measure on~$\ttonde{\dUpsilon,\A_\mrmv(\msE)}$ satisfying Assumption~\ref{ass:AC}. 
Further let~$\rep U,\rep V\in \preW(\msE)$ with~$\rep U\equiv 0$ $\QP^\asym{\infty}$-a.e.. Then,
\begin{align}\label{eq:l:WellPosedness:00}
\rep\cdc^\asym{N}(\rep U,\rep V)\equiv 0 \as{\QP^\asym{\infty}} \fstop
\end{align}
\end{lem}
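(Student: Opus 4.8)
The statement asserts that the product square-field form $\rep\cdc^{\asym N}$ is well-posed on $\QP^{\asym\infty}$-classes, i.e.\ satisfies the analogue of~\eqref{eq:Ss}. The strategy is to reduce from the infinite product to the finite products $X^{\tym n}$, where the diffusion property~\eqref{eq:i:d:SF:3} of $\rep\cdc$ and the closability built into the \LDS-structure can be applied, and then to exploit Assumption~\ref{ass:AC} to pass back. First I would reduce to a single coordinate: since $\rep\cdc^{\asym N}(\rep U,\rep V)=\sum_{p\leq N}\rep\cdc^p(\rep U,\rep V)$, it suffices to prove $\rep\cdc^p(\rep U,\rep V)\equiv 0$ $\QP^{\asym\infty}$-a.e.\ for each fixed $p$; and by symmetry and bilinearity (polarization), it suffices to handle $\rep\cdc^p(\rep U)\equiv 0$ when $\rep U\equiv 0$ $\QP^{\asym\infty}$-a.e., which is exactly the statement that the single-coordinate carr\'e-du-champ of a null function is null.

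\textbf{Reduction to finite dimensions via disintegration.} The key tool is a disintegration of $\QP^{\asym\infty}$ with respect to the truncation $\tr^n$, combined with Assumption~\ref{ass:AC}. Fix $p$ and choose $n\geq p$. Since $\ttonde{\dUpsilon,\T_\mrmv(\msE)}$ is Luzin (Prop.~\ref{p:ConfigLuzin}), $\QP$ — hence $\QP^{\asym\infty}=\lb_\pfwd\QP$ on the completion — is a Radon probability measure on a suitable Radon space, and the target $X^{\tym n}$ is countably separated (second countable by Rem.~\ref{r:TLS}\iref{i:r:TLS:1}). Theorem~\ref{t:Disintegration} then yields a disintegration $\seq{\QP^{\asym\infty}_{\mbfx^{\asym n}}}_{\mbfx^{\asym n}\in X^{\tym n}}$ of $\QP^{\asym\infty}$ over $\QP^{\asym n}=(\tr^n)_\pfwd\QP^{\asym\infty}$, strongly consistent with $\tr^n$; by~\ref{ass:AC}, $\QP^{\asym n}\ll\mssm^{\otym n}$. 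Now $\rep\cdc^p(\rep U)(\mbfx)=\repSF{X}{\mssm}(\rep U_{\mbfx,p})(x_p)$ depends only on the section $\rep U_{\mbfx,p}\colon z\mapsto \rep U(x_1,\dots,x_{p-1},z,x_{p+1},\dots)$, i.e.\ only on the coordinates $x_q$ with $q\neq p$ together with the freed coordinate. Freezing all coordinates except the $p$-th (which, for $p\leq n$, is among the first $n$ coordinates subject to the disintegration), the condition $\rep U\equiv 0$ $\QP^{\asym\infty}$-a.e.\ together with strong consistency forces, for $\QP^{\asym n}$-a.e.\ $\mbfx^{\asym n}$ and then along the disintegrated fibre, that the section $\rep U_{\mbfx,p}$ vanishes $\mssm$-a.e.\ on $X$. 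At this point apply property~\eqref{eq:Ss} for the base square field $\rep\cdc$ (valid since $(\mcX,\cdc)$ is a \TLDS, hence $\rep\cdc_\ell$ satisfies~\eqref{eq:Ss}): $\repSF{X}{\mssm}$ of an $\mssm$-a.e.-null function is $\mssm$-a.e.-null — more precisely one first reduces $\rep\cdc^p(\rep U)$ to the closure $\SF{X}{\mssm}$ applied to the $\mssm$-class of the section, using that $\rep U\in\preW(\msE)$ means the section lies in $\domext{\SF{X}{\mssm}}$ with the right Sobolev bound, and that this class is the zero class. Integrating the resulting $\QP^{\asym\infty}$-a.e.-vanishing back over $X^{\tym n}$ via the disintegration gives $\rep\cdc^p(\rep U)\equiv 0$ $\QP^{\asym\infty}$-a.e.

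\textbf{Assembling and handling $N=\infty$.} Summing over $p\leq N$ gives~\eqref{eq:l:WellPosedness:00} for every finite $N$; for $N=\infty$ one takes the monotone limit, noting $\rep\cdc^{\asym\infty}(\rep U)=\sum_p\rep\cdc^p(\rep U)$ is a sum of non-negative terms each vanishing $\QP^{\asym\infty}$-a.e., so the (countable) sum vanishes $\QP^{\asym\infty}$-a.e.\ as well; the boundedness of $\ttnorm{\rep U}_{\llb}$ in Definition~\ref{d:preSobolev} guarantees the sum is finite $\QP^{\asym\infty}$-a.e., so no convergence pathology arises.

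\textbf{Main obstacle.} The delicate point is the passage from ``$\rep U\equiv 0$ as a function on $X^{\tym\infty}$'' to ``the $p$-th section $\rep U_{\mbfx,p}$ is $\mssm$-a.e.\ zero for $\QP^{\asym n}$-a.e.\ frozen configuration'': this is precisely where Assumption~\ref{ass:AC} is essential, since without absolute continuity of the finite marginals over $\mssm^{\otym n}$ one cannot translate a $\QP^{\asym\infty}$-null set into an $\mssm$-null condition on the freed coordinate, and one cannot then invoke~\eqref{eq:Ss} which is an $\mssm$-a.e.\ statement. Making the disintegration argument rigorous — in particular verifying the measurability of $\mbfx\mapsto\rep\cdc^p(\rep U)(\mbfx)$ against $\boldSigma^*(\msE)$ so that Fubini/Tonelli and the disintegration identities apply, and checking that the freed coordinate is genuinely $\mssm$-distributed in the relevant fibre — is the technical heart of the proof, and I expect this bookkeeping (rather than any conceptual difficulty) to be where most of the work lies.
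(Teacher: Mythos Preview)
Your reduction to a single coordinate $p$ matches the paper, and the idea of using a disintegration together with Assumption~\ref{ass:AC} is the right instinct. However, two genuine gaps prevent the argument from closing.

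\textbf{The disintegration goes the wrong way.} You disintegrate $\QP^{\asym\infty}$ over $\tr^n$, which conditions on the \emph{first $n$ coordinates} and integrates over the tail. But to analyze $\rep\cdc^p(\rep U,\rep V)(\mbfx)=\repSF{X}{\mssm}(\rep U_{\mbfx,p},\rep V_{\mbfx,p})(x_p)$ you must free the $p$-th coordinate and fix \emph{all} others, including the infinite tail. The paper therefore disintegrates via the projection $\Pi\colon(x_1,x_2,\dots)\mapsto(x_2,x_3,\dots)$ that removes the $p$-th coordinate (taking $p=1$ without loss of generality), obtaining fibre measures $\nu_{\mbfx'}$ on the freed coordinate. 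Your $\tr^n$-disintegration gives no handle on the distribution of $x_p$ conditional on everything else.

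\textbf{The section does not vanish $\mssm$-a.e.} Even with the correct disintegration, the claim ``the section $\rep U_{\mbfx,p}$ vanishes $\mssm$-a.e.'' is generically false. From $\QP^{\asym\infty}\{\rep U=0\}=1$ one only obtains that for $\Pi_\pfwd\QP^{\asym\infty}$-a.e.\ $\mbfx'$ the zero set $A_{\mbfx'}\eqdef\{y:\rep U_{\mbfx,1}(y)=0\}$ is $\nu_{\mbfx'}$-conegligible; there is no reason for $\nu_{\mbfx'}\ll\mssm$, so $A_{\mbfx'}$ need not be $\mssm$-conegligible, and~\eqref{eq:Ss} cannot be invoked. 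The paper circumvents this by using the stronger \emph{local property} of~$\SF{X}{\mssm}$ (namely $\SF{X}{\mssm}(f,h)=\SF{X}{\mssm}(g,h)$ $\mssm$-a.e.\ on $\{\rep f=\rep g\}$), which gives $\repSF{X}{\mssm}(\rep U_{\mbfx,1},\rep V_{\mbfx,1})=0$ $\mssm$-a.e.\ \emph{on} $A_{\mbfx'}$; Assumption~\ref{ass:AC} (with $n=1$, so $\QP^{\asym 1}\ll\mssm$) then transfers this to $\QP^{\asym 1}$-a.e.\ on $A_{\mbfx'}$. A further disintegration computation shows $\QP^{\asym 1}A_{\mbfx'}=1$, whence the zero set of the carr\'e du champ has full $\QP^{\asym\infty}$-measure. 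Thus the key ingredients you are missing are the projection that removes (rather than keeps) coordinate $p$, and locality of the base square field in place of~\eqref{eq:Ss}.
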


Before dwelling into the details, the idea of the proof is as follows.
It suffices to show that~\eqref{eq:l:WellPosedness:00} holds when~$\rep\cdc^\asym{N}$ is replaced by any of its (at most countable) summands~$\rep\cdc^p$ as in~\eqref{eq:d:Di:1}.
This is achieved via a disintegration argument, projecting~$X^\tym{\infty}$ to one of its coordinates (without loss of generality, say the first one), and considering the section~$\rep\cdc^1(\rep U,\rep V)_{\mbfx,1}\colon X\to\R$.
By assumption on~$\rep U$ and locality of~$\SF{X}{\mssm}$, this section vanishes $\mssm$-a.e..
As a consequence of Assumption~\ref{ass:AC}, it vanishes~$\tr^1_\pfwd\QP^\asym{\infty}$-a.e.\ as well, and the assertion is concluded by reintegrating over all sections in a consistent way.
\begin{proof}[Proof of Lemma~\ref{l:WellPosedness}]
Set~$\QP^\sym{N}\eqdef \QP\mrestr{\dUpsilon^\sym{N}}$.
Since~$\QP=\sum_{N\in\N_0\cup\set{+\infty}} \QP^\sym{N}$, it suffices to show the statement with~$\QP^\sym{N}$ in place of~$\QP$ for each~$N\in N_0\cup\set{+\infty}$.
For simplicity, we show the statement in the case when~$\QP\dUpsilon^\sym{\infty}=1$, in which case we may restrict each labeling map to~$\lb\colon \dUpsilon^\sym{\infty}\to X^\tym{\infty}$.
A proof for the analogous statement with~$\QP^\sym{n}$ in place of~$\QP^\sym{\infty}$ is similar, and therefore it is omitted.

\paragraph{Restriction to $1$-sections} Suppose we have already shown that for every~$p\in \N_1$ there exists a $\widehat{\QP^\asym{\infty}}$-measurable set~$\mbfZ_p$ of full $\widehat{\QP^\asym{\infty}}$-measure so that~$\rep\cdc^p(\rep U, \rep V)\equiv \zero$ on~$\mbfZ_p$.
Then, the conclusion follows since the set~$\bigcap_{p\geq 1} \mbfZ_p$ has full $\QP^\asym{\infty}$-measure as well.
Thus, it suffices to show the statement with~$\rep\cdc^p(\rep U, \rep V)$ in place of~$\rep\cdc^\asym{N}(\rep U,\rep V)$ and we may and will in fact choose $p=1$, the proof for $p>1$ being analogous, although notationally more involved.

\paragraph{Sets of interest}
Now, denote by~$\Pi\colon \mbfX\rar \mbfX$ the projection~$\Pi\colon \mbfx\eqdef\seq{x_p}_{p=1}^\infty\mapsto \mbfx'\eqdef \seq{x_p}_{p=2}^\infty$, and note that~$\Pi$ globally fixes~$\mbfX_\locfin(\msE)$.
In the following, we also write~$\mbfx=x_1\oplus\mbfx'$.
Further set
\begin{align*}
\mbfA\eqdef& \tset{\mbfx\in \mbfX^\asym{\infty}_\locfin(\msE): \rep U(\mbfx)=0}\comma & A_{\mbfx'}\eqdef& \tset{y\in X: \rep U_{\mbfx,{1}}(y)=0} \comma
\\
\mbfZ\eqdef& \tset{\mbfx\in  \mbfX^\asym{\infty}_\locfin(\msE): \rep\cdc^1(\rep U, \rep V)(\mbfx)=0} \comma & Z_{\mbfx'}\eqdef& \tset{y\in X : \ttonde{\rep\cdc^1(\rep U, \rep V)}_{\mbfx,1}(y)=0} \fstop
\end{align*}
We write~$A_{\mbfx'}$ (as opposed to:~$A_\mbfx$) since, for fixed~$\mbfx=\seq{x_p}_{p=1}^\infty$, the set~$A_{\mbfx'}$ does not depend on~$x_1$. The same holds for~$Z_{\mbfx'}$.
Note that the set~$\mbfA$ above is $\boldSigma^*(\msE)$-measurable, by $\boldSigma^*(\msE)$-measurability of the corresponding defining function.
Since~$\mbfA\in\boldSigma^*(\msE)$, there exists a $\boldSigma^*(\msE)$-measurable $\QP^\asym{\infty}$-negligible set~$\mbfN$ and a $\Bo{\T^\tym{\infty}}$-measurable set~$\mbfA_0\subset \mbfA$ so that~$\mbfA=\mbfA_0\cup \mbfN$.

Further set~$\mbfA'\eqdef \Pi(\mbfA_0)$. Since~$\Pi$ is $\T^\tym{\infty}/\T^\tym{\infty}$-continuous and~$\mbfA_0\in\Bo{\T^\tym{\infty}}$, then~$\mbfA'$ is a Suslin set, thus $\Pi_\pfwd\QP^\asym{\infty}$-measurable.
Analogously,~$\Pi^{-1}(\mbfA')$ is Suslin, thus $\QP^\asym{\infty}$-measurable.

\paragraph{Proof for the first coordinate} Assume now~$\widehat{\QP^\asym{\infty}}\mbfA=1$. We want to show that~$\widehat{\QP^\asym{\infty}}\mbfZ=1$.
Since~$\widehat{\QP^\asym{\infty}}\mbfA=1$, we have that~$\QP^\asym{\infty}\mbfA_0=1$ as well.
Thus, up to replacing~$\mbfA$ with~$\mbfA_0$, we may and shall assume with no loss of generality that~$\mbfA$ is an element of~$\Bo{\T^\tym{\infty}}\subsetneq \boldSigma^*(\msE)$.

By Remark~\ref{r:TLS}\iref{i:r:TLS:1},~$(X,\Bo{\T})$ is countably separated by a countable basis of~$\T$, and so the same holds for~$(X^\tym{\infty},\Bo{\T^\tym{\infty}})$ and~$(\mbfX,\Bo{\T^\tym{\infty}})$.
By Theorem~\ref{t:Disintegration} there exists a disintegration~$\seq{\nu_{\mbfx'}}_{\mbfx'\in\mbfX}$ of the completed measure~$\widehat{\QP^\asym{\infty}}$ over~$\Pi_\pfwd \QP^\asym{\infty}$, strongly consistent with~$\Pi$.
In particular,
\begin{align}\label{eq:Disintegration}
\QP^\asym{\infty}(\mbfB\cap \mbfA)=\widehat{\QP^\asym{\infty}}\ttonde{\mbfB\cap \Pi^{-1}(\mbfA')}=\int_{\mbfA'} \nu_{\mbfx'} \mbfB \, \diff \Pi_\pfwd \QP^\asym{\infty}(\mbfx') \comma \qquad \mbfB\in\Bo{\T^\tym{\infty}}\fstop
\end{align}

Since~$\QP^\asym{\infty}\mbfA=1$, it follows from~\eqref{eq:Disintegration} with~$\mbfB=\mbfA$ and the strong consistency of the disintegration with~$\Pi$ that
\begin{align}\label{eq:l:WellPosedness:1}
\nu_{\mbfx'}\ttonde{A_{\mbfx'}\times \set{\mbfx'}} = \nu_{\mbfx'} \ttonde{\mbfA\cap \Pi^{-1}(\mbfx')}=1 \quad \forallae{\Pi_\pfwd\QP^\asym{\infty}} \mbfx'\in\mbfA' \comma
\end{align}
where the first equality holds by identity of the sets.

By the local property of~$\SF{X}{\mssm}$, we have~$\repSF{X}{\mssm}(f, h)=\repSF{X}{\mssm}(g,h)$ $\mssm$-a.e.\ on the set~$\ttset{\rep f= \rep g}$ for every~$f$,~$g$, and~$h\in\domext{\EE{X}{\mssm}}$.
Since~$\rep U$, $\rep V\in\preW(\msE)$, we may choose~$\rep f=\rep U_{\mbfx,{1}}$,~$\rep g\equiv \zero$ and~$\rep h=\rep V_{\mbfx,{1}}$, to conclude
\begin{align*}
\ttonde{\rep\cdc^1(\rep U, \rep V)}_{\mbfx,1}=\repSF{X}{\mssm}(\zero, \rep V_{\mbfx,{1}})\equiv 0 \quad \as{\mssm} \text{~on~} A_{\mbfx'} \qquad \mbfx'\in\mbfA'
\end{align*}
by definition of~$A_{\mbfx'}$. Since~$\QP^\asym{1}\ll \mssm$ by Assumption~\ref{ass:AC}, we have as well that
\begin{align}\label{eq:l:WellPosedness:2}
\ttonde{\rep\cdc^1(\rep U, \rep V)}_{\mbfx,1}=\repSF{X}{\mssm}(\zero, \rep V_{\mbfx,{1}})\equiv 0 \quad \as{\QP^\asym{1}} \text{~on~} A_{\mbfx'} \qquad \mbfx'\in\mbfA' \fstop
\end{align}
As a consequence we have that~$\widehat{\QP^\asym{1}} \ttonde{Z_{\mbfx'}\triangle A_{\mbfx'}}=0$ by definition of~$Z_{\mbfx'}$.
Thus, letting~$Z_{\mbfx'}^*\eqdef A_{\mbfx'}\cap Z_{\mbfx'}$, it holds that~$\widehat{\QP^\asym{1}} Z_{\mbfx'}^*= \widehat{\QP^\asym{1}} A_{\mbfx'}$ for all~$\mbfx'\in\mbfA'$.
Therefore, by~\eqref{eq:l:WellPosedness:1} and~\eqref{eq:Disintegration},
\begin{align}
\nonumber
\widehat{\QP^\asym{1}} Z_{\mbfx'}^*= \widehat{\QP^\asym{1}} A_{\mbfx'} =& \int_{\mbfA'} \widehat{\tr^1_\pfwd \nu_{\mbfx'}} \, A_{\mbfx'}\diff \widehat{\Pi_\pfwd \QP^\asym{\infty}} (\mbfx') =  \int_{\mbfA'} \nu_{\mbfx'} \, \ttonde{A_{\mbfx'}\times\set{\mbfx'}}\diff \widehat{\Pi_\pfwd \QP^\asym{\infty}} (\mbfx')
\\
\label{eq:l:WellPosedness:3}
=&\ \widehat{\Pi_\pfwd \QP^\asym{\infty}} \mbfA'=\QP^\asym{\infty} \mbfA=1
\end{align}
for all~$\mbfx'\in\mbfA'$. Again by~\eqref{eq:Disintegration} and strong consistency of the disintegration
\begin{equation}\label{eq:l:WellPosedness:4}
\begin{aligned}
\widehat{\QP^\asym{1}} Z_{\mbfx'}^*=&\ \widehat{\QP^\asym{\infty}} (\tr^1)^{-1}(Z_{\mbfx'}^*)=\int_{\mbfA'} \tr^1_\pfwd \nu_{\mbfx'} \, Z_{\mbfx'}^* \diff \widehat{\Pi_\pfwd \QP^\asym{\infty}}(\mbfx')
\\
=&\ \int_{\mbfA'} \nu_{\mbfx'}\ttonde{Z_{\mbfx'}^*\times \set{\mbfx'}} \diff \widehat{\Pi_\pfwd \QP^\asym{\infty}}(\mbfx') \fstop
\end{aligned}
\end{equation}

Combining~\eqref{eq:l:WellPosedness:3} and~\eqref{eq:l:WellPosedness:4} yields
\begin{align}\label{eq:l:WellPosedness:5}
1=\nu_{\mbfx'} \ttonde{Z_{\mbfx'}^* \times \set{\mbfx'}}\leq \nu_{\mbfx'}\ttonde{Z_{\mbfx'}\times\set{\mbfx'}} \leq 1\quad \forallae{\Pi_\pfwd\QP^\asym{\infty}}\,\mbfx'\in\mbfA' \fstop
\end{align}

Finally, by~\eqref{eq:Disintegration}, strong consistency of the disintegration, and~\eqref{eq:l:WellPosedness:5}
\begin{align*}
\widehat{\QP^\asym{\infty}} \mbfZ=&\int \nu_{\mbfx'} \mbfZ \diff \widehat{\Pi_\pfwd \QP^\asym{\infty}}(\mbfx')\geq \int_{\mbfA'} \nu_{\mbfx'} \ttonde{\mbfZ\cap \Pi^{-1}(\mbfx')} \diff \widehat{\Pi_\pfwd \QP^\asym{\infty}}(\mbfx')
\\
=& \int_{\mbfA'} \nu_{\mbfx'} \ttonde{Z_{\mbfx'}\times\set{\mbfx'}} \diff \widehat{\Pi_\pfwd \QP^\asym{\infty}}(\mbfx')
\\
=&\ \widehat{\Pi_\pfwd \QP^\asym{\infty}} \mbfA'= \QP^\asym{\infty} \mbfA=1 \fstop \qedhere
\end{align*}
\end{proof}

\subsubsection{Well-posedness on configuration spaces} 
We are now ready to show that, under Assumption~\ref{ass:AC}, the pre-Dirichlet form~\ref{eq:Temptation} is well-defined, that is, that~\eqref{eq:WPCdC} holds.

\begin{prop}[Well-posedness on~$\dUpsilon$]\label{p:NewWP}
Let~$(\mcX,\cdc)$ be a \TLDS, and~$\QP$ be a probability measure on~$\ttonde{\dUpsilon,\A_\mrmv(\msE)}$ satisfying Assumption~\ref{ass:AC}.
Then,~\eqref{eq:WPCdC} holds for~$\QP$.

\begin{proof}
Let~$\rep v\in\Cyl{\rep\Dz}$ be arbitrary, and note that~$\Lb^*\rep v\in \preW(\msE)$ by Lemma~\ref{l:InfDiff}.
By~\eqref{eq:l:CdCCylinderTruncation:1c}, for every strong lifting~$\ell\colon L^\infty(\mssm)\rar\mcL^\infty(\mssm)$ and every labeling map~$\lb$,
\begin{align}\label{eq:p:NewWP:1}
\Lb^*\tonde{\class[\QP]{\cdc^\dUpsilon(0,\rep v)}}=\class[\QP^\asym{\infty}]{\rep\cdc^\asym{\infty}_\ell(0,\Lb^*\rep v)}\as{\QP^\asym{\infty}}\fstop
\end{align}
By Lemma~\ref{l:WellPosedness}, the right-hand side of~\eqref{eq:p:NewWP:1} vanishes $\QP^\asym{\infty}$-a.e., hence
\begin{align*}
\Lb^*\tonde{\class[\QP]{\cdc^\dUpsilon(0,\rep v)}}=0 \as{\QP^\asym{\infty}}\comma
\end{align*}
whence~$\rep\cdc^\dUpsilon(0,\rep v)=0$ $\QP$-a.e..
\end{proof}
\end{prop}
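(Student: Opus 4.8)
The plan is to pull the statement back, via an arbitrary labeling map~$\lb$, to the infinite-product space~$\mbfX^\asym{\infty}_\locfin(\msE)$, where it reduces to the product well-posedness result Lemma~\ref{l:WellPosedness}. Recall that in~\eqref{eq:WPCdC} the symbol~$0$ stands for an arbitrary cylinder representative~$\rep u\in\Cyl{\rep\Dz}$ of the zero~$\QP$-class, i.e.\ with~$\rep u\equiv0$~$\QP$-a.e.; what has to be shown is that then~$\rep\cdc^\dUpsilon(\rep u,\rep v)=0$~$\QP$-a.e.\ for every~$\rep v\in\Cyl{\rep\Dz}$, which by bilinearity and symmetry is exactly the descent of~$\rep\cdc^\dUpsilon$ to a well-defined functional~$\cdc^\dUpsilon$ on~$\QP$-classes. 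Fix such~$\rep u$ and~$\rep v$, a strong lifting~$\ell$ (so that~$\rep\cdc^\dUpsilon=\rep\cdc^\dUpsilon_\ell$ is built from the pointwise operator~$\rep\cdc_\ell$ of~\eqref{eq:LiftCdC},~\eqref{eq:d:LiftCdCRep}), and a labeling map~$\lb$; set~$\QP^\asym{\infty}\eqdef\lb_\pfwd\QP$ and~$\Lb^*w\eqdef w\circ\Lb$. Since~$\Lb\circ\lb=\id_\dUpsilon$ we have~$\Lb_\pfwd\QP^\asym{\infty}=\QP$, so~$\Lb^*\rep u\equiv0$~$\QP^\asym{\infty}$-a.e., and it suffices to prove~$\Lb^*\tonde{\class[\QP]{\rep\cdc^\dUpsilon(\rep u,\rep v)}}=0$~$\QP^\asym{\infty}$-a.e.

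First I would record that~$\Lb^*\rep u,\Lb^*\rep v\in\preW(\msE)$. Boundedness and~$\boldSigma^*(\msE)$-measurability follow from boundedness of~$F,G$ and the~$\boldSigma^*(\msE)/\A_\mrmv(\msE)^\QP$-measurability of~$\Lb$ (cf.\ the remark after Notation~\ref{n:LabelingUniversalAlgebra}, based on Lemma~\ref{l:MeasurabilityL}). For the uniform Sobolev-seminorm bound in Definition~\ref{d:preSobolev}, one expands~$\rep\cdc^\asym{\infty}(\Lb^*\rep v)$ by the chain rule~\eqref{eq:d:Di:1}: the~$p$-section~$(\Lb^*\rep v)_{\mbfx,p}$ at~$\mbfx=\seq{x_q}_q$ with~$\gamma\eqdef\Lb(\mbfx)$ is~$y\mapsto G\ttonde{\dots,\gamma\rep g_i-\rep g_i(x_p)+\rep g_i(y),\dots}$, whose square field at~$x_p$ is dominated by~$\norm{G}_{\mcC^1}^2\max_{i,j}\ttabs{\rep\cdc(\rep g_i,\rep g_j)(x_p)}$; summing over~$p$ bounds~$\rep\cdc^\asym{\infty}(\Lb^*\rep v)(\mbfx)$ by a constant times~$\gamma E$ for a fixed~$E\in\msE$ carrying the supports of all the~$\rep\cdc(\rep g_i,\rep g_j)$, and~$\int\gamma E\diff\QP^\asym{\infty}=\mssm_\QP E$ is finite (Assumption~\ref{ass:Mmu}, in force in this section) and independent of~$\lb$; likewise for~$\Lb^*\rep u$.

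The crucial step is the pullback identity
\begin{align*}
\Lb^*\tonde{\class[\QP]{\rep\cdc^\dUpsilon(\rep u,\rep v)}}=\class[\QP^\asym{\infty}]{\rep\cdc^\asym{\infty}_\ell(\Lb^*\rep u,\Lb^*\rep v)}\as{\QP^\asym{\infty}}\fstop
\end{align*}
To establish it I would evaluate the Ma--R\"ockner representation~\eqref{eq:MaRoeckner} of~$\rep\cdc^\dUpsilon(\rep u,\rep v)$ at~$\gamma=\Lb(\mbfx)$ and compare it, summand by summand, with~$\rep\cdc^\asym{\infty}_\ell(\Lb^*\rep u,\Lb^*\rep v)(\mbfx)=\sum_p\repSF{X}{\mssm}\ttonde{(\Lb^*\rep u)_{\mbfx,p},(\Lb^*\rep v)_{\mbfx,p}}(x_p)$. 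Via the chain rule both sides reduce to the combination~$\sum_{i,j}(\partial_i F)(\rep\mbff^\trid\gamma)\,(\partial_j G)(\rep\mbfg^\trid\gamma)\,\rep\cdc(\rep f_i,\rep g_j)^\trid\gamma$ of~\eqref{eq:d:LiftCdCRep}, once the multiplicity bookkeeping is carried out: on the~$\dUpsilon$ side through the weights~$\gamma_x^{-1}$ and the increments~$\gamma_x\delta_\bullet$ in~\eqref{eq:MaRoeckner}, on the~$\mbfX$ side through the fact that exactly~$\gamma_x$ coordinates~$x_p$ equal a given atom~$x$, using also that~$\repSF{X}{\mssm}$ annihilates the additive constants~$\rep u\ttonde{\car_{X\setminus\set{x}}\!\cdot\gamma}$,~$\rep v\ttonde{\car_{X\setminus\set{x}}\!\cdot\gamma}$ in~\eqref{eq:MaRoeckner}. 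The genuine subtlety is that~$\rep\cdc_\ell(\rep f_i,\rep g_j)$ is only a pointwise representative of an~$\mssm$-class, so the two expressions coincide only off an~$\mssm^\otym{n}$-negligible set in the coordinates involved; Assumption~\ref{ass:AC} is precisely what upgrades this to coincidence~$\QP^\asym{n}$-a.e., hence~$\QP^\asym{\infty}$-a.e. This reconciliation ---~matching the atomic/multiplicity structure of~\eqref{eq:MaRoeckner} with the product structure of~\eqref{eq:d:Di:1} while taming the a.e.-ambiguity through~\ref{ass:AC}~--- is the step I expect to be the main obstacle.

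With these in hand the proof closes: Lemma~\ref{l:WellPosedness} applied with~$\rep U\eqdef\Lb^*\rep u$ (which is~$\equiv0$~$\QP^\asym{\infty}$-a.e.) and~$\rep V\eqdef\Lb^*\rep v$ gives~$\rep\cdc^\asym{\infty}_\ell(\Lb^*\rep u,\Lb^*\rep v)\equiv0$~$\QP^\asym{\infty}$-a.e.; the pullback identity then yields~$\Lb^*\tonde{\class[\QP]{\rep\cdc^\dUpsilon(\rep u,\rep v)}}=0$~$\QP^\asym{\infty}$-a.e., and since~$\Lb_\pfwd\QP^\asym{\infty}=\QP$ we conclude~$\rep\cdc^\dUpsilon(\rep u,\rep v)=0$~$\QP$-a.e. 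As~$\rep u$ was an arbitrary~$\QP$-a.e.-vanishing cylinder representative and~$\rep v\in\Cyl{\rep\Dz}$ arbitrary, this establishes~\eqref{eq:WPCdC} and the announced descent of~$\rep\cdc^\dUpsilon$ to~$\cdc^\dUpsilon$ on~$\QP$-classes.
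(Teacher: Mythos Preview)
Your proof is correct and follows essentially the same strategy as the paper's: pull back to~$\mbfX^\asym{\infty}_\locfin(\msE)$ via~$\Lb^*$, identify~$\rep\cdc^\dUpsilon$ with~$\rep\cdc^\asym{\infty}_\ell$ through the pullback, and invoke Lemma~\ref{l:WellPosedness}. The only presentational difference is that the paper obtains the pre-Sobolev membership and the pullback identity by forward-referencing Lemma~\ref{l:InfDiff} and~\eqref{eq:l:CdCCylinderTruncation:1c} (stated later, under the nominally stronger hypothesis~\ref{ass:CWP}, though the computations used do not actually require closability), whereas you unfold these arguments in place via the explicit chain-rule expansion and the Ma--R\"ockner representation~\eqref{eq:MaRoeckner}; your self-contained route avoids the forward dependence and makes the role of~\ref{ass:AC} in absorbing the $\mssm^{\otym n}$-negligible ambiguity more transparent.
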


\subsection{Closability and conditional measures}\label{ss:FurtherClosability}
In this section we prove the closability of the form~\eqref{eq:Temptation} under a new set of assumptions, different from Assumption~\ref{ass:Closability}.
We collect examples of measures satisfying this new assumptions in~\S\ref{sss:ExamplesAC} below.

\subsubsection{Projected conditional measures}
Let~$(\mcX,\cdc)$ be a \TLDS, and recall that~$\cdc$ is assumed to be defined on~$\Dz\subset \Cz(\msE)$.
In particular, we shall always assume that~$f\in\Dz$ is identified with its continuous representative in~$\rep\Dz=\ell(\Dz)$, where $\ell\colon L^\infty(\mssm)\to\mcL^\infty(\mssm)$ is any strong lifting.

\begin{defs}[Projected conditional measures]\label{d:ConditionalQP}
For a probability measure~$\QP$ on~$\ttonde{\dUpsilon,\A_{\mrmv}(\msE)}$, for each fixed~$\eta\in\dUpsilon$, and for each fixed~$E\in\msE$, we let~$\QP^{\eta_{E^\complement}}$ be the regular conditional probability strongly consistent with~$\pr^{E^\complement}$ (Dfn.~\ref{d:Disintegrations}), satisfying
\begin{equation}\label{eq:ConditionalQP}
\QP\tonde{\Lambda\cap \pr_{E^\complement}^{-1}(\Xi)}=\int_\Xi \QP^{\eta_{E^\complement}} \Lambda \, \diff\QP(\eta) \comma
\end{equation}
where, with slight abuse of notation, we regard~$\dUpsilon(E)$ as a subset of~$\dUpsilon$, and thus~$\pr^{E^\complement}$ as a map~$\pr^{E^\complement}\colon\dUpsilon\to\dUpsilon$.
In probabilistic notation,
\begin{equation*}
\QP^{\eta_{E^\complement}}=\QP\ttonde{\emparg \big |\, \pr^{E^\complement}(\emparg)=\eta_{E^\complement}} \fstop
\end{equation*}
The \emph{projected conditional probabilities of~$\QP$} are the system of measures
\begin{equation}\label{eq:ProjectedConditionalQP}
\QP^\eta_E\eqdef \pr^E_\pfwd \QP^{\eta_{E^\complement}}\comma \qquad \eta\in\dUpsilon\comma \qquad E\in\msE \fstop
\end{equation}
\end{defs}

\begin{defs}[Conditional absolute continuity]\label{d:ConditionalAC}
A probability measure~$\QP$ on $\ttonde{\dUpsilon,\A_{\mrmv}(\msE)}$ is \emph{conditionally absolutely continuous} (\emph{w.r.t.~$\PP_\mssm$}) if there exists a localizing sequence~$\seq{E_h}_h$ such that the projected conditional probabilities satisfy
\begin{equation}\tag*{$(\mathsf{CAC})_{\ref{d:ConditionalAC}}$}
\label{ass:CAC}
\forallae{\QP} \eta\in\dUpsilon \qquad \QP^\eta_{E_h} \ll \PP_{\mssm_{E_h}} \comma \qquad h\in\N \fstop
\end{equation}
It is \emph{conditionally equivalent} (\emph{to~$\PP_\mssm$}) if its projected conditional probabilities satisfy
\begin{equation}\tag*{$(\mathsf{CE})_{\ref{d:ConditionalAC}}$}
\label{ass:CE}
\forallae{\QP} \eta\in\dUpsilon \qquad \QP^\eta_{E_h} \sim \PP_{\mssm_{E_h}} \comma \qquad h\in\N \fstop
\end{equation}
\end{defs}

\begin{rem}\label{r:ConditionalAC}
\begin{enumerate*}[$(a)$]
\item\label{i:r:ConditionalAC:1} We note that, for~$\QP$-a.e.~$\eta$, the probability measure~$\QP^\eta_E$ satisfies Assumption~\ref{d:ass:AC} by combining Remark~\ref{r:AC}\iref{i:r:AC:1.5} and~\iref{i:r:AC:2}.
\item\label{i:r:ConditionalAC:2} Assumption~\ref{ass:CAC} is a rephrasing of the more `symmetric' condition~$\pr^{E_h}_\pfwd \QP^{\eta_{E_h^\complement}}\ll \pr^{E_h}_\pfwd (\PP_\mssm)^{\eta_{E_h^\complement}}$.
Indeed, we have that~$\pr^{E_h}_\pfwd (\PP_\mssm)^{\eta_{E_h^\complement}}=\pr^{E_h}_\pfwd \PP_\mssm$, e.g.~\eqref{eq:PropertiesPP:4}, and~$\pr^{E_h}_\pfwd \PP_\mssm=\PP_{\mssm_{E_h}}$ by~\eqref{eq:PoissonRestriction}.
The same is true for Assumption~\ref{ass:CE}.
\end{enumerate*}
\end{rem}

In fact, the same holds for the original measure~$\QP$, as shown by the next result.

\begin{prop}[{\protect{\ref{ass:CAC}} $\Longrightarrow$ \protect{\ref{ass:AC}}}]\label{p:CACtoAC}
Let~$\mcX$ be a topological local structure, and~$\QP$ be a probability measure on $\ttonde{\dUpsilon,\A_\mrmv(\msE)}$ satisfying Assumption~\ref{ass:CAC}.
Then,~$\QP$ satisfies as well Assumption~\ref{ass:AC}.
\begin{proof}
Let~$\seq{E_h}_h\subset \msE$ be a localizing sequence witnessing~\ref{ass:CAC}.
Further let~$A^\asym{n}\in\A^\otym{n}$ be $\mssm^\otym{n}$-negligible, and set
\begin{align}\label{eq:p:CACtoAC:1}
\mbfA\eqdef \tr_n^{-1}(A^\asym{n})\comma \qquad \mbfA_h\eqdef \tr_n^{-1}(A^\asym{n}\cap E_h^\tym{n})\fstop
\end{align}
For each~$k\in\N_0$ set
\begin{equation*}
\tr^k(A^\asym{n})\times X^\tym{k-n}=\begin{cases} \tr^k A^\asym{n} & \text{if~} k\leq n\\ A^\asym{n}\times X^\tym{k-n} & \text{if } k>n\end{cases} \fstop
\end{equation*}
Finally, recall the notation in~\eqref{eq:ProjSymmetricG} and, for each~$k\in\N_0$ and each~$B^\asym{k}\subset X^\tym{k}$, denote its symmetrization by
\begin{equation*}
(B^\asym{k})^\sym{k}\eqdef \bigcup_{\sigma\in\mfS_k} (B^\asym{k})_\sigma=(\pr^\sym{k})^{-1}\ttonde{\pr^\sym{k}(B^\asym{k})}\fstop
\end{equation*}

Letting~$\mbfE_h\eqdef \sqcup_{k=0}^\infty E_h^\tym{k}\sqcup E_h^\tym{\infty}$, it follows from~\eqref{eq:PoissonLebesgue} that for each~$h\in\N_1$,
\begin{equation}\label{eq:p:QuasiGibbsAC:1}
(\pr_{E_h})_{\pfwd} \PP_\mssm \,\,\Lb(\widetilde{\mbfA_h\cap \mbfE_h}) = e^{-\mssm E_h} \sum_{k=0}^\infty\frac{1}{k!} \mssm_{E_h}^\sym{k}\pr^\sym{k} \tonde{\ttonde{\tr^k(A^\asym{n})\times X^\tym{k-n}}^\sym{k}}=0\fstop
\end{equation}

Note now that~$\Lb(\widetilde{\mbfA_h})\subset \pr_{E_h}^{-1}\ttonde{\Lb(\widetilde{\mbfA_h\cap\mbfE_h})}$, hence
\begin{equation}\label{eq:p:QuasiGibbsAC:2}
\QP^{\eta_{E_h^\complement}} \Lb(\widetilde{\mbfA_h})\leq (\pr_{E_h})_\pfwd\QP^{\eta_{E_h^\complement}} \Lb(\widetilde{\mbfA_h\cap\mbfE_h}) \fstop
\end{equation}
By definition of~$\QP^\eta_{E_h^\complement}$, and combining~\eqref{eq:ConditionalQP}--\eqref{eq:p:QuasiGibbsAC:2},
\begin{align*}
\QP\, \Lb(\widetilde{\mbfA_h})=&\ \int_\dUpsilon \QP^{\eta_{E_h^\complement}} \Lb(\widetilde{\mbfA_h}) \diff\QP(\eta)= \int_\dUpsilon \QP^\eta_{E_h} \Lb(\widetilde{\mbfA_h\cap\mbfE_h}) \diff\QP(\eta) = 0\fstop
\end{align*}

By monotonicity of~$\mbfA_h\uparrow \mbfA$, and since~$\QP$ is a probability measure,
\begin{equation}
\QP \, \Lb(\widetilde{\mbfA}) = \lim_{h\rar\infty} \QP\, \Lb(\widetilde{\mbfA_h})=0\fstop
\end{equation}

Thus, finally, for any labeling map~$\lb$,
\begin{equation*}
(\tr^n\circ\lb)_\pfwd\QP \, A^\asym{n}=\QP\ttonde{(\tr^n\circ \lb)^{-1} (A^\asym{n})}\leq \QP\, \Lb(\widetilde\mbfA)=0 \comma
\end{equation*}
where the inequality follows by~\eqref{eq:p:CACtoAC:1} and since~$\lb^{-1}$ coincides with~$\Lb$ on the image~$\lb\ttonde{\dUpsilon}$.
\end{proof}
\end{prop}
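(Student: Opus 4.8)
The plan is to prove the contrapositive-free statement directly: for every labeling map $\lb$, every $n\in\N_1$, and every $\mssm^{\otym{n}}$-negligible $A^{\asym{n}}\in\A^{\hotym{n}}$, one has $(\tr^n\circ\lb)_\pfwd\QP\,(A^{\asym{n}})=0$; since $\QP^{\sym{\geq n}}\leq\QP$ this yields $(\tr^n\circ\lb)_\pfwd\QP^{\sym{\geq n}}(A^{\asym{n}})=0$, which is exactly \ref{ass:AC}. Set $\mbfA\eqdef\tr_n^{-1}(A^{\asym{n}})$ and $\widetilde{\mbfA}\eqdef\mbfA\cap\mbfX_\locfin(\msE)$. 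Because a labeling map is a right inverse of~$\Lb$, so that $\lb^{-1}$ agrees with $\Lb$ on $\lb(\dUpsilon)$, if $\gamma$ satisfies $\tr^n(\lb(\gamma))\in A^{\asym{n}}$ then $\lb(\gamma)\in\widetilde{\mbfA}$ and $\gamma=\Lb(\lb(\gamma))\in\Lb(\widetilde{\mbfA})$; hence $(\tr^n\circ\lb)^{-1}(A^{\asym{n}})\subset\Lb(\widetilde{\mbfA})$ and it suffices to prove $\QP\,\Lb(\widetilde{\mbfA})=0$.

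Next I would localize. Fix a localizing sequence $\seq{E_h}_h$ witnessing \ref{ass:CAC}, and put $\mbfA_h\eqdef\tr_n^{-1}(A^{\asym{n}}\cap E_h^{\tym{n}})$. Since $E_h\uparrow X$ we have $A^{\asym{n}}\cap E_h^{\tym{n}}\uparrow A^{\asym{n}}$, hence $\mbfA_h\uparrow\mbfA$, $\widetilde{\mbfA_h}\uparrow\widetilde{\mbfA}$, and $\Lb(\widetilde{\mbfA_h})\uparrow\Lb(\widetilde{\mbfA})$, so by continuity from below it is enough to show $\QP\,\Lb(\widetilde{\mbfA_h})=0$ for each $h$. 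Now I would condition on the complement of $E_h$: since $(\dUpsilon,\T_\mrmv(\msE))$ is separable metrizable (Proposition~\ref{p:TopologyUpsilon}), $\QP$ is a Radon probability measure on it and $\dUpsilon$ is countably separated, so by Theorem~\ref{t:Disintegration} the regular conditional probabilities $\QP^{\eta_{E_h^\complement}}$ of Definition~\ref{d:ConditionalQP} exist and are strongly consistent with $\pr^{E_h^\complement}$, giving $\QP\,\Lb(\widetilde{\mbfA_h})=\int_\dUpsilon\QP^{\eta_{E_h^\complement}}\ttonde{\Lb(\widetilde{\mbfA_h})}\diff\QP(\eta)$. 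Because $\mbfA_h$ only constrains coordinates lying in $E_h$, one has $\Lb(\widetilde{\mbfA_h})\subset(\pr^{E_h})^{-1}\ttonde{\Lb(\widetilde{\mbfA_h\cap\mbfE_h})}$, where $\mbfE_h\eqdef\bigsqcup_{k=0}^\infty E_h^{\tym{k}}\sqcup E_h^{\tym{\infty}}$, and therefore $\QP^{\eta_{E_h^\complement}}\ttonde{\Lb(\widetilde{\mbfA_h})}\leq(\pr^{E_h})_\pfwd\QP^{\eta_{E_h^\complement}}\ttonde{\Lb(\widetilde{\mbfA_h\cap\mbfE_h})}=\QP^\eta_{E_h}\ttonde{\Lb(\widetilde{\mbfA_h\cap\mbfE_h})}$, with $\QP^\eta_{E_h}$ the projected conditional probability.

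The last step is a Poisson computation. By \ref{ass:CAC}, $\QP^\eta_{E_h}\ll\PP_{\mssm_{E_h}}$ for $\QP$-a.e.\ $\eta$, so it suffices to show $\PP_{\mssm_{E_h}}\ttonde{\Lb(\widetilde{\mbfA_h\cap\mbfE_h})}=0$. Using the Poisson--Lebesgue formula~\eqref{eq:PoissonLebesgue}, $\PP_{\mssm_{E_h}}=e^{-\mssm E_h}\sum_{k\geq 0}\tfrac{1}{k!}\,\mssm_{E_h}^{\sym{k}}$, and under the isomorphism $\dUpsilon^{\sym{k}}(E_h)\cong E_h^{\sym{k}}=E_h^{\tym{k}}/\mfS_k$ the trace of $\Lb(\widetilde{\mbfA_h\cap\mbfE_h})$ on $\dUpsilon^{\sym{k}}(E_h)$ pulls back via the quotient projection $\pr^{\sym{k}}$ into the symmetrization of $\tr_n^{-1}(A^{\asym{n}})\cap X^{\tym{k}}$, i.e.\ a finite union of permuted copies of $A^{\asym{n}}\times X^{\tym{k-n}}$ (for $k\geq n$) or of $\tr^k(A^{\asym{n}})$ (for $k<n$); since $A^{\asym{n}}$ is $\mssm^{\otym{n}}$-negligible and $\mssm$ is $\sigma$-finite, this set is $\mssm^{\otym{k}}$-negligible, hence charged with mass zero by $\mssm_{E_h}^{\sym{k}}$. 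Summing the series gives $\PP_{\mssm_{E_h}}\ttonde{\Lb(\widetilde{\mbfA_h\cap\mbfE_h})}=0$, so $\QP^\eta_{E_h}\ttonde{\Lb(\widetilde{\mbfA_h\cap\mbfE_h})}=0$ for $\QP$-a.e.\ $\eta$; reintegrating over $\eta$ yields $\QP\,\Lb(\widetilde{\mbfA_h})=0$, and letting $h\to\infty$ concludes.

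I expect no conceptual difficulty, but a fair amount of measure-theoretic bookkeeping. One must check that $\Lb(\widetilde{\mbfA})$, $\Lb(\widetilde{\mbfA_h})$ and $\Lb(\widetilde{\mbfA_h\cap\mbfE_h})$ all lie in the completed/universal $\sigma$-algebras at hand — they are Suslin, hence universally measurable, since $\Lb$ is Borel (Lemma~\ref{l:MeasurabilityL}) and $\dUpsilon$, $\mbfX_\locfin(\msE)$ are Suslin — and that the conditional probabilities $\QP^{\eta_{E_h^\complement}}$, $\QP^\eta_{E_h}$ are well-defined in the strongly consistent sense. The one genuinely delicate point is the inclusion $\Lb(\widetilde{\mbfA_h})\subset(\pr^{E_h})^{-1}\ttonde{\Lb(\widetilde{\mbfA_h\cap\mbfE_h})}$, i.e.\ checking carefully that restricting a configuration to $E_h$ sends $\Lb(\widetilde{\mbfA_h})$ into $\Lb(\widetilde{\mbfA_h\cap\mbfE_h})$ and that no mass is lost in passing to $(\pr^{E_h})_\pfwd\QP^{\eta_{E_h^\complement}}=\QP^\eta_{E_h}$; everything after that is the Poisson--Lebesgue expansion together with Fubini.
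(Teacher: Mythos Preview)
Your proposal is correct and follows essentially the same route as the paper's proof: the same localization via $\mbfA_h=\tr_n^{-1}(A^{\asym{n}}\cap E_h^{\tym{n}})$, the same key inclusion $\Lb(\widetilde{\mbfA_h})\subset(\pr^{E_h})^{-1}\ttonde{\Lb(\widetilde{\mbfA_h\cap\mbfE_h})}$, the same disintegration against the projected conditionals $\QP^\eta_{E_h}$, and the same Poisson--Lebesgue computation~\eqref{eq:PoissonLebesgue} to show the relevant set is $\PP_{\mssm_{E_h}}$-null. Your additional remarks on Suslin measurability and on $\QP^{\sym{\geq n}}\leq\QP$ are accurate bookkeeping that the paper leaves implicit.
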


\subsubsection{Closability}
Now, let~$(\mcX,\cdc)$ be a \TLDS.
For simplicity, we shall consider a strong lifting~$\ell\colon L^\infty(\mssm)\rar \mcL^\infty(\mssm)$ fixed throughout this section, and we shall write~$\rep\cdc$ in place of $\rep\cdc_\ell$.

For every $\A_\mrmv(\msE)$-measurable~$\rep u\colon \dUpsilon\to [-\infty,\infty]$ define now
\begin{equation}\label{eq:ConditionalFunction}
\rep u_{E,\eta}(\gamma)\eqdef \rep u(\gamma+\eta_{E^\complement})\comma \qquad \gamma\in \dUpsilon(E) \fstop
\end{equation}
For a probability measure~$\QP$ on~$\ttonde{\dUpsilon,\A_\mrmv(\msE)}$, and for the corresponding system of projected conditional probabilities~\eqref{eq:ProjectedConditionalQP}, we have the following standard result.
\begin{prop}\label{p:ConditionalIntegration}
Let~$(\mcX,\cdc)$ be a \TLDS,~$\QP$ be a probability measure on~$\ttonde{\dUpsilon,\A_\mrmv(\msE)}$, and~$\rep u\in \mcL^1(\QP)$. Then,
\begin{align*}
\int_{\dUpsilon}\rep u \diff\QP = \int_{\dUpsilon} \quadre{\int_{\dUpsilon(E)} \rep u_{E,\eta} \diff \QP^\eta_E }\diff\QP(\eta) \fstop
\end{align*}
\begin{proof}
By definition of conditional probability
\begin{align*}
\int_{\dUpsilon} \rep u\diff\QP = \int_{\dUpsilon} \quadre{\int_{\dUpsilon} \rep u \diff\QP^{\eta_{E^\complement}} }\diff\QP(\eta) \fstop
\end{align*}
By regularity of the conditional system~$\seq{\QP^{\eta_{E^\complement}}}_{\eta\in\dUpsilon}$, the measure~$\QP^{\eta_{E^\complement}}$ is concentrated on the set
\begin{equation}\label{eq:RoeSch99Set}
\Lambda_{\eta, E^\complement} \eqdef \set{\gamma \in \dUpsilon : \gamma_{E^\complement}=\eta_{E^\complement} } 
\end{equation}
and we have that
\begin{equation}\label{eq:p:ConditionalIntegration:1}
\rep u\equiv \rep u_{E,\eta}\circ \pr_E \quad \text{everywhere on } \Lambda_{\eta, E^\complement}\fstop
\end{equation}
As a consequence,
\begin{align*}
\int_{\dUpsilon} \rep u\diff\QP = \int_{\dUpsilon} \quadre{\int_{\dUpsilon} \rep u_{E,\eta} \circ \pr_E \diff\QP^{\eta_{E^\complement}} }\diff\QP(\eta) \comma
\end{align*}
and the conclusion follows by definition of the projected conditional system.
\end{proof}
\end{prop}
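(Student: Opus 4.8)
The plan is to obtain the identity as a routine consequence of the disintegration (``tower'') property of the regular conditional probabilities~$\QP^{\eta_{E^\complement}}$ from Definition~\ref{d:ConditionalQP}, whose existence and \emph{strong} consistency with~$\pr^{E^\complement}$ is furnished by Theorem~\ref{t:Disintegration} --- applicable because~$\ttonde{\dUpsilon,\T_\mrmv(\msE),\QP}$ is a Radon probability space by Proposition~\ref{p:TopologyUpsilon}\iref{i:p:TopologyUpsilon:4}, and because~$\dUpsilon(\msE_{E^\complement})$, being separable metrizable (Proposition~\ref{p:TopologyUpsilon}\iref{i:p:TopologyUpsilon:1}), is countably separated. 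First I would apply the defining relation~\eqref{eq:ConditionalQP} to~$\rep u\in\mcL^1(\QP)$ to get
\begin{align*}
\int_\dUpsilon \rep u\diff\QP=\int_\dUpsilon \quadre{\int_\dUpsilon \rep u\diff\QP^{\eta_{E^\complement}}}\diff\QP(\eta)\comma
\end{align*}
noting in passing that running the same identity for~$\abs{\rep u}$ shows the inner integral is finite for~$\QP$-a.e.~$\eta$.

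The one substantive step is the rewriting of the inner integral, and it is driven entirely by \emph{strong} consistency: for~$\QP$-a.e.~$\eta$ the measure~$\QP^{\eta_{E^\complement}}$ is concentrated on the fibre~$\ttonde{\pr^{E^\complement}}^{-1}(\eta_{E^\complement})=\set{\gamma\in\dUpsilon:\gamma_{E^\complement}=\eta_{E^\complement}}$. On this set every configuration splits uniquely as~$\gamma=\gamma_E+\gamma_{E^\complement}=\pr^E(\gamma)+\eta_{E^\complement}$, so that, by the very definition~\eqref{eq:ConditionalFunction} of~$\rep u_{E,\eta}$,
\begin{align*}
\rep u(\gamma)=\rep u\ttonde{\pr^E(\gamma)+\eta_{E^\complement}}=\rep u_{E,\eta}\ttonde{\pr^E(\gamma)}\comma\qquad \gamma\in\ttonde{\pr^{E^\complement}}^{-1}(\eta_{E^\complement})\comma
\end{align*}
and this holds pointwise on the fibre, not merely almost everywhere. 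Hence~$\rep u=\rep u_{E,\eta}\circ\pr^E$ holds~$\QP^{\eta_{E^\complement}}$-a.e., and the image-measure (change of variables) formula together with~$\QP^\eta_E=\pr^E_\pfwd\QP^{\eta_{E^\complement}}$ from~\eqref{eq:ProjectedConditionalQP} gives
\begin{align*}
\int_\dUpsilon \rep u\diff\QP^{\eta_{E^\complement}}=\int_\dUpsilon\ttonde{\rep u_{E,\eta}\circ\pr^E}\diff\QP^{\eta_{E^\complement}}=\int_{\dUpsilon(E)}\rep u_{E,\eta}\diff\tonde{\pr^E_\pfwd\QP^{\eta_{E^\complement}}}=\int_{\dUpsilon(E)}\rep u_{E,\eta}\diff\QP^\eta_E\fstop
\end{align*}
Substituting this into the first display yields the claimed formula.

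I do not expect a genuine obstacle here; the remaining points are measure-theoretic bookkeeping. One should check that~$(\eta,\gamma)\mapsto\rep u\ttonde{\pr^E(\gamma)+\pr^{E^\complement}(\eta)}$ is jointly~$\A_\mrmv(\msE)\hotimes\A_\mrmv(\msE)$-measurable --- which follows since~$(\eta,\gamma)\mapsto\pr^E(\gamma)+\pr^{E^\complement}(\eta)$ is measurable into~$\dUpsilon$ (the coordinate projections and the addition of~$\N_0$-valued measures being measurable) and~$\rep u$ is~$\A_\mrmv(\msE)$-measurable --- so that, combined with measurability of the kernel~$\eta\mapsto\QP^{\eta_{E^\complement}}$, the map~$\eta\mapsto\int_{\dUpsilon(E)}\rep u_{E,\eta}\diff\QP^\eta_E$ is~$\QP$-measurable and all the integrals written above make sense; and that~$\rep u_{E,\eta}\in\mcL^1(\QP^\eta_E)$ for~$\QP$-a.e.~$\eta$, which is the~$\abs{\rep u}$-version of the tower identity already invoked. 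The only point that truly needs care is the identification, tacit in Definition~\ref{d:ConditionalQP}, of~$\dUpsilon(E)$ and~$\dUpsilon(\msE_{E^\complement})$ with the corresponding sub-structures of~$\dUpsilon$, so that~$\pr^E$ and~$\pr^{E^\complement}$ may legitimately be read as idempotent measurable self-maps of~$\dUpsilon$ with~$\gamma=\pr^E(\gamma)+\pr^{E^\complement}(\gamma)$; once that convention is in place, the entire content of the proposition is carried by the strong consistency of the disintegration.
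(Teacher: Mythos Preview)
Your proposal is correct and follows essentially the same approach as the paper: disintegrate via the tower identity, use strong consistency to see that~$\QP^{\eta_{E^\complement}}$ is concentrated on the fibre~$\Lambda_{\eta,E^\complement}$, observe the pointwise identity~$\rep u=\rep u_{E,\eta}\circ\pr^E$ there, and push forward to~$\QP^\eta_E$. You are simply more explicit than the paper about why the disintegration exists and about the attendant measurability bookkeeping, which is entirely appropriate.
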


In light of~\eqref{eq:MaRoeckner}, we define a restricted square field operator on~$\dUpsilon$ associated to~$E$ as
\begin{equation}\label{eq:RestrictedCdCUpsilon}
\rep\cdc^\dUpsilon_E(\rep u)(\gamma) \eqdef \sum_{x\in \gamma_E} \gamma_x^{-1}\cdot \rep\cdc\tonde{\rep u\ttonde{\car_{X\setminus\set{x}}\cdot\gamma + \gamma_x\delta_x}-\rep u\ttonde{\car_{X\setminus\set{x}}\cdot\gamma}} \comma \quad \rep u\in\Cyl{\Dz} \comma
\end{equation}
and the associated quadratic functionals
\begin{subequations}\label{eq:VariousForms}
\begin{align}\label{eq:VariousFormsA}
\EE{\dUpsilon}{\QP}_E(\rep u)\eqdef& \int_{\dUpsilon} \rep\cdc^\dUpsilon_E(\rep u)\diff\QP\comma && E\in\msE\comma\quad \rep u\in\Cyl{\Dz}
\\
\label{eq:VariousFormsB}
\EE{\dUpsilon(E)}{\QP^\eta_E}(\rep u) \eqdef& \int_{\dUpsilon(E)} \rep\cdc^{\dUpsilon(E)}(\rep u)\diff\QP^\eta_E\comma &&\begin{gathered} E\in\msE\comma\quad\eta\in\dUpsilon\comma\\ \rep u\in\Cyl{\Dz}\fstop \end{gathered}
\end{align}
\end{subequations}

Before proving the next results, let us comment on the meaning of the forms above.
For simplicity, let us assume we have already shown that both the forms in~\eqref{eq:VariousForms} are quasi-regular and strongly local, so that their properties can be recast in terms of the properly associated Markov diffusions.
Let~$\mbfM_E$, resp.~$\mbfM^{\eta,E}$, be the diffusion associated to~\eqref{eq:VariousFormsA}, resp.~\eqref{eq:VariousFormsB}.
The corresponding sample paths~$\gamma^E_t$ and~$\gamma^{\eta,E}_t$ coincide almost surely when restricted to~$E$.
Restricting on~$E^\complement$ we have instead that~$t\mapsto\gamma^E_t\restr_{E^\complement}$ is almost surely a constant configuration on~$E^\complement$ randomly distributed according to~$\pr^{E^\complement}_\pfwd\QP$, whereas~$t\mapsto\gamma^{\eta,E}_t\restr_{E^\complement}=\eta_{E^\complement}$ almost surely.

\begin{prop}\label{p:MarginalWP}
Let~$(\mcX,\cdc)$ be a \TLDS, and~$\QP$ be a probability measure on $\ttonde{\dUpsilon,\A_\mrmv(\msE)}$ satisfying Assumption~\ref{ass:CAC} for some localizing sequence~$\seq{E_h}_h$.
Then,
\begin{align}\label{eq:p:MarginalWP:0}
\EE{\dUpsilon}{\QP}_{E_h}(\rep u)=\int_\dUpsilon  \EE{\dUpsilon(E_h)}{\QP^\eta_{E_h}}(\rep u_{E_h,\eta}) \diff\QP(\eta) \comma\qquad h\in\N\comma\qquad \rep u\in\Cyl{\Dz}\fstop
\end{align}
Furthermore,~$\EE{\dUpsilon}{\QP}_{E_h}$ is well-defined on~$\CylQP{\QP}{\Dz}$, in the sense that:
\begin{enumerate}[$(i)$]
\item\label{i:p:MarginalWP:1} it does not depend on the choice of the $\QP$-representative~$\rep u$ of~$u\in \CylQP{\QP}{\Dz}$;
\item\label{i:p:MarginalWP:2} it does not depend on the choice of the strong lifting~$\ell$.
\end{enumerate}

\begin{proof}
Let~$h$ and~$\eta$ be fixed and set~$E\eqdef E_h$ for simplicity of notation.
By Assumption~\ref{ass:CAC} and Remark~\ref{r:ConditionalAC}, we may apply Proposition~\ref{p:NewWP} to~$\QP^\eta_E$ on~$\dUpsilon(E)$ to obtain that the form~$\EE{\dUpsilon(E)}{\QP^\eta_E}$ is well-posed, in the sense that it satisfies~\iref{i:p:MarginalWP:1}.
Furthermore, by Lemma~\ref{l:MaRoeckner} applied to~$\dUpsilon(E)$,
\begin{align*}
&\tonde{\rep\cdc^{\dUpsilon(E)}(\rep u_{E,\eta})\circ \pr_E}(\gamma)=
\\
=&\ \sum_{x\in\gamma_E} \ttonde{(\gamma_E)_x}^{-1}\cdot \rep\cdc\tonde{\rep u_{E,\eta}\ttonde{\car_{E\setminus \set{x}} \cdot\gamma_E+(\gamma_E)_x\delta_\bullet}-\rep u_{E,\eta}\ttonde{\car_{E\setminus\set{x}}\cdot\gamma_E}}
\\
=&\ \sum_{x\in\gamma_E} \ttonde{(\gamma_E)_x}^{-1}\cdot \rep\cdc\tonde{\rep u_{E,\eta}\ttonde{\car_{X\setminus\set{x}}\cdot\gamma_E+(\gamma_E)_x\delta_\bullet}-\rep u_{E,\eta}\ttonde{\car_{X\setminus\set{x}}\cdot\gamma_E}}
\\
=&\ \sum_{x\in\gamma_E} \ttonde{(\gamma_E+\eta_{E^\complement})_x}^{-1}\cdot \rep\cdc\tonde{\rep u\ttonde{\car_{X\setminus\set{x}}\cdot\gamma_E+(\gamma_E)_x\delta_\bullet+\eta_{E^\complement}}-\rep u\ttonde{\car_{X\setminus\set{x}}\cdot\gamma_E+\eta_{E^\complement}}}
\\
=&\ \sum_{x\in\gamma_E} \ttonde{(\gamma_E+\eta_{E^\complement})_x}^{-1}\cdot \rep\cdc\tonde{\rep u\ttonde{\car_{X\setminus\set{x}}\cdot(\gamma_E+\eta_{E^\complement})+(\gamma_E+\eta_{E^\complement})_x\delta_\bullet}-\rep u\ttonde{\car_{X\setminus\set{x}}\cdot(\gamma_E+\eta_{E^\complement})}}
\\
=&\ \rep\cdc^\dUpsilon_E(\rep u) (\gamma_E+\eta_{E^\complement})\comma
\end{align*}
where the last equality holds by definition~\eqref{eq:RestrictedCdCUpsilon} of~$\rep\cdc^\dUpsilon_E$. This shows that
\begin{equation}\label{eq:CdCRestrConditionalFormCylinderF}
\rep\cdc^{\dUpsilon(E)}(\rep u_{E,\eta})\circ \pr_E\equiv \rep\cdc^\dUpsilon_E(\rep u) \qquad \text{on} \quad \Lambda_{\eta, E^\complement} \comma
\end{equation}
where~$\Lambda_{\eta,E^\complement}$ is defined as in~\eqref{eq:RoeSch99Set}.
Since~$\QP^{\eta_{E^\complement}}$ is concentrated on~$\Lambda_{\eta, E^\complement}$, we thus have
\begin{equation*}
\int_{\dUpsilon} \rep\cdc^\dUpsilon_E(\rep u) \diff \QP^{\eta_{E^\complement}}
=\int_{\dUpsilon} \rep\cdc^{\dUpsilon(E)}(\rep u_{E,\eta})\circ \pr_E \diff\QP^{\eta_{E^\complement}}= \int_{\dUpsilon(E)} \rep\cdc^{\dUpsilon(E)}(\rep u_{E,\eta}) \diff\QP^\eta_E
\end{equation*}
for every~$\rep u\in\Cyl{\Dz}$, which concludes the proof of~\eqref{eq:p:MarginalWP:0} by integration w.r.t.~$\QP$ and Proposition~\ref{p:ConditionalIntegration}.

Assertion~\iref{i:p:MarginalWP:1} immediately follows from the well-posedness of the right-hand side in~\eqref{eq:p:MarginalWP:0}.
Assertion~\iref{i:p:MarginalWP:2} follows similarly, as soon as we show that, for~$\QP$-a.e.~$\eta$, the form~$\EE{\dUpsilon(E)}{\QP^\eta_E}$ is independent of~$\ell$.

To this end, note that, for different strong liftings~$\ell_1, \ell_2$,
\begin{align*}
\int_{\dUpsilon(E)} &\abs{ \rep\cdc^{\dUpsilon(E)}_{\ell_1}(\rep u_{E,\eta})- \rep\cdc^{\dUpsilon(E)}_{\ell_2}(\rep u_{E,\eta}) }\diff\QP^\eta_E
\\
=&\int_{\dUpsilon} \tonde{\abs{\rep\cdc^{\dUpsilon(E)}_{\ell_1}(\rep u_{E,\eta})-\rep\cdc^{\dUpsilon(E)}_{\ell_2}(\rep u_{E,\eta})} \cdot \frac{\diff \QP^\eta_E}{\diff (\pr_E)_\pfwd \PP_\mssm}} \circ \pr_E \diff\PP_\mssm
\\
=&\int_\dUpsilon \int_X \abs{\rep\cdc^{\dUpsilon(E)}_{\ell_1}(\rep u_{E,\eta})-\rep\cdc^{\dUpsilon(E)}_{\ell_2}(\rep u_{E,\eta})}(\gamma_E+\car_E\delta_x) \, \cdot 
\\
&\qquad \cdot \frac{\diff \QP^\eta_E}{\diff (\pr_E)_\pfwd \PP_\mssm}(\gamma_E+\car_E\delta_x) \diff\mssm(x)\, \diff\PP_\mssm(\gamma) \comma
\end{align*}
where the second equality follows from~\eqref{eq:Mecke}.
By definition of lifting, and computing~$\rep\cdc^{\dUpsilon(E)}_{\ell_i}(\rep u_{E,\eta})$ by~\eqref{eq:d:LiftCdCRep} for $i=1,2$, we have that
\begin{align*}
x\longmapsto \abs{\rep\cdc^{\dUpsilon(E)}_{\ell_1}(\rep u_{E,\eta})-\rep\cdc^{\dUpsilon(E)}_{\ell_2}(\rep u_{E,\eta})}(\gamma_E+\car_E\delta_x) \equiv 0 \as{\mssm}\comma
\end{align*}
and the conclusion follows.
\end{proof}
\end{prop}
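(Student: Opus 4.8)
The plan is to establish the identity~\eqref{eq:p:MarginalWP:0} first — as a purely pointwise consequence of the Ma--R\"ockner formula combined with the disintegration of Proposition~\ref{p:ConditionalIntegration} — and then to read off the two well-posedness claims from it. First I would fix $h$, abbreviate $E\eqdef E_h$, and take a representative $\rep u=F\circ\rep\mbff^\trid\in\Cyl{\Dz}$ and a point $\eta\in\dUpsilon$. The function $\rep u_{E,\eta}$ on $\dUpsilon(E)$ depends on $\gamma\in\dUpsilon(E)$ only through the increments $\rep u(\car_{X\setminus\set{x}}\cdot\gamma+\gamma_x\delta_\bullet+\eta_{E^\complement})-\rep u(\car_{X\setminus\set{x}}\cdot\gamma+\eta_{E^\complement})$, $x\in\gamma$. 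Expanding $\rep\cdc^{\dUpsilon(E)}(\rep u_{E,\eta})$ by Lemma~\ref{l:MaRoeckner} applied on $\dUpsilon(E)$ and matching it term by term with the definition~\eqref{eq:RestrictedCdCUpsilon} of $\rep\cdc^\dUpsilon_E$ — using that on the fibre $\Lambda_{\eta,E^\complement}$ of~\eqref{eq:RoeSch99Set} one has $\gamma=\gamma_E+\eta_{E^\complement}$, and that the points of $\eta_{E^\complement}$ contribute vanishing increments of $\rep u_{E,\eta}$ — I expect the identity $\rep\cdc^{\dUpsilon(E)}(\rep u_{E,\eta})\circ\pr^E\equiv\rep\cdc^\dUpsilon_E(\rep u)$ on $\Lambda_{\eta,E^\complement}$. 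Since the regular conditional probability $\QP^{\eta_{E^\complement}}$ is concentrated on $\Lambda_{\eta,E^\complement}$, integrating this identity against $\QP^{\eta_{E^\complement}}$ and then against $\QP(\diff\eta)$, and using Proposition~\ref{p:ConditionalIntegration} together with $\QP^\eta_E=\pr^E_\pfwd\QP^{\eta_{E^\complement}}$, yields~\eqref{eq:p:MarginalWP:0}; all integrands are nonnegative, so the identity holds in $[0,+\infty]$ with no integrability subtlety.

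For assertion~\iref{i:p:MarginalWP:1}, this is where Assumption~\ref{ass:CAC} enters. For $\QP$-a.e.\ $\eta$ one has $\QP^\eta_E\ll\PP_{\mssm_E}$; since $\mssm_E$ is a finite diffuse measure, Remark~\ref{r:AC}\iref{i:r:AC:1.5}--\iref{i:r:AC:2} (cf.\ Remark~\ref{r:ConditionalAC}) gives that $\QP^\eta_E$ satisfies Assumption~\ref{ass:AC} on the \TLDS\ obtained by restricting $(\mcX,\cdc)$ to $E$. Proposition~\ref{p:NewWP} then yields $\rep\cdc^{\dUpsilon(E)}(0,\rep v)=0$ $\QP^\eta_E$-a.e.\ for every $\rep v\in\Cyl{\Dz}$, so $\EE{\dUpsilon(E)}{\QP^\eta_E}(\rep u_{E,\eta})$ depends only on the $\QP^\eta_E$-class of $\rep u_{E,\eta}$, hence only on the $\mssm$-classes of the inner functions $\mbff$ of $\rep u$; feeding this into the right-hand side of~\eqref{eq:p:MarginalWP:0} proves~\iref{i:p:MarginalWP:1}.

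For assertion~\iref{i:p:MarginalWP:2}, again by~\eqref{eq:p:MarginalWP:0} it suffices to show that, for $\QP$-a.e.\ $\eta$, the form $\EE{\dUpsilon(E)}{\QP^\eta_E}$ does not depend on the strong lifting $\ell$. Given two strong liftings $\ell_1,\ell_2$, I would bound $\int_{\dUpsilon(E)}\abs{\rep\cdc^{\dUpsilon(E)}_{\ell_1}(\rep u_{E,\eta})-\rep\cdc^{\dUpsilon(E)}_{\ell_2}(\rep u_{E,\eta})}\diff\QP^\eta_E$ by rewriting $\QP^\eta_E$ as $(\diff\QP^\eta_E/\diff\PP_{\mssm_E})\cdot\PP_{\mssm_E}$ using the density furnished by~\ref{ass:CAC}, pulling everything back through $\pr^E$ to an integral against $\PP_\mssm$, and then applying Mecke's identity~\eqref{eq:Mecke} to flatten it into an integral over $X\times\dUpsilon$. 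Expanding $\rep\cdc^{\dUpsilon(E)}_{\ell_i}(\rep u_{E,\eta})$ via~\eqref{eq:d:LiftCdCRep} and using $\ell_1(\cdc(f,g))=\ell_2(\cdc(f,g))=\cdc(f,g)$ $\mssm$-a.e., the inner integrand vanishes for $\mssm$-a.e.\ $x$, hence the whole expression is $0$; this gives~\iref{i:p:MarginalWP:2}.

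The heart of the argument is the last step. The $\mssm$-a.e.\ equality of the two liftings only becomes usable once the configuration-space integral has been ``unfolded'' down to the base space, and this unfolding genuinely requires the Poisson domination of $\QP^\eta_E$ — i.e.\ Assumption~\ref{ass:CAC}, not merely~\ref{ass:AC} — together with Mecke's formula; one must also check that the Radon--Nikodym density $\diff\QP^\eta_E/\diff\PP_{\mssm_E}$ is a bona fide function on $\dUpsilon(E)$, so that the substitution $\diff\QP^\eta_E=(\,\cdot\,)\circ\pr^E\,\diff\PP_\mssm$ is legitimate. By contrast, the fibre identity of the first paragraph is routine once the indexing of the two sums is matched carefully, and assertion~\iref{i:p:MarginalWP:1} is essentially an application of Proposition~\ref{p:NewWP} and Remark~\ref{r:AC}.
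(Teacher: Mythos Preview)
Your proposal is correct and follows essentially the same route as the paper: the fibre identity $\rep\cdc^{\dUpsilon(E)}(\rep u_{E,\eta})\circ\pr^E\equiv\rep\cdc^\dUpsilon_E(\rep u)$ on $\Lambda_{\eta,E^\complement}$ via Lemma~\ref{l:MaRoeckner}, then disintegration by Proposition~\ref{p:ConditionalIntegration} for~\eqref{eq:p:MarginalWP:0}; Proposition~\ref{p:NewWP} through Remark~\ref{r:ConditionalAC} for~\iref{i:p:MarginalWP:1}; and the Mecke unfolding against the density $\diff\QP^\eta_E/\diff\PP_{\mssm_E}$ for~\iref{i:p:MarginalWP:2}. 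The only difference is cosmetic ordering (you establish the integral identity before invoking well-posedness, the paper the reverse), and your remark on nonnegativity handling integrability is a small clarification the paper leaves implicit.
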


In light of Proposition~\ref{p:MarginalWP}, the following assumption is well-posed.

\begin{ass}[Conditional closability]\label{ass:ConditionalClosability}
Let~$(\mcX,\cdc)$ be a \TLDS, and~$\QP$ be a probability measure on~$\ttonde{\dUpsilon,\A_{\mrmv}(\msE)}$ satisfying Assumptions~\ref{ass:Mmu} and~\ref{ass:CAC} for some localizing sequence~$\seq{E_h}_h$.
We say that~$\QP$ satisfies the \emph{conditional closability} assumption~\ref{ass:ConditionalClos} if the forms
\begin{equation}\tag*{$(\mathsf{CC})_{\ref{ass:ConditionalClosability}}$}\label{ass:ConditionalClos}
\EE{\dUpsilon(E_h)}{\QP^\eta_{E_h}}(u,v)=\int_{\dUpsilon(E_h)} \cdc^{\dUpsilon(E_h)}(u,v) \diff\QP^\eta_{E_h}\comma\qquad
\begin{aligned}
u,v\in&\ \CylQP{\QP^\eta_{E_h}}{\Dz}\comma
\\ 
h\in\N&\comma \quad \eta\in\dUpsilon\comma
\end{aligned}
\end{equation}
are closable on~$L^2\ttonde{\dUpsilon(E_h),\QP^\eta_{E_h}}$ for $\QP$-a.e.~$\eta\in\dUpsilon$, and for every~$h\in\N$.
\end{ass}

For~$E\eqdef E_h$ in a suitable localizing sequence, combining Lemma~\ref{l:MmuL1} with the disintegration result in Proposition~\ref{p:ConditionalIntegration} shows that~$\EE{\dUpsilon(E)}{\QP^\eta_E}$ is finite on~$\CylQP{\QP^\eta_E}{\Dz}$ for every~$E$, for $\QP$-a.e.~$\eta\in\dUpsilon$.
Since each of the forms~\ref{ass:ConditionalClos} is densely defined by Remark~\ref{r:DensityQP}\iref{i:r:DensityQP:1}, its closure
\begin{equation*}
\ttonde{\EE{\dUpsilon(E)}{\QP^\eta_E},\dom{\EE{\dUpsilon(E)}{\QP^\eta_E}}}
\end{equation*}
is a Dirichlet form.

\begin{ese}
When~$X=\R^n$ is a standard Euclidean space, then all canonical Gibbs measures  and the laws of some determinantal/permanental point processes (e.g., the Ginibre,~$\mathrm{sine}_\beta$, $\mathrm{Airy}_\beta$, $\mathrm{Bessel}_{\alpha,\beta}$) satisfy~\ref{ass:ConditionalClos}.
See~\S\ref{sss:ExamplesAC} for references and further examples.
\end{ese}

\begin{thm}[Closability II]\label{t:ClosabilitySecond}
Let~$(\mcX,\cdc)$ be a \TLDS,~$\QP$ be a probability measure on $\ttonde{\dUpsilon,\A_\mrmv(\msE)}$ satisfying Assumptions~\ref{ass:CAC} and~\ref{ass:ConditionalClos}.
Then, the form~$\ttonde{\EE{\dUpsilon}{\QP},\CylQP{\QP}{\Dz}}$ as defined in~\eqref{eq:Temptation} is well-defined, densely defined, and closable, and its closure~$\ttonde{\EE{\dUpsilon}{\QP},\dom{\EE{\dUpsilon}{\QP}}}$ is a Dirichlet form on~$L^2(\QP)$.

\begin{proof}
Let~$\seq{E_h}_h$ be a localizing sequence witnessing~\ref{ass:CAC} and~\ref{ass:ConditionalClos}.
For simplicity of notation let~$h$ be fixed and set~$E\eqdef E_h$.
The form~$\ttonde{\EE{\dUpsilon}{\QP}_E,\CylQP{\QP}{\Dz}}$ in~\eqref{eq:p:MarginalWP:0} is finite by Lemma~\ref{l:MmuL1}, well-defined on~$\QP$-classes by Proposition~\ref{p:MarginalWP}, and closable by replacing~$\mbbD$ in~\cite[Prop.~V.3.1.1]{BouHir91} with the core~$\CylQP{\QP}{\Dz}$.
We may apply~\cite[Prop.~V.3.1.1]{BouHir91} by the assumption on the closability of the forms~\ref{ass:ConditionalClos}, and Proposition~\ref{p:ConditionalIntegration}.
Its closure~$\ttonde{\EE{\dUpsilon}{\QP}_E,\dom{\EE{\dUpsilon}{\QP}_E}}$ is a Dirichlet form for each~$h\in\N$.

It follows from the representation of~$\rep\cdc^\dUpsilon_E$ in~\eqref{eq:RestrictedCdCUpsilon} that~$h\mapsto \rep\cdc^\dUpsilon_{E_h}(\rep u)$ is monotone increasing in~$h$, w.r.t.\ the inclusion of sets, for every~$\rep u\in\Cyl{\Dz}$.
As a consequence, by the equality in~\eqref{eq:p:MarginalWP:0} and by the representation in~\eqref{eq:VariousFormsA} the association
\begin{equation}\label{eq:MonotonicityE}
h\longmapsto \ttonde{\EE{\dUpsilon}{\QP}_{E_h},\CylQP{\QP}{\Dz}} \comma\quad h\in\N\comma \qquad \text{is monotone}
\end{equation}
in the sense of Dirichlet forms.

Now, fix~$\rep u\in\Cyl{\Dz}$ and~$\eta\in\dUpsilon$, and observe that the  conditional expectation~(see e.g.,~\cite[233E(b)]{Fre00})~$u_h\colon \gamma\mapsto \mbfE_\QP\quadre{\rep u(\gamma)\middle| \gamma_{E_h^\complement}=\eta_{E_h^\complement}}$ satisfies~$u_h(\gamma_E+\eta_{E^\complement})= \class[\QP^\eta_{E_h}]{\rep u}$.
By definition of conditional expectation,~$h\mapsto u_h$ satisfies the tower property and is therefore an $L^2(\QP)$-martingale (see e.g.~\cite[275A]{Fre00}), adapted to the filtration of $\sigma$-algebras on~$\dUpsilon$ generated by~$\CylQP{\QP^\eta_{E_h}}{\QP}$.
Since~$\seq{E_h}_h$ is a monotone exhaustion of~$X$, the martingale~$\tseq{\rep u_h}_h$ converges to~$\class[\QP]{\rep u}$ in~$L^2(\QP)$.
As a consequence,
\begin{equation*}
\bigcap_h \CylQP{\QP^\eta_{E_h}}{\Dz}=\CylQP{\QP}{\Dz}\comma
\end{equation*}
and the form
\begin{gather*}
\Dz^{\dUpsilon,\QP}\eqdef \set{u\in L^2(\QP): u\in \bigcap_h \CylQP{\QP^\eta_{E_h}}{\Dz} \comma \sup_h \EE{\dUpsilon}{\QP}_{E_h}(u)<\infty}\comma
\\
\widetilde{\EE{\dUpsilon}{\QP}}(u)\eqdef \sup_h \EE{\dUpsilon}{\QP}_{E_h}(u)\comma \qquad u\in \dom{\widetilde{\EE{\dUpsilon}{\QP}}}\fstop
\end{gather*}
is a well-defined pre-Dirichlet form on~$L^2(\QP)$ with core~$\CylQP{\QP}{\Dz}$, closable by~\cite[Prop.~I.3.7.(ii)]{MaRoe92}. Again since~$\seq{E_h}_h$ exhausts~$X$, it is readily verified that
\begin{equation*}
\widetilde{\EE{\dUpsilon}{\QP}}(u)=\int_\dUpsilon \cdc^\dUpsilon(u) \diff\QP\comma \qquad u\in \CylQP{\QP}{\Dz}
\end{equation*}
for~$\cdc^\dUpsilon$ given by the~$\QP$-class of~\eqref{eq:d:LiftCdCRep}.
Thus, finally,
\begin{equation*}
\EE{\dUpsilon}{\QP}(u)=\widetilde{\EE{\dUpsilon}{\QP}}(u) \comma \qquad u\in \CylQP{\QP}{\Dz}
\end{equation*}
and therefore~$\ttonde{\EE{\dUpsilon}{\QP},\CylQP{\QP}{\Dz}}$ is closable, and its closure coincides with $\ttonde{\widetilde{\EE{\dUpsilon}{\QP}},\dom{\widetilde{\EE{\dUpsilon}{\QP}}}}$.
In particular, the form constructed above does not depend on the localizing sequence~$\seq{E_h}_h$.
\end{proof}
\end{thm}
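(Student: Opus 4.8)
The plan is to assemble the closability of $\ttonde{\EE{\dUpsilon}{\QP},\CylQP{\QP}{\Dz}}$ from the conditional data by realizing the full form as a monotone limit of the ``local'' forms $\EE{\dUpsilon}{\QP}_{E_h}$ and invoking two classical stability results for Dirichlet forms: that an integral of closable forms is closable (Bouleau--Hirsch), and that a supremum of an increasing sequence of closable forms with a common dense core is closable (Ma--R\"ockner). First I would fix a localizing sequence $\seq{E_h}_h$ witnessing both~\ref{ass:CAC} and~\ref{ass:ConditionalClos}, abbreviate $E\eqdef E_h$, and recall from Proposition~\ref{p:MarginalWP} the disintegration identity
\begin{equation*}
\EE{\dUpsilon}{\QP}_{E}(\rep u)=\int_\dUpsilon \EE{\dUpsilon(E)}{\QP^\eta_E}(\rep u_{E,\eta}) \diff\QP(\eta)\comma\qquad \rep u\in\Cyl{\Dz}\comma
\end{equation*}
together with the well-posedness of $\EE{\dUpsilon}{\QP}_E$ on $\QP$-classes and its independence from the choice of strong lifting. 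Using~\ref{ass:ConditionalClos} (each $\EE{\dUpsilon(E)}{\QP^\eta_E}$ is closable for $\QP$-a.e.~$\eta$) and the $L^1(\QP)$-finiteness from Lemma~\ref{l:MmuL1} and Proposition~\ref{p:ConditionalIntegration}, one applies~\cite[Prop.~V.3.1.1]{BouHir91} with $\mbbD$ there replaced by the core $\CylQP{\QP}{\Dz}$ to conclude that $\ttonde{\EE{\dUpsilon}{\QP}_{E_h},\CylQP{\QP}{\Dz}}$ is closable and its closure is a Dirichlet form; density of the core is Remark~\ref{r:DensityQP}\iref{i:r:DensityQP:1}.

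Next I would establish the monotonicity~\eqref{eq:MonotonicityE}: from the explicit expression~\eqref{eq:RestrictedCdCUpsilon} for $\rep\cdc^\dUpsilon_{E_h}$ as a sum over $x\in\gamma_{E_h}$, the map $h\mapsto \rep\cdc^\dUpsilon_{E_h}(\rep u)(\gamma)$ is nondecreasing (the index set $\gamma_{E_h}$ grows with $h$), so by~\eqref{eq:VariousFormsA} the association $h\mapsto \ttonde{\EE{\dUpsilon}{\QP}_{E_h},\CylQP{\QP}{\Dz}}$ is monotone increasing in the sense of quadratic forms. The key auxiliary fact I would prove is that $\bigcap_h \CylQP{\QP^\eta_{E_h}}{\Dz}=\CylQP{\QP}{\Dz}$: fixing $\rep u\in\Cyl{\Dz}$ and $\eta$, the sequence $u_h\colon\gamma\mapsto \mbfE_\QP\quadre{\rep u(\gamma)\,\middle|\,\gamma_{E_h^\complement}=\eta_{E_h^\complement}}$ is, by the tower property of conditional expectation, an $L^2(\QP)$-martingale adapted to the filtration generated by the $\CylQP{\QP^\eta_{E_h}}{\Dz}$, and since $\seq{E_h}_h$ exhausts $X$ it converges in $L^2(\QP)$ to $\class[\QP]{\rep u}$; this gives the claimed equality of cores.

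With these pieces in place I would define
\begin{gather*}
\Dz^{\dUpsilon,\QP}\eqdef \set{u\in L^2(\QP): u\in\textstyle\bigcap_h \CylQP{\QP^\eta_{E_h}}{\Dz}\comma\ \sup_h \EE{\dUpsilon}{\QP}_{E_h}(u)<\infty}\comma
\\
\widetilde{\EE{\dUpsilon}{\QP}}(u)\eqdef \sup_h \EE{\dUpsilon}{\QP}_{E_h}(u)\comma\qquad u\in\dom{\widetilde{\EE{\dUpsilon}{\QP}}}\comma
\end{gather*}
which is a pre-Dirichlet form on $L^2(\QP)$ with core $\CylQP{\QP}{\Dz}$, closable as an increasing limit of closable forms sharing a common dense core by~\cite[Prop.~I.3.7(ii)]{MaRoe92}. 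Since $\seq{E_h}_h$ exhausts $X$, monotone convergence in~\eqref{eq:RestrictedCdCUpsilon} gives $\widetilde{\EE{\dUpsilon}{\QP}}(u)=\int_\dUpsilon \cdc^\dUpsilon(u)\diff\QP$ for $u\in\CylQP{\QP}{\Dz}$, with $\cdc^\dUpsilon$ the $\QP$-class of~\eqref{eq:d:LiftCdCRep} (which is a bona fide $\QP$-class by Proposition~\ref{p:NewWP}, since~\ref{ass:CAC} implies~\ref{ass:AC} by Proposition~\ref{p:CACtoAC}). Hence $\EE{\dUpsilon}{\QP}=\widetilde{\EE{\dUpsilon}{\QP}}$ on the core, so $\ttonde{\EE{\dUpsilon}{\QP},\CylQP{\QP}{\Dz}}$ is well-defined, densely defined, and closable, its closure is the Dirichlet form $\ttonde{\widetilde{\EE{\dUpsilon}{\QP}},\dom{\widetilde{\EE{\dUpsilon}{\QP}}}}$, and independence of the localizing sequence follows from this intrinsic characterization. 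The main obstacle I anticipate is the careful bookkeeping around representatives, liftings, and $\QP$-classes needed to make the identity $\EE{\dUpsilon}{\QP}=\widetilde{\EE{\dUpsilon}{\QP}}$ and the core identification rigorous---in particular, ensuring that the conditional forms $\EE{\dUpsilon(E_h)}{\QP^\eta_{E_h}}$ and the restricted forms $\EE{\dUpsilon}{\QP}_{E_h}$ are compared via~\eqref{eq:CdCRestrConditionalFormCylinderF} $\QP^{\eta_{E^\complement}}$-a.e.\ on the concentration sets $\Lambda_{\eta,E^\complement}$, which is exactly the content already secured in Proposition~\ref{p:MarginalWP}.
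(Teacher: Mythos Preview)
Your proposal is correct and follows essentially the same approach as the paper's proof: both establish closability of each localized form $\EE{\dUpsilon}{\QP}_{E_h}$ via the Bouleau--Hirsch superposition result applied to the disintegration identity of Proposition~\ref{p:MarginalWP}, use the pointwise monotonicity of $\rep\cdc^\dUpsilon_{E_h}$ from~\eqref{eq:RestrictedCdCUpsilon} to obtain a monotone increasing family of closable forms on the common core, invoke the martingale argument to identify the core as $\CylQP{\QP}{\Dz}$, and conclude closability of the supremum via~\cite[Prop.~I.3.7(ii)]{MaRoe92}. Your added remark that well-posedness of $\cdc^\dUpsilon$ on $\QP$-classes follows from Proposition~\ref{p:NewWP} via the implication \ref{ass:CAC}$\Rightarrow$\ref{ass:AC} (Proposition~\ref{p:CACtoAC}) is a helpful clarification not made explicit in the paper's proof.
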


\begin{rem}
\begin{enumerate*}[$(a)$]
\item We stress that Theorem~\ref{t:ClosabilitySecond} can be proved in many different ways:
\begin{enumerate*}[$({a}_1)$] 
\item as an application of the theory of superpositions of Dirichlet forms in~\cite{BouHir91};

\item as an application of the theory of direct integrals of Dirichlet forms developed by the first named author in~\cite{LzDS20};

\item by the very same arguments as in the proof of~\cite[Thm.~4]{Osa96}, noting that~\cite[Prop.~4.1]{Osa96} there is replaced by our Assumption~\ref{ass:ConditionalClosability}.
\end{enumerate*}

\item It is possible to show that the projected conditional systems~$\set{\QP^\eta_E}_{\eta\in\dUpsilon,E\in\msE}$ are consistent similarly to \emph{specifications} in the sense of Preston~\cite[\S6]{Pre76}.
\item If the measure~$\QP$ has full $\T_\mrmv(\msE)$-support, the well-posedness of the form~$\ttonde{\EE{\dUpsilon}{\QP},\CylQP{\QP}{\Dz}}$ is immediate, since every function in~$\CylQP{\QP}{\Dz}$ has a unique continuous representative~$\rep u\in \Cyl{\Dz}$.
\end{enumerate*}
\end{rem}

\subsection{Identification of forms on product spaces}\label{ss:IdentificationFormsProduct}
Let~$(\mcX,\cdc)$ be a \TLDS.
In this section we show how to identify objects on~$\dUpsilon$ ---~e.g.\ the form~$\EE{\dUpsilon}{\QP}$ or, in the case~$\QP=\PP_\mssm$, the corresponding heat kernel~$\hh{\dUpsilon}{\PP_\mssm}_\bullet$~--- with a suitable counterpart thereof on the infinite product~$\mbfX$.

The identification of forms will be instrumental to the proof of the Rademacher property for~$\EE{\dUpsilon}{\QP}$ established in~\S\ref{sss:Rademacher}, and constitutes one of the main results of this paper.
It relies on the notion of abstract completion of pre-Dirichlet spaces and on the finite-dimensional approximations thorougly discussed in~\S\ref{ss:IdentificationFormsProduct} below.

\medskip

In the following, the well-posedness of the pre-Dirichlet form~$\ttonde{\EE{\dUpsilon}{\QP},\CylQP{\QP}{\Dz}}$ will be often established by means of Proposition~\ref{p:NewWP} under Assumption~\ref{ass:AC}.
The form is densely defined by Remark~\ref{r:DensityQP}\iref{i:r:DensityQP:1}.
In light of the results in the previous sections, it will be convenient from now on to simply \emph{assume} that the pre-Dirichlet form~$\ttonde{\EE{\dUpsilon}{\QP},\CylQP{\QP}{\Dz}}$ be \emph{closable}.
In particular, the next assumption includes all measures satisfying Assumption~\ref{ass:Closability} (by Theorem~\ref{t:Closability}), as well as measures satisfying the assumptions in Theorem~\ref{t:ClosabilitySecond}.

\begin{ass}[Closability]\label{d:ass:CWP}
Let~$\QP$ be a probability measure on~$\ttonde{\dUpsilon,\A_\mrmv(\msE)}$. It satisfies Assumption~\ref{ass:CWP} if
\begin{equation}\tag*{$(\mssC)_{\ref{d:ass:CWP}}$}\label{ass:CWP}
\begin{gathered}
\textbf{$\QP$ satisfies Assumption~\ref{ass:Mmu} and}
\\
\textbf{the pre-Dirichlet form~\eqref{eq:Temptation} is closable on~$L^2(\QP)$.}
\end{gathered}
\end{equation}
\end{ass}

Most of the results of this section will make use only of Assumptions~\ref{ass:AC} and~\ref{ass:CWP}.
While~\ref{ass:AC} will be essential to our analysis,~\ref{ass:CWP} ought to be understood as a placeholder for any assumption granting the closability of~$\ttonde{\EE{\dUpsilon}{\QP},\dom{\EE{\dUpsilon}{\QP}}}$ in a specific case of interest.
As a first result granted by these assumptions, let us reprove Lemma~\ref{l:MaRoe4.3} and Proposition~\ref{p:ExtDom} in this greater generality.

\begin{lem}\label{l:MaRoe4.3II}
Let~$(\mcX,\cdc)$ be a \TLDS, and~$\QP$ be a probability measure on~$\ttonde{\dUpsilon,\A_\mrmv(\msE)}$ satisfying Assumption~\ref{ass:AC} and~\ref{ass:CWP}.
Then, all the assertions in Lemma~\ref{l:MaRoe4.3} hold true.
\begin{proof}
\cite[Lem.~4.2]{MaRoe00} is replaced by our Lemma~\ref{l:AbstractCompletion}.
The validity of $(S^\Gamma.\mu)$ in~\cite[p.~282]{MaRoe00} is precisely our Proposition~\ref{p:NewWP}.
The proof thus follows exactly as in~\cite[Lem.~4.3]{MaRoe00}.
\end{proof}
\end{lem}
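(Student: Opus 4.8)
The statement is Lemma~\ref{l:MaRoe4.3II}: under Assumptions~\ref{ass:AC} and~\ref{ass:CWP}, the three assertions of Lemma~\ref{l:MaRoe4.3} continue to hold, namely that $\tclass[\QP]{\rep f^\trid}\in\coK{\CylQP{\QP}{\rep\Dz}}{1,\QP}$ with $\SF{\dUpsilon}{\QP}(\rep f^\trid)=\rep\cdc(\rep f)^\trid$, that this $\QP$-class is independent of the chosen $\mssm$-representative of $f$, and that $\ttnorm{\rep f^\trid}_{1,\QP}=\norm{f}_{1,\mssm_\QP}$. The strategy is to revisit the proof of~\cite[Lem.~4.3]{MaRoe00} and check that every place where it invoked the standing hypotheses of~\cite[\S4]{MaRoe00} (in particular Assumption~\ref{ass:Closability} and the auxiliary Lemmas~4.2 and~4.4 there) can be replaced by a result already available under our weaker hypotheses. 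The two substitutions are exactly as indicated in the excerpt: \cite[Lem.~4.2]{MaRoe00} is superseded by our Lemma~\ref{l:AbstractCompletion} (injectivity of $\cokb_{p,\mssm}$ for $p\in[1,2]$ and the continuous extension of $\cdc$ to the abstract $L^p$-completions), and the well-posedness condition $(S^\Gamma.\mu)$ of~\cite[p.~282]{MaRoe00} — that $\rep\cdc^\dUpsilon(0,\rep v)\equiv 0$ $\QP$-a.e. — is precisely the content of our Proposition~\ref{p:NewWP}, which holds under Assumption~\ref{ass:AC}.

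\textbf{Steps, in order.} First I would record that, under~\ref{ass:Mmu} (part of~\ref{ass:CWP}), the map $f\mapsto \tclass[\QP]{\rep f^\trid}$ is well-defined from $\Dz$ to $L^1(\QP)$ by Lemma~\ref{l:MmuL1}, and isometric for the norms $\norm{\emparg}_{1,\mssm_\QP}$ on the domain side and $\ttnorm{\emparg}_{1,\QP}$ on the target side; this is the elementary computation using the intensity formula $\int_\dUpsilon \gamma|f|\diff\QP=\int_X|f|\diff\mssm_\QP$, and it already gives assertion~\iref{i:l:MaRoe4.3:3} for $f\in\Dz$ and hence, by density and completion, for $f\in\coK{\Dz}{1,\mssm_\QP}$. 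Second, using the closability hypothesis in~\ref{ass:CWP}, the form $\ttonde{\EE{\dUpsilon}{\QP},\CylQP{\QP}{\rep\Dz}}$ has a closure $\ttonde{\EE{\dUpsilon}{\QP},\dom{\EE{\dUpsilon}{\QP}}}$ which is a Dirichlet form with carré du champ $\SF{\dUpsilon}{\QP}$; for $\rep f\in\rep\Dz$ the identity $\rep\cdc^\dUpsilon(\rep f^\trid)=\rep\cdc(\rep f)^\trid$ holds pointwise by the defining formula~\eqref{eq:d:LiftCdCRep} (with $F(t)=t$), so after passing to $\QP$-classes — which is legitimate by Proposition~\ref{p:NewWP} — one gets $\SF{\dUpsilon}{\QP}(\rep f^\trid)=\rep\cdc(\rep f)^\trid$, i.e. the second half of~\iref{i:l:MaRoe4.3:1}. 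Third, to obtain $\tclass[\QP]{\rep f^\trid}\in\coK{\CylQP{\QP}{\rep\Dz}}{1,\QP}$ for general $f\in\coK{\Dz}{1,\mssm_\QP}$ (not just $f\in\Dz$), I would take an $\norm{\emparg}_{1,\mssm_\QP}$-fundamental sequence $\seq{f_k}_k\subset\Dz$ with $f_k\to f$ in $L^1(\mssm_\QP)$, observe via the isometry of Step~1 that $\seq{\rep f_k^\trid}_k$ is $\ttnorm{\emparg}_{1,\QP}$-fundamental, hence converges in $\coK{\CylQP{\QP}{\rep\Dz}}{1,\QP}$, and identify the limit with $\tclass[\QP]{\rep f^\trid}$ using that $\rep f_k^\trid\to\rep f^\trid$ in $L^1(\QP)$ (again by the intensity estimate) and that $\cokb_{1,\QP}$ is continuous; the continuity of $\cdc$ on $\coK{\Dz}{1,\mssm_\QP}$ from Lemma~\ref{l:AbstractCompletion} is what makes $\SF{\dUpsilon}{\QP}(\rep f^\trid)=\rep\cdc(\rep f)^\trid$ persist in the limit. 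Fourth, assertion~\iref{i:l:MaRoe4.3:2} — independence of $\tclass[\QP]{\SF{\dUpsilon}{\QP}(\rep f^\trid)}$ from the $\mssm$-representative $\rep f$ of $f$ — follows because if $\rep f_1,\rep f_2$ are two representatives of the same $\mssm$-class, then $\rep f_1-\rep f_2\equiv 0$ $\mssm$-a.e., so by Proposition~\ref{p:NewWP} (which is exactly the statement that the $\QP$-class of $\rep\cdc^\dUpsilon$ depends only on $\mssm$-classes) the difference $\rep\cdc(\rep f_1)^\trid-\rep\cdc(\rep f_2)^\trid$ vanishes $\QP$-a.e.; alternatively one reruns the approximation argument of Step~3 starting from either representative and uses uniqueness of limits.

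\textbf{Main obstacle.} The argument is essentially bookkeeping: the two genuine inputs, Lemma~\ref{l:AbstractCompletion} and Proposition~\ref{p:NewWP}, have already been proved, and what remains is to verify that the original proof in~\cite{MaRoe00} uses the GNZ-formula Assumption~\ref{ass:Closability} \emph{only} through these two facts. The point requiring care is therefore the audit of~\cite[Lem.~4.3]{MaRoe00}: one must confirm that the intensity identity $\int\gamma|f|\diff\QP=\int|f|\diff\mssm_\QP$ — which in~\cite{MaRoe00} is read off from~\ref{ass:GNZ} via~\eqref{eq:Campbell2} — is for us simply the definition of the intensity $\mssm_\QP$ combined with~\ref{ass:Mmu}, so that no Campbell/Mecke structure is actually needed for the isometry, and that the passage from pointwise identities for $\rep\cdc^\dUpsilon$ to $\QP$-a.e. identities for $\SF{\dUpsilon}{\QP}$ goes exclusively through Proposition~\ref{p:NewWP}. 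Once these checks are in place, the proof of~\cite[Lem.~4.3]{MaRoe00} transcribes verbatim, which is what the excerpt already asserts; I would simply spell out the three steps above rather than citing it as a black box, since the hypotheses differ.
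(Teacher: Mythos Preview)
Your proposal is correct and follows exactly the same strategy as the paper's proof: identify that the two ingredients from~\cite{MaRoe00} needed for Lemma~4.3 there --- namely \cite[Lem.~4.2]{MaRoe00} and the well-posedness condition $(S^\Gamma.\mu)$ --- are replaced in our setting by Lemma~\ref{l:AbstractCompletion} and Proposition~\ref{p:NewWP} respectively, after which the original argument transcribes. You have simply unpacked the citation, spelling out the approximation and isometry steps that the paper leaves implicit; one minor caution is that your claim of ``isometric'' for the $L^1$-parts is literally an inequality ($\|\rep f^\trid\|_{L^1(\QP)}\leq \|f\|_{L^1(\mssm_\QP)}$ via $|\gamma f|\leq \gamma|f|$), which is all that is needed for the continuity argument, though the full equality in~\iref{i:l:MaRoe4.3:3} does hold once the energy part is included.
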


\begin{prop}
Let~$(\mcX,\cdc)$ be a \TLDS, and~$\QP$ be a probability measure on~$\ttonde{\dUpsilon,\A_\mrmv(\msE)}$ satisfying Assumption~\ref{ass:AC} and~\ref{ass:CWP}.
Then, all the assertions in Proposition~\ref{p:ExtDom} hold true.
\begin{proof}
Set (cf.~\cite[p.~301]{MaRoe00})
\begin{align*}
\tnorm{\rep f}_{\mssm_\QP,1}\eqdef \quadre{\int \rep\cdc(\rep f)\diff\mssm_\QP}^{1/2}+\int\tabs{\rep f} \diff\mssm_\QP\comma \qquad \rep f\in \Dz\comma
\end{align*}
which is finite by Assumption~\ref{ass:Mmu} (part of~\ref{ass:CWP}). This defines a corresponding norm~$\norm{f}_{\mssm_\QP,1}$ on the space of $\mssm_\QP$-classes~$\class[\mssm_\QP]{\Dz}$, and the inclusion~$\cok_{1,\mssm_\QP}\colon \class[\mssm_\QP]{\Dz}\to L^1(\mssm_\QP)$ uniquely extends to a continuous linear map~$\cokb_{1,\mssm_\QP}\colon \co_{1,\mssm_\QP}(\Dz)\to L^1(\mssm_\QP)$, injective by Lemma~\ref{l:AbstractCompletion} (which substitutes~\cite[Lem.~4.4]{MaRoe00}).
The proof now follows as in~\cite[Prop.~4.6]{MaRoe00} having care to substitute~\cite[Lem.~4.3]{MaRoe00} with the corresponding statement previously established in Lemma~\ref{l:MaRoe4.3II}.
\end{proof}
\end{prop}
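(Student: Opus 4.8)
The strategy is to reproduce the proof of~\cite[Prop.~4.6]{MaRoe00} essentially line by line, substituting for the auxiliary results it rests upon ---~\cite[Lem.s~4.2, 4.3 and~4.4]{MaRoe00}~--- the analogues valid under our weaker hypotheses: \cite[Lem.~4.2]{MaRoe00} and \cite[Lem.~4.4]{MaRoe00} (the latter being the injectivity of~$\cokb_{1,\mssm_\QP}$) are subsumed by Lemma~\ref{l:AbstractCompletion}, and \cite[Lem.~4.3]{MaRoe00} by Lemma~\ref{l:MaRoe4.3II}. Everything downstream of these facts is then formal.

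First I would record the set-up. By~\ref{ass:Mmu} (part of~\ref{ass:CWP}) the intensity measure~$\mssm_\QP$ is $\msE$-locally finite, hence $\sigma$-finite; moreover, writing any $\gamma$ as an $\msE$-locally finite sum over the coordinates of~$\lb(\gamma)$, one sees that~$\mssm_\QP$ is a countable sum of measures each absolutely continuous w.r.t.~$\mssm$ (by~\ref{ass:AC} applied to a single coordinate), so~$\mssm_\QP\ll\mssm$. Consequently the norm~$\norm{\emparg}_{1,\mssm_\QP}$ of Definition~\ref{d:AbstractCompletion} is finite on~$\Dz$, the completion~$\coK{\Dz}{1,\mssm_\QP}$ and the continuous extension~$\cokb_{1,\mssm_\QP}\colon\coK{\Dz}{1,\mssm_\QP}\to L^1(\mssm_\QP)$ of the inclusion are well-defined, and $\mssm$- and $\mssm_\QP$-classes of functions may be freely identified throughout.

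For the injectivity of~$\cokb_{1,\mssm_\QP}$ I would argue through the configuration space. The pair~$(\cdc^\dUpsilon,\CylQP{\QP}{\Dz})$ is a square field operator satisfying the diffusion identity~\eqref{eq:i:d:SF:3}, and by~\ref{ass:CWP} the form~$\EE{\dUpsilon}{\QP}$ is closable on~$L^2(\QP)$; the proof of Lemma~\ref{l:AbstractCompletion} ---~which uses only the diffusion property, the reverse triangle inequality, and $L^2$-closability~--- therefore applies verbatim with~$(\cdc^\dUpsilon,\QP)$ in place of~$(\cdc,\mssm)$ and yields that~$\cokb_{1,\QP}$ is injective, i.e.~$\coK{\CylQP{\QP}{\rep\Dz}}{1,\QP}\subset L^1(\QP)$, and that~$\cdc^\dUpsilon$ extends continuously to it. Now Lemma~\ref{l:MaRoe4.3II}\iref{i:l:MaRoe4.3:1}--\iref{i:l:MaRoe4.3:3} show that~$f\mapsto\tclass[\QP]{\rep f^\trid}$ is an isometric embedding~$\iota\colon\coK{\Dz}{1,\mssm_\QP}\hookrightarrow\coK{\CylQP{\QP}{\rep\Dz}}{1,\QP}$, and that the bounded linear map~$T\colon L^1(\mssm_\QP)\to L^1(\QP)$, $\tclass[\mssm_\QP]{h}\mapsto\tclass[\QP]{h^\trid}$ (well-defined since~$\int\abs{h^\trid}\diff\QP\leq\int\abs{h}\diff\mssm_\QP$) satisfies~$T\circ\cokb_{1,\mssm_\QP}=\cokb_{1,\QP}\circ\iota$ on the dense subspace~$\Dz$, hence everywhere. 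Thus, if~$\cokb_{1,\mssm_\QP}\xi=0$ then~$\cokb_{1,\QP}(\iota\xi)=T(\cokb_{1,\mssm_\QP}\xi)=0$, so~$\iota\xi=0$ and therefore~$\xi=0$; this is the required injectivity, and~$\cdc$ extends continuously to~$\coK{\Dz}{1,\mssm_\QP}^{\times 2}$ by pulling back the extension of~$\cdc^\dUpsilon$ along~$\iota$.

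With the injectivity of~$\cokb_{1,\mssm_\QP}$ and Lemma~\ref{l:MaRoe4.3II} at hand, assertions~(i)--(iii) of Proposition~\ref{p:ExtDom} follow exactly as in~\cite[Prop.~4.6]{MaRoe00}: (i) uses the embedding~$\iota$ to make sense of~$u=\tclass[\QP]{F\circ\rep\mbff^\trid}$ for~$f_i\in\coK{\Dz}{1,\mssm_\QP}$ as an element of~$\dom{\EE{\dUpsilon}{\QP}}$, approximating the~$f_i$ in~$\norm{\emparg}_{1,\mssm_\QP}$ by functions in~$\Dz$ and passing to the limit, with independence of the~$\mssm$-representatives forced by the identification above; (ii) follows from the chain rule~\eqref{eq:i:d:SF:3} together with the continuity of~$\cdc$ and~$\cdc^\dUpsilon$ on the respective completions; and (iii) is the special case~$F=\id_\R$ combined with Lemma~\ref{l:MaRoe4.3II}\iref{i:l:MaRoe4.3:2}. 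The single genuinely new point ---~and the only real obstacle~--- is the injectivity of~$\cokb_{1,\mssm_\QP}$: the delicate part there is verifying that~$f\mapsto\tclass[\QP]{\rep f^\trid}$ is truly isometric and intertwines the two completion maps, so that the~$L^1(\QP)$-closability supplied by~\ref{ass:CWP} can be transported back to the base space; the remaining steps are Ma--R\"ockner's bookkeeping.
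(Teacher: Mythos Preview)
Your proposal is correct and follows essentially the same route as the paper's proof: you reproduce~\cite[Prop.~4.6]{MaRoe00} after replacing~\cite[Lem.~4.3]{MaRoe00} by Lemma~\ref{l:MaRoe4.3II} and~\cite[Lem.~4.4]{MaRoe00} by the injectivity of~$\cokb_{1,\mssm_\QP}$, which the paper cites tersely as ``by Lemma~\ref{l:AbstractCompletion}''. Your explicit mechanism for that injectivity ---~applying Lemma~\ref{l:AbstractCompletion} to the configuration-space form (where closability is granted by~\ref{ass:CWP}) and transporting it back to~$\coK{\Dz}{1,\mssm_\QP}$ via the isometric embedding~$\iota$ given by Lemma~\ref{l:MaRoe4.3II}\iref{i:l:MaRoe4.3:3}~--- is exactly the content that the paper's citation compresses.
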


We aim to lift a form~$\EE{\dUpsilon}{\QP}$ satisfying Assumption~\ref{ass:CWP} on the configuration space~$\dUpsilon$ to a quadratic form~$\EE{\asym{\infty}}{\llb_\pfwd\QP}$ on the infinite product space~$X^{\tym{\infty}}$.
The latter form will depend on the choice of a labeling map~$\lb$ around which this lifting procedure is pivoting.
Our goal is to transfer properties of~$\EE{\asym{\infty}}{\llb_\pfwd\QP}$ to~$\EE{\dUpsilon}{\QP}$.
These properties will be studied by approximating~$\EE{\asym{\infty}}{\llb_\pfwd\QP}$ with forms on the finite-product spaces~$X^\tym{n}$.

\smallskip

In light of Lemma~\ref{l:WellPosedness}, we shall denote again by~$\EE{\asym{N}}{\llb_\pfwd\QP}$ the quadratic form defined on~$\spclass[\llb]{\preW(\msE)}$.
Without an integration-by-parts formula, the closability of~$\ttonde{\EE{\asym{N}}{\llb_\pfwd\QP}, \spclass[\llb]{\preW(\msE)}}$ on its natural Hilbert space~$H_\infty\eqdef L^2(\QP^\asym{\infty})$ appears to be out of reach.
We shall therefore work with the abstract Hilbert completions
\begin{align}\label{eq:AbstractCompletionN}
K_N\eqdef \coK{\spclass[\llb]{\preW(\msE)}}{N}\comma \qquad N\in \overline\N_1\comma
\end{align}
of~$\spclass[\llb]{\preW(\msE)}$ w.r.t.~$(\EE{\asym{N}}{\llb_\pfwd\QP})^{1/2}_1$ given as in Definition~\ref{d:AbstractCompletion} with~$\QP^\asym{\infty}$~in place of~$\mssm$, $\spclass[\llb]{\preW(\msE)}$ in place of~$\Dz$, and~$\EE{\asym{N}}{\llb_\pfwd\QP}$ in place of~$\EE{X}{\mssm}$.
Again as in Definition~\ref{d:AbstractCompletion}, let us set~$\coi_N\eqdef \coi_{2,N}$ and~$\cokb_N\eqdef \cokb_{2,N}$.

\smallskip

Now, let us assume that~$\ttonde{\EE{\dUpsilon}{\QP},\dom{\EE{\dUpsilon}{\QP}}}$ satisfies Assumption~\ref{ass:CWP}, and admits square field operator $\ttonde{\SF{\dUpsilon}{\QP},\dom{\SF{\dUpsilon}{\QP}}}$ as in~\eqref{eq:d:LiftCdC}. 
The next lemma constitutes a key step in the sought identification.

\begin{lem}[Square field approximation]\label{l:CdCCylinderTruncation}
Let~$(\mcX,\cdc)$ be a \TLDS, and~$\QP$ be a probability measure on~$\ttonde{\dUpsilon,\A_\mrmv(\msE)}$ satisfying Assumptions~\ref{ass:AC} and~\ref{ass:CWP}.
For~$\rep u= F\circ \rep\mbff^\trid \in \Cyl{\rep\Dz}$ set
\begin{align}\label{eq:CylFReduction}
\rep u^\sym{n}\colon \mbfy \longmapsto (\Lb^*\rep u)(\mbfy) \comma \qquad \mbfy\eqdef\seq{y_i}_{i\leq n} \in X^\tym{n}\fstop
\end{align}
Then, $u^{\sym{n}}\eqdef \class[\mssm^\otym{n}]{\rep u^\sym{n}}\in \domext{\cdc^\otym{n}}$,
\begin{align}
\label{eq:l:CdCCylinderTruncation:1}
\SF{\dUpsilon}{\QP}(u)=\nlim \cdc^{\otym n}\ttonde{u^\sym{n} } \circ \tr^n\circ \lb \as{\QP}\comma
\end{align}
and, for every strong lifting~$\ell\colon L^\infty(\mssm)\rar \mcL^\infty(\mssm)$,
\begin{align}\label{eq:l:CdCCylinderTruncation:1c}
\Lb^*\ttonde{\SF{\dUpsilon}{\QP}(u)}=\class[\QP^\asym{\infty}]{\rep\cdc_\ell^\asym{\infty}(\Lb^*\rep u)} \as{\QP^\asym{\infty}}\fstop
\end{align}

\begin{proof}
We divide the proof into several steps.

\paragraph{Sets} Firstly, we shall construct a set~$\Omega_u$, of full $\QP$-measure depending on~$u$, on which we show~\eqref{eq:l:CdCCylinderTruncation:1}.
By Proposition~\ref{p:Products}, the space~$\mcX^{\otym n}$ is a topological local structure for every~$n\geq 1$, thus it admits a strong lifting, say~$\ell^\asym{n}$. We write~$\ell$ in place of~$\ell^\asym{1}$.
For~$\rep u^\sym{n}$ as above and~$\mbfy\eqdef\seq{y_i}_{i\leq n}\in X^\tym{n}$, set
\begin{align*}
\rep u^\sym{n}_{\mbfy, {p}}\colon z&\longmapsto \rep u^\sym{n}(y_1,\dotsc, y_{p-1},z,y_{p+1},\dotsc, y_n) \comma
\\
B^\asym{n}_u\eqdef&\set{\mbfy \in X^\tym{n}: 
\sum_{p=1}^n \repSF{X}{\mssm}_\ell \tonde{u^\sym{n}_{\mbfy, {p}}}(x_p)=\ell^\asym{n}\tonde{\cdc^\otym{n}\ttonde{u^\sym{n}}}(\mbfy)}\fstop
\end{align*}
Note that~$B^\asym{n}_u$ has full $\mssm^{\otimes n}$-measure for every~$n\geq 1$. In fact,~$B^\asym{1}_u=X$.

By assumption on~$f_i$, there exists~$E\in\msE$ so that~$\bigcup_i\supp f_i\subset E$. By definition of~$\Upsilon(\msE)$,
\begin{equation}\label{eq:l:CdCCylinderTruncation:2}
\begin{aligned}
\forall \gamma\in \Upsilon(\msE) \quad \exists n_\gamma\geq&\ 1: \\
\rep u(\gamma)=&\ \ttonde{\rep u^\sym{n}\circ \tr^n \circ \lb}(\gamma)\comma \qquad n\geq n_\gamma \fstop
\end{aligned}
\end{equation}

Let further~$\Omega_{0,u}\subset \Upsilon(\msE)$ be a set of full~$\QP$-measure such that~\eqref{eq:d:LiftCdC} holds with~$\rep\cdc_\ell$ in place of~$\rep\cdc$, and~$\Omega_{n,u}\eqdef\ttonde{\tr^n\circ \lb}^{-1} (B^\asym{n}_u)$.
Since~$B^\asym{n}_u$ is $\mssm^\otym{n}$-conegligible,~$\QP \Omega_{n,u}=1$ as a consequence of Assumption~\ref{ass:AC}.
Thus,~$\Omega_u\eqdef \bigcap_{n\geq 0} \Omega_{n,u}$ has full~$\QP$-measure as well.

\paragraph{Statements on configuration spaces} Let us now turn to the choice of a representative for $\SF{\dUpsilon}{\QP}(u)$ satisfying~\eqref{eq:l:CdCCylinderTruncation:1} on~$\Omega_u$. Indeed, set
\begin{align*}
\rep g_u\colon \gamma\longmapsto \sum_{i,j}^{k,k} (\partial_i F \cdot \partial_j F)(\rep \mbff^\trid \gamma) \cdot \ell\ttonde{\cdc(f_i , f_j)}^\trid \gamma
\end{align*}
and note that it is a $\QP$-representative of~$\SF{\dUpsilon}{\QP}(u)$ by~\eqref{eq:d:LiftCdC} and~\eqref{eq:LiftCdC}.
For all~$\gamma\in\Omega_u$, for all~$n\geq n_\gamma$ as in~\eqref{eq:l:CdCCylinderTruncation:2},
\begin{align}
\nonumber
\rep g_u(\gamma)
=&\sum_{i,j}^{k,k} (\partial_i F \cdot \partial_j F)(\rep \mbff^\trid \gamma) \cdot \ell\ttonde{\cdc(f_i , f_j)}^\trid \gamma
\\
\label{eq:l:CdCCylinderTruncation:2.1}
=&\sum_{i,j}^{k,k} (\partial_i F \cdot \partial_j F)\ttonde{(\rep \mbff^\trid\circ \Lb)(\mbfx^\asym{n})} \sum_{p=1}^n\ell\ttonde{\cdc(f_i,f_j)}(x_p)
\\
\nonumber
=&\sum_{p=1}^n \repSF{X}{\mssm}_\ell \tonde{u^\sym{n}_{\mbfx, {p}} }(x_p)
\\
\label{eq:l:CdCCylinderTruncation:2.11}
=&\ \ell^\asym{n}\tonde{\cdc^\otym{n}\ttonde{u^\sym{n}}}(\mbfx^\asym{n})\comma
\end{align}
where~$\mbfx\eqdef\lb(\gamma)$. That is,
\begin{equation}\label{eq:l:CdCCylinderTruncation:3}
\begin{aligned}
\forall \gamma\in \Omega_u\quad \exists n_\gamma\geq&\ 1: 
\\
\rep g_u(\gamma)=&\ \tonde{ \ell^\asym{n}\tonde{\cdc^\otym{n}\ttonde{u^\sym{n}}}\circ \tr^n\circ \lb} (\gamma) \comma \qquad n\geq n_\gamma \fstop
\end{aligned}
\end{equation}
Formula~\eqref{eq:l:CdCCylinderTruncation:1} follows by passing to $\QP$-classes, since, for fixed~$\gamma$, the left-hand side of~\eqref{eq:l:CdCCylinderTruncation:3} is independent of the chosen family of liftings~$\rep \ell^\asym{n}$.

\paragraph{Statements on product spaces}
Now we show~\eqref{eq:l:CdCCylinderTruncation:1c}.
By pre-composing \eqref{eq:l:CdCCylinderTruncation:1} with~$\Lb$ we have
\begin{align}\label{eq:l:CdCCylinderTruncation:4}
\ttonde{\SF{\dUpsilon}{\QP}(u)\circ \Lb}(\mbfx)=\ttonde{\nlim \cdc^\otym{n}\ttonde{u^\sym{n}}\circ \tr^n}(\mbfx)\comma \qquad \gamma\in \Omega_u\comma \mbfx\eqdef\lb(\gamma) \fstop
\end{align}

Further note that, since
\begin{align*}
\ttonde{\rep\mbff^\trid\circ\Lb}(\mbfx^\asym{n})=\ttonde{\rep\mbff^\trid\circ\Lb}(\mbfx)\comma \qquad \gamma\in \Omega_u\comma \mbfx\eqdef\lb(\gamma)\comma n\geq n_\gamma\comma
\end{align*}
then
\begin{align}\label{eq:l:CdCCylinderTruncation:2.0}
\rep\cdc_\ell^\asym{n}\ttonde{\rep u^\sym{n}\circ\tr^n}\ttonde{\lb(\gamma)}=\eqref{eq:l:CdCCylinderTruncation:2.1}=\rep\cdc_\ell^\asym{n}(\rep u\circ \Lb)\ttonde{\lb(\gamma)} \comma \qquad \gamma\in \Omega_u\comma n\geq n_\gamma\fstop
\end{align}

Finally, combining the first equality in~\eqref{eq:l:CdCCylinderTruncation:2.0} with~\eqref{eq:l:CdCCylinderTruncation:2.11},
\begin{align*}
\forall \gamma\in\Omega_u \quad \exists& n_\gamma \geq 1 :
\\
\rep\cdc_\ell^\asym{n}&\ttonde{\rep u^\sym{n}\circ\tr^n}\ttonde{\lb(\gamma)} = \ell^\asym{n}\ttonde{\cdc^\otym{n}(u^\sym{n})}\ttonde{(\tr^n\circ \lb)(\gamma)}\comma \qquad n\geq n_\gamma \comma
\end{align*}
whence
\begin{align*}
\forall \mbfx\in\lb\ttonde{\Omega_u} \quad \exists& n_\mbfx \geq 1 :
\\
\rep\cdc_\ell^\asym{n}&\ttonde{\rep u^\sym{n}\circ\tr^n}(\mbfx)= \ell^\asym{n}\ttonde{\cdc^\otym{n}(u^\sym{n})}(\mbfx^\asym{n})\comma \qquad n\geq n_\mbfx \fstop
\end{align*}
By comparison of the latter equation with~\eqref{eq:l:CdCCylinderTruncation:4} and the second equality in~\eqref{eq:l:CdCCylinderTruncation:2.0}, letting~$n\rar\infty$,
\begin{align}\label{eq:l:CdCCylinderTruncation:5}
\rep g_u\circ \Lb=&\nlim \rep\cdc_\ell^\asym{n}\ttonde{\rep u^{\sym{n}}\circ\tr^n}=\nlim \rep\cdc_\ell^\asym{n}(\rep u\circ \Lb) = \rep\cdc_\ell^\asym{\infty}(\rep u\circ \Lb) \comma \qquad \mbfx\in \lb(\Omega_u)\fstop
\end{align}
and~\eqref{eq:l:CdCCylinderTruncation:1c} follows by passing to~$\QP^\asym{\infty}$-classes.
\end{proof}
\end{lem}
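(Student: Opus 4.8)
The plan is to prove Lemma~\ref{l:CdCCylinderTruncation} by carefully tracking how a cylinder function $\rep u = F\circ\rep\mbff^\trid$ behaves when pulled back via $\Lb$, and then passing between pointwise statements on $\dUpsilon$ (where $\SF{\dUpsilon}{\QP}$ lives on $\QP$-classes) and statements on $X^\tym{\infty}$ (where everything must be interpreted pointwise). The crucial structural fact is that for a \emph{fixed} configuration $\gamma\in\Upsilon(\msE)$ whose labeled sequence eventually leaves the compact set $E \supseteq \bigcup_i\supp f_i$, the value $\rep u(\gamma)$ is already captured by the truncated function $\rep u^\sym{n}\circ\tr^n\circ\lb$ for all $n\geq n_\gamma$ large enough --- this is exactly~\eqref{eq:l:CdCCylinderTruncation:2}, and it is the mechanism that converts an infinite-dimensional object into a stabilizing sequence of finite-dimensional ones.

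First I would fix a strong lifting $\ell$ on $\mcX$ and strong liftings $\ell^\asym{n}$ on each product $\mcX^\otym{n}$ (available by Proposition~\ref{p:Products} and the lifting existence theorem), and define the measurable sets $B^\asym n_u \subset X^\tym n$ on which the pointwise chain rule for $\cdc^\otym n$ holds, noting $B^\asym 1_u = X$ trivially. Then I would assemble the full-measure set $\Omega_u = \bigcap_{n\geq 0}\Omega_{n,u}$, where $\Omega_{0,u}$ is where the lifted formula~\eqref{eq:d:LiftCdC} for $\SF{\dUpsilon}{\QP}(u)$ holds and $\Omega_{n,u} = (\tr^n\circ\lb)^{-1}(B^\asym n_u)$; the key point is that $\QP\,\Omega_{n,u}=1$ \emph{because} $\QP^\asym n \ll \mssm^\otym n$ by Assumption~\ref{ass:AC}, so conegligibility on the product space transfers to conegligibility on $\dUpsilon$. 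This is where~\ref{ass:AC} does its essential work. Membership of $u^\sym n$ in $\domext{\cdc^\otym n}$ follows since $\rep u^\sym n$ is a smooth function of finitely many functions $f_i\in\Dz$, hence lies in the (extended) domain of the product form.

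Next I would carry out the central computation on $\Omega_u$: choose the explicit $\QP$-representative $\rep g_u(\gamma) = \sum_{i,j}(\partial_i F\cdot\partial_j F)(\rep\mbff^\trid\gamma)\cdot\ell(\cdc(f_i,f_j))^\trid\gamma$ of $\SF{\dUpsilon}{\QP}(u)$, and for $\gamma\in\Omega_u$ and $n\geq n_\gamma$ expand $\rep g_u(\gamma)$ using that $\rep\mbff^\trid\gamma = (\rep\mbff^\trid\circ\Lb)(\mbfx^\asym n)$ and that the integral $\ell(\cdc(f_i,f_j))^\trid\gamma$ splits as $\sum_{p=1}^n \ell(\cdc(f_i,f_j))(x_p)$ once $n\geq n_\gamma$ --- this is the chain~\eqref{eq:l:CdCCylinderTruncation:2.1}--\eqref{eq:l:CdCCylinderTruncation:2.11}, which identifies $\rep g_u(\gamma)$ with $\ell^\asym n(\cdc^\otym n(u^\sym n))(\mbfx^\asym n)$, i.e.\ with the finite-dimensional square field of the truncation, evaluated along the label. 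Passing to $\QP$-classes --- and observing that the left-hand side $\rep g_u(\gamma)$ does not depend on the auxiliary liftings $\ell^\asym n$ --- yields~\eqref{eq:l:CdCCylinderTruncation:1}. For~\eqref{eq:l:CdCCylinderTruncation:1c} I would precompose~\eqref{eq:l:CdCCylinderTruncation:1} with $\Lb$, then use that $\rep\cdc_\ell^\asym n(\rep u^\sym n\circ\tr^n)(\lb(\gamma)) = \rep\cdc_\ell^\asym n(\rep u\circ\Lb)(\lb(\gamma))$ for $n\geq n_\gamma$ (since the inner functions agree), so that both the truncated sequence and $\rep\cdc_\ell^\asym n(\Lb^*\rep u)$ stabilize to the same limit $\rep\cdc_\ell^\asym\infty(\Lb^*\rep u)$ on $\lb(\Omega_u)$; passing to $\QP^\asym\infty$-classes finishes it.

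The main obstacle I expect is bookkeeping rather than conceptual: one must be scrupulous about \emph{where} each equality is pointwise versus a.e., and in particular about the interplay between the different liftings $\ell, \ell^\asym n$ and the fact that only the final $\QP$-class (resp.\ $\QP^\asym\infty$-class) is lifting-independent. A subtle point is that the stabilization index $n_\gamma$ depends on $\gamma$, so the convergence in~\eqref{eq:l:CdCCylinderTruncation:1} is pointwise on $\Omega_u$ but \emph{not} uniform; this is fine for the stated a.e.\ conclusion but one should resist the temptation to claim anything stronger here. Finally, care is needed that $\Omega_u \subseteq \Upsilon(\msE)$ (not merely $\dUpsilon$) is genuinely $\QP$-conegligible --- this uses that the relevant measures are concentrated on configurations with locally finite labels, which is built into the setup via $\mbfX_\locfin(\msE)$ and the measurability of $\lb$.
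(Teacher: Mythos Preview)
Your proposal is correct and follows essentially the same approach as the paper's proof: the same construction of the full-measure set~$\Omega_u$ via Assumption~\ref{ass:AC}, the same explicit representative~$\rep g_u$, the same chain of equalities~\eqref{eq:l:CdCCylinderTruncation:2.1}--\eqref{eq:l:CdCCylinderTruncation:2.11} exploiting the stabilization~\eqref{eq:l:CdCCylinderTruncation:2}, and the same passage to classes for both conclusions. Your remarks on the bookkeeping subtleties (lifting-independence only at the level of classes, pointwise but non-uniform stabilization in~$n_\gamma$) are apt and match the care taken in the paper.
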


\begin{lem}\label{l:InfDiff} Let~$(\mcX,\cdc)$ be a \TLDS, and~$\QP$ be a probability measure on~$\ttonde{\dUpsilon,\A_\mrmv(\msE)}$ satisfying Assumptions~\ref{ass:AC} and~\ref{ass:CWP}.
Then,~$\Lb^*\Cyl{\rep\Dz}\subset \preW(\msE)$. In particular, the form $\ttonde{\EE{\asym{N}}{\llb_\pfwd\QP}, \spclass[\llb]{\preW(\msE)}}$ is densely defined on~$H_\infty$.

\begin{proof}
Definition~\ref{d:preSobolev}\iref{i:d:preSobolev:1} holds by definition of cylinder function, while~\iref{i:d:preSobolev:3} follows by integrating~\eqref{eq:l:CdCCylinderTruncation:1c} w.r.t.~$\QP^\asym{\infty}$, since the integral of the left-hand side does not in fact depend on~$\lb$.
Thus, the inclusion~$\spclass[\llb]{\preW(\msE)}\subset H_\infty$ is dense, and the form~$\ttonde{\EE{\asym{N}}{\llb_\pfwd\QP}, \spclass[\llb]{\preW(\msE)}}$ is densely defined on~$H_\infty$ by Remark~\ref{r:DensityQP}\iref{i:r:DensityQP:1}.
\end{proof}
\end{lem}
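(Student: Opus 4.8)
The goal is to show two things: that $\Lb^*\Cyl{\rep\Dz}\subset \preW(\msE)$, and that consequently $\ttonde{\EE{\asym{N}}{\llb_\pfwd\QP},\spclass[\llb]{\preW(\msE)}}$ is densely defined on $H_\infty = L^2(\QP^\asym{\infty})$. The first claim is essentially a bookkeeping exercise built on Lemma~\ref{l:CdCCylinderTruncation}; the density claim follows from it together with Remark~\ref{r:DensityQP}\iref{i:r:DensityQP:1}.

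First I would fix $\rep u = F\circ\rep\mbff^\trid\in\Cyl{\rep\Dz}$ and check the two defining conditions of Definition~\ref{d:preSobolev} for $\Lb^*\rep u = \rep u\circ\Lb$. Condition~\iref{i:d:preSobolev:1}, boundedness and $\boldSigma^*(\msE)$-measurability, is immediate: $F$ is bounded (so $\rep u$ and hence $\rep u\circ\Lb$ are bounded), and $\Lb\colon \mbfX^\asym{\infty}_\locfin(\msE)\to\dUpsilon$ is $\boldSigma^*(\msE)/\A_\mrmv(\msE)^\QP$-measurable (as recorded just after Notation~\ref{n:LabelingUniversalAlgebra}), while $\rep u$ is $\A_\mrmv(\msE)$-measurable, being a cylinder function; thus $\rep u\circ\Lb$ is $\boldSigma^*(\msE)$-measurable. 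For condition~\iref{i:d:preSobolev:3} I need a constant $M$ with $\ttnorm{\Lb^*\rep u}_{\llb}\leq M$ \emph{for every} labeling map $\lb$. Here is where Lemma~\ref{l:CdCCylinderTruncation} enters: equation~\eqref{eq:l:CdCCylinderTruncation:1c} gives, for every strong lifting $\ell$ and every labeling map $\lb$, that $\rep\cdc_\ell^\asym{\infty}(\Lb^*\rep u) = \Lb^*\ttonde{\SF{\dUpsilon}{\QP}(u)}$ as a $\QP^\asym{\infty}$-class. Since $\ttnorm{\Lb^*\rep u}_{\llb} = \norm{\,\ttabs{\Lb^*\rep u} + \rep\cdc^\asym{\infty}(\Lb^*\rep u)^{1/2}\,}_{H_\infty}$ and $\QP^\asym{\infty} = \lb_\pfwd\QP$, the change-of-variables formula for push-forwards turns this $H_\infty$-norm into an integral over $\dUpsilon$ against $\QP$: namely $\ttnorm{\Lb^*\rep u}_{\llb}^2 = \int_\dUpsilon \ttonde{\abs{u} + \SF{\dUpsilon}{\QP}(u)^{1/2}}^2\diff\QP$, which manifestly does not depend on $\lb$. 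Taking $M$ to be (any number exceeding) the square root of that quantity — finite because $u\in\Cyl{\rep\Dz}\subset L^2(\QP)$ and $\SF{\dUpsilon}{\QP}(u)\in L^1(\QP)$ by Lemma~\ref{l:MmuL1} under Assumption~\ref{ass:Mmu} (part of~\ref{ass:CWP}) — establishes~\iref{i:d:preSobolev:3}. Hence $\Lb^*\rep u\in\preW(\msE)$.

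For the density assertion, the inclusion $\spclass[\llb]{\preW(\msE)}\subset H_\infty$ is dense because $\CylQP{\QP}{\Dz}$ — equivalently, by Remark~\ref{r:DensityQP}\iref{i:r:DensityQP:1} and Definition~\ref{d:DS}\iref{i:d:DS:1.5}, the image of $\Cyl{\rep\Dz}$ in $L^2(\QP)$ — is dense in $L^2(\QP)$, and this density transfers along the measure isomorphism $\lb_\pfwd\colon L^2(\QP)\to L^2(\QP^\asym{\infty})=H_\infty$, under which $\CylQP{\QP}{\Dz}$ maps into $\spclass[\llb]{\preW(\msE)}$ by the first part. The form $\ttonde{\EE{\asym{N}}{\llb_\pfwd\QP},\spclass[\llb]{\preW(\msE)}}$ is therefore densely defined on $H_\infty$.

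I do not anticipate a serious obstacle: the only subtlety is the insistence that $M$ in~\iref{i:d:preSobolev:3} be uniform over all labeling maps, and that uniformity is exactly the content already packaged into~\eqref{eq:l:CdCCylinderTruncation:1c} — the left-hand side $\Lb^*\ttonde{\SF{\dUpsilon}{\QP}(u)}$ pulls back, against $\QP^\asym{\infty}=\lb_\pfwd\QP$, to the $\lb$-independent quantity $\SF{\dUpsilon}{\QP}(u)$ on $\dUpsilon$. One should be mildly careful that $\boldSigma^*(\msE)$-measurability (needed for $\rep\cdc^\asym{\infty}$ of $\Lb^*\rep u$ to make sense for every $\lb$ simultaneously) really does follow from measurability of $\Lb$ into $\A_\mrmv(\msE)^\QP$ combined with the definition of $\boldSigma^*(\msE)$ as the intersection over $\lb$ of the $\lb_\pfwd\QP$-completions of $\Bo{\T^\tym{\infty}}$; this is the point where the careful setup of Notation~\ref{n:LabelingUniversalAlgebra} pays off, and it is routine to unwind. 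Everything else is the standard push-forward identity for integrals and the already-cited density statement.
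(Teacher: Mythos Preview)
Your proof is correct and takes essentially the same approach as the paper: verify Definition~\ref{d:preSobolev}\iref{i:d:preSobolev:1} directly from the measurability of~$\Lb$ and the boundedness of cylinder functions, and derive~\iref{i:d:preSobolev:3} by integrating~\eqref{eq:l:CdCCylinderTruncation:1c} against~$\QP^\asym{\infty}$ and using that the push-forward identity makes the resulting integral independent of~$\lb$; then invoke Remark~\ref{r:DensityQP}\iref{i:r:DensityQP:1} for density. Your write-up simply unpacks the paper's two-sentence argument with more care (including the explicit finiteness via Lemma~\ref{l:MmuL1}), and the only quibble is notational: the isometry $L^2(\QP)\to H_\infty$ is~$\Lb^*$ rather than~$\lb_\pfwd$.
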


Let~$H\eqdef L^2(\QP)$. We are now in the position to identify the Hilbert space 
\begin{align*}
K\eqdef\ttonde{\dom{\EE{\dUpsilon}{\QP}}, (\EE{\dUpsilon}{\QP})^{1/2}_1}
\end{align*}
with a Hilbert subspace of the abstract completion~$K_\infty$.
In order to do so, we shall show that
\begin{itemize}
\item $\Lb^*\colon H\rar H_\infty\eqdef L^2(\QP^\asym{\infty})$ is a unitary isomorphism (by definition);
\item the space~$K_n$ `converges' (see below) to~$K$ (see Lem.~\ref{l:KSPoisson});
\item the `convergence' of~$K_n$ to~$K$ is invariant under unitary isomorphism of the limit space (see Lem.~\ref{l:KSUnitary}).
\end{itemize}
The notion of convergence of Hilbert spaces we use here is the one introduced by K.~Kuwae and~T.~Shioya in~\cite{KuwShi03}. We recall the main definitions and related results below.
Since \emph{Kuwae--Shioya's convergence} induces a Hausdorff topology on the space of classes of Hilbert spaces modulo unitary isomorphisms, the limit space of the sequence~$\seq{K_n}_n$ above is uniquely identified,~\cite[Cor.~2.2]{KuwShi03}.
We may thus conclude that
\begin{itemize}
\item $\Lb^*\colon K\rar K_\infty$ is isometric onto its image in~$K_\infty$ (Prop.~\ref{p:LipIsometry}).
\end{itemize}

The identification of~$K$ with the subspace~$\Lb^* K$ of~$K_\infty$ will be of particular importance. Indeed, if we show that the completion embedding~$\cokb_\infty\colon K_\infty \rar H_\infty\eqdef L^2(\QP^\asym{\infty})$ is injective on a further subspace~$V\subset \Lb^*K$, then we may transfer properties of~$V$ to properties of~$\lb^* V\subset K$. In particular, in~\S\ref{sss:Rademacher} below, choosing~$\spclass[\llb]{\preW(\msE)}$ for some suitable labeling maps~$\lb$, we will show how to transfer the Lipschitz property of functions.

\paragraph{Kuwae--Shioya's convergence}
We recall the main definitions in~\cite[\S2]{KuwShi03}. 
In the following:~$(\mcA,\T,\preceq)$ is a directed Hausdorff topological space; every Hilbert space is real and \emph{separable} and, for a Hilbert space~$H$, we denote by~$\Lin(H)$ the space of bounded linear operators~$B\colon H\rar H$.

\begin{defs}[Convergence of spaces]\label{d:KSSpaces} A net~$\seq{H_\alpha}_{\alpha\in\mcA}$ of Hilbert spaces \emph{converges} to a Hilbert space~$H$ if there exist a dense linear subspace~$\KSD\subset H$ and densely defined linear maps~$\Phi_\alpha\colon \KSD\rar H_\alpha$ such that
\begin{align*}
\limsup_\alpha \norm{\Phi_\alpha u}_{H_\alpha}=\norm{u}_H\comma\qquad u\in \KSD\fstop
\end{align*}
\end{defs}

\begin{rem} The above definition is \emph{not} independent on the system of maps~$\seq{\Phi_\alpha}_{\alpha\in\mcA}$ and on the linear subspace~$\KSD$. If it is necessary to specify the system of maps~$\seq{\Phi_\alpha}$ and the subspace~$\KSD$, we write that~$\seq{H_\alpha}_{\alpha\in\mcA}$ converges to~$H$ \emph{w.r.t.~the pair~$(\Phi_\alpha, \KSD)$}.
\end{rem}

\begin{defs}[Convergences of vectors
]\label{d:KSOperators}
Assume that a net~$\seq{H_\alpha}_{\alpha\in \mcA}$ of Hilbert spaces converges to a Hilbert space~$H$.
We say that
\begin{itemize}
\item a net~$\seq{u_\alpha}_{\alpha\in\mcA}$, with $u_\alpha\in H_\alpha$, (\emph{strongly}) \emph{converges} to~$u\in H$ if there exists a net~$\seq{\tilde u_\beta}_{\beta\in\mcB}\subset \KSD$ such that
\begin{align}\label{eq:KSStrongConvergence}
\lim_\beta \norm{\tilde u_\beta-u}_H=0 \qquad \text{and} \qquad \lim_\beta\limsup_\alpha \norm{\Phi_\alpha \tilde u_\beta-u_\alpha}_{H_\alpha}=0 \semicolon
\end{align}

\item a net~$\seq{u_\alpha}_{\alpha\in\mcA}$, with $u_\alpha\in H_\alpha$, \emph{weakly converges} to~$u\in H$ if 
\begin{align}\label{eq:KSWeakConvergence}
\lim_\alpha \scalar{u_\alpha}{v_\alpha}_{H_\alpha}=\scalar{u}{v}_H
\end{align}
for every net~$\seq{v_\alpha}_{\alpha\in\mcA}$, with~$v_\alpha\in H_\alpha$, strongly converging to~$v\in H$.
\end{itemize}
\end{defs}

\begin{lem}[Unitary invariance]\label{l:KSUnitary}
Let~$\seq{H_\alpha}_{\alpha\in\mcA}$ be a net of Hilbert spaces converging to a Hilbert space~$H$ w.r.t.~the pair~$(\Phi_\alpha, \KSD)$. Further let~$H'$ be a Hilbert space and~$U\colon H\rar H'$ be a unitary operator. Then,
\begin{enumerate*}[$(i)$]
\item $\seq{H_\alpha}_{\alpha}$ converges to~$H'$ w.r.t.~the pair~$\ttonde{\Phi_\alpha\circ U^{-1}, U(\KSD)}$;
\item if~$\seq{u_\alpha}_{\alpha\in\mcA}$, with~$u_\alpha\in H_\alpha$ strongly, resp.~weakly, converges to~$u\in H$ w.r.t.~the pair~$(\Phi_\alpha, \KSD)$, then~$\seq{u_\alpha}_{\alpha\in\mcA}$ strongly, resp.~weakly, converges to~$U(u)\in H'$ w.r.t.~the pair~$\ttonde{\Phi_\alpha\circ U^{-1}, U(\KSD)}$.
\end{enumerate*}
\begin{proof}
Straightforward.
\end{proof}
\end{lem}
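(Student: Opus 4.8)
The statement to prove (Lemma on unitary invariance) asserts two facts about Kuwae--Shioya convergence, both of which follow directly from unwinding Definitions~\ref{d:KSSpaces} and~\ref{d:KSOperators}. Since the author has already flagged the proof as \emph{straightforward}, my plan is to give the short bookkeeping argument, checking each clause of each definition after transporting everything through the unitary~$U$.

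\paragraph{Plan for~$(i)$} The hypothesis gives a dense subspace~$\KSD\subset H$ and densely defined linear maps~$\Phi_\alpha\colon\KSD\rar H_\alpha$ with $\limsup_\alpha\norm{\Phi_\alpha u}_{H_\alpha}=\norm{u}_H$ for all~$u\in\KSD$. I would set~$\KSD'\eqdef U(\KSD)$ and~$\Phi_\alpha'\eqdef \Phi_\alpha\circ U^{-1}\colon \KSD'\rar H_\alpha$. First, $\KSD'$ is dense in~$H'$ because~$U$ is a surjective isometry, hence a homeomorphism, and the image of a dense set under a homeomorphism is dense. Each~$\Phi_\alpha'$ is linear (composition of linear maps) and densely defined on~$H'$ since its domain~$\KSD'$ is dense. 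Finally, for~$u'\in\KSD'$ write~$u'=U(u)$ with~$u\in\KSD$; then
\begin{equation*}
\limsup_\alpha\norm{\Phi_\alpha' u'}_{H_\alpha}=\limsup_\alpha\norm{\Phi_\alpha(U^{-1}u')}_{H_\alpha}=\limsup_\alpha\norm{\Phi_\alpha u}_{H_\alpha}=\norm{u}_H=\norm{U u}_{H'}=\norm{u'}_{H'}\comma
\end{equation*}
using that~$U$ is unitary in the penultimate equality. This verifies Definition~\ref{d:KSSpaces} for~$\seq{H_\alpha}_\alpha$ converging to~$H'$ with respect to the pair~$(\Phi_\alpha\circ U^{-1},U(\KSD))$.

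\paragraph{Plan for~$(ii)$} Suppose~$\seq{u_\alpha}_\alpha$ converges strongly to~$u\in H$ with respect to~$(\Phi_\alpha,\KSD)$, witnessed by a net~$\seq{\tilde u_\beta}_{\beta\in\mcB}\subset\KSD$ satisfying~\eqref{eq:KSStrongConvergence}. I would take the transported net~$\seq{U\tilde u_\beta}_\beta\subset U(\KSD)=\KSD'$ as the witness for strong convergence of~$\seq{u_\alpha}_\alpha$ to~$U(u)\in H'$. Indeed~$\lim_\beta\norm{U\tilde u_\beta-U u}_{H'}=\lim_\beta\norm{\tilde u_\beta-u}_H=0$ since~$U$ is an isometry, and
\begin{equation*}
\lim_\beta\limsup_\alpha\norm{\Phi_\alpha'(U\tilde u_\beta)-u_\alpha}_{H_\alpha}=\lim_\beta\limsup_\alpha\norm{\Phi_\alpha\tilde u_\beta-u_\alpha}_{H_\alpha}=0\comma
\end{equation*}
which is exactly~\eqref{eq:KSStrongConvergence} for the pair~$(\Phi_\alpha\circ U^{-1},U(\KSD))$. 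For the weak case, let~$\seq{v_\alpha}_\alpha$ with~$v_\alpha\in H_\alpha$ converge strongly to some~$v'\in H'$ with respect to~$(\Phi_\alpha',\KSD')$; by the strong case applied to~$U^{-1}$ (which is also unitary, and note $\Phi_\alpha'\circ(U^{-1})^{-1}=\Phi_\alpha$), the same net~$\seq{v_\alpha}_\alpha$ converges strongly to~$U^{-1}v'\in H$ with respect to~$(\Phi_\alpha,\KSD)$. Since~$\seq{u_\alpha}_\alpha$ converges weakly to~$u$ with respect to~$(\Phi_\alpha,\KSD)$, applying~\eqref{eq:KSWeakConvergence} with the pair~$v_\alpha,\,U^{-1}v'$ gives
\begin{equation*}
\lim_\alpha\scalar{u_\alpha}{v_\alpha}_{H_\alpha}=\scalar{u}{U^{-1}v'}_H=\scalar{Uu}{v'}_{H'}\comma
\end{equation*}
where the last equality uses that~$U$ is unitary. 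As~$\seq{v_\alpha}_\alpha$ was an arbitrary strongly convergent net with respect to~$(\Phi_\alpha',\KSD')$, this is precisely~\eqref{eq:KSWeakConvergence} for~$\seq{u_\alpha}_\alpha$ converging weakly to~$Uu\in H'$.

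\paragraph{Expected obstacle} There is essentially none: the only subtlety is the symmetric use of~$U$ and~$U^{-1}$ in the weak-convergence step, where one must invoke the already-established strong case for the unitary~$U^{-1}$ to translate a ``test net'' from the $H'$-side back to the $H$-side before invoking the hypothesis; once that observation is made, every line is a one-step computation using that~$U$ preserves norms and inner products. I would therefore keep the write-up to a few lines, matching the author's own assessment that the proof is straightforward.
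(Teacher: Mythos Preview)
Your proposal is correct and is precisely the definition-chasing the paper has in mind when it writes ``Straightforward''; there is nothing more to the paper's proof than what you have spelled out.
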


We are now ready to show the main result of this section.

\begin{lem}[Domains' convergence]\label{l:KSPoisson}
Let~$(\mcX,\cdc)$ be a \TLDS, and~$\QP$ be a probability measure on~$\ttonde{\dUpsilon,\A_\mrmv(\msE)}$ satisfying Assumptions~\ref{ass:AC} and~\ref{ass:CWP}.
Further let~$\KSD\eqdef \CylQP{\QP}{\Dz}$ and $\Psi_n\colon \CylQP{\QP}{\Dz}\rar K_n$
be defined by
\begin{align*}
\Psi_n \colon u\mapsto (\Lb^*u)\circ \tr^n\comma
\end{align*}
where~$\tr^n$ is defined as in~\eqref{eq:TruncationDef}.
Then,
$K_n \xrightarrow{n\rar\infty} K$
in the sense of Definition~\ref{d:KSSpaces}.

\begin{proof} 
Firstly, recall that~$\Lb^* \Cyl{\Dz}\subset \preW(\msE)$ by Lemma~\ref{l:InfDiff}. Furthermore, it is readily checked that~$\Psi_n\ttonde{\CylQP{\QP}{\Dz}}\subset \spclass[\llb]{\preW(\msE)} \subset K_n$ for every~$n \geq 1$.
Secondly, note that the inclusion $\KSD \subset K$ is dense, by definition of~$K$.
Since the functions~$\mbff$ in the definition of~$u=F\circ \mbff^\trid\in \Cyl{\Dz}$ are $\msE$-eventually vanishing, and by definition of~$\Lb$, we have that
\begin{align}\label{eq:l:KSPoisson:0}
\nlim u\circ \Lb\circ\tr^n=u\circ \Lb \qquad \text{pointwise on~$\mbfX^\asym{\infty}_\locfin(\msE)$}\fstop
\end{align}
Then
\begin{align}\label{eq:l:KSPoisson:1}
\nlim \norm{\Psi_n(u)}_{H_\infty}^2=\nlim \int_\Upsilon \tabs{u\circ \Lb\circ \tr^n \circ \lb }^2 \diff\QP = \norm{u}_H^2
\end{align}
by Dominated Convergence.

Now, let~$\rep\Psi_n \colon \rep u\mapsto (\Lb^*\rep u)\circ \tr^n$. In order to compute
\begin{align}\label{eq:l:CdCCylinderTruncation:4.25}
\nlim \int_{X^\tym{\infty}} \rep\cdc_\ell^\asym{n}\ttonde{\rep\Psi_n(\rep u)} \diff\QP^\asym{\infty} \eqdef \nlim \int_{X^\tym{\infty}} \rep\cdc_\ell^\asym{n}(\rep u\circ \Lb \circ \tr^n) \diff\QP^\asym{\infty}\comma
\end{align}
note that, by~\eqref{eq:l:CdCCylinderTruncation:5},
\begin{equation}\label{eq:l:CdCCylinderTruncation:4.5}
\nlim \rep\cdc_\ell^\asym{n}(\rep u\circ \Lb \circ \tr^n)= \rep\cdc_\ell^\asym{\infty}(\rep u\circ \Lb) \diff\QP^\asym{\infty} \as{\QP^\asym{\infty}} \fstop
\end{equation}
Furthermore, for every~$\gamma\in\dUpsilon$ and every~$\mbfx\in\lb(\gamma)$,
\begin{align*}
\rep\cdc_\ell^\asym{n}(\rep u\circ \Lb \circ \tr^n)(\mbfx) \eqdef& \sum_{p=1}^n \ttonde{\rep\cdc^p_\ell (\rep u\circ\Lb\circ\tr^n)_{\mbfx, {p}}}(x_p) \leq \norm{\nabla F}^2_\infty \sum_{i,j}^{k,k}\sum_{p=1}^n \ell\ttonde{\cdc(f_i,f_j)}(x_p)
\\
\leq&\ \norm{\nabla F}^2_\infty \sum_{i,j}^{k,k} \int_X \ell\tabs{\cdc(f_i,f_j)} \diff\Lb(\mbfx)
\\
\leq&\ C_u \cdot \gamma\ttonde{ \cup_{i\leq k} \supp[f_i]}\comma
\end{align*}
for some constant~$C_u>0$, where we used that~$\rep\cdc_\ell$ is~$\mcL^\infty(\mssm)$-valued.
By definition of~$\Cyl{\Dz}$ we have that~$E_u\eqdef \cup_{i\leq k} \supp[f_i]\in\msE$. 
We have thus shown that there exists a constant~$C_u>0$ and a set~$E_u\in\msE$, so that
\begin{align*}
\sup_n \rep\cdc_\ell^\asym{n}(\rep u\circ \Lb \circ \tr^n)(\mbfx) \leq C_u \cdot \gamma E \comma \qquad \mbfx\in \lb(\gamma) \fstop
\end{align*}
Setting~$G_u\colon\mbfx\mapsto C_u \cdot \Lb(\mbfx) E$ we have therefore that~$\norm{G_u}_{L^1(\QP^\asym{\infty})}=C_u \cdot \mssm_\QP E_u<\infty$ by Assumption~\ref{ass:Mmu} (part of~\ref{ass:CWP}).

By~\eqref{eq:l:CdCCylinderTruncation:4.25},~\eqref{eq:l:CdCCylinderTruncation:4.5} and Dominated Convergence with Dominating function~$G_u$, and~\eqref{eq:l:CdCCylinderTruncation:1c}, we thus have
\begin{align*}
\nlim \int_{X^\tym{\infty}} \rep\cdc_\ell^\asym{n}\ttonde{\rep\Psi_n(\rep u)} \diff\QP^\asym{\infty} =& \nlim \int_{X^\tym{\infty}}\rep\cdc_\ell^\asym{n}(\rep u\circ \Lb \circ \tr^n) \diff\QP^\asym{\infty}
\\
=& \int_{X^\tym{\infty}}\rep\cdc_\ell^\asym{\infty}(\Lb^* \rep u) \diff\QP^\asym{\infty}
\\
=& \int_{X^\tym{\infty}} \SF{\dUpsilon}{\QP}(u)\circ \Lb \,\diff\QP^\asym{\infty} \comma
\end{align*}
and the conclusion follows.
\end{proof}
\end{lem}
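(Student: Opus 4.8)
The goal is to prove that the net of abstract Hilbert completions $K_n = \coK{\spclass[\llb]{\preW(\msE)}}{n}$ converges, in the Kuwae--Shioya sense (Definition~\ref{d:KSSpaces}), to $K = \ttonde{\dom{\EE{\dUpsilon}{\QP}}, (\EE{\dUpsilon}{\QP})^{1/2}_1}$, with respect to the pair $(\Psi_n, \KSD)$ where $\KSD = \CylQP{\QP}{\Dz}$ and $\Psi_n(u) = (\Lb^* u)\circ\tr^n$. By the definition of Kuwae--Shioya convergence, I need to exhibit: (1) that $\KSD$ is a dense linear subspace of $K$; (2) that each $\Psi_n$ is a densely defined linear map $\KSD \to K_n$; and (3) that $\limsup_n \norm{\Psi_n u}_{K_n} = \norm{u}_K$ for every $u \in \KSD$. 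Points (1) and (2) are essentially bookkeeping: density of $\CylQP{\QP}{\Dz}$ in $K$ holds by definition of $K$ as the closure of the pre-Dirichlet form on this core; and $\Psi_n$ lands inside $\spclass[\llb]{\preW(\msE)} \subset K_n$ by Lemma~\ref{l:InfDiff} together with the straightforward check (already indicated in the statement) that $(\Lb^*u)\circ\tr^n$ is again of cylinder type hence pre-Sobolev.

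The substance is point (3), the energy identity. Recalling that $\norm{\emparg}_{K_n}^2 = \EE{\asym{n}}{\llb_\pfwd\QP}(\emparg) + \norm{\emparg}^2_{H_\infty}$ and similarly for $K$ with $H = L^2(\QP)$, and that $\Lb^*\colon H \to H_\infty$ is unitary, I would split the computation into the $L^2$-part and the energy-part. For the $L^2$-part: since the functions $\mbff$ defining $u = F\circ\mbff^\trid$ are $\msE$-eventually vanishing, $u\circ\Lb\circ\tr^n \to u\circ\Lb$ pointwise on $\mbfX^\asym{\infty}_\locfin(\msE)$, and by Dominated Convergence (the functions are uniformly bounded by $\norm{F}_\infty$) one gets $\nlim \norm{\Psi_n u}^2_{H_\infty} = \norm{u}^2_H$ — this is exactly display~\eqref{eq:l:KSPoisson:1}. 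For the energy-part: the key input is Lemma~\ref{l:CdCCylinderTruncation}, whose formula~\eqref{eq:l:CdCCylinderTruncation:5} gives the pointwise convergence $\rep\cdc_\ell^\asym{n}(\rep u\circ\Lb\circ\tr^n) \to \rep\cdc_\ell^\asym{\infty}(\rep u\circ\Lb)$ $\QP^\asym{\infty}$-a.e. To pass this to convergence of integrals I would produce a uniform-in-$n$ integrable dominating function: using that $\rep\cdc_\ell$ is $\mcL^\infty(\mssm)$-valued and that $F \in \Cb^\infty(\R^k)$ has bounded gradient, one bounds $\rep\cdc_\ell^\asym{n}(\rep u\circ\Lb\circ\tr^n)(\mbfx)$ by $C_u \cdot \gamma E_u$ where $E_u \eqdef \cup_{i\le k}\supp f_i \in \msE$ and $\gamma = \Lb(\mbfx)$; this dominating function $G_u\colon \mbfx \mapsto C_u\,\Lb(\mbfx)E_u$ is in $L^1(\QP^\asym{\infty})$ precisely because of Assumption~\ref{ass:Mmu}, which is part of~\ref{ass:CWP} — so $\norm{G_u}_{L^1(\QP^\asym{\infty})} = C_u\,\mssm_\QP E_u < \infty$. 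Dominated Convergence then yields $\nlim \EE{\asym{n}}{\llb_\pfwd\QP}(\Psi_n u) = \int_{X^\tym{\infty}} \rep\cdc^\asym{\infty}_\ell(\Lb^*\rep u)\diff\QP^\asym{\infty}$, and by~\eqref{eq:l:CdCCylinderTruncation:1c} and the unitarity of $\Lb^*$ this last integral equals $\int_\dUpsilon \SF{\dUpsilon}{\QP}(u)\diff\QP = \EE{\dUpsilon}{\QP}(u)$. Adding the two parts gives $\nlim \norm{\Psi_n u}^2_{K_n} = \EE{\dUpsilon}{\QP}(u) + \norm{u}^2_H = \norm{u}^2_K$, in particular the $\limsup$ required in Definition~\ref{d:KSSpaces}.

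The main obstacle is really the construction and integrability of the uniform dominating function $G_u$ for the energy terms — one has to be careful that the bound $\sup_n \rep\cdc_\ell^\asym{n}(\rep u\circ\Lb\circ\tr^n)(\mbfx) \le C_u\,\gamma E_u$ holds pointwise (not just a.e.) along the fibers $\mbfx \in \lb(\gamma)$, which is where the everywhere-defined pointwise calculus of Lemma~\ref{l:MaRoeckner}/Lemma~\ref{l:CdCCylinderTruncation} and the $\mcL^\infty(\mssm)$-valuedness of the lifted square field are used in an essential way; and that $E_u$ genuinely belongs to the localizing ring $\msE$, so that $\mssm_\QP E_u < \infty$ by~\ref{ass:Mmu}. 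Everything else — the pointwise limits, the $L^2$-Dominated Convergence, the density statements — is routine given Lemma~\ref{l:CdCCylinderTruncation} and Lemma~\ref{l:InfDiff}. I would also remark, as the statement itself does, that the limiting energy integral does not depend on the choice of labeling map $\lb$, since $\int_{X^\tym{\infty}}\rep\cdc^\asym{\infty}_\ell(\Lb^*\rep u)\diff\QP^\asym{\infty} = \EE{\dUpsilon}{\QP}(u)$ is manifestly $\lb$-independent; this is what makes the Kuwae--Shioya limit $K$ the same for every labeling map, which is the feature exploited later in the Rademacher argument.
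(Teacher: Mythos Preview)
Your proposal is correct and follows essentially the same approach as the paper: splitting the Kuwae--Shioya verification into the $L^2$-part (pointwise convergence of $u\circ\Lb\circ\tr^n$ plus Dominated Convergence with bound $\norm{F}_\infty$) and the energy-part (pointwise convergence from~\eqref{eq:l:CdCCylinderTruncation:5}, the dominating function $G_u(\mbfx)=C_u\cdot\Lb(\mbfx)E_u$ built from $\mcL^\infty$-valuedness of $\rep\cdc_\ell$ and $E_u\in\msE$, integrability via~\ref{ass:Mmu}, and the identification~\eqref{eq:l:CdCCylinderTruncation:1c}). The density and well-definedness checks, and the key lemmas invoked, match the paper exactly.
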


\begin{prop}\label{p:LipIsometry}
Let~$(\mcX,\cdc)$ be a \TLDS, and~$\QP$ be a probability measure on $\ttonde{\dUpsilon,\A_\mrmv(\msE)}$ satisfying Assumptions~\ref{ass:AC} and~\ref{ass:CWP}.
The linear operator
\begin{equation*}
\Lb^*\colon (K,\norm{\emparg}_K)\longrar \ttonde{\Lb^* K, \norm{\emparg}_{K_\infty}}
\end{equation*}
is unitary.
\begin{proof}
Firstly, recall that~$\Lb^* \CylQP{\QP}{\Dz}\subset K_\infty$ by Lemma~\ref{l:InfDiff}.
It suffices to show that
\begin{align*}
\Lb^*\colon \ttonde{\CylQP{\QP}{\Dz}, \norm{\emparg}_K}\longrar \ttonde{\Lb^*\CylQP{\QP}{\Dz},\norm{\emparg}_{K_\infty}}
\end{align*}
is isometric. Since~$\Lb^*\colon L^2(\QP)\rar L^2(\QP^\asym{\infty})$ is clearly unitary with inverse~$\lb^*$, it suffices to show the intertwining property
\begin{align}\label{eq:l:LipIsometry:1}
\EE{\dUpsilon}{\QP}(u)=\EE{\asym{\infty}}{\llb}(\Lb^* u) \comma\qquad u\in \CylQP{\QP}{\Dz}\comma
\end{align}
which in turn follows from~\eqref{eq:l:CdCCylinderTruncation:1c}.
\end{proof}
\end{prop}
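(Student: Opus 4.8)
The plan is to prove Proposition~\ref{p:LipIsometry} exactly along the skeleton already laid out in the paragraph preceding it, reducing the statement to the intertwining identity~\eqref{eq:l:LipIsometry:1} and then invoking the square-field approximation Lemma~\ref{l:CdCCylinderTruncation}. First I would reduce to the core: by Lemma~\ref{l:InfDiff} we know $\Lb^*\Cyl{\rep\Dz}\subset\preW(\msE)$, so $\Lb^*\CylQP{\QP}{\Dz}$ is a linear subspace of $K_\infty$; since $\CylQP{\QP}{\Dz}$ is $\norm{\emparg}_K$-dense in $K$ by definition of $K$ as the closure of $\ttonde{\EE{\dUpsilon}{\QP},\CylQP{\QP}{\Dz}}$, it suffices to check that $\Lb^*$ is an isometry on $\CylQP{\QP}{\Dz}$ with respect to the norms $\norm{\emparg}_K$ and $\norm{\emparg}_{K_\infty}$, and that its image is $\norm{\emparg}_{K_\infty}$-dense in $\Lb^*K$. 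The latter density is automatic: $\Lb^*$ being a bijective linear map intertwining the two seminorms sends a dense subspace to a dense subspace of the closure of its image. So the whole proof collapses to the intertwining property.

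For the intertwining property, I would argue as follows. The map $\Lb^*\colon L^2(\QP)\to L^2(\QP^\asym{\infty})$ is unitary by construction (it is the pushforward isomorphism induced by the labeling map $\lb$, with inverse $\lb^*$, since $\QP^\asym{\infty}\eqdef\lb_\pfwd\QP$ and $\Lb\circ\lb=\id$). Hence it preserves the $L^2$ part of both graph norms, and it remains to show the Dirichlet parts agree, i.e.
\begin{equation*}
\EE{\dUpsilon}{\QP}(u)=\EE{\asym{\infty}}{\llb_\pfwd\QP}(\Lb^*u)\comma\qquad u\in\CylQP{\QP}{\Dz}\fstop
\end{equation*}
By definition $\EE{\dUpsilon}{\QP}(u)=\int_\dUpsilon\SF{\dUpsilon}{\QP}(u)\diff\QP$ and $\EE{\asym{\infty}}{\llb_\pfwd\QP}(\Lb^*u)=\int_{X^\tym{\infty}}\rep\cdc^\asym{\infty}_\ell(\Lb^*\rep u)\diff\QP^\asym{\infty}$, where $\rep u\in\Cyl{\rep\Dz}$ is any continuous representative of $u$ (fixing a strong lifting $\ell$). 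Lemma~\ref{l:CdCCylinderTruncation}, equation~\eqref{eq:l:CdCCylinderTruncation:1c}, gives precisely $\Lb^*\ttonde{\SF{\dUpsilon}{\QP}(u)}=\class[\QP^\asym{\infty}]{\rep\cdc^\asym{\infty}_\ell(\Lb^*\rep u)}$ $\QP^\asym{\infty}$-a.e.; integrating this identity against $\QP^\asym{\infty}$ and using that $\Lb^*$ preserves integrals (change of variables under $\lb$, since $(\lb)_\pfwd\QP=\QP^\asym{\infty}$ and $\Lb\circ\lb=\id$, so $\int_\dUpsilon g\diff\QP=\int_{X^\tym{\infty}}(g\circ\Lb)\diff\QP^\asym{\infty}$ for $g=\SF{\dUpsilon}{\QP}(u)$) yields the claimed equality of Dirichlet energies. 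Polarization then upgrades this from the quadratic form to the bilinear form, so $\Lb^*$ is isometric for the full graph norms.

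I do not expect a genuine obstacle here: the content of the statement is entirely front-loaded into Lemma~\ref{l:CdCCylinderTruncation} (which itself rests on the well-posedness Proposition~\ref{p:NewWP}, the finite-product square-field construction, and the marginal absolute-continuity Assumption~\ref{ass:AC}), and into Lemma~\ref{l:InfDiff} (membership $\Lb^*\Cyl{\rep\Dz}\subset\preW(\msE)$, which guarantees the target lives inside $K_\infty$ and that the approximating integrals are finite via Assumption~\ref{ass:Mmu}). The only point requiring a little care is the bookkeeping of representatives versus classes: $\rep\cdc^\asym{\infty}_\ell(\Lb^*\rep u)$ is defined pointwise on $X^\tym{\infty}$ using the lifting $\ell$, and one must note that its $\QP^\asym{\infty}$-class is independent of $\ell$ — this is exactly what~\eqref{eq:l:CdCCylinderTruncation:1c} asserts, since the left-hand side $\Lb^*\ttonde{\SF{\dUpsilon}{\QP}(u)}$ manifestly does not see $\ell$ — so the form $\EE{\asym{\infty}}{\llb_\pfwd\QP}$ appearing in the conclusion is well-defined on $\spclass[\llb]{\preW(\msE)}$ in the sense of Lemma~\ref{l:WellPosedness}, and the identity~\eqref{eq:l:LipIsometry:1} is an identity of $\QP^\asym{\infty}$-classes, not of representatives. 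With that understood the proof is the three lines already sketched in the paper.
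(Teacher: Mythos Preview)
Your proposal is correct and follows essentially the same approach as the paper's proof: reduce to the core $\CylQP{\QP}{\Dz}$ via density, split the graph norm into its $L^2$ and Dirichlet parts, handle the $L^2$ part by the tautological unitarity of $\Lb^*$, and obtain the Dirichlet part by integrating the pointwise identity~\eqref{eq:l:CdCCylinderTruncation:1c} from Lemma~\ref{l:CdCCylinderTruncation}. Your additional remarks on the change-of-variables formula and the independence from the lifting $\ell$ are correct elaborations of steps the paper leaves implicit.
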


\begin{cor}\label{c:Isometry}
Let~$(\mcX,\cdc)$ be a \TLDS, and~$\QP$ be a probability measure on~$\ttonde{\dUpsilon,\A_\mrmv(\msE)}$ satisfying Assumptions~\ref{ass:AC} and~\ref{ass:CWP}.
Equation~\eqref{eq:l:CdCCylinderTruncation:1c}, resp.~\eqref{eq:l:LipIsometry:1}, holds for all~$u$ in~$\dom{\SF{\dUpsilon}{\QP}}$, resp.~$\dom{\EE{\dUpsilon}{\QP}}$.
\end{cor}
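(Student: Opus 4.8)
The plan is to extend equations~\eqref{eq:l:CdCCylinderTruncation:1c} and~\eqref{eq:l:LipIsometry:1} from the core $\CylQP{\QP}{\Dz}$ to the full domains $\dom{\SF{\dUpsilon}{\QP}}$ and $\dom{\EE{\dUpsilon}{\QP}}$ by a density-plus-closedness argument, exploiting that $\Lb^*$ is a unitary isometry onto its image (Proposition~\ref{p:LipIsometry}) and that Kuwae--Shioya convergence is stable under passage to the limit.

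First I would treat~\eqref{eq:l:LipIsometry:1}. Let $u\in\dom{\EE{\dUpsilon}{\QP}}$ and pick a sequence $\seq{u_m}_m\subset\CylQP{\QP}{\Dz}$ that is $(\EE{\dUpsilon}{\QP})^{1/2}_1$-fundamental with $u_m\to u$ in $K$, possible since $\CylQP{\QP}{\Dz}$ is a core. By~\eqref{eq:l:LipIsometry:1} applied to each $u_m$, the sequence $\seq{\Lb^*u_m}_m$ is $(\EE{\asym{\infty}}{\llb})^{1/2}_1$-fundamental in $K_\infty$, hence converges to some element $w\in K_\infty$; since $\Lb^*\colon L^2(\QP)\to L^2(\QP^\asym{\infty})$ is unitary, $\Lb^*u_m\to\Lb^*u$ in $L^2(\QP^\asym{\infty})$, and by the compatibility of the $K_\infty$-norm with the $L^2$-norm (injectivity of the relevant completion embedding restricted to $\Lb^*K$, as in Proposition~\ref{p:LipIsometry}) we get $w=\Lb^*u\in\Lb^*K$. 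Passing to the limit in the identity $\EE{\dUpsilon}{\QP}(u_m)=\EE{\asym{\infty}}{\llb}(\Lb^*u_m)$ yields~\eqref{eq:l:LipIsometry:1} for $u$; polarization gives the bilinear version. Equivalently, one may simply invoke that $\Lb^*$ is unitary from $K$ onto $\Lb^*K$ (Proposition~\ref{p:LipIsometry}), which literally \emph{is} the statement $\EE{\dUpsilon}{\QP}(u)=\EE{\asym{\infty}}{\llb}(\Lb^*u)$ on all of $K$ once one unwinds the definition of the $K$- and $K_\infty$-norms.

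Next I would treat~\eqref{eq:l:CdCCylinderTruncation:1c}, the pointwise (a.e.) identification of square fields $\Lb^*\ttonde{\SF{\dUpsilon}{\QP}(u)}=\class[\QP^\asym{\infty}]{\rep\cdc_\ell^\asym{\infty}(\Lb^*\rep u)}$. Here I would argue by $L^1(\QP^\asym{\infty})$-approximation: for $u\in\dom{\SF{\dUpsilon}{\QP}}=\dom{\EE{\dUpsilon}{\QP}}\cap L^\infty(\QP)$, choose $u_m\in\CylQP{\QP}{\Dz}$ with $u_m\to u$ in $\dom{\EE{\dUpsilon}{\QP}}$ and, by the Markov property of the form, uniformly bounded in $L^\infty(\QP)$ (truncating by a smooth cutoff). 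Then $\SF{\dUpsilon}{\QP}(u_m)\to\SF{\dUpsilon}{\QP}(u)$ in $L^1(\QP)$ by the standard continuity of carré du champ operators along $\EE{\dUpsilon}{\QP}$-convergent bounded sequences (reverse triangle inequality as in~\eqref{eq:l:AbstractCompletion:0}), whence $\Lb^*(\SF{\dUpsilon}{\QP}(u_m))\to\Lb^*(\SF{\dUpsilon}{\QP}(u))$ in $L^1(\QP^\asym{\infty})$. On the other hand, by~\eqref{eq:l:CdCCylinderTruncation:1c} for cylinder functions, $\Lb^*(\SF{\dUpsilon}{\QP}(u_m))=\rep\cdc_\ell^\asym{\infty}(\Lb^*u_m)$, and since $\Lb^*u_m\to\Lb^*u$ in $K_\infty$ with uniform $L^\infty$ bound, the same reverse-triangle estimate applied to $\rep\cdc_\ell^\asym{\infty}$ (which is the closure-compatible square field on $K_\infty$, by Lemma~\ref{l:WellPosedness} and the construction of $K_N$) shows $\rep\cdc_\ell^\asym{\infty}(\Lb^*u_m)\to\rep\cdc_\ell^\asym{\infty}(\Lb^*u)$ in $L^1(\QP^\asym{\infty})$. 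Matching the two limits and passing to a further subsequence converging $\QP^\asym{\infty}$-a.e. gives~\eqref{eq:l:CdCCylinderTruncation:1c} for $u$.

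The main obstacle I anticipate is the bookkeeping around the square field on the abstract completion $K_\infty$: unlike $\SF{\dUpsilon}{\QP}$, the object $\rep\cdc_\ell^\asym{\infty}$ is only defined a priori on the dense subspace $\spclass[\llb]{\preW(\msE)}$ and not obviously closable on $H_\infty=L^2(\QP^\asym{\infty})$, so I must be careful to interpret $\rep\cdc_\ell^\asym{\infty}(\Lb^*u)$ for $u\in\dom{\EE{\dUpsilon}{\QP}}$ as the image under the continuous extension $\coK{\spclass[\llb]{\preW(\msE)}}{\infty}^{\times2}\to L^1(\QP^\asym{\infty})$ furnished by (the product-space analogue of) Lemma~\ref{l:AbstractCompletion} — i.e. to work inside $K_\infty$ throughout rather than in $H_\infty$ — and to check that this extension is compatible with $\Lb^*$ and with the finite-dimensional truncations used in Lemma~\ref{l:CdCCylinderTruncation}. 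Once the correct interpretation of both sides is fixed, the convergence arguments are routine applications of the reverse triangle inequality and Dominated Convergence, exactly as in the proof of Lemma~\ref{l:AbstractCompletion} and Lemma~\ref{l:CdCCylinderTruncation}.
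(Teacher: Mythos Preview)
The paper states this corollary without proof, treating it as an immediate consequence of Proposition~\ref{p:LipIsometry}; your density-plus-closedness argument is precisely the natural way to make that implication explicit, and it is correct. Your observation that~\eqref{eq:l:LipIsometry:1} on the full domain is literally the unitarity of~$\Lb^*\colon K\to\Lb^*K$ is exactly right, and your handling of~\eqref{eq:l:CdCCylinderTruncation:1c} via $L^1$-approximation of square fields (reverse triangle inequality) together with the correct interpretation of~$\rep\cdc_\ell^\asym{\infty}(\Lb^*u)$ through the continuous extension on~$K_\infty$ (rather than~$H_\infty$) addresses the only genuine subtlety, which the paper leaves implicit.
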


\begin{cor}[Finite-dimensional approximations]\label{c:KSUpsilon}
Let~$(\mcX,\cdc)$ be a \TLDS, and~$\QP$ be a probability measure on~$\ttonde{\dUpsilon,\A_\mrmv(\msE)}$ satisfying Assumptions~\ref{ass:AC} and~\ref{ass:CWP}.
Further let~$\KSD$ and~$\Psi_n$ be defined as in Lemma~\ref{l:KSPoisson}. Then,
\begin{equation*}
\text{$K_n$ converges to~$\Lb^*K$ in the sense of Definition~\ref{d:KSSpaces}}
\end{equation*}
w.r.t.~the pair~$\ttonde{\Psi_n\circ (\Lb^*)^{-1}, \Lb^*(\KSD)}$.
\begin{proof}
It suffices to combine Lemma~\ref{l:KSPoisson} with Proposition~\ref{p:LipIsometry} and Lemma~\ref{l:KSUnitary}.
\end{proof}
\end{cor}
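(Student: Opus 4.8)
The plan is to obtain this corollary as a purely formal consequence of three ingredients already in place: the domain-convergence Lemma~\ref{l:KSPoisson}, the unitarity of $\Lb^*$ from Proposition~\ref{p:LipIsometry}, and the unitary-invariance principle for Kuwae--Shioya convergence, Lemma~\ref{l:KSUnitary}. No new estimate is needed here; the argument amounts to transporting the convergence datum through the isometry $\Lb^*$ and keeping careful track of the maps and subspaces involved.

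First I would recall that Lemma~\ref{l:KSPoisson} asserts precisely that $K_n$ converges to $K$ in the sense of Definition~\ref{d:KSSpaces} with respect to the pair $(\Psi_n,\KSD)$, where $\KSD\eqdef\CylQP{\QP}{\Dz}$ is dense in $K$ and $\Psi_n\colon u\mapsto(\Lb^*u)\circ\tr^n$ maps $\KSD$ into the abstract completion $K_n$; here one uses Lemma~\ref{l:InfDiff} to see that $\Lb^*\CylQP{\QP}{\Dz}$ lands in the relevant completion, so that the truncations $\Psi_n(u)$ are genuinely elements of $K_n$. (The content of Lemma~\ref{l:KSPoisson} is where Assumptions~\ref{ass:AC} and~\ref{ass:CWP} enter, through the square-field approximation Lemma~\ref{l:CdCCylinderTruncation} and a Dominated Convergence argument with the $L^1(\QP^\asym{\infty})$ dominating function furnished by Assumption~\ref{ass:Mmu}; for the present corollary this is imported wholesale.)

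Next I would invoke Proposition~\ref{p:LipIsometry}, which identifies $\Lb^*$ as a unitary operator from $(K,\norm{\emparg}_K)$ onto the Hilbert subspace $(\Lb^*K,\norm{\emparg}_{K_\infty})$ of the abstract completion $K_\infty$; its proof reduces, via the intertwining identity~\eqref{eq:l:LipIsometry:1} that follows from~\eqref{eq:l:CdCCylinderTruncation:1c}, to the fact that $\Lb^*\colon L^2(\QP)\to L^2(\QP^\asym{\infty})$ is unitary with inverse $\lb^*$. Setting $U\eqdef\Lb^*$, $H\eqdef K$, and $H'\eqdef\Lb^*K$, all hypotheses of Lemma~\ref{l:KSUnitary} are then met: a sequence $\seq{K_n}_n$ of Hilbert spaces converging to $H$ with respect to the pair $(\Psi_n,\KSD)$, together with a unitary $U\colon H\to H'$.

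Applying the first assertion of Lemma~\ref{l:KSUnitary} then gives that $\seq{K_n}_n$ converges to $H'=\Lb^*K$ with respect to the pair $\ttonde{\Psi_n\circ U^{-1},\,U(\KSD)}=\ttonde{\Psi_n\circ(\Lb^*)^{-1},\,\Lb^*(\KSD)}$, which is exactly the claim. The only point I expect to require a word of justification is that $\Lb^*(\KSD)$ is a dense linear subspace of $\Lb^*K$ and that $\Psi_n\circ(\Lb^*)^{-1}$ is well-defined and densely defined into $K_n$: the former holds because a unitary carries dense subspaces to dense subspaces, and the latter because on $\Lb^*(\KSD)=\Lb^*\CylQP{\QP}{\Dz}$ the composition $\Psi_n\circ(\Lb^*)^{-1}$ is just $V\mapsto V\circ\tr^n$, already known to take values in $K_n$ by Lemma~\ref{l:InfDiff}. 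Since every nontrivial ingredient is imported, there is no genuine obstacle internal to the corollary; the substance lies entirely in Lemma~\ref{l:KSPoisson} and Proposition~\ref{p:LipIsometry}.
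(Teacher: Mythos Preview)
Your proposal is correct and follows exactly the paper's approach: the paper's proof consists of the single sentence ``It suffices to combine Lemma~\ref{l:KSPoisson} with Proposition~\ref{p:LipIsometry} and Lemma~\ref{l:KSUnitary},'' and you have correctly unpacked this by applying Lemma~\ref{l:KSUnitary} with $U=\Lb^*$, $H=K$, $H'=\Lb^*K$ to transport the convergence $K_n\to K$ from Lemma~\ref{l:KSPoisson} through the unitary of Proposition~\ref{p:LipIsometry}. Your additional remarks on why $\Psi_n\circ(\Lb^*)^{-1}$ is well-defined on $\Lb^*(\KSD)$ are accurate and helpful, though not strictly needed given that Lemma~\ref{l:KSUnitary} handles this formally.
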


\begin{figure}[htb!]
\centerfloat
{
\begin{tikzcd}[column sep=tiny]
& K_\infty^{\lb_1} \arrow[ddl, hook', "?"' near start, dotted] & L^2(\QP) \arrow[ddrr, leftrightarrow, "\lb_2^*" near start] & K_\infty^{\lb_2} \arrow[ddr, hook, "?" near start, dotted]
\\
&& K \arrow[draw=none]{u}[sloped, auto=false]{\subset} \arrow[dddl, dashed, bend right=15, leftarrow] \arrow[dddr, dashed, bend left=15, leftarrow]
\\
L^2\ttonde{\QP^{\asym{\infty}, \lb_1}} \arrow[uurr, leftrightarrow, "\lb_1^*" near end] \arrow[draw=none]{r}[sloped,auto=false]{\supset} & \ttonde{\preW, (\EE{\asym{\infty}}{\lb_1})_1^{1/2}} \arrow[uu, hook, crossing over, "\co"] & & \ttonde{\preW, (\EE{\asym{\infty}}{\lb_2})_1^{1/2}} \arrow[uu, hook', crossing over, "\co"'] & L^2\ttonde{\QP^{\asym{\infty}, \lb_2}} \arrow[draw=none]{l}[sloped,auto=false]{\subset}
\\
&& \ttonde{\Cyl{\Dz},(\EE{\dUpsilon}{\QP})_1^{1/2}} \arrow[uu, hook, "\co"] \arrow[ul, "\Lb^*"', hook, crossing over] \arrow[ur, "\Lb^*", hook', crossing over] \arrow[dl, "\Psi_n"'] \arrow[dr, "\Psi_n"] \arrow[ddl, "\Psi_{n-1}"] \arrow[ddr, "\Psi_{n-1}"']
\\
& K_n^{\lb_1} && K_n^{\lb_2}
\\
& K_{n-1}^{\lb_1} \arrow[u, no head, dashed] && K_{n-1}^{\lb_2} \arrow[u, no head, dashed] 
\end{tikzcd}
}
\caption{
A description of the constructions in this section for two different labeling maps~$\lb_1$,~$\lb_2$.
For simplicity of notation, in the diagram we wright~$\QP^{\asym{\infty},\lb_i}$ in place of~$(\lb_i)_\pfwd \QP$, and~$\EE{\asym{\infty}}{\lb_i}$ in place of~$\EE{\asym{\infty}}{(\lb_i)_\pfwd\QP}$ for $i=1,2$.
The dashed arrows represent Kuwae--Shioya convergence.
The dotted arrows represent the lift to the completion of the identical embedding~$\ttonde{\preW, (\EE{\asym{\infty}}{\lb_i})_1^{1/2}} \subset L^2\ttonde{\QP^{\asym{\infty}, \lb_i}}$.
Question marks refer to the potential lack of injectivity of the corresponding arrows.
Arrows labeled by~`$\co$' denote completion embeddings.
}
\label{fig:KS}
\end{figure}

\section{Geometric structure: extended metric local diffusion spaces}\label{s:Geometry}
In this section we add further structure to the base spaces, by considering a distance function~$\mssd$, suitably compatible with the topology~$\T$ and with the localizing ring~$\msE$ of a topological local structure~$\mcX$.
For the sake of this introductory discussion, let us assume for simplicity that~$\T$ is the topology induced by a separable and complete distance~$\mssd\colon X^\tym{2}\rar[0,\infty)$, and that~$\msE=\Ed$ is the localizing ring of all bounded Borel sets with finite $\mssm$-measure.

For~$i=1,2$ let~$\pr^i\colon X^{\times 2}\rar X$ denote the projection to the~$i^\text{th}$ coordinate.
For~$\gamma,\eta\in \dUpsilon$, let~$\Cpl(\gamma,\eta)\subset \Meas(X^{\tym{2}},\A^{\hotimes 2})$ be the set of all couplings of~$\gamma$ and~$\eta$, viz.
\begin{align*}
\Cpl(\gamma,\eta)\eqdef \set{\cpl\in \Meas(X^{\tym{2}},\A^{\hotimes 2}) \colon \pr^1_\pfwd \cpl =\gamma \comma \pr^2_\pfwd \cpl=\eta} \fstop
\end{align*}
Similarly to the case of Wasserstein spaces, any distance function~$\mssd$ on~$X$ induces a distance on configuration spaces, in the sense of the following definition.

\begin{defs}[$L^2$-transportation distance]
The \emph{$L^2$-transportation} (\emph{extended}) \emph{distance} on~$\dUpsilon$ is defined as
\begin{align}\label{eq:d:W2Upsilon}
\mssd_\dUpsilon(\gamma,\eta)\eqdef \inf_{\cpl\in\Cpl(\gamma,\eta)} \tonde{\int_{X^{\times 2}} \mssd^2(x,y) \diff\cpl(x,y)}^{1/2}\comma \qquad \inf{\emp}=+\infty \fstop
\end{align}
\end{defs}

As shown by several works about configuration spaces over Riemannian manifolds, e.g.~\cite{RoeSch99,ErbHue15}, the $L^2$-transportation distance on~$\dUpsilon$ induced by the Riemannian distance on the underlying manifold is a natural object.
In particular, the same metric properties of the underlying manifold, such as completeness, hold for the corresponding multiple configuration space as well.

In fact~$\mssd_\dUpsilon$ is an extended distance, attaining the value~$+\infty$ on a set of positive~$\QP^\otym{2}$-measure in~$\dUpsilon^\tym{2}$, making~$\ttonde{\dUpsilon,\mssd_\dUpsilon, \QP}$ into an extended metric measure space.

\subsection{Base spaces}
We collect here some definitions detailing the interplay between the metric and diffusion-space structures on base spaces.
We refer the reader to~\cite{LzDSSuz20} for a more thorough account of this interplay, and for the proof of some of the results listed below.

For the sake of simplicity, we state most of our definitions for \emph{extended} metric diffusion spaces (see Definition~\ref{d:EMLDS} below) in such a way to include both the base spaces, and the configuration spaces endowed with the (extended) $L^2$-transportation distance.
In most of the assertions, we shall however restrict our attention to the case when the base spaces are endowed with a distance, rather than with an extended distance.

\subsubsection{Extended metric local structures}
Let~$X$ be any non-empty set.

\paragraph{Distances}
A function~$\mssd\colon X^\tym{2}\rar [0,\infty]$ is an \emph{extended pseudo-distance} if it is symmetric and satisfying the triangle inequality. Any such~$\mssd$ is called a \emph{pseudo-distance} if it is finite, i.e.~$\mssd\colon X^{\tym{2}}\rar [0,\infty)$; an \emph{extended distance} if it does not vanish outside the diagonal in~$X^{\tym{2}}$, i.e.~$\mssd(x,y)=0$ iff~$x=y$; a \emph{distance} if it is both finite and not vanishing outside the diagonal.

Let~$x_0\in X$ and~$r\in [0,\infty]$. We write~$\mssd_{x_0}\eqdef \mssd(\emparg, x_0)\colon X\rar [0,\infty]$ and
\begin{align*}
B^\mssd_r(x_0)\eqdef \set{x\in X: \mssd(x,x_0)<r}\fstop
\end{align*}
Note that, if~$\mssd$ in an extended pseudo-metric, then, in general,~$B^\mssd_\infty(x_0)\subsetneq X$. 
We say that an extended metric space is, \emph{complete}, resp.\ \emph{length}, \emph{geodesic}, if each~$B^\mssd_\infty(x)$ is complete, resp.\ length, geodesic, for each~$x\in X$.

We denote by~$\Bdd{\mssd}$ the family of all $\mssd$-bounded sets.
Finally, for every~$A\subset X$, set
\begin{align}\label{eq:Point-to-SetDistance}
\mssd(\emparg, A)\eqdef \inf_{x\in A} \mssd_x \colon X\longrar [0,\infty] \fstop
\end{align}

\paragraph{Curves}
Let~$I\eqdef [0,1]$ be the standard unit interval, and let~$p\in [1,\infty]$.
A curve~$x_\emparg\colon I\to (X,\mssd)$ is $p$-\emph{absolutely continuous} if there exists~$g\in L^p(I)$ such that
\begin{align}\label{eq:MetricDerivative}
\mssd(x_s,x_t)\leq \int_s^t g(r)\diff r\comma\qquad s,t\in I \comma s\leq t\fstop
\end{align}
We denote by~$\AC^p(I;X)$ the family of all $p$-absolutely continuous curves.
If~$\seq{x_t}_{t\in I}$ is $p$-absolutely continuous, we denote by~$\abs{\dot x}\colon I\to [0,\infty]$ its \emph{metric derivative}
\begin{equation*}
\abs{\dot x}_t\eqdef \lim_{h \to 0}\frac{\mssd(x_{t+h}, x_t)}{\abs{h}}\comma \qquad t\in (0,1)
\end{equation*}
well-defined for a.e.~$t\in I$ and the a.e.-minimal~$g$ in~\eqref{eq:MetricDerivative}, e.g.~\cite[Thm.~1.1.2]{AmbGigSav08}.

\paragraph{Lipschitz functions} A function~$f\colon X\rar \R$ is $\mssd$-Lipschitz if there exists a constant~$L>0$ so that
\begin{align}\label{eq:Lipschitz}
\abs{f(x)-f(y)}\leq L\, \mssd(x,y) \comma \qquad x,y\in X \fstop
\end{align}
The infimal constant~$L$ such that~\eqref{eq:Lipschitz} holds is the (global) \emph{Lipschitz constant of~$f$}, denoted by~$\Li[d]{f}$. We write~$\Lip(\mssd)$, resp.~$\bLip(\mssd)$ for the family of all finite, resp.\ bounded, $\mssd$-Lipschitz functions on~$(X,\mssd)$.

In the following, we shall be concerned with triples~$(X,\T,\mssd)$, where~$(X,\T)$ is a (completely regular Hausdorff) topological space, and~$\mssd$ is a (possibly extended) metric on~$X$ not necessarily compatible with~$\T$.
We caution that, on any such space, the standard intuition of $\mssd$-Lipschitz function as an amenable class of functions fails.
In particular, let~$f\colon (X,\T,\mssd)\rar \Rex$ be $\mssd$-Lipschitz with~$\Dom(f)\neq \emp$.
In general, $f$ is \emph{neither} finite, nor $\T$-continuous, nor $\Bo{\T}$-measurable.
For a given $\sigma$-algebra~$\A$, resp.\ a topology~$\T$, on~$X$, this motivates to set
\begin{align*}
\bLip(\mssd,\A)\eqdef \bLip(\mssd)\cap \Sb(X)\qquad \textrm{and}\qquad \bLip(\mssd,\T)\eqdef \bLip(\mssd)\cap \Cb(\T)\fstop
\end{align*}
Let the analogous definitions for unbounded Lipschitz functions be given. Finally, set
\begin{align*}
\Lipu(\mssd)\eqdef \set{f\in \Lip(\mssd): \Li[\mssd]{f}\leq 1} \comma
\end{align*}
and let the analogous definitions for continuous, measurable, or bounded Lipschitz functions be given.

The next lemma is an easy adaptation of McShane~\cite{McS34} to extended metric spaces.

\begin{lem}[Constrained McShane extensions,~{\cite[Lem.~2.1]{LzDSSuz20}}]\label{l:McShane}
Let~$(X,\mssd)$ be an extended metric space.
Fix~$A\subset X$,~$A\neq \emp$, and let~$\rep f\colon A\rar \R$ be a function in~$\bLip(A,\mssd)$. Further set
\begin{equation}\label{eq:McShane}
\begin{aligned}
\overline{f}\colon x&\longmapsto \sup_A \rep f\wedge \inf_{a\in A} \ttonde{\rep f(a)+\Li[\mssd]{\rep f}\,\mssd(x,a)} \comma
\\
\underline{f}\colon x&\longmapsto \inf_A \rep f \vee \sup_{a\in A} \ttonde{\rep f(a)-\Li[\mssd]{\rep f}\,\mssd(x,a)} \fstop
\end{aligned}
\end{equation}

Then,
\begin{enumerate}[$(i)$]
\item\label{i:l:McShane:1} $\underline{f}=\rep f=\overline{f}$ on~$A$ and~$\inf_A \rep f\leq \underline{f}\leq \overline{f}\leq \sup_A \rep f$ on~$X$;

\item\label{i:l:McShane:2} $\underline{f}$, $\overline{f}\in \bLip[\mssd]$ with~$\Li[\mssd]{\underline{f}}=\Li[\mssd]{\overline{f}}=\Li[\mssd]{\rep f}$;

\item\label{i:l:McShane:3} $\underline{f}$, resp.~$\overline{f}$, is the minimal, resp.\ maximal, function satisfying~\iref{i:l:McShane:1}-\iref{i:l:McShane:2}, that is, for every~$\rep g\in \bLip(\mssd)$ with~$\inf_A \rep f\leq \rep g\leq \sup_A \rep f$, $\rep g\restr_A=\rep f$ on~$A$ and~$\Li[\mssd]{\rep g}= \Li[\mssd]{\rep f}$, it holds that~$\underline{f}\leq \rep g \leq \overline{f}$.
\end{enumerate}
\end{lem}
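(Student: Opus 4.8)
The statement to prove is the Constrained McShane extension lemma (Lemma~\ref{l:McShane}), which the paper attributes to \cite[Lem.~2.1]{LzDSSuz20} and describes as an easy adaptation of McShane~\cite{McS34} to extended metric spaces. Let me plan a proof.

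The plan is to work with $L\eqdef \Li[\mssd]{\rep f}$ throughout and treat $\overline f$ and $\underline f$ symmetrically (replacing $\rep f$ by $-\rep f$ swaps the two), so I would only write out the argument for $\overline f$ in detail and invoke symmetry for $\underline f$. First I would verify finiteness: since $A\neq\emp$, fix $a_0\in A$; then for every $x$, the expression $\inf_{a\in A}(\rep f(a)+L\,\mssd(x,a))$ lies in $[-\infty, \rep f(a_0)+L\,\mssd(x,a_0)]$, and after taking $\sup_A \rep f\wedge(\emparg)$ the value lies in $[\inf_A\rep f,\sup_A\rep f]$ — here one uses that $\rep f\in\bLip(A,\mssd)$ is bounded, so $\sup_A\rep f$ and $\inf_A\rep f$ are finite; the lower bound $\inf_A\rep f\le\overline f$ follows from $\rep f(a)+L\,\mssd(x,a)\ge \rep f(a)\ge\inf_A\rep f$ for each $a$. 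This establishes \iref{i:l:McShane:1} except for the equality on $A$.

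For the equality on $A$: if $x\in A$, then taking $a=x$ in the infimum gives $\inf_{a\in A}(\rep f(a)+L\,\mssd(x,a))\le \rep f(x)$, while the Lipschitz bound $\rep f(a)+L\,\mssd(x,a)\ge \rep f(a)+|\rep f(x)-\rep f(a)|\ge \rep f(x)$ gives the reverse inequality, so the infimum equals $\rep f(x)\le\sup_A\rep f$, whence $\overline f(x)=\rep f(x)$; symmetrically $\underline f(x)=\rep f(x)$. For \iref{i:l:McShane:2}, the Lipschitz estimate: the truncation $t\mapsto \sup_A\rep f\wedge t$ is $1$-Lipschitz, and $x\mapsto \rep f(a)+L\,\mssd(x,a)$ is $L$-Lipschitz in $x$ for each fixed $a$ (triangle inequality for $\mssd$, valid in the extended setting with the convention $|{+\infty}-{+\infty}|$ never arising since we compare $\mssd(x,a)$ and $\mssd(y,a)$ and use $|\mssd(x,a)-\mssd(y,a)|\le\mssd(x,y)$ whenever $\mssd(x,y)<\infty$; when $\mssd(x,y)=\infty$ there is nothing to prove). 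An infimum of $L$-Lipschitz functions is $L$-Lipschitz, so $\overline f\in\bLip(\mssd)$ with $\Li[\mssd]{\overline f}\le L$; combined with $\overline f\restr_A=\rep f$ and $A\neq\emp$ this forces $\Li[\mssd]{\overline f}=L$ (the restriction of an $L'$-Lipschitz function to $A$ cannot have Lipschitz constant $L>L'$).

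For \iref{i:l:McShane:3}, the extremality: let $\rep g\in\bLip(\mssd)$ satisfy $\inf_A\rep f\le\rep g\le\sup_A\rep f$, $\rep g\restr_A=\rep f$, and $\Li[\mssd]{\rep g}=L$. For any $x$ and any $a\in A$, $\rep g(x)\le \rep g(a)+L\,\mssd(x,a)=\rep f(a)+L\,\mssd(x,a)$; taking the infimum over $a$ and then the minimum with $\sup_A\rep f\ge\rep g(x)$ gives $\rep g(x)\le\overline f(x)$. The bound $\underline f\le\rep g$ is symmetric, and $\underline f\le\overline f$ on $X$ follows either from applying both bounds to $\rep g=\overline f$ (which is an admissible competitor) or directly since $\sup_{a}(\rep f(a)-L\,\mssd(x,a))\le\inf_b(\rep f(b)+L\,\mssd(x,b))$ by the Lipschitz bound $\rep f(a)-\rep f(b)\le L\,\mssd(a,b)\le L(\mssd(x,a)+\mssd(x,b))$. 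I do not anticipate a serious obstacle here; the only point requiring mild care is the bookkeeping of the extended-distance convention (terms equal to $+\infty$) — one checks that every manipulation either avoids $\infty-\infty$ or holds trivially when a relevant distance is infinite, so the classical McShane argument goes through verbatim on each ``finiteness component'' $B^\mssd_\infty(x)$ and then globally.
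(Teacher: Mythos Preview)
Your proof is correct and follows the classical McShane argument exactly as the paper intends: the paper does not give its own proof but cites~\cite[Lem.~2.1]{LzDSSuz20} and calls it ``an easy adaptation of McShane~\cite{McS34} to extended metric spaces,'' which is precisely what you have written out. The only places requiring care in the extended setting---the possibility that~$\mssd(x,a)=+\infty$ and the resulting need for the truncation by~$\sup_A\rep f$---are handled correctly in your argument.
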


\paragraph{Slopes}
Let~$(X,\mssd)$ be an extended metric space.
The \emph{slope} of $u\colon X\rar \overline{\R}$ is defined as
\begin{align}\label{eq:SlopeDef}
\slo[\mssd]{f}(x)\eqdef \limsup_{y\to x} \frac{\abs{f(x)-f(y)}}{\mssd(x,y)}
\end{align}
where, conventionally,~$\slo[\mssd]{f}(x)=0$ whenever~$x$ is isolated.
The specification of the distance is omitted whenever apparent from context.

In the following, we shall make use of the following facts on slopes, occasionally without explicit mention.

\begin{rem}\label{r:Slopes}
\begin{enumerate*}[$(a)$]
\item It is shown in~\cite[Lem.~2.6]{AmbGigSav14} that~$\slo{f}$ is $\Bo{\T}^*$-measurable whenever~$f$ is $\Bo{\T}$-measur\-able.

\item\label{i:r:Slopes:1} As a consequence of~\cite[Eqn.~(2.9)]{AmbGigSav14}, the slope is $\mssd_\T$-local in the following sense: whenever~$f$ is constant on a $\mssd$-open set~$A$, then~$\slo{f}\equiv 0$ on~$A$.
Since $\mssd$ is $\T$-l.s.c.\ by assumption, $\T_\mssd$ is finer than~$\T$, hence the slope is $\T$-local as well.
\end{enumerate*}
\end{rem}

\paragraph{Extended metric local structures}
Let~$(X,\T)$ be a Hausdorff topological space. A family~$\UP$ of pseudo-distances~$\mssd_i\colon X^\tym{2}\to [0,\infty]$ is a \emph{uniformity} (\emph{of pseudo-distances}) if:
\begin{enumerate*}[$(a)$]
\item it is directed, i.e., $\mssd_1\vee \mssd_2\in\UP$ for every~$\mssd_1,\mssd_2\in \UP$;
and
\item it is order-closed, i.e., $\mssd_2\in \UP$ and~$\mssd_1\leq \mssd_2$ implies~$\mssd_1\in \UP$ for every pseudo-distance~$\mssd_1$ on~$X$.
\end{enumerate*}
A uniformity is: \emph{bounded} if every~$\mssd\in \UP$ is bounded; \emph{Hausdorff} if it separates points.

The next definition is taken from~\cite[Dfn.~2.2]{LzDSSuz20}.
It is a reformulation of~\cite[Dfn.~4.1]{AmbErbSav16}; see~\cite[Rmk.~2.3]{LzDSSuz20}.

\begin{defs}[Extended metric-topological spaces]\label{d:AES}
Let~$(X,\T)$ be a topological space, $\mssd\colon X^{\tym{2}}\rar [0,\infty]$ be an extended distance on~$X$.
The triple~$(X,\T,\mssd)$ is an \emph{extended metric-topological space} if there exists a (Hausdorff) uniformity~$\UP$ of $\T$-continuous pseudo-distances~$\mssd'\colon X^\tym{2}\rar [0,\infty)$, so that~$\UP$ generates~$\T$ and
\begin{align}\label{eq:d:AES}
\mssd=\sup\set{\mssd':\mssd'\in\UP}\fstop
\end{align}
\end{defs}

\begin{defs}[Extended metric local structure]\label{d:EMLS}
Let~$\mcX$ be a topological local structure as in Definition~\ref{d:TLS}, and~$\mssd\colon X^\tym{2}\rar[0,\infty]$ be an extended distance. We say that~$(\mcX,\mssd)$ is an \emph{extended metric local structure}, if
\begin{enumerate}[$(a)$]
\item\label{i:d:EMLS:0} $\mcX$ is a topological local structure in the sense of Definition~\ref{d:TLS};

\item\label{i:d:EMLS:1} the space~$(X,\T,\mssd)$ is a \emph{complete} extended metric-topological space;

\item\label{i:d:EMLS:2} $\msE=\msE_\mssd\eqdef \Bdd{\mssd}\cap \A$ is the localizing ring of $\A$-measurable $\mssd$-bounded sets (in particular~$\mssm$ is finite on~$\Ed$).
\end{enumerate}

If~$\mssd$ is finite, then it metrizes~$\T$,~\iref{i:d:EMLS:1} reduces to the requirement that~$(X,\mssd)$ be a complete metric space, and we say that~$(\mcX,\mssd)$ is a \emph{metric local structure}.
\end{defs}

\begin{rem}\label{r:EMetTop}
\begin{enumerate*}[$(a)$]
\item\label{i:r:EMetTop:1} As noted in~\cite[Eqn.~(4.4)]{AmbErbSav16}, if~$(X,\T,\mssd)$ is an extended metric-topological space, then $\mssd$-convergence implies $\T$-convergence.

\item\label{i:r:EMetTop:2}
If~$(\mcX,\mssd)$ is a \emph{metric} local structure, then~$\Cz(\Ed)$ is a convergence-determining class for the narrow convergence of probability measures on~$(X,\A)$.
\end{enumerate*}
Furthermore, the space
\begin{equation*}
\Cz(\Ed)\restr_E\eqdef \set{f\restr_E:f\in\Cz(\Ed)} \subset \Cb(E,\T_E)
\end{equation*}
is a convergence-determining class for the narrow convergence of probability measures on~$(E,\A_E)$, for every~$E\in\Ed$.
Both assertions hold as a consequence of~\cite[Cor.~7]{BloKou10}.
\end{rem}

The Definition~\ref{d:TLDS} of \TLDS can be now combined with that of a metric local structure above.

\begin{defs}[Metric local diffusion spaces]\label{d:EMLDS}
An (\emph{extended}) \emph{metric local diffusion space} (in \linebreak short: \parEMLDS) is a triple $(\mcX,\cdc,\mssd)$ so that
\begin{enumerate}[$(a)$]
\item $(\mcX,\mssd)$ is an (extended) metric local structure;
\item $(\mcX,\cdc)$ is a \TLDS.
\end{enumerate}
\end{defs}

\subsubsection{Intrinsic distances and maximal functions}
In this section, we recall some basic properties of intrinsic distances and maximal functions of Dirichlet spaces.
We assume the reader to be familiar with `quasi-notions' and broad local spaces in the sense of Dirichlet forms theory, see e.g.~\cite{Kuw98, LzDSSuz20}.

Let~$(\mcX,\cdc)$ be a \TLDS, and~$\ttonde{\EE{X}{\mssm},\dom{\EE{X}{\mssm}}}$ be the (quasi-regular strongly local) Dirichlet form with square field operator~$\SF{X}{\mssm}$.
We denote by~$\domloc{\EE{X}{\mssm}}$ its broad local domain, and consider the non-relabeled extension of~$\cdc$ to~$\domloc{\EE{X}{\mssm}}$; see e.g.~\cite[\S2.4]{LzDSSuz20}.
The \emph{broad local space of functions with $\mssm$-uniformly bounded $\EE{X}{\mssm}$-energy} is the space
\begin{align*}
\DzLoc{\mssm}\eqdef \set{f\in \domloc{\EE{X}{\mssm}}: \cdc(f)\leq 1 \as{\mssm}}\fstop
\end{align*}
Let us additionally set
\begin{align*}
\DzLocB{\mssm,\T}\eqdef \DzLoc{\mssm}\cap \Cb(\T)\qquad \text{and} \qquad \DzB{\mssm,\T}\eqdef \DzLocB{\mssm,\T}\cap \dom{\EE{X}{\mssm}} \fstop
\end{align*}

\begin{defs}[Intrinsic distance]
Let~$(\mcX,\cdc)$ be a \TLDS. The \emph{intrinsic distance associated to~$\EE{X}{\mssm}$} is the extended pseudo-distance
\begin{align}\label{eq:d:IntrinsicDist}
\mssd_\mssm(x,y)\eqdef \sup_{f\in \DzLocB{\mssm,\T}} \abs{f(x)-f(y)} \fstop
\end{align}
\end{defs}
In the case when~$\mcX$ is a locally compact Polish Radon measure space and~$\ttonde{\EE{X}{\mssm},\dom{\EE{X}{\mssm}}}$ is a regular strongly local Dirichlet form,~\eqref{eq:d:IntrinsicDist} coincides with the standard definition of intrinsic distance, see~\cite[Prop.~2.31]{LzDSSuz20}. For more information on intrinsic distances, we refer the reader to~\cite[\S{2.6}]{LzDSSuz20}.

\paragraph{Maximal functions}
Together with intrinsic distances, a second class of functions playing a role in the analysis of Dirichlet spaces is that of \emph{maximal functions}.

\begin{defs}[Maximal functions]\label{d:MaximalFunction}
For each~$A\in \A$ there exists an $\mssm$-a.e.\ unique function $\hr{\mssm, A}\colon X\to[0,\infty]$ so that, for each~$r>0$,
\begin{align*}
\hr{\mssm, A}\wedge r = \mssm\text{-}\esssup \set{f: f\in \DzLocB{\mssm}\comma f\equiv 0 \as{\mssm} \text{ on~$A$}\comma f\leq r \as{\mssm}} \fstop
\end{align*}
\end{defs}

Maximal functions were introduced by M.~Hino and J.~Ram\'{i}rez in~\cite{HinRam03}, in order to study the short-time asymptotics of the heat kernel associated to a general Dirichlet spaces.
Their main result was further generalized by T.~Ariyoshi and M.~Hino in~\cite{AriHin05}. An adaptation of this latter result to our setting reads as follows.

\begin{thm}[Ariyoshi--Hino~{\cite[Thm.~5.2(i)]{AriHin05}}]
Let~$(\mcX,\cdc)$ be a \TLDS, and~$\nu\sim \mssm$ be any probability measure on~$(X,\A)$.
Further let~$A\in \A$ be so that~$\mssm A\in(0,\infty)$, and set~$u_t\eqdef -2t \log \TT{X}{\mssm}_t\car_A$.
Then, $\nu$-$\lim_{t\downarrow 0} u_t\cdot \car_{\set{u_t <\infty}} = \hr{\mssm, A}^2$.
\end{thm}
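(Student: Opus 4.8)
The statement to prove is precisely the Ariyoshi--Hino theorem \cite[Thm.~5.2(i)]{AriHin05} transplanted into our framework, so the plan is \emph{not} to reprove it from scratch but to \emph{verify that our \TLDS\ setting satisfies the hypotheses under which that theorem applies}, and then to record the conclusion. First I would recall the precise framework of \cite{AriHin05}: they work with a quasi-regular symmetric Dirichlet form $(\mathcal E,\mathcal F)$ on $L^2(\nu)$ for a $\sigma$-finite measure $\nu$, with carré-du-champ operator, and they prove the convergence $\nu\text{-}\lim_{t\downarrow0}(-2t\log T_t\car_A)=\hr{\mssm,A}^2$ with the maximal function $\hr{\mssm,A}$ defined exactly as in Definition~\ref{d:MaximalFunction}. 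Our $\ttonde{\EE{X}{\mssm},\dom{\EE{X}{\mssm}}}$ is, by Definition~\ref{d:TLDS}\iref{i:d:TLDS:5}, a quasi-regular strongly local Dirichlet form on $L^2(\mssm)$ admitting a square field operator $\SF{X}{\mssm}$ with $\dom{\SF{X}{\mssm}}=\dom{\EE{X}{\mssm}}\cap L^\infty(\mssm)$ (Notation~\ref{n:Form}\iref{i:n:Form:1}), so the structural assumptions of \cite{AriHin05} are met directly.

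The only genuine point requiring care is the \emph{change of reference measure} from $\mssm$ (which is merely $\sigma$-finite) to a probability measure $\nu\sim\mssm$. Here the plan is: since $\nu\sim\mssm$, the identity map induces a unitary $L^2(\mssm)\to L^2(\mssm)$ only up to the Radon--Nikodym density, but what matters is that the form $\EE{X}{\mssm}$ and its carré-du-champ $\SF{X}{\mssm}$, and hence the broad local domain $\domloc{\EE{X}{\mssm}}$, the class $\DzLocB{\mssm}$, and the maximal function $\hr{\mssm,A}$, are \emph{intrinsic to the $\mssm$-a.e.\ equivalence classes} and therefore unchanged if one re-expresses everything with respect to $\nu$; moreover the semigroup $\TT{X}{\mssm}_t$ acting on $L^2(\mssm)$ and $L^p(\mssm)$ for all $p$ (Notation~\ref{n:Form}\iref{i:n:Form:4}) is the same operator whether one views $\car_A\in L^2(\mssm)$ or, equivalently, as a bounded function; and $\nu$-a.e.\ convergence is the same as $\mssm$-a.e.\ convergence along subsequences and in measure. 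Thus one applies \cite[Thm.~5.2(i)]{AriHin05} with their $(\mathcal E,\mathcal F,\nu)$ taken to be our $(\EE{X}{\mssm},\dom{\EE{X}{\mssm}},\nu)$, noting that the density $\diff\nu/\diff\mssm$ is strictly positive $\mssm$-a.e., that $\mssm A\in(0,\infty)$ forces $0<\nu A<\infty$ as well since $\nu A=\int_A(\diff\nu/\diff\mssm)\diff\mssm$, and that $\TT{X}{\mssm}_t\car_A$ is well-defined and $(0,1]$-valued $\mssm$-a.e.\ by the sub-Markov property. The conclusion $u_t=-2t\log\TT{X}{\mssm}_t\car_A\to\hr{\mssm,A}^2$ in $\nu$-measure follows, and one includes the factor $\car_{\set{u_t<\infty}}$ exactly as in the source (it handles the points where $\TT{X}{\mssm}_t\car_A$ vanishes).

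The main obstacle, such as it is, is bookkeeping rather than mathematics: one must check that our Definition~\ref{d:MaximalFunction} of $\hr{\mssm,A}$ coincides verbatim with the maximal function in \cite{AriHin05} (it does --- both use the broad local space $\DzLocB{\mssm}$ of bounded functions with $\SF{X}{\mssm}(f)\le1$ $\mssm$-a.e.\ vanishing on $A$, truncated at level $r$), and that our standing completeness/second-countability hypotheses on $\mcX$ (Remark~\ref{r:TLS}, Remark~\ref{r:QRegBase}) place us inside the class of spaces covered by \cite{AriHin05} --- in particular one may invoke Remark~\ref{r:QRegBase} to assume, after removing an $\EE{X}{\mssm}$-polar set, that the underlying space is Luzin, which is the standing setting there. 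Once these identifications are made, the proof is a one-line citation. I would therefore write the proof as: ``This is \cite[Thm.~5.2(i)]{AriHin05} applied to $(\EE{X}{\mssm},\dom{\EE{X}{\mssm}})$, whose hypotheses are verified by Definition~\ref{d:TLDS}, Notation~\ref{n:Form}, and Remark~\ref{r:QRegBase}; the passage from $\mssm$ to the equivalent probability measure $\nu$ is immediate since $\SF{X}{\mssm}$, $\DzLocB{\mssm}$, $\hr{\mssm,A}$ and the semigroup are all unchanged under $\mssm$-a.e.\ equivalence of reference measures, and $0<\nu A<\infty$ because $0<\mssm A<\infty$ and $\diff\nu/\diff\mssm>0$ $\mssm$-a.e.''
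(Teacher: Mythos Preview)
Your proposal is correct and aligns with the paper's treatment: the paper offers no proof whatsoever for this theorem, presenting it purely as a citation of \cite[Thm.~5.2(i)]{AriHin05} recast in the \TLDS\ framework. Your plan to verify that the hypotheses of \cite{AriHin05} are met by Definition~\ref{d:TLDS} and Notation~\ref{n:Form}, and then cite the result, is exactly the right approach and in fact provides more detail than the paper itself, which simply states the theorem and moves on.
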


\subsubsection{Rademacher and Sobolev-to-Lipschitz properties}\label{sss:RadStoL}
In this section, we focus on the interplay between the diffusion-space structure and the metric structure of an \parEMLDS.
We refer the reader to~\cite{LzDSSuz20} for a detailed discussion on the subject.

\begin{defs}[Rademacher and Sobolev-to-Lipschitz properties]
We say that an \EMLDS \linebreak $(\mcX,\cdc,\mssd)$ has:
\begin{itemize}
\item[($\Rad{\mssd}{\mssm}$)] the \emph{Rademacher property} if, whenever~$\rep f\in \Lipu(\mssd,\A)$, then~$f\in \DzLoc{\mssm}$;
%
\item[($\dRad{\mssd}{\mssm}$)] the \emph{distance-Rademacher property} if~$\mssd\leq \mssd_\mssm$;
\item[($\ScL{\mssm}{\T}{\mssd}$)] the \emph{Sobolev--to--continuous-Lipschitz property} if each~$f\in\DzLoc{\mssm}$ has an $\mssm$-representative $\rep f\in\Lip^1(\mssd,\T)$;

\item[($\SL{\mssm}{\mssd}$)] the \emph{Sobolev--to--Lipschitz property} if each~$f\in\DzLoc{\mssm}$ has an $\mssm$-representat\-ive $\rep f\in\Lip^1(\mssd,\A)$;

\item[($\dcSL{\mssd}{\mssm}{\mssd}$)] the \emph{$\mssd$-continuous-Sobolev--to--Lipschitz property} if each~$f\in \DzLoc{\mssm}$ having a $\mssd$-continuous $\A$-measurable representative~$\rep f$ also has a representative $\reptwo f\in\Lip^1(\mssd,\A^\mssm)$ (possibly,~$\reptwo f\neq \rep f$);

\item[($\cSL{\T}{\mssm}{\mssd}$)] the \emph{continuous-Sobolev--to--Lipschitz property} if each~$f\in \DzLoc{\mssm,\T}$ satisfies $f\in\Lip^1(\mssd,\T)$;

\item[($\dSL{\mssd}{\mssm}$)] the \emph{distance Sobolev-to-Lipschitz property} if~$\mssd\geq \mssd_\mssm$.
\end{itemize}
\end{defs}

We refer the reader to~\cite[Rmk.s~3.2, 4.3]{LzDSSuz20} for comments on the terminology and to~\cite[Lem.~3.6, Prop.~4.2]{LzDSSuz20} for the interplay of all such properties. In the setting of~\EMLDS's, they reduce to the following scheme:

\begin{equation}\label{eq:EquivalenceRadStoL}
\begin{tikzcd}
& (\Rad{\mssd}{\mssm}) \arrow[r, Rightarrow] & (\dRad{\mssd}{\mssm}) & & \text{\cite[Lem.~3.6]{LzDSSuz20}}
\\
(\ScL{\mssm}{\T}{\mssd}) \arrow[r, Rightarrow] & (\SL{\mssm}{\mssd}) \arrow[r, Rightarrow] & (\cSL{\mssm}{\T}{\mssd}) \arrow[r, Leftrightarrow] & (\dSL{\mssd}{\mssm}) &  \text{\cite[Prop.~4.2]{LzDSSuz20}}
\end{tikzcd}
\end{equation}

The delicate interplay between the Rademacher and Sobolev-to-Lipschitz properties and maximal functions was investigated in the setting of quasi-regular Dirichlet spaces by the authors, in~\cite{LzDSSuz20}.

\begin{rem}[About~$(\dcSL{\mssd}{\mssm}{\mssd})$]\label{r:dcSL}
Let us note that, whereas both~$(\dcSL{\mssd}{\mssm}{\mssd})$ and $(\cSL{\T}{\mssm}{\mssd})$ are implied by~$(\SL{\mssm}{\mssd})$ and coincide on metric spaces, they do \emph{not} ---~at least in principle~--- imply each other on extended metric spaces.
In particular, while the $\mssd$-Lipschitz representative in~$(\cSL{\T}{\mssm}{\mssd})$ is taken to coincide with the given $\T$-continuous one, it is important in the definition of~$(\dcSL{\mssd}{\mssm}{\mssd})$ to allow for the $\mssd$-Lipschitz representative to be different from the $\mssd$-continuous one, and for the former to be only~$\A^\mssm$-measurable, rather than $\A$-measurable.
This will be clarified in the case of configuration spaces in Remark~\ref{r:RoeSch99SL} below.
\end{rem}

\begin{lem}\label{l:RadCompleteness}
Let~$(\mcX,\cdc,\mssd)$ be an \MLDS satisfying~$(\dRad{\mssd}{\mssm})$. Then,~$(X,\T,\mssd_\mssm)$ is a complete extended metric-topological (Radon measure) space in the sense of Definition~\ref{d:AES}.
\begin{proof}
See~\cite[Prop.~3.7]{LzDSSuz20}.
\end{proof}
\end{lem}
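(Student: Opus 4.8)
The statement is that under the distance-Rademacher property $(\dRad{\mssd}{\mssm})$, i.e.\ $\mssd \le \mssd_\mssm$, the triple $(X,\T,\mssd_\mssm)$ is a complete extended metric-topological space. Since the excerpt explicitly says the proof is \cite[Prop.~3.7]{LzDSSuz20}, my plan is to reconstruct the argument one would use in that companion paper, in three movements: (i)~verify $\mssd_\mssm$ is a genuine extended distance (not merely an extended pseudo-distance); (ii)~exhibit a Hausdorff uniformity of $\T$-continuous pseudo-distances generating $\T$ whose supremum is $\mssd_\mssm$, which is exactly Definition~\ref{d:AES}; (iii)~prove completeness of each ball $B^{\mssd_\mssm}_\infty(x)$.

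\emph{Step (i): $\mssd_\mssm$ separates points.} By definition~\eqref{eq:d:IntrinsicDist}, $\mssd_\mssm(x,y) = \sup_{f\in\DzLocB{\mssm,\T}}\abs{f(x)-f(y)}$, so symmetry and the triangle inequality are immediate, and $\mssd_\mssm$ is an extended pseudo-distance. To see it does not vanish off the diagonal: if $x\neq y$, then since $(X,\T,\mssd)$ is an extended metric-topological space, $\mssd(x,y)>0$; by the hypothesis $\mssd \le \mssd_\mssm$ we get $\mssd_\mssm(x,y) \ge \mssd(x,y) > 0$. So $\mssd_\mssm$ is an extended distance on $X$.

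\emph{Step (ii): the uniformity.} For $f\in\DzLocB{\mssm,\T}$ define $\mssd_f(x,y)\eqdef\abs{f(x)-f(y)}$; this is a $\T$-continuous pseudo-distance (continuity because $f\in\Cb(\T)$), and it is bounded since $f$ is bounded. Let $\UP$ be the order-closed directed hull of $\set{\mssd_f : f\in\DzLocB{\mssm,\T}}$; closing under $\mssd_{f}\vee\mssd_{g}$ is harmless because $\mssd_{f}\vee\mssd_{g}\le \mssd_{f}+\mssd_{g}$ and one can take the pseudo-distance $\abs{f(x)-f(y)}+\abs{g(x)-g(y)}$, again $\T$-continuous and bounded, in $\UP$; then take the order-ideal below. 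By construction $\sup\set{\mssd':\mssd'\in\UP}=\sup_f\mssd_f=\mssd_\mssm$, giving~\eqref{eq:d:AES}. It remains to check $\UP$ generates $\T$ and is Hausdorff. Hausdorffness: given $x\neq y$, by Step (i) there is $f\in\DzLocB{\mssm,\T}$ with $\mssd_f(x,y)>0$, so $\UP$ separates points. That $\UP$ generates $\T$: one inclusion is that each $\mssd_f$ is $\T$-continuous so the $\UP$-topology is coarser than $\T$; for the reverse, one uses that $(X,\T,\mssd)$ is itself an extended metric-topological space with a generating uniformity $\UP^\mssd$ of $\T$-continuous bounded pseudo-distances, together with the fact that $\DzLocB{\mssm,\T}$ is rich enough --- here is where quasi-regularity of $\EE{X}{\mssm}$ and the inclusion $\Dz\subset\Cz(\msE)$ generating $\T$ (Definition~\ref{d:TLDS}\iref{i:d:TLDS:2}) enter: functions of $\Dz$ truncated/normalized lie in $\DzLocB{\mssm,\T}$ after rescaling, and these already generate $\T$, so the $\UP$-topology is finer than $\T$ as well. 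Hence the $\UP$-topology equals $\T$.

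\emph{Step (iii): completeness.} Fix $x_0\in X$ and let $\seq{x_n}_n$ be $\mssd_\mssm$-Cauchy inside $B^{\mssd_\mssm}_\infty(x_0)$. Since $\mssd\le\mssd_\mssm$, the sequence is $\mssd$-Cauchy, and it stays in $B^{\mssd}_\infty(x_0)$ (because $\mssd(x_n,x_0)\le\mssd_\mssm(x_n,x_0)$ is bounded); by completeness of $(X,\T,\mssd)$ (Definition~\ref{d:EMLS}\iref{i:d:EMLS:1}), $x_n\to x$ in $\mssd$, hence in $\T$ by Remark~\ref{r:EMetTop}\iref{i:r:EMetTop:1}. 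It remains to upgrade $\mssd$-convergence to $\mssd_\mssm$-convergence. For each $f\in\DzLocB{\mssm,\T}$ and each $\eps>0$, Cauchyness gives $N$ with $\mssd_\mssm(x_n,x_m)<\eps$ for $n,m\ge N$, hence $\abs{f(x_n)-f(x_m)}<\eps$; letting $m\to\infty$ and using $\T$-continuity of $f$ gives $\abs{f(x_n)-f(x)}\le\eps$ for $n\ge N$. Taking the supremum over $f\in\DzLocB{\mssm,\T}$ yields $\mssd_\mssm(x_n,x)\le\eps$ for $n\ge N$, i.e.\ $x_n\to x$ in $\mssd_\mssm$. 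Thus $B^{\mssd_\mssm}_\infty(x_0)$ is complete.

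\emph{Main obstacle.} The routine parts are Steps (i) and (iii); the delicate point is verifying in Step (ii) that $\UP$ \emph{generates the topology $\T$} --- one needs that $\DzLocB{\mssm,\T}$ contains enough $\T$-continuous functions to recover $\T$. This is where I would lean on the \TLDS hypotheses: $\Dz\subset\Cz(\msE)$ generates $\T$, and any $f\in\Dz$ can be composed with a smooth bounded normalization $\phi$ with $\abs{\phi'}\le c$ so that $c^{-1}\,\phi_0\circ f\in\DzLocB{\mssm,\T}$ after an appropriate rescaling involving $\norm{\cdc(f)}_{L^\infty}$; since these functions separate points and generate $\T$, so does $\UP$. (In $\cite{LzDSSuz20}$ this is presumably packaged into the statement; here I would cite it as Definition~\ref{d:TLDS}\iref{i:d:TLDS:2} plus \cite[Prop.~3.7]{LzDSSuz20}.)
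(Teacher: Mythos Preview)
The paper's proof is simply a citation to \cite[Prop.~3.7]{LzDSSuz20}, so there is no in-paper argument to compare against; your three-step reconstruction is correct and is what that cited proposition presumably contains. One small simplification in Step~(ii): you do not need the composition with~$\phi_0$ --- since the square field takes values in~$\class[\mssm]{\Sb(\msE)}\subset L^\infty(\mssm)$ by Definition~\ref{d:TLDS}\iref{i:d:TLDS:3}, each~$f\in\Dz$ already has~$\norm{\cdc(f)}_{L^\infty(\mssm)}<\infty$, so the plain rescaling~$f\mapsto \ttonde{1\vee\norm{\cdc(f)}_{L^\infty}}^{-1/2}f$ lands in~$\DzLocB{\mssm,\T}$, and these rescaled functions still generate~$\T$.
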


\paragraph{Locality}
Let~$(\mcX,\cdc,\mssd)$,~$(\mcX',\cdc',\mssd)$ be \EMLDS's, with same underlying topological measurable space~$(X,\T,\A)$, equivalent measures~$\mssm\sim\mssm'$, and same extended distance~$\mssd$.
Let~$(\EE{X}{\mssm},\dom{\EE{X}{\mssm}})$, resp.~$(\EE{X}{\mssm'},\dom{\EE{X}{\mssm'}})$, be the corresponding quasi-regular strongly local Dirichlet forms, defined on~$L^2(\mssm)$, resp.~$L^2(\mssm')$, and admitting carr\'e du champ operator~$\SF{X}{\mssm}$, resp.~$\SF{X}{\mssm'}$.
We write
\begin{equation*}
\ttonde{\SF{X}{\mssm}, \msA}\leq \ttonde{\SF{X}{\mssm'}, \msA'}
\end{equation*}
to indicate that~$\msA\supset \msA'$ and~$\SF{X}{\mssm'}\geq \SF{X}{\mssm}$ on~$ \msA'$, and analogously for the opposite inequality.
Let~$(\mssP)$ denote either~$(\Rad{\mssd}{\mssm})$, $(\ScL{\mssm}{\T}{\mssd})$, $(\SL{\mssm}{\mssd})$, or~$(\cSL{\T}{\mssm}{\mssd})$.
Note that~$\Lipu(\mssd,\A)$ and~$\Cb(\T)$ do not depend on either~$(\mcE,\dom{\mcE})$ or~$\mssm$.
Furthemore, since~$\mssm\sim\mssm'$, we have that~$L^\infty(\mssm)= L^\infty(\mssm')$ as Banach spaces.
As a consequence of the facts above,~$(\mssP)$ is a \emph{local} property in the sense of the following Proposition, a proof of which is straightforward.

\begin{prop}[Weighted spaces]\label{p:Locality}
Retain the notation above. Then,
\begin{enumerate}[$(i)$]
\item\label{i:p:Locality:1} if $\ttonde{\SF{X}{\mssm}, \DzLocB{\mssm}}\leq \ttonde{\SF{X}{\mssm'},\DzLocBprime{\mssm'}}$ and~$(\ScL{\mssm}{\T}{\mssd})$, resp.~$(\SL{\mssm}{\mssd})$,~$(\cSL{\T}{\mssm}{\mssd})$ holds,
then $(\ScL{\mssm'}{\T}{\mssd})$, resp.~$(\SL{\mssm'}{\mssd})$, or~$(\cSL{\T}{\mssm'}{\mssd})$ holds as well;

\item\label{i:p:Locality:2} if $\ttonde{\SF{X}{\mssm}, \DzLocB{\mssm}}\geq \ttonde{\SF{X}{\mssm'},\DzLocBprime{\mssm'}}$ and~$(\Rad{\mssd}{\mssm})$ holds, then~$(\Rad{\mssd}{\mssm'})$ holds as well.
\end{enumerate}
\end{prop}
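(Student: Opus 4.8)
The plan is to observe that this is a ``soft'' statement: once one unwinds the definitions, both implications are immediate from the monotonicity hypotheses on the square field operators, and no analysis is required. I would state at the outset that the proof is straightforward (as the paper itself announces) and then supply the two short arguments.

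For part \iref{i:p:Locality:1}, suppose $(\SL{\mssm}{\mssd})$ holds (the arguments for $(\ScL{\mssm}{\T}{\mssd})$ and $(\cSL{\T}{\mssm}{\mssd})$ being identical, replacing $\Lip^1(\mssd,\A)$ by $\Lip^1(\mssd,\T)$ throughout, and in the continuous case starting from $f\in\DzLocB{\mssm',\T}$). Let $f\in\DzLoc{\mssm'}$, i.e.\ $f\in\domloc{\EE{X}{\mssm'}}$ with $\SF{X}{\mssm'}(f)\leq 1$ $\mssm'$-a.e.. By the hypothesis $\ttonde{\SF{X}{\mssm}, \DzLocB{\mssm}}\leq \ttonde{\SF{X}{\mssm'},\DzLocBprime{\mssm'}}$, the broad local domain of $\EE{X}{\mssm'}$ is contained in that of $\EE{X}{\mssm}$ and $\SF{X}{\mssm}(f)\leq \SF{X}{\mssm'}(f)$ on it; since $\mssm\sim\mssm'$, the inequality $\SF{X}{\mssm'}(f)\leq 1$ $\mssm'$-a.e.\ is the same as $\leq 1$ $\mssm$-a.e., whence $\SF{X}{\mssm}(f)\leq 1$ $\mssm$-a.e.\ and $f\in\DzLoc{\mssm}$. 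Applying $(\SL{\mssm}{\mssd})$ produces an $\mssm$-representative $\rep f\in\Lip^1(\mssd,\A)$; as $\mssm\sim\mssm'$, this $\rep f$ is an $\mssm'$-representative as well, which is exactly $(\SL{\mssm'}{\mssd})$.

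For part \iref{i:p:Locality:2}, suppose $(\Rad{\mssd}{\mssm})$ holds and let $\rep f\in\Lipu(\mssd,\A)$. Since $\Lipu(\mssd,\A)$ is independent of any Dirichlet form and of the reference measure, $(\Rad{\mssd}{\mssm})$ gives $f=\class[\mssm]{\rep f}=\class[\mssm']{\rep f}\in\DzLoc{\mssm}$, i.e.\ $f\in\domloc{\EE{X}{\mssm}}$ with $\SF{X}{\mssm}(f)\leq 1$ $\mssm$-a.e.. The hypothesis $\ttonde{\SF{X}{\mssm}, \DzLocB{\mssm}}\geq \ttonde{\SF{X}{\mssm'},\DzLocBprime{\mssm'}}$ means $\DzLocB{\mssm}\subset\DzLocBprime{\mssm'}$ and $\SF{X}{\mssm'}\leq\SF{X}{\mssm}$ there; hence $f\in\domloc{\EE{X}{\mssm'}}$ and $\SF{X}{\mssm'}(f)\leq\SF{X}{\mssm}(f)\leq 1$ $\mssm$-a.e., equivalently $\mssm'$-a.e.. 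Thus $f\in\DzLoc{\mssm'}$, which is $(\Rad{\mssd}{\mssm'})$.

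I do not anticipate a genuine obstacle here; the only point requiring a modicum of care is the bookkeeping of broad local domains and the extension of $\SF{X}{\mssm}$ and $\SF{X}{\mssm'}$ to them (so that the comparison $\ttonde{\SF{X}{\mssm}, \DzLocB{\mssm}}\leq \ttonde{\SF{X}{\mssm'},\DzLocBprime{\mssm'}}$ may be propagated from the bounded broad local space to all of $\domloc{\EE{X}{\mssm'}}$ by the standard localization/truncation argument recalled in \cite[\S2.4]{LzDSSuz20}), together with the repeated use of $\mssm\sim\mssm'$ to pass freely between $\mssm$- and $\mssm'$-a.e.\ statements and between $\mssm$- and $\mssm'$-representatives. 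Accordingly I would keep the writeup to a few lines, as the paper suggests, and refer to \cite{LzDSSuz20} for the broad-local machinery rather than reproving it.
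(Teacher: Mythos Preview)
Your proposal is correct and matches the paper's own treatment: the paper declares the proof ``straightforward'' and gives none, and your argument is precisely the definition-unwinding one would expect, correctly using $\mssm\sim\mssm'$ to identify a.e.\ statements and representatives. Your remark about propagating the comparison from $\DzLocB{\mssm'}$ to the full $\DzLoc{\mssm'}$ via truncation is apt and is the only non-formal point, handled appropriately by deferring to \cite{LzDSSuz20}.
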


Let us now show under some additional assumptions for~$\mssm'$ and~$\mssm$, that it suffices to verify the assumptions in Proposition~\ref{p:Locality} only on a common core.
We spell this out only for the assumption in Proposition~\ref{p:Locality}\ref{i:p:Locality:1}, which is the only one we shall need in the following.
The easy adaptation for the case of Proposition~\ref{p:Locality}\ref{i:p:Locality:2} is left to the reader.
We say that two sets~$A_1$, $A_2\subset X$ are \emph{well-separated} if~$\mssd(A_1,A_2)>0$.
An increasing sequence of sets~$\seq{A_k}_k$ is well-separated if~$A_k$ and~$A_{k+1}^\complement$ are well-separated for every~$k$.
 
\begin{prop}[Comparison of square fields]\label{p:LocalityProbab}
Let~$(\mcX,\cdc,\mssd)$,~$(\mcX',\cdc',\mssd)$ be \MLDS's with same underlying topological measurable space~$(X,\T,\A)$ and same distance~$\mssd$, and assume that:
\begin{enumerate}[$(a)$]
\item\label{i:p:LocalityProbab:1} $\mssm$, $\mssm'$ are finite measures;
\item\label{i:p:LocalityProbab:2} for every pair of well-separated $\T$-open sets~$U_1, U_2$, there exists~$\varrho\eqdef\varrho_{U_1,U_2}\in \dom{\EE{X}{\mssm}}$ with~$\varrho\equiv 1$ $\mssm$-a.e.\ on~$U_1$, $\varrho\equiv 0$ $\mssm$-a.e.\ on~$U_2$, and~$\SF{X}{\mssm}(\varrho)\leq C$ for some finite constant~$C\eqdef C_{U_1,U_2}>0$;
\item\label{i:p:LocalityProbab:3} $\mssm'=g\mssm$ for some~$g\in L^1(\mssm)$, and there exists an increasing well-separated $\EE{X}{\mssm}$-nest~$\seq{F_k}_k$, and a sequence of constants~$\seq{a_k}_k$ with~$a_k>0$, such that
\begin{align*}
0<a_k\leq g \leq a_k^{-1}<+\infty \quad \as{\mssm} \text{ on} \quad \inter_\T(F_k) \semicolon
\end{align*}
\item\label{i:p:LocalityProbab:4} there exists~$\Dz$ a core for both~$\ttonde{\EE{X}{\mssm},\dom{\EE{X}{\mssm}}}$ on~$L^2(\mssm)$ and~$\ttonde{\EE{X}{\mssm'},\dom{\EE{X}{\mssm'}}}$ on~$L^2(\mssm')$, additionally so that~$\SF{X}{\mssm}\leq \SF{X}{\mssm'}$ on~$\Dz$.
\end{enumerate}
Then,~$\ttonde{\SF{X}{\mssm}, \DzLocB{\mssm}}\leq \ttonde{\SF{X}{\mssm'},\DzLocBprime{\mssm'}}$.

\begin{proof}
Since~$\mssm,\mssm'$ are finite measures, it follows from~\cite[Prop.~2.26]{LzDSSuz20} that
\begin{equation}\label{eq:p:LocalityProbab:1}
\DzLocB{\mssm}=\DzB{\mssm}\eqdef \DzLocB{\mssm}\cap \dom{\EE{X}{\mssm}}\comma
\end{equation}
and analogously for~$\mssm'$.

Let~$\varrho_k$ be the cut-off function of the well-separated pair of open sets~$\inter_\T(F_k)$,~$F_{k+1}^\complement$ given by~\ref{i:p:LocalityProbab:2}, with $\SF{X}{\mssm}(\varrho_k)\leq C_k$.
Without loss of generality, by the Markov property of~$\ttonde{\EE{X}{\mssm},\dom{\EE{X}{\mssm}}}$, we may and shall assume that~$0\leq \varrho_k\leq 1$.

Now, let~$f\in \DzLocBprime{\mssm'}= \DzBprime{\mssm'}$, and~$\seq{f_n}_n\subset \Dz$ be $\EE{X}{\mssm'}_1$-converging to~$f$.
By the Leibniz rule for~$\SF{X}{\mssm}$, we have that~$\seq{f_n\varrho_k}_n\subset \dom{\EE{X}{\mssm}}_b$ satisfies
\begin{align*}
\SF{X}{\mssm}(f_n\varrho_k-f_m\varrho_k) =&\ \abs{f_n-f_m}^2 \SF{X}{\mssm}(\varrho_k) + \abs{\varrho_k}^2 \SF{X}{\mssm}(f_n-f_m) 
\\
\leq&\ C_k\abs{f_n-f_m}^2+\SF{X}{\mssm'}(f_n-f_m)\fstop
\end{align*}
Since~$\varrho_k$ is supported on~$F_{k+1}$, and since~$\inter_\T(F_{k+1})$ is $\T$-open, hence in particular $\EE{X}{\mssm}$-quasi-open, by locality of~$\SF{X}{\mssm}$ we conclude that
\begin{align}
\nonumber
\SF{X}{\mssm}(f_n\varrho_k-f_m\varrho_k)=&\ \car_{\inter_\T(F_{k+1})}\SF{X}{\mssm}(f_n\varrho_k-f_m\varrho_k) 
\\
\label{eq:p:LocalityProbab:2}
\leq&\ \car_{\inter_\T(F_{k+1})}\ttonde{C_k\abs{f_n-f_m}^2+ \SF{X}{\mssm}(f_n)} \fstop
\end{align}
Now, since~$0<a_k\leq g\leq a_k^{-1}<+\infty$ on~$F_{k+1}$, the $L^2(\mssm\mrestr{F_{k+1}})$-convergence is equivalent to the $L^2(\mssm'\mrestr{F_{k+1}})$-convergence.
Thus, there exists
\begin{equation}\label{eq:p:LocalityProbab:3}
L^2(\mssm)\text{-}\nlim f_n\varrho_k = L^2(\mssm')\text{-}\nlim f_n\varrho_k = f\varrho_k \comma
\end{equation}
which, together with~\eqref{eq:p:LocalityProbab:2}, implies the existence of
\begin{align*}
\EE{X}{\mssm}_1\text{-}\nlim f_n\varrho_k=f\varrho_k\in\dom{\EE{X}{\mssm}}_b \fstop
\end{align*}

By the same argument as above, with~$f_n$ in place of~$f_n-f_m$,
\begin{align*}
\car_{\inter_\T(F_{k+1})}\SF{X}{\mssm}(f_n\varrho_k)=\car_{\inter_\T(F_{k+1})} \SF{X}{\mssm}(f_n)\leq \car_{\inter_\T(F_{k+1})}  \SF{X}{\mssm'}(f_n)\comma
\end{align*}
and, taking the limit as~$n\to\infty$ (possibly, up to choosing a suitable non-relabeled subsequence),
\begin{align*}
\car_{\inter_\T(F_{k+1})} \SF{X}{\mssm}(f\varrho_k)\leq \car_{\inter_\T(F_{k+1})} \nlim \SF{X}{\mssm'}(f_n)=\car_{\inter_\T(F_{k+1})}  \SF{X}{\mssm'}(f) \leq 1 \as{\mssm}\fstop
\end{align*}
Since~$\seq{F_k}_k$ is a nest,~$\seq{\inter_\T(F_k)}_k$ is a sequence of ($\EE{X}{\mssm}$-quasi-)open sets increasing to~$X$ $\EE{X}{\mssm}$-quasi-everywhere, and~$\seq{f\varrho_k}_k\subset \dom{\EE{X}{\mssm}}$ is a sequence of functions satisfying~$f\varrho_k\equiv f$ on~$\inter_\T(F_k)$.
Thus,~$f\in\dotloc{\dom{\EE{X}{\mssm}}}$ by definition of broad local space.
Again by locality of both~$\SF{X}{\mssm}$ and~$\SF{X}{\mssm'}$,
\begin{align*}
\car_{\inter_\T(F_k)} \SF{X}{\mssm}(f) =\car_{\inter_\T(F_k)} \SF{X}{\mssm}(f\varrho_k)\leq \car_{\inter_\T(F_k)}  \SF{X}{\mssm'}(f) \as{\mssm}\comma
\end{align*}
hence, letting~$k\to\infty$ and since~$\klim \mssm F_k^\complement=0$,
\begin{align*}
\SF{X}{\mssm}(f) \leq \SF{X}{\mssm'}(f) \leq 1 \as{\mssm}\comma
\end{align*}
which shows that~$f\in\DzLocB{\mssm}$. 
\end{proof}
\end{prop}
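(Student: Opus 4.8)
The plan is to verify the defining inclusion directly: take an arbitrary $f\in\DzLocBprime{\mssm'}$ and show that $f\in\DzLocB{\mssm}$ with $\SF{X}{\mssm}(f)\le\SF{X}{\mssm'}(f)$ $\mssm$-a.e., which is precisely the asserted comparison $\ttonde{\SF{X}{\mssm},\DzLocB{\mssm}}\le\ttonde{\SF{X}{\mssm'},\DzLocBprime{\mssm'}}$. The first step is a reduction exploiting the finiteness of the measures: by \cite[Prop.~2.26]{LzDSSuz20} one has $\DzLocB{\mssm}=\DzB{\mssm}\eqdef\DzLocB{\mssm}\cap\dom{\EE{X}{\mssm}}$ and likewise $\DzLocBprime{\mssm'}=\DzBprime{\mssm'}$, so I may assume $f\in\dom{\EE{X}{\mssm'}}\cap\Cb(\T)$ with $\SF{X}{\mssm'}(f)\le1$ (this holds $\mssm'$-a.e., hence $\mssm$-a.e.\ since $\mssm\sim\mssm'$ by~\ref{i:p:LocalityProbab:3}); it then suffices to establish the broad-local membership $f\in\dotloc{\dom{\EE{X}{\mssm}}}$ and the pointwise bound $\SF{X}{\mssm}(f)\le\SF{X}{\mssm'}(f)$, since honest domain membership follows again from \cite[Prop.~2.26]{LzDSSuz20}.

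The heart of the argument is a localization along the nest $\seq{F_k}_k$. For each $k$, assumption~\ref{i:p:LocalityProbab:2} applied to the well-separated $\T$-open pair $\inter_\T(F_k)$, $F_{k+1}^\complement$ (well-separated because $\seq{F_k}_k$ is) produces a cut-off $\varrho_k\in\dom{\EE{X}{\mssm}}$ with $\varrho_k\equiv1$ $\mssm$-a.e.\ on $\inter_\T(F_k)$, $\varrho_k\equiv0$ $\mssm$-a.e.\ on $F_{k+1}^\complement$, and $\SF{X}{\mssm}(\varrho_k)\le C_k<\infty$, which by the Markov property of $\EE{X}{\mssm}$ I may take with $0\le\varrho_k\le1$. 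Choosing $\seq{f_n}_n\subset\Dz$ with $\EE{X}{\mssm'}_1$-$\nlim f_n=f$ (possible since $\Dz$ is a core for $\EE{X}{\mssm'}$ by~\ref{i:p:LocalityProbab:4}), the Leibniz and Cauchy--Schwarz inequalities for $\SF{X}{\mssm}$, together with $\varrho_k$ being supported in $F_{k+1}$, the comparison $\SF{X}{\mssm}\le\SF{X}{\mssm'}$ on $\Dz$, and the locality of $\SF{X}{\mssm}$ on the $\EE{X}{\mssm}$-quasi-open set $\inter_\T(F_{k+1})$, give an estimate of the shape
\begin{equation*}
\SF{X}{\mssm}(f_n\varrho_k-f_m\varrho_k)\le\car_{\inter_\T(F_{k+1})}\ttonde{C_k\abs{f_n-f_m}^2+\SF{X}{\mssm'}(f_n-f_m)}\fstop
\end{equation*}
On $F_{k+1}$ the bounds $0<a_k\le g\le a_k^{-1}$ from~\ref{i:p:LocalityProbab:3} make $L^2(\mssm)$- and $L^2(\mssm')$-convergence equivalent, so $f_n\varrho_k\to f\varrho_k$ in $L^2(\mssm)$; combined with the estimate this forces $\EE{X}{\mssm}_1$-$\nlim f_n\varrho_k=f\varrho_k\in\dom{\EE{X}{\mssm}}_b$. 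Re-running the same computation with a single $f_n$ in place of $f_n-f_m$ and passing to the limit along a suitable subsequence yields $\car_{\inter_\T(F_{k+1})}\SF{X}{\mssm}(f\varrho_k)\le\car_{\inter_\T(F_{k+1})}\SF{X}{\mssm'}(f)\le1$ $\mssm$-a.e.

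Finally I would glue: since $\seq{F_k}_k$ is a nest, $\seq{\inter_\T(F_k)}_k$ increases to $X$ $\EE{X}{\mssm}$-quasi-everywhere, and $f\varrho_k\equiv f$ on $\inter_\T(F_k)$ with $f\varrho_k\in\dom{\EE{X}{\mssm}}$, whence $f\in\dotloc{\dom{\EE{X}{\mssm}}}$ by definition of the broad local space; locality of both $\SF{X}{\mssm}$ and $\SF{X}{\mssm'}$ then gives $\car_{\inter_\T(F_k)}\SF{X}{\mssm}(f)=\car_{\inter_\T(F_k)}\SF{X}{\mssm}(f\varrho_k)\le\car_{\inter_\T(F_k)}\SF{X}{\mssm'}(f)$, and letting $k\to\infty$ with $\klim\mssm F_k^\complement=0$ produces $\SF{X}{\mssm}(f)\le\SF{X}{\mssm'}(f)\le1$ $\mssm$-a.e., i.e.\ $f\in\DzLocB{\mssm}$ with the desired inequality. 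The main obstacle I anticipate is that $\SF{X}{\mssm}\le\SF{X}{\mssm'}$ is assumed only on the core $\Dz$ while the Radon--Nikodym weight $g$ is controlled only locally (on each $F_k$) and may degenerate at infinity, so one cannot pass to $\EE{X}{\mssm'}$-limits inside $\EE{X}{\mssm}$ directly: the cut-offs $\varrho_k$ and the $\EE{X}{\mssm}$-quasi-open locality of the square field are essential both to bound the local energies and to make sense of the passage from $f\varrho_k$ to $f$. A secondary technical point is the free passage between bounded broad-local spaces and honest form domains via \cite[Prop.~2.26]{LzDSSuz20}, which relies on the finiteness of $\mssm$ and $\mssm'$.
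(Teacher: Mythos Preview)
Your proposal is correct and follows essentially the same approach as the paper's own proof: the reduction via \cite[Prop.~2.26]{LzDSSuz20}, the construction of cut-offs $\varrho_k$ from the well-separated pair $\inter_\T(F_k)$, $F_{k+1}^\complement$, the Leibniz-rule estimate localized to $\inter_\T(F_{k+1})$, the transfer of $L^2$-convergence via the density bounds on $g$, and the final gluing along the nest are all exactly as in the paper. One very minor slip: membership in $\DzLocBprime{\mssm'}$ does not by itself give $\T$-continuity of $f$, but you never actually use $f\in\Cb(\T)$, so this does not affect the argument.
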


\begin{rem}\label{r:CutOff}
\begin{enumerate*}[$(a)$]
\item In the language of~\cite{LzDSSuz20}, one can rephrase assumption~\ref{i:p:LocalityProbab:3} in Proposition~\ref{p:LocalityProbab} by saying that $g, g^{-1}\in \dotloc{\ttonde{L^\infty(\mssm)}}$.
\item\label{i:r:CutOff:2} The existence of the cut-off functions in~Proposition~\ref{p:LocalityProbab}\ref{i:p:LocalityProbab:2} is a quite mild assumption. Indeed, it is readily verified whenever~$(X,\mssd,\mssm)$ satisfies~$(\Rad{\mssd}{\mssm})$, for the choice~$\varrho_{U_1,U_2}\eqdef \mssd(U_1,U_2)^{-1}\mssd(\emparg,U_2)\wedge 1$ with constant~$C\eqdef \mssd(U_1,U_2)^{-1}$.
\end{enumerate*}
\end{rem}

\subsubsection{Cheeger energies}\label{sss:CheegerE}
The content of this section is mostly taken from the detailed discussion of extended metric measure spaces put forward by L.~Ambrosio, N.~Gigli, and G.~Savar\'e in~\cite{AmbGigSav14}, Ambrosio, M.~Erbar, and Savar\'e in~\cite{AmbErbSav16}, and the one ---~more general still~--- in Savar\'e's monograph~\cite{Sav19}.
For the sake of simplicity, we adapt the definitions in~\cite{AmbErbSav16,Sav19} to include information on localizing rings.
This additional structure, however, will not be used throughout this section.

By a \emph{Borel-probability extended-metric local structure}~$(\mcX,\mssd)$ we mean an extended metric local structure additionally so that~$\A=\Bo{\T}$ and~$\mssm$ is a (Radon) probability measure on~$\A$.
We note that every such space is an \emph{extended metric measure space} in the sense of~\cite[Dfn.~4.7]{AmbErbSav16}.
We shall therefore make use of results in~\cite{AmbErbSav16} without further reference to the assumptions there.

\paragraph{Minimal relaxed slopes}
Let~$\mcX$ be a Borel-probability extended-metric local structure.
A $\Bo{\T}$-measurable function~$G\colon X\to [0,\infty]$ is a ($\mssm$-)\emph{relaxed slope} of~$f\in L^2(\mssm)$ if there exist~$\seq{f_n}_n\subset \Lip(\mssd,\Bo{\T})$ so that
\begin{itemize}
\item $L^2(\mssm)$-$\nlim f_n=f$ and~$\slo{f_n}$ $L^2(\mssm)$-weakly converges to~$\tilde G\in L^2(\mssm)$;
\item $\tilde G\leq G$ $\mssm$-a.e..
\end{itemize}

We say that~$G$ is the ($\mssm$-)\emph{minimal} ($\mssm$-)\emph{relaxed slope} of~$f\in L^2(\mssm)$, denoted by~$\slo[*]{f}$, if its $L^2(\mssm)$-norm is minimal among relaxed slopes.
The notion is well-posed, and $\slo[*]{f}$ is in fact $\mssm$-a.e.\ minimal as well, see~\cite[\S{4.1}]{AmbGigSav14}.

\paragraph{Minimal weak upper gradients}
Let~$\mcX$ be a Borel-probability extended-metric local structure.
A Borel probability measure~$\boldpi$ on~$\AC^2(I;X)$ is a \emph{test plan of bounded compression} if there exists a constant~$C=C_\boldpi>0$ such that
\begin{align*}
(\ev_t)_\pfwd \boldpi \leq C\mssm \comma \qquad \ev_t\colon \AC^2\to X\comma \ev_t\colon x_\emparg\mapsto x_t\fstop
\end{align*}
A Borel subset~$A\subset \AC^2(I;X)$ is \emph{negligible} if~$\boldpi(A)=0$ for every test plan of bounded compression.
A property of $\AC^2(I;X)$-curves is said to hold for a.e.-curve if it holds for every curve in a co-negligible set.

A $\Bo{\T}$-measurable function~$G\colon X\to [0,\infty]$ ($\mssm$-)\emph{weak upper gradient} of~$\rep f\in \mcL^0(\mssm)$ if
\begin{align*}
\abs{\rep f(x_1)-\rep f(x_0)}\leq \int_0^1 G(x_r)\, \abs{\dot x}_r\diff r<\infty \quad \text{for a.e.\ curve}\fstop
\end{align*}

We say that~$G$ is the ($\mssm$-)\emph{minimal} ($\mssm$-)\emph{weak upper gradient} of~$f\in L^2(\mssm)$, denoted by~$\slo[w]{f}$, if it is $\mssm$-.a.e.-minimal among the weak upper gradients of~$\rep f$ for every representative~$\rep f$ of~$f$.
See e.g.~\cite[Dfn.~2.12]{AmbGigSav14} for the well-posedness of this notion, independently of the representatives of~$f$.

\paragraph{Asymptotic Lipschitz constants and asymptotic slopes}
Let~$\mcX$ be a Borel-probability extended-metric local structure.
For~$f\in \bLip(\mssd,\T)$ set
\begin{align*}
\Lia[\mssd]{f}(x,r)\eqdef \sup_{\substack{y,z\in B^\mssd_r(x)\\ \mssd(y,z)>0}} \frac{\abs{f(z)-f(y)}}{\mssd(z,y)}\comma \qquad x\in X\comma r>0 \fstop
\end{align*}
The \emph{asymptotic Lipschitz constant}~$\Lia[\mssd]{f}\colon X\rar [0,\infty]$ is defined as
\begin{align*}
\Lia[\mssd]{f}(x)\eqdef \lim_{r\downarrow 0} \Lia[\mssd]{f}(x,r) \comma
\end{align*}
with the usual convention that~$\Lia[\mssd]{f}(x)=0$ whenever~$x$ is a $\mssd$-isolated point in~$X$.
We drop the subscript~$\mssd$ whenever apparent from context.
Note that~$\Lia{f}$ is \mbox{$\mssd$-u.s.c.}\ by construction, thus if~$(\mcX,\mssd)$ is a metric (as opposed to: extended metric) local structure, then~$\Lia{f}$ is $\T$-u.s.c.\ as well, and therefore it is $\Bo{\T}$-measurable.

\paragraph{Cheeger energies}
Each metric measure space~$(X,\mssd,\mssm)$ is naturally endowed with a convex local energy functional, called the \emph{Cheeger energy} of the space; e.g.~\cite{AmbGigSav14}.
In the case when the Cheeger energy is quadratic, it is a local Dirichlet form, and the l.s.c.\ $L^2(\mssm)$-relaxation of the natural energy on Lipschitz functions.
Several ---~\emph{a priori} inequivalent~--- definitions of Cheeger energy are possible. We collect here three of them, referring to~\cite{Sav19} for additional ones.

\begin{defs}[{\cite[Thm.~4.5]{AmbGigSav14}}]
Let~$(\mcX,\T)$ be a Borel-probability extended-metric local structure.
The \emph{Cheeger energy} of~$f\in L^2(\mssm)$ is defined as
\begin{gather*}
\Ch[*,\mssd,\mssm](f)\eqdef \int \slo[*]{f}^2 \diff\mssm\comma \qquad
\dom{\Ch[*,\mssd,\mssm]}\eqdef \set{f\in L^2(\mssm) : \Ch[*,\mssd,\mssm](f)<\infty} \fstop
\end{gather*}
\end{defs}

\begin{defs}[{\cite[Dfn.~6.1]{AmbErbSav16}}]\label{d:Cheeger}
Let~$(\mcX,\T)$ be a Borel-probability extended-metric local structure.
The \emph{Cheeger energy} of~$f\in L^2(\mssm)$ is defined as
\begin{gather*}
\Ch[a,\mssd,\mssm](f)\eqdef \inf\nliminf \int g_n^2 \diff\mssm\comma \quad
\dom{\Ch[a,\mssd,\mssm]}\eqdef \set{f\in L^2(\mssm) : \Ch[a,\mssd,\mssm](f)<\infty} \comma
\end{gather*}
where the infimum is taken over all sequences~$\seq{f_n}_n \subset \bLip(\mssd,\T)$, $L^2(\mssm)$-strongly converging to~$f$, and all sequences~$\seq{g_n}_n$ of $\Bo{\T}^\mssm$-measurable functions satisfying~$g_n\geq \Lia{f_n}$ $\mssm$-a.e..
\end{defs}

\begin{defs}[{\cite[Rmk.~5.12]{AmbGigSav14}}]\label{d:CheegerW}
Let~$(\mcX,\T)$ be a Borel-probability extended-metric local structure.
The \emph{Cheeger energy} of~$f\in L^2(\mssm)$ is defined as
\begin{gather*}
\Ch[w,\mssd,\mssm](f)\eqdef \int \slo[w]{f}^2 \diff\mssm\comma \qquad
\dom{\Ch[w,\mssd,\mssm]}\eqdef \set{f\in L^2(\mssm) : \Ch[w,\mssd,\mssm](f)<\infty} \fstop
\end{gather*}

\end{defs}

In all cases, we shall omit the specification of either~$\mssd$,~$\mssm$ or both, whenever apparent from context.
We refer the reader to~\cite[\S4]{AmbGigSav14} for a thorough treatment of~$\Ch[*]$, and to~\cite[\S6]{AmbErbSav16} for a thorough treatment of~$\Ch[a]$ in the setting of probability extended metric measure space.

In the following, in order to refer to results in the literature, we shall need to make use of both definitions. To this end, in the next quite technical result, we show that they coincide in the present setting.

\begin{prop}\label{p:ConsistencyCheeger}
Let~$(\mcX,\T)$ be a Borel-probability extended-metric local structure. Then,
\begin{equation*}
\Ch[*,\mssd,\mssm]=\Ch[a,\mssd,\mssm]=\Ch[w,\mssd,\mssm]\comma
\end{equation*}
and each of these functionals is densely defined on~$L^2(\mssm)$.

\begin{proof}
The domain of~$\Ch[*,\mssd,\mssm]$ contains~$\bLip(\mssd,\Bo{\T})$, and the latter is dense in~$L^2(\mssm)$ by e.g.~\cite[Prop.~4.1]{AmbGigSav14}.
Let us show the identification.
Firstly, note that our Definition~\ref{d:Cheeger} (i.e.~\cite[Dfn.~6.1]{AmbErbSav16}) differs from~\cite[Dfn.~5.1]{Sav19} ---~as do our definition of the asymptotic Lipschitz constant (again after~\cite{AmbErbSav16}) and the one in~\cite{Sav19}~---. Thus, we need to show that our definition of~$\Ch[a]$ coincides with the one of~$\CE[2]$ in~\cite[Dfn.~5.1]{Sav19}.
Once this identity is established, the assertion will be a consequence of the identification of both~$\Ch[*]$ and~$\CE[2]$ with~$\Ch[w]$.

The identification of~$\CE[2]$ with~$\Ch[w]$ is shown in~\cite[Thm.~11.7]{Sav19} for the choice $\msA\eqdef \bLip(X,\mssd,\T)$.
The identification of~$\Ch[*]$ with~$\Ch[w]$ is shown in~\cite[Thm.~6.2]{AmbGigSav14}. The assumption in Equation~(4.2) there is trivially satisfied since~$\mssm X=1$.
Thus, the proof is concluded if we show that
\begin{equation}\label{eq:p:Cheeger:1}
\Ch[w]\leq \Ch[a]\leq \CE[2]\fstop
\end{equation}

Since~$\T_\mssd$ is finer than~$\T$, for each~$x\in X$ and for each neighborhood~$U$ of~$x$, there exists~$r>0$ so that~$B^\mssd_r(x)\subset U$. Thus, for any $\Bo{\T}$-measurable~$f\colon X\rar \overline{\R}$,
\begin{align*}
\sup_{x,y\in B^\mssd_r(x)} \frac{f(z)-f(y)}{\mssd(x,y)} \leq \sup_{x,y\in U} \frac{f(z)-f(y)}{\mssd(x,y)} \fstop
\end{align*}
As a consequence, $\Lia[\mssd]{f}$~is dominated by the asymptotic Lipschitz constant of~$f$ as defined in~\cite[Eqn.~(2.48)]{Sav19}, and the second inequality in~\eqref{eq:p:Cheeger:1} follows.

The first inequality in~\eqref{eq:p:Cheeger:1} is a consequence of~\cite[Prop.~6.3$(b)$ and $(g)$]{AmbErbSav16}. Importantly, we note that~\cite{AmbErbSav16} denotes by~$\slo[w]{f}$ the minimal relaxed slope~$\slo[*]{f}$.
\end{proof}
\end{prop}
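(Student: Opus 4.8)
The plan is to prove Proposition~\ref{p:ConsistencyCheeger} by reducing all three functionals to the minimal-weak-upper-gradient energy $\Ch[w]$, since that notion behaves robustly under the weak topological hypotheses in play. First I would observe that the domain of $\Ch[*,\mssd,\mssm]$ contains $\bLip(\mssd,\Bo{\T})$: indeed any bounded $\mssd$-Lipschitz Borel function has its slope bounded by its Lipschitz constant $\mssm$-a.e., so it is a relaxed slope. Density of $\bLip(\mssd,\Bo{\T})$ in $L^2(\mssm)$ then follows from~\cite[Prop.~4.1]{AmbGigSav14}, which applies since $(\mcX,\mssd)$ is a Borel-probability extended-metric local structure. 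Hence once the three energies are identified, all are densely defined.

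The core of the argument is the chain of identifications. I would first invoke~\cite[Thm.~6.2]{AmbGigSav14} to get $\Ch[*,\mssd,\mssm]=\Ch[w,\mssd,\mssm]$: the only hypothesis beyond the extended-metric-measure structure is the compression-type condition of Equation~(4.2) there, which is automatic because $\mssm X=1$. Next I would identify $\Ch[a,\mssd,\mssm]$ with the energy $\CE[2]$ of Savar\'e~\cite[Dfn.~5.1]{Sav19} for the algebra of generators $\msA\eqdef\bLip(X,\mssd,\T)$, and then invoke~\cite[Thm.~11.7]{Sav19}, which gives $\CE[2]=\Ch[w]$ for this choice of $\msA$. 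The subtlety — and what I expect to be the main obstacle — is precisely that \emph{our} Definition~\ref{d:Cheeger} of $\Ch[a]$ (following~\cite{AmbErbSav16}) uses a different asymptotic Lipschitz constant $\Lia[\mssd]{f}$ (defined via $\mssd$-balls $B^\mssd_r(x)$) than the one used in~\cite{Sav19} (defined via $\T$-neighborhoods). So I cannot simply quote $\CE[2]=\Ch[a]$; I must compare the two notions.

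To carry out that comparison I would prove the sandwich
\begin{equation*}
\Ch[w]\leq \Ch[a]\leq \CE[2]
\end{equation*}
which, combined with the two already-established equalities $\Ch[*]=\Ch[w]$ and $\CE[2]=\Ch[w]$, forces all four functionals to coincide, in particular $\Ch[a]=\Ch[w]=\Ch[*]$. For the right-hand inequality: since $\T_\mssd$ is finer than $\T$, for every $x$ and every $\T$-neighborhood $U$ of $x$ there is $r>0$ with $B^\mssd_r(x)\subset U$, so the difference quotients over $B^\mssd_r(x)$ are dominated by those over $U$; taking $\limsup$ as $r\downarrow 0$ shows $\Lia[\mssd]{f}$ is pointwise $\le$ the asymptotic Lipschitz constant of~\cite[Eqn.~(2.48)]{Sav19}. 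Since both $\Ch[a]$ and $\CE[2]$ are defined as infima of $L^2$-limits of (functions dominating) these asymptotic slopes along $L^2$-convergent sequences in $\bLip(\mssd,\T)$, the smaller pointwise quantity yields the smaller energy, i.e.\ $\Ch[a]\le\CE[2]$. For the left-hand inequality $\Ch[w]\le\Ch[a]$, I would cite~\cite[Prop.~6.3(b) and (g)]{AmbErbSav16}, taking care to note the notational clash that~\cite{AmbErbSav16} writes $\slo[w]{f}$ for what we (following~\cite{AmbGigSav14}) call the minimal relaxed slope $\slo[*]{f}$ — so that their statement, read in our notation, is exactly $\Ch[w]\le\Ch[a]$. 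Assembling these inequalities with the two equalities completes the proof; the only genuine work beyond bookkeeping is the $\Lia$-comparison, which is short once the finer-topology observation is in hand.
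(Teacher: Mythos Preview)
Your proposal is correct and follows essentially the same route as the paper: reduce everything to $\Ch[w]$ via the identifications $\Ch[*]=\Ch[w]$ from~\cite[Thm.~6.2]{AmbGigSav14} and $\CE[2]=\Ch[w]$ from~\cite[Thm.~11.7]{Sav19}, then squeeze $\Ch[a]$ between $\Ch[w]$ and $\CE[2]$ using the finer-topology comparison of asymptotic Lipschitz constants and~\cite[Prop.~6.3(b),(g)]{AmbErbSav16}, noting the notational clash on $\slo[w]{\cdot}$. The only differences are cosmetic ordering.
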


\noindent In light of the Proposition, we shall henceforth simply write~$\Ch$ in place of either~$\Ch[*]$, $\Ch[w]$, or~$\Ch[a]$.


\subsubsection{Tensorization and its consequences}\label{sss:TensorizationConseq}
In this section we present some results on product spaces, and in particular the tensorization of the Rademacher property for~\MLDS's.
As explained in~\S\ref{sss:Rademacher} below, the tensorization of the Rademacher property for~$(\mcX,\cdc,\mssd)$ is one cornerstone of our proof of the Rademacher property for~$\ttonde{\dUpsilon,\SF{\dUpsilon}{\QP},\mssd_\dUpsilon}$.
However, the tensorization of the Rademacher property is also of independent interest.
In the literature, it seems to have been addressed only in the case when~$\EE{X}{\mssm}=\Ch[\mssd,\mssm]$ is the Cheeger energy, in connection with the tensorization of the Cheeger energy under local doubling and Poincar\'e or under the $\RCD$ condition (see~\S~\ref{sss:RCD}), cf.~\cite{AmbGigSav14b},~\cite{AmbPinSpe15}.
Here, we discuss the case of general quasi-regular strongly local~$\EE{X}{\mssm}$, without any geometric (e.g.\ curvature) assumption.

\medskip

Let~$(\mcX,\mssd)$ be a (extended) metric local structure and~$n\in\N$, and denote by~$\mssd_\tym{n}$ the $n$-fold (extended) $\ell^2$-product distance on~$X^\tym{n}$.
The next result, consequence of the inequality $\norm{\emparg}_{\ell^\infty}\leq \norm{\emparg}_{\ell^2}$ on~$\R^n$, complements Proposition~\ref{p:Products}.

\begin{prop}\label{p:Products2}
Let~$(\mcX,\mssd)$ be a (extended) metric local structure, resp.\ $(\mcX,\cdc,\mssd)$ be an \parEMLDS. Then, for every~$n\geq 2$, the pair~$(\mcX^\otym{n},\mssd_\tym{n})$ is a (extended) metric local structure, resp.\ the triple $(\mcX^\otym{n},\cdc^\otym{n},\mssd_\tym{n})$ is an \parEMLDS.
\end{prop}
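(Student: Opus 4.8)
The plan is to prove both claims at once: once $(\mcX^\otym{n},\mssd_\tym{n})$ is known to be an (extended) metric local structure, the assertion about \parEMLDS's is immediate, since the last sentence of Proposition~\ref{p:Products} already gives that $(\mcX^\otym{n},\cdc^{\tym n})$ is a \TLDS (recall from Notation~\ref{n:ProductCdC} that $\cdc^\otym{n}$ is built from the pre-square-field $\cdc^{\tym n}$), and Definition~\ref{d:EMLDS} asks for nothing more. So the entire content lies in verifying Definition~\ref{d:EMLS} for $\mssd_\tym{n}$. Condition~\iref{i:d:EMLS:0}, that $\mcX^\otym{n}$ be a topological local structure, would be obtained as in Proposition~\ref{p:Products} (which already yields the local-structure part) together with the standard facts that finite products preserve complete regularity and the Luzin property, that $\Bo{\T^\tym{n}}=\Bo{\T}^{\hotym n}\subset\A^{\hotym n}\subset(\Bo{\T^\tym{n}})^{\mssm^{\otym n}}$ by second countability of $(X,\T)$ (Remark~\ref{r:TLS}\iref{i:r:TLS:1}) and basic properties of completions of product $\sigma$-algebras, that $\mssm^{\otym n}$ is Radon with full $\T^\tym{n}$-support (a product of full-support measures has full support, and a finite Borel measure on the Suslin space $X^\tym{n}$ is Radon), and that Definition~\ref{d:TLS}\iref{i:d:TLS:4} holds by taking products of coordinatewise neighbourhoods.

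The key device for the rest is the comparison with the $\ell^\infty$-product (extended) distance $\mssd^\infty_\tym{n}(\mbfx,\mbfy)\eqdef\max_{i\leq n}\mssd(x_i,y_i)$: the elementary bound $\norm{\emparg}_{\ell^\infty}\leq\norm{\emparg}_{\ell^2}\leq\sqrt{n}\,\norm{\emparg}_{\ell^\infty}$ on $\R^n$ gives $\mssd^\infty_\tym{n}\leq\mssd_\tym{n}\leq\sqrt{n}\,\mssd^\infty_\tym{n}$, with the usual conventions at $+\infty$. From this I would read off at once that $\Bdd{\mssd_\tym{n}}=\Bdd{\mssd^\infty_\tym{n}}$, that the finiteness components coincide and equal $B^{\mssd_\tym{n}}_\infty(\mbfx)=\prod_{i\leq n}B^\mssd_\infty(x_i)$, and that on each such component $\mssd_\tym{n}$ and $\mssd^\infty_\tym{n}$ are bi-Lipschitz equivalent. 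Completeness of $(X^\tym{n},\mssd_\tym{n})$ then follows because each $B^\mssd_\infty(x_i)$ is complete by hypothesis, a finite $\ell^\infty$-product of complete metric spaces is complete, and bi-Lipschitz equivalence preserves completeness. For the extended metric-topological structure (Definition~\ref{d:AES}) I would take a Hausdorff uniformity $\UP$ of $\T$-continuous pseudo-distances on $X$ with $\mssd=\sup\UP$ and let $\UP_n$ be the order-closure of the directed family $\bigl\{\bigl(\sum_{i\leq n}(\mssd'_i)^2\bigr)^{1/2}:\mssd'_i\in\UP\bigr\}$; each of these is a finite, $\T^\tym{n}$-continuous pseudo-distance, one has $\sup\UP_n=\mssd_\tym{n}$ (the supremum over each coordinate may be taken independently, since $\UP$ is directed), and $\UP_n$ generates $\T^\tym{n}$ because each box $\prod_{i\leq n}B^{\mssd'_i}_r(x_i)$ contains the corresponding $\UP_n$-ball around $\mbfx$ while every $\UP_n$-ball contains such a box. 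This establishes Definition~\ref{d:EMLS}\iref{i:d:EMLS:1}.

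It then remains to identify the localizing ring $\msE^\otym{n}$ of $\mcX^\otym{n}$ with $\Bdd{\mssd_\tym{n}}\cap\A^{\hotym n}$, i.e.\ to check Definition~\ref{d:EMLS}\iref{i:d:EMLS:2}, using the hypothesis $\msE=\Bdd{\mssd}\cap\A$. Any rectangle $E_1\times\cdots\times E_n$ with $E_i\in\msE$ is $\mssd^\infty_\tym{n}$-bounded and lies in $\A^{\hotym n}$, so the pluri-rectangle algebra generating $\msE^\otym{n}$ is contained in $\Bdd{\mssd_\tym{n}}\cap\A^{\hotym n}$; conversely, fixing a localizing sequence $\seq{E_h}_h$ for $\msE$, the products $\seq{E_h^\tym{n}}_h$ increase to $X^\tym{n}$, lie in $\A^{\hotym n}$, are $\mssd_\tym{n}$-bounded, and satisfy $\mssm^{\otym n}(E_h^\tym{n})=(\mssm E_h)^n<\infty$, while an arbitrary $\mssd_\tym{n}$-bounded set is contained in a box $\prod_{i\leq n}B^\mssd_R(x^0_i)$, each $B^\mssd_R(x^0_i)$ being $\mssd$-bounded and $\A$-measurable (because $\mssd(\emparg,x^0_i)$ is $\T$-lower semicontinuous, hence $\Bo{\T}$-measurable, hence in $\A$), thus lying in $\msE$, so the box is inside some $E_h^\tym{n}$. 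Hence $\seq{E_h^\tym{n}}_h$ is a localizing sequence generating $\Bdd{\mssd_\tym{n}}\cap\A^{\hotym n}$, so $\msE^\otym{n}=\msE_{\mssd_\tym{n}}$ and $\mssm^{\otym n}$ is finite on it. Combined with Proposition~\ref{p:Products}, this would close the proof, uniformly in whether $\mssd$ is a distance or an extended distance (if $\mssd$ is finite then so is $\mssd_\tym{n}$, and it metrizes $\T^\tym{n}$, placing one in the non-extended cases). The only genuinely delicate steps I anticipate are the construction of $\UP_n$ together with the identity $\sup\UP_n=\mssd_\tym{n}$ required by Definition~\ref{d:AES}, and the measurability of the balls $B^\mssd_R(x^0_i)$ via lower semicontinuity of $\mssd$; the rest is routine bookkeeping comparing the $\ell^2$- and $\ell^\infty$-product distances.
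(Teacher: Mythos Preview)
Your proposal is correct and follows exactly the route the paper indicates: the paper gives no detailed proof, stating only that the result is a ``consequence of the inequality~$\norm{\emparg}_{\ell^\infty}\leq \norm{\emparg}_{\ell^2}$ on~$\R^n$'' complementing Proposition~\ref{p:Products}. You have simply spelled out in full the bookkeeping that the paper leaves implicit, including the construction of the product uniformity~$\UP_n$ and the identification~$\msE^\otym{n}=\msE_{\mssd_\tym{n}}$.
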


In the same spirit of~\S\ref{ss:AnalyticForms}, in the next sections we will compare the metric properties of~$\ttonde{\dUpsilon,\mssd_\dUpsilon}$ with those of the infinite-product space~$(X^\tym{\infty},\mssd_\tym{\infty})$.
In order to make sense of the approximation of the latter extended metric space in terms of its finite-dimensional approximations, we shall need the following assumption.

\begin{ass}[Tensorization]\label{d:ass:Tensorization}
We say that an \EMLDS $(\mcX,\cdc,\mssd)$ satisfies the \emph{tensorization} assumption if
\begin{align}\tag*{$(\otimes)_{\ref{d:ass:Tensorization}}$}\label{ass:T}
\cdc^\otym{n}(f^\asym{n})=\slo[\mssd_\tym{n}]{f^\asym{n}}^2 \quad \as{\mssm} \comma \qquad f^\asym{n}\in \bLip(\mssd,\A)\fstop
\end{align}
\end{ass}

Let us put Assumption~\ref{ass:T} into context: while it is presented here, for it ought to be regarded as an assumption on base spaces, rather than on configuration spaces, the assumption will be used only in~\S\ref{sss:AnalyticCheeger} to show the identification of the Cheeger energy~$\Ch[\mssd_\dUpsilon,\QP]$ with the form~$\EE{\dUpsilon}{\QP}$.
As further clarified in the preliminary discussion to Theorem~\ref{t:IdentificationCheeger} below, this identification is ---~from the point of view of Dirichlet-forms theory~--- a very strong statement.
In the same spirit of all our results, we will proceed to a proof lifting properties of the base spaces to the corresponding configuration spaces.
With the above goal in mind, it is therefore natural to assume a similar coincidence of the diffusion structure of an \MLDS with the correspoding metric measure structure, which takes the form~\ref{ass:T}.
We collect some of its consequences in the next remark.

\begin{rem}\label{r:Ass:T}
Assumption~\ref{ass:T} entails both the coincidence of the square field~$\SF{X}{\mssm}$ with the $\mssd$-slope, and the tensorization of the slope on product spaces of every order.
In particular,
\begin{enumerate*}[$(a)$]
\item since the $\mssd$-slope of a $\mssd$-Lipschitz function is always dominated by its Lipschitz constant, the assumption implies~$(\Rad{\mssd_{\tym{n}}}{\mssm^{\otym n}})$ for every~$n\in \N_1$;
\item since~$\cdc^\otym{n}$ is a quadratic functional, the assumption implies that the metric measure space~$(X^\tym{n},\mssd_{\tym n},\mssm^{\otym n})$ is \emph{infinitesimally Hilbertian}, i.e.\ that the corresponding Cheeger energy~$\Ch[*,\mssd_\tym{n}, \mssm^\otym{n}]$ is a quadratic form (see Definition~\ref{d:IH}) for every~$n\in \N_1$;
\item under Assumption~\ref{ass:T}, we have the coincidence of~$\ttonde{\EE{X}{\mssm},\dom{\EE{X}{\mssm}}}$ with the Cheeger energy~$\Ch[*,\mssd,\mssm]$.
Indeed, since Assumption~\ref{ass:T} clearly implies~$(\Rad{\mssd}{\mssm})$, we have~$\EE{X}{\mssm}\leq \Ch[*,\mssd,\mssm]$ by Proposition~\ref{p:IneqCdC}.
The opposite inequality holds by $L^2(\mssm)$-lower-semicontinuity of the square field~$\SF{X}{\mssm}$ and definition of~$\slo[*,\mssd,\mssm]{\emparg}$.
\end{enumerate*}
\end{rem}

Whereas at some instance in the following sections we will assume~\ref{ass:T}, it is worth to point out that on well-behaved spaces the assumption is a consequence of the same assertion \emph{only for~$n=1$}.
This can be checked by separately studing the two opposite inequalities, corresponding to local versions of the Rademacher and Sobolev-to-Lipschitz properties.
Let us now provide a proof of the tensorization of the Rademacher property, relying on some technical arguments in~\cite{AmbGigSav14b}.

\begin{prop}\label{p:IneqCdC}
Let~$(\mcX,\cdc,\mssd)$ be an \MLDS satisfying~$(\Rad{\mssd}{\mssm})$. Then,
\begin{equation*}
\SF{X}{\mssm}(f)\leq \slo[w,\mssd_{\mssm}]{f}^2 \leq \slo[w,\mssd]{f}^2 \quad \as{\mssm}\comma \qquad f\in \bLip(\mssd)\fstop
\end{equation*}
In particular,~$\EE{X}{\mssm}\leq \Ch[\mssd_{\mssm}]\leq \Ch[\mssd]$.
\begin{proof}
By~\eqref{eq:EquivalenceRadStoL} the space~$(\mcX,\cdc,\mssd)$ satisfies~$(\dRad{\mssd}{\mssm})$, which implies the second inequality by definition of the objects involved.
Thus, it suffices to show the first equality.
Firstly, let us note that~$(X,\mssd_{\mssm})$ is a complete extended metric-topological space in the sense of Definition~\ref{d:AES} by Lemma~\ref{l:RadCompleteness}.
Suppose momentarily that~$\mssm$ be a probability measure.
Then, we can apply~\cite[Thm.~12.5]{AmbErbSav16} to the complete extended metric-topological Radon probability space~$(X,\T,\mssd,\mssm)$ and conclude the assertion.

\paragraph{Heuristics}
Now, let us show how to extend the statement to the case of any $\sigma$-finite measure~$\mssm$.
We need to treat simulatneously the square field operator~$\SF{X}{\mssm}$ and the minimal weak upper gradient~$\slo[w,\mssd_\mssm,\mssm]{\emparg}$.
To this end, we find a probability density~$\theta\in L^1(\mssm)$, set~$\tilde\mssm\eqdef \theta\mssm$, and show that~$\SF{X}{\mssm}=\SF{X}{\tilde\mssm}$ and~$\slo[w,\mssd_\mssm,\mssm]{\emparg}=\slo[w,\mssd_\mssm,\tilde\mssm]{\emparg}$.
The conclusion follows from these equalities together with the inequality established in the probability case applied to~$\tilde\mssm$.

For the square field, we make use of the result on the Girsanov transform of~$\EE{X}{\mssm}$ by a factor~$\sqrt{\theta}\in\dom{\EE{X}{\mssm}}$, thoroughly discussed in the generality of quasi-regular Dirichlet spaces by C.-Z.~Chen and W.~Sun in~\cite{CheSun06}.
For the minimal weak upper gradient, we make use of the locality result for~$\slo[w,\mssd_\mssm,\mssm]{\emparg}$ under transformation of the reference measure by a factor~$\theta$ locally bounded away from~$0$ and infinity on neighborhoods of compact sets, established by L.~Ambrosio, N.~Gigli, and G.~Savar\'e in~\cite{AmbGigSav14}.

\paragraph{Construction of a density} We start by showing that there exists a $\Bo{\T}$-measurable~$\theta\colon X\to \R$ satisfying:
\begin{equation*}
\theta\in L^1(\mssm)\cap L^\infty(\mssm)\comma\qquad \norm{\theta}_{L^1(\mssm)}=1\comma\qquad \theta>0 \as{\mssm}\comma\qquad \sqrt\theta\in \dom{\EE{X}{\mssm}}\fstop
\end{equation*}
Indeed, let~$x_0\in X$ be fixed,~$\seq{r_i}_i$,~$r_{i+1}-r_i>1$ be a diverging sequence of radii, and note that~$B_i\eqdef B^\mssd_{r_i}(x_0)\in\Ed$ is an exhaustion of~$X$.
Further define
\begin{align*}
g_i\colon x\mapsto 0\vee \ttonde{r_i-\mssd(x_0,x)}\wedge 1\comma \qquad f_i\eqdef \frac{g_i}{\sqrt{\mssm B_{i+1}} }\comma\qquad \varphi_n\eqdef \sum_i^n 2^{-i-1} f_i\fstop
\end{align*}
Since~$\mssm B_i<\infty$ for every~$i$, we have that~$0<f_i\in L^2(\mssm)$ for every~$i$.
Up to choosing~$r_1\gg 1$, we may assume with no loss of generality that~$\mssm B_{i+1} \geq \norm{g_i}_{L^2(\mssm)}^2\geq 1$ for every~$i$, in such a way that~$\norm{f_i}_{L^2(\mssm)}\leq 1$ for every~$i$.
Thus, the sequence~$\seq{\varphi_n}_n$ satisfies
\begin{align*}
\norm{\varphi_n}_{L^2(\mssm)}\leq 1\comma \qquad  L^2(\mssm)\text{-}\nlim \varphi_n=\varphi\eqdef \sum_i^\infty 2^{-i-1} f_i\fstop
\end{align*}
Furthermore,~$f_i\in \bLip(\mssd)$ and~$\Li[\mssd]{f_i}\leq \Li[\mssd]{g_i}\leq 1$ for every~$i$ by definition, hence~$\varphi_n\in \bLip(\mssd)$ and~$\Li[\mssd]{\varphi_n}\leq 1$ for every~$n$ by triangle inequality for the Lipschitz semi-norm~$\Li[\mssd]{\emparg}$.
As a consequence,~$\seq{\varphi_n}_n$ is uniformly bounded in~$\dom{\EE{X}{\mssm}}$ by~$(\Rad{\mssd}{\mssm})$ and thus~$\varphi\in \dom{\EE{X}{\mssm}}$ by e.g.~\cite[Lem.~2.36, Rmk.~2.37]{LzDSSuz20}.
Since
\begin{equation}\label{eq:l:IneqCdC:0.25}
\varphi\restr_{B_i}\geq 2^{-i-1}f_i\restr_{B_i}=2^{-i-1}/\norm{g_i}_{L^2(\mssm)}>0
\end{equation}
and since~$\seq{B_i}_i$ is a covering of~$X$, then~$\varphi>0$ everywhere on~$X$.
Letting~$\theta\eqdef \varphi^2/\norm{\varphi}_{L^2(\mssm)}^2$ shows the assertion.

\paragraph{Dirichlet forms}
Now, let us set~$\tilde\mssm\eqdef \theta\mssm\sim \mssm$.
Since~$\sqrt\theta\in\dom{\EE{X}{\mssm}}$, by~\cite[Thm.~2.2]{CheSun06} the form
\begin{align*}
\EE{X}{\tilde\mssm}(f)\eqdef \int_X \SF{X}{\mssm}(f)\diff\tilde\mssm\comma \qquad f\in\dom{\EE{X}{\mssm}}\comma 
\end{align*}
is closable, and its closure~$\ttonde{\EE{X}{\tilde\mssm},\dom{\EE{X}{\tilde\mssm}}}$ is a Dirichlet form on~$L^2(\tilde\mssm)$ with square field operator~$\SF{X}{\tilde\mssm}$ satisfying
\begin{align}\label{eq:l:IneqCdC:0.5}
\SF{X}{\tilde\mssm}(f)=\SF{X}{\mssm}(f) \as{\tilde\mssm}\comma \qquad f\in \dom{\EE{X}{\mssm}}\cap L^\infty(\tilde\mssm)\fstop
\end{align}
Further observe that~$\dom{\EE{X}{\tilde\mssm}}\cap L^\infty(\tilde\mssm)\supset \dom{\EE{X}{\mssm}}\cap L^\infty(\mssm)\supset \bLip(\mssd)$ by~$(\Rad{\mssd}{\mssm})$.

\paragraph{Metric objects} 
On the complete extended metric-topological Radon measure space~$(X,\mssd_\mssm,\mssm)$ we have
\begin{equation}\label{eq:l:IneqCdC:1}
\slo[*,\mssd_\mssm]{f}=\slo[w,\mssd_\mssm]{f}\comma \qquad f\in\bLip(\mssd_\mssm,\T)\fstop
\end{equation}
This readily follows from the locality of both~$\slo[*]{\emparg}$ and~$\slo[w]{\emparg}$ in the sense of e.g.~\cite[Prop.~4.8(b)]{AmbGigSav14} and~\cite[Eqn.~(2.18)]{AmbGigSav14b}.

Secondly, let us verify that the space~$(X,\mssd_\mssm,\mssm)$ satisfies the assumptions of~\cite[Lem.~4.11]{AmbGigSav14}.
It suffices to show that~$\theta$ as above satisfies~\cite[Eqn.~(4.21)]{AmbGigSav14}, that is:
for every $\T$-compact~$K\subset X$, there exist~$r_K>0$ and constants~$c_K$, $C_K$ so that
\begin{align}\label{eq:AGS4.11}
0<c_K\leq \theta \leq C_K <\infty \quad \as{\mssm} \text{ on~} B^{\mssd_\mssm}_{r_K}(K)\eqdef \set{x: \mssd_\mssm(x, K)\leq r_K} \fstop
\end{align}
Since~$\theta\in L^\infty(\mssm)$, it suffices to show the existence of~$c_K$ and~$r_K$.
In fact, we show that there exists~$c_K$ satisfying~\eqref{eq:AGS4.11} with~$r_K=1$.
Since~$\seq{B_i}_i$ is an exhaustion of~$X$, and since~$\mssd\leq \mssd_\mssm$ by~$(\Rad{\mssd}{\mssm})$ and~\eqref{eq:EquivalenceRadStoL}, for every $\T$-compact~$K$ there exists~$i_K$ so that~$B_1^{\mssd_\mssm}(K)\subset B_1^\mssd(K)\subset B_{i_K}$.
By~\eqref{eq:l:IneqCdC:0.25}, it suffices to set~$c_K\eqdef 2^{-2i_K-2} \norm{\varphi}_{L^2(\mssm)}^{-2}\norm{g_{i_K}}_{L^2(\mssm)}^{-1}$.

Now, applying~\cite[Lem.~4.11]{AmbGigSav14} to the probability measure~$\tilde\mssm$, we have that
\begin{align}\label{eq:l:IneqCdC:2}
\slo[*,\mssd_\mssm,\mssm]{f}=\slo[*,\mssd_\mssm,\tilde\mssm]{f}\comma \qquad f\in\bLip(\mssd)\subset \bLip(\mssd_\mssm, \T)\fstop
\end{align}

\paragraph{Conclusion}
Respectively by~\eqref{eq:l:IneqCdC:0.5},~\cite[Lem.~12.3]{AmbErbSav16},~\eqref{eq:l:IneqCdC:1},~\eqref{eq:l:IneqCdC:2}, and again~\eqref{eq:l:IneqCdC:1},
\begin{align*}
\SF{X}{\mssm}(f)=\SF{X}{\tilde\mssm}\leq \slo[w,\mssd_\mssm,\tilde\mssm]{f} = \slo[*,\mssd_\mssm,\tilde\mssm]{f} = \slo[*,\mssd_\mssm,\mssm]{f} = \slo[w,\mssd_\mssm,\mssm]{f}\comma \qquad f\in\bLip(\mssd)\comma
\end{align*}
which concludes the proof.
\end{proof}
\end{prop}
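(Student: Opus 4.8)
The plan is to establish the chain of (in)equalities $\SF{X}{\mssm}(f)\leq \slo[w,\mssd_\mssm]{f}^2\leq \slo[w,\mssd]{f}^2$ on $\bLip(\mssd)$, the second of which is immediate. Indeed, under $(\Rad{\mssd}{\mssm})$ the scheme~\eqref{eq:EquivalenceRadStoL} yields $(\dRad{\mssd}{\mssm})$, i.e.\ $\mssd\leq \mssd_\mssm$; since a larger distance produces a smaller (minimal) weak upper gradient, we get $\slo[w,\mssd_\mssm]{f}\leq \slo[w,\mssd]{f}$ $\mssm$-a.e.\ at once. So the whole difficulty is in the first inequality $\SF{X}{\mssm}(f)\leq \slo[w,\mssd_\mssm]{f}^2$.

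First I would dispose of the probability case: if $\mssm X=1$, then $(X,\T,\mssd_\mssm,\mssm)$ is a complete extended metric-topological Radon probability space by Lemma~\ref{l:RadCompleteness}, and the inequality $\SF{X}{\mssm}(f)\leq \slo[w,\mssd_\mssm,\mssm]{f}^2$ is exactly~\cite[Thm.~12.5]{AmbErbSav16} applied on that space. The remaining work is the reduction from a $\sigma$-finite $\mssm$ to a finite one via a carefully chosen density $\theta$. I would construct $\theta=\varphi^2/\norm{\varphi}_{L^2(\mssm)}^2$ where $\varphi=\sum_i 2^{-i-1}f_i$ and $f_i$ are suitably normalized truncated distance functions $g_i=0\vee(r_i-\mssd(x_0,\emparg))\wedge 1$ on an exhausting sequence of balls $B_i=B^\mssd_{r_i}(x_0)\in\Ed$; the key points to check are $\theta\in L^1(\mssm)\cap L^\infty(\mssm)$, $\theta>0$ everywhere (from $\varphi\restr_{B_i}\geq 2^{-i-1}/\norm{g_i}_{L^2(\mssm)}>0$), and crucially $\sqrt\theta=\varphi/\norm{\varphi}_{L^2(\mssm)}\in\dom{\EE{X}{\mssm}}$, which follows because $\Li[\mssd]{\varphi_n}\leq 1$ uniformly (triangle inequality for the Lipschitz seminorm), so under $(\Rad{\mssd}{\mssm})$ the partial sums $\varphi_n$ are bounded in $\dom{\EE{X}{\mssm}}$ and their $L^2$-limit $\varphi$ lies in $\dom{\EE{X}{\mssm}}$ by the usual lower-semicontinuity/Banach--Saks argument (e.g.~\cite[Lem.~2.36]{LzDSSuz20}).

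With $\tilde\mssm\eqdef\theta\mssm\sim\mssm$ a probability measure, I would then identify the relevant objects on the two sides. On the analytic side, since $\sqrt\theta\in\dom{\EE{X}{\mssm}}$, the Girsanov/Doob transform result of Chen--Sun~\cite[Thm.~2.2]{CheSun06} gives that $\EE{X}{\tilde\mssm}(f)\eqdef\int\SF{X}{\mssm}(f)\diff\tilde\mssm$ is closable with carr\'e du champ $\SF{X}{\tilde\mssm}(f)=\SF{X}{\mssm}(f)$ $\tilde\mssm$-a.e.\ for $f\in\dom{\EE{X}{\mssm}}\cap L^\infty(\tilde\mssm)\supset\bLip(\mssd)$. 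On the metric side, I need $\slo[w,\mssd_\mssm,\mssm]{f}=\slo[w,\mssd_\mssm,\tilde\mssm]{f}$ for $f\in\bLip(\mssd)$: by locality of minimal relaxed slopes and minimal weak upper gradients (\cite[Prop.~4.8(b)]{AmbGigSav14}, \cite[Eqn.~(2.18)]{AmbGigSav14b}) it suffices to work with relaxed slopes and apply~\cite[Lem.~4.11]{AmbGigSav14}; this requires verifying~\cite[Eqn.~(4.21)]{AmbGigSav14}, namely that $\theta$ is bounded away from $0$ and $\infty$ on $\mssd_\mssm$-neighborhoods of $\T$-compact sets. Boundedness above is clear from $\theta\in L^\infty(\mssm)$; boundedness below with $r_K=1$ follows because, using $\mssd\leq\mssd_\mssm$, any $\T$-compact $K$ satisfies $B^{\mssd_\mssm}_1(K)\subset B^\mssd_1(K)\subset B_{i_K}$ for some $i_K$, whence $\theta\geq c_K\eqdef 2^{-2i_K-2}\norm{\varphi}_{L^2(\mssm)}^{-2}\norm{g_{i_K}}_{L^2(\mssm)}^{-1}$ on that set by~\eqref{eq:l:IneqCdC:0.25}. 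Finally, concatenating: applying the probability case to $\tilde\mssm$ gives $\SF{X}{\tilde\mssm}(f)\leq\slo[w,\mssd_\mssm,\tilde\mssm]{f}^2$ (after passing between relaxed slopes and weak upper gradients via~\cite[Lem.~12.3]{AmbErbSav16}), and the two identifications turn this into $\SF{X}{\mssm}(f)\leq\slo[w,\mssd_\mssm,\mssm]{f}^2$ $\mssm$-a.e.\ on $\bLip(\mssd)$. The statement $\EE{X}{\mssm}\leq\Ch[\mssd_\mssm]\leq\Ch[\mssd]$ follows by integrating and using the $L^2(\mssm)$-lower-semicontinuity of $\SF{X}{\mssm}$ together with the definition of $\Ch[w]=\Ch$ via Proposition~\ref{p:ConsistencyCheeger}. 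I expect the main obstacle to be the bookkeeping needed to keep the three notions of gradient ($\SF{X}{\mssm}$, $\slo[*]{\emparg}$, $\slo[w]{\emparg}$) and the two measures ($\mssm$, $\tilde\mssm$) aligned while invoking external results whose hypotheses are stated for probability (rather than $\sigma$-finite) reference measures.
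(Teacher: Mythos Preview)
Your proposal is correct and follows essentially the same approach as the paper's proof: the second inequality via $(\dRad{\mssd}{\mssm})$, the probability case via~\cite[Thm.~12.5]{AmbErbSav16}, and the reduction to a probability measure via the density $\theta=\varphi^2/\norm{\varphi}_{L^2(\mssm)}^2$ built from truncated distance functions, together with the Chen--Sun Girsanov transform~\cite{CheSun06} on the Dirichlet-form side and~\cite[Lem.~4.11]{AmbGigSav14} on the metric side. The order of operations and the cited external results match the paper exactly.
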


\begin{thm}[Tensorization of $(\mathsf{Rad})$]\label{t:Tensor} Let~$(X,\mssd,\cdc,\mssm)$ be an \MLDS satisfying~$(\Rad{\mssd}{\mssm})$.
Then $\ttonde{X^\tym{n},\mssd_\tym{n}, \cdc^{\otym{n}},\mssm^{\hotym n}}$ satisfies~$(\Rad{\mssd_\tym{n}}{\mssm^\otym{n}})$.
\begin{proof} It suffices to show the statements for~$n=2$.
The case of arbitrary~$n$ follows by induction.

By Proposition~\ref{p:Products2}, the product space is an \MLDS.
By Propositions~\ref{p:Products},~\ref{p:Products2}, and recalling~\eqref{eq:FuncSection},~\eqref{eq:CdCTensor}, and Notation~\ref{n:ProductCdC},
\begin{align}\label{eq:t:Tensor:1}
\cdc^\otym{2}(f^\asym{2})(x_1,x_2)=\SF{X}{\mssm}(f^\asym{2}_{x_2,1})(x_1)+\SF{X}{\mssm}(f^\asym{2}_{x_1,2})(x_2)\comma \qquad \forallae{\mssm^\otym{2}}~ (x_1,x_2)\in X^\tym{2} \fstop
\end{align}

Further denote by
\begin{align*}
\wslo[\mssd]{f^\asym{2}}^{\otym{2}}(x_1,x_2)\eqdef \sqrt{\wslo[\mssd]{f^\asym{2}_{x_2,1}}^2(x_1)+\wslo[\mssd]{f^\asym{2}_{x_1,2}}^2(x_2)} \comma \qquad f^\asym{2}\in \Lip(\mssd_\tym{2})\comma
\end{align*}
the \emph{Cartesian gradient} in~\cite[p.~1477]{AmbGigSav14b}, by
\begin{align*}
\slo[c,\mssd]{f^\asym{2}}^{\otym{2}}(x_1,x_2)\eqdef \sqrt{\slo[\mssd]{f^\asym{2}_{x_2,1}}^2(x_1)+\slo[\mssd]{f^\asym{2}_{x_1,2}}^2(x_2)} \comma \qquad f^\asym{2}\in \Lip(\mssd_\tym{2})\comma
\end{align*}
the \emph{Cartesian slope} in~\cite[Eqn.~(3.3)]{AmbPinSpe15}, and by~$\slo[*,c,\mssd]{f^\asym{2}}^{\otym{2}}$ the minimal relaxed gradient associated to the Cheeger energy
\begin{align*}
\Ch[*,c,\mssm^\otym{2}](f^\asym{2})\eqdef \inf\set{\liminf_n \int_{X^\tym{2}} \ttonde{\slo[c,\mssd]{f^\asym{2}_n}^{\otym{2}}}^2 \diff\mssm^\otym{2}}=\int_{X^\tym{2}} \ttonde{\slo[*,c,\mssd]{f^\asym{2}}^{\otym{2}}}^2 \diff\mssm^\otym{2} \comma
\end{align*}
where the infimum is taken over all sequences~$\seq{f^\asym{2}_n}_n\subset \Lip(\mssd_\tym{2})$ with~$L^2(\mssm^\otym{2})\text{-}\nlim f_n^\asym{2}=f^\asym{2}$.
Recall that, by e.g.~\cite[Thm.~2.2(ii)]{AmbPinSpe15}, for every~$f^\asym{2}\in\Lip(\mssd_\tym{2})$ there exists a sequence of functions~$\seq{f^\asym{2}_n}_n\subset \Lip(\mssd_\tym{2})$ such that
\begin{align}\label{eq:t:Tensor:2}
L^2(\mssm^\otym{2})\text{-}\nlim f^\asym{2}_n=f^\asym{2}\qquad \text{and} \qquad \slo[*,c,\mssd]{f^\asym{2}}^{\otym{2}}=L^2(\mssm^\otym{2})\text{-}\nlim \slo[*,c,\mssd]{f^\asym{2}_n}^{\otym{2}}\fstop
\end{align}

Now, by applying Proposition~\ref{p:IneqCdC} to the right-hand side of~\eqref{eq:t:Tensor:1}, we have that
\begin{align}\label{eq:t:Tensor:3}
\cdc^\otym{2}(f^\asym{2})\leq \ttonde{\wslo[\mssd]{f^\asym{2}}^{\otym{2}}}^2\comma \qquad f^\asym{2}\in \Lip(\mssd_\tym{2})\fstop
\end{align}
Letting~$f^\asym{2}$ and~$\seq{f^\asym{2}_n}_n$ be as in~\eqref{eq:t:Tensor:2}, it follows by $L^2(\mssm^\otym{2})$-lower semi-continuity of~$\cdc^\otym{2}$, that
\begin{align}\label{eq:t:Tensor:4}
\cdc^\otym{2}(f^\asym{2})\leq&\ \nliminf \cdc^\otym{2}(f^\asym{2}_n)\leq \nliminf \tonde{\wslo[\mssd]{{f^\asym{2}_n}_{\emparg,1}}^2+\wslo[\mssd]{{f^\asym{2}_n}_{\emparg,2}}^2}
\\
\leq&\ \nliminf \tonde{\slo[\mssd]{{f^\asym{2}_n}_{\emparg,1}}^2+\slo[\mssd]{{f^\asym{2}_n}_{\emparg,2}}^2}= \slo[*,c,\mssd]{f^\asym{2}}^{\otym{2}} \comma
\end{align}
where the second inequality is~\eqref{eq:t:Tensor:3}, the third one holds by definition of minimal weak upper gradient, and the final equality holds by~\eqref{eq:t:Tensor:2}.

By~\cite[Thm.~3.2]{AmbPinSpe15}, we have that~$\slo[*,c,\mssd]{f^\asym{2}}^{\otym{2}}=\slo[*,\mssd_\tym{2}]{f^\asym{2}}$.
We may thus continue from~\eqref{eq:t:Tensor:4} to obtain that
\begin{align*}
\cdc^\otym{2}(f^\asym{2})\leq& \slo[*,\mssd_\tym{2}]{f^\asym{2}} \leq \slo[\mssd_\tym{2}]{f^\asym{2}} \leq \Li[\mssd_\tym{2}]{f^\asym{2}} \comma \qquad f^\asym{2}\in \bLip(\mssd_{\tym{2}})\comma
\end{align*}
which concludes the proof.
\end{proof}
\end{thm}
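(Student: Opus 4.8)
The plan is to prove the statement by induction on $n$, with the base case $n=1$ being vacuous (it is exactly the hypothesis $(\Rad{\mssd}{\mssm})$), and the inductive step reducing as usual to the case $n=2$. So the real content is: assuming $(\mcX,\cdc,\mssd)$ is an \MLDS satisfying $(\Rad{\mssd}{\mssm})$, show that the two-fold product \MLDS $(\mcX^{\otym 2},\cdc^{\otym 2},\mssd_\tym{2})$ satisfies $(\Rad{\mssd_\tym{2}}{\mssm^\otym{2}})$. First I would record that $(\mcX^{\otym 2},\mssd_\tym{2})$ is again a metric local structure and $(\mcX^{\otym 2},\cdc^{\otym 2},\mssd_\tym{2})$ an \MLDS (Propositions~\ref{p:Products},~\ref{p:Products2}), so that the statement $(\Rad{\mssd_\tym{2}}{\mssm^\otym{2}})$ even makes sense; this also gives the product formula
\begin{align*}
\cdc^\otym{2}(f^\asym{2})(x_1,x_2)=\SF{X}{\mssm}(f^\asym{2}_{x_2,1})(x_1)+\SF{X}{\mssm}(f^\asym{2}_{x_1,2})(x_2)\qquad \forallae{\mssm^\otym{2}}\ (x_1,x_2)\fstop
\end{align*}

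The key idea is to bound $\cdc^\otym{2}$ on $\mssd_\tym{2}$-Lipschitz functions by the minimal relaxed slope of the product Cheeger energy, using that the latter tensorizes. Concretely: apply Proposition~\ref{p:IneqCdC} (the one-dimensional inequality $\SF{X}{\mssm}(g)\le \slo[w,\mssd]{g}^2$, valid because $(\Rad{\mssd}{\mssm})$ holds) to each of the two sections appearing in the product formula. This yields $\cdc^\otym{2}(f^\asym{2})\le (\wslo[\mssd]{f^\asym{2}}^{\otym 2})^2$ for $f^\asym{2}\in\Lip(\mssd_\tym{2})$, where $\wslo[\mssd]{f^\asym{2}}^{\otym 2}$ is the Cartesian (weak) gradient of Ambrosio--Gigli--Savar\'e~\cite{AmbGigSav14b}. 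Then I would pick, for a given $f^\asym{2}\in\Lip(\mssd_\tym{2})$, an approximating sequence $\seq{f^\asym{2}_n}_n\subset\Lip(\mssd_\tym{2})$ as in~\cite[Thm.~2.2(ii)]{AmbPinSpe15} converging to $f^\asym{2}$ in $L^2(\mssm^\otym{2})$ with Cartesian minimal relaxed slopes converging in $L^2$ to $\slo[*,c,\mssd]{f^\asym{2}}^{\otym 2}$, and use the $L^2(\mssm^\otym{2})$-lower semicontinuity of $\cdc^\otym{2}$ (as a closed form / Dirichlet form) together with the pointwise bounds $\wslo[\mssd]{g}\le\slo[\mssd]{g}$ and the definition of minimal weak/relaxed upper gradient, to pass to the limit and obtain
\begin{align*}
\cdc^\otym{2}(f^\asym{2})\le \slo[*,c,\mssd]{f^\asym{2}}^{\otym 2}\fstop
\end{align*}
Finally, the identification $\slo[*,c,\mssd]{f^\asym{2}}^{\otym 2}=\slo[*,\mssd_\tym{2}]{f^\asym{2}}$ from~\cite[Thm.~3.2]{AmbPinSpe15} (the tensorization of the Cheeger energy on products in the metric-measure sense), together with $\slo[*,\mssd_\tym{2}]{f^\asym{2}}\le\slo[\mssd_\tym{2}]{f^\asym{2}}\le\Li[\mssd_\tym{2}]{f^\asym{2}}$, gives $\cdc^\otym{2}(f^\asym{2})\le\Li[\mssd_\tym{2}]{f^\asym{2}}$ for every $f^\asym{2}\in\bLip(\mssd_\tym{2})$ — i.e. $f^\asym{2}\in\DzLoc{\mssm^\otym{2}}$ — which is exactly $(\Rad{\mssd_\tym{2}}{\mssm^\otym{2}})$.

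The main obstacle I anticipate is a bookkeeping issue rather than a conceptual one: carefully matching the several notions of slope/upper gradient at play (the square field $\SF{X}{\mssm}$, the minimal weak upper gradient $\slo[w]{\emparg}$, the minimal relaxed slope $\slo[*]{\emparg}$, the Cartesian versions $\wslo{\emparg}^{\otym 2}$, $\slo[c]{\emparg}^{\otym 2}$, $\slo[*,c]{\emparg}^{\otym 2}$), and making sure that the cited results from~\cite{AmbGigSav14, AmbGigSav14b, AmbPinSpe15} apply in the present setting. In particular Proposition~\ref{p:IneqCdC} is the one genuinely delicate input — it already contains the $\sigma$-finite-to-probability reduction via a Girsanov transform and the locality of weak upper gradients under change of reference measure — but since it is available from the excerpt, the remaining work here is to assemble these pieces and to verify that $\bLip(\mssd_\tym{2})\subset\Lip(\mssd_\tym{2})$ is an admissible class of test functions for the product Cheeger energy, which is immediate from $(\Rad{\mssd}{\mssm})$ and Remark~\ref{r:CutOff}. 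One should also remark that the induction step for general $n$ only requires observing that $(X^\tym{n},\mssd_\tym{n})=((X^\tym{n-1})\tym{X},(\mssd_\tym{n-1})\tym{\mssd})$ and that $\cdc^\otym{n}$ is the corresponding product square field, so the $n=2$ case applies verbatim.
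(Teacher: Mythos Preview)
Your proposal is correct and follows essentially the same approach as the paper's proof: reduce to $n=2$, use the product formula for $\cdc^\otym{2}$ together with Proposition~\ref{p:IneqCdC} applied sectionwise to bound by the Cartesian weak gradient, then pass to the minimal relaxed Cartesian slope via the optimal sequence of~\cite[Thm.~2.2(ii)]{AmbPinSpe15} and $L^2$-lower semicontinuity of $\cdc^\otym{2}$, and finally invoke~\cite[Thm.~3.2]{AmbPinSpe15} to identify the Cartesian and product minimal relaxed slopes. The only cosmetic differences are that the paper spells out the definitions of the various Cartesian objects explicitly before using them, and your remark about $\bLip(\mssd_\tym{2})$ being admissible via Remark~\ref{r:CutOff} is not needed (and not used in the paper).
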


\subsection{Configuration spaces}\label{ss:MetricConfig}
Let~$(\mcX,\cdc,\mssd)$ be an \MLDS.
We start by proving some metric properties of~$\mssd_\dUpsilon$, including: a generalization to our setting of~\cite[Lem.s~4.1 and 4.2]{RoeSch99}, the Borel measurability of McShane extensions, and a study of the local Lipschitz property for cylinder functions.
These properties eventually allow us to prove both the Rademacher (\S\ref{sss:Rademacher}) and the Sobolev-to-Lipschitz (\S\ref{ss:VariousStoL}) properties for~$\ttonde{\EE{\dUpsilon}{\QP},\dom{\EE{\dUpsilon}{\QP}}}$ w.r.t.~$\mssd_\dUpsilon$, from which we will conclude the identification of the analytic and the geometric structure.

\begin{center}
\textbf{Everywhere in this section,~$(\mcX,\mssd)$ is a metric (\emph{not} extended) local structure.}
\end{center}

Throughout this section, we shall make use of the following fact.

\begin{rem}\label{r:QPRadon}
Any Borel probability measure~$\QP$ on $\ttonde{\dUpsilon, \T_\mrmv(\Ed)}$ is Radon.
\begin{proof}
Every topological probability measure on a Polish space is Radon.
Since an \MLDS is in particular Polish,~$\ttonde{\dUpsilon,\T_\mrmv(\Ed)}$ is Polish by Proposition~\ref{p:TopologyUpsilon}\iref{i:p:TopologyUpsilon:2}. It follows that~$\QP$ is Radon.
\end{proof}
\end{rem}

\subsubsection{The \texorpdfstring{$L^2$}{L2}-transportation distance}\label{sss:DistanceUpsilon}
We collect here several properties of~$\mssd_\dUpsilon$, adapting the proofs of~\cite[Lem.s~4.1,~4.2]{RoeSch99}.
As usual, we denote by~$\Opt(\eta,\gamma)$ the set of~$\cpl$'s in~$\Cpl(\eta,\gamma)$ attaining the infimum in~\eqref{eq:d:W2Upsilon}.
We write~$B^\dUpsilon_r(\gamma)$ in place of~$B^{\mssd_\dUpsilon}_r(\gamma)$ for every~$\gamma\in \dUpsilon$ and~$r\in [0,\infty]$.
Finally, recall that~$\pr^i\colon X^{\tym{2}}\rar X$ denotes the projection on the $i^\text{th}$ coordinate, $i=1,2$. 

\begin{prop}[Properties of~$\mssd_\dUpsilon$, cf.~{\cite[Lem.~4.1]{RoeSch99}}]\label{p:W2Upsilon}
The following assertions hold.
\begin{enumerate}[$(i)$]
\item\label{i:p:W2Upsilon:1} Let~$\eta\in\dUpsilon$ and~$N\eqdef \eta X$. Then~$B^\dUpsilon_\infty(\eta)\subset \dUpsilon^{\sym{N}}(\Ed)$;

\item\label{i:p:W2Upsilon:2} the map~$I\colon \dUpsilon(X^{\tym{2}},\Ed^{\otym{2}}) \longrar [0,\infty]$ defined as
\begin{align*}
I\colon \alpha &\longmapsto \int_{X^\tym{2}} \mssd^2(x,y)\diff \alpha(x,y)
\end{align*}
is $\T_\mrmv(\Ed^{\otym{2}})$-lower semi-continuous;

\item\label{i:p:W2Upsilon:3} for~$i=1,2$ the projection map $\pr^i_\pfwd\colon \dUpsilon(X^{\tym{2}},\Ed^{\otym{2}})\rar \dUpsilon$ is proper, i.e.\ the pre-image via~$\pr^i_\pfwd$ of a relatively $\T_\mrmv(\Ed)$-compact set is relatively $\T_\mrmv(\Ed^{\otym{2}})$-compact.

\item\label{i:p:W2Upsilon:4} for every~$r\geq 0$ and~$i=1,2$, the projection map~$\pr^i_\pfwd\colon \dUpsilon(X^{\tym{2}},\Ed^{\otym{2}})\rar \dUpsilon$ restricted to the closed set
\begin{align}\label{p:W2Upsilon:0}
G_r\eqdef \set{I\leq r^2}
\end{align}
is continuous;

\item\label{i:p:W2Upsilon:5} for every~$\eta,\gamma\in\dUpsilon$ with~$\mssd_\dUpsilon(\eta,\gamma)<\infty$, there exists~$\cpl\in \Opt(\eta,\gamma)$, additionally a \emph{matching}, i.e.~$\cpl\in \dUpsilon(X^{\tym{2}},\Ed^{\otym{2}})$. In particular,~$\Opt(\eta,\gamma)\neq \emp$, and, if~$\eta,\gamma\in\Upsilon$, then $\cpl\in \Upsilon(X^{\tym{2}},\Ed^{\otym{2}})$;

\item\label{i:p:W2Upsilon:6} the map~$G_r\ni\alpha \longmapsto (\pr^1_\pfwd \alpha,\pr^2_\pfwd \alpha)\in \dUpsilon^{\tym{2}}$ is closed, i.e.\ the set
\begin{align*}
\tset{(\pr^1_\pfwd \alpha,\pr^2_\pfwd \alpha) : \alpha\in F\cap G_r}
\end{align*}
is $\T_\mrmv(\Ed)^{\tym{2}}$-closed for every $\T_\mrmv(\Ed^{\otym{2}})$-closed $F\subset \dUpsilon(X^{\tym{2}},\Ed^{\otym{2}})$. In particular, $\pr^i_\pfwd \colon G_r\rar \dUpsilon$ is closed;

\item\label{i:p:W2Upsilon:7} $\mssd_\dUpsilon$ is $\T_\mrmv(\Ed)^{\tym{2}}$-l.s.c.\ on~$\dUpsilon^{\tym{2}}$; (in particular:~$\Bo{\T_\mrmv(\Ed)}^{\otym{2}}$-measur\-able);

\item\label{i:p:W2Upsilon:8} Let~$\Lambda \subset \dUpsilon$ be $\T_\mrmv(\Ed)$-closed.
Then,~$\mssd_\dUpsilon(\emparg,\Lambda)$ is  $\T_\mrmv(\Ed)$-l.s.c.\ (in particular: $\Bo{\T_\mrmv(\Ed)}$-measur\-able) on~$\dUpsilon$.
If~$\Lambda$ is $\T_\mrmv(\Ed)$-compact, then
\begin{align*}
(\Lambda)_r\eqdef \set{\mssd_\dUpsilon(\emparg, \Lambda)\leq r}
\end{align*}
is compact, for all~$r\geq 0$. In particular, $\mssd_\dUpsilon$-closed balls are $\T_\mrmv(\Ed)$-compact;

\item\label{i:p:W2Upsilon:9} $\mssd_\dUpsilon(\emparg, \Lambda)\wedge r$ belongs to $\Lip(\mssd_\dUpsilon)$ with~$\Li[\mssd_\dUpsilon]{\mssd_\dUpsilon(\emparg, \Lambda)\wedge r}\leq 1$ for every~$\Lambda \subset \dUpsilon$ and~$r\geq 0$;
\end{enumerate}
\begin{proof}
\iref{i:p:W2Upsilon:1} If~$\eta\in\dUpsilon^\sym{N_1}$ and~$\gamma\in\dUpsilon^\sym{N_2}$ for some~$N_1\neq N_2$, then~$\Cpl(\eta,\gamma)=\emp$ and~$\mssd_\dUpsilon(\eta,\gamma)=+\infty$.
\iref{i:p:W2Upsilon:2} By Proposition~\ref{p:Products} and Remark~\ref{r:TLS}\iref{i:r:TLS:7}, there exists~$\msU^{\asym{2}}\subset \Ed^{\otym{2}}$ a countable locally finite open cover of~$(X^{\tym{2}},\T^{\tym{2}})$, and~$\seq{f_k}_k$ a partition of unity of~$(X^{\tym{2}},\T^{\tym{2}})$ subordinate to~$\msU^{\asym{2}}$. In particular,~$\seq{f_k}_k\subset \Cz(\Ed^{\otym{2}})$.
Note that~$\mssd_n\eqdef \sum_{k=1}^n f_k\mssd$ belongs to~$\Cz(\Ed^{\otym{2}})$ for every~$n$, since~$\msU^{\asym{2}}$ is locally finite and~$\Ed^{\otym{2}}$ is an algebra.
By monotone convergence of~$\mssd_n$ to~$\mssd$,
\begin{align*}
\int_{X^{\tym{2}}} \mssd^2(x,y) \diff \alpha(x,y)\ =&\ \nlim \int_{X^{\tym{2}}} \mssd_n^2(x,y) \diff \alpha(x,y)
\\
=&\ \nlim \mliminf \int_{X^{\tym{2}}} \mssd_n^2(x,y) \diff \alpha_m(x,y)
\\
\leq&\ \mliminf \int_{X^{\tym{2}}} \mssd^2(x,y) \diff \alpha_m(x,y) \fstop
\end{align*}

\iref{i:p:W2Upsilon:3}
Denote by~$\Meas(\Ed)$ the space of $\Ed$-locally finite nonnegative measures on $(X,\A)$. By~\cite[Thm.~4.2]{Kal17}, a set~$\Lambda \subset \Meas(\Ed)$ is $\T_\mrmv(\Ed)$-relatively compact if and only if
\begin{align*}
\sup_{\lambda \in \Lambda} \lambda E <\infty \qquad \text{and} \qquad \inf_{K\in \Ko{\T}}\sup_{\lambda \in \Lambda} \lambda (E\setminus K)=0 \comma \qquad E\in\Ed \fstop 
\end{align*}
The same characterization holds for subsets~$\Lambda \subset \dUpsilon$ and~$\Delta \subset \dUpsilon(X^{\tym{2}},\Ed^{\otym{2}})$.
Thus,
\begin{align*}
\sup_{\alpha\in \Delta} \alpha E^{\asym{2}}\leq& \sup_{\alpha\in \Delta} \pr^i_\pfwd \alpha \; \pr^i(E^{\asym{2}}) \leq \sup_{\gamma\in \Lambda} \gamma\, \pr^i(E^{\asym{2}}) <\infty\comma \qquad E^\asym{2}\in\Ed^\otym{2}\fstop
\end{align*}
Now, setting~$E_i\eqdef \pr^i(E^\asym{2})\in\Ed$, and since~$\Ko{\T^\tym{2}}\supset \Ko{\T}^{\tym{2}}$,
\begin{align*}
\inf_{K^{\asym{2}}\in \Ko{\T^{\tym{2}}}} &\sup_{\alpha\in \Delta} \alpha \ttonde{E^{\asym{2}}\setminus K^{\asym{2}}}\leq
\\
\leq&\inf_{K_1, K_2\in \Ko{\T}} \sup_{\alpha\in \Delta} \alpha \ttonde{(E_1\times E_2)\setminus (K_1\times K_2)}
\\
\leq&\inf_{K_1, K_2\in \Ko{\T}} \tonde{ \sup_{\alpha\in \Delta} \alpha \ttonde{E_1\times (E_2\setminus K_2)} + \sup_{\alpha\in \Delta} \alpha \ttonde{(E_1\setminus K_1)\times E_2} }
\\
\leq&\inf_{K_1, K_2\in \Ko{\T}} \Bigg( \sup_{\alpha\in \Delta} \pr^2_\pfwd\alpha \; \pr^2\ttonde{E_1\times (E_2\setminus K_2)} 
+ \sup_{\alpha\in \Delta} \pr^1_\pfwd \alpha \; \pr^1\ttonde{(E_1\setminus K_1)\times E_2} \Bigg)
\\
\leq&\inf_{K_1, K_2\in \Ko{\T}} \tonde{ \sup_{\gamma\in \Lambda} \gamma  (E_2\setminus K_2) + \sup_{\gamma\in \Lambda} \gamma (E_1\setminus K_1) }=0 \fstop
\end{align*}

\iref{i:p:W2Upsilon:4} By Remark~\ref{r:TLS}\iref{i:r:TLS:7}, for every~$r>0$ there exists~$\msU\eqdef\seq{U_k}_k \subset\Ed$ a countable locally finite open cover of~$(X,\T)$ so that every~$E\in\msU$ has diameter~$\diam_\mssd E<r$. Let~$\seq{f_k}_k\subset \Cz(\Ed)$ with~$\supp f_k\subset U_k$ be a partition of unity subordinate to~$\msU$.
For~$f\in \Cz(\Ed)$ set
\begin{align*}
(\supp f)_r\eqdef \set{\mssd(\emparg,\supp f)\leq r}\in\Ed \qquad \text{and} \qquad g\eqdef \sum_{k: U_k \cap (\supp f)_r \neq \emp} f_k \fstop
\end{align*}
Then,~$g\in\Cz(\Ed)$ and~$g\equiv \uno$ on~$(\supp f)_r$. The proof now follows as in~\cite[Lem.~4.1(iii)]{RoeSch99}.

\iref{i:p:W2Upsilon:5} is a consequence of~\iref{i:p:W2Upsilon:2},~\iref{i:p:W2Upsilon:3} and the closedness of~$G_r$ in~\iref{i:p:W2Upsilon:4}.

\iref{i:p:W2Upsilon:6}--\iref{i:p:W2Upsilon:9} are concluded as in the proof of~\cite[Lem.~4.1(v)--(vii)]{RoeSch99}.
\end{proof}
\end{prop}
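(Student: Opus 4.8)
The plan is to follow the strategy of \cite[Lem.~4.1]{RoeSch99} very closely, adapting each step from the locally compact manifold setting to the present setting of a metric local structure, where the role of compact sets is played by the $\mssd$-bounded sets in the localizing ring $\Ed$, and where Kallenberg's compactness criterion \cite[Thm.~4.2]{Kal17} replaces the classical Prokhorov-type arguments. The proof of \iref{i:p:W2Upsilon:1} is immediate: since couplings of $\gamma$ and $\eta$ exist only when $\gamma X=\eta X$, the ball $B^\dUpsilon_\infty(\eta)$ is contained in $\dUpsilon^\sym{N}(\Ed)$. For \iref{i:p:W2Upsilon:2}, I would use Remark~\ref{r:TLS}\iref{i:r:TLS:7} applied to the product space $\mcX^\otym{2}$ (which is again a topological local structure by Proposition~\ref{p:Products}) to obtain a countable locally finite open cover $\msU^\asym{2}\subset\Ed^\otym{2}$ and a subordinate partition of unity $\seq{f_k}_k\subset\Cz(\Ed^\otym{2})$; then $\mssd_n\eqdef\sum_{k\le n}f_k\mssd\in\Cz(\Ed^\otym{2})$ increases to $\mssd$, so that $\alpha\mapsto\int\mssd_n^2\diff\alpha$ is $\T_\mrmv(\Ed^\otym{2})$-continuous and lower semicontinuity passes to the supremum/limit.

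For \iref{i:p:W2Upsilon:3} the key input is Kallenberg's characterization of $\T_\mrmv$-relative compactness in terms of uniform boundedness on $\Ed$ together with uniform mass-escape control along compact exhaustions; applying it coordinate-wise and using that $\Ko{\T^\tym{2}}\supset\Ko{\T}^\tym{2}$ (which holds since by Remark~\ref{r:TLS}\iref{i:r:TLS:7} compact sets belong to $\Ed$), one bounds $\alpha\ttonde{E^\asym{2}\setminus K^\asym{2}}$ by a sum of two terms each controlled via one of the two marginals. Step \iref{i:p:W2Upsilon:4} follows by constructing, for a given $f\in\Cz(\Ed)$, a function $g\in\Cz(\Ed)$ with $g\equiv\uno$ on the $r$-neighborhood $(\supp f)_r\in\Ed$ --- obtained by summing the partition-of-unity pieces $f_k$ whose supports meet $(\supp f)_r$, using that the cover has mesh $<r$ and is locally finite --- and then repeating verbatim the argument of \cite[Lem.~4.1(iii)]{RoeSch99}. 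Items \iref{i:p:W2Upsilon:5} and \iref{i:p:W2Upsilon:6} then fall out of \iref{i:p:W2Upsilon:2}--\iref{i:p:W2Upsilon:4}: the infimum in \eqref{eq:d:W2Upsilon} is attained because $G_r$ is $\T_\mrmv(\Ed^\otym{2})$-closed by lower semicontinuity of $I$ and the minimizing sequence of couplings lies in a relatively compact set by properness; that an optimal coupling can be chosen as a matching, i.e.\ a configuration in $\dUpsilon(X^\tym{2},\Ed^\otym{2})$, is the same argument as in \cite{RoeSch99}. The remaining items \iref{i:p:W2Upsilon:7}--\iref{i:p:W2Upsilon:9}, concerning lower semicontinuity and measurability of $\mssd_\dUpsilon$ and of $\mssd_\dUpsilon(\emparg,\Lambda)$, compactness of the sublevel sets $(\Lambda)_r$, and the Lipschitz bound for $\mssd_\dUpsilon(\emparg,\Lambda)\wedge r$, follow as in \cite[Lem.~4.1(v)--(vii)]{RoeSch99} once the closedness/continuity properties of the projection maps on $G_r$ are in place.

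I expect the main obstacle to be bookkeeping rather than conceptual: one must be careful that all the covering, partition-of-unity, and compactness arguments in \cite{RoeSch99} --- which there exploit local compactness and $\sigma$-compactness of the manifold freely --- go through using only the axioms of a metric local structure, namely paracompactness and second countability (Remark~\ref{r:TLS}\iref{i:r:TLS:1}, \iref{i:r:TLS:7}), the fact that $\Ed$ is a ring of $\mssm$-finite $\mssd$-bounded sets containing a local basis of open neighborhoods, Kallenberg's relative-compactness criterion, and properness of the underlying metric space (so that closed $\mssd$-bounded sets are compact and $\mssd$-neighborhoods of $\Ed$-sets lie in $\Ed$). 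A secondary subtlety is that $\mssd_\dUpsilon$ is only an extended distance, so one must track which statements genuinely require the matching $\cpl$ to be finite; but since all the quantitative assertions are local in $r$ via the sets $G_r$ and $(\Lambda)_r$, this causes no real difficulty. Finally, the passage from the $\T_\mrmv(\Ed^\otym{2})$-closedness of the pushforward-of-marginals map on $G_r$ (item \iref{i:p:W2Upsilon:6}) to the lower semicontinuity of $\mssd_\dUpsilon$ (item \iref{i:p:W2Upsilon:7}) is exactly \cite[Lem.~4.1(v)]{RoeSch99} and requires no new idea.
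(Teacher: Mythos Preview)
Your proposal is correct and follows essentially the same route as the paper: the same partition-of-unity construction via Proposition~\ref{p:Products} and Remark~\ref{r:TLS}\iref{i:r:TLS:7} for \iref{i:p:W2Upsilon:2} and \iref{i:p:W2Upsilon:4}, the same appeal to Kallenberg's criterion \cite[Thm.~4.2]{Kal17} for \iref{i:p:W2Upsilon:3}, and the same deferral to \cite[Lem.~4.1]{RoeSch99} for the remaining items. One minor remark: your closing list of needed ingredients includes ``properness of the underlying metric space (so that closed $\mssd$-bounded sets are compact)'', but properness is neither assumed in the paper's setting nor used in the argument --- Kallenberg's criterion and the continuity/closedness of the marginal maps on $G_r$ suffice, and the compactness of $(\Lambda)_r$ in \iref{i:p:W2Upsilon:8} comes from \iref{i:p:W2Upsilon:3}--\iref{i:p:W2Upsilon:4} applied to the compact $\Lambda$, not from properness of $X$.
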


For further purposes, let us expand the scope of the measurability statement in Proposition~\ref{p:W2Upsilon}\ref{i:p:W2Upsilon:8}.

\begin{cor}[Universal measurability of point-to-set distance]\label{c:MeasurabilitydU}
The map~$\mssd_\dUpsilon(\emparg,\Xi)$ is $\A_\mrmv(\Ed)^*$-measurable for every~$\Xi\in\Bo{\T_\mrmv(\Ed)}$.
\begin{proof}
It suffices to show that the sub-level sets~$(\Xi)_r\eqdef \set{\gamma\in\dUpsilon: \mssd_\dUpsilon(\gamma,\Xi)\leq r}$ satisfy~$(\Xi)_r\in\A_\mrmv(\Ed)^*$.
Let~$G_r\subset \dUpsilon(X^\tym{2},\Ed^\otym{2})$ be as in~\eqref{p:W2Upsilon:0}, and note that
\begin{align*}
(\Xi)_r=\set{\pr^1_\pfwd\alpha: \alpha\in G_r\comma \pr^2_\pfwd\alpha\in\Xi}= \pr^1_\pfwd\ttonde{G_r\cap (\pr^2_\pfwd)^{-1}(\Xi)} \fstop
\end{align*}
Since~$\Xi\in\Bo{\T_\mrmv(\Ed)}$ and since~$G_r$ is $\T_\mrmv(\Ed^\otym{2})$-closed by Proposition~\ref{p:W2Upsilon}\ref{i:p:W2Upsilon:4}, hence $\Bo{\T_\mrmv(\Ed^\otym{2})}$-measur\-able, the set~$G_r\cap (\pr^2_\pfwd)^{-1}(\Xi)$ is $\Bo{\T_\mrmv(\Ed^\otym{2})}$-measurable, and thus a Borel subset of the Polish space $\dUpsilon(X^\tym{2},\Ed^\otym{2})$.
Since~$\pr^1_\pfwd\colon \dUpsilon(X^\tym{2},\Ed^\otym{2})\to \dUpsilon$ is $\T_\mrmv(\Ed^\otym{2})/\T_\mrmv(\Ed)$-continuous by Proposition~\ref{p:W2Upsilon}\ref{i:p:W2Upsilon:4}, the set~$(\Xi)_r$ is a continuous image of a Borel subset in a Polish space, thus a Suslin set, and therefore universally measurable (e.g.~\cite[Thm.~21.10]{Kec95}).
\end{proof}
\end{cor}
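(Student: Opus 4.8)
The plan is to show that the sub-level sets $(\Xi)_r \eqdef \set{\gamma\in\dUpsilon: \mssd_\dUpsilon(\gamma,\Xi)\leq r}$ belong to the universal $\sigma$-algebra $\A_\mrmv(\Ed)^*$ for every $r\geq 0$; since $\mssd_\dUpsilon(\emparg,\Xi)$ is nonnegative, this suffices to conclude its $\A_\mrmv(\Ed)^*$-measurability. The starting point is the elementary set-theoretic identity
\begin{align*}
(\Xi)_r=\set{\pr^1_\pfwd\alpha: \alpha\in G_r\comma \pr^2_\pfwd\alpha\in\Xi}= \pr^1_\pfwd\ttonde{G_r\cap (\pr^2_\pfwd)^{-1}(\Xi)}\comma
\end{align*}
where $G_r\eqdef \set{I\leq r^2}\subset\dUpsilon(X^\tym{2},\Ed^\otym{2})$ is the set introduced in~\eqref{p:W2Upsilon:0}. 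This identity is justified exactly by Proposition~\ref{p:W2Upsilon}\ref{i:p:W2Upsilon:5}: if $\mssd_\dUpsilon(\gamma,\eta)\leq r$ for some $\eta\in\Xi$, an optimal \emph{matching} $\cpl\in\Opt(\gamma,\eta)$ realizes $\gamma$ as $\pr^1_\pfwd\cpl$ with $\cpl\in G_r$ and $\pr^2_\pfwd\cpl=\eta\in\Xi$; conversely any such $\alpha$ witnesses $\mssd_\dUpsilon(\pr^1_\pfwd\alpha,\Xi)\leq r$ since $I(\alpha)\leq r^2$ and $\pr^2_\pfwd\alpha\in\Xi$.

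Next I would verify that $G_r\cap (\pr^2_\pfwd)^{-1}(\Xi)$ is a Borel subset of the Polish space $\dUpsilon(X^\tym{2},\Ed^\otym{2})$. The functional $I$ is $\T_\mrmv(\Ed^\otym{2})$-lower semicontinuous by Proposition~\ref{p:W2Upsilon}\ref{i:p:W2Upsilon:2}, so $G_r=\set{I\leq r^2}$ is $\T_\mrmv(\Ed^\otym{2})$-closed, hence $\Bo{\T_\mrmv(\Ed^\otym{2})}$-measurable. (That $\dUpsilon(X^\tym{2},\Ed^\otym{2})$ is Polish follows from Proposition~\ref{p:Products} together with Proposition~\ref{p:TopologyUpsilon}\iref{i:p:TopologyUpsilon:2}, using that the base space is a metric local structure and hence Polish.) The map $\pr^2_\pfwd\colon\dUpsilon(X^\tym{2},\Ed^\otym{2})\to\dUpsilon$ is $\T_\mrmv(\Ed^\otym{2})/\T_\mrmv(\Ed)$-continuous by Proposition~\ref{p:W2Upsilon}\ref{i:p:W2Upsilon:4} (indeed continuity of the push-forward projections is part of that statement, at least when restricted to the closed sets $G_r$, which is all we need here); consequently $(\pr^2_\pfwd)^{-1}(\Xi)$ is $\Bo{\T_\mrmv(\Ed^\otym{2})}$-measurable whenever $\Xi\in\Bo{\T_\mrmv(\Ed)}$, and the intersection with $G_r$ remains Borel.

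Finally, I would invoke the structure theory of Suslin sets. The set $G_r\cap(\pr^2_\pfwd)^{-1}(\Xi)$ is a Borel subset of a Polish space, and $\pr^1_\pfwd\colon\dUpsilon(X^\tym{2},\Ed^\otym{2})\to\dUpsilon$ is continuous (again Proposition~\ref{p:W2Upsilon}\ref{i:p:W2Upsilon:4}), so its image $(\Xi)_r$ is an analytic (Suslin) subset of the Polish space $\dUpsilon$. Every analytic subset of a Polish space is universally measurable, e.g.~\cite[Thm.~21.10]{Kec95}, hence $(\Xi)_r\in\Bo{\T_\mrmv(\Ed)}^*=\A_\mrmv(\Ed)^*$, which completes the argument. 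I do not anticipate a genuine obstacle here: all the heavy lifting — lower semicontinuity of $I$, properness and continuity of the push-forward projections on $G_r$, closedness of $G_r$, and existence of optimal matchings — has already been carried out in Proposition~\ref{p:W2Upsilon}. The only point requiring a little care is making sure one uses the \emph{continuity} of $\pr^1_\pfwd$ restricted to $G_r$ rather than global continuity (which fails, $\pr^i_\pfwd$ being merely proper on all of $\dUpsilon(X^\tym{2},\Ed^\otym{2})$ by part~\ref{i:p:W2Upsilon:3}), and that $\Xi$ being Borel — not merely analytic — is what guarantees $(\pr^2_\pfwd)^{-1}(\Xi)$ is Borel so that the intersection with the Borel set $G_r$ stays Borel before applying the continuous image.
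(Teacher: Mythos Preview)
Your proposal is correct and follows essentially the same route as the paper: express~$(\Xi)_r$ as $\pr^1_\pfwd\ttonde{G_r\cap (\pr^2_\pfwd)^{-1}(\Xi)}$, observe this is a continuous image of a Borel subset of a Polish space, and conclude it is Suslin, hence universally measurable. Your added care in noting that the continuity of~$\pr^i_\pfwd$ is only available on~$G_r$ (Proposition~\ref{p:W2Upsilon}\ref{i:p:W2Upsilon:4}) rather than globally is well placed, and your explicit justification of the set identity via optimal matchings (Proposition~\ref{p:W2Upsilon}\ref{i:p:W2Upsilon:5}) makes the argument slightly more self-contained than the paper's version.
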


\begin{prop}[Further properties of~$\mssd_\dUpsilon$, cf.~{\cite[Lem.~4.2]{RoeSch99}}]\label{p:W2Upsilon2}
Let~$U\in\Ed$ be \emph{open} and so that $U^\complement\eqdef X\setminus U$ has non-empty interior.
For~$\gamma\in\dUpsilon$ set
\begin{align}\label{eq:W2Upsilon:0}
\Lambda_{\gamma,U}\eqdef \set{\eta\in\dUpsilon:\eta_U=\gamma_U} \qquad \text{and} \qquad \rho_{\gamma,U}\eqdef \mssd_\dUpsilon(\emparg,\Lambda_{\gamma,U}) \fstop
\end{align}
Then,

\begin{enumerate}[$(i)$]
\item\label{i:p:W2Upsilon:10} the set~$\Lambda_{\gamma,U}$ is closed;
\item\label{i:p:W2Upsilon:11} $\rho_{\gamma,U}(\eta)<\infty$ if and only if~$\eta X\geq \gamma U$;
\item\label{i:p:W2Upsilon:12} $\rho_{\gamma,U}\colon\dUpsilon\rar [0,\infty]$ is $\T_\mrmv(\Ed)$-continuous;
\item\label{i:p:W2Upsilon:13} $\rho_{\gamma, U}(\eta) \uparrow \mssd_\dUpsilon(\gamma,\eta)$ as~$U\uparrow X$ for all~$\gamma,\eta\in\dUpsilon$;
\item\label{i:p:W2Upsilon:14} $\gamma\mapsto \rho_{\gamma,U}(\eta)$ is lower semi-continuous for every fixed~$\eta\in\dUpsilon$.
\end{enumerate}

\begin{proof} 
\iref{i:p:W2Upsilon:10} Since~$U$ is open, $\gamma_U=\eta_U$ if and only if~$f^\trid\gamma=f^\trid\eta$ for every~$f\in\Cz(\Ed)$ with~$\supp f\subset U$ by Remark~\ref{r:TLS}\iref{i:r:TLS:7}. Thus, $\Lambda_{\gamma,U}$ is closed, since it coincides with the intersection of the closed sets
\begin{align*}
\Lambda_{\gamma,U}=\bigcap_{\substack{f\in\Cz(\Ed)\\ \supp f\subset U}} (f^\trid)^{-1} (f^\trid\gamma) \fstop
\end{align*}

\iref{i:p:W2Upsilon:11} Firstly, we show that for every~$m\in \N_0$ there exists~$E_m\in\Ed$, with~$U\subset E_m$, so that~$E_m\setminus U$ contains at least $m$ points.
Recall that~$(X,\T)$ is perfect by Remark~\ref{r:TLS}\ref{i:r:TLS:1.5}.
Since an open subset of a perfect space does not have isolated points, the $\T$-open set~$\interior(U^\complement)$ is at least countable and the claim follows by Definition~\iref{d:LS}\iref{i:d:LS:2}. 

Now, let~$\gamma\in\dUpsilon$ be fixed. If~$\eta X <\gamma U$, then~$\mssd_\dUpsilon(\eta,\gamma)=\infty$ by Proposition~\ref{p:W2Upsilon}\iref{i:p:W2Upsilon:1}, and the `only if' part easily follows.
For the converse implication, assume~$\eta X\geq \gamma U$ and let $\gamma = \sum_{i=1}^{\gamma X} \delta_{y_i}$, and $\eta = \sum_{i=1}^{\eta X} \delta_{x_i}$.
Without loss of generality, up to relabeling the atoms of~$\gamma$ and~$\eta$, we may assume that~$\seq{y_i}_{i\leq \gamma U}\subset U$ and~$\seq{x_i}_{i\leq \eta U}\subset U$.
Set $\gamma'=\sum_{i=1}^{\gamma U}\delta_{y_i}+ \sum_{i=1}^{\eta U} \delta_{z_i} + \eta - \sum_{i=1}^{\gamma U + \eta U} \delta_{x_i}$, where $\seq{z_i}_{i \leq \eta U} \subset U^c$ and $z_i \neq z_j$ whenever $i \neq j$ (existence by the previous claim).
Since~$\gamma'_U = \gamma_U$ 
\begin{align*}
\rho_{\gamma, U}(\eta) = \rho_{\gamma', U}(\eta)  \leq \mssd_\dUpsilon(\gamma',\eta)\leq  \sqrt{\sum_{i=1}^{\gamma U} \mssd^2(y_i, x_i) + \sum_{i=1}^{\eta U} \mssd^2(z_i, x_{\gamma U+i}) }<\infty\comma
\end{align*}
which concludes the proof.

\iref{i:p:W2Upsilon:12}--\iref{i:p:W2Upsilon:14} are concluded as in the proof of~\cite[Lem.~4.2(ii)--(iv)]{RoeSch99} having care to replace the set~$B_r$ there by~$U$ as in~\iref{i:p:W2Upsilon:11}, resp.\ a continuous compactly supported function~$f$ with~$\supp f\subset B_r$ with a function~$f\in\Cz(\Ed)$ with~$\supp f\subset U$.
\end{proof}
\end{prop}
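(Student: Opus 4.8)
The statement to prove is Proposition~\ref{p:W2Upsilon2}, which collects five properties of the set $\Lambda_{\gamma,U}$ and the point-to-set distance $\rho_{\gamma,U}$. The author helpfully gives the proof of parts \iref{i:p:W2Upsilon:10} and \iref{i:p:W2Upsilon:11} in full and refers to \cite{RoeSch99} for the rest, so let me sketch how I would approach all five parts.

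\textbf{Part \iref{i:p:W2Upsilon:10} (closedness of $\Lambda_{\gamma,U}$).} The plan is to observe that, since $U$ is open, Remark~\ref{r:TLS}\iref{i:r:TLS:7} gives that $\Cz(\Ed)$ generates the topology, and functions with support in $U$ detect exactly the restriction to $U$: $\eta_U = \gamma_U$ iff $f^\trid\eta = f^\trid\gamma$ for all $f\in\Cz(\Ed)$ with $\supp f\subset U$. Hence $\Lambda_{\gamma,U}$ is an intersection of preimages $(f^\trid)^{-1}(f^\trid\gamma)$ of points under the $\T_\mrmv(\Ed)$-continuous maps $f^\trid$, so it is closed.

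\textbf{Part \iref{i:p:W2Upsilon:11} (finiteness of $\rho_{\gamma,U}$).} First I would prove the auxiliary claim that for each $m\in\N_0$ there is $E_m\in\Ed$ with $U\subset E_m$ and $\#(E_m\setminus U)\geq m$: this uses that $(X,\T)$ is perfect (Remark~\ref{r:TLS}\ref{i:r:TLS:1.5}), so the open set $\interior(U^\complement)\neq\emp$ has no isolated points, hence is uncountable, together with Definition~\ref{d:LS}\iref{i:d:LS:2}. Then the `only if' direction follows from Proposition~\ref{p:W2Upsilon}\iref{i:p:W2Upsilon:1}: if $\eta X<\gamma U$ then $\mssd_\dUpsilon(\eta,\gamma')=\infty$ for every $\gamma'\in\Lambda_{\gamma,U}$ since $\gamma' X\geq\gamma U>\eta X$. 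For the `if' direction, assuming $\eta X\geq\gamma U$, I would exhibit an explicit competitor $\gamma'\in\Lambda_{\gamma,U}$ built by keeping the atoms of $\gamma$ inside $U$ and placing the remaining $\eta U$ atoms of $\gamma'$ at distinct points $z_i\in U^\complement$ (available by the auxiliary claim with $m=\eta U$), then filling out with the atoms of $\eta$ outside $U$; pairing atoms appropriately gives a finite transport cost from $\gamma'$ to $\eta$, so $\rho_{\gamma,U}(\eta)\leq\mssd_\dUpsilon(\gamma',\eta)<\infty$.

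\textbf{Parts \iref{i:p:W2Upsilon:12}--\iref{i:p:W2Upsilon:14}.} These follow the template of \cite[Lem.~4.2(ii)--(iv)]{RoeSch99}, with the essential modification that wherever that reference uses a metric ball $B_r$ and a compactly supported cutoff adapted to $B_r$, one substitutes the fixed localizing open set $U$ and a cutoff $f\in\Cz(\Ed)$ with $\supp f\subset U$. For \iref{i:p:W2Upsilon:12}, continuity of $\rho_{\gamma,U}$: given $\T_\mrmv(\Ed)$-convergent $\eta_n\to\eta$ with $\eta X\geq\gamma U$ eventually, one uses the matching existence from Proposition~\ref{p:W2Upsilon}\iref{i:p:W2Upsilon:5} and the properness/lower-semicontinuity statements in Proposition~\ref{p:W2Upsilon}\iref{i:p:W2Upsilon:2}--\iref{i:p:W2Upsilon:4} to pass optimal matchings to the limit and to perturb the $U^\complement$-part freely. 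For \iref{i:p:W2Upsilon:13}, monotone convergence $\rho_{\gamma,U}(\eta)\uparrow\mssd_\dUpsilon(\gamma,\eta)$ as $U\uparrow X$: monotonicity is immediate since $\Lambda_{\gamma,U}\supset\Lambda_{\gamma,U'}$ when $U\subset U'$; for the limit one takes, for fixed $\eta$, a matching realizing $\mssd_\dUpsilon(\gamma,\eta)$ and notes that for $U$ large enough the portion of $\gamma$ outside $U$ becomes negligible, using $\msE$-local finiteness of $\gamma$ and $\eta$ and a localizing sequence $E_h\uparrow X$. Part \iref{i:p:W2Upsilon:14}, lower semicontinuity of $\gamma\mapsto\rho_{\gamma,U}(\eta)$, follows from the closed-graph / lower-semicontinuity package in Proposition~\ref{p:W2Upsilon}\iref{i:p:W2Upsilon:6}--\iref{i:p:W2Upsilon:7} applied to the $\T_\mrmv(\Ed)$-closed sets $\Lambda_{\gamma,U}$, exactly as in the cited lemma.

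The main obstacle I anticipate is not any single deep point but the careful bookkeeping in part \iref{i:p:W2Upsilon:11}: one must verify that the auxiliary claim (producing arbitrarily many points in $E_m\setminus U$ with $E_m\in\Ed$) genuinely uses only the perfectness of $(X,\T)$ together with the localizing-ring axioms, and that the explicit competitor $\gamma'$ indeed lies in $\Lambda_{\gamma,U}$ (i.e.\ $\gamma'_U=\gamma_U$) while admitting a finite-cost coupling to $\eta$ — the relabeling of atoms of $\gamma$ and $\eta$ has to be done so that the atoms inside $U$ are matched among themselves and the remaining atoms (finitely many in each $E_h$) are matched in a way that keeps each partial sum of squared distances finite. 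Everything else is a transcription of the Röckner--Schied arguments with $B_r$ replaced by $U$, made legitimate by Remark~\ref{r:TLS}\iref{i:r:TLS:7} and the compactness/continuity statements already proved in Proposition~\ref{p:W2Upsilon}.
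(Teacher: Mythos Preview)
Your proposal is correct and follows essentially the same approach as the paper: the proof of~\iref{i:p:W2Upsilon:10} via the intersection of closed preimages of~$f^\trid$, the auxiliary claim and explicit competitor construction for~\iref{i:p:W2Upsilon:11}, and the referral to~\cite[Lem.~4.2(ii)--(iv)]{RoeSch99} with~$B_r$ replaced by~$U$ for~\iref{i:p:W2Upsilon:12}--\iref{i:p:W2Upsilon:14} all match the paper's own argument almost verbatim.
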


As a consequence of the previous proposition, we also obtain that configuration spaces are extended metric-topological spaces.

\begin{prop}\label{p:ConfigEMTS}
Let~$(\mcX,\mssd)$ be a metric local structure.
Then,
\begin{align*}
\ttonde{\dUpsilon,\T_\mrmv(\Ed),\mssd_\dUpsilon}
\end{align*}
is an extended metric-topological space in the sense of Definition~\ref{d:AES}.
\begin{proof}
Let~$x_0\in X$ be fixed, and, for each~$r>0$, set~$B_r\eqdef B^\mssd_r(x_0)\in\Ed$.
Note that~$B_r\in\T$ as well, since~$\mssd$ metrizes~$\T$.
For each~$\gamma\in\dUpsilon$ further let~$\rho_{\gamma,r}\eqdef \rho_{\gamma, B_r}$ be defined as in~\eqref{eq:W2Upsilon:0}.
Now, the function~$\mssd_{\gamma,r}\colon \dUpsilon^\tym{2}\rar[0,\infty]$ defined as
\begin{align*}
\mssd_{\gamma,r}\colon (\eta,\xi)\longmapsto \abs{\rho_{\gamma,r}(\eta)-\rho_{\gamma,r}(\xi)}
\end{align*}
is $\T_\mrmv(\Ed)$-continuous by Pro\-po\-sition~\ref{p:W2Upsilon2}\iref{i:p:W2Upsilon:12}, finite everywhere on~$\dUpsilon^\sym{\infty}(\Ed)^\tym{2}$ (Prop.~\ref{p:W2Upsilon2}\iref{i:p:W2Upsilon:11}), $1$-Lipschitz w.r.t.\ $\mssd_\dUpsilon$ by triangle inequality for~$\mssd_\dUpsilon$, and a pseudo-distance for each~$\gamma$ and~$r$, by triangle inequality for the Euclidean distance on~$\R$.
By the $1$-Lipschitz property,~$\mssd_{\gamma,r}\leq \mssd_\dUpsilon$ everywhere on~$\dUpsilon^\tym{2}$ for every~$\gamma\in\dUpsilon$.
Together with Proposition~\ref{p:W2Upsilon2}\iref{i:p:W2Upsilon:13}, the latter inequality implies that
\begin{align*}
\mssd_\dUpsilon(\eta,\xi)\geq \sup_{r>0}\sup_{\gamma\in\dUpsilon} \mssd_{\gamma,r}(\eta,\xi)\geq \sup_{r>0} \mssd_{\eta,r}(\eta,\xi)=\mssd_\dUpsilon(\eta,\xi)\comma \qquad \eta,\xi\in\dUpsilon \fstop
\end{align*}
In particular, since~$\mssd_\dUpsilon$ separates points, the uniformity~$\UP$ generated by the family of $\T_\mrmv(\Ed)$-continuous pseudo-distances~$\set{\mssd_{\gamma,r}}_{\gamma\in\dUpsilon, r>0}$ separates points as well, and it is therefore Hausdorff and generating~$\T_\mrmv(\Ed)$.
\end{proof}
\end{prop}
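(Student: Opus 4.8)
The plan is to verify the three conditions in Definition~\ref{d:AES} for the triple $\ttonde{\dUpsilon,\T_\mrmv(\Ed),\mssd_\dUpsilon}$, namely: producing a Hausdorff uniformity $\UP$ of $\T_\mrmv(\Ed)$-continuous (finite) pseudo-distances that generates $\T_\mrmv(\Ed)$ and whose supremum recovers $\mssd_\dUpsilon$. The natural candidates are the functions $\mssd_{\gamma,r}(\eta,\xi)\eqdef\abs{\rho_{\gamma,r}(\eta)-\rho_{\gamma,r}(\xi)}$, where $\rho_{\gamma,r}\eqdef\rho_{\gamma,B_r}=\mssd_\dUpsilon(\emparg,\Lambda_{\gamma,B_r})$ is the point-to-set distance associated to the ``agree on $B_r$'' set, exactly as introduced in Proposition~\ref{p:W2Upsilon2}. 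First I would fix a basepoint $x_0\in X$, set $B_r\eqdef B^\mssd_r(x_0)$ (noting $B_r\in\Ed\cap\T$ since $\mssd$ metrizes $\T$), and check that $U^\complement=X\setminus B_r$ has non-empty interior for $r$ large, so Proposition~\ref{p:W2Upsilon2} applies; to be safe one can just use all $\gamma$ and all $r$ for which this holds, which is a cofinal family.

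Next I would collect the properties of each $\mssd_{\gamma,r}$ needed: $\T_\mrmv(\Ed)$-continuity follows from Proposition~\ref{p:W2Upsilon2}\iref{i:p:W2Upsilon:12} (continuity of $\rho_{\gamma,U}$) together with continuity of the absolute-value map on $[0,\infty]$; finiteness on $\dUpsilon^\sym{\infty}(\Ed)^\tym{2}$ — hence finiteness as a pseudo-distance on each $B^{\mssd_\dUpsilon}_\infty(\eta)$, which is all that is needed since $\mssd_\dUpsilon$ is an extended distance — follows from Proposition~\ref{p:W2Upsilon2}\iref{i:p:W2Upsilon:11}; symmetry and the triangle inequality are inherited from the triangle inequality for the Euclidean distance on $\R$ applied to the values $\rho_{\gamma,r}(\emparg)$; and the $1$-Lipschitz bound $\mssd_{\gamma,r}\leq\mssd_\dUpsilon$ follows from the triangle inequality for $\mssd_\dUpsilon$, because $\rho_{\gamma,r}$ is itself $1$-Lipschitz w.r.t.\ $\mssd_\dUpsilon$ (Prop.~\ref{p:W2Upsilon}\iref{i:p:W2Upsilon:9}). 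I would then let $\UP$ be the uniformity generated by the family $\set{\mssd_{\gamma,r}}$, i.e.\ close it up under finite maxima and under domination by arbitrary pseudo-distances, as Definition~\ref{d:AES} requires.

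The crux — and the step I expect to be the main obstacle — is the identity $\mssd_\dUpsilon=\sup\UP$, for which the $\leq$ direction is the substantive one. By the $1$-Lipschitz bound one already has $\sup\UP\leq\mssd_\dUpsilon$. For the reverse, for fixed $\eta,\xi$ one evaluates $\mssd_{\eta,r}(\eta,\xi)=\abs{\rho_{\eta,r}(\eta)-\rho_{\eta,r}(\xi)}=\rho_{\eta,r}(\xi)$, since $\eta\in\Lambda_{\eta,B_r}$ forces $\rho_{\eta,r}(\eta)=0$; then Proposition~\ref{p:W2Upsilon2}\iref{i:p:W2Upsilon:13} (monotone convergence $\rho_{\gamma,U}(\eta)\uparrow\mssd_\dUpsilon(\gamma,\eta)$ as $U\uparrow X$) gives $\sup_{r>0}\mssd_{\eta,r}(\eta,\xi)=\sup_{r>0}\rho_{\eta,r}(\xi)=\mssd_\dUpsilon(\eta,\xi)$. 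Hence $\sup\UP\geq\sup_{r}\mssd_{\eta,r}(\eta,\xi)=\mssd_\dUpsilon(\eta,\xi)$, and combining with the opposite inequality yields equality. Finally, since $\mssd_\dUpsilon$ separates points (it is an extended distance), the equality $\mssd_\dUpsilon=\sup\UP$ shows $\UP$ is Hausdorff; and since each $\mssd_{\gamma,r}$ is $\T_\mrmv(\Ed)$-continuous while $\mssd_\dUpsilon$ is $\T_\mrmv(\Ed)$-l.s.c.\ (Prop.~\ref{p:W2Upsilon}\iref{i:p:W2Upsilon:7}) and the sets $\set{\rho_{\gamma,r}<s}$ are $\T_\mrmv(\Ed)$-open, one checks the topology generated by $\UP$ is coarser than $\T_\mrmv(\Ed)$; conversely a subbasic $\T_\mrmv(\Ed)$-neighbourhood given by $f^\trid$, $f\in\Cz(\Ed)$, is controlled by the $\mssd_{\gamma,r}$'s (as in the proof of Prop.~\ref{p:W2Upsilon2}\iref{i:p:W2Upsilon:10}, writing $\Lambda_{\gamma,U}$ in terms of such $f$), so $\UP$ generates $\T_\mrmv(\Ed)$ exactly. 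This completes the verification of Definition~\ref{d:AES}.

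Remark: the only place where real care is needed is making sure the cofinal family of admissible $(\gamma,U)$ — those with $U$ open in $\Ed$ and $U^\complement$ of non-empty interior — is rich enough that both the generation of the topology and the monotone-limit identity go through; since $(X,\T)$ is perfect (Rem.~\ref{r:TLS}\iref{i:r:TLS:1.5}) every $\mssd$-ball $B_r$ has this property for large $r$, and $\set{B_r}_{r>0}$ is an exhaustion of $X$, so this is not an actual difficulty, merely a bookkeeping point to state cleanly.
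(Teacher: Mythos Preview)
Your proposal is correct and follows essentially the same route as the paper: you build the uniformity from the pseudo-distances $\mssd_{\gamma,r}(\eta,\xi)=\abs{\rho_{\gamma,r}(\eta)-\rho_{\gamma,r}(\xi)}$, use Proposition~\ref{p:W2Upsilon2}\iref{i:p:W2Upsilon:12} for continuity, the $1$-Lipschitz property for the upper bound, and the monotone limit in Proposition~\ref{p:W2Upsilon2}\iref{i:p:W2Upsilon:13} with the choice $\gamma=\eta$ for the lower bound. The paper's argument is the same, only terser on the ``generates $\T_\mrmv(\Ed)$'' step and on the non-empty-interior bookkeeping you flag in your remark.
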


\paragraph{Metric properties} In the following, let~$\mssd_\Upsilon$ be the restriction of~$\mssd_\dUpsilon$ to~$\Upsilon^{\tym{2}}$.
We collect some metric properties of the spaces~$(\Upsilon,\mssd_\Upsilon)$.

\begin{prop}[Metric properties of~$\dUpsilon$]\label{p:EH}
The following assertions hold:
\begin{enumerate}[$(i)$]
\item\label{i:p:EH:1} $\ttonde{\dUpsilon,\mssd_\dUpsilon}$ is (isometric to) the completion of~$\ttonde{\Upsilon,\mssd_\Upsilon}$ and~$\ttonde{\dUpsilon^\sym{N},\mssd_\dUpsilon}$ is (isometric to) the completion of~$\ttonde{\Upsilon^\sym{N}(\Ed),\mssd_\Upsilon}$ for each~$N\in\overline\N_0$;

\item\label{i:p:EH:2} \cite[Cor.~2.7]{ErbHue15} if~$(X,\mssd)$ is a geodesic metric space, then so is~$\ttonde{\dUpsilon,\mssd_\dUpsilon}$;
\end{enumerate}
\begin{proof}
Since~$\mssd$ completely metrizes~$\T$, and since~$\msE=\Ed$, a proof of~\iref{i:p:EH:1} is straightforward.
Assertion~\iref{i:p:EH:2} is proven in~\cite{ErbHue15} for~$(X,\mssd)$ a Riemannian manifold. Its proof carry over \emph{verbatim} to our setting. 
\end{proof}
\end{prop}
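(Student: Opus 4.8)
\textbf{Proof plan for Proposition \ref{p:EH}.}

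The plan is to deduce both assertions from the structural results already assembled in this section, treating them as essentially bookkeeping around Proposition~\ref{p:W2Upsilon} and the cited result~\cite{ErbHue15}. For~\iref{i:p:EH:1}, I would first record that~$\Upsilon(\Ed)$ is dense in~$\ttonde{\dUpsilon,\mssd_\dUpsilon}$ in the sense of extended metric spaces, i.e. dense within each component~$B^\dUpsilon_\infty(\gamma)$: given~$\gamma=\sum_{i=1}^N\delta_{x_i}\in\dUpsilon$ with possibly repeated atoms, one perturbs each repeated atom~$x_i$ to a nearby distinct point, using that~$(X,\T)$ is perfect (Remark~\ref{r:TLS}\iref{i:r:TLS:1.5}) together with the fact that~$\mssd$ metrizes~$\T$, to produce~$\eta_\eps\in\Upsilon(\Ed)$ within~$\mssd_\dUpsilon$-distance~$\eps$ of~$\gamma$ — here one matches atom-to-atom so the coupling cost is controlled. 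Next I would verify that~$\ttonde{\dUpsilon,\mssd_\dUpsilon}$ is complete: on each component~$\dUpsilon^\sym{N}$ this follows from completeness of~$(X,\mssd)$ as in~\cite[Prop.~2.6]{ErbHue15} or directly by extracting, from an~$\mssd_\dUpsilon$-Cauchy sequence~$\seq{\gamma_k}_k$, a limit configuration via the matchings~$\cpl_k\in\Opt(\gamma_k,\gamma_{k+1})$ supplied by Proposition~\ref{p:W2Upsilon}\iref{i:p:W2Upsilon:5}, whose atoms form Cauchy sequences in~$(X,\mssd)$. Density plus completeness identify~$\ttonde{\dUpsilon,\mssd_\dUpsilon}$ with the abstract completion of~$\ttonde{\Upsilon,\mssd_\Upsilon}$; restricting to the clopen (for~$\mssd_\dUpsilon$) pieces~$\dUpsilon^\sym{N}$ gives the statement for each~$N\in\overline\N_0$, noting that the completion operation commutes with the disjoint decomposition into components at infinite distance.

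For~\iref{i:p:EH:2}, the claim is that the geodesic property of~$(X,\mssd)$ lifts to~$\ttonde{\dUpsilon,\mssd_\dUpsilon}$, and this is exactly~\cite[Cor.~2.7]{ErbHue15}. I would point out that, although that reference is phrased for~$X$ a Riemannian manifold, its argument uses only: existence of optimal matchings between configurations at finite distance (our Proposition~\ref{p:W2Upsilon}\iref{i:p:W2Upsilon:5}), the possibility of interpolating each matched pair of atoms along a geodesic in~$X$, and the fact that~$\mssd_\dUpsilon$ on the resulting curve of configurations is computed from the componentwise geodesics via the identity~$\mssd_\dUpsilon^2(\gamma,\eta)=\int_{X^\tym 2}\mssd^2\diff\cpl$ for an optimal matching~$\cpl$. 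Since all three ingredients are available in our setting for a metric (not extended) local structure~$(\mcX,\mssd)$ with~$(X,\mssd)$ geodesic, the proof transfers verbatim; I would simply say so, as the statement does.

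The only mildly delicate point — and the one I would write out with care — is the passage from ``geodesic'' as a property of each~$B^\dUpsilon_\infty(\gamma)$ (which is how we have defined it for extended metric spaces) to the curve construction: one must check that the interpolating curve stays inside a single component, which is automatic since each intermediate configuration has the same total mass~$N$ as its endpoints, and that when~$N=\infty$ the curve~$t\mapsto\sum_i\delta_{x_i^t}$ (with~$x_i^t$ the geodesic from the~$i$-th atom of~$\gamma$ to the matched~$i$-th atom of~$\eta$) is~$\Ed$-locally finite for every~$t$ — this uses that the matching cost~$\sum_i\mssd^2(x_i^0,x_i^1)$ is finite, so only finitely many atoms travel distance exceeding any fixed~$\delta>0$, hence at most finitely many can enter a given bounded set. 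I expect no serious obstacle; the work is entirely in confirming that the Euclidean/manifold-specific wording of~\cite{ErbHue15} and~\cite{RoeSch99} rests only on the abstract facts we have already proven.
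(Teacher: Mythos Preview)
Your proposal is correct and follows essentially the same approach as the paper: the paper's proof simply asserts that~\iref{i:p:EH:1} is straightforward given that~$\mssd$ completely metrizes~$\T$ and~$\msE=\Ed$, and that~\iref{i:p:EH:2} carries over verbatim from~\cite{ErbHue15}. You have merely unpacked what ``straightforward'' and ``verbatim'' mean here --- the density argument via perturbation of repeated atoms (using perfectness of~$X$), completeness via optimal matchings, and the observation that the geodesic-interpolation argument of~\cite{ErbHue15} rests only on Proposition~\ref{p:W2Upsilon}\iref{i:p:W2Upsilon:5} and the geodesic property of~$X$ --- together with the useful check that the interpolated configuration remains $\Ed$-locally finite, which the paper leaves entirely implicit.
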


\paragraph{$L^2$-transportation distance and topology in the finite-volume case}
Let us collect here some result about the interplay between the $\Ed$-vague topology~$\T_\mrmv(\msE)$ and the $L^2$-transportation distance~$\mssd_\dUpsilon$ in the case when~$X=E\in\Ed$ is a $\mssd$-bounded set.
In this case, the extended metric space~$\ttonde{\dUpsilon(E),\mssd_\dUpsilon}$ has only countably many $\mssd_\dUpsilon$-accessible components, which makes the analysis of the metric-topological properties much easier than in the general case.

In particular the following properties hold true.

\begin{prop}[Topology over finite configurations]\label{p:DistTopE}
Let~$(\mcX,\mssd)$ be a metric local structure. Then, for every $\T$-closed ($\mssd$-bounded)~$E\in\Ed$,
\begin{enumerate*}[$(i)$]
\item\label{i:p:DistTopE:1} a sequence~$\seq{\gamma_n}_n$ of configurations in~$\dUpsilon(E)$ $\Ed$-vaguely converges to~$\gamma\in\dUpsilon(E)$ if and only if~$\nlim \mssd_\dUpsilon(\gamma_n,\gamma)=0$;
\item\label{i:p:DistTopE:2} a function~$u\colon \dUpsilon(E)\to\R$ is $\T_\mrmv(\Ed)$-continuous if and only if it is $\mssd_\dUpsilon$-continuous.
\end{enumerate*}

\begin{proof}
The backwards implication in~\iref{i:p:DistTopE:1} is a consequence of Proposition~\ref{p:ConfigEMTS} and Remark~\ref{r:EMetTop}\iref{i:r:EMetTop:1}.
In order to show the converse implication, let~$\seq{\gamma_n}_n\subset \dUpsilon(E)$ be $\T_\mrmv(\msE)$-vaguely convergent to~$\gamma\in\dUpsilon(E)$.
Then~$\gamma_n E=\gamma E\defeq m$ for all sufficiently large~$n$, and
\begin{equation*}
\nlim \int_E f\diff\frac{\gamma_n}{m}= \int_E f \diff\frac{\gamma}{m} \comma \qquad f\in\Cz(\Ed)\restr_E\fstop
\end{equation*}
Since~$\Cz(\Ed)\restr_E$ is a convergence-determing class for the narrow topology on probability measures on~$E$ by Remark~\ref{r:EMetTop}\iref{i:r:EMetTop:2}, we have that~$\seq{\gamma_n/m}_n$ narrowly converges to~$\gamma/m$.
Since~$E$ is $\T$-closed,~$(E,\mssd)$ is a bounded complete and separable metric space, hence the narrow convergence of probability measures on~$E$ coincides with the convergence in the $L^2$-Wasserstein distance~$W_2$ on~$\msP(E)$, see e.g.~\cite[Cor.~6.13]{Vil09}.
As a consequence,
\begin{equation*}
\nlim \mssd_\dUpsilon(\gamma_n,\gamma)= \nlim m \, W_2(\gamma_n/m, \gamma/m) = 0\comma
\end{equation*}
and the conclusion follows.
\iref{i:p:DistTopE:2} is an immediate consequence of~\iref{i:p:DistTopE:1}.
\end{proof}
\end{prop}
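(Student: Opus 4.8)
\textbf{Proof plan for Proposition~\ref{p:DistTopE}.}
The plan is to reduce the statement to the well-known equivalence between narrow convergence and $W_2$-convergence of probability measures on the complete separable bounded metric space $(E,\mssd)$.
First, for the backwards implication in~\iref{i:p:DistTopE:1}, I would simply invoke that $(\dUpsilon,\T_\mrmv(\Ed),\mssd_\dUpsilon)$ is an extended metric-topological space by Proposition~\ref{p:ConfigEMTS}, so that $\mssd_\dUpsilon$-convergence implies $\T_\mrmv(\Ed)$-convergence by Remark~\ref{r:EMetTop}\iref{i:r:EMetTop:1}. (This direction uses nothing about $E$ being bounded.)
For the forward implication, suppose $\seq{\gamma_n}_n\subset\dUpsilon(E)$ converges $\Ed$-vaguely to $\gamma\in\dUpsilon(E)$. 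Testing against the constant function $\car_E\in\Cz(\Ed)\restr_E$, I get $\gamma_n E\to\gamma E$; since these are non-negative integers, $\gamma_n E=\gamma E\eqdef m$ for all $n$ large, and I discard the finitely many earlier terms. If $m=0$ the claim is trivial (all configurations are the empty one), so assume $m\geq 1$. Then $\gamma_n/m$ and $\gamma/m$ are probability measures on $E$, and $\Ed$-vague convergence of $\gamma_n$ to $\gamma$ gives $\int_E f\diff(\gamma_n/m)\to\int_E f\diff(\gamma/m)$ for every $f\in\Cz(\Ed)\restr_E$.

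The key step is to upgrade this to narrow convergence of the probability measures $\gamma_n/m$ to $\gamma/m$ on $E$: this is exactly where I use that $\Cz(\Ed)\restr_E$ is a convergence-determining class for the narrow topology on $\msP(E)$, which is the content of Remark~\ref{r:EMetTop}\iref{i:r:EMetTop:2} (itself a consequence of~\cite[Cor.~7]{BloKou10}), together with the fact that $E$ being $\T$-closed makes $(E,\mssd)$ a bounded complete separable metric space. Once narrow convergence is established, the standard equivalence on Wasserstein spaces over such a space---narrow convergence coincides with $W_2$-convergence, see e.g.~\cite[Cor.~6.13]{Vil09} (boundedness of $E$ makes the second-moment condition automatic)---yields $W_2(\gamma_n/m,\gamma/m)\to 0$. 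Finally I observe $\mssd_\dUpsilon(\gamma_n,\gamma)= m\,W_2(\gamma_n/m,\gamma/m)$: indeed, couplings $\cpl\in\Cpl(\gamma_n,\gamma)$ on $X^\tym{2}$ supported on $E^\tym{2}$ are in bijection with $m$ times couplings of the normalized measures, and any coupling of $\gamma_n$ and $\gamma$ is automatically concentrated on $E^\tym{2}$ since its marginals are; comparing the two transport costs gives the stated identity. Hence $\mssd_\dUpsilon(\gamma_n,\gamma)\to0$, proving~\iref{i:p:DistTopE:1}.

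Part~\iref{i:p:DistTopE:2} is then immediate: both $\T_\mrmv(\Ed)$ and the $\mssd_\dUpsilon$-topology on $\dUpsilon(E)$ are metrizable (the vague topology by Proposition~\ref{p:TopologyUpsilon}\iref{i:p:TopologyUpsilon:1}, and $\mssd_\dUpsilon$ is a genuine metric here since $\dUpsilon(E)$ decomposes into the countably many pieces $\dUpsilon^\sym{m}(E)$, each at finite mutual $\mssd_\dUpsilon$-distance), so continuity of $u$ can be tested sequentially, and by~\iref{i:p:DistTopE:1} the convergent sequences for the two topologies coincide. I do not expect any serious obstacle here; the only point requiring a little care is the identity $\mssd_\dUpsilon(\gamma_n,\gamma)=m\,W_2(\gamma_n/m,\gamma/m)$, which needs the observation that couplings of two configurations with the same total mass $m$ on $E$ are exactly the $m$-scalings of couplings of the normalized probability measures, together with the fact that the $L^2$-transportation cost is insensitive to the ambient space once one restricts to $E$.
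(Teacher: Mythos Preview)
Your proof is correct and follows essentially the same route as the paper's: the backward implication via Proposition~\ref{p:ConfigEMTS} and Remark~\ref{r:EMetTop}\iref{i:r:EMetTop:1}, the forward implication by normalizing to probabilities, invoking Remark~\ref{r:EMetTop}\iref{i:r:EMetTop:2} to pass to narrow convergence, and then~\cite[Cor.~6.13]{Vil09} on the bounded complete separable space $(E,\mssd)$. One small correction (shared with the paper): the scaling identity is $\mssd_\dUpsilon(\gamma_n,\gamma)=\sqrt{m}\,W_2(\gamma_n/m,\gamma/m)$, not $m\,W_2(\cdots)$, though this of course does not affect the conclusion.
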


\subsubsection{\texorpdfstring{$L^2$}{L2}-transportation distance and labeling maps}\label{sss:dUpsilonLb}
Let~$\mcX$ be a metric local structure,~$\mbfX$ be defined by~\eqref{eq:d:Labelings:1}, and set
\begin{align*}
\bmssd(\mbfx,\mbfy)\eqdef& 
\begin{cases} 0 & \text{if } \mbfx=\mbfy=\emp
\\
\mssd_\tym{N}(\mbfx, \mbfy) &\text{if } \mbfx,\mbfy\in X^\tym{N}\comma \quad N\in \overline\N_1
\\
+\infty & \text{otherwise}
\end{cases}
\fstop
\end{align*}

\begin{prop}[Radially isometric labeling maps]\label{p:PoissonLabeling}
Fix~$\eta\in \dUpsilon$,~$\mbfx\in\Lb^{-1}(\eta)$, and set $N\eqdef \eta X$. Then,
\begin{enumerate}[$(i)$]
\item\label{i:p:PoissonLabeling:1} for every~$\gamma\in B^\dUpsilon_\infty(\eta)$ there exists a labeling~$\mbfy=\mbfy(\mbfx)\in\Lb^{-1}(\gamma)$ such that
\begin{align}\label{eq:p:W2UpsilonLabeling:1}
\mssd_\dUpsilon(\eta,\gamma)=\bmssd(\mbfx,\mbfy)\semicolon
\end{align}

\item\label{i:p:PoissonLabeling:2} there exists a labeling map~$\lb_\mbfx\colon \dUpsilon\rar \mbfX_\locfin(\Ed)$ satisfying
\begin{equation}\label{eq:p:W2Upsilon:1}
\begin{aligned}
\mssd_\dUpsilon(\emparg,\eta)=&\mssd_\tym{N}\ttonde{\lb_\mbfx(\emparg),\mbfx} \quad \text{on} \quad \dUpsilon^\sym{N}\comma
\\
\mssd_\dUpsilon\ttonde{\emparg, \Lb(\mbfx^\asym{n})}=& \mssd_\tym{n}\ttonde{\lb_\mbfx(\emparg),\mbfx^\asym{n}} \quad \text{on} \quad \dUpsilon^\sym{N}\comma\qquad n< N \fstop
\end{aligned}
\end{equation}

\item\label{i:p:PoissonLabeling:3} $\mssd_\dUpsilon(\emparg,\eta)=\mssd_\tym{N}(\emparg,\mbfx)\circ \lb_\mbfx$ on~$\dUpsilon$;

\item\label{i:p:PoissonLabeling:4} if~$N<\infty$, then $\mssd_\dUpsilon(\emparg,\eta)\geq \mssd_\tym{n}\ttonde{\emparg,\mbfx^\asym{n}}\circ \tr^n \circ \lb_\mbfx$ on~$B_\infty^\dUpsilon(\eta)$ for every~$n\leq N$;

\item\label{i:p:PoissonLabeling:5} if~$N=\infty$, then $\mssd_\dUpsilon(\emparg,\eta)=\nlim \mssd_\tym{n}\ttonde{\emparg, \mbfx^\asym{n}}\circ \tr^n\circ \lb_\mbfx$ on~$B_\infty^\dUpsilon(\eta)$;

\item\label{i:p:PoissonLabeling:6} for every~$n\leq N$
\begin{align}\label{eq:SymmetricDistance}
\mssd_\dUpsilon\ttonde{\Lb(\mbfx^\asym{n}),\Lb(\mbfy^\asym{n})}= \inf_{\sigma\in\mfS_n}\mssd_\tym{n}\ttonde{\mbfx^\asym{n},{\mbfy^\asym{n}}_\sigma} \comma\qquad \mbfy^\asym{n}\in \cl \ttonde{(\tr^n\circ \lb_\mbfx)(\dUpsilon)}\subset X^\tym{n} \fstop
\end{align}
\end{enumerate}

\begin{proof}

\iref{i:p:PoissonLabeling:1}~is a consequence of Proposition~\ref{p:W2Upsilon}\iref{i:p:W2Upsilon:7}, cf.~\cite[Eqn.~(4.2)]{ErbHue15}.

\iref{i:p:PoissonLabeling:2}
Set~$\mbfX_\locfin(\Ed)_\mbfx\eqdef\set{\mbfy\in\mbfX_\locfin(\Ed):\mssd_\dUpsilon\ttonde{\Lb(\mbfy),\eta}=\bmssd(\mbfy,\mbfx)}$, and note that the restriction of~$\Lb$ to~$\mbfX_\locfin(\Ed)_\mbfx$ is surjective by~\iref{i:p:PoissonLabeling:1}.
The rest of the proof thus follows as in Proposition~\ref{p:Selection} if we show that~$\mbfX_\locfin(\Ed)_\mbfx$ is a Suslin space.
Recall that~$\mbfX_\locfin(\Ed)$ is a Suslin space as in the proof of Proposition~\ref{p:Selection}.
We show that~$\mbfX_\locfin(\Ed)_\mbfx$ is a Borel subset of~$\mbfX_\locfin(\Ed)$, hence a Suslin space by e.g.~\cite[Cor.~6.6.7]{Bog07}.
In order to show that~$\mbfX_\locfin(\Ed)_\mbfx$ is Borel, it suffices to use the $\Bo{\T_\mrmv(\Ed)}$-measurability of~$\mssd_\dUpsilon(\emparg,\eta)$ (Prop.~\ref{p:W2Upsilon}\iref{i:p:W2Upsilon:8}), the $\Bo{\mbfX_\locfin(\Ed)}/\Bo{\T_\mrmv(\Ed)}$-measurability of~$\Lb$ (Lem.~\ref{l:MeasurabilityL}), and the~$\Bo{\mbfX_\locfin(\Ed)}$-measurability of~$\bmssd(\emparg,\mbfx)$.

\iref{i:p:PoissonLabeling:3}--\iref{i:p:PoissonLabeling:5} are a straightforward consequence of Proposition~\ref{p:W2Upsilon}. 

The inequality~``$\geq$'' in~\iref{i:p:PoissonLabeling:6} is trivial. Firstly, let us prove the reverse inequality when~$\mbfy= \lb_\mbfx(\gamma)$ for some~$\gamma\in\dUpsilon$ and~$\mbfy^\asym{n}=\tr^n(\mbfy)$. Let~$\cpl$ be an optimal matching for the pair~$(\eta,\gamma)$ as in Proposition~\ref{p:W2Upsilon}\iref{i:p:W2Upsilon:5}. By an adaptation of Step~2 in the proof of~\cite[Thm.~5.10(ii)]{Vil09}, the matching~$\cpl$ is concentrated on the $\mssd^2$-cyclically monotone set
\begin{align*}
C\eqdef \bigcup_{i\geq 1}\tset{(x_i, \lb_\mbfx(\gamma)_i)}\subset X^{\times 2}\fstop
\end{align*} 
That is, for every~$n\leq \eta X$ and every~$\set{(x_{i_1},y_{i_1}),\dotsc, (x_{i_n},y_{i_n})}\subset C$ with~$i_j\leq N$ for~$j\leq n$, it holds that
\begin{align*}
\sum_j^n \mssd^2(x_{i_j},y_{i_j})\leq \sum_j^n \mssd^2(x_{i_j},y_{i_{j+1}})\comma \qquad y_{i_{n+1}}\eqdef y_{i_1} \fstop
\end{align*}
Equivalently, for every $n\leq N$ and every~$\sigma\in\mfS_n$, it holds that
\begin{align*}
\mssd_\tym{n}^2\ttonde{\mbfx^\asym{n},\mbfy^\asym{n}} \leq \mssd_\tym{n}^2\ttonde{\mbfx^\asym{n},{\mbfy^\asym{n}}_\sigma}\comma \qquad \set{(x_1,y_1),\dotsc, (x_n,y_n)}\subset C \comma
\end{align*}
which implies the inequality~``$\leq$'' in~\eqref{eq:SymmetricDistance}.
Finally, the same inequality holds for~$\mbfy^\asym{n}$ in the $\T^{\tym{n}}$-closure of~$(\tr^n\circ \lb_\mbfx)\ttonde{\dUpsilon}$ by continuity of~$\mssd_\tym{n}^2$.
\end{proof}
\end{prop}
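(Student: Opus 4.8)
\textbf{Proof strategy for Proposition~\ref{p:PoissonLabeling}.}

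The plan is to prove the six items essentially in the order they are stated, observing that \iref{i:p:PoissonLabeling:1} is a metric-geometric statement that decouples from the measurability bookkeeping, that \iref{i:p:PoissonLabeling:2} upgrades the pointwise selection of \iref{i:p:PoissonLabeling:1} to a \emph{measurable} selection via the machinery already set up for Proposition~\ref{p:Selection}, and that \iref{i:p:PoissonLabeling:3}--\iref{i:p:PoissonLabeling:6} are then formal consequences of the defining property of~$\lb_\mbfx$ together with the structure results on~$\mssd_\dUpsilon$ from Propositions~\ref{p:W2Upsilon} and~\ref{p:W2Upsilon2}. For \iref{i:p:PoissonLabeling:1}, fix $\eta\in\dUpsilon$, $\mbfx\in\Lb^{-1}(\eta)$, and $\gamma\in B^\dUpsilon_\infty(\eta)$, so $\gamma X=\eta X=N$ by Proposition~\ref{p:W2Upsilon}\iref{i:p:W2Upsilon:1}. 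By Proposition~\ref{p:W2Upsilon}\iref{i:p:W2Upsilon:5} there is an optimal matching $\cpl\in\Opt(\eta,\gamma)\cap\dUpsilon(X^\tym{2},\Ed^\otym{2})$; writing $\cpl=\sum_{i\leq N}\delta_{(x_i,w_i)}$ with $\mbfx=\seq{x_i}_{i\leq N}$ (after relabeling the atoms of $\cpl$ so that its first marginal is presented in the order dictated by $\mbfx$, using $\gamma_{x}\in\N_0$ and the $\Ed$-local finiteness), one sets $\mbfy\eqdef\seq{w_i}_{i\leq N}$, which lies in $\mbfX_\locfin(\Ed)$ since $\cpl$ does, satisfies $\Lb(\mbfy)=\pr^2_\pfwd\cpl=\gamma$, and by optimality of $\cpl$ gives $\mssd_\dUpsilon(\eta,\gamma)^2=\int\mssd^2\diff\cpl=\sum_{i\leq N}\mssd^2(x_i,w_i)=\bmssd(\mbfx,\mbfy)^2$. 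This is \eqref{eq:p:W2UpsilonLabeling:1}; the argument mirrors \cite[Eqn.~(4.2)]{ErbHue15}.

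For \iref{i:p:PoissonLabeling:2} I would introduce
\begin{align*}
\mbfX_\locfin(\Ed)_\mbfx\eqdef\set{\mbfy\in\mbfX_\locfin(\Ed):\mssd_\dUpsilon\ttonde{\Lb(\mbfy),\eta}=\bmssd(\mbfy,\mbfx)}
\end{align*}
and note that $\Lb\restr{\mbfX_\locfin(\Ed)_\mbfx}$ is surjective onto $\dUpsilon$ by \iref{i:p:PoissonLabeling:1} (for $\gamma\notin B^\dUpsilon_\infty(\eta)$ both sides are $+\infty$, so any preimage works). To invoke Yankov's Measurable Selection Theorem exactly as in Proposition~\ref{p:Selection} it suffices to check that $\mbfX_\locfin(\Ed)_\mbfx$ is Suslin; since $\mbfX_\locfin(\Ed)$ is Suslin, it is enough that $\mbfX_\locfin(\Ed)_\mbfx$ be a Borel subset of it, and this follows by writing it as $\set{\mbfy:\Phi(\mbfy)=\Psi(\mbfy)}$ where $\Phi\eqdef\mssd_\dUpsilon(\emparg,\eta)\circ\Lb$ is $\Bo{\mbfX_\locfin(\Ed)}$-measurable by Lemma~\ref{l:MeasurabilityL} and Proposition~\ref{p:W2Upsilon}\iref{i:p:W2Upsilon:8}, and $\Psi\eqdef\bmssd(\emparg,\mbfx)$ is $\Bo{\mbfX_\locfin(\Ed)}$-measurable (it is $\T^\tym{\infty}$-lower semicontinuous on each component and $+\infty$ off the component of $\mbfx$). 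A labeling map $\lb_\mbfx$ obtained in this way inverts $\Lb$ on the right and, by construction, takes values in $\mbfX_\locfin(\Ed)_\mbfx$, so $\mssd_\dUpsilon(\gamma,\eta)=\bmssd(\lb_\mbfx(\gamma),\mbfx)$ for every $\gamma$. Restricting to $\dUpsilon^\sym{N}$ (where $\bmssd=\mssd_\tym{N}$ on the relevant component) yields the first line of \eqref{eq:p:W2Upsilon:1}; the second line follows by applying \iref{i:p:PoissonLabeling:1} with $\Lb(\mbfx^\asym{n})$ in place of $\eta$ — more precisely, one checks that $\lb_\mbfx$ is \emph{also} radially isometric around each truncation $\Lb(\mbfx^\asym{n})$, because an optimal matching between $\eta$ and $\gamma$ restricts to an optimal matching between $\Lb(\mbfx^\asym{n})$ and the corresponding sub-configuration of $\gamma$ along the chosen labeling (this is the cyclical-monotonicity step that reappears in \iref{i:p:PoissonLabeling:6}).

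Items \iref{i:p:PoissonLabeling:3}--\iref{i:p:PoissonLabeling:5} are then immediate: \iref{i:p:PoissonLabeling:3} is the first line of \eqref{eq:p:W2Upsilon:1} rewritten with $\bmssd$ replaced by $\mssd_\tym{N}$ (valid off $\dUpsilon^\sym{N}$ as well, both sides being $+\infty$); \iref{i:p:PoissonLabeling:4} for finite $N$ follows from the second line of \eqref{eq:p:W2Upsilon:1} together with the obvious domination $\mssd_\tym{N}(\emparg,\mbfx)\geq\mssd_\tym{n}(\emparg,\mbfx^\asym{n})\circ\tr^n$; and \iref{i:p:PoissonLabeling:5} for $N=\infty$ follows from the second line of \eqref{eq:p:W2Upsilon:1} by letting $n\to\infty$, using $\nlim\mssd_\tym{n}(\lb_\mbfx(\gamma)^\asym{n},\mbfx^\asym{n})^2=\sum_{i\geq1}\mssd^2(\lb_\mbfx(\gamma)_i,x_i)=\mssd_\dUpsilon(\gamma,\eta)^2$ by monotone convergence (equivalently, Proposition~\ref{p:W2Upsilon2}\iref{i:p:W2Upsilon:13} applied along the exhausting sets $\Lb(\mbfx^\asym{n})$). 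For \iref{i:p:PoissonLabeling:6}, the inequality ``$\geq$'' holds because any $\sigma\in\mfS_n$ produces a (not necessarily optimal) matching of $\Lb(\mbfx^\asym{n})$ and $\Lb(\mbfy^\asym{n})$. For ``$\leq$'' one first takes $\mbfy=\lb_\mbfx(\gamma)$, $\mbfy^\asym{n}=\tr^n(\mbfy)$: by Proposition~\ref{p:W2Upsilon}\iref{i:p:W2Upsilon:5} choose an optimal matching $\cpl$ of $(\eta,\gamma)$, and by (an adaptation of Step~2 of) \cite[Thm.~5.10(ii)]{Vil09} $\cpl$ is concentrated on the $\mssd^2$-cyclically monotone set $C\eqdef\bigcup_{i\geq1}\ttset{(x_i,\lb_\mbfx(\gamma)_i)}$; cyclical monotonicity on any $n$ atoms says precisely that $\mssd_\tym{n}^2(\mbfx^\asym{n},\mbfy^\asym{n})\leq\mssd_\tym{n}^2(\mbfx^\asym{n},\mbfy^\asym{n}_\sigma)$ for all $\sigma\in\mfS_n$, giving ``$\leq$''. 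The general case $\mbfy^\asym{n}\in\cl\ttonde{(\tr^n\circ\lb_\mbfx)(\dUpsilon)}$ follows by continuity of $\mssd_\tym{n}^2$ and of $\inf_{\sigma\in\mfS_n}\mssd_\tym{n}^2(\emparg,\emparg_\sigma)$, which is a finite infimum of continuous functions. The main technical obstacle is the passage from the pointwise radially-isometric selection \iref{i:p:PoissonLabeling:1} to the \emph{measurable} radially-isometric labeling map in \iref{i:p:PoissonLabeling:2}: one must verify the Borel (hence Suslin) character of $\mbfX_\locfin(\Ed)_\mbfx$ cleanly, and — for the compatibility of $\lb_\mbfx$ with \emph{all} truncations simultaneously, which is what \iref{i:p:PoissonLabeling:4}--\iref{i:p:PoissonLabeling:6} exploit — one must make sure the \emph{single} selection $\lb_\mbfx$ already carries radial isometry around every $\Lb(\mbfx^\asym{n})$, which is exactly where the cyclical-monotonicity argument must be threaded back into the construction rather than invoked only at the end.
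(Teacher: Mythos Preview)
Your proposal is correct and follows essentially the same route as the paper: part~\iref{i:p:PoissonLabeling:1} via the existence of an optimal matching (Proposition~\ref{p:W2Upsilon}\iref{i:p:W2Upsilon:5}), part~\iref{i:p:PoissonLabeling:2} by introducing exactly the constraint set~$\mbfX_\locfin(\Ed)_\mbfx$, verifying it is Borel (hence Suslin) and reapplying the Yankov selection argument from Proposition~\ref{p:Selection}, parts~\iref{i:p:PoissonLabeling:3}--\iref{i:p:PoissonLabeling:5} as formal consequences, and part~\iref{i:p:PoissonLabeling:6} via $\mssd^2$-cyclical monotonicity of the optimal matching (adapting~\cite[Thm.~5.10(ii)]{Vil09}) plus continuity to pass to the closure. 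Your explicit remark that the \emph{single} selection~$\lb_\mbfx$ must already be radially isometric around every truncation~$\Lb(\mbfx^\asym{n})$, and that this is precisely what the cyclical-monotonicity step secures, is a useful clarification that the paper leaves implicit in its terse ``straightforward consequence of Proposition~\ref{p:W2Upsilon}''.
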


As a consequence of Proposition~\ref{p:PoissonLabeling}\iref{i:p:PoissonLabeling:1}, we obtain the following.

\begin{prop}\label{p:LbLipschitz}
The map~$\Lb\colon \mbfX^\asym{\infty}_\locfin(\Ed)\longrar \dUpsilon$ is $\mssd_\tym{\infty}/\mssd_\dUpsilon$-short, i.e.
\begin{equation*}
\mssd_\dUpsilon\ttonde{\Lb(\mbfx),\Lb(\mbfy)}\leq \mssd_\tym{\infty}(\mbfx,\mbfy) \comma\qquad \mbfx,\mbfy\in\mbfX^\asym{\infty}_\locfin(\Ed) \fstop
\end{equation*}
\end{prop}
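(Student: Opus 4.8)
The statement asserts that the surjection $\Lb\colon \mbfX^\asym{\infty}_\locfin(\Ed)\to\dUpsilon$ is $1$-Lipschitz from the product extended distance $\mssd_\tym{\infty}$ to the $L^2$-transportation distance $\mssd_\dUpsilon$. The plan is to reduce to the case of two sequences that lie in the same ``radial component'', and then exhibit an explicit coupling realizing (an upper bound on) $\mssd_\dUpsilon$ from the diagonal matching of coordinates.

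\emph{Step 1: reduce to finite distance.} Fix $\mbfx=\seq{x_i}_i$ and $\mbfy=\seq{y_i}_i$ in $\mbfX^\asym{\infty}_\locfin(\Ed)$. If $\mssd_\tym{\infty}(\mbfx,\mbfy)=+\infty$ there is nothing to prove, so assume $\mssd_\tym{\infty}(\mbfx,\mbfy)=\seq{\sum_i \mssd^2(x_i,y_i)}^{1/2}<\infty$. Write $\gamma\eqdef\Lb(\mbfx)=\sum_i\delta_{x_i}$ and $\eta\eqdef\Lb(\mbfy)=\sum_i\delta_{y_i}$; both have the same total mass $N\eqdef\gamma X=\eta X\in\overline\N_1$ (or are both $\zero$, in which case the inequality is trivial), since the two sequences have the same length by definition of $\mssd_\tym{\infty}$ being finite.

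\emph{Step 2: exhibit a coupling.} Consider the measure $\cpl\eqdef\sum_{i=1}^N \delta_{(x_i,y_i)}$ on $(X^\tym{2},\A^\otym{2})$. First I would check that $\cpl$ is $\Ed^\otym{2}$-locally finite, hence an element of $\dUpsilon(X^\tym{2},\Ed^\otym{2})$: for $E_1,E_2\in\Ed$, the number of indices $i$ with $x_i\in E_1$ is finite since $\mbfx\in\mbfX_\locfin(\Ed)$, so a fortiori $\#\set{i: (x_i,y_i)\in E_1\times E_2}<\infty$, and pluri-rectangles generate $\Ed^\otym{2}$. By construction $\pr^1_\pfwd\cpl=\gamma$ and $\pr^2_\pfwd\cpl=\eta$, so $\cpl\in\Cpl(\gamma,\eta)$. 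Then, directly from the definition~\eqref{eq:d:W2Upsilon} of $\mssd_\dUpsilon$,
\begin{equation*}
\mssd_\dUpsilon(\gamma,\eta)^2\leq \int_{X^\tym{2}}\mssd^2(x,y)\diff\cpl(x,y)=\sum_{i=1}^N \mssd^2(x_i,y_i)=\mssd_\tym{\infty}(\mbfx,\mbfy)^2\comma
\end{equation*}
which is exactly the claim after taking square roots.

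\emph{Remarks on obstacles.} This argument is essentially elementary; the only point requiring care is the measurability/local-finiteness of the coupling $\cpl$, i.e. verifying $\cpl\in\dUpsilon(X^\tym{2},\Ed^\otym{2})$ so that it is an admissible competitor in the infimum defining $\mssd_\dUpsilon$ — but this follows at once from $\mbfx,\mbfy\in\mbfX_\locfin(\Ed)$ together with the description of $\Ed^\otym{2}$ as the ring generated by pluri-rectangles from $\Ed^\tym{2}$ (Proposition~\ref{p:Products}). Alternatively, one can obtain the same inequality as an immediate corollary of Proposition~\ref{p:PoissonLabeling}\iref{i:p:PoissonLabeling:1}: given $\gamma\in B^\dUpsilon_\infty(\eta)$ there is a labeling $\mbfy'\in\Lb^{-1}(\gamma)$ with $\mssd_\dUpsilon(\eta,\gamma)=\bmssd(\mbfx,\mbfy')\leq\bmssd(\mbfx,\mbfy)=\mssd_\tym{\infty}(\mbfx,\mbfy)$, where the inequality is because $\mbfy'$ is the \emph{optimal} relabeling while $\mbfy$ is just some relabeling of $\gamma$; and if $\gamma\notin B^\dUpsilon_\infty(\eta)$ then $\mssd_\dUpsilon(\eta,\gamma)=+\infty$ forces $\eta X\neq\gamma X$, whence the two sequences have different lengths and $\mssd_\tym{\infty}(\mbfx,\mbfy)=+\infty$ too. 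I would present the direct coupling argument as the main proof since it is self-contained, and mention the second route parenthetically.
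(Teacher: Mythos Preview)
Your main argument (Step~2, the direct coupling $\cpl=\sum_i\delta_{(x_i,y_i)}$) is correct and in fact more explicit than the paper, which simply records the proposition as ``a consequence of Proposition~\ref{p:PoissonLabeling}\iref{i:p:PoissonLabeling:1}'' without a written proof. Your coupling argument is the standard way to justify that consequence, and it is self-contained.

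One remark on your parenthetical alternative route: the case analysis there is not quite right. The implication ``$\gamma\notin B^\dUpsilon_\infty(\eta)\Rightarrow \eta X\neq\gamma X$'' is false --- Proposition~\ref{p:W2Upsilon}\iref{i:p:W2Upsilon:1} only gives the inclusion $B^\dUpsilon_\infty(\eta)\subset\dUpsilon^\sym{N}$, not equality (e.g.\ on $X=\R$ take $\eta=\sum_{i\geq 1}\delta_i$ and $\gamma=\sum_{i\geq 1}\delta_{2i}$: both are in $\dUpsilon^\sym{\infty}$ but $\mssd_\dUpsilon(\eta,\gamma)=+\infty$). Moreover, the inequality $\bmssd(\mbfx,\mbfy')\leq\bmssd(\mbfx,\mbfy)$ is not contained in the \emph{statement} of Proposition~\ref{p:PoissonLabeling}\iref{i:p:PoissonLabeling:1} (which only asserts existence of an optimal labeling), so invoking it there is slightly circular: to know $\mbfy'$ is optimal among labelings you already need $\mssd_\dUpsilon\leq\bmssd(\mbfx,\cdot)$, which is your direct coupling argument. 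None of this affects your primary proof; just drop or repair the parenthetical.
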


\subsubsection{Metric properties of functions}
It is not straightforward to exhibit non-trivial $\mssd_\dUpsilon$-Lipschitz functions on the configuration space $\ttonde{\dUpsilon,\mssd_\dUpsilon}$.
Two opposite examples of this fact are that
\begin{enumerate*}[$(a)$]
\item\label{i:r:Lipschitz:1.1} there exist uncountably many~$\gamma\in\dUpsilon$ so that $\mssd_\dUpsilon(\emparg, \gamma)\wedge 1\equiv \car$ $\PP_\mssm$-a.e.;
\item\label{i:r:Lipschitz:1.2} there exist cylinder functions, or exponential cylinder functions, that are \emph{not} $\mssd_\dUpsilon$-Lipschitz.
\end{enumerate*}
\iref{i:r:Lipschitz:1.1}~holds since we have~$\PP_\mssm B^\dUpsilon_\infty(\gamma)=0$ for uncountably many~$\gamma\in\dUpsilon$. For~\iref{i:r:Lipschitz:1.2} we provide a simple Example~\ref{e:NonLipCyl} of such a non-Lipschitz cylinder function on~$\dUpsilon(\R)$.

\begin{ese}[A non-Lipschitz cylinder function]\label{e:NonLipCyl}
Consider the standard real line~$\R$, endowed with the one-dimension\-al Lebesgue measure~$\Leb^1$ and with the localizing ring generated by compact sets. On the configuration space~$\dUpsilon(\R)$, let~$\PP_1\eqdef\PP_{\Leb^1}$.
For~$\eps>0$ set~$A_{\eps}\eqdef [-2,-2+\eps]\cup [2-\eps,2]$ and note that
\begin{align}
\Xi_{n,\eps}\eqdef \set{\gamma \in \dUpsilon(\R): \gamma A_\eps= n\comma \gamma\ttonde{[-3,3] \setminus A_\eps}=0}
\end{align}
has positive $\PP_1$-measure for every~$n\in \N$ and every~$\eps>0$, e.g.~by~\eqref{eq:AKR2.7}.

Now, let~$f\in\Cc^\infty(\R)$ be satisfying~$f\restr_{[1,2]}\equiv 1-\abs{\emparg-1}$,~$f\restr_{[-3,3]^\complement}\equiv 0$, and~$f(x)=f(-x)$. 
Set~$u\eqdef \arctan \circ f^\trid \in\Cyl{\Cc^\infty(\R)}$.
Then,~$f^\trid\gamma=f\restr_{A_\eps}^\trid\gamma \leq \eps n$ for every~$\gamma\in \Xi_{n,\eps}$, and
\begin{align*}
\SF{\dUpsilon}{\PP_1}(u)(\gamma)=\frac{(\abs{\nabla f}^2)^\trid \gamma}{\ttonde{1+(f^\trid\gamma)^2}^2}\geq \frac{\car_{A_\eps}^\trid\gamma}{\ttonde{1+(f\restr_{A_\eps}^\trid\gamma)^2}^2}\geq \frac{n}{(1+\eps^2 n^2)^2} \comma \qquad \gamma\in \Xi_{n,\eps}\fstop
\end{align*}

In particular, choosing~$\eps_n\eqdef 1/n$, we have~$\SF{\dUpsilon}{\PP_1}(u)(\gamma) \geq n/4$ for every~$\gamma\in\Xi_{n,\eps_n}$, thus~$\SF{\dUpsilon}{\PP_1}(u)$ is \emph{not} in~$L^\infty(\PP_\mssm)$.
Argue by contradiction that~$u$ were $\mssd_\dUpsilon$-Lipschitz.
By the Rademacher Theorem for configuration spaces over smooth spaces shown in~\cite[Thm.~1.3(i)]{RoeSch99}, we have that~$\PP_1$-$\esssup \SF{\dUpsilon}{\PP_1}(u)\leq \Li[\mssd_\dUpsilon]{u}$, a contradiction.
\end{ese}

In order to construct a $\PP_\mssm$-non-trivial example of a measurable Lipschitz function, let~$U\subset X$ be open, and so that~$\mssm U>0$ and~$\mssm U^\complement >0$.
Then the set~$\Lambda_{\gamma, U}$ defined in~\eqref{eq:W2Upsilon:0} satisfies~$\PP_\mssm \Lambda_{\gamma, U}>0$ and~$\PP_\mssm \Lambda_{\gamma, U}^\complement>0$.
As a consequence, the function~$\rho_{\gamma, U}\eqdef \mssd_\dUpsilon(\emparg, \Lambda_{\gamma, U})$ is an example of a non-trivial $\mssd_\dUpsilon$-Lipschitz function.

Since~$\mssd_\dUpsilon$ is not $\T_\mrmv(\Ed)$-continuous, it is not true in general that $\mssd_\dUpsilon$-Lipschitz functions on~$\dUpsilon$ are Borel measurable.
In fact, one can show that there exist bounded $\mssd_\dUpsilon$-Lipschitz functions that are not $\QP$-measurable for every Borel probability measure~$\QP$ on~$\dUpsilon$ (cf.~\cite[Ex.~3.4]{LzDSSuz20}).
A good class of examples of Borel-measurable $\mssd_\dUpsilon$-Lipschitz functions is provided by the next result.

\begin{prop}[$\Bo{\T_\mrmv(\Ed)}$-measurability of McShane extensions]\label{p:BorelMcShane}
Let~$(\mcX,\mssd)$ be a metric local \linebreak structure.
Further let~$u \in \Cb(\T_\mrmv(\Ed))$, and~$\Lambda \subset \dUpsilon$ be $\T_\mrmv(\Ed)$-closed.
If the restriction~$u\restr_\Lambda$ of~$u$ to~$\Lambda$ is $\mssd_\dUpsilon$-Lipschitz, then the constrained McShane extension~$\overline{u}\colon \dUpsilon\rar \R$ of~$u\restr_\Lambda$ is $\Bo{\T_\mrmv(\Ed)}$-measurable.
\begin{proof}
It suffices to show that~$\overline{u}$ is $\T_\mrmv(\Ed)$-l.s.c., i.e.\ that~$\set{\overline{u}\leq r}$ is $\T_\mrmv(\Ed)$-closed for each real~$r$.
For simplicity, set~$L\eqdef\Li[\mssd_\dUpsilon]{u}$, and~$M\eqdef \sup u$.

\paragraph{Step~1}
Firstly, let us show that the infimum in the definition~\eqref{eq:McShane} of~$\overline{u}$ is achieved.
Indeed, for fixed~$\gamma\in\dUpsilon$, let~$\seq{\zeta_n}_n$ be an infimizing sequence in~\eqref{eq:McShane}, so that
\begin{align}\label{eq:BorelMcShane:1}
M \wedge \ttonde{u(\zeta_n)+L\, \mssd_\dUpsilon(\gamma,\zeta_n)}\leq \overline{u}(\gamma)+1/n \fstop
\end{align}
If~$\overline{u}(\gamma)=M$, we may choose~$\zeta_n=\zeta^\gamma$ for some fixed~$\Lambda \cap B^{\dUpsilon}_\infty(\gamma)^\complement$.
If otherwise~$\overline{u}(\gamma)<M$, then~$L\mssd_\dUpsilon(\gamma,\zeta_n)\leq 2M+1$, hence~$\seq{\zeta_n}_n\subset \Lambda \cap B^\dUpsilon_s(\gamma)$ for any~$s\geq (2M+1)/L$.
Since~$B^\dUpsilon_s(\gamma)$ is $\T_\mrmv(\Ed)$-compact by Proposition~\ref{p:W2Upsilon}\iref{i:p:W2Upsilon:8}, there exists a $\T_\mrmv(\Ed)$-cluster point~$\zeta^\gamma$ of~$\seq{\zeta_n}_n$, and~$\zeta^\gamma\in \Lambda$ since the latter set is $\T_\mrmv(\Ed)$-closed.
Letting~$n$ to infinity in~\eqref{eq:BorelMcShane:1}, the $\T_\mrmv(\Ed)$-continuity of~$u$ and the lower $\T_\mrmv(\Ed)$-semi-continuity of~$\mssd_\dUpsilon$ imply that~$\overline{u}(\gamma)=u(\zeta^\gamma)+ L\mssd_\dUpsilon(\gamma,\zeta^\gamma)$.
Summarizing, for each~$\gamma\in\dUpsilon$ there exists~$\zeta^\gamma\in \Lambda$ so that
\begin{align*}
\overline{u}(\gamma)=M \wedge \ttonde{u(\zeta^\gamma)+L\, \mssd_\dUpsilon(\gamma,\zeta^\gamma)} \comma
\end{align*}
whence
\begin{align*}
\set{\overline{u}\leq r}=\set{\gamma\in\dUpsilon: M\wedge\ttonde{u(\zeta^\gamma)+L\,\mssd_\dUpsilon(\gamma,\zeta^\gamma)}\leq r} \fstop
\end{align*}

\paragraph{Step~2}
Retaining the notation established in Proposition~\ref{p:W2Upsilon}, let
\begin{align*}
H_r=\set{\alpha\in\dUpsilon^\tym{2}: u(\pr^2_\pfwd\alpha)+L\cdot I(\alpha)^{1/2}\leq r}\fstop
\end{align*}
Further let~$G_r$ be as in~\eqref{p:W2Upsilon:0}, and note that~$H_r\subset G_s$ for~$s\geq (r+\sup\abs{u})^2/\Li[\mssd_\dUpsilon]{u}$.
As a consequence,~$H_r$ is a $\T_\mrmv(\Ed)$-closed set by Proposition~\ref{p:W2Upsilon}\iref{i:p:W2Upsilon:2} and~\iref{i:p:W2Upsilon:4}.

Since~$\pr^2_\pfwd$ is closed on~$G_s$ by Proposition~\ref{p:W2Upsilon}\iref{i:p:W2Upsilon:6}, and since~$H_r$ is $\T_\mrmv(\Ed^\otym{2})$-closed, the set~$\pr^2_\pfwd(H_r)$ is $\T_\mrmv(\Ed)$-closed.
Since~$\Lambda$ is $\T_\mrmv(\Ed)$-closed, the set~$\Lambda \cap\pr^2_\pfwd(H_r)$ is $\T_\mrmv(\Ed)$-closed as well.
Since~$H_r$ is $\T_\mrmv(\Ed^\otym{2})$-closed, and since~$\pr^2_\pfwd$ is continuous on~$G_s$ by Proposition~\ref{p:W2Upsilon}\iref{i:p:W2Upsilon:4}, the set
\begin{align*}
H_r \cap (\pr^2_\pfwd)^{-1}(\Lambda)= H_r\cap (\pr^2_\pfwd)^{-1}\ttonde{\Lambda \cap \pr^2_\pfwd (H_r)}
\end{align*}
is $\T_\mrmv(\Ed^\otym{2})$-closed.
Since~$\pr^1_\pfwd$ is closed on~$G_s$ by Proposition~\ref{p:W2Upsilon}\iref{i:p:W2Upsilon:6}, we conclude that
\begin{align*}
K_r\eqdef \pr^1_\pfwd \ttonde{H_r \cap (\pr^2_\pfwd)^{-1}(\Lambda)}=\set{\pr^1_\pfwd\alpha: \alpha\in H_r\comma \pr^2_\pfwd \alpha\in \Lambda}
\end{align*}
is $\T_\mrmv(\Ed)$-closed.

\paragraph{Step 3} In order to conclude the proof, it suffices to show that~$\set{\overline{u}\leq r}=K_r$ for each~$r\in\R$.
For each~$\gamma\in\set{\overline{u}\leq r}$, there exists by Proposition~\ref{p:W2Upsilon}\iref{i:p:W2Upsilon:5} an optimal matching~$\alpha\in\dUpsilon(\Ed^\otym{2})$ between~$\gamma=\pr^1_\pfwd\alpha$ and~$\zeta^\gamma=\pr^2_\pfwd\alpha$, thus satisfying
\begin{align*}
\overline{u}(\gamma)= M\wedge \tonde{u(\pr^2_\pfwd\alpha)+L\tonde{\int_{X^\tym{2}}\mssd^2(x,y)\diff\alpha(x,y)}^{1/2}}\comma
\end{align*}
which shows that~$\gamma\in K_r$.

Viceversa, let~$\gamma\in K_r$.
By definition of~$K_r$, there exists~$\alpha\in H_r\cap (\pr^2_\pfwd)^{-1}(\Lambda)$ so that~$\pr^1_\pfwd \alpha=\gamma$.
Therefore, we have that
\begin{align*}
u(\pr^2_\pfwd \alpha)+L\cdot I(\alpha)^{1/2}\leq&\ r
\\
M\wedge \ttonde{u(\pr^2_\pfwd \alpha)+L\cdot I(\alpha)^{1/2}}\leq&\ M\wedge r \leq r \fstop
\end{align*}
By definition of~$\overline{u}$, and since~$\pr^2_\pfwd \alpha\in \Lambda$,
\begin{align*}
\overline{u}(\gamma)=M\wedge \inf_{\zeta\in \Lambda} \ttonde{u(\zeta) + L\, \mssd_\dUpsilon(\gamma,\zeta)}\leq M\wedge \ttonde{u(\pr^2_\pfwd \alpha)+L\cdot I(\alpha)^{1/2}} \leq r\comma
\end{align*}
which shows that~$\gamma\in \set{\overline{u}\leq r}$, and thus concludes the proof.
\end{proof}
\end{prop}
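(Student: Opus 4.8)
The plan is to show that $\overline{u}$ is $\T_\mrmv(\Ed)$-lower semicontinuous, which suffices since lower semicontinuous functions are Borel. Concretely, I want to prove that the sublevel set $\{\overline{u}\leq r\}$ is $\T_\mrmv(\Ed)$-closed for every real $r$. The strategy proceeds in three stages, exactly paralleling the structure already set up by Proposition~\ref{p:W2Upsilon}: (1) argue that the infimum defining the McShane extension is actually attained at some point $\zeta^\gamma\in\Lambda$, so that $\overline u(\gamma)$ can be written using a single optimal configuration rather than an infimum; (2) lift the problem to the space of couplings $\dUpsilon(X^\tym{2},\Ed^\otym{2})$, where the continuity/closedness/properness properties of the projections $\pr^i_\pfwd$ on the sets $G_r=\{I\leq r^2\}$ can be exploited; (3) identify $\{\overline u\leq r\}$ with a set obtained from a $\T_\mrmv(\Ed^\otym{2})$-closed set by applying the closed map $\pr^1_\pfwd$, hence itself $\T_\mrmv(\Ed)$-closed.

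For Step~1, I would fix $\gamma\in\dUpsilon$ and take an infimizing sequence $\seq{\zeta_n}_n\subset\Lambda$ in the McShane formula~\eqref{eq:McShane}. If $\overline u(\gamma)=\sup u=:M$ there is nothing to do, so assume $\overline u(\gamma)<M$; then the bound $L\,\mssd_\dUpsilon(\gamma,\zeta_n)\leq 2M+1$ (with $L=\Li[\mssd_\dUpsilon]{u}$) confines $\seq{\zeta_n}_n$ to a $\mssd_\dUpsilon$-ball around $\gamma$, which is $\T_\mrmv(\Ed)$-compact by Proposition~\ref{p:W2Upsilon}\iref{i:p:W2Upsilon:8}. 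A $\T_\mrmv(\Ed)$-cluster point $\zeta^\gamma$ lies in $\Lambda$ by closedness, and passing to the limit using $\T_\mrmv(\Ed)$-continuity of $u$ and $\T_\mrmv(\Ed)$-lower semicontinuity of $\mssd_\dUpsilon$ (again Proposition~\ref{p:W2Upsilon}) gives $\overline u(\gamma)=M\wedge(u(\zeta^\gamma)+L\,\mssd_\dUpsilon(\gamma,\zeta^\gamma))$. Thus $\{\overline u\leq r\}=\{\gamma:\exists\zeta\in\Lambda,\ M\wedge(u(\zeta)+L\,\mssd_\dUpsilon(\gamma,\zeta))\leq r\}$.

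For Step~2, I introduce, as in the notation of Proposition~\ref{p:W2Upsilon}, the set
\begin{equation*}
H_r\eqdef\set{\alpha\in\dUpsilon(X^\tym{2},\Ed^\otym{2}): u(\pr^2_\pfwd\alpha)+L\cdot I(\alpha)^{1/2}\leq r}
\end{equation*}
and observe $H_r\subset G_s$ for $s$ large enough (namely $s\geq (r+\sup\abs u)^2/L$). On $G_s$ the map $\pr^2_\pfwd$ is continuous, so $\alpha\mapsto u(\pr^2_\pfwd\alpha)$ is continuous there, while $I$ is $\T_\mrmv(\Ed^\otym{2})$-l.s.c.\ by Proposition~\ref{p:W2Upsilon}\iref{i:p:W2Upsilon:2}; hence $H_r$ is $\T_\mrmv(\Ed^\otym{2})$-closed. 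Then $\pr^2_\pfwd(H_r)$ is $\T_\mrmv(\Ed)$-closed (since $\pr^2_\pfwd$ is closed on $G_s$), so $\Lambda\cap\pr^2_\pfwd(H_r)$ is closed, so its $\pr^2_\pfwd$-preimage intersected with $H_r$, namely $H_r\cap(\pr^2_\pfwd)^{-1}(\Lambda)$, is $\T_\mrmv(\Ed^\otym{2})$-closed (using continuity of $\pr^2_\pfwd$ on $G_s$). Finally applying the closed map $\pr^1_\pfwd$ yields that $K_r\eqdef\pr^1_\pfwd(H_r\cap(\pr^2_\pfwd)^{-1}(\Lambda))$ is $\T_\mrmv(\Ed)$-closed.

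For Step~3, I verify $\{\overline u\leq r\}=K_r$. The inclusion $\subset$ follows by choosing, for $\gamma$ with $\overline u(\gamma)\leq r$, an optimal matching $\alpha$ between $\gamma$ and the minimizer $\zeta^\gamma$ (existence by Proposition~\ref{p:W2Upsilon}\iref{i:p:W2Upsilon:5}), which then lies in $H_r\cap(\pr^2_\pfwd)^{-1}(\Lambda)$ with $\pr^1_\pfwd\alpha=\gamma$. For $\supset$, any $\gamma\in K_r$ comes with $\alpha\in H_r\cap(\pr^2_\pfwd)^{-1}(\Lambda)$, $\pr^1_\pfwd\alpha=\gamma$, and then $\overline u(\gamma)=M\wedge\inf_{\zeta\in\Lambda}(u(\zeta)+L\,\mssd_\dUpsilon(\gamma,\zeta))\leq M\wedge(u(\pr^2_\pfwd\alpha)+L\cdot I(\alpha)^{1/2})\leq r$ since $I(\alpha)^{1/2}\geq\mssd_\dUpsilon(\gamma,\pr^2_\pfwd\alpha)$. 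I expect the main obstacle to be bookkeeping the interplay between the two topologies $\T_\mrmv(\Ed)$ and $\T_\mrmv(\Ed^\otym{2})$ and correctly invoking the right item of Proposition~\ref{p:W2Upsilon} at each passage (in particular that the several projections are \emph{simultaneously} continuous \emph{and} closed only when restricted to the sublevel sets $G_s$, and that one must enlarge $s$ whenever $r$ or $\sup\abs u$ grows); the attainment argument in Step~1 is a second, milder, point requiring care with the two cases $\overline u(\gamma)=M$ and $\overline u(\gamma)<M$.
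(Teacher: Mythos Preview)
Your proposal is correct and follows essentially the same approach as the paper's proof: showing lower semicontinuity of $\overline{u}$ by (1) attaining the infimum via compactness of $\mssd_\dUpsilon$-balls, (2) lifting to the coupling space and defining the closed set $H_r\subset G_s$, and (3) identifying $\{\overline u\leq r\}$ with $K_r=\pr^1_\pfwd(H_r\cap(\pr^2_\pfwd)^{-1}(\Lambda))$, which is closed since $\pr^1_\pfwd$ is closed on $G_s$. The notation, bounds, and invocations of Proposition~\ref{p:W2Upsilon} match the paper's argument step for step.
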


Whereas cylinder functions are typically not $\mssd_\dUpsilon$-Lipschitz on~$\dUpsilon$, as shown by Example~\ref{e:NonLipCyl}, they are locally Lipschitz in the following weak sense.

\begin{thm}[Local Lipschitz property of cylinder functions]\label{t:CylinderLocLip}
Let~$(\mcX,\cdc,\mssd)$ be an \MLDS with $\Dz\subset \bLip(\mssd)$, and fix~$u=F\circ \mbff^\trid\in\Cyl{\Dz}$ with~$\mbff=\seq{f_i}_{i\leq k}$. Further let~$E\in\Ed$ be so that~$\cup_{i\leq k}\supp f_i\subset E$.
Then,~$u$ is $\mssd_\dUpsilon$-Lipschitz on
\begin{align}\label{eq:t:CylinderLocLip:0}
\Xi^\sym{\infty}_{\leq n}(E)\eqdef \set{\gamma\in\dUpsilon^\sym{\infty}:\gamma E\leq n}
\end{align}
for each~$n\in\N$.
\begin{proof}
Fix~$\gamma,\eta\in \Xi^\sym{\infty}_{\leq n}(E)$.
By Proposition~\ref{p:PoissonLabeling}\iref{i:p:PoissonLabeling:1} there exist~$\mbfx\in \Lb^{-1}(\gamma)$ and $\mbfy\in \Lb^{-1}(\eta)$ so that $\mssd_\dUpsilon(\gamma,\eta)=\mssd_\tym{\infty}(\mbfx,\mbfy)$.
Up to permutation of coordinates for both~$\mbfx$ and~$\mbfy$, we may additionally choose them in such a way that~$x_p\in E$ for all~$p\leq n$, and~$y_p\notin E$ for all~$p>2n$.
Since~$\gamma\in \Xi^\sym{\infty}_{\leq n}(E)$, we deduce that~$x_p\notin E$ for all~$p>n$.
Now, letting~$u^\sym{2n}\eqdef \Lb^*u\restr_{X^\tym{2n}}$, it follows from the definition of~$u$ that
\begin{align*}
\abs{u(\gamma)-u(\eta)}=&\ \abs{u^\sym{2n}(\mbfx^\asym{2n})-u^\sym{2n}(\mbfy^\asym{2n})}
\\
\leq &\ \Li[\R^k]{F}\cdot \quadre{\sum_{i=1}^k \abs{f_i^\trid \Lb(\mbfx^\asym{2n})-f_i^\trid \Lb(\mbfy^\asym{2n})}^2}^{1/2}
\\
\leq&\ \Li[\R^k]{F}\cdot \sqrt{k} \max_{i\leq k} \quadre{\abs{\sum_{p=1}^{2n} f_i(x_p)- f_i(y_p)}^2}^{1/2}
\\
\leq&\ \Li[\R^k]{F}\cdot \sqrt{2n} \sqrt{k} \max_{i\leq k} \Li[\mssd]{f_i}\, \mssd_\tym{2n}(\mbfx^\asym{2n},\mbfy^\asym{2n}) 
\\
=&\ \sqrt{n}\, C_u \,\mssd_\tym{2n}(\mbfx^\asym{2n},\mbfy^\asym{2n}) \leq \sqrt{n}\, C_u\, \mssd_\tym{\infty}(\mbfx,\mbfy) = \sqrt{n}\, C_u\, \mssd_\dUpsilon(\gamma,\eta)
\end{align*}
for some constant~$C_u\geq 0$ only depending on~$u$.
\end{proof}
\end{thm}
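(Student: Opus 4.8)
The plan is to reduce the statement to a finite-dimensional Lipschitz estimate by running the cylinder function through a radially isometric labeling and exploiting that the inner functions $f_i$ vanish off $E$ while $\gamma E\leq n$.

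First I would fix $\gamma,\eta\in\Xi^\sym{\infty}_{\leq n}(E)$; the asserted inequality is trivial if $\mssd_\dUpsilon(\gamma,\eta)=+\infty$, so assume $\gamma\in B^\dUpsilon_\infty(\eta)$. By Proposition~\ref{p:PoissonLabeling}\iref{i:p:PoissonLabeling:1} there exist labelings $\mbfx\in\Lb^{-1}(\gamma)$ and $\mbfy\in\Lb^{-1}(\eta)$ with $\mssd_\dUpsilon(\gamma,\eta)=\mssd_\tym{\infty}(\mbfx,\mbfy)$, so that the coordinatewise pairing $p\mapsto(x_p,y_p)$ realizes an optimal coupling of $\gamma$ and $\eta$. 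The essential step is then a single reordering of the coordinates, applied simultaneously to $\mbfx$ and to $\mbfy$: this alters neither the pairing nor $\mssd_\tym{\infty}(\mbfx,\mbfy)$, and it can be arranged so that the at most $n$ atoms of $\gamma$ lying in $E$ occupy coordinates $\leq n$ (possible since $\gamma E\leq n$) and, among the remaining coordinates, the at most $n$ atoms of $\eta$ lying in $E$ occupy coordinates $\leq 2n$ (possible since $\eta E\leq n$). After this relabeling one has $x_p\notin E$ for $p>n$ and $y_p\notin E$ for $p>2n$.

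Next I would invoke locality: since $\cup_{i\leq k}\supp f_i\subset E$, each $f_i$ vanishes off $E$, so $f_i^\trid\gamma=\sum_{p=1}^{2n}f_i(x_p)$ and $f_i^\trid\eta=\sum_{p=1}^{2n}f_i(y_p)$, whence $u(\gamma)=u^\sym{2n}(\mbfx^\asym{2n})$ and $u(\eta)=u^\sym{2n}(\mbfy^\asym{2n})$ for $u^\sym{2n}\eqdef\Lb^*u\restr_{X^\tym{2n}}$. The remaining bound is routine: $F\in\Cb^\infty(\R^k)$ has bounded gradient, hence is globally Lipschitz with constant $\Li[\R^k]{F}$; every $f_i\in\bLip(\mssd)$; and Cauchy--Schwarz over the $2n$ coordinates turns $\sum_{p=1}^{2n}\mssd(x_p,y_p)$ into $\sqrt{2n}\,\mssd_\tym{2n}(\mbfx^\asym{2n},\mbfy^\asym{2n})$. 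Collecting the constants gives $\abs{u(\gamma)-u(\eta)}\leq\sqrt{n}\,C_u\,\mssd_\tym{2n}(\mbfx^\asym{2n},\mbfy^\asym{2n})$ with $C_u$ depending only on $\Li[\R^k]{F}$, $k$ and $\max_{i\leq k}\Li[\mssd]{f_i}$, and finally $\mssd_\tym{2n}(\mbfx^\asym{2n},\mbfy^\asym{2n})\leq\mssd_\tym{\infty}(\mbfx,\mbfy)=\mssd_\dUpsilon(\gamma,\eta)$, which is precisely the desired Lipschitz estimate on $\Xi^\sym{\infty}_{\leq n}(E)$.

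I expect the only real obstacle to be the reordering step: one must check that a single permutation serves both sequences at once --- sending the $\gamma$-atoms of $E$ into the first $n$ slots and, within the complementary slots, the $\eta$-atoms of $E$ into slots $n+1,\dots,2n$ --- while keeping the optimal pairing $p\mapsto(x_p,y_p)$ intact, so that the finite truncation $\mssd_\tym{2n}$ still underestimates $\mssd_\dUpsilon(\gamma,\eta)$. It is worth stressing that genuine infinite labelings are used here (hence the restriction to $\dUpsilon^\sym{\infty}$), and that the argument needs no continuity of $\mssd_\dUpsilon$, only Proposition~\ref{p:PoissonLabeling}\iref{i:p:PoissonLabeling:1} and the global Lipschitzianity of $F$.
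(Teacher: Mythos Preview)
Your proposal is correct and follows essentially the same approach as the paper's proof: invoke Proposition~\ref{p:PoissonLabeling}\iref{i:p:PoissonLabeling:1} for an optimal labeling, apply a single simultaneous permutation to push the $E$-atoms of~$\gamma$ into the first~$n$ slots and those of~$\eta$ into the first~$2n$ slots, then use locality of the~$f_i$ together with Lipschitzianity of~$F$ and Cauchy--Schwarz. Your write-up is in fact slightly more careful than the paper's on the reordering step (you correctly emphasize that the \emph{same} permutation must act on both sequences to preserve the pairing, and that there are \emph{at most}~$n$ atoms in~$E$ rather than exactly~$n$).
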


\section{Interplay between the analytic and the geometric structure}\label{s:Interplay}
In this section, we undertake a careful analysis of the metric and metric-measure properties of the extended metric-measure space~$\ttonde{\dUpsilon, \mssd_\dUpsilon,\QP}$, and of their interplay with the Dirichlet-space structure provided by either Proposition~\ref{p:MRLifting} or Theorem~\ref{t:ClosabilitySecond}, with intrinsic distance~$\mssd_\QP$.
Our goal is to show that these two structures coincide. 
In particular, for the class of measures~$\QP$ and of spaces~$(\mcX,\cdc,\mssd)$ discussed in~\S\ref{s:Intro}, we shall identify the energy functionals
\begin{align}\label{eq:IdentificationCheeger}
\ttonde{\Ch[\mssd_\dUpsilon,\QP],\dom{\Ch[\mssd_\dUpsilon,\QP]}}=\ttonde{\EE{\dUpsilon}{\QP},\dom{\EE{\dUpsilon}{\QP}}} \comma
\end{align}
as well as the extended distances
\begin{align}\label{eq:IdentificationDistance}
\mssd_\dUpsilon=\mssd_\QP\fstop
\end{align}
As a byproduct of the former identification, we may conclude that:
$\Ch[\mssd_\dUpsilon,\QP]$ is a quadratic functional, and thus defines a Dirichlet form by polarization;
$\mssd_\QP$ is an extended \emph{distance} (as opposed to: pseudo-distance) on~$\dUpsilon$.

\smallskip

As it turns out, this analysis is particularly delicate, mainly due to the following facts:
\begin{itemize}
\item $\ttonde{\EE{\dUpsilon}{\QP},\dom{\EE{\dUpsilon}{\QP}}}$ is a \emph{quasi}-regular (\emph{non}-regular) Dirichlet space, since~$\ttonde{\dUpsilon,\T_\mrmv(\Ed)}$ is \emph{not} locally compact;
\item $\mssd_\dUpsilon$ (or, equivalently,~$\mssd_\QP$) is an \emph{extended} distance, not generating the $\Ed$-vague topology.
\end{itemize}

In order to address these difficulties, we shall make extensive use of results in \S\S\ref{s:Analytic}--\ref{s:Geometry}, and of the study of intrinsic distances of quasi-regular strongly local Dirichlet spaces undertaken by the authors in~\cite{LzDSSuz20} (partly recalled in~\S\ref{sss:RadStoL}).

\subsection{The Rademacher property}\label{sss:Rademacher}
We present here one of the main results in this paper, the Rademacher property for the form~$\EE{\dUpsilon}{\QP}$ on~$\ttonde{\dUpsilon,\mssd_\dUpsilon}$ under Assumption~\ref{ass:AC}.
Before dwelling into the details, let us note as a guideline the following three heuristic facts:
\begin{enumerate}[$(a)$]
\item\label{i:HeuristicRad1} The configuration space~$\ttonde{\dUpsilon^\sym{\infty},\mssd_\dUpsilon}$ is a $1$-Lipschitz image, via~$\Lb$ (Proposition~\ref{p:LbLipschitz}), of~$\mbfX^\asym{\infty}_\locfin(\Ed)$, and, for every~$\gamma\in \dUpsilon$, there exists a radial isometry around~$\gamma$ inverting the map~$\Lb$ (Prop.~\ref{p:PoissonLabeling}\iref{i:p:PoissonLabeling:2}).
It is therefore not surprising that a Lipschitz function~$u$ on~$\ttonde{\dUpsilon^\sym{\infty},\mssd_\dUpsilon}$ may be studied via its counterpart~$\Lb^*u$ on~$\ttonde{\mbfX^\asym{\infty}_\locfin(\Ed),\mssd_{\tym{\infty}}}$.

\item\label{i:HeuristicRad2} If~$(\mcX,\cdc,\mssd)$ satisfies~$(\Rad{\mssd}{\mssm})$, then the product space~$(\mcX^\otym{n},\cdc^\otym{n},\mssd_\tym{n})$ satisfies~$(\Rad{\mssd_\tym{n}}{\mssm^\otym{n}})$ (Thm.~\ref{t:Tensor}).

\item\label{i:HeuristicRad3} The condition defining~$\mbfX^\asym{\infty}_\locfin(\Ed)\subset X^{\tym{\infty}}$ is a `tail event', in the sense that~$\tr^n\colon \mbfX^\asym{\infty}_\locfin(\Ed)\rar X^{\tym{n}}$ is surjective for every~$n\in \N_1$.
As a consequence, statements obtained by finite-dimensional approximation of~$X^{\tym{\infty}}$ by~$X^{\tym{n}}$ are likely to hold for~$\mbfX^\asym{\infty}_\locfin(\Ed)$ as well.
\end{enumerate}

\paragraph{Sketch of proof} Our proof relies in an essential way on many of the results previously established, in particular:
\begin{itemize}
\item on the well-posedness results for forms on infinite-product spaces, established in~\S\ref{ss:AnalyticForms};
\item on the identification of said forms and on their approximation in the sense of Kuwae--Shioya convergence, established in~\S\ref{ss:IdentificationFormsProduct};
\item on the tensorization results for minimal weak upper gradients, minimal relaxed slopes, and for the Rademacher property, established in~\S\ref{sss:TensorizationConseq};
\item on the metric properties of the map~$\Lb$ and of its radially isometric right inverses, established in~\S\ref{sss:dUpsilonLb}.
\end{itemize}

Now, let~$u$ be a $\mssd_\dUpsilon$-Lipschitz function.
We want to show that~$u\in\dom{\EE{\dUpsilon}{\QP}}$.
Firstly, we will show by~\iref{i:HeuristicRad1} above that~$\Lb^*u$ is a pre-Sobolev function in~$\preW(\Ed)$ (Dfn.~\ref{d:preSobolev}), and that~$\norm{\Lb^*u}_{\llb}\leq \norm{\Lb^*u}_{\bLip}$.
Secondly, since~$\Lb^*u\in\preW(\Ed)$, we may regard~$\Lb^*u$ as an element~$\class[n]{\Lb^*u}$ of each of the abstract completions~$K_n$ defined in~\eqref{eq:AbstractCompletionN}.

Furthermore, we can identify~$(\Lb^*u)\circ \tr^n$ with a function in~$L^\infty(\mssm^{\otym{n}})$ by virtue of Assumption~\ref{ass:AC}.
Since the spaces~$K_n$ are `spaces of functions differentiable along $n$ directions', combining the last two assertions with~\iref{i:HeuristicRad2} above shows that the sequence~$\seq{\class[n]{\Lb^*u}}_n$, where again~$\Lb^*u$ is regarded as an element of~$K_n$, is uniformly bounded by~$\norm{\Lb^* u}_{\bLip}$, and thus weakly convergent in the Kuwae--Shioya sense.

In the spirit of~\iref{i:HeuristicRad3}, it is now not surprising that the limit of~$\seq{\class[n]{\Lb^*u}}_n$ is the element~$\class[\infty]{\Lb^*u}$ of~$K_\infty$.
Thus, we have shown that~$\Lb^*u$ is an element of~$K_\infty$ in the range of~$\Lb^*$.
Since~$\Lb^*\colon \dom{\EE{\dUpsilon}{\QP}}\to K_\infty$ is an isometry (onto its range in~$K_\infty$, by Cor.~\ref{c:Isometry}), we may finally conclude that~$u\in\dom{\EE{\dUpsilon}{\QP}}$.

\medskip

Let us now turn to a rigorous proof.
We will dispense from the distinction of classes and representatives, as it would be too burdensome to distinguish both representatives from measure classes, and classes in the different Hilbert completions~$K_N$ of~$\preW(\Ed)$.
Similarly, we shall not mention the underlying choice of strong lifting.
Furthermore, we will implicitly assume that~$\QP$ is concentrated on~$\dUpsilon^\sym{\infty}$, which allows to restrict our statements and arguments to~$\mbfX^\asym{\infty}_\locfin(\Ed)$.
The easy adaptation of both statements and arguments to~$\mbfX_\locfin(\Ed)$ is left to the reader.

For~$N\in \overline\N_1$, let~$\Li[N]{\emparg}\eqdef \Li[\mssd^{\tym{N}}]{\emparg}$.
If not otherwise specified,~$\lb$ is a fixed labeling map.
We still need one preparatory lemma.
Recall (cf.\ Notation~\ref{n:LabelingUniversalAlgebra}) that the \emph{labeling-universal} $\sigma$-algebra on~$\mbfX^\asym{\infty}_\locfin(\Ed)$ is defined by
\begin{align*}
\boldSigma^*(\msE)\eqdef \bigcap_{\lb \text{ labeling map}} \Bo{\T^\tym{\infty}}^{\lb_\pfwd\QP} \fstop
\end{align*}
Note that~$\Lb\colon \mbfX^\asym{\infty}_\locfin(\Ed)\to \dUpsilon$ is $\boldSigma^*(\Ed)/\A_\mrmv(\Ed)^\QP$-measurable, hence that~$\Lb^*\rep u\colon \mbfX^\asym{\infty}_\locfin(\Ed)\to[-\infty,\infty]$ is $\boldSigma^*(\Ed)$-measurable whenever $\rep u\colon \dUpsilon\to[-\infty,\infty]$ is $\A_\mrmv(\Ed)^\QP$-measurable.

\begin{lem}\label{l:CdCBLip}
Let~$(\mcX,\cdc,\mssd)$ be an \MLDS satisfying~$(\Rad{\mssd}{\mssm})$, and~$\QP$ be a probability measure on~$\dUpsilon$ satisfying Assumption~\ref{ass:AC}. Then,
\begin{align*}
\bLip\ttonde{\mbfX^\asym{\infty}_\locfin(\Ed),\boldSigma^*(\Ed),\mssd_\tym{\infty}}\subset \preW(\Ed)\comma \qquad \norm{U}_{\llb}\leq \norm{U}_{\bLip(X^\tym{\infty},\mssd_\tym{\infty})}\comma
\end{align*}
for every labelling map~$\lb$.
\begin{proof}
Let~$\mbfx=\seq{x_i}_i$ and let~$U_{\mbfx,n}\colon X^\tym{n}\longrar \R$ be defined by
\begin{align*}
U_{\mbfx,n}\colon \seq{y_1,\dotsc, y_n}&\longmapsto U(y_1,\dotsc, y_n,x_{n+1},x_{n+2},\dotsc)\fstop
\end{align*}
Then,
\begin{align}\label{eq:l:CdCBLip:1}
\cdc^\asym{n}(U)(\mbfx)=\cdc^\otym{n}(U_{\mbfx,n})(\mbfx^\asym{n})\comma \qquad \mbfx\in X^\tym{\infty}\comma
\end{align}
and~$\Li[n]{U_{\mbfx,n}}\leq \Li[\infty]{U}$ for every~$\mbfx\in X^\tym{\infty}$ and~$n\in \N_1$.
Thus, by Theorem~\ref{t:Tensor},
\begin{align}\label{eq:l:CdCBLip:2}
\cdc^\otym{n}(U_{\mbfx,n})(\emparg)\leq \Li[n]{U_{\mbfx,n}}^2\leq \Li[\infty]{U}^2 \as{\mssm^{\otimes n}} \comma
\end{align}
whence~$\QP^\asym{n}$-a.e., by Assumption~\ref{ass:AC}. As a consequence, combining~\eqref{eq:l:CdCBLip:2} and~\eqref{eq:l:CdCBLip:1},
\begin{align}\label{eq:l:CdCBLip:3}
\cdc^\asym{n}(U)(\emparg)=\cdc^\otym{n}(U_{\emparg,n})\circ \tr^n (\emparg)\leq \Li[\infty]{U}^2 \as{\QP^\asym{\infty}}\comma
\end{align}
and the conclusion follows by~\eqref{eq:d:Di:1} letting~$n\rar\infty$.
\end{proof}
\end{lem}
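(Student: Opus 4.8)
The statement asserts that every bounded $\boldSigma^*(\Ed)$-measurable $\mssd_\tym{\infty}$-Lipschitz function $U$ on $\mbfX^\asym{\infty}_\locfin(\Ed)$ is pre-Sobolev, with the quantitative bound $\norm{U}_{\llb}\leq \norm{U}_{\bLip(X^\tym{\infty},\mssd_\tym{\infty})}$, uniformly over all labeling maps $\lb$. Recalling Definition~\ref{d:preSobolev}, there are two things to verify: measurability (item~\iref{i:d:preSobolev:1}), and the uniform Sobolev seminorm bound (item~\iref{i:d:preSobolev:3}). The plan is to handle measurability immediately, then obtain the seminorm bound by restricting $U$ to the first $n$ coordinates, invoking the tensorized Rademacher property from Theorem~\ref{t:Tensor}, transferring the resulting $\mssm^\otym{n}$-a.e.\ bound to a $\QP^\asym{n}$-a.e.\ bound via Assumption~\ref{ass:AC}, and finally letting $n\to\infty$.

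\textbf{Step 1 (measurability).} Since $U$ is $\boldSigma^*(\Ed)$-measurable and bounded, the sections $U_{\mbfx,n}\colon X^\tym{n}\to\R$ in the statement of the lemma are $\A^\otym{n}$-measurable for each fixed $\mbfx$, hence lie in $\bLip(\mssd_\tym{n},\A^\otym{n})$ with Lipschitz constant at most $\Li[\infty]{U}$; thus $U$ itself satisfies Definition~\ref{d:preSobolev}\iref{i:d:preSobolev:1}. One must also check that the functions $\mbfx\mapsto\rep\cdc^p(U)(\mbfx)$ and $\mbfx\mapsto\rep\cdc^\asym{\infty}(U)(\mbfx)$ are suitably measurable, which follows because the lifted square field $\repSF{X}{\mssm}$ is applied to the section, and the construction of $\rep\cdc^\asym{N}$ is a countable sum of such terms.

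\textbf{Step 2 (finite-dimensional bound and passage to the limit).} This is the substantive part. Fix $\mbfx\in X^\tym{\infty}$ and $n\in\N_1$. The identity $\rep\cdc^\asym{n}(U)(\mbfx)=\cdc^\otym{n}(U_{\mbfx,n})(\mbfx^\asym{n})$ holds by the very definition~\eqref{eq:d:Di:1} of $\rep\cdc^\asym{n}$ and~\eqref{eq:CdCTensor} of the product square field (the sections appearing in the two formulas coincide), so this is a formal manipulation. Since $\Li[n]{U_{\mbfx,n}}\leq\Li[\infty]{U}$ for every $\mbfx$, we may apply $(\Rad{\mssd_\tym{n}}{\mssm^\otym{n}})$ for the product \MLDS $(\mcX^\otym{n},\cdc^\otym{n},\mssd_\tym{n})$ --- which holds by Theorem~\ref{t:Tensor} given that $(\mcX,\cdc,\mssd)$ satisfies $(\Rad{\mssd}{\mssm})$ --- to obtain $\cdc^\otym{n}(U_{\mbfx,n})\leq\Li[n]{U_{\mbfx,n}}^2\leq\Li[\infty]{U}^2$ $\mssm^\otym{n}$-a.e. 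Because $\QP^\asym{n}\ll\mssm^\otym{n}$ by Assumption~\ref{ass:AC}, the same inequality holds $\QP^\asym{n}$-a.e., i.e.\ (pulling back along $\tr^n$) $\rep\cdc^\asym{n}(U)\leq\Li[\infty]{U}^2$ $\QP^\asym{\infty}$-a.e. Letting $n\to\infty$ and using that $\rep\cdc^\asym{n}(U)\uparrow\rep\cdc^\asym{\infty}(U)$ by~\eqref{eq:d:Di:1}, we get $\rep\cdc^\asym{\infty}(U)\leq\Li[\infty]{U}^2$ $\QP^\asym{\infty}$-a.e.; integrating and adding $\norm{U}_{H_\infty}\leq\norm{U}_{L^\infty}$ yields $\norm{U}_{\llb}\leq\norm{U}_{\bLip}$. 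Since the right-hand side does not depend on $\lb$, Definition~\ref{d:preSobolev}\iref{i:d:preSobolev:3} holds with $M\eqdef\norm{U}_{\bLip(X^\tym{\infty},\mssd_\tym{\infty})}$, and thus $U\in\preW(\Ed)$.

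\textbf{Main obstacle.} The only delicate point is the interchange between taking the section $U_{\mbfx,n}$ (for fixed $\mbfx$) and asserting the $\mssm^\otym{n}$-a.e.\ bound: one needs the tensorized Rademacher property in precisely the form where the Lipschitz bound on \emph{every} section transfers to a bound on $\cdc^\otym{n}$, and then Fubini-type reasoning (or rather the disintegration underlying Assumption~\ref{ass:AC}, as developed for Lemma~\ref{l:WellPosedness}) to convert the $\mssm^\otym{n}$-a.e.\ statement on $X^\tym{n}$ into the $\QP^\asym{\infty}$-a.e.\ statement on $X^\tym{\infty}$ after composing with $\tr^n$. Once Theorem~\ref{t:Tensor} is granted, this is routine; the real work was already done in establishing the tensorization of the Rademacher property.
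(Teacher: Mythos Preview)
Your proposal is correct and follows essentially the same approach as the paper: define the $n$-dimensional section $U_{\mbfx,n}$, use the identity $\rep\cdc^{\asym{n}}(U)(\mbfx)=\cdc^{\otym{n}}(U_{\mbfx,n})(\mbfx^{\asym{n}})$, apply the tensorized Rademacher property (Theorem~\ref{t:Tensor}) to get the $\mssm^{\otym{n}}$-a.e.\ bound, transfer it via~\ref{ass:AC}, and let $n\to\infty$. Your Step~1 and final integration make explicit what the paper leaves implicit, and your ``Main obstacle'' paragraph correctly flags the Fubini-/disintegration-type passage from the sectionwise $\mssm^{\otym{n}}$-a.e.\ bound to the $\QP^{\asym{\infty}}$-a.e.\ bound---a point the paper glosses over but which is handled by the same mechanism as in Lemma~\ref{l:WellPosedness}.
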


\begin{thm}[Rademacher]\label{t:Rademacher}
Let~$(\mcX,\cdc,\mssd)$ be an \MLDS satisfying~$(\Rad{\mssd}{\mssm})$ and assume that $\Dz\subset \bLip(\T,\mssd)$. Further let~$\QP$ be a probability measure on~$\dUpsilon$ satisfying Assumptions~\ref{ass:AC} and~\ref{ass:CWP}.

Then,~$(\dUpsilon, \SF{\dUpsilon}{\QP}, \mssd_\dUpsilon)$ satisfies~$(\Rad{\mssd_\dUpsilon}{\QP})$, that is, every~$\rep u\in \bLip\ttonde{\dUpsilon,\A_\mrmv(\Ed)^\QP,\mssd_\dUpsilon}$ satisfies~$u\in\dom{\EE{\dUpsilon}{\QP}}$ and~$\QP$-$\esssup \SF{\dUpsilon}{\QP}(u)\leq \Li[\dUpsilon]{\rep u}$.
\begin{proof}
Fix~$\rep u\in \bLip\ttonde{\dUpsilon,\A_\mrmv(\Ed)^\QP,\mssd_\dUpsilon}$ and note that
\begin{align*}
\Lb^*u\in \spclass[\llb]{\bLip\ttonde{\mbfX^\asym{\infty}_\locfin(\Ed),\boldSigma^*(\Ed),\mssd_\tym{\infty}}}\subset \spclass[\llb]{\preW(\Ed)}\subset K_\infty
\end{align*}
by Lemma~\ref{l:CdCBLip}, and that~$\Li[\infty]{\Lb^*\rep u}\leq\Li[\dUpsilon]{\rep u}$ by Proposition~\ref{p:LbLipschitz}. Thus, by~\eqref{eq:l:CdCBLip:3},
\begin{align*}
\cdc^\asym{n}(\Lb^*u)\leq \cdc^\asym{\infty}(\Lb^*u)\leq \Li[\infty]{\Lb^*\rep u}^2\leq \Li[\dUpsilon]{\rep u}^2 \as{\QP^\asym{\infty}}\fstop
\end{align*}

As a consequence,
\begin{align*}
\sup_n\norm{\Lb^*u}_{K_n}\eqdef \sup_n \sqrt{\int \cdc^\asym{n}(\Lb^*u)\diff\QP^\asym{\infty}+\norm{\Lb^*u}^2_{L^2(\QP^\asym{\infty})}} \leq \Li[\dUpsilon]{\rep u}+\norm{u}_{L^\infty(\QP)} \leq \norm{\rep u}_{\bLip(\mssd_\dUpsilon)}\fstop
\end{align*}
Thus,~$\seq{\Lb^*u}_n$, with~$\Lb^*u\in K_n$, has a weakly converging subsequence in the sense of Definition~\ref{d:KSOperators} w.r.t.~the pair~$\ttonde{\Psi_n\circ (\Lb^*)^{-1},\Lb^*\Cyl{\Dz}}$ by~\cite[Lem.~2.2]{KuwShi03}. Let~$U$ be its limit.
Since~$K_n$ converges to~$\Lb^*K$ by Corollary~\ref{c:KSUpsilon}, there exists~$v\in K$ such that~$U= \Lb^*v$.
Since~$\Lb^*u$ is an element of~$\spclass[\llb]{\preW(\Ed)}$, the (extensions of the) canonical inclusions~$\cokb_N\colon K_N\to L^2(\QP^\asym{\infty})$ act identically on~$\Lb^*u$, that is,~$\cokb_n(\Lb^*u)=\cokb_\infty (\Lb^*u)=\Lb^*u$ is a constant sequence in~$L^2(\QP^\asym{\infty})$.
Therefore,~$\Lb^*u=\Lb^*v$ \emph{as elements of}~$L^2(\QP^\asym{\infty})$, and thus~$u=v$ in~$L^2(\QP)$ by Proposition~\ref{p:LipIsometry}.
By closability of~$\ttonde{\EE{\dUpsilon}{\QP},\dom{\EE{\dUpsilon}{\QP}}}$, and since~$v\in K\eqdef \dom{\EE{\dUpsilon}{\QP}}$ we have~$u\in \dom{\EE{\dUpsilon}{\QP}}$.
We may now apply the equality in~\eqref{eq:l:CdCCylinderTruncation:1c} established in Corollary~\ref{c:Isometry} to obtain
\begin{align*}
\SF{\dUpsilon}{\QP}(u)\circ \Lb=\cdc^\asym{\infty}(\Lb^*u)\leq \Li[\dUpsilon]{u}^2 \as{\QP^\asym{\infty}}\comma
\end{align*}
which concludes the proof.
\end{proof}
\end{thm}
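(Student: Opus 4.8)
The plan is to follow the sketch-of-proof already outlined before the theorem statement, assembling the pieces that have been developed in Sections~\ref{s:Analytic} and~\ref{s:Geometry}. The overall strategy is to transfer a $\mssd_\dUpsilon$-Lipschitz function~$\rep u$ on~$\dUpsilon$ to its counterpart~$\Lb^*u$ on the infinite product~$\mbfX^\asym{\infty}_\locfin(\Ed)$, recognize~$\Lb^*u$ as a pre-Sobolev function, use the finite-dimensional approximation of~$\dom{\EE{\dUpsilon}{\QP}}$ in the Kuwae--Shioya sense together with the tensorized Rademacher property to see that the truncations~$(\Lb^*u)\circ\tr^n$ are uniformly bounded in the completions~$K_n$, and finally identify the weak limit with~$\Lb^*u$ regarded as an element of~$\Lb^*K\subset K_\infty$, using that $\Lb^*\colon K\to\Lb^*K$ is unitary (Proposition~\ref{p:LipIsometry}, Corollary~\ref{c:Isometry}).

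First I would fix $\rep u\in\bLip\ttonde{\dUpsilon,\A_\mrmv(\Ed)^\QP,\mssd_\dUpsilon}$. Since $\Lb$ is $\boldSigma^*(\Ed)/\A_\mrmv(\Ed)^\QP$-measurable and $\mssd_\tym{\infty}/\mssd_\dUpsilon$-short (Proposition~\ref{p:LbLipschitz}), the pullback $\Lb^*u$ is bounded, $\boldSigma^*(\Ed)$-measurable, and $\mssd_\tym{\infty}$-Lipschitz with $\Li[\infty]{\Lb^*\rep u}\leq\Li[\dUpsilon]{\rep u}$. By Lemma~\ref{l:CdCBLip} --- which is exactly where Assumption~\ref{ass:AC} and the tensorization of the Rademacher property (Theorem~\ref{t:Tensor}) enter --- we get $\Lb^*u\in\spclass[\llb]{\preW(\Ed)}\subset K_\infty$ together with the pointwise-a.e.\ bound $\rep\cdc^\asym{n}(\Lb^*u)\leq\rep\cdc^\asym{\infty}(\Lb^*u)\leq\Li[\dUpsilon]{\rep u}^2$ $\QP^\asym{\infty}$-a.e., for every~$n$ and every labeling map~$\lb$. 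Consequently $\sup_n\norm{\Lb^*u}_{K_n}\leq\norm{\rep u}_{\bLip(\mssd_\dUpsilon)}<\infty$, where $\norm{\emparg}_{K_n}$ is the $(\EE{\asym{n}}{\llb_\pfwd\QP})^{1/2}_1$-norm on the completion.

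Next I would invoke the weak-compactness principle for Kuwae--Shioya convergence (\cite[Lem.~2.2]{KuwShi03}): the bounded sequence $\tseq{\Lb^*u}_n$, with $\Lb^*u$ viewed in $K_n$, has a subsequence weakly converging, with respect to the pair $\ttonde{\Psi_n\circ(\Lb^*)^{-1},\Lb^*\CylQP{\QP}{\Dz}}$, to some $U$. By Corollary~\ref{c:KSUpsilon} the spaces $K_n$ converge to $\Lb^*K$, so $U=\Lb^*v$ for some $v\in K\eqdef\dom{\EE{\dUpsilon}{\QP}}$. The final identification step is to observe that, since $\Lb^*u$ already lies in $\spclass[\llb]{\preW(\Ed)}$, the completion embeddings $\cokb_N\colon K_N\to L^2(\QP^\asym{\infty})$ all act identically on it, so $\cokb_n(\Lb^*u)=\cokb_\infty(\Lb^*u)=\Lb^*u$ is a constant sequence in $L^2(\QP^\asym{\infty})$; hence $\Lb^*u=\Lb^*v$ as elements of $L^2(\QP^\asym{\infty})$, and therefore $u=v$ in $L^2(\QP)$ by the unitarity of $\Lb^*\colon L^2(\QP)\to L^2(\QP^\asym{\infty})$ (Proposition~\ref{p:LipIsometry}). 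Thus $u=v\in\dom{\EE{\dUpsilon}{\QP}}$. Finally, applying the square-field identity~\eqref{eq:l:CdCCylinderTruncation:1c} in the form extended to all of $\dom{\SF{\dUpsilon}{\QP}}$ by Corollary~\ref{c:Isometry} gives $\SF{\dUpsilon}{\QP}(u)\circ\Lb=\rep\cdc^\asym{\infty}(\Lb^*u)\leq\Li[\dUpsilon]{\rep u}^2$ $\QP^\asym{\infty}$-a.e., i.e.\ $\QP$-$\esssup\SF{\dUpsilon}{\QP}(u)\leq\Li[\dUpsilon]{\rep u}^2$.

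The main obstacle in this argument is packaged into the inputs rather than the assembly: it is Lemma~\ref{l:CdCBLip}, whose proof rests on Theorem~\ref{t:Tensor} (tensorization of the Rademacher property, itself depending on Proposition~\ref{p:IneqCdC} and delicate facts about Cartesian slopes from~\cite{AmbGigSav14b,AmbPinSpe15}) and on Assumption~\ref{ass:AC} to pass from $\mssm^\otym{n}$-a.e.\ bounds to $\QP^\asym{n}$-a.e.\ bounds. At the level of this proof, the only genuinely subtle point is making sure that the element $\Lb^*u\in K_n$ we feed into the Kuwae--Shioya machinery is consistent across~$n$ in the sense needed to identify its weak limit with $\Lb^*v\in\Lb^*K$: this is exactly what the constancy of the sequence $\tseq{\cokb_n(\Lb^*u)}_n$ in $L^2(\QP^\asym{\infty})$, valid because $\Lb^*u\in\preW(\Ed)$, provides. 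For the general case, where $\QP$ is not concentrated on $\dUpsilon^\sym{\infty}$, one decomposes $\QP=\sum_{N\in\overline\N_0}\QP^\sym{N}$ and runs the same argument for each $\QP^\sym{N}$ with $\lb$ restricted accordingly, which I would state is a routine adaptation.
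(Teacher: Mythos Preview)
Your proposal is correct and follows essentially the same approach as the paper's own proof: pull back via~$\Lb^*$, use Lemma~\ref{l:CdCBLip} to land in~$\preW(\Ed)$ with uniform $K_n$-bounds, extract a Kuwae--Shioya weak limit in~$\Lb^*K$ via Corollary~\ref{c:KSUpsilon}, identify it with~$\Lb^*u$ through the constancy of~$\cokb_n(\Lb^*u)$ in~$L^2(\QP^\asym{\infty})$, and conclude using Corollary~\ref{c:Isometry}. Your added commentary on where the real work is hidden (Lemma~\ref{l:CdCBLip} via Theorem~\ref{t:Tensor} and Assumption~\ref{ass:AC}) and on the decomposition over~$\dUpsilon^\sym{N}$ is accurate and matches the paper's own remarks preceding the theorem.
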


\begin{rem}[Comparison with~{\cite{RoeSch99}}, part I]\label{r:RoeSch99SL0}
In~\cite[Thm.~1.3]{RoeSch99}, the authors construct four closed energy forms on~$\dUpsilon$ in the case when~$(X,\mssd)$ is a smooth Riemannian manifold~$(M,\mssd_g)$.
The forms all coincide on cylinder functions~$\CylQP{\QP}{\mcC^\infty_c(M)}$, but differ for their domains, satisfying, cf.~\cite[Prop.~1.4]{RoeSch99},
\begin{equation}\label{eq:RoeSch}
\msF_0\subset \msF^{(c)}\subset \msF\subset W^{1,2}\comma
\end{equation}
with~$\msF_0$ the $\EE{\dUpsilon}{\QP}$-closure of~$\CylQP{\QP}{\mcC^\infty_c(M)}$, i.e.\ the minimal domain containing cylinder functions.

The authors show the Rademacher property for~$\ttonde{\EE{\dUpsilon}{\QP},\msF}$ assuming that~$\QP$ is quasi-invariant w.r.t.\ the lift by push-forward of the natural action of the group of diffeomorphisms~$\Diff_c(M)$, in a rather strong sense, cf.~\cite[Ass.~1.1(d)]{RoeSch99}.
Whereas not comparable to the assumptions in~\cite{RoeSch99}, our assumptions are ---~in essence~--- much weaker than~\cite{RoeSch99}.
Furthermore, we stress that the Rademacher property shown here holds for the minimal domain~$\msF_0$ rather than for~$\msF$, which is a stronger statement than the one in~\cite{RoeSch99}.
The coincidence of the domains in~\eqref{eq:RoeSch} in the setting of~\cite{RoeSch99}, holds under essential self-adjointness of the generator~$\LL{\dUpsilon}{\QP}$ on~$\CylQP{\QP}{\Cc^\infty(M)}$ see~\cite[Proof of Prop.~1.4(iii), p.~350]{RoeSch99}.
While in our setting it is not possible to define the spaces~$\msF^{(c)}$ and~$\msF$ \emph{a priori}, the identification~$\msF_0=W^{1,2}$ holds at least for Poisson measures by the essential self-adjointness results in~\cite{LzDSSuz22a}.
\end{rem}

\subsection{Identification of forms}\label{sss:AnalyticCheeger}
Our aim in this section is to establish sufficient conditions for the coincidence of the forms~$\EE{\dUpsilon}{\QP}$ and~$\Ch[\mssd_\dUpsilon, \QP]$ on~$L^2(\QP)$.

\begin{lem}\label{l:EleCh}
Let~$(\mcX,\cdc, \mssd)$ be an \MLDS, and~$\QP$ be a probability measure on~$\ttonde{\dUpsilon,\A_\mrmv(\Ed)}$ satisfying Assumption~\ref{ass:CWP}.
If~$\ttonde{\dUpsilon,\SF{\dUpsilon}{\QP},\mssd_\dUpsilon}$ satisfies~$(\Rad{\mssd_\dUpsilon}{\QP})$, then
\begin{align*}
\EE{\dUpsilon}{\QP}\leq \Ch[\mssd_\dUpsilon,\QP] \fstop
\end{align*}
\begin{proof}
By Proposition~\ref{p:ConfigEMTS},~$\ttonde{\dUpsilon,\T_\mrmv(\Ed),\mssd_\dUpsilon}$ is an extended metric-topo\-logic\-al space, hence we may apply~\cite[Lem.~3.6]{LzDSSuz20} to obtain that~$\ttonde{\dUpsilon,\SF{\dUpsilon}{\QP},\mssd_\dUpsilon}$ satisfies~$(\dRad{\mssd_\dUpsilon}{\QP})$, viz.~$\mssd_\dUpsilon\leq \mssd_{\QP}$.
As a consequence, by Lemma~\ref{l:RadCompleteness}, $\ttonde{\dUpsilon,\T_\mrmv(\Ed),\mssd_\QP}$ is a complete extended metric-topological measure space.

Note that~$\ttonde{\EE{\dUpsilon}{\QP},\dom{\EE{\dUpsilon}{\QP}}}$ is local by construction of~$\SF{\dUpsilon}{\QP}$, conservative since~$\QP$ is a probability measure, and therefore strongly local. 
Further recall that~$\QP$ is a Radon measure by Remark~\ref{r:QPRadon}.
Combining this two facts together, the quadruple~$\ttonde{\dUpsilon, \Bo{\T_\mrmv(\Ed)}, \EE{\dUpsilon}{\QP},\QP}$ is an energy measure space in the sense of~\cite[Dfn.~10.1]{AmbErbSav16}.
Therefore, all results in~\cite[\S12]{AmbErbSav16} apply.
By~\cite[Thm.~12.5]{AmbErbSav16} and~$\dRad{\mssd_\dUpsilon}{\QP}$,
\begin{equation*}
\EE{\dUpsilon}{\QP}\leq \Ch[\mssd_{\QP},\QP]\leq \Ch[\mssd_\dUpsilon,\QP] \fstop \qedhere
\end{equation*}
\end{proof}
\end{lem}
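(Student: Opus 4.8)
The proof of Lemma~\ref{l:EleCh} is actually already given in the excerpt (it is short), so here I describe how I would reconstruct it if it were not. The plan is to reduce the inequality $\EE{\dUpsilon}{\QP}\leq \Ch[\mssd_\dUpsilon,\QP]$ to the general comparison theorem between a strongly local Dirichlet form and the Cheeger energy on an extended metric-topological measure space, for which one needs two ingredients: (i) a \emph{distance}-Rademacher inequality $\mssd_\dUpsilon\leq\mssd_\QP$, and (ii) that the abstract framework of~\cite{AmbErbSav16} applies to $\ttonde{\dUpsilon,\T_\mrmv(\Ed),\mssd_\dUpsilon,\QP}$ as an energy measure space.

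First I would invoke Proposition~\ref{p:ConfigEMTS} to record that $\ttonde{\dUpsilon,\T_\mrmv(\Ed),\mssd_\dUpsilon}$ is an extended metric-topological space in the sense of Definition~\ref{d:AES}. Given this, the hypothesis $(\Rad{\mssd_\dUpsilon}{\QP})$ together with the implication $(\Rad{\mssd}{\mssm})\Rightarrow(\dRad{\mssd}{\mssm})$ recorded in~\eqref{eq:EquivalenceRadStoL} (i.e.~\cite[Lem.~3.6]{LzDSSuz20}) yields $(\dRad{\mssd_\dUpsilon}{\QP})$, which by definition is exactly $\mssd_\dUpsilon\leq\mssd_\QP$. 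By Lemma~\ref{l:RadCompleteness} applied on $\dUpsilon$, this in turn makes $\ttonde{\dUpsilon,\T_\mrmv(\Ed),\mssd_\QP}$ a complete extended metric-topological (Radon measure) space, which is the structural hypothesis needed to quote the results of~\cite[\S12]{AmbErbSav16} with the intrinsic distance.

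Next I would check the Dirichlet-form side of the hypotheses. The form $\ttonde{\EE{\dUpsilon}{\QP},\dom{\EE{\dUpsilon}{\QP}}}$ is local by the very construction of $\SF{\dUpsilon}{\QP}$ from the lifted square field~\eqref{eq:d:LiftCdC}, and conservative because $\QP$ is a probability measure; a local conservative Dirichlet form is strongly local. Combined with the fact that $\QP$ is Radon on $\ttonde{\dUpsilon,\T_\mrmv(\Ed)}$ (Remark~\ref{r:QPRadon}), the quadruple $\ttonde{\dUpsilon,\Bo{\T_\mrmv(\Ed)},\EE{\dUpsilon}{\QP},\QP}$ satisfies the axioms of an energy measure space in the sense of~\cite[Dfn.~10.1]{AmbErbSav16}.

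Finally, with $(\dRad{\mssd_\dUpsilon}{\QP})$ in hand, I would apply~\cite[Thm.~12.5]{AmbErbSav16} to conclude $\EE{\dUpsilon}{\QP}\leq\Ch[\mssd_\QP,\QP]$, and then use the monotonicity of the Cheeger energy in the distance (a smaller distance gives a smaller asymptotic Lipschitz constant, hence a smaller energy) together with $\mssd_\dUpsilon\leq\mssd_\QP$ to obtain $\Ch[\mssd_\QP,\QP]\leq\Ch[\mssd_\dUpsilon,\QP]$, chaining to $\EE{\dUpsilon}{\QP}\leq\Ch[\mssd_\dUpsilon,\QP]$. The only genuinely delicate point is the verification that the present (quasi-regular, \emph{non}-regular, extended-metric) situation really falls under the scope of~\cite[\S12]{AmbErbSav16}; this is handled precisely by the two structural facts above --- that $\QP$ is Radon and that $\mssd_\QP$ makes $\dUpsilon$ a \emph{complete} extended metric-topological space via Lemma~\ref{l:RadCompleteness} --- so no further work is needed beyond invoking Proposition~\ref{p:ConfigEMTS}, Remark~\ref{r:QPRadon}, Lemma~\ref{l:RadCompleteness}, and the cited theorems.
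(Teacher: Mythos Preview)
Your reconstruction is correct and follows essentially the same route as the paper's proof: Proposition~\ref{p:ConfigEMTS} and \cite[Lem.~3.6]{LzDSSuz20} to pass from $(\Rad{\mssd_\dUpsilon}{\QP})$ to $(\dRad{\mssd_\dUpsilon}{\QP})$, Lemma~\ref{l:RadCompleteness} and Remark~\ref{r:QPRadon} to place the space in the energy-measure-space framework of~\cite{AmbErbSav16}, and then \cite[Thm.~12.5]{AmbErbSav16} together with the monotonicity $\Ch[\mssd_\QP,\QP]\leq\Ch[\mssd_\dUpsilon,\QP]$ from $\mssd_\dUpsilon\leq\mssd_\QP$. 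You even spell out the monotonicity step for the Cheeger energy in the distance, which the paper leaves implicit in the final chain of inequalities.
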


In order to show the opposite inequality, we shall need pointwise estimates for the slope of cylinder functions.
As usual, let~$u^\sym{n}\eqdef \Lb^*u\restr_{X^\tym{n}}$, and recall the definition~\eqref{eq:SlopeDef} of slope of a function.

\begin{lem}\label{l:CylinderLocLip}
Let~$(\mcX,\cdc)$ be an \MLDS with~$\Dz\subset \bLip(\mssd)$, and fix~$u\in\Cyl{\Dz}$.
Then,
\begin{align}\label{eq:l:CylinderLocLip:0}
\slo[\mssd_\dUpsilon]{u} \circ \Lb \leq \nlimsup \slo[\mssd_\tym{n}]{u^\sym{n}} \circ \tr^{n} \fstop
\end{align}
\begin{proof}
Since~$\dUpsilon^\sym{N}$ is a $\mssd_\dUpsilon$-accessibility component of~$\dUpsilon$ for every~$N\in\overline{\N}_0$, a proof of~\eqref{eq:l:CylinderLocLip:0} on~$\dUpsilon^\sym{N}$ is independent of the same argument for~$M\neq N$.
For simplicity, we show that~\eqref{eq:l:CylinderLocLip:0} holds on~$\dUpsilon^\sym{N}$ when~$N=\infty$.
A proof of the case when~$N<\infty$ is similar and simpler, and therefore it is omitted.

Let $E\eqdef \cup_{i\leq k}\supp f_i\in\Ed$, and, for each~$n\in\N$, let~$\Xi_{\leq n}(E)$ be defined as in~\eqref{eq:t:CylinderLocLip:0}.
Since $E$ is $\T$-closed, $\Xi$ is $\T_\mrmv(\Ed)$-open by Remark~\ref{r:CylContinuity}\iref{i:r:CylContinuity:3}, and therefore~$\mssd_\dUpsilon$-open as well, since $\T_{\mssd_\dUpsilon}$ is finer than~$\T_\mrmv(\Ed)$.
Further fix~$\eta\in\dUpsilon$, and set~$n\eqdef \eta E$ and~$\Xi\eqdef \Xi_{\leq n}(E)$.
Similarly to the proof of Theorem~\ref{t:CylinderLocLip}, for each fixed $\gamma\in\dUpsilon$ we have that
\begin{align}\label{eq:l:CylinderLocLip:1}
\slo[\mssd_\dUpsilon]{u}(\eta)=&\ \limsup_{\gamma \to \eta} \frac{\abs{u(\gamma)-u(\eta)}}{\mssd_\dUpsilon(\gamma,\eta)}= \limsup_{\substack{\gamma \to \eta \\ \gamma \in B^\dUpsilon_\infty(\eta)\cap \Xi}} \frac{\abs{u(\gamma)-u(\eta)}}{\mssd_\dUpsilon(\gamma,\eta)} \fstop
\end{align}

Assume now that~$\gamma\in B^\dUpsilon_\infty(\eta)\cap\Xi$ be $\mssd_\dUpsilon$-convergent to~$\eta$.
Fix~$\mbfy\in\Lb^{-1}(\eta)$ with~$y_p\notin E$ for~$p> n$, let~$\alpha\in \dUpsilon(\Ed^\otym{2})$ be any optimal matching of~$(\gamma,\eta)$, and define a labeling
\begin{align*}
\gamma\longmapsto \mbfx\eqdef& \seq{x_1,\dotsc, x_{\eta\set{y_1}}, x_{\eta\set{y_1}+1},\dotsc, x_{\eta\set{y_1}+\eta\set{y_2}},\dotsc}
\\
&x_{\sum_{j=1}^{p-1} \eta\set{y_j}+i}\in \supp\, \alpha\set{(\emparg, y_p)}\comma \qquad i\in \set{1,\dotsc, \eta\set{y_p}}\comma \quad p\in \N \fstop
\end{align*}
Observe that $\bmssd(\mbfx,\mbfy)=\mssd_\dUpsilon(\gamma,\eta)$ by definition of~$\alpha$.
Further argue by contradiction that
\begin{equation*}
\limsup_{\substack{\gamma \to \eta \\ \gamma \in B^\dUpsilon_\infty(\eta)\cap \Xi}} \bar p >n \comma \qquad \bar p\eqdef \max\set{p:x_p\in E}\fstop
\end{equation*}
Then, we obtain the contradiction
\begin{align*}
0=&\ \limsup_{\gamma\to \eta} \mssd_\dUpsilon(\gamma,\eta)^2 \geq \limsup_{\substack{\gamma \to \eta \\ \gamma \in B^\dUpsilon_\infty(\eta)\cap \Xi}} \mssd(x_{\bar p}, y_{\bar p})^2 \geq \limsup_{\substack{\gamma \to \eta \\ \gamma \in B^\dUpsilon_\infty(\eta)\cap \Xi}} \mssd(x_{\bar p}, \supp\,\eta_{E^\complement})^2
\\
\geq&\ \mssd(E,\supp \eta_{E^\complement})^2>0\comma
\end{align*}
where the first inequality holds since~$(x_{\bar p},y_{\bar p})\in\supp \alpha$ and by optimality of~$\alpha$, the second inequality holds by the contradiction assumption~$\limsup \bar p> n$ and since~$y_p\notin E$ for~$p>n$ by definition of~$\mbfy$, the third inequality holds by definition of~$\bar p$, and the last one since~$E$ is $\T$-closed and $\supp \eta$ has no accumulation point.
As a consequence, we may conclude that
\begin{equation*}
\limsup_{\substack{\gamma \to \eta \\ \gamma \in B^\dUpsilon_\infty(\eta)\cap \Xi}} \bar p \leq n \comma \qquad \bar p\eqdef \max\set{p:x_p\in E}\comma
\end{equation*}
so that, eventually as~$\gamma \to \eta$, we have that~$x_p\in E$ for~$p\leq n$.
Thus, finally, continuing from~\eqref{eq:l:CylinderLocLip:1}
\begin{align*}
\slo[\mssd_\dUpsilon]{u}(\eta)=&\ \limsup_{\substack{\gamma \to \eta \\ \gamma \in B^\dUpsilon_\infty(\eta)\cap \Xi}} \frac{\abs{u(\gamma)-u(\eta)}}{\mssd_\dUpsilon(\gamma,\eta)}
\\
=&\ \limsup_{\substack{\gamma \to \eta\\ \gamma \in B^\dUpsilon_\infty(\eta)\cap \Xi}} \frac{\abs{u^\sym{n}(\mbfx)-u^\sym{n}(\mbfy)}}{\mssd_\tym{\infty}(\mbfx,\mbfy)}
\\
\leq& \ \limsup_{\substack{\gamma \to \eta\\ \gamma \in B^\dUpsilon_\infty(\eta)\cap \Xi}} \frac{\abs{u^\sym{n}(\mbfx^\asym{n})-u^\sym{n}(\mbfy^\asym{n})}}{\mssd_\tym{n}(\mbfx^\asym{n},\mbfy^\asym{n})}\leq \slo[\mssd_\tym{n}]{u^\sym{n}}(\mbfy^\asym{n}) \fstop
\end{align*}

Since~$\eta$ was arbitrary, $n\geq \eta E$ ranges in $\N_0$, and therefore passing to the limit superior in~$n$ in the previous inequality we obtain the conclusion.
\end{proof}
\end{lem}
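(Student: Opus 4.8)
The target statement (Lemma~\ref{l:CylinderLocLip}) asserts that for a cylinder function $u=F\circ\mbff^\trid\in\Cyl{\Dz}$ with $\Dz\subset\bLip(\mssd)$, the $\mssd_\dUpsilon$-slope of $u$ is controlled by the limit superior of the $\mssd_\tym{n}$-slopes of its finite-dimensional representatives $u^\sym{n}=\Lb^*u\restr_{X^\tym{n}}$, composed with $\tr^n$. The approach is a localization argument: although $u$ is not globally $\mssd_\dUpsilon$-Lipschitz, it \emph{is} locally Lipschitz on the concentration sets $\Xi_{\leq n}(E)$ (Theorem~\ref{t:CylinderLocLip}), and the slope is a \emph{local} quantity, so I can compute it inside such a set. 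First I would reduce to the component $\dUpsilon^\sym{N}$ for a fixed $N\in\overline\N_0$ --- since each $\dUpsilon^\sym{N}$ is a $\mssd_\dUpsilon$-accessibility component, the computation on different components is independent --- and I would treat the representative case $N=\infty$, the finite case being strictly simpler. I would set $E\eqdef\cup_{i\leq k}\supp f_i\in\Ed$, fix $\eta\in\dUpsilon^\sym{\infty}$, put $n\eqdef\eta E$ and $\Xi\eqdef\Xi_{\leq n}(E)$; since $E$ is $\T$-closed, $\Xi$ is $\T_\mrmv(\Ed)$-open by Remark~\ref{r:CylContinuity}\iref{i:r:CylContinuity:3}, hence $\mssd_\dUpsilon$-open because $\T_{\mssd_\dUpsilon}$ refines $\T_\mrmv(\Ed)$. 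Because $\eta$ lies in the $\mssd_\dUpsilon$-open set $\Xi$ and lies in its own accessibility component, the $\limsup$ defining $\slo[\mssd_\dUpsilon]{u}(\eta)$ can be restricted to $\gamma\in B^\dUpsilon_\infty(\eta)\cap\Xi$.

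The key step is then the following \emph{labeling-stability} claim: there is a labeling $\mbfy\in\Lb^{-1}(\eta)$ with $y_p\notin E$ for $p>n$, and for every $\gamma\in B^\dUpsilon_\infty(\eta)\cap\Xi$ sufficiently $\mssd_\dUpsilon$-close to $\eta$, one can pick a labeling $\mbfx\in\Lb^{-1}(\gamma)$, built from an optimal matching $\alpha\in\dUpsilon(\Ed^\otym{2})$ of $(\gamma,\eta)$ (existing by Proposition~\ref{p:W2Upsilon}\iref{i:p:W2Upsilon:5}), such that $\bmssd(\mbfx,\mbfy)=\mssd_\dUpsilon(\gamma,\eta)$ and, \emph{eventually as $\gamma\to\eta$}, all points of $\mbfx$ landing in $E$ occur among the first $n$ coordinates. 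I would prove this last assertion by contradiction: if $\bar p\eqdef\max\{p:x_p\in E\}$ satisfied $\limsup\bar p>n$ along a sequence $\gamma\to\eta$, then optimality of $\alpha$ forces $\mssd(x_{\bar p},y_{\bar p})\leq\mssd_\dUpsilon(\gamma,\eta)\to 0$, while $x_{\bar p}\in E$ and $y_{\bar p}\notin E$ (since $\bar p>n$ forces $y_{\bar p}\notin E$ by choice of $\mbfy$) give $\mssd(x_{\bar p},y_{\bar p})\geq\mssd(E,\supp\eta_{E^\complement})>0$, using that $E$ is $\T$-closed and $\supp\eta$ has no accumulation point. This contradiction yields that eventually $x_p\in E$ exactly for $p\leq n$.

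Once the labeling is stabilized, the estimate is routine: for $\gamma$ near $\eta$ I have $u(\gamma)=u^\sym{n}(\mbfx^\asym{n})$ and $u(\eta)=u^\sym{n}(\mbfy^\asym{n})$ by the cylinder form of $u$ together with $x_p,y_p\notin E$ for $p>n$, and $\mssd_\dUpsilon(\gamma,\eta)=\mssd_\tym{\infty}(\mbfx,\mbfy)\geq\mssd_\tym{n}(\mbfx^\asym{n},\mbfy^\asym{n})$. Therefore
\[
\slo[\mssd_\dUpsilon]{u}(\eta)
=\limsup_{\substack{\gamma\to\eta\\ \gamma\in B^\dUpsilon_\infty(\eta)\cap\Xi}}\frac{\abs{u^\sym{n}(\mbfx^\asym{n})-u^\sym{n}(\mbfy^\asym{n})}}{\mssd_\tym{n}(\mbfx^\asym{n},\mbfy^\asym{n})}\leq\slo[\mssd_\tym{n}]{u^\sym{n}}(\mbfy^\asym{n}),
\]
and since $\mbfy^\asym{n}=\tr^n(\mbfy)$ with $\mbfy\in\Lb^{-1}(\eta)$ and $n=\eta E$ can be arbitrarily large as $\eta$ varies, passing to $\limsup_n$ over the range of $n$ gives $\slo[\mssd_\dUpsilon]{u}\circ\Lb\leq\nlimsup\slo[\mssd_\tym{n}]{u^\sym{n}}\circ\tr^n$, which is exactly~\eqref{eq:l:CylinderLocLip:0}. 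The main obstacle is the labeling-stability claim: one must be careful that the optimal matching indeed concentrates mass in a way that keeps the $E$-supported points among the first $n$ coordinates, and that the relevant $\limsup$ over $\gamma\to\eta$ interacts correctly with the (fixed, once $\eta$ is fixed) value $n=\eta E$ --- the subtlety being that $\Xi$ depends on $\eta$, so one genuinely works locally around each $\eta$ rather than with a single global localizing set.
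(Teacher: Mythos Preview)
Your proposal is correct and follows essentially the same approach as the paper's proof: both reduce to the $N=\infty$ component, localize the slope computation to the $\mssd_\dUpsilon$-open set $\Xi_{\leq n}(E)$ with $n=\eta E$, construct the labeling $\mbfx$ from an optimal matching so that $\bmssd(\mbfx,\mbfy)=\mssd_\dUpsilon(\gamma,\eta)$, and rule out $\limsup\bar p>n$ by the same contradiction using $\mssd(E,\supp\eta_{E^\complement})>0$. The only cosmetic difference is that you spell out more explicitly why $u(\gamma)=u^\sym{n}(\mbfx^\asym{n})$ once the $E$-supported points are confined to the first $n$ coordinates, and your phrasing of the final passage to $\nlimsup$ is slightly different from the paper's (both are a bit loose; the clean statement is that the same argument yields $\slo[\mssd_\dUpsilon]{u}(\eta)\leq\slo[\mssd_\tym{m}]{u^\sym{m}}(\mbfy^\asym{m})$ for every $m\geq\eta E$, whence the $\limsup$).
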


\begin{lem}\label{l:CylinderSlopeLeqCdC}
Let~$(\mcX,\cdc,\mssd)$ be an \MLDS with $\Dz\subset \bLip(\T,\mssd)$, and satisfying~\ref{ass:T}.
Further let~$\QP$ be a probability measure on~$\dUpsilon$ satisfying Assumptions~\ref{ass:AC} and~\ref{ass:CWP}.
Then,
\begin{align*}
\slo[\mssd_\dUpsilon]{u}^2\leq \SF{\dUpsilon}{\QP}(u)\quad \as{\QP}\comma \qquad u\in\Cyl{\Dz} \fstop
\end{align*}
\begin{proof}
Let~$n\in\N$ and~$u^\sym{n}$ be defined as in~\eqref{eq:CylFReduction}.
Since~$u^\sym{n}$ is $\mssd_{\tym{n}}$-Lipschitz for each $n\in \N$, we have, by Assumption~\ref{ass:T}, that
\begin{align}\label{eq:p:TensorizationPoisson:1}
\cdc^\otym{n}\ttonde{u^\sym{n}}=\slo[\mssd_\tym{n}]{u^\sym{n}}^2 \qquad \as{\mssm^\otym{n}}\comma \qquad n\in\N \fstop
\end{align}
Similarly to the proof of Lemma~\ref{l:CdCCylinderTruncation}, let~$B^\asym{n}\in \Bo{\T}^\otym{n}$ be a set of full $\mssm^\otym{n}$-measure so that~\eqref{eq:p:TensorizationPoisson:1} holds everywhere on~$B^\asym{n}$.
For a fixed labeling map~$\lb$, set~$\Omega_n\eqdef (\tr^n\circ \lb)^{-1}(B^\asym{n})$ and note that~$\QP \Omega_n=1$ by Assumption~\ref{ass:AC}.
Thus,~$\Omega \eqdef \bigcap_{n\geq 1} \Omega_n$ has full~$\QP$-measure.

Now, a sequential application of Lemma~\ref{l:CylinderLocLip}, \eqref{eq:p:TensorizationPoisson:1}, and Lemma~\ref{l:CdCCylinderTruncation}, yields for every~$\eta\in\Omega$
\begin{align*}
\slo[\mssd_\dUpsilon]{u}^2(\eta)\leq&\ \nlimsup \ttonde{\slo[\mssd_\tym{n}]{u^\sym{n}}^2\circ \tr^{n}\circ \lb}(\eta)
\\
=&\ \nlimsup \ttonde{\cdc^\otym{n}(u^\sym{n})\circ \tr^{n}\circ \lb}(\eta) = \SF{\dUpsilon}{\QP}(u)(\eta) \comma
\end{align*}
which concludes the proof.
\end{proof}
\end{lem}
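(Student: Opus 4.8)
The statement to prove is Lemma~\ref{l:CylinderSlopeLeqCdC}: for an \MLDS $(\mcX,\cdc,\mssd)$ with $\Dz\subset\bLip(\T,\mssd)$ satisfying~\ref{ass:T}, and $\QP$ satisfying~\ref{ass:AC} and~\ref{ass:CWP}, one has $\slo[\mssd_\dUpsilon]{u}^2\leq\SF{\dUpsilon}{\QP}(u)$ $\QP$-a.e.\ for every $u\in\Cyl{\Dz}$.

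The plan is to chain together three results already established in the excerpt: the pointwise slope estimate of Lemma~\ref{l:CylinderLocLip}, the tensorization assumption~\ref{ass:T}, and the square-field approximation of Lemma~\ref{l:CdCCylinderTruncation}. First I would fix $u=F\circ\mbff^\trid\in\Cyl{\Dz}$ and write $u^\sym{n}\eqdef\Lb^*u\restr_{X^\tym{n}}$ as in~\eqref{eq:CylFReduction}. Since $\Dz\subset\bLip(\mssd)$, each $u^\sym{n}$ is $\mssd_\tym{n}$-Lipschitz on $X^\tym{n}$ (this is exactly the elementary estimate used in the proof of Theorem~\ref{t:CylinderLocLip}), so the tensorization assumption~\ref{ass:T} applies with $f^\asym{n}=u^\sym{n}$, giving
\begin{align*}
\cdc^\otym{n}(u^\sym{n})=\slo[\mssd_\tym{n}]{u^\sym{n}}^2\quad\as{\mssm^\otym{n}}\comma\qquad n\in\N\fstop
\end{align*}

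Next, to pass from an $\mssm^\otym{n}$-a.e.\ identity on $X^\tym{n}$ to a $\QP$-a.e.\ statement on $\dUpsilon$, I would argue exactly as in the proof of Lemma~\ref{l:CdCCylinderTruncation}: for each $n$ pick $B^\asym{n}\in\Bo{\T}^\otym{n}$ of full $\mssm^\otym{n}$-measure on which the above identity holds everywhere, fix a labeling map $\lb$, set $\Omega_n\eqdef(\tr^n\circ\lb)^{-1}(B^\asym{n})$, and observe $\QP\Omega_n=1$ by Assumption~\ref{ass:AC}. Then $\Omega\eqdef\bigcap_{n\geq 1}\Omega_n$ has full $\QP$-measure. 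For every $\eta\in\Omega$, I would apply in sequence: Lemma~\ref{l:CylinderLocLip}, which gives $\slo[\mssd_\dUpsilon]{u}\circ\Lb\leq\nlimsup\slo[\mssd_\tym{n}]{u^\sym{n}}\circ\tr^n$, hence $\slo[\mssd_\dUpsilon]{u}^2(\eta)\leq\nlimsup(\slo[\mssd_\tym{n}]{u^\sym{n}}^2\circ\tr^n\circ\lb)(\eta)$; then the identity~\ref{ass:T} (valid on $\Omega_n$) to rewrite $\slo[\mssd_\tym{n}]{u^\sym{n}}^2\circ\tr^n\circ\lb=\cdc^\otym{n}(u^\sym{n})\circ\tr^n\circ\lb$ on $\Omega$; and finally equation~\eqref{eq:l:CdCCylinderTruncation:1} of Lemma~\ref{l:CdCCylinderTruncation}, which identifies $\nlim\cdc^\otym{n}(u^\sym{n})\circ\tr^n\circ\lb=\SF{\dUpsilon}{\QP}(u)$ $\QP$-a.e. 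Combining these on $\Omega$ yields $\slo[\mssd_\dUpsilon]{u}^2\leq\SF{\dUpsilon}{\QP}(u)$ $\QP$-a.e., which is the claim.

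I do not expect a serious obstacle here, since all the heavy machinery — the local Lipschitz property of cylinder functions and the slope bound (Lemma~\ref{l:CylinderLocLip}), the well-posedness and convergence of the finite-dimensional square fields (Lemma~\ref{l:CdCCylinderTruncation}), and the tensorization of the square field into slopes (Assumption~\ref{ass:T})—is already in place. The only point requiring mild care is the bookkeeping of null sets: one must intersect the countably many full-measure sets $\Omega_n$ coming from~\ref{ass:AC} together with the full-measure set from Lemma~\ref{l:CdCCylinderTruncation} on which~\eqref{eq:l:CdCCylinderTruncation:1} holds, and check that the $\nlimsup$ in Lemma~\ref{l:CylinderLocLip} may be replaced by the genuine limit from Lemma~\ref{l:CdCCylinderTruncation} once the terms agree along $\Omega$. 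As in Lemma~\ref{l:CdCCylinderTruncation}, the statement splits over the $\mssd_\dUpsilon$-accessibility components $\dUpsilon^\sym{N}$, so it suffices to treat $N=\infty$ (and $N<\infty$ is easier), with each labeling map restricted accordingly; the choice of strong lifting is immaterial by Proposition~\ref{p:ExtDom}.
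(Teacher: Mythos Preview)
Your proposal is correct and follows essentially the same approach as the paper: apply Lemma~\ref{l:CylinderLocLip} for the slope bound, invoke Assumption~\ref{ass:T} to identify slopes with~$\cdc^{\otym n}$ on full-$\mssm^{\otym n}$-measure sets transported to~$\dUpsilon$ via~\ref{ass:AC}, and close with the limit in Lemma~\ref{l:CdCCylinderTruncation}. Your additional remarks on the null-set bookkeeping and the~$\limsup$-versus-limit issue are accurate and match what the paper leaves implicit.
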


\begin{lem}\label{l:ChleE}
Let~$(\mcX,\cdc,\mssd)$ be an \MLDS with $\Dz\subset \bLip(\T,\mssd)$, and satisfying~\ref{ass:T}.
Further let~$\QP$ be a probability measure on~$\dUpsilon$ satisfying Assumptions~\ref{ass:AC} and~\ref{ass:CWP}.
Then,
\begin{align*}
\Ch[\mssd_\dUpsilon,\QP] \leq \EE{\dUpsilon}{\QP} \fstop
\end{align*}

\begin{proof}
Fix~$u=F\circ \mbff^\trid\in\Cyl{\Dz}$ with~$\mbff\eqdef\seq{f_i}_{i\leq k}$, and let~$E\eqdef \cup_{i\leq k}\supp f_i$ in~$\Ed$
Further fix a curve~$\gamma_\emparg\eqdef \seq{\gamma_t}_t\in \AC^1\ttonde{I; (\dUpsilon,\mssd_\dUpsilon)}$.
Since~$E$ is $\T$-closed, the function~$t\mapsto\gamma_t E$ is upper semi-continuous, and thus it has a maximum~$n= n(E,\gamma_{\emparg})\in\N_0$.

Now, let~$\Xi\eqdef \Xi_n^\sym{\infty}(E)$ be as in~\eqref{eq:t:CylinderLocLip:0}.
Note that the restriction of~$u$ to~$\Xi$ is $\mssd_\dUpsilon$-Lipschitz by Theorem~\ref{t:CylinderLocLip}, and that~$\seq{\gamma_t}_t\subset \Xi$ by definition of the latter.
As a consequence,~$u$ is $\mssd_\dUpsilon$-Lipschitz along~$\gamma_\emparg$, and
\begin{align*}
\abs{u(\gamma_1)-u(\gamma_0)}\leq \int_0^1 \slo[\mssd_\dUpsilon]{u}(\gamma_r) \abs{\dot\gamma}_r\diff r 
\end{align*}
by~\cite[Rmk.~2.8]{AmbGigSav14}.
Since~$\gamma_\emparg$ was arbitrary, the slope~$\slo[\mssd_\dUpsilon]{u}$ is an upper gradient for~$u$.

Now, since~$\slo[\mssd_\dUpsilon]{u}$ is $\Bo{\T_\mrmv(\Ed)}^*$-measurable by~\cite[Lem.~2.6]{AmbGigSav14}, it follows from Definition~\ref{d:CheegerW}, the definition of minimal weak upper gradient, and Lemma~\ref{l:CylinderSlopeLeqCdC} that
\begin{align*}
\Ch[w,\mssd_\dUpsilon,\QP] \eqdef \int_{\dUpsilon} \slo[w,\mssd_\dUpsilon]{u}^2 \diff \QP \leq \int_{\dUpsilon} \slo[\mssd_\dUpsilon]{u}^2 \diff \QP \leq \int_{\dUpsilon} \SF{\dUpsilon}{\QP}(u)\diff \QP \fstop
\end{align*}
By Proposition~\ref{p:ConsistencyCheeger}, this concludes the assertion for cylinder functions.

Finally, in order to extend this result to~$u\in\dom{\EE{\dUpsilon}{\QP}}$, let~$\seq{u_n}_n\subset \Cyl{\Dz}$ be $\dom{\EE{\dUpsilon}{\QP}}$-converging to~$u$.
Then, respectively by: lower semi-continuity of~$\Ch[\mssd_\dUpsilon]$, definition of~$\Ch[\mssd_\dUpsilon]$ (Lem.~\ref{l:CylinderSlopeLeqCdC}), and definition of~$u_n$,
\begin{align*}
\Ch[\mssd_\dUpsilon,\QP](u)\leq&\ \nliminf\Ch[\mssd_\dUpsilon,\QP](u_n)\leq \nliminf \int_\dUpsilon \slo{u_n}^2 \diff\QP
\\
\leq &\ \nliminf \int_\dUpsilon \SF{\dUpsilon}{\QP}(u_n) = \nliminf\EE{\dUpsilon}{\QP}(u_n) = \EE{\dUpsilon}{\QP}(u) \fstop \qedhere
\end{align*}
\end{proof}
\end{lem}

Combining Lemma~\ref{l:ChleE} with the opposite inequality, shown in Lemma~\ref{l:EleCh} by means of Theorem~\ref{t:Rademacher}, finally yields the following Identification Theorem.
\begin{thm}[Identification]\label{t:IdentificationCheeger}
Let~$(\mcX,\cdc,\mssd)$ be an \MLDS satisfying~$(\Rad{\mssd}{\mssm})$, $\Dz\subset \bLip(\T,\mssd)$, and~\ref{ass:T}.
Further let~$\QP$ be a probability measure on~$\dUpsilon$ satisfying Assumptions~\ref{ass:AC} and \ref{ass:CWP}.
Then,
\begin{align}\label{eq:Cheeger=EE}
\ttonde{\Ch[\mssd_\dUpsilon,\QP],\dom{\Ch[\mssd_\dUpsilon,\QP]}}=\ttonde{\EE{\dUpsilon}{\QP},\dom{\EE{\dUpsilon}{\QP}}} \fstop
\end{align}
\end{thm}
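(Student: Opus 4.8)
The plan is to prove the identification \eqref{eq:Cheeger=EE} by establishing the two inequalities separately, each of which has already been isolated as a lemma in this section. The structure of the argument is thus modular: all the hard work has been front-loaded into Theorem~\ref{t:Rademacher} and Lemmas~\ref{l:EleCh}, \ref{l:ChleE}, and the identification is the bookkeeping that combines them.

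\begin{proof}[Proof of Theorem~\ref{t:IdentificationCheeger}]
The hypotheses are exactly those required by all three of the cited results, so the proof is a synthesis. First I would invoke Theorem~\ref{t:Rademacher}: since $(\mcX,\cdc,\mssd)$ is an \MLDS satisfying $(\Rad{\mssd}{\mssm})$ with $\Dz\subset \bLip(\T,\mssd)$, and $\QP$ satisfies Assumptions~\ref{ass:AC} and~\ref{ass:CWP}, the space $\ttonde{\dUpsilon,\SF{\dUpsilon}{\QP},\mssd_\dUpsilon}$ satisfies $(\Rad{\mssd_\dUpsilon}{\QP})$. Feeding this into Lemma~\ref{l:EleCh} (whose remaining hypotheses — $(\mcX,\cdc,\mssd)$ an \MLDS and $\QP$ satisfying~\ref{ass:CWP} — are among our assumptions) yields the inequality
\begin{equation*}
\EE{\dUpsilon}{\QP}\leq \Ch[\mssd_\dUpsilon,\QP]\fstop
\end{equation*}
For the reverse inequality, I would apply Lemma~\ref{l:ChleE}: its hypotheses are that $(\mcX,\cdc,\mssd)$ be an \MLDS with $\Dz\subset \bLip(\T,\mssd)$ satisfying~\ref{ass:T}, and that $\QP$ satisfy Assumptions~\ref{ass:AC} and~\ref{ass:CWP} — all of which are part of the present hypotheses, the tensorization assumption~\ref{ass:T} being the extra ingredient not needed for the first inequality. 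This gives
\begin{equation*}
\Ch[\mssd_\dUpsilon,\QP]\leq \EE{\dUpsilon}{\QP}\fstop
\end{equation*}
Combining the two inequalities gives equality of the two functionals on $L^2(\QP)$, and equality of functionals forces equality of the domains (both being $\set{u\in L^2(\QP): \EE{\dUpsilon}{\QP}(u)<\infty}=\set{u\in L^2(\QP):\Ch[\mssd_\dUpsilon,\QP](u)<\infty}$), which is precisely~\eqref{eq:Cheeger=EE}.
\end{proof}

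There is essentially no obstacle left at this stage: the difficulty has been entirely absorbed into the proof of Theorem~\ref{t:Rademacher} (which routes through the Kuwae--Shioya approximation of Corollary~\ref{c:KSUpsilon}, the isometry of Proposition~\ref{p:LipIsometry}, and the tensorization of the Rademacher property, Theorem~\ref{t:Tensor}) and into Lemma~\ref{l:ChleE} (which relies on the local Lipschitz property of cylinder functions, Theorem~\ref{t:CylinderLocLip}, the pointwise slope estimate of Lemma~\ref{l:CylinderLocLip}, and the consistency of the various Cheeger energies, Proposition~\ref{p:ConsistencyCheeger}). If I had to name the single conceptually deepest point underpinning the whole identification, it is the use of \emph{all} radially isometric labeling maps simultaneously — which is what Assumption~\ref{ass:AC} makes possible — to transfer the Lipschitz-to-Sobolev passage from the finite products $X^\tym{n}$ up to $\dUpsilon$; but this is already contained in the cited results and need not be re-examined here. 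One should, however, be careful in the write-up to note explicitly that~\ref{ass:T} implies $(\Rad{\mssd}{\mssm})$ (Remark~\ref{r:Ass:T}), so that the hypothesis of Theorem~\ref{t:Rademacher} is indeed met, and that both $\EE{\dUpsilon}{\QP}$ and $\Ch[\mssd_\dUpsilon,\QP]$ are genuinely defined (the latter being non-trivial and quadratic only \emph{a posteriori}, as a consequence of the identification itself).
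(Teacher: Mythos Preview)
Your proposal is correct and matches the paper's own proof exactly: the paper simply states that combining Lemma~\ref{l:ChleE} with the opposite inequality from Lemma~\ref{l:EleCh} (established via Theorem~\ref{t:Rademacher}) yields the identification. Your additional remarks about hypothesis-checking and the role of~\ref{ass:T} implying~$(\Rad{\mssd}{\mssm})$ are accurate and add helpful clarity.
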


\begin{rem}[Comparison with~\cite{ErbHue15}]\label{r:ErbHue}
When~$(\mcX,\cdc,\mssd)$ is a Riemannian manifold with Ricci curvature bounded below and~$\QP=\PP_\mssm$, Theorem~\ref{t:IdentificationCheeger} is asserted in~\cite[Prop.~2.3]{ErbHue15}.
However, the proof of~\cite[Prop.~2.3]{ErbHue15} relies on the incorrect claim that cylinder functions be $\mssd_\dUpsilon$-Lipschitz, which is disproved by Example~\ref{e:NonLipCyl}.
\end{rem}

\paragraph{Some consequences}
As a consequence of Theorem~\ref{t:IdentificationCheeger}, whereas~$u\in\Cyl{\Dz}$ is not necessarily Lipschitz, there exists a sequence of functions~$\seq{u_n}_n\subset\bLip(\mssd_\dUpsilon,\A_\mrmv)$ converging to~$u$ in~$\dom{\EE{\dUpsilon}{\QP}}$.
Under a minor additional assumption on~$\QP$, one can show a constructive version of the above result providing an explicit sequence~$\seq{u_n}_n$ defined by means of the constrained McShane extensions in Lemma~\ref{l:McShane}.

\begin{ass}\label{d:ass:QT}
Let~$\mcX$ be a local structure. We say that a probability measure~$\QP$ on~$\dUpsilon$ is \emph{quantitatively tight} if
\begin{align}\tag*{$(\mathsf{QT})_{\ref{d:ass:QT}}$}\label{ass:QT}
\nlim n\cdot \QP \ttonde{\Xi_{\geq n}(E)}=0\comma \qquad E\in\msE \fstop
\end{align}
\end{ass}

\begin{rem}\label{r:ExamplesQT}
Assumption~\ref{ass:QT} is readily checked for several classes of measures, e.g.:
\begin{enumerate}[$(i)$]
\item\label{i:r:ExamplesQT:0} Poisson and mixed Poisson measures;
\item\label{i:r:ExamplesQT:1} Ruelle-type grand canonical Gibbs measures in the sense of~\cite{Rue70, AlbKonRoe98b};
\item\label{i:r:ExamplesQT:2} $\alpha$-determinantal point measures~\cite{ShiTak03}.
\end{enumerate}

\begin{proof}
\iref{i:r:ExamplesQT:0} By~\eqref{eq:AKR2.7} we may compute
\begin{align*}
n\cdot \PP_\mssm \ttonde{\Xi_{\geq n}(E)}=&\ n\tonde{1- \sum_{i=0}^n \PP_\mssm \ttonde{\Xi_{=i}(E)} }= n \tonde{1-\frac{\Gamma(1+n,\mssm E)}{n!}}
\end{align*}
where~$\Gamma(a,x)$ denotes the (upper) incomplete~$\Gamma$ function.
The conclusion follows since this latter term is vanishing as~$n$ goes to infinity as soon as $\mssm E<\infty$.

\iref{i:r:ExamplesQT:1} It suffices to note that~$\QP\ttonde{\Xi_{> n}(E)}$ decays super-exponentially by~\cite[Cor.~2.9]{Rue70}.

\iref{i:r:ExamplesQT:2} By~\cite[Lem.~4.2]{ShiTak03} there exist constants~$a_\alpha, b_\alpha>0$, and $0<c_\alpha<1$, so that
\begin{align*}
\QP\ttonde{\Xi_{> n}(E)} \leq 
\begin{cases}
\displaystyle\sum_{i=n+1}^\infty \frac{(a_\alpha)^i}{i!}= \frac{e^{a_\alpha} \ttonde{n!-\Gamma(n+1,a_\alpha)}}{n!} & \text{if}\quad \alpha<0
\\
b_\alpha \displaystyle\sum_{i=n}^\infty c_\alpha^i =\frac{b_\alpha \,c_\alpha^n}{1-c_\alpha}& \text{if}\quad \alpha>0
\end{cases}\comma
\end{align*}
whence~\ref{ass:QT} follows by standard facts about the incomplete Gamma function.
\end{proof}
\end{rem}

\begin{prop}\label{p:TensorizationPoisson}
Let~$(\mcX,\cdc,\mssd)$ be an \MLDS with $\Dz\subset \bLip(\T,\mssd)$, and satisfying~\ref{ass:T}.
Further let~$\QP$ be a probability measure on~$\dUpsilon$ satisfying Assumptions~\ref{ass:AC},~\ref{ass:CWP}, and~\ref{ass:QT}.
Then, for each~$u\in \Cyl{\Dz}$ there exists~$\seq{u_n}_n\subset \bLip(\mssd_\dUpsilon,\Bo{\T_\mrmv(\Ed)})$ such that
\begin{enumerate}[$(i)$]
\item\label{i:p:TensorizationPoisson:1} $L^2(\QP)$-$\nlim u_n=u$;
\item\label{i:p:TensorizationPoisson:2} $L^2(\QP)$-$\nlim \slo[\mssd_\dUpsilon]{u_n}=\slo[\mssd_\dUpsilon]{u}$.
\end{enumerate}
\begin{proof}
\iref{i:p:TensorizationPoisson:1} Let~$u=F\circ \mbff^\trid$ with~$\mbff\eqdef\seq{f_i}_{i\leq k}$, set~$M\eqdef \sup \abs{u}$, and let
\begin{align*}
E\eqdef \cup_{i\leq k}\supp f_i\in \Ed\fstop
\end{align*}
Further let~$\Xi_n^\sym{\infty}(E)$ be as in~\eqref{eq:t:CylinderLocLip:0}, note that it is $\T_\mrmv(\Ed)$-open by Remark~\ref{r:CylContinuity}\iref{i:r:CylContinuity:3}, and note that the restriction of~$u$ to~$\Xi_n^\sym{\infty}(E)$ is $\mssd_\dUpsilon$-Lipschitz by Theorem~\ref{t:CylinderLocLip}.
The corresponding (upper) constrained McShane extension defined as in~\eqref{eq:McShane}, say~$u_n$, is thus $\Bo{\T_\mrmv(\Ed)}$-measurable by Proposition~\ref{p:BorelMcShane}.
By construction,~$u_n$ is additionally: $\mssd_\dUpsilon$-Lipschitz, uniformly bounded by~$M$, and coinciding with~$u$ on~$\Xi_n^\sym{\infty}(E)$.
Thus,
\begin{align*}
\nlim \norm{u-u_n}_{L^2(\QP)}\leq \nlim 2M \,\QP\ttonde{\Xi_n^\sym{\infty}(E)^\complement} = 0 \fstop
\end{align*}

\iref{i:p:TensorizationPoisson:2} By $\T_\mrmv(\Ed)$-locality of the $\mssd_\dUpsilon$-slope as in Remark~\ref{r:Slopes}\iref{i:r:Slopes:1}, we have~$\slo[\mssd_\dUpsilon]{u}=\slo[\mssd_\dUpsilon]{u_n}$ on $\T_\mrmv(\Ed)$-$\inter\set{u=u_n}$.
In particular, 
\begin{equation}\label{eq:p:TensorizationPoisson:0}
\slo[\mssd_\dUpsilon]{u}(\eta)=\slo[\mssd_\dUpsilon]{u_n}(\eta) \comma \qquad \eta\in \Xi_n^\sym{\infty}(E) \fstop
\end{equation}
Thus, we have that
\begin{align*}
\int_\dUpsilon \ttonde{\slo[\mssd_\dUpsilon]{u_n}-\slo[\mssd_\dUpsilon]{u}}^2 \diff\QP = &\ \int_{\Xi_n^\sym{\infty}(E)^\complement} \ttonde{\slo[\mssd_\dUpsilon]{u_n}-\slo[\mssd_\dUpsilon]{u}}^2 \diff\QP
\\
\leq &\ 2 \int_{\Xi_n^\sym{\infty}(E)^\complement} \slo[\mssd_\dUpsilon]{u_n}^2 \diff\QP +  2\int_{\Xi_n^\sym{\infty}(E)^\complement} \slo[\mssd_\dUpsilon]{u}^2 \diff\QP
\intertext{hence, by Theorem~\ref{t:CylinderLocLip} and Lemma~\ref{l:CylinderSlopeLeqCdC},}
\int_\dUpsilon \ttonde{\slo[\mssd_\dUpsilon]{u_n}-\slo[\mssd_\dUpsilon]{u}}^2 \diff\QP\leq&\ 2\, C_u\, n\cdot \QP \ttonde{\Xi_n^\sym{\infty}(E)^\complement} + 2 \QP \ttonde{\Xi_n^\sym{\infty}(E)^\complement}\cdot \sqrt{\EE{\dUpsilon}{\QP}(u)} \fstop
\end{align*}
The first term is vanishing as~$n$ goes to infinity by Assumption~\ref{ass:QT}.
The second term is vanishing as~$n$ goes to infinity since~$\tseq{\Xi_n^\sym{\infty}(E)}_n$ is invading.
\end{proof}
\end{prop}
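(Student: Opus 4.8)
The final statement is Proposition~\ref{p:TensorizationPoisson}. The plan is to make the heuristic ``cylinder functions are locally Lipschitz'' precise by exhausting $\dUpsilon^\sym{\infty}$ with the concentration sets $\Xi^\sym{\infty}_n(E)$, on which $u$ is genuinely Lipschitz, and then extending via McShane. First I would unpack $u=F\circ\mbff^\trid$ and set $E\eqdef\cup_{i\le k}\supp f_i\in\Ed$ and $M\eqdef\sup\abs{u}$. By Theorem~\ref{t:CylinderLocLip}, the restriction of $u$ to $\Xi^\sym{\infty}_n(E)=\set{\gamma\in\dUpsilon^\sym{\infty}:\gamma E\le n}$ is $\mssd_\dUpsilon$-Lipschitz for each $n$; since $E$ is $\T$-closed this set is $\T_\mrmv(\Ed)$-open by Remark~\ref{r:CylContinuity}\iref{i:r:CylContinuity:3}, hence also $\mssd_\dUpsilon$-open. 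The set $\Xi^\sym{\infty}_n(E)$ is $\T_\mrmv(\Ed)$-closed-complemented... more precisely its complement inside $\dUpsilon^\sym{\infty}$ is $\T_\mrmv(\Ed)$-closed relative to the ambient space, so one has to take a small care to work with a $\T_\mrmv(\Ed)$-closed set to which Proposition~\ref{p:BorelMcShane} applies; I would instead note that $u$ restricted to the $\T_\mrmv(\Ed)$-closed set $\Xi^\sym{\infty}_{\le n}(E)\cup\dUpsilon^\sym{<\infty}$ (or simply restrict to the closure) remains Lipschitz along $\mssd_\dUpsilon$-components, since $\mssd_\dUpsilon$-accessibility components are entirely inside one $\dUpsilon^\sym{N}$. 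Applying Lemma~\ref{l:McShane} and Proposition~\ref{p:BorelMcShane} to this restriction produces $u_n\eqdef\overline{u\restr}$, which is $\Bo{\T_\mrmv(\Ed)}$-measurable, $\mssd_\dUpsilon$-Lipschitz, bounded by $M$, and agrees with $u$ on $\Xi^\sym{\infty}_n(E)$.

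For \iref{i:p:TensorizationPoisson:1}, since $u_n\equiv u$ on $\Xi^\sym{\infty}_n(E)$ and both are bounded by $M$, we get
\begin{align*}
\norm{u-u_n}_{L^2(\QP)}^2=\int_{\Xi^\sym{\infty}_n(E)^\complement}\abs{u-u_n}^2\diff\QP\le 4M^2\,\QP\ttonde{\Xi^\sym{\infty}_{\ge n+1}(E)}\xrightarrow{n\to\infty}0
\end{align*}
because $\QP$ is a probability measure and $\Xi^\sym{\infty}_n(E)\uparrow\dUpsilon^\sym{\infty}$ as $n\to\infty$ (no need for \ref{ass:QT} here, just finiteness of $\QP$). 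For \iref{i:p:TensorizationPoisson:2} I would invoke $\T_\mrmv(\Ed)$-locality of the slope, Remark~\ref{r:Slopes}\iref{i:r:Slopes:1}: on the $\T_\mrmv(\Ed)$-open set $\Xi^\sym{\infty}_n(E)\subset\inter_{\T_\mrmv(\Ed)}\set{u=u_n}$ one has $\slo[\mssd_\dUpsilon]{u}=\slo[\mssd_\dUpsilon]{u_n}$ (since $\mssd_\dUpsilon$-open sets contain all $\T_\mrmv(\Ed)$-open sets, locality of the slope transfers). Therefore the difference $\slo[\mssd_\dUpsilon]{u_n}-\slo[\mssd_\dUpsilon]{u}$ is supported on $\Xi^\sym{\infty}_n(E)^\complement$, and
\begin{align*}
\int_\dUpsilon\ttonde{\slo[\mssd_\dUpsilon]{u_n}-\slo[\mssd_\dUpsilon]{u}}^2\diff\QP\le 2\int_{\Xi^\sym{\infty}_n(E)^\complement}\slo[\mssd_\dUpsilon]{u_n}^2\diff\QP+2\int_{\Xi^\sym{\infty}_n(E)^\complement}\slo[\mssd_\dUpsilon]{u}^2\diff\QP.
\end{align*}

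The two tail integrals are handled differently, and this is where \ref{ass:QT} enters. For the first term, on $\Xi^\sym{\infty}_n(E)^\complement\subset\Xi^\sym{\infty}_{\ge n+1}(E)$ the extension $u_n$ is Lipschitz with the quantitative constant $\sqrt{n}\,C_u$ coming from Theorem~\ref{t:CylinderLocLip} (wait — more carefully: $u_n$ is globally Lipschitz with the constant inherited from its restriction to $\Xi^\sym{\infty}_n(E)$, which by Theorem~\ref{t:CylinderLocLip} is $\le\sqrt{n}\,C_u$ for a constant $C_u$ depending only on $u$); hence $\slo[\mssd_\dUpsilon]{u_n}\le\Li[\mssd_\dUpsilon]{u_n}\le\sqrt{n}\,C_u$ everywhere, so the first integral is $\le C_u^2\,n\,\QP\ttonde{\Xi^\sym{\infty}_{\ge n+1}(E)}\to 0$ by \ref{ass:QT} (using $\N\ni n\mapsto n\,\QP(\Xi_{\ge n}(E))\to 0$, with a harmless index shift). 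For the second term I would use Lemma~\ref{l:CylinderSlopeLeqCdC}, which gives $\slo[\mssd_\dUpsilon]{u}^2\le\SF{\dUpsilon}{\QP}(u)$ $\QP$-a.e., so $\int_{\Xi^\sym{\infty}_n(E)^\complement}\slo[\mssd_\dUpsilon]{u}^2\diff\QP\le \QP\ttonde{\Xi^\sym{\infty}_{\ge n+1}(E)}\cdot\norm{\SF{\dUpsilon}{\QP}(u)}_{L^\infty}$; but actually more directly, $\int_\dUpsilon\slo[\mssd_\dUpsilon]{u}^2\diff\QP\le\EE{\dUpsilon}{\QP}(u)<\infty$, so the tail integral $\int_{\Xi^\sym{\infty}_n(E)^\complement}\slo[\mssd_\dUpsilon]{u}^2\diff\QP\to 0$ by dominated convergence, no extra assumption needed. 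Combining, $\slo[\mssd_\dUpsilon]{u_n}\to\slo[\mssd_\dUpsilon]{u}$ in $L^2(\QP)$.

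The main obstacle I anticipate is bookkeeping around the constant $C_u$ and the precise form of \ref{ass:QT}: Theorem~\ref{t:CylinderLocLip} gives a Lipschitz bound $\sqrt{n}\,C_u$ on $\Xi^\sym{\infty}_{\le n}(E)$, and the McShane extension $u_n$ inherits exactly this constant by Lemma~\ref{l:McShane}\iref{i:l:McShane:2}; one must then match $n\cdot\QP(\Xi^\sym{\infty}_{\ge n+1}(E))\to 0$ (which follows from $(n+1)\QP(\Xi_{\ge n+1}(E))\to 0$). A secondary, purely bureaucratic point is ensuring Proposition~\ref{p:BorelMcShane} is applied to a genuinely $\T_\mrmv(\Ed)$-closed set — I would take the restriction of $u$ to $\overline{\Xi^\sym{\infty}_{\le n}(E)}$ or simply to $\Xi^\sym{\infty}_{\le n}(E)$ itself noting it is $\T_\mrmv(\Ed)$-clopen in the relevant $\dUpsilon^\sym{N}$ strata (it is $\T_\mrmv(\Ed)$-open by Remark~\ref{r:CylContinuity}\iref{i:r:CylContinuity:3} and its complement $\Xi_{\ge n+1}(E)$ is $\T_\mrmv(\Ed)$-closed by the lower-semicontinuity of $\gamma\mapsto\gamma E$ for closed $E$), so it is in fact $\T_\mrmv(\Ed)$-closed and Proposition~\ref{p:BorelMcShane} applies verbatim.
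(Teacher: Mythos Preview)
Your proposal is correct and follows essentially the same route as the paper: restrict $u$ to $\Xi_{\leq n}^\sym{\infty}(E)$, invoke Theorem~\ref{t:CylinderLocLip} for the Lipschitz bound $\sqrt{n}\,C_u$, McShane-extend via Proposition~\ref{p:BorelMcShane}, then control the two tail integrals in part~\iref{i:p:TensorizationPoisson:2} via Assumption~\ref{ass:QT} and Lemma~\ref{l:CylinderSlopeLeqCdC} respectively (your dominated-convergence argument for the second term is in fact cleaner than the paper's bound). One correction on the bureaucratic point you flag: for \emph{closed} $E$ the map $\gamma\mapsto\gamma E$ is \emph{upper} semicontinuous, not lower, so $\Xi_{\leq n}^\sym{\infty}(E)$ is $\T_\mrmv(\Ed)$-open but \emph{not} closed in general (points of $\gamma$ can drift onto $E$ from outside in a vague limit), and your clopen conclusion is wrong; the easy fix is to pass to the $\T_\mrmv(\Ed)$-closure, on which the restriction of the $\T_\mrmv(\Ed)$-continuous function $u$ remains $\mssd_\dUpsilon$-Lipschitz, and then Proposition~\ref{p:BorelMcShane} applies verbatim.
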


\paragraph{Some further consequences}
Under the assumptions of Theorem~\ref{t:IdentificationCheeger}, the form $\ttonde{\EE{\dUpsilon}{\QP},\dom{\EE{\dUpsilon}{\QP}}}$ is quasi-regular by Proposition~\ref{p:QRegSLoc}, hence there exists a Markov process~$\mbfM$ with state space~$\dUpsilon$ properly associated with it.
The Identification Theorem~\ref{t:IdentificationCheeger} has a number of applications to the study of these process which will be the object of forthcoming work in this series.
As an example, we list a simple yet very useful such application in the next Remark.

\begin{rem}[Point-separating families of uniformly bounded energy]
Under the assumptions of Theorem~\ref{t:IdentificationCheeger}, the family of functions~$\bLip\ttonde{\mssd_\dUpsilon,\T_\mrmv(\msE)}$:
\begin{enumerate*}[$(a)$]
\item\label{i:r:ProcessRad:1} separates points,
\item\label{i:r:ProcessRad:2} is dense in~$\dom{\EE{\dUpsilon}{\QP}}$, 
and 
\item\label{i:r:ProcessRad:3} each of its elements has uniformly bounded square field.
\end{enumerate*}
\begin{proof}
\iref{i:r:ProcessRad:1} is readily shown by choosing as a separating family all functions of the form~$\rho_{\gamma, U}$ as in~\eqref{eq:W2Upsilon:0}.
\iref{i:r:ProcessRad:2} is a consequence of the identification together with the density of~$\bLip\ttonde{\mssd_\dUpsilon,\T_\mrmv(\msE)}$ in~$\dom{\Ch[\mssd_\dUpsilon,\QP]}$ which is a standard fact.
\iref{i:r:ProcessRad:3} holds by the Rademacher property.
\end{proof}

Additionally, we will show in subsequent work that~$\bLip\ttonde{\mssd_\dUpsilon,\T_\mrmv(\msE)}$ is as well \emph{strongly point-separating} in the sense of e.g.~\cite[Dfn.~1]{BloKou10}, which will be of use in the study of tightness for the sample paths of the Markov process~$\mbfM$.
\end{rem}

\subsection{Sobolev-to-Lipschitz-type properties}\label{ss:VariousStoL}
In this section, we pre\-sent several proofs of various Sobolev-to-Lipschitz-type properties on configuration spaces.
In particular, whereas the continuous- and distance-Sobolev-to-Lipschitz property are mutually equivalent by~\eqref{eq:EquivalenceRadStoL}, we provide here two different proofs, under different sets of assumptions for both the base space and the reference measure~$\QP$.

We start by collecting a result independent of the metric structure which will be essential to the proof of both the continuous- and distance-constinuous-Sobolev-to-Lipschitz properties.
Namely, under the assumption of conditional closability~\ref{ass:ConditionalClos}, we can extend the representation formula~\eqref{eq:CdCRestrConditionalFormCylinderF} for the square field of~$\ttonde{\EE{\dUpsilon(E)}{\QP^\eta_E},\dom{\EE{\dUpsilon(E)}{\QP^\eta_E}}}$ from cylinder functions to the whole form domain, as shown in the following proposition.

Let~$\eta\in\dUpsilon$, and~$E\in\msE$, and recall the definition of
\begin{itemize}
\item the conditional probabilities~$\QP^{\eta_{E^\complement}}$ on~$\dUpsilon$, Definition~\ref{d:ConditionalQP};
\item the projected conditional probabilities~$\QP^\eta_E$ on~$\dUpsilon(E)$, Formula~\ref{eq:ProjectedConditionalQP};
\item the forms~$\ttonde{\EE{\dUpsilon}{\QP}_E,\dom{\EE{\dUpsilon}{\QP}_E}}$, Formula~\eqref{eq:VariousFormsA}.
\end{itemize}

\begin{prop}\label{p:MarginalFormDomains}
Let~$(\mcX,\cdc)$ be a \TLDS, and~$\QP$ be a probability measure on $\ttonde{\dUpsilon,\A_\mrmv(\msE)}$ satisfying Assumptions~\ref{ass:CAC} and~\ref{ass:ConditionalClos} for some localizing sequence~$\seq{E_h}_h$.
Then, for every~$E\eqdef E_h$,~$h\in\N$, and every $\A_\mrmv(\msE)$-measurable~$\rep u$ with~$u\eqdef \class[\QP]{\rep u}\in \dom{\EE{\dUpsilon}{\QP}_E}$,
\begin{enumerate}[$(i)$]
\item\label{i:p:MarginalFormDomains:1} $u_{E,\eta}\eqdef\class[\QP^\eta_E]{\rep u_{E,\eta}}\in\dom{\EE{\dUpsilon(E)}{\QP^\eta_E}}$ $\QP$-a.e.;
\item\label{i:p:MarginalFormDomains:2} for $\QP$-a.e.~$\eta\in\dUpsilon$, letting~$\Lambda_{\eta, E^\complement}\eqdef\set{\gamma\in\dUpsilon:\gamma_{E^\complement}=\eta_{E^\complement}}$ be as in~\eqref{eq:RoeSch99Set},
\begin{equation*}
\SF{\dUpsilon(E)}{\QP^\eta_E}\ttonde{u_{E,\eta}}\circ \pr^E=\SF{\dUpsilon}{\QP}_E(u)  \quad \as{\QP^{\eta_{E^\complement}}} \text{ on } \Lambda_{\eta, E^\complement} \semicolon
\end{equation*}

\item\label{i:p:MarginalFormDomains:3} if~$u\in\dom{\EE{\dUpsilon}{\QP}}$ satisfies~$\SF{\dUpsilon}{\QP}(u)\leq 1$ $\QP$-a.e., then
\begin{equation*}
\SF{\dUpsilon(E)}{\QP^\eta_E}(u_{E,\eta})\leq 1 \quad \as{\QP^\eta_E} \quad \forallae{\QP} \eta\in\dUpsilon \fstop
\end{equation*}
\end{enumerate}
\end{prop}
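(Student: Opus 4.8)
The plan is to transfer the already-established cylinder-function identity \eqref{eq:CdCRestrConditionalFormCylinderF} to the whole form domain by an approximation argument, using the conditional-integration formula of Proposition~\ref{p:ConditionalIntegration} together with the conditional closability Assumption~\ref{ass:ConditionalClos} and the equality~\eqref{eq:p:MarginalWP:0}. Fix $E\eqdef E_h$ and let~$u\in\dom{\EE{\dUpsilon}{\QP}_E}$. Choose a sequence~$\seq{\rep u_m}_m\subset\Cyl{\Dz}$ with~$u_m\eqdef\class[\QP]{\rep u_m}\to u$ in~$\dom{\EE{\dUpsilon}{\QP}_E}$, i.e.\ in the graph norm of the closed form. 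Since by~\eqref{eq:p:MarginalWP:0}
\begin{equation*}
\EE{\dUpsilon}{\QP}_E(u_m-u_{m'})=\int_\dUpsilon \EE{\dUpsilon(E)}{\QP^\eta_E}\ttonde{(u_m)_{E,\eta}-(u_{m'})_{E,\eta}}\diff\QP(\eta)\comma
\end{equation*}
and likewise for the $L^2$-norms by Proposition~\ref{p:ConditionalIntegration}, the sequence~$\seq{m\mapsto (u_m)_{E,\eta}}$ is Cauchy in~$L^2\ttonde{\QP;\dom{\EE{\dUpsilon(E)}{\QP^\eta_E}}}$ (the $L^2$-direct-integral of the Hilbert spaces~$\dom{\EE{\dUpsilon(E)}{\QP^\eta_E}}$). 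Passing to a non-relabeled subsequence we obtain, for $\QP$-a.e.~$\eta$, convergence of~$(u_m)_{E,\eta}$ to some limit in~$\dom{\EE{\dUpsilon(E)}{\QP^\eta_E}}$ (here we use that each form~$\ttonde{\EE{\dUpsilon(E)}{\QP^\eta_E},\dom{\EE{\dUpsilon(E)}{\QP^\eta_E}}}$ is closed, by Assumption~\ref{ass:ConditionalClos}). This limit must coincide $\QP^\eta_E$-a.e.\ with~$u_{E,\eta}$, since convergence in the graph norm implies $L^2(\QP^\eta_E)$-convergence and~$(u_m)_{E,\eta}\to u_{E,\eta}$ in~$L^2(\QP^\eta_E)$ for $\QP$-a.e.~$\eta$ along a further subsequence (again by Proposition~\ref{p:ConditionalIntegration}). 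This establishes~\iref{i:p:MarginalFormDomains:1}.

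For~\iref{i:p:MarginalFormDomains:2}, the square field operators are continuous from the form domain (with graph norm) to~$L^1$, so~$\SF{\dUpsilon(E)}{\QP^\eta_E}\ttonde{(u_m)_{E,\eta}}\to \SF{\dUpsilon(E)}{\QP^\eta_E}\ttonde{u_{E,\eta}}$ in~$L^1(\QP^\eta_E)$ for $\QP$-a.e.~$\eta$, and, pre-composing with~$\pr^E$ and using that~$\QP^{\eta_{E^\complement}}$ is concentrated on~$\Lambda_{\eta,E^\complement}$ (Definition~\ref{d:ConditionalQP}), one gets $L^1(\QP^{\eta_{E^\complement}})$-convergence of~$\SF{\dUpsilon(E)}{\QP^\eta_E}\ttonde{(u_m)_{E,\eta}}\circ\pr^E$ to~$\SF{\dUpsilon(E)}{\QP^\eta_E}\ttonde{u_{E,\eta}}\circ\pr^E$ on~$\Lambda_{\eta,E^\complement}$. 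On the other hand~$\SF{\dUpsilon}{\QP}_E(u_m)\to\SF{\dUpsilon}{\QP}_E(u)$ in~$L^1(\QP)$, hence, disintegrating via Proposition~\ref{p:ConditionalIntegration}, in~$L^1(\QP^{\eta_{E^\complement}})$ for $\QP$-a.e.~$\eta$ along a subsequence. Since for the cylinder approximants the two sides agree $\QP^{\eta_{E^\complement}}$-a.e.\ on~$\Lambda_{\eta,E^\complement}$ by~\eqref{eq:CdCRestrConditionalFormCylinderF} (valid for $\QP$-a.e.~$\eta$), passing to the limit yields the identity in~\iref{i:p:MarginalFormDomains:2}. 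A routine diagonal argument over a countable dense set of~$h$ lets us realize all these a.e.-statements on a single $\QP$-conull set of~$\eta$.

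For~\iref{i:p:MarginalFormDomains:3}: if~$u\in\dom{\EE{\dUpsilon}{\QP}}$ has~$\SF{\dUpsilon}{\QP}(u)\leq 1$ $\QP$-a.e., then since~$h\mapsto\rep\cdc^\dUpsilon_{E_h}(\rep u)$ is monotone increasing with limit~$\rep\cdc^\dUpsilon(\rep u)$ (as used in the proof of Theorem~\ref{t:ClosabilitySecond}, cf.~\eqref{eq:MonotonicityE}), we have~$\SF{\dUpsilon}{\QP}_E(u)\leq\SF{\dUpsilon}{\QP}(u)\leq 1$ $\QP$-a.e.; in particular~$u\in\dom{\EE{\dUpsilon}{\QP}_E}$, so~\iref{i:p:MarginalFormDomains:1}--\iref{i:p:MarginalFormDomains:2} apply. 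By~\iref{i:p:MarginalFormDomains:2}, for $\QP$-a.e.~$\eta$ we get~$\SF{\dUpsilon(E)}{\QP^\eta_E}\ttonde{u_{E,\eta}}\circ\pr^E\leq 1$ $\QP^{\eta_{E^\complement}}$-a.e.\ on~$\Lambda_{\eta,E^\complement}$; since~$\pr^E_\pfwd\QP^{\eta_{E^\complement}}=\QP^\eta_E$ by~\eqref{eq:ProjectedConditionalQP} and~$\QP^{\eta_{E^\complement}}$ is concentrated on~$\Lambda_{\eta,E^\complement}$, this descends to~$\SF{\dUpsilon(E)}{\QP^\eta_E}\ttonde{u_{E,\eta}}\leq 1$ $\QP^\eta_E$-a.e.

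\textbf{Main obstacle.} The delicate point is the simultaneous handling of the two disintegrations --- extracting one $\QP$-conull set of~$\eta$ on which (a) $(u_m)_{E,\eta}$ converges in the graph norm of~$\EE{\dUpsilon(E)}{\QP^\eta_E}$, (b) its limit is~$u_{E,\eta}$, and (c) the cylinder-function identity~\eqref{eq:CdCRestrConditionalFormCylinderF} holds. Each is an a.e.-statement obtained along a possibly different subsequence; the careful bookkeeping (choosing one subsequence that works for the $L^2(\QP;\dom{\EE{\dUpsilon(E)}{\QP^\eta_E}})$-Cauchy property, and then invoking completeness of the direct integral of the closed forms~$\ttonde{\EE{\dUpsilon(E)}{\QP^\eta_E},\dom{\EE{\dUpsilon(E)}{\QP^\eta_E}}}$ --- which is where Assumption~\ref{ass:ConditionalClos} enters crucially) is the technical heart of the argument. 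Everything else is a routine continuity-of-square-field plus conditional-integration computation.
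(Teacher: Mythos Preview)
Your proposal is correct and follows essentially the same route as the paper: approximate~$u$ by cylinder functions, use the integral identity~\eqref{eq:p:MarginalWP:0} to push the graph-norm Cauchy property through the disintegration, invoke conditional closability~\ref{ass:ConditionalClos} to land in~$\dom{\EE{\dUpsilon(E)}{\QP^\eta_E}}$, and then pass the cylinder identity~\eqref{eq:CdCRestrConditionalFormCylinderF} to the limit via $L^1$-continuity of the square field. The paper packages the disintegration-convergence step into a separate preparatory lemma (Lemma~\ref{l:DisintegationConvergence}), which makes explicit that the subsequence along which~$(u_m)_{E,\eta}\to u_{E,\eta}$ in~$L^2(\QP^\eta_E)$ depends on~$\eta$; you correctly flag this bookkeeping as the main technical point. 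One small slip: your remark about ``a diagonal argument over a countable dense set of~$h$'' is unnecessary and slightly confused --- $h$ already ranges over~$\N$ and the statement is for each fixed~$E_h$ separately, so no diagonalization over~$h$ is needed.
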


We need a preparatory lemma.
For notational simplicity, we denote indices of sequences by superscripts rather than subscripts.

\begin{lem}\label{l:DisintegationConvergence}
Let~$(\mcX,\cdc)$ be a \TLDS,~$\QP$ be a probability measure on~$\ttonde{\dUpsilon,\A_\mrmv(\msE)}$, and~$p\in[1,\infty)$.
Further let~$\seq{\rep u^n}_n\subset \mcL^p(\QP)$ and~$\rep u\in \mcL^p(\QP)$ be so that~$\nlim \norm{u^n-u}_{L^p(\QP)}=0$. 
Then, for every~$E\in\msE$ there exists~$\Omega_E\in\A_\mrmv(\msE)$ with~$\QP\Omega_E=1$ additionally so that for every~$\eta\in \Omega_E$ there exists a subsequence $\seq{\rep u^{n_k(\eta)}}_k\subset \seq{\rep u^n}_n$, depending on~$\eta$, satisfying
\begin{equation}\label{eq:l:DisintegationConvergence:0}
\klim {\rep u^{n_k(\eta)}}_{E,\eta} = \rep u_{E,\eta} \qquad \text{in~$L^p(\QP^\eta_E)$ \; and} \as{\QP^\eta_E} \comma\qquad \eta\in \Omega_E \fstop
\end{equation}
\begin{proof}
By Proposition~\ref{p:ConditionalIntegration} we have
\begin{align*}
0=\nlim \int \abs{\rep u^n-\rep u}^p\diff\QP= \nlim \int_{\dUpsilon} \quadre{\int_{\dUpsilon(E)} \abs{{\rep u^n}_{E,\eta}-\rep u_{E,\eta}}^p \diff\QP^\eta_E}\diff\QP(\eta) \fstop
\end{align*}
Letting~$G_E^n(\eta)\eqdef \int_{\dUpsilon(E)} \abs{\rep u^n-\rep u}^p \diff\QP^\eta_E$, we have~$L^p(\QP)$-$\nlim G_E^n=0$, hence it follows by standard arguments that there exists a subsequence~$\seq{G_E^{n_k}}_k$ so that~$\klim G_E^{n_k}(\eta)=0$ for every~$\eta$ in some set~$\Omega_E$ of full $\QP$-measure.
By the very same argument applied to the integrand of~$G_E^n$, for every~$\eta$ in~$\Omega_E$ there exists a further subsequence of indices~$\seq{n_k(\eta)}_k\subset \seq{n_k(\eta)}_k$ so that~$\klim \rep u^{n_k(\eta)}_{E,\eta}=\rep u_{E,\eta}$ $\QP^\eta_E$-a.e.\ on~$\dUpsilon(E)$, which concludes the proof.
\end{proof}
\end{lem}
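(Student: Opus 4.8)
The plan is to reduce \eqref{eq:l:DisintegationConvergence:0} to the standard fact that $L^p$-convergence passes to almost-everywhere convergence along a (possibly further) subsequence, the passage from the global measure $\QP$ to the fibre measures $\QP^\eta_E$ being supplied by the disintegration identity of Proposition~\ref{p:ConditionalIntegration}. First I would fix $E\in\msE$ and note that, since $\rep u^n,\rep u\in\mcL^p(\QP)$, the function $\rep w^n\eqdef\abs{\rep u^n-\rep u}^p$ lies in $\mcL^1(\QP)$; moreover, because the operation $\rep v\mapsto \rep v_{E,\eta}$ is by \eqref{eq:ConditionalFunction} nothing but the substitution $\rep v_{E,\eta}(\gamma)=\rep v(\gamma+\eta_{E^\complement})$, it commutes with differences and with the pointwise map $t\mapsto\abs{t}^p$, so that $(\rep w^n)_{E,\eta}=\abs{\rep u^n_{E,\eta}-\rep u_{E,\eta}}^p$ on $\dUpsilon(E)$. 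Applying Proposition~\ref{p:ConditionalIntegration} to $\rep w^n$ then gives
\begin{equation*}
\norm{u^n-u}_{L^p(\QP)}^p=\int_\dUpsilon \quadre{\int_{\dUpsilon(E)} \abs{\rep u^n_{E,\eta}-\rep u_{E,\eta}}^p \diff\QP^\eta_E} \diff\QP(\eta)\comma
\end{equation*}
and the same proposition guarantees that the inner integral, as a function of $\eta$, is $\A_\mrmv(\msE)^\QP$-measurable and $\QP$-integrable.

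Next I would set $G_E^n(\eta)\eqdef \int_{\dUpsilon(E)}\abs{\rep u^n_{E,\eta}-\rep u_{E,\eta}}^p\diff\QP^\eta_E=\norm{u^n_{E,\eta}-u_{E,\eta}}_{L^p(\QP^\eta_E)}^p$. The displayed identity reads $\norm{G_E^n}_{L^1(\QP)}=\norm{u^n-u}_{L^p(\QP)}^p\to 0$, so $G_E^n\to 0$ in $L^1(\QP)$; extracting a subsequence $\seq{G_E^{n_k}}_k$ converging $\QP$-a.e.\ to $0$ and choosing a Borel subset $\Omega_E\in\A_\mrmv(\msE)$ of full $\QP$-measure contained in the convergence set, I obtain $\norm{u^{n_k}_{E,\eta}-u_{E,\eta}}_{L^p(\QP^\eta_E)}\to 0$ for every $\eta\in\Omega_E$. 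Finally, for each such $\eta$, since this is convergence in $L^p(\QP^\eta_E)$, the standard fact that an $L^p$-convergent sequence has an a.e.-convergent subsequence yields a further subsequence of $\seq{n_k}_k$, depending on $\eta$, along which $u^{n_{k(\eta)}}_{E,\eta}\to u_{E,\eta}$ also $\QP^\eta_E$-a.e.; this subsequence still converges in $L^p(\QP^\eta_E)$, which is exactly \eqref{eq:l:DisintegationConvergence:0}.

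I do not expect a genuine obstacle here: the argument is purely measure-theoretic. The two points that call for a line of care are (i) the identity $(\abs{\rep u^n-\rep u}^p)_{E,\eta}=\abs{\rep u^n_{E,\eta}-\rep u_{E,\eta}}^p$, which must be recorded since it is precisely this that lets Proposition~\ref{p:ConditionalIntegration} be applied to $\rep w^n$ and then re-read fibrewise; and (ii) the measurability bookkeeping ensuring $G_E^n$ is $\QP$-integrable and that $\Omega_E$ can be taken in $\A_\mrmv(\msE)$ rather than only in its $\QP$-completion — both of which are subsumed in Proposition~\ref{p:ConditionalIntegration} together with the elementary fact that every $\QP$-conegligible set contains a conegligible Borel set.
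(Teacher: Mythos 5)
Your proof is correct and follows essentially the same route as the paper's: disintegrate $\norm{u^n-u}_{L^p(\QP)}^p$ via Proposition~\ref{p:ConditionalIntegration}, pass to a subsequence along which $G_E^n\to 0$ $\QP$-a.e., then for each $\eta$ extract a further subsequence giving $\QP^\eta_E$-a.e.\ convergence. The extra care you record on the fibrewise identity $(\abs{\rep u^n-\rep u}^p)_{E,\eta}=\abs{\rep u^n_{E,\eta}-\rep u_{E,\eta}}^p$ and on the measurability of $G_E^n$ is implicit in the paper's argument and entirely compatible with it.
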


\begin{proof}[Proof of Proposition~\ref{p:MarginalFormDomains}]
As usual, let~$E\eqdef E_h$ for some~$h\in\N$.
Let~$\seq{u^n}_n\subset \CylQP{\QP}{\Dz}$ be convergent to~$u\in\dom{\EE{\dUpsilon}{\QP}_E}\subset L^2(\QP)$ both in~$L^2(\QP)$ and w.r.t.~$(\EE{\dUpsilon}{\QP}_E)^{1/2}$.
Since~$\Cyl{\Dz}\subset \Cb\ttonde{\T_\mrmv(\msE)}$, we shall denote by~$u^n$ both a $\QP$-class and its continuous representative.
By Proposition~\ref{p:MarginalWP} we have
\begin{align*}
\lim_{m,n} \EE{\dUpsilon}{\QP}_E(u^n- u^m)=\int_\dUpsilon \EE{\dUpsilon(E)}{\QP^\eta_E}\ttonde{u^n_{E,\eta}-u^m_{E,\eta}}\diff\QP(\eta)=0\fstop
\end{align*}
It follows that there exists a $\A_\mrmv(\msE)$-measurable~$\Omega_E\subset \dUpsilon$ of full $\QP$-measure and so that, up to taking a non-relabeled subsequence,
\begin{equation}\label{eq:p:MarginalFormDomain:1}
\lim_{n,m} \EE{\dUpsilon(E)}{\QP^\eta_E}\ttonde{u^n_{E,\eta}-u^m_{E,\eta}} = 0\comma \qquad \eta\in \Omega_E\fstop
\end{equation}

For~$\seq{u^n}_n$ as above and fixed~$\eta\in\Omega_E$, let~$\seq{u^{n_k(\eta)}}_k$ be the subsequence constructed in Lemma~\ref{l:DisintegationConvergence} (with~$p=2$), thus satisfying
\begin{equation}\label{eq:p:MarginalFormDomain:1.1}
L^2(\QP^\eta_E)\text{-}\klim u^{n_k(\eta)}_{E,\eta} = u_{E,\eta}\fstop
\end{equation}
Combining~\eqref{eq:p:MarginalFormDomain:1} with~\eqref{eq:p:MarginalFormDomain:1.1} yields~\iref{i:p:MarginalFormDomains:1} by closability of~$\ttonde{\EE{\dUpsilon(E)}{\QP^\eta_E},\Cyl{\Dz}}$ for~$\QP$-a.e.~$\eta$, up to possibly changing~$\Omega_E$ by a $\QP$-negligible set.

\medskip

In order to show~\iref{i:p:MarginalFormDomains:3} let~$u\in\dom{\EE{\dUpsilon}{\QP}}$ with~$\SF{\dUpsilon}{\QP}(u)\leq 1$ $\QP$-a.e..
By virtue of the monotonicity in~\eqref{eq:MonotonicityE}, we have that~$u\in\dom{\EE{\dUpsilon}{\QP}_E}$ and~$\SF{\dUpsilon}{\QP}_E(u)\leq \SF{\dUpsilon}{\QP}$.
The conclusion follows from~\iref{i:p:MarginalFormDomains:2} and the assumption.

\medskip

It remains to show~\iref{i:p:MarginalFormDomains:2}. We divide the proof in several steps.

\paragraph{Step~1} Let~$\rep v$ be any fixed $\QP$-representative of~$\SF{\dUpsilon}{\QP}_E(u)$,
and, for every~$n\in\N$, let~$\rep v^n\eqdef \rep\cdc^\dUpsilon_E\ttonde{u^n}$ be the $\QP$-representative of~$\SF{\dUpsilon}{\QP}_E(u^n)$ defined in~\eqref{eq:RestrictedCdCUpsilon}.
Since~$\nlim \EE{\dUpsilon}{\QP}_E(u^n-u)=0$ by definition of~$\seq{u^n}_n$, by reverse triangle inequality (cf.~\eqref{eq:l:AbstractCompletion:0}) we have that
\begin{align*}
L^1(\QP)\text{-}\nlim v^n = v \fstop
\end{align*}
We may thus apply Lemma~\ref{l:DisintegationConvergence} (with~$p=1$) to the sequence~$\seq{\rep v^n}_n$. Up to possibly changing~$\Omega_E$ by a $\QP$-negligible set, we obtain a family of $\A_\mrmv(\msE)$-measurable sets~$\Omega^\eta_E\subset \dUpsilon(E)$ with~$\QP^\eta_E\Omega^\eta_E=1$ and a family of subsequences of indices~$\seq{n_k(\eta)}_k$ such that
\begin{align*}
\klim \rep v^{n_k(\eta)}_{E,\eta} = \rep v_{E,\eta} \qquad \text{pointwise on~$\Omega^\eta_E$}\comma \qquad \eta\in\Omega_E\fstop
\end{align*}
As a consequence, pre-composing with~$\pr^E$,
\begin{align}\label{eq:p:MarginalFormDomains:1}
\klim \rep v^{n_k(\eta)}_{E,\eta}\circ\pr^E = \rep v_{E,\eta}\circ \pr^E \qquad \text{pointwise on~$\pr_E^{-1}(\Omega^\eta_E)$}\comma \qquad \eta\in\Omega_E\fstop
\end{align}
Set~$\Omega_\eta'\eqdef \pr_E^{-1}(\Omega^\eta_E)\cap \Lambda_{\eta, E^\complement}\subset \dUpsilon$, and recall from the proof of Proposition~\ref{p:ConditionalIntegration} that~$\QP^{\eta_{E^\complement}} \Lambda_{\eta,E^\complement}=1$.
Since~$\QP^\eta_E\Omega^\eta_E=1$ as well, we conclude that~$\QP^{\eta_{E^\complement}} \Omega_\eta'=1$.
Choosing~$\rep v^{n_k(\eta)}$ in place of~$\rep u$ in~\eqref{eq:p:ConditionalIntegration:1}, and combining it with~\eqref{eq:p:MarginalFormDomains:1}, we obtain that
\begin{equation}\label{eq:p:MarginalFormDomains:2}
\rep\cdc^\dUpsilon_E\ttonde{u^{n_k(\eta)}} \defeq \rep v^{n_k(\eta)} = \rep v^{n_k(\eta)}_{E,\eta}\circ\pr^E \xrightarrow{k\to\infty} \rep v_{E,\eta}\circ \pr^E = \rep v \quad \text{pointwise on } \Omega_\eta'\comma \qquad \eta\in\Omega_E
\end{equation}
and thus in particular~$\QP^{\eta_{E^\complement}}$-a.e., for $\QP$-a.e.~$\eta$.

\paragraph{Step 2} Let~$\rep w^\eta$ be a $\QP^\eta_E$-representative of~$\SF{\dUpsilon(E)}{\QP^\eta_E}(u_{E,\eta})$, and, for each~$n\in\N$, let~$\rep w^{\eta,n}$ be the $\QP^\eta_E$-representative of~$\SF{\dUpsilon(E)}{\QP^\eta_E}\ttonde{u^n_{E,\eta}}$ defined by~$\rep w^{\eta,n}\eqdef \rep\cdc^{\dUpsilon(E)}\ttonde{u^n_{E,\eta}}$.
In light of~\eqref{eq:p:MarginalFormDomain:1}, agan by reverse triangle inequality we have that
\begin{align*}
L^1(\QP^\eta_E)\text{-}\nlim w^{\eta, n} = w^\eta \comma \qquad \eta\in\Omega_E\comma
\end{align*}
As a consequence,
\begin{align*}
L^1(\QP^\eta_E)\text{-}\klim w^{\eta, n_k(\eta)} = w^\eta \comma \qquad \eta\in\Omega_E\fstop
\end{align*}
By standard arguments, possibly up to extracting from~$\seq{n_k(\eta)}_k$ a further non-relabeled subsequence of indices, there exist sets~$\Xi^\eta_E$ with~$\QP^\eta_E \Xi^\eta_E=1$ such that
\begin{equation*}
\klim \rep w^{\eta, n_k(\eta)} = \rep w^\eta \quad \text{pointwise on } \Xi^\eta_E\comma \qquad \eta\in\Omega_E\fstop
\end{equation*}
As a consequence, pre-composing with~$\pr^E$,
\begin{equation}\label{eq:p:MarginalFormDomains:3}
\rep\cdc^{\dUpsilon(E)}\ttonde{u^{n_k(\eta)}_{E,\eta}}\circ \pr^E \defeq \rep w^{\eta, n_k(\eta)}\circ \pr^E \xrightarrow{k\to\infty} \rep w^\eta\circ \pr^E \quad \text{pointwise on } \pr_E^{-1}(\Xi^\eta_E)\comma \qquad \eta\in\Omega_E\fstop
\end{equation}

\paragraph{Step 3}
Set~$\Omega_\eta\eqdef \Omega_\eta'\cap \pr_E^{-1}(\Xi^\eta_E)$ and note that~$\QP^{\eta_{E^\complement}} \Omega_\eta=1$ since~$\QP^\eta_E \Xi^\eta_E=1$.
Finally, combining on the set of full $\QP^{\eta_{E^\complement}}$-measure~$\Omega_\eta\subset \Lambda_{\eta, E^\complement}$ the equalities~\eqref{eq:p:MarginalFormDomains:3},~\eqref{eq:CdCRestrConditionalFormCylinderF}, and~\eqref{eq:p:MarginalFormDomains:2},
\begin{equation*}
\rep w^\eta\circ \pr^E=\klim \rep\cdc^{\dUpsilon(E)}\ttonde{u^{n_k(\eta)}_{E,\eta}}\circ \pr^E=\klim \rep\cdc^\dUpsilon_E\ttonde{u^{n_k(\eta)}}=\rep v \quad \text{pointwise on~} \Omega_\eta \comma
\end{equation*}
which concludes the proof of~\iref{i:p:MarginalFormDomains:2} by recalling the definitions of~$\rep w^\eta$ and~$\rep v$.
\end{proof}


\subsubsection{The continuous-Sobolev-to-Lipschitz property}\label{sss:cSL}
Throughout this section, let us fix~$x_0\in X$.
For each~$r>0$, let us set~$B_r\eqdef B_r(x_0)$. Further let~$\mssm_r\eqdef \mssm_{B_r}$ be the restriction of~$\mssm$ to~$B_r(x_0)$, and~$\T_r$ be the subspace topology on~$B_r(x_0)$.

Recall the definition~\eqref{eq:ProjectedConditionalQP} of the projected conditional probabilities~$\QP^\eta_E$ on~$\dUpsilon(E)$, for~$\eta\in\dUpsilon$ and~$E\in\Ed$.
In the following, we will be interested in the case when~$E=B_r$ for some~$r>0$.
For simplicity of notation, let us set~$\QP^\eta_r\eqdef \QP^\eta_{B_r}$.

We introduce the following assumptions. The first one consists in fact of two variations of the Sobolev-to-Lipschitz-property on products.

\begin{ass}\label{d:ass:cSLTensor}
Let~$(\mcX,\cdc,\mssd)$ be an \MLDS.
We assume that for every~$n\in\N$ and~$r>0$, the standard product form
\begin{equation*}
\EE{\asym{n}}{\mssm_r}(f^\asym{n})\eqdef \int_{B_r(x_0)^\tym{n}} \cdc^\otym{n}(f^\asym{n}) \diff\mssm_r^\otym{n}\comma\qquad f^\asym{n}\in \Lip\ttonde{B_r(x_0)^\tym{n},\mssd_\tym{n}}
\end{equation*}
is closable on~$L^2(\mssm_r^\otym{n})$ and satisfies either
\begin{equation}\tag*{$(\mathsf{cSL}^\otym{}_r)_{\ref{d:ass:cSLTensor}}$}\label{ass:cSLTensor}
\text{the continuous-Sobolev--to--Lipschitz property} \quad (\cSL{\T_r^\tym{n}}{\mssm_r^\otym{n}}{\mssd_\tym{n}})\comma
\end{equation}
or its strengthtening
\begin{equation}\tag*{$(\mathsf{SL}^\otym{}_r)_{\ref{d:ass:cSLTensor}}$}\label{ass:SLTensor}
\text{the Sobolev-to-Lipschitz property}\quad (\SL{\mssm_r^\otym{n}}{\mssd_\tym{n}}) \fstop
\end{equation}
\end{ass}
Assumptions~\ref{ass:cSLTensor} and~\ref{ass:SLTensor} can be verified on every Riemannian manifold, and on $\MLDS$'s satisfying both~$\RCD(K,N)$ and~$\CAT(0)$.

The second assumption is a strengthening of~\ref{ass:CE}, and~\ref{ass:ConditionalClos}.

\begin{ass}\label{ass:dcSLConfig}
Let~$(\mcX,\cdc,\mssd)$ be an \MLDS, $\QP$ be a probability measure on~$\ttonde{\dUpsilon,\A_\mrmv(\Ed)}$. We further assume that, for $\QP$-a.e.~$\eta\in\dUpsilon$ and every~$r>0$,
\begin{align}
\tag*{$(\mathsf{CC}_r)_{\ref{ass:dcSLConfig}}$}\label{ass:CCr} 
&\text{Assumption~\ref{ass:ConditionalClos} holds with~$B_r$ in place of~$E$} \comma
\end{align}
and the measures~$\QP^\eta_r$ and~$\PP_{\mssm_r}$ are mutually absolutely continuous, viz.~$\QP^\eta_r\sim \PP_{\mssm_r}$, and there exists a well-separated $\EE{\dUpsilon(B_r)}{\PP_{\mssm_r}}$-nest~$\seq{F_k}_k$ and constants~$a_k>0$ so that
\begin{align}\tag*{$(\mathsf{CE}_r)_{\ref{ass:dcSLConfig}}$}\label{ass:CEr}
0<a_k\leq \displaystyle\frac{\diff \QP^\eta_r}{\diff \PP_{\mssm_r}} \leq a_k^{-1} <+\infty \as{\PP_{\mssm_r}} \quad \text{on} \quad \inter_{\T_\mrmv(\Ed)} F_k\comma \qquad k\in\N \fstop
\end{align}
\end{ass}

\begin{prop}\label{p:cSLProjPoisson}
Let~$(X,\cdc,\mssd)$ be an \MLDS, fix~$r>0$, and let~$\ttonde{\EE{\dUpsilon(B_r)}{\PP_{\mssm_r}},\dom{\EE{\dUpsilon(B_r)}{\PP_{\mssm_r}}}}$ be defined as in~\ref{ass:ConditionalClos} with~$\PP_{\mssm_r}$ in place of~$\QP^\eta_{B_r}$ and~$B_r$ in place of~$E$, cf.\ Remark~\ref{r:ConditionalAC}\iref{i:r:ConditionalAC:2}. Then,
\begin{enumerate}[$(i)$]
\item\label{i:p:cSLProjPoisson:1} if~$(X,\cdc,\mssd)$ satisfies~\ref{ass:cSLTensor}, resp.~\ref{ass:SLTensor},
then~$\ttonde{\EE{\dUpsilon(B_r)}{\PP_{\mssm_r}},\dom{\EE{\dUpsilon(B_r)}{\PP_{\mssm_r}}}}$ satisfies~$(\cSL{\T_\mrmv(\Ed)}{\PP_{\mssm_r}}{\mssd_\dUpsilon})$, resp.~$(\SL{\PP_{\mssm_r}}{\mssd_\dUpsilon})$;
\item\label{i:p:cSLProjPoisson:2} if~$(X,\cdc,\mssd)$ satisfies~$(\Rad{\mssd}{\mssm})$, then~$\ttonde{\EE{\dUpsilon(B_r)}{\PP_{\mssm_r}},\dom{\EE{\dUpsilon(B_r)}{\PP_{\mssm_r}}}}$ satisfies~$(\Rad{\mssd_\dUpsilon}{\PP_{\mssm_r}})$.
\end{enumerate}
\begin{proof}
A simple proof of~\ref{i:p:cSLProjPoisson:1} follows directly from the assumptions invoking the representation~\eqref{eq:PoissonLebesgue} and working on each component of the disjoint union~$\dUpsilon(B_r) = \bigsqcup_{n\in\N_0} B_r^\sym{n}$, with~$B_r^\sym{n}\eqdef B_r^\tym{n}/\mfS_n$.
A proof of~\ref{i:p:cSLProjPoisson:2} follows in a similar way and by tensorization of the Rademacher property Theorem~\ref{t:Tensor}.
We omit the details.
\end{proof}
\end{prop}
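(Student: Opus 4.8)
The plan is to exploit that, over the $\mssd$-bounded set~$B_r$, both the space~$\dUpsilon(B_r)$ and the measure~$\PP_{\mssm_r}$ split over particle number, and that the $n$-particle stratum is nothing but the $n$-fold product $\parEMLDS$ $(\mcX^\otym{n},\cdc^\otym{n},\mssd_\tym{n})$ restricted to~$B_r^\tym{n}$ and quotiented by~$\mfS_n$, so that the two properties descend from the product. First I would record the stratification. By~\eqref{eq:PoissonLebesgue}, $\dUpsilon(B_r)$ is the $\T_\mrmv(\Ed)$-clopen (Remark~\ref{r:CylContinuity}\iref{i:r:CylContinuity:3}--\iref{i:r:CylContinuity:4}) and $\mssd_\dUpsilon$-clopen (Proposition~\ref{p:W2Upsilon}\iref{i:p:W2Upsilon:1}) disjoint union of the strata $\dUpsilon^\sym{n}(B_r)\cong B_r^\sym{n}\eqdef B_r^\tym{n}/\mfS_n$, and on the $n$-th stratum $\PP_{\mssm_r}$ equals $c_n\,\mssm_r^\sym{n}$ for a constant $c_n>0$, where $\mssm_r^\sym{n}\eqdef \pr^\sym{n}_\pfwd \mssm_r^\otym{n}$. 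Hence $L^2(\PP_{\mssm_r})=\bigoplus_n L^2(\mssm_r^\sym{n})$ and $\EE{\dUpsilon(B_r)}{\PP_{\mssm_r}}=\bigoplus_n \EE{\dUpsilon^\sym{n}(B_r)}{\mssm_r^\sym{n}}$, the constants $c_n$ affecting neither domain, nor carr\'e du champ, nor metric; moreover a function on~$\dUpsilon(B_r)$ is $\mssd_\dUpsilon$-Lipschitz (resp.\ $\T_\mrmv(\Ed)$-continuous, resp.\ in the relevant broad local domain) if and only if each of its strata-restrictions is, the bound $\leq 1$ on Lipschitz constants transferring freely since distinct strata lie at infinite $\mssd_\dUpsilon$-distance. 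Finally, along the clopen exhaustion $\Xi_{\leq N}(B_r)\uparrow\dUpsilon(B_r)$ — an $\EE{\dUpsilon(B_r)}{\PP_{\mssm_r}}$-nest, as for $u\in\Cyl{\Dz}$ the truncations $u\,\car_{\Xi_{\leq N}(B_r)}$ lie in the form domain and converge to~$u$ by Dominated Convergence and locality — it suffices to establish each assertion, with constant~$\leq 1$, for a single stratum form~$\EE{\dUpsilon^\sym{n}(B_r)}{\mssm_r^\sym{n}}$.

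Next I would identify the stratum form with the $\mfS_n$-symmetric part of the product form. The quotient map $\pr^\sym{n}\colon B_r^\tym{n}\to \dUpsilon^\sym{n}(B_r)$ is $\T^\tym{n}/\T_\mrmv(\Ed)$-continuous and $\A^\otym{n}/\A_\mrmv(\Ed)$-measurable, pushes $\mssm_r^\otym{n}$ to $\mssm_r^\sym{n}$, is $1$-Lipschitz from $\mssd_\tym{n}$ to $\mssd_\dUpsilon$ (Proposition~\ref{p:LbLipschitz}), and realises $\mssd_\dUpsilon$ on the stratum as the quotient distance $\inf_{\sigma\in\mfS_n}\mssd_\tym{n}(\emparg,\emparg_\sigma)$ — the inequality ``$\geq$'' from the optimal-labeling lemma Proposition~\ref{p:PoissonLabeling}\iref{i:p:PoissonLabeling:1} (a labeling of a fixed $n$-point configuration being a permutation of any other), ``$\leq$'' from Proposition~\ref{p:LbLipschitz} or Proposition~\ref{p:PoissonLabeling}\iref{i:p:PoissonLabeling:6}. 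Consequently $(\pr^\sym{n})^*$ is a unitary of $L^2(\mssm_r^\sym{n})$ onto $L^2(\mssm_r^\otym{n})^{\mfS_n}$ carrying $\mssd_\dUpsilon$-Lipschitz functions on~$\dUpsilon^\sym{n}(B_r)$ to $\mfS_n$-invariant $\mssd_\tym{n}$-Lipschitz functions of equal constant, and conversely every $\mfS_n$-invariant $\mssd_\tym{n}$-Lipschitz function on~$B_r^\tym{n}$ descends with equal constant. On cylinder functions the identity $\SF{\dUpsilon^\sym{n}(B_r)}{\mssm_r^\sym{n}}(u)\circ\pr^\sym{n}=\cdc^\otym{n}\ttonde{u\circ\pr^\sym{n}}$, $\mssm_r^\otym{n}$-a.e., follows by unwinding~\eqref{eq:d:LiftCdCRep} and~\eqref{eq:CdCTensor} (equivalently, it is the finite-$n$ instance of~\eqref{eq:l:CdCCylinderTruncation:1c}, legitimate because $\PP_{\mssm_r}$ satisfies~\ref{ass:AC} by Remark~\ref{r:AC}\iref{i:r:AC:1.5} and~\ref{ass:CWP} by Remark~\ref{r:PoissonClosability} together with Theorem~\ref{t:Closability}, reading $\tr^n\circ\lb$ as a section of~$\pr^\sym{n}$). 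Since $(\pr^\sym{n})^*\Cyl{\Dz}$ is dense in $L^2(\mssm_r^\otym{n})^{\mfS_n}$ and the product form $\EE{\asym{n}}{\mssm_r}$ is closable (a hypothesis in~\ref{i:p:cSLProjPoisson:1} via~\ref{ass:cSLTensor}/\ref{ass:SLTensor}, and part of the product structure of~$\mcX^\otym{n}$ by Proposition~\ref{p:Products} in~\ref{i:p:cSLProjPoisson:2}), closing up shows that, through~$(\pr^\sym{n})^*$, $\ttonde{\EE{\dUpsilon^\sym{n}(B_r)}{\mssm_r^\sym{n}},\dom{\EE{\dUpsilon^\sym{n}(B_r)}{\mssm_r^\sym{n}}}}$ is the restriction of $\ttonde{\EE{\asym{n}}{\mssm_r},\dom{\EE{\asym{n}}{\mssm_r}}}$ to $\mfS_n$-invariant functions, with the square-field identity and the correspondence of nests and broad local domains extending to the whole domain.

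Then the descent is mechanical. For~\ref{i:p:cSLProjPoisson:1} under~\ref{ass:cSLTensor}: a $\T_\mrmv(\Ed)$-continuous $u$ on $\dUpsilon^\sym{n}(B_r)$ with $\cdc^{\dUpsilon^\sym{n}(B_r)}(u)\leq 1$ pulls back to a $\T_r^\tym{n}$-continuous function in $\DzLoc{\mssm_r^\otym{n},\T_r^\tym{n}}$ with $\cdc^\otym{n}\ttonde{u\circ\pr^\sym{n}}\leq 1$, hence by $(\cSL{\T_r^\tym{n}}{\mssm_r^\otym{n}}{\mssd_\tym{n}})$ it is $\mssd_\tym{n}$-Lipschitz with constant~$\leq 1$; being $\mfS_n$-invariant it descends to~$u$, which therefore lies in $\Lipu(\mssd_\dUpsilon,\T_\mrmv(\Ed))$. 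Under~\ref{ass:SLTensor} the same argument with $(\SL{\mssm_r^\otym{n}}{\mssd_\tym{n}})$ yields an $\mssm_r^\otym{n}$-representative of $u\circ\pr^\sym{n}$ in $\Lipu(\mssd_\tym{n},\A^\mssm)$, which may be taken $\mfS_n$-invariant (average over $\mfS_n$), hence descends to an $\mssm_r^\sym{n}$-representative of~$u$ in $\Lipu(\mssd_\dUpsilon,\A^\mssm)$; reassembling over~$n$ gives $(\SL{\PP_{\mssm_r}}{\mssd_\dUpsilon})$. For~\ref{i:p:cSLProjPoisson:2}: the Rademacher property is inherited by the restricted $\parEMLDS$ $(B_r,\cdc,\mssd,\mssm_r)$ (a routine localization, $\mssm_r\ll\mssm$), so Theorem~\ref{t:Tensor} gives $(\Rad{\mssd_\tym{n}}{\mssm_r^\otym{n}})$ for $(B_r^\tym{n},\mssd_\tym{n},\cdc^\otym{n},\mssm_r^\otym{n})$; given $\rep u\in\Lipu(\mssd_\dUpsilon,\A_\mrmv(\Ed))$ on the stratum, $\rep u\circ\pr^\sym{n}$ is $\A^\otym{n}$-measurable and $\mssd_\tym{n}$-Lipschitz with constant~$\leq 1$, hence in $\DzLoc{\mssm_r^\otym{n}}$ with $\cdc^\otym{n}\ttonde{\rep u\circ\pr^\sym{n}}\leq 1$, whence by the identification $u\in\DzLoc{\mssm_r^\sym{n}}$ with $\SF{\dUpsilon^\sym{n}(B_r)}{\mssm_r^\sym{n}}(u)\leq 1$ $\mssm_r^\sym{n}$-a.e.; reassembling over~$n$ along the nest of the first step gives $(\Rad{\mssd_\dUpsilon}{\PP_{\mssm_r}})$.

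The hard part is the middle step: transporting the \emph{entire} Dirichlet-space structure — domain, carr\'e du champ, \emph{and} broad local domain — between the $n$-fold product and its $\mfS_n$-quotient, which is delicate because $\pr^\sym{n}$ degenerates at configurations with repeated points, so one cannot argue by a local isometry. The way around is to establish the square-field identity first on cylinder functions (where it is a direct computation, or the finite-$n$ case of~\eqref{eq:l:CdCCylinderTruncation:1c}, using~\ref{ass:AC} for~$\PP_{\mssm_r}$ so that $\mssm_r^\otym{n}$- and $\PP^\asym{n}_{\mssm_r}$-representatives may be identified), then propagate it by closure, and match nests along the clopen exhaustion; the remaining ingredients — the stratification, the metric identification via Proposition~\ref{p:PoissonLabeling}, and the appeal to $(\cSL{}{}{})$/$(\SL{}{})$ on products and to Theorem~\ref{t:Tensor} — are then essentially bookkeeping.
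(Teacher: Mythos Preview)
Your proposal is correct and takes essentially the same approach as the paper: decompose~$\dUpsilon(B_r)$ via the Poisson--Lebesgue representation~\eqref{eq:PoissonLebesgue} into the strata~$B_r^\sym{n}$, identify each stratum form with the $\mfS_n$-invariant part of the product form, and then pull the relevant property (cSL/SL for~\iref{i:p:cSLProjPoisson:1}, Rademacher via Theorem~\ref{t:Tensor} for~\iref{i:p:cSLProjPoisson:2}) back and forth through~$\pr^\sym{n}$. The paper's own proof is only a two-line sketch pointing to exactly these ingredients and explicitly omits the details you have carefully supplied, so there is nothing to compare beyond noting that you have spelled out what the authors left implicit.
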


\begin{prop}\label{p:cSLProjQP}
Let~$(X,\cdc,\mssd)$ be an \MLDS, and fix~$r>0$ and~$\eta\in\dUpsilon$.
Further assume that $(X,\cdc,\mssd)$ satisfies~\ref{ass:cSLTensor}, resp.~\ref{ass:SLTensor}, and~$\QP$ be a probability measure on~$\ttonde{\dUpsilon,\A_\mrmv(\Ed)}$ satisfying Assumptions~\ref{ass:CEr} and~\ref{ass:CCr}.
Further assume that for every $\mssd_\dUpsilon$-well-separated $\T_{\mrmv(\Ed)}$-open sets~$U_1,U_2$ there exists~$\varrho\eqdef \varrho_{U_1,U_2}\in \dom{\EE{\dUpsilon(B_r)}{\PP_{\mssm_r}}}$ and a constant~$C=C_{U_1,U_2}$ such that
\begin{equation}\label{eq:p:cSLProjQP:0}
\varrho \equiv 1 \as{\PP_\mssm} \quad \text{on} \quad U_1\comma \qquad \varrho \equiv 0 \as{\PP_\mssm} \quad \text{on} \quad U_2\comma \qquad \SF{\dUpsilon(B_r)}{\PP_{\mssm_r}}(\varrho)\leq C\fstop
\end{equation}
Then, the form~$\ttonde{\EE{\dUpsilon(B_r)}{\QP^\eta_r},\dom{\EE{\dUpsilon(B_r)}{\QP^\eta_r}}}$ defined as in~\ref{ass:ConditionalClos} with~$B_r$ in place of~$E$, satisfies~$(\cSL{\T_\mrmv(\Ed)}{\QP^\eta_r}{\mssd_\dUpsilon})$, resp.~$(\SL{\QP^\eta_r}{\mssd_\dUpsilon})$, for every~$\eta\in\dUpsilon$.
\begin{proof}
Since~$\QP^\eta_r\sim \PP_{\mssm_r}$ we have that~$\QP^\eta_r$ has full support on~$\dUpsilon(B_r)$ and that
\begin{equation*}
\CylQP{\QP^\eta_r}{\Dz}=\CylQP{\PP_{\mssm_r}}{\Dz}=\Cyl{\Dz}
\end{equation*}
is a common core for both~$\ttonde{\EE{\dUpsilon(B_r)}{\PP_{\mssm_r}},\dom{\EE{\dUpsilon(B_r)}{\PP_{\mssm_r}}}}$ and~$\ttonde{\EE{\dUpsilon(B_r)}{\QP^\eta_r},\dom{\EE{\dUpsilon(B_r)}{\QP^\eta_r}}}$. 
In particular, the corresponding square field operators~$\SF{\dUpsilon(B_r)}{\PP_{\mssm_r}}$ and~$\SF{\dUpsilon(B_r)}{\QP^\eta_r}$ satisfy
\begin{align*}
\SF{\dUpsilon(B_r)}{\PP_{\mssm_r}}(u)=\cdc^{\dUpsilon(B_r)}(u)=\SF{\dUpsilon(B_r)}{\QP^\eta_r}(u)\comma \qquad u\in \Cyl{\Dz} \fstop
\end{align*}

The conclusion now follows from Proposition~\ref{p:Locality}, once we verify the assumptions in Proposition~\ref{p:LocalityProbab}.
Since~$\Cyl{\Dz}$ is a common core by definition of the forms, and since both~$\QP^\eta_r$ and~$\PP_{\mssm_r}$ are probability measures, it suffices to verify Proposition~\ref{p:LocalityProbab}\ref{i:p:LocalityProbab:2} and~\ref{i:p:LocalityProbab:3}, which are respectively~\eqref{eq:p:cSLProjQP:0} and Assumption~\ref{ass:CEr}.
\end{proof}
\end{prop}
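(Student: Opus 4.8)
The plan is to deduce $(\cSL{\T_\mrmv(\Ed)}{\QP^\eta_r}{\mssd_\dUpsilon})$, resp.\ $(\SL{\QP^\eta_r}{\mssd_\dUpsilon})$, for the conditional form from the corresponding property for the \emph{Poisson} form $\ttonde{\EE{\dUpsilon(B_r)}{\PP_{\mssm_r}},\dom{\EE{\dUpsilon(B_r)}{\PP_{\mssm_r}}}}$, using that $\QP^\eta_r$ and $\PP_{\mssm_r}$ are mutually absolutely continuous with a density controlled along a well-separated nest, and that the two forms share the common core $\Cyl{\Dz}$ on which their square fields agree. The three ingredients are Proposition~\ref{p:cSLProjPoisson} (the Poisson case), Proposition~\ref{p:Locality} (locality of Sobolev-to-Lipschitz-type properties under an equivalent change of reference measure), and Proposition~\ref{p:LocalityProbab} (a criterion, checkable on a common core, for the broad-local comparison of square field operators).

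First I would record the Poisson case. Since $(X,\cdc,\mssd)$ satisfies Assumption~\ref{ass:cSLTensor}, resp.~\ref{ass:SLTensor}, Proposition~\ref{p:cSLProjPoisson}\iref{i:p:cSLProjPoisson:1} yields that $\ttonde{\EE{\dUpsilon(B_r)}{\PP_{\mssm_r}},\dom{\EE{\dUpsilon(B_r)}{\PP_{\mssm_r}}}}$ enjoys $(\cSL{\T_\mrmv(\Ed)}{\PP_{\mssm_r}}{\mssd_\dUpsilon})$, resp.~$(\SL{\PP_{\mssm_r}}{\mssd_\dUpsilon})$; concretely, one decomposes $\PP_{\mssm_r}$ over the symmetric products $B_r^\sym{n}$ via~\eqref{eq:PoissonLebesgue} and reduces to the finite-dimensional statements in Assumption~\ref{ass:cSLTensor}, resp.~\ref{ass:SLTensor}.

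Next I would transfer this property from $\PP_{\mssm_r}$ to $\QP^\eta_r$. Since $\QP^\eta_r\sim\PP_{\mssm_r}$ (part of Assumption~\ref{ass:CEr}), both are Radon probability measures on $\dUpsilon(B_r)$, they induce the same $L^\infty$ space, and --- $\QP^\eta_r$ having full $\T_\mrmv(\Ed)$-support --- the set $\Cyl{\Dz}=\CylQP{\QP^\eta_r}{\Dz}=\CylQP{\PP_{\mssm_r}}{\Dz}$ is a common core for both forms (using Assumption~\ref{ass:CCr} together with the density argument of Remark~\ref{r:DensityQP}\iref{i:r:DensityQP:1}); on this core $\SF{\dUpsilon(B_r)}{\PP_{\mssm_r}}(u)=\cdc^{\dUpsilon(B_r)}(u)=\SF{\dUpsilon(B_r)}{\QP^\eta_r}(u)$. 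To apply Proposition~\ref{p:Locality}\iref{i:p:Locality:1} with reference measures $\mssm=\PP_{\mssm_r}$ and $\mssm'=\QP^\eta_r$ I still need the comparison $\ttonde{\SF{\dUpsilon(B_r)}{\PP_{\mssm_r}},\DzLocB{\PP_{\mssm_r}}}\leq\ttonde{\SF{\dUpsilon(B_r)}{\QP^\eta_r},\DzLocBprime{\QP^\eta_r}}$, which I would obtain from Proposition~\ref{p:LocalityProbab} by verifying its four hypotheses: \iref{i:p:LocalityProbab:1} is immediate since both measures are probabilities; \iref{i:p:LocalityProbab:4} holds with the common core $\Cyl{\Dz}$, where the two square fields are in fact equal; \iref{i:p:LocalityProbab:2} is exactly the postulated cut-off estimate~\eqref{eq:p:cSLProjQP:0} (with $C_{U_1,U_2}$ the constant there); and \iref{i:p:LocalityProbab:3} is precisely the density bound in Assumption~\ref{ass:CEr} along the well-separated $\EE{\dUpsilon(B_r)}{\PP_{\mssm_r}}$-nest $\seq{F_k}_k$. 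Proposition~\ref{p:Locality}\iref{i:p:Locality:1} then propagates the Sobolev-to-Lipschitz-type property from $\PP_{\mssm_r}$ to $\QP^\eta_r$, which is the assertion.

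The main obstacle is really upstream: it lies in Proposition~\ref{p:cSLProjPoisson} and in the fact that the finite-dimensional assumptions $(\mathsf{cSL}^\otym{}_r)$/$(\mathsf{SL}^\otym{}_r)$ genuinely bootstrap to the $\dUpsilon(B_r)$-level through the decomposition into symmetric products. Once that is granted, the remainder is bookkeeping: matching the hypotheses of Propositions~\ref{p:Locality} and~\ref{p:LocalityProbab} with $(\mathsf{CE}_r)$, $(\mathsf{CC}_r)$, and the assumed cut-off functions. The one point worth stressing is that Proposition~\ref{p:LocalityProbab}\iref{i:p:LocalityProbab:3} demands the nest be \emph{well-separated} and carry the two-sided density control simultaneously --- which is exactly why Assumption~\ref{ass:CEr} is stated in that form, so no additional argument is needed there.
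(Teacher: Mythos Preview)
Your proposal is correct and follows essentially the same route as the paper's proof: both arguments identify $\Cyl{\Dz}$ as a common core on which the square fields of the $\PP_{\mssm_r}$- and $\QP^\eta_r$-forms coincide, verify the hypotheses \iref{i:p:LocalityProbab:1}--\iref{i:p:LocalityProbab:4} of Proposition~\ref{p:LocalityProbab} via the probability assumption, the cut-off condition~\eqref{eq:p:cSLProjQP:0}, Assumption~\ref{ass:CEr}, and the common core, and then conclude via Proposition~\ref{p:Locality}\iref{i:p:Locality:1}. You are slightly more explicit than the paper in first invoking Proposition~\ref{p:cSLProjPoisson} to secure the Sobolev-to-Lipschitz property for the Poisson reference form, which is indeed the input that Proposition~\ref{p:Locality} requires.
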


Let us now comment on~\eqref{eq:p:cSLProjQP:0}.

\begin{rem}
It is not difficult to show that, if~$(X,\mssd,\mssm)$ is an \MLDS satisfying~$(\Rad{\mssd}{\mssm})$, then the form~$\ttonde{\EE{\dUpsilon(B_r)}{\PP_{\mssm_r}},\dom{\EE{\dUpsilon(B_r)}{\PP_{\mssm_r}}}}$ satisfies~$(\Rad{\mssd_\dUpsilon}{\PP_{\mssm_r}})$ as a consequence of Theorem~\ref{t:Tensor} and of the representation~\eqref{eq:PoissonLebesgue}.
As a consequence,~\eqref{eq:p:cSLProjQP:0} is satisfied, by Remark~\ref{r:CutOff}, as soon as~$(\Rad{\mssd}{\mssm})$ holds.
In particular,~\eqref{eq:p:cSLProjQP:0}  ought to be thought of as an assumption on the base space, rather than on the corresponding configuration space.
In the following therefore,~\eqref{eq:p:cSLProjQP:0} will be essentially immaterial, for in most cases we shall assume~$(\Rad{\mssd}{\mssm})$ for the base space.
We omit to repeat that~\eqref{eq:p:cSLProjQP:0} holds under this assumption for the base space.
\end{rem}

\begin{thm}[Continuous-Sobolev-to-Lipschitz]\label{t:cSLUpsilon}
Let~$(X,\cdc,\mssd)$ be an \MLDS satisfying Assumption \ref{ass:cSLTensor} and~\eqref{eq:p:cSLProjQP:0}, and~$\QP$ be a probability measure on~$\ttonde{\dUpsilon,\A_\mrmv(\Ed)}$ satisfying Assumptions~\ref{ass:CEr} and~\ref{ass:CCr}.
Then, the \EMLDS~$\ttonde{\dUpsilon,\SF{\dUpsilon}{\QP},\mssd_\dUpsilon}$ satisfies~$(\cSL{\T_\mrmv(\Ed)}{\QP}{\mssd_\dUpsilon})$.
\begin{proof}
Let~$u\in\dom{\EE{\dUpsilon}{\QP}}\cap \Cb(\T_\mrmv(\Ed))$ with~$\SF{\dUpsilon}{\QP}(u)\leq 1$ $\QP$-a.e., and fix~$\gamma, \xi\in\dUpsilon$ with~$\mssd_\dUpsilon(\gamma,\xi)<\infty$.
Further fix~$x_0\in X$, let~$\seq{r_n}_n$ be a sequence of radii with~$\nlim r_n=+\infty$ additionally so that, setting~$B_n\eqdef B_{r_n}(x_0)$, it holds that~$\gamma B_n=\xi B_n$.
Then,~$\mssd_\dUpsilon(\gamma_{B_n},\xi_{B_n})$ is increasing to~$\mssd_\dUpsilon(\gamma,\xi)$ as~$n\to\infty$.
Furthermore, for any~$\eta\in\dUpsilon$ we additionally have that~$\gamma_{B_n}+\eta_{B_n^\complement}$ $\Ed$-vaguely converges to~$\gamma$, and analogously for~$\xi$.
As a consequence, by $\T_\mrmv(\Ed)$-continuity of~$u$,
\begin{equation*}
\nlim \ttonde{u_{B_n,\eta}\circ \pr^{B_n}}(\gamma)=\nlim u(\gamma_{B_n}+\eta_{B_n^\complement}) =u(\gamma)\comma \qquad \gamma,\eta\in\dUpsilon\comma
\end{equation*}
and analogously for~$\xi$.

Now, for every~$n$, by Proposition~\ref{p:MarginalFormDomains}\iref{i:p:MarginalFormDomains:3} there exists a $\A_\mrmv(\Ed)$-measurable set~$\Omega_n$ with~$\QP\Omega_n=1$ and
\begin{equation}\label{eq:t:cSLUpsilon:1}
\SF{\dUpsilon(B_n)}{\QP^\eta_{B_n}}(u_{B_n,\eta})\leq 1 \quad \as{\QP^\eta_{B_n}}\comma\qquad \eta\in \Omega_n\fstop
\end{equation}
Set~$\Omega\eqdef \bigcap_n \Omega_n$ and fix~$\eta\in \Omega$.
By the assumption and Proposition~\ref{p:cSLProjQP}, the function~$u_{B_n,\eta}$ is~$\mssd_\dUpsilon$-Lipschitz on~$\dUpsilon(B_n)$ for every~$n$, and
\begin{align*}
\abs{u_{B_n,\eta}(\zeta)-u_{B_n,\eta}(\stigma)}\leq \mssd_\dUpsilon(\zeta,\stigma) \comma\qquad \zeta,\stigma\in\dUpsilon(B_n)\fstop
\end{align*}
As a consequence
\begin{align*}
\abs{u(\gamma)-u(\xi)}=\nlim \abs{u_{B_n,\eta}(\gamma_{B_n})-u_{B_n,\eta}(\xi_{B_n})}\leq \nlim \mssd_\dUpsilon(\gamma_{B_n}, \xi_{B_n})=\mssd_\dUpsilon(\gamma,\xi)\comma
\end{align*}
which concludes the proof.
\end{proof}
\end{thm}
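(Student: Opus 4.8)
\textit{The plan.} I would reduce the assertion for $\dUpsilon$ to its finite-volume conditional counterparts, which are furnished by Proposition~\ref{p:cSLProjQP}, and then pass to the limit along an exhaustion of $X$ by balls. Let $u\in\dom{\EE{\dUpsilon}{\QP}}\cap\Cb(\T_\mrmv(\Ed))$ with $\SF{\dUpsilon}{\QP}(u)\leq 1$ $\QP$-a.e.; since $\QP$ is a probability measure, the broad local members of $\EE{\dUpsilon}{\QP}$ of unit square field lying in $\Cb(\T_\mrmv(\Ed))$ are exactly those $u$ (coincidence of the broad-local and the global domain for finite measures), so it suffices to prove $\abs{u(\gamma)-u(\xi)}\leq\mssd_\dUpsilon(\gamma,\xi)$ for all $\gamma,\xi\in\dUpsilon$, only the case $\mssd_\dUpsilon(\gamma,\xi)<\infty$ being non-trivial. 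Fix such a pair and $x_0\in X$, pick radii $r_n\uparrow+\infty$ with $B_n\eqdef B_{r_n}(x_0)\in\Ed$, and fix an optimal matching $\cpl\in\Opt(\gamma,\xi)$ (Proposition~\ref{p:W2Upsilon}\iref{i:p:W2Upsilon:5}); let $\gamma_n,\xi_n\in\dUpsilon(B_n)$ be the sub-configurations obtained by keeping those atoms of $\gamma$ (resp.\ $\xi$) whose $\cpl$-partner also lies in $B_n$. Then $\gamma_n\uparrow\gamma$ and $\xi_n\uparrow\xi$ in the $\Ed$-vague topology, and $\seq{B_n}_n$ is a localizing sequence. (Alternatively, following the argument sketched by the authors, one chooses the radii so that $\gamma B_n=\xi B_n$ and takes $\gamma_n\eqdef\pr^{B_n}\gamma$; this is always possible when $\mssd_\dUpsilon(\gamma,\xi)<\infty$.)

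\textit{Three limits.} First, $\mssd_\dUpsilon(\gamma_n,\xi_n)\to\mssd_\dUpsilon(\gamma,\xi)$: the bound $\mssd_\dUpsilon(\gamma_n,\xi_n)^2\leq\sum\{\mssd^2(x,y):(x,y)\in\supp\cpl,\ x,y\in B_n\}\leq\mssd_\dUpsilon(\gamma,\xi)^2$ gives $\limsup\leq$, while $\T_\mrmv(\Ed)$-lower semicontinuity of $\mssd_\dUpsilon$ (Proposition~\ref{p:W2Upsilon}\iref{i:p:W2Upsilon:7}) together with $\gamma_n\to\gamma$, $\xi_n\to\xi$ gives $\liminf\geq$; in the naive-restriction variant one uses instead Proposition~\ref{p:W2Upsilon2}\iref{i:p:W2Upsilon:13}. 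Second, for every $\eta\in\dUpsilon$ the configuration $\gamma_n+\eta_{B_n^\complement}$ converges $\Ed$-vaguely to $\gamma$ (it coincides with $\gamma$ on any fixed $E\in\Ed$ for $n$ large), and likewise for $\xi$; hence, by $\T_\mrmv(\Ed)$-continuity of $u$ and the definition $u_{B_n,\eta}(\zeta)\eqdef u(\zeta+\eta_{B_n^\complement})$, one has $u_{B_n,\eta}(\gamma_n)\to u(\gamma)$ and $u_{B_n,\eta}(\xi_n)\to u(\xi)$. Third, by Proposition~\ref{p:MarginalFormDomains}\iref{i:p:MarginalFormDomains:1} and~\iref{i:p:MarginalFormDomains:3} --- applicable since Assumption~\ref{ass:CCr} carries \ref{ass:CAC} and \ref{ass:ConditionalClos} for every $B_n$ --- for each $n$ there is a $\QP$-conegligible set $\Omega_n$ such that, for $\eta\in\Omega_n$, $u_{B_n,\eta}\in\dom{\EE{\dUpsilon(B_n)}{\QP^\eta_{B_n}}}$ and $\SF{\dUpsilon(B_n)}{\QP^\eta_{B_n}}(u_{B_n,\eta})\leq 1$ $\QP^\eta_{B_n}$-a.e.

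\textit{Assembly.} Fix $\eta$ in the $\QP$-conegligible set $\Omega\eqdef\bigcap_n\Omega_n$. For each $n$, $u_{B_n,\eta}$ is $\T_\mrmv(\Ed)$-continuous on $\dUpsilon(B_n)$ (composition of $u$ with $\zeta\mapsto\zeta+\eta_{B_n^\complement}$) and lies in the broad local domain of $\EE{\dUpsilon(B_n)}{\QP^\eta_{B_n}}$ with unit square field; since \ref{ass:cSLTensor}, \ref{ass:CEr}, \ref{ass:CCr} and~\eqref{eq:p:cSLProjQP:0} are in force, Proposition~\ref{p:cSLProjQP} applies and $\EE{\dUpsilon(B_n)}{\QP^\eta_{B_n}}$ satisfies the continuous-Sobolev-to-Lipschitz property, whence $u_{B_n,\eta}$ is $1$-Lipschitz for $\mssd_\dUpsilon$ on $\dUpsilon(B_n)$. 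Evaluating at $\gamma_n,\xi_n$ and using the first two limits,
\begin{equation*}
\abs{u(\gamma)-u(\xi)}=\nlim\abs{u_{B_n,\eta}(\gamma_n)-u_{B_n,\eta}(\xi_n)}\leq\nlim\mssd_\dUpsilon(\gamma_n,\xi_n)=\mssd_\dUpsilon(\gamma,\xi),
\end{equation*}
which, since $u$ is already $\T_\mrmv(\Ed)$-continuous, is exactly $(\cSL{\T_\mrmv(\Ed)}{\QP}{\mssd_\dUpsilon})$.

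\textit{Main obstacle.} All the real work is packaged into the two earlier results invoked here. Proposition~\ref{p:MarginalFormDomains} is what lets the $\QP$-a.e.\ bound on $\SF{\dUpsilon}{\QP}(u)$ descend to the conditional forms $\EE{\dUpsilon(B_n)}{\QP^\eta_{B_n}}$, and its proof needs, beyond conditional closability, a delicate disintegration-plus-subsequence argument (Lemma~\ref{l:DisintegationConvergence}); Proposition~\ref{p:cSLProjQP} upgrades the finite-volume Sobolev-to-Lipschitz property from $\PP_{\mssm_r}$ --- where it follows from Assumption~\ref{ass:cSLTensor} via the disjoint-union decomposition~\eqref{eq:PoissonLebesgue} of Poisson measures over symmetric products --- to $\QP^\eta_r$ by the Girsanov/locality transfer of Proposition~\ref{p:LocalityProbab} and the two-sided density bounds~\ref{ass:CEr}. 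Granting those, the present step is a clean exhaustion-and-limit assembly; the only point demanding slight care is the choice of the finite-mass sub-configurations $\gamma_n,\xi_n$ (or, in the naive-restriction variant, the arrangement $\gamma B_n=\xi B_n$), handled as indicated above.
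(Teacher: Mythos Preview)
Your proof is correct and follows essentially the same route as the paper's: reduce to the finite-volume conditional forms via Proposition~\ref{p:MarginalFormDomains}\iref{i:p:MarginalFormDomains:3}, apply Proposition~\ref{p:cSLProjQP} to get the $1$-Lipschitz bound on each $\dUpsilon(B_n)$, and pass to the limit along a ball exhaustion using $\T_\mrmv(\Ed)$-continuity of $u$. Your optimal-matching variant for defining $\gamma_n,\xi_n$ is a harmless alternative to the paper's choice of radii with $\gamma B_n=\xi B_n$ (which you also mention), and the assembly is otherwise identical.
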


As a consequence of the Theorem and of~\eqref{eq:EquivalenceRadStoL}, we immediately obtain the distance-Sobolev-to-Lipschitz property.
\begin{cor}[Distance-Sobolev-to-Lipschitz I]\label{c:dSLUpsilon-cSL}
Let~$(X,\cdc,\mssd)$ be an \MLDS satisfying Assumption~\ref{ass:cSLTensor} and~\eqref{eq:p:cSLProjQP:0}, and~$\QP$ be a probability measure on~$\ttonde{\dUpsilon,\A_\mrmv(\Ed)}$ satisfying Assumptions~\ref{ass:CEr} and~\ref{ass:CCr}.
Then,
\begin{equation*}
\mssd_\dUpsilon\geq \mssd_\QP \fstop
\end{equation*}
\end{cor}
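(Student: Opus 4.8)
The plan is to deduce the estimate at once from Theorem~\ref{t:cSLUpsilon} together with the general implication $(\cSL)\Rightarrow(\dSL)$ recorded in the scheme~\eqref{eq:EquivalenceRadStoL}. Under the hypotheses of the corollary, Theorem~\ref{t:cSLUpsilon} applies verbatim and yields that the \EMLDS $\ttonde{\dUpsilon,\SF{\dUpsilon}{\QP},\mssd_\dUpsilon}$ satisfies the continuous-Sobolev-to-Lipschitz property $(\cSL{\T_\mrmv(\Ed)}{\QP}{\mssd_\dUpsilon})$; that is, every $u\in\DzLocB{\QP,\T_\mrmv(\Ed)}$ --- every $\T_\mrmv(\Ed)$-continuous function in the broad local domain of $\EE{\dUpsilon}{\QP}$ with $\SF{\dUpsilon}{\QP}(u)\leq 1$ $\QP$-a.e.\ --- belongs to $\Lip^1(\mssd_\dUpsilon,\T_\mrmv(\Ed))$.

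From here I would simply recall the definition~\eqref{eq:d:IntrinsicDist} of the intrinsic distance of $\EE{\dUpsilon}{\QP}$,
\[
\mssd_\QP(\gamma,\eta)=\sup_{u\in\DzLocB{\QP,\T_\mrmv(\Ed)}}\abs{u(\gamma)-u(\eta)}\comma\qquad \gamma,\eta\in\dUpsilon\fstop
\]
Each such $u$ is $1$-Lipschitz w.r.t.\ $\mssd_\dUpsilon$ by the property just obtained, hence $\abs{u(\gamma)-u(\eta)}\leq \mssd_\dUpsilon(\gamma,\eta)$; taking the supremum over $u$ gives $\mssd_\QP\leq \mssd_\dUpsilon$, which is exactly $(\dSL{\mssd_\dUpsilon}{\QP})$ and the claim. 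Equivalently, one may invoke the equivalence $(\cSL{\T_\mrmv(\Ed)}{\QP}{\mssd_\dUpsilon})\iff(\dSL{\mssd_\dUpsilon}{\QP})$ of~\cite[Prop.~4.2]{LzDSSuz20} quoted in~\eqref{eq:EquivalenceRadStoL}, after noting --- as already established in Proposition~\ref{p:ConfigEMTS} and used in the proof of Lemma~\ref{l:EleCh} --- that $\ttonde{\dUpsilon,\T_\mrmv(\Ed),\mssd_\dUpsilon,\QP}$ is a complete extended metric-topological Radon measure space and $\ttonde{\EE{\dUpsilon}{\QP},\dom{\EE{\dUpsilon}{\QP}}}$ a quasi-regular strongly local Dirichlet form, so that the abstract framework of~\cite{LzDSSuz20} indeed covers the present situation.

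I do not expect any genuine obstacle here. The only subtlety worth a sentence is that $\mssd_\dUpsilon$ is merely an \emph{extended} distance, not generating $\T_\mrmv(\Ed)$, and that typically every $\mssd_\dUpsilon$-ball is $\QP$-negligible; one must therefore use the extended-metric version of the equivalence from~\cite{LzDSSuz20} rather than any statement relying on $\mssd_\dUpsilon$ being compatible with the topology or the measure. As the direct computation above makes plain, the implication $(\cSL)\Rightarrow(\dSL)$ we actually need follows immediately from the two definitions and is insensitive to these pathologies, so no further work is required.
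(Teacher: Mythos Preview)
Your proposal is correct and follows precisely the paper's route: the paper states the corollary as an immediate consequence of Theorem~\ref{t:cSLUpsilon} together with the implication $(\cSL{\T}{\mssm}{\mssd})\Leftrightarrow(\dSL{\mssd}{\mssm})$ from~\eqref{eq:EquivalenceRadStoL}, and your unwinding of the definition of~$\mssd_\QP$ is exactly the content of that implication.
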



\subsubsection{The \texorpdfstring{$\mssd_\dUpsilon$}{d}-continuous-Sobolev-to-Lipschitz property}\label{sss:cdSL}
In this section we discuss the distance-continuous-Sobolev-to-Lipschitz property on configuration spaces. 
In light of Remark~\ref{r:dcSL}, the proof of~$(\dcSL{\mssd_\dUpsilon}{\QP}{\mssd_\dUpsilon})$ does not void that of~$(\cSL{\T_\mrmv(\msE)}{\QP}{\mssd_\dUpsilon})$ presented in the previous section.
For all practical purposes however, the former property is `stronger', and will play an important role in subsequent work, in particular in relationship with the~$L^\infty(\QP)$-to-$\Cb(\mssd_\dUpsilon)$-regularization property of the semigroup~$\TT{\dUpsilon}{\QP}_\bullet$, e.g.~\cite[Thm.~7.1]{KonLytRoe02}. 

\begin{thm}[Distance-continuous Sobolev-to-Lipschitz]\label{t:dcSLUpsilon}
Let~$(X,\cdc,\mssd)$ be an \MLDS satisfying Assumption~\ref{ass:cSLTensor} , and~\eqref{eq:p:cSLProjQP:0}, and~$\QP$ be a probability measure on~$\ttonde{\dUpsilon,\A_\mrmv(\Ed)}$ satisfying Assumptions \ref{ass:CEr} and~\ref{ass:CCr}.
Then, $\ttonde{\EE{\dUpsilon}{\QP},\dom{\EE{\dUpsilon}{\QP}}}$ satisfies~$(\dcSL{\mssd_\dUpsilon}{\QP}{\mssd_\dUpsilon})$.
\begin{proof}
Let~$u\in\dom{\EE{\dUpsilon}{\QP}}$ be so that~$\SF{\dUpsilon}{\QP}(u)\leq 1$ $\QP$-a.e., and suppose that there exists a $\A_\mrmv(\Ed)$-measurable, bounded $\mssd_\dUpsilon$-continuous representative~$\rep u$ of~$u$.

Fix~$x_0\in X$ and set~$B_r\eqdef B_r(x_0)$ for~$r>0$.
By Proposition~\ref{p:MarginalFormDomains}\iref{i:p:MarginalFormDomains:3} for every~$r>0$ there exists a $\A_\mrmv(\Ed)$-measurable set~$\Omega_r$ with~$\QP\Omega_r=1$ and
\begin{equation}\label{eq:t:dcSLUpsilon:1}
\SF{\dUpsilon(B_r)}{\QP^\eta_{B_r}}(u_{B_r,\eta})\leq 1 \quad \as{\QP^\eta_{B_r}}\comma\qquad \eta\in \Omega_r\fstop
\end{equation}
Set~$\Omega\eqdef \bigcap_{r\in\Q^+} \Omega_r$ and note that~$\QP \Omega=1$.

Now, fix~$\eta\in\Omega$ and~$\gamma\in\dUpsilon$ with~$\mssd_\dUpsilon(\gamma,\eta)<\infty$. 
There exists a sequence of radii~$\seq{r_n}_n\subset \Q^+$ with~$\nlim r_n=+\infty$ additionally so that, setting~$B_n\eqdef B_{r_n}(x_0)$, it holds that~$\gamma B_n=\eta B_n$.
Then,
\begin{equation}\label{eq:t:dcSLUpsilon:1.5}
\nlim \mssd_\dUpsilon(\gamma_{B_n},\eta_{B_n})=\mssd_\dUpsilon(\gamma,\eta)\comma 
\end{equation}
which implies that~$\nlim \mssd_\dUpsilon(\gamma_{B_n^\complement},\eta_{B_n^\complement})=0$.
As a consequence,
\begin{align*}
\nlim \mssd_\dUpsilon\ttonde{\gamma,\gamma_{B_n}+\eta_{B_n^\complement}}=\nlim \mssd_\dUpsilon\ttonde{\gamma_{B_n}+\gamma_{B_n^\complement},\gamma_{B_n}+\eta_{B_n^\complement}} \leq \nlim \mssd_\dUpsilon(\gamma_{B_n^\complement},\eta_{B_n^\complement}) =0\fstop
\end{align*}
By $\mssd_\dUpsilon$-continuity of~$\rep u$, the latter convergence implies that
\begin{align}\label{eq:t:dcSLUpsilon:2}
\nlim \rep u_{B_n,\eta}(\gamma)=\nlim \rep u(\gamma_{B_n}+\eta_{B_n^\complement})=\rep u(\gamma) \fstop
\end{align}

Now, let us show that~$\rep u_{B_n,\eta}$ is $\mssd_\dUpsilon$-continuous on~$\dUpsilon(E)$.
Indeed let~$\seq{\zeta_k}_k\in\dUpsilon(E)$ be $\mssd_\dUpsilon$-convergent to~$\zeta\in\dUpsilon(E)$. Then,
\begin{equation*}
\klim \mssd_\dUpsilon(\zeta_k+\eta_{B_n^\complement},\zeta+\eta_{B_n^\complement})\leq \klim \mssd_\dUpsilon(\zeta_k,\zeta)=0\comma\qquad n\in \N_1\comma
\end{equation*}
which implies that
\begin{align*}
\klim \rep u_{B_n,\eta}(\zeta_k)=\nlim \rep u(\zeta_k+\eta_{B_n^\complement})= \rep u(\zeta+\eta_{B_n})=\rep u_{B_n,\eta}(\zeta)\comma \qquad n\in \N_1\comma
\end{align*}
that is, the sought $\mssd_\dUpsilon$-continuity of~$\rep u_{B_n,\eta}$.
By Proposition~\ref{p:DistTopE}\iref{i:p:DistTopE:2}, $\rep u_{B_n,\eta}$ is also $\T_\mrmv(\Ed)$-continuous, hence, by~\eqref{eq:t:dcSLUpsilon:1} and Proposition~\ref{p:cSLProjQP}, the function~$\rep u_{B_n,\eta}$ is~$\mssd_\dUpsilon$-Lipschitz on~$\dUpsilon(B_n)$ for every~$n$, and
\begin{align}\label{eq:t:dcSLUpsilon:3}
\abs{\rep u_{B_n,\eta}(\zeta)- \rep u_{B_n,\eta}(\stigma)}\leq \mssd_\dUpsilon(\zeta,\stigma) \comma\qquad \zeta,\stigma\in\dUpsilon(B_n)\fstop
\end{align}

Therefore, combining~\eqref{eq:t:dcSLUpsilon:2},~\eqref{eq:t:dcSLUpsilon:3} and~\eqref{eq:t:dcSLUpsilon:1.5},
\begin{align*}
\abs{\rep u(\gamma)-\rep u(\eta)}=\nlim \abs{\rep u_{B_n,\eta}(\gamma_{B_n})-\rep u_{B_n,\eta}(\eta_{B_n})}\leq \nlim \mssd_\dUpsilon(\gamma_{B_n},\eta_{B_n})=\mssd_\dUpsilon(\gamma,\eta) \comma
\end{align*}
which concludes that~$\rep u$ is $\mssd_\dUpsilon$-Lipschitz on~$\Omega\times \dUpsilon$.
In order to extend this property to the whole of~$\dUpsilon$, define~$\reptwo u\colon \dUpsilon\to\R$ as the (upper) constrained McShane extension of~$\rep u\restr_\Omega$ defined as in~\eqref{eq:McShane}.
Since~$\QP\Omega=1$, the function~$\reptwo u$ is $\QP$-measurable, and the conclusion follows by Lemma~\ref{l:McShane}\iref{i:l:McShane:2}.
\end{proof}
\end{thm}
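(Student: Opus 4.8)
The plan is to run the same localization-to-finite-volume argument as in the proof of Theorem~\ref{t:cSLUpsilon}, but carrying along a fixed $\mssd_\dUpsilon$-continuous representative~$\rep u$ of~$u$ and exploiting, on each finite box~$B_r\eqdef B_r(x_0)$, both the conditional closability~\ref{ass:CCr} and the finite-volume Sobolev-to-Lipschitz property established in Proposition~\ref{p:cSLProjQP}. First I would fix~$u\in\dom{\EE{\dUpsilon}{\QP}}$ with~$\SF{\dUpsilon}{\QP}(u)\leq 1$ $\QP$-a.e.\ admitting a bounded $\A_\mrmv(\Ed)$-measurable $\mssd_\dUpsilon$-continuous representative~$\rep u$. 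Fix~$x_0\in X$, and for each~$r\in\Q^+$ apply Proposition~\ref{p:MarginalFormDomains}\iref{i:p:MarginalFormDomains:3} to obtain a $\QP$-conull set~$\Omega_r$ on which~$\SF{\dUpsilon(B_r)}{\QP^\eta_{B_r}}(u_{B_r,\eta})\leq 1$ $\QP^\eta_{B_r}$-a.e.; then~$\Omega\eqdef\bigcap_{r\in\Q^+}\Omega_r$ is still $\QP$-conull.

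Next, for a fixed~$\eta\in\Omega$ and~$\gamma\in\dUpsilon$ with~$\mssd_\dUpsilon(\gamma,\eta)<\infty$, I would choose radii~$r_n\uparrow\infty$ along~$\Q^+$ with~$\gamma B_n=\eta B_n$ where~$B_n\eqdef B_{r_n}(x_0)$. Since~$\mssd_\dUpsilon(\gamma_{B_n},\eta_{B_n})\uparrow\mssd_\dUpsilon(\gamma,\eta)$, the complementary masses satisfy~$\mssd_\dUpsilon(\gamma_{B_n^\complement},\eta_{B_n^\complement})\to 0$, hence~$\mssd_\dUpsilon(\gamma,\gamma_{B_n}+\eta_{B_n^\complement})\to 0$ by triangle inequality, and by $\mssd_\dUpsilon$-continuity of~$\rep u$ one gets~$\rep u_{B_n,\eta}(\gamma)=\rep u(\gamma_{B_n}+\eta_{B_n^\complement})\to\rep u(\gamma)$. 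A short separate argument shows~$\rep u_{B_n,\eta}$ is $\mssd_\dUpsilon$-continuous on~$\dUpsilon(B_n)$: if~$\zeta_k\to\zeta$ in~$\mssd_\dUpsilon$ inside~$\dUpsilon(B_n)$ then~$\zeta_k+\eta_{B_n^\complement}\to\zeta+\eta_{B_n^\complement}$ in~$\mssd_\dUpsilon$, and $\mssd_\dUpsilon$-continuity of~$\rep u$ closes the loop; by Proposition~\ref{p:DistTopE}\iref{i:p:DistTopE:2} this representative is also $\T_\mrmv(\Ed)$-continuous on~$\dUpsilon(B_n)$. Feeding the energy bound~\eqref{eq:t:dcSLUpsilon:1} and the $\T_\mrmv(\Ed)$-continuity of~$\rep u_{B_n,\eta}$ into Proposition~\ref{p:cSLProjQP} (whose hypotheses~\ref{ass:cSLTensor}, \eqref{eq:p:cSLProjQP:0}, \ref{ass:CEr}, \ref{ass:CCr} are exactly those assumed here) yields that~$\rep u_{B_n,\eta}$ is $1$-Lipschitz on~$\dUpsilon(B_n)$, i.e.\ $\abs{\rep u_{B_n,\eta}(\zeta)-\rep u_{B_n,\eta}(\stigma)}\leq\mssd_\dUpsilon(\zeta,\stigma)$.

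Then passing to the limit in~$\abs{\rep u(\gamma)-\rep u(\eta)}=\nlim\abs{\rep u_{B_n,\eta}(\gamma_{B_n})-\rep u_{B_n,\eta}(\eta_{B_n})}\leq\nlim\mssd_\dUpsilon(\gamma_{B_n},\eta_{B_n})=\mssd_\dUpsilon(\gamma,\eta)$ shows~$\rep u$ is $1$-Lipschitz in the $\mssd_\dUpsilon$-sense on~$\Omega\times\dUpsilon$ (hence, by symmetry, on~$\Omega\times\Omega$). Finally I would define~$\reptwo u$ as the constrained McShane extension (Lemma~\ref{l:McShane}) of~$\rep u\restr_\Omega$; since~$\QP\Omega=1$ this~$\reptwo u$ is $\QP$-measurable, agrees with~$u$ $\QP$-a.e., and by Lemma~\ref{l:McShane}\iref{i:l:McShane:2} it is globally $1$-Lipschitz, i.e.\ in~$\Lip^1(\mssd_\dUpsilon,\A_\mrmv(\Ed)^\QP)$, which is precisely~$(\dcSL{\mssd_\dUpsilon}{\QP}{\mssd_\dUpsilon})$. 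I expect the main subtlety to be the bookkeeping around representatives and conull sets --- making sure that the single $\mssd_\dUpsilon$-continuous representative~$\rep u$ is compatible with the $\QP^\eta_{B_r}$-a.e.\ energy bounds obtained fiberwise from Proposition~\ref{p:MarginalFormDomains}, and that the conditioning sets~$\Lambda_{\eta,B_r^\complement}$ on which the fiber identifications hold are handled consistently across the countably many radii; the metric estimates themselves are routine given Propositions~\ref{p:W2Upsilon2} and~\ref{p:DistTopE}.
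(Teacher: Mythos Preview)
Your proposal is correct and follows essentially the same approach as the paper's own proof: the same localization to balls~$B_r$ via Proposition~\ref{p:MarginalFormDomains}\iref{i:p:MarginalFormDomains:3}, the same passage from $\mssd_\dUpsilon$-continuity to $\T_\mrmv(\Ed)$-continuity on~$\dUpsilon(B_n)$ via Proposition~\ref{p:DistTopE}\iref{i:p:DistTopE:2}, the same application of Proposition~\ref{p:cSLProjQP} to obtain the fiberwise $1$-Lipschitz bound, and the same McShane extension to globalize. The bookkeeping concerns you flag are exactly the ones the paper handles in the way you describe.
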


\begin{rem}[Comparison with~{\cite{RoeSch99}}, part II]\label{r:RoeSch99SL}
The distance-continuous-Sobolev-to-Lipschitz property on configuration spaces was originally discussed by M.~R\"ockner and A.~Schied in~\cite{RoeSch99}.
Indeed, it was shown to holds on the configuration space over a Riemannian manifold in~\cite[Thm.~1.5(i)]{RoeSch99} under the assumptions~\cite[Ass.~1.1]{RoeSch99}. 
As discussed in Remark~\ref{r:RoeSch99SL0}, these assumption are usually difficult to verify outside the class of Ruelle-type Gibbs measures.

As already noted in~\cite[p.~331, Rmk.]{RoeSch99}, if~$u\in L^2(\QP)$ has a $\mssd_\dUpsilon$-Lipschitz $\QP$-representative, then it has in fact uncountably many such representatives, with arbitrarily large Lipschitz constant.
This shows the necessity to allow for different representatives in the formulation of~$(\dcSL{\mssd_\dUpsilon}{\QP}{\mssd_\dUpsilon})$.
\end{rem}


\subsection{Identification of distances}
Let us now come to the identification of the extended distances~$\mssd_\QP$ and~$\mssd_\dUpsilon$.

\begin{thm}\label{t:StoL2}
Let~$(X,\cdc,\mssd)$ be an \MLDS with~$\Dz\subset \bLip(\mssd)$, satisfying both~$(\Rad{\mssd}{\mssm})$ and Assumption \ref{ass:cSLTensor}, and~$\QP$ be a probability measure on~$\ttonde{\dUpsilon,\A_\mrmv(\Ed)}$ satisfying Assumptions~\ref{ass:CEr} and~\ref{ass:CCr}.
Then,
\begin{equation*}
\mssd_\QP=\mssd_\dUpsilon \fstop
\end{equation*}
\begin{proof}
By Theorem~\ref{t:Rademacher}, the space~$\ttonde{\dUpsilon,\SF{\dUpsilon}{\QP},\mssd_\dUpsilon}$ satisfies~$(\Rad{\mssd_\dUpsilon}{\QP})$.
By Proposition~\ref{p:ConfigEMTS}, the space~$\ttonde{\dUpsilon,\T_\mrmv(\Ed),\mssd_\dUpsilon}$ is an extended metric-topological space in the sense of Definition~\ref{d:AES}.
By~\cite[Lem.~3.6]{LzDSSuz20}, the above two facts together imply that~$\ttonde{\dUpsilon,\SF{\dUpsilon}{\QP},\mssd_\dUpsilon}$ satisfies~$(\dRad{\mssd_\dUpsilon}{\QP})$, which is the inequality~`$\leq$'.
The converse inequality is Corollary~\ref{c:dSLUpsilon-cSL}.
\end{proof}
\end{thm}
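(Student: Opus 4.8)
The plan is to obtain the two inequalities $\mssd_\dUpsilon\leq\mssd_\QP$ and $\mssd_\dUpsilon\geq\mssd_\QP$ separately. Each is an instance of an abstract property of the \EMLDS\ $\ttonde{\dUpsilon,\SF{\dUpsilon}{\QP},\mssd_\dUpsilon}$: the first is the distance-Rademacher property $(\dRad{\mssd_\dUpsilon}{\QP})$, the second the distance Sobolev-to-Lipschitz property $(\dSL{\mssd_\dUpsilon}{\QP})$. By the implication scheme~\eqref{eq:EquivalenceRadStoL} these descend, respectively, from the Rademacher property $(\Rad{\mssd_\dUpsilon}{\QP})$ and from the continuous Sobolev-to-Lipschitz property $(\cSL{\T_\mrmv(\Ed)}{\QP}{\mssd_\dUpsilon})$ --- both of which have already been obtained, under hypotheses implied by the present ones, in Theorems~\ref{t:Rademacher} and~\ref{t:cSLUpsilon}. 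Thus the body of the proof is essentially a matter of checking that the hypotheses here feed those two results, together with the observation (Proposition~\ref{p:ConfigEMTS}) that $\ttonde{\dUpsilon,\T_\mrmv(\Ed),\mssd_\dUpsilon}$ is an extended metric-topological space in the sense of Definition~\ref{d:AES}, so that the abstract implications of~\cite{LzDSSuz20} recalled in~\eqref{eq:EquivalenceRadStoL} apply.

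For the inequality `$\leq$' I would argue as follows. Assumption~\ref{ass:CCr} requires that Assumption~\ref{ass:ConditionalClosability} hold with the balls $B_r$ as localizing sets; in particular it entails that $\QP$ satisfies Assumptions~\ref{ass:Mmu} and~\ref{ass:CAC}, together with the conditional closability of the finite-volume forms $\EE{\dUpsilon(B_r)}{\QP^\eta_{B_r}}$. Hence Proposition~\ref{p:CACtoAC} gives the marginal absolute continuity~\ref{ass:AC}, while Theorem~\ref{t:ClosabilitySecond} gives the closability Assumption~\ref{ass:CWP} for $\ttonde{\EE{\dUpsilon}{\QP},\CylQP{\QP}{\Dz}}$. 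Since, on an \MLDS, $\Dz\subset\Cz(\Ed)\subset\Cb(\T)$ and $\Dz\subset\bLip(\mssd)$ by hypothesis, the core $\Dz$ meets the continuity-and-Lipschitz requirement of Theorem~\ref{t:Rademacher}; combined with $(\Rad{\mssd}{\mssm})$, that theorem yields $(\Rad{\mssd_\dUpsilon}{\QP})$ for $\ttonde{\dUpsilon,\SF{\dUpsilon}{\QP},\mssd_\dUpsilon}$. Feeding this together with Proposition~\ref{p:ConfigEMTS} into \cite[Lem.~3.6]{LzDSSuz20} upgrades $(\Rad{\mssd_\dUpsilon}{\QP})$ to $(\dRad{\mssd_\dUpsilon}{\QP})$, i.e. $\mssd_\dUpsilon\leq\mssd_\QP$.

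For the inequality `$\geq$' I would simply invoke Corollary~\ref{c:dSLUpsilon-cSL}, whose hypotheses are subsumed by those of the statement: Assumption~\ref{ass:cSLTensor} and Assumptions~\ref{ass:CEr}, \ref{ass:CCr} for $\QP$ are assumed verbatim, while the cut-off condition~\eqref{eq:p:cSLProjQP:0} is automatic here because $(\Rad{\mssd}{\mssm})$ holds on the base space (by the discussion following Proposition~\ref{p:cSLProjQP}, together with Remark~\ref{r:CutOff}). That corollary delivers $(\dSL{\mssd_\dUpsilon}{\QP})$, i.e. $\mssd_\dUpsilon\geq\mssd_\QP$, through Theorem~\ref{t:cSLUpsilon} and the equivalence $(\cSL{\T_\mrmv(\Ed)}{\QP}{\mssd_\dUpsilon})\iff(\dSL{\mssd_\dUpsilon}{\QP})$ of~\eqref{eq:EquivalenceRadStoL}. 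Combining the two inequalities gives $\mssd_\QP=\mssd_\dUpsilon$.

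As for the difficulty: the assembly above is essentially bookkeeping, and the real obstacles sit upstream, in Theorems~\ref{t:Rademacher} and~\ref{t:cSLUpsilon}. The `$\leq$' side rests on the simultaneous finite-dimensional (Kuwae--Shioya) approximation of $\EE{\dUpsilon}{\QP}$ along all labeling maps and on the tensorization of the Rademacher property of the base space (Theorem~\ref{t:Tensor}); the `$\geq$' side rests on the specification/conditioning reduction of the Sobolev-to-Lipschitz property on $\dUpsilon$ to the forms $\EE{\dUpsilon(B_r)}{\QP^\eta_{B_r}}$ (Propositions~\ref{p:MarginalFormDomains} and~\ref{p:cSLProjQP}). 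Within the present proof, the only point deserving genuine care is to verify that the two hypothesis packages --- conditional closability plus conditional equivalence on the one hand, and the resulting marginal absolute continuity plus global closability on the other --- are mutually consistent, and that the \emph{single} extended distance $\mssd_\dUpsilon$ is used as reference throughout, so that the two inequalities genuinely compare the same pair of extended distances.
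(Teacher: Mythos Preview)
Your proposal is correct and follows exactly the same route as the paper: obtain $(\Rad{\mssd_\dUpsilon}{\QP})$ via Theorem~\ref{t:Rademacher}, upgrade it to $(\dRad{\mssd_\dUpsilon}{\QP})$ using Proposition~\ref{p:ConfigEMTS} and \cite[Lem.~3.6]{LzDSSuz20} for the inequality `$\leq$', and invoke Corollary~\ref{c:dSLUpsilon-cSL} for `$\geq$'. Your version is in fact more explicit than the paper's about why the hypotheses of Theorem~\ref{t:Rademacher} (namely~\ref{ass:AC} and~\ref{ass:CWP}, deduced from~\ref{ass:CCr} via Proposition~\ref{p:CACtoAC} and Theorem~\ref{t:ClosabilitySecond}) and the cut-off condition~\eqref{eq:p:cSLProjQP:0} are met, which the paper leaves implicit.
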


\begin{cor}\label{c:PoissonEMTSp}
Under the same assumptions as in either Theorem~\ref{t:StoL2}, the space $\ttonde{\dUpsilon, \T_\mrmv(\Ed), \mssd_\QP}$ is a complete extended metric topological space. In particular:~$\mssd_\QP$ is non-trivial, i.e.~$\mssd_\QP\not\equiv 0,+\infty$.
\end{cor}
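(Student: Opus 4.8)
The proof of Corollary~\ref{c:PoissonEMTSp} is now essentially a matter of assembling facts that have already been established. The plan is to first invoke Theorem~\ref{t:StoL2}, whose hypotheses are precisely those of the corollary, to obtain the identity $\mssd_\QP=\mssd_\dUpsilon$. This reduces the assertion about $\mssd_\QP$ to the corresponding assertion about $\mssd_\dUpsilon$, which is in principle known independently of the Dirichlet-form structure.

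Next I would recall that, by Proposition~\ref{p:ConfigEMTS}, the triple $\ttonde{\dUpsilon,\T_\mrmv(\Ed),\mssd_\dUpsilon}$ is an extended metric-topological space in the sense of Definition~\ref{d:AES}. Transporting this along the equality $\mssd_\QP=\mssd_\dUpsilon$ gives that $\ttonde{\dUpsilon,\T_\mrmv(\Ed),\mssd_\QP}$ is an extended metric-topological space as well. For completeness, one option is to argue directly: since $\mssd_\QP=\mssd_\dUpsilon$, the $\mssd_\QP$-Cauchy sequences are exactly the $\mssd_\dUpsilon$-Cauchy sequences, and within each ball $B^{\mssd_\dUpsilon}_\infty(\eta)\subset\dUpsilon^\sym{N}(\Ed)$ (Proposition~\ref{p:W2Upsilon}\iref{i:p:W2Upsilon:1}) the space $(\dUpsilon^\sym{N},\mssd_\dUpsilon)$ is complete by Proposition~\ref{p:EH}\iref{i:p:EH:1}, being the completion of $(\Upsilon^\sym{N}(\Ed),\mssd_\Upsilon)$; hence $\ttonde{\dUpsilon,\mssd_\QP}$ is complete. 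Alternatively, and perhaps more cleanly, one may instead deduce completeness from the Rademacher-type input: the space satisfies $(\dRad{\mssd_\dUpsilon}{\QP})$ by the proof of Theorem~\ref{t:StoL2}, so Lemma~\ref{l:RadCompleteness} (applied with $\mssd=\mssd_\dUpsilon$ and recalling $\mssd_\QP=\mssd_{\mssd_\dUpsilon,\QP}$ in the notation there, i.e.\ the intrinsic distance of $\EE{\dUpsilon}{\QP}$) yields that $\ttonde{\dUpsilon,\T_\mrmv(\Ed),\mssd_\QP}$ is a complete extended metric-topological Radon measure space.

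Finally, non-triviality of $\mssd_\QP$ follows from the example discussion around Proposition~\ref{p:W2Upsilon2}: choosing an open $U\in\Ed$ with $\mssm U>0$ and $\mssm U^\complement>0$, the set $\Lambda_{\gamma,U}$ has both $\QP$-positive measure and $\QP$-positive complement for suitable $\gamma$ (using that $\QP$ is, say, conditionally equivalent to Poisson under the standing assumptions, or more simply just exhibiting two configurations at finite positive $\mssd_\dUpsilon$-distance), so $\rho_{\gamma,U}=\mssd_\dUpsilon(\emparg,\Lambda_{\gamma,U})=\mssd_\QP(\emparg,\Lambda_{\gamma,U})$ is a non-constant finite $\mssd_\QP$-Lipschitz function, whence $\mssd_\QP\not\equiv 0$; and $\mssd_\QP\not\equiv+\infty$ since, e.g., two configurations differing by moving a single particle a finite $\mssd$-distance are at finite $\mssd_\dUpsilon$-distance. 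I expect no genuine obstacle here: the only point requiring a little care is making sure the completeness statement is phrased for the \emph{extended} metric-topological structure (completeness of each $B^{\mssd_\QP}_\infty$, not of $\dUpsilon$ globally, which is false since there are uncountably many components at infinite distance), and that the uniformity of $\T_\mrmv(\Ed)$-continuous pseudo-distances generating $\T_\mrmv(\Ed)$ is the one already produced in Proposition~\ref{p:ConfigEMTS}, now read through the identity $\mssd_\QP=\mssd_\dUpsilon$.
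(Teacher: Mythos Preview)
Your proposal is correct and matches the paper's intended argument: the corollary is stated without proof in the paper, and the implicit reasoning is precisely to combine the identity $\mssd_\QP=\mssd_\dUpsilon$ from Theorem~\ref{t:StoL2} with the already-established properties of $\mssd_\dUpsilon$ (Proposition~\ref{p:ConfigEMTS} for the extended metric-topological structure, Proposition~\ref{p:EH}\iref{i:p:EH:1} for completeness). Your alternative route via Lemma~\ref{l:RadCompleteness} is in fact exactly how the paper obtains the same conclusion earlier, in the proof of Lemma~\ref{l:EleCh}, so both of your suggested arguments are aligned with the paper.
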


\begin{rem}[Relation with Varadhan short time asymptotic]
The identification $\mssd_\QP=\mssd_\dUpsilon$ does not necessarily imply the integral Varadhan short time asymptotic w.r.t.\ $\mssd_\dUpsilon$ since $\mssd_\dUpsilon$ does not generate the topology. This point will be investigated in \S\ref{s:Applications} and also in a forthcoming paper.
\end{rem}

\section{Applications}\label{s:Applications}
We collect here some applications of the results and ideas presented in~\S\ref{s:Interplay}.
As anticipated in~\S\ref{s:Intro}, further applications to the study of curvature of configuration spaces and to stochastic analysis of infinite particle systems will be the subject of other papers in this series.

\subsection{Applications of the Rademacher property}
Let us start by collecting applications of the Rademacher property, hence in particular of Theorem~\ref{t:Rademacher}.

\subsubsection{Topological properties: quasi-regularity and strong locality}
The following result shows that the Rademacher property implies the quasi-regularity of the form on~$\dUpsilon$.

\begin{prop}[Quasi-regularity]\label{p:QRegSLoc}
Let~$(\mcX,\cdc)$ be an \MLDS, and~$\QP$ be a probability measure on $\ttonde{\dUpsilon,\A_\mrmv(\Ed)}$ satisfying Assumption~\ref{ass:CWP}.
If~$(\dUpsilon,\SF{\dUpsilon}{\QP},\mssd_\dUpsilon)$ satisfies $(\Rad{\mssd_\dUpsilon}{\QP})$, then the form $\ttonde{\EE{\dUpsilon}{\QP},\dom{\EE{\dUpsilon}{\QP}}}$ is a conservative quasi-regular strongly local Dirichlet form on~$L^2(\QP)$.
\begin{proof}
Since~$\QP$ is a probability measure,~$L^2(\QP)$ contains constant functions.
By definition of~$\SF{\dUpsilon}{\QP}$ we have~$\SF{\dUpsilon}{\QP}(\car)\equiv 0$, and conservativeness follows.

In order to show the quasi-regularity, it suffices to show that the $\EE{\dUpsilon}{\QP}_1$-capacity is tight.
Since~$\QP$ is Radon by Remark~\ref{r:QPRadon}, there exists a sequence~$\seq{\Kappa_n}_n$ of $\T_\mrmv(\Ed)$-compact sets so that~$\nlim \QP \Kappa_n=1$.
For~$n\in \N$ let~$\rho_{\Kappa_n}$ be defined as in~\eqref{eq:W2Upsilon:0} and note that the sub-level sets~$\set{\rho_{\Kappa_n}\leq 1/2}$ are $\T_\mrmv(\Ed)$-compact by Proposition~\ref{p:W2Upsilon}\iref{i:p:W2Upsilon:8}.
By~$(\Rad{\mssd_\dUpsilon}{\QP})$ we have that~$\sup_n \EE{\dUpsilon}{\QP}(\rho_{\Kappa_n}\wedge 1)<\infty$, hence, by e.g.~\cite[Lem.~I.2.12]{MaRoe92}, there exists a subsequence~$\seq{k_n}_n$ so that~$u_n\eqdef \tfrac{1}{n}\sum_j^n \rho_{\Kappa_{k_j}}\wedge 1$ $\ttonde{\EE{\dUpsilon}{\QP}_1}^{1/2}$-converges to~$0$.
Furthermore,~$u_n\geq 1/2$ on~$\tset{\rho_{\Kappa_{k_j}}\geq 1/2}$ for all~$j\leq n$.
As a consequence,
\begin{align*}
\Cap_{\EE{\dUpsilon}{\QP}_1}\set{\rho_{\Kappa_{k_n}}> 1/2}\leq \Cap_{\EE{\dUpsilon}{\QP}_1}\set{u_n> 1/2} \leq \EE{\dUpsilon}{\QP}_1(u_n) \xrightarrow{n\rar\infty} 0\comma
\end{align*}
which shows that~$\Cap_{\EE{\dUpsilon}{\QP}_1}$ is tight.

Concerning strong locality, by conservativeness it suffices to show locality.
By quasi-regularity, this follows exactly as in the proof of~\cite[Prop.~4.12]{MaRoe00}.
\end{proof}
\end{prop}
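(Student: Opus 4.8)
The statement to prove is Proposition~\ref{p:QRegSLoc}: under Assumption~\ref{ass:CWP} and the Rademacher property $(\Rad{\mssd_\dUpsilon}{\QP})$, the form $\ttonde{\EE{\dUpsilon}{\QP},\dom{\EE{\dUpsilon}{\QP}}}$ is a conservative, quasi-regular, strongly local Dirichlet form on $L^2(\QP)$.

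The plan is to verify the three properties in turn. For \emph{conservativeness}, since $\QP$ is a probability measure, the constant function $\car$ lies in $L^2(\QP)$; by construction of the lifted square field $\SF{\dUpsilon}{\QP}$ via the chain rule in~\eqref{eq:d:LiftCdC}, one has $\SF{\dUpsilon}{\QP}(\car)\equiv 0$, so $\car\in\dom{\EE{\dUpsilon}{\QP}}$ with $\EE{\dUpsilon}{\QP}(\car)=0$, which is exactly conservativeness. For \emph{quasi-regularity}, the key is to produce an $\EE{\dUpsilon}{\QP}_1$-nest of compact sets, equivalently to show that the $1$-capacity is tight. Here I would exploit that $\QP$ is Radon (Remark~\ref{r:QPRadon}), giving an increasing sequence of $\T_\mrmv(\Ed)$-compact sets $\Kappa_n$ with $\QP\Kappa_n\to 1$; then the point-to-set distance functions $\rho_{\Kappa_n}=\mssd_\dUpsilon(\emparg,\Kappa_n)$ from~\eqref{eq:W2Upsilon:0} are bounded $\mssd_\dUpsilon$-Lipschitz with Lipschitz constant $\le 1$ (Proposition~\ref{p:W2Upsilon}\iref{i:p:W2Upsilon:9}), their truncations $\rho_{\Kappa_n}\wedge 1$ have $\T_\mrmv(\Ed)$-compact sublevel sets $\set{\rho_{\Kappa_n}\le 1/2}$ (Proposition~\ref{p:W2Upsilon}\iref{i:p:W2Upsilon:8}), and—crucially—$(\Rad{\mssd_\dUpsilon}{\QP})$ gives $\sup_n \EE{\dUpsilon}{\QP}(\rho_{\Kappa_n}\wedge 1)<\infty$. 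A standard Banach--Saks / Cesàro-averaging argument (as in~\cite[Lem.~I.2.12]{MaRoe92}) then produces convex combinations $u_n$ that $\EE{\dUpsilon}{\QP}_1$-converge to $0$ while remaining $\ge 1/2$ on $\set{\rho_{\Kappa_{k_n}}>1/2}$, so $\Cap_{\EE{\dUpsilon}{\QP}_1}\set{\rho_{\Kappa_{k_n}}>1/2}\to 0$, establishing tightness of the capacity and hence $\T_\mrmv(\Ed)$-quasi-regularity.

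For \emph{strong locality}, by conservativeness it suffices to establish locality in the sense of Dirichlet forms: $\EE{\dUpsilon}{\QP}(u,v)=0$ whenever $u,v\in\dom{\EE{\dUpsilon}{\QP}}$ have supports that are disjoint (or, in the strongly local version, whenever $u$ is constant on a neighborhood of $\supp v$). On cylinder functions $\Cyl{\Dz}$ this is immediate from the representation~\eqref{eq:d:LiftCdCRep} of $\rep\cdc^\dUpsilon$ as a sum of products of derivatives $\partial_i F$, $\partial_j G$ against $\cdc(f_i,g_j)^\trid$: if $u$ is locally constant where $v$ lives, the relevant partial derivatives vanish on the support of $v$. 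The passage from the core to the full domain then follows from quasi-regularity by the standard argument used in the proof of~\cite[Prop.~4.12]{MaRoe00}, which carries over verbatim once one knows the form is quasi-regular and the square field operator is given by~\eqref{eq:d:LiftCdC}.

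The main obstacle is the quasi-regularity step, specifically the tightness of the $\EE{\dUpsilon}{\QP}_1$-capacity. The configuration space $\ttonde{\dUpsilon,\T_\mrmv(\Ed)}$ is Polish but \emph{not} locally compact, so one cannot simply invoke regularity; the only lever available is the Rademacher property, which converts the metric compactness furnished by Proposition~\ref{p:W2Upsilon} (balls and sublevel sets of $\rho_{\Kappa_n}$ being $\T_\mrmv(\Ed)$-compact) into a uniform $\EE{\dUpsilon}{\QP}$-energy bound on the cut-off functions $\rho_{\Kappa_n}\wedge 1$. The delicate point is that $\mssd_\dUpsilon$ is an \emph{extended} distance not generating $\T_\mrmv(\Ed)$, and $\mssd_\dUpsilon$-balls are typically $\QP$-negligible, so one must work with the point-to-\emph{set} distances to compact sets of nearly full measure rather than with balls; the Radon property of $\QP$ is what makes such compacta available in the first place. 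Once the uniform energy bound is in hand, the Cesàro-averaging trick to force $\EE{\dUpsilon}{\QP}_1$-convergence of a subsequence to zero is routine.
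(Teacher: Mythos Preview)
Your proposal is correct and follows essentially the same approach as the paper's own proof: conservativeness from $\SF{\dUpsilon}{\QP}(\car)\equiv 0$; tightness of the $\EE{\dUpsilon}{\QP}_1$-capacity via the Radon property, the point-to-set $\mssd_\dUpsilon$-distance functions $\rho_{\Kappa_n}\wedge 1$ (with compact sublevel sets by Proposition~\ref{p:W2Upsilon}\iref{i:p:W2Upsilon:8} and uniformly bounded energy by $(\Rad{\mssd_\dUpsilon}{\QP})$), and the Ces\`aro-averaging lemma~\cite[Lem.~I.2.12]{MaRoe92}; and strong locality reduced via conservativeness to locality and then handled by quasi-regularity as in~\cite[Prop.~4.12]{MaRoe00}. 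The only cosmetic addition is your brief explanation of locality on cylinder functions, which the paper omits.
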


\begin{rem}[Comparison with~{\cite[\S4]{MaRoe00}}, part II]
A proof of quasi-regularity can also be provided by showing that an~\MLDS satisfying the Rademacher property additionally satisfies Assumption~(Q) in~\cite[p.~298]{MaRoe00}; also cf.~Remark~\ref{r:ComparisonMR00-1} for a comparison of all other assumptions.
\end{rem}

As a corollary of Proposition~\ref{p:QRegSLoc} and Theorem~\ref{t:Rademacher} we obtain the following.
\begin{cor}\label{c:RadQReg}
Let~$(\mcX,\cdc,\mssd)$ be an \MLDS satisfying~$(\Rad{\mssd}{\mssm})$ and assume that $\Dz\subset \bLip(\T,\mssd)$. Further let~$\QP$ be a probability measure on~$\dUpsilon$ satisfying Assumptions~\ref{ass:AC} and~\ref{ass:CWP}.
Then the form $\ttonde{\EE{\dUpsilon}{\QP},\dom{\EE{\dUpsilon}{\QP}}}$ is a conservative quasi-regular strongly local Dirichlet form on~$L^2(\QP)$.
\end{cor}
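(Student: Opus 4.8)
The plan is to observe that Corollary~\ref{c:RadQReg} is an immediate synthesis of two results already in hand, so the proof is essentially a one-line assembly. First I would invoke Theorem~\ref{t:Rademacher}: under the hypotheses that $(\mcX,\cdc,\mssd)$ is an \MLDS satisfying $(\Rad{\mssd}{\mssm})$, that $\Dz\subset\bLip(\T,\mssd)$, and that $\QP$ satisfies Assumptions~\ref{ass:AC} and~\ref{ass:CWP}, the \EMLDS $\ttonde{\dUpsilon,\SF{\dUpsilon}{\QP},\mssd_\dUpsilon}$ satisfies $(\Rad{\mssd_\dUpsilon}{\QP})$. This is exactly the input needed for the next step, since all the standing assumptions of Theorem~\ref{t:Rademacher} already include those of Proposition~\ref{p:QRegSLoc}.

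Second, I would apply Proposition~\ref{p:QRegSLoc} with this conclusion: that proposition takes an \MLDS $(\mcX,\cdc)$ together with a probability measure $\QP$ on $\ttonde{\dUpsilon,\A_\mrmv(\Ed)}$ satisfying Assumption~\ref{ass:CWP}, and, \emph{provided} $(\dUpsilon,\SF{\dUpsilon}{\QP},\mssd_\dUpsilon)$ satisfies $(\Rad{\mssd_\dUpsilon}{\QP})$, yields that $\ttonde{\EE{\dUpsilon}{\QP},\dom{\EE{\dUpsilon}{\QP}}}$ is a conservative, quasi-regular, strongly local Dirichlet form on $L^2(\QP)$. Since Assumption~\ref{ass:CWP} is among the hypotheses of the corollary, and the Rademacher property on $\dUpsilon$ was just established, the hypotheses of Proposition~\ref{p:QRegSLoc} are met, and its conclusion is precisely the assertion of the corollary.

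There is essentially no obstacle here: the entire content of Corollary~\ref{c:RadQReg} lies in Theorem~\ref{t:Rademacher} and Proposition~\ref{p:QRegSLoc}, whose proofs contain all the substantive work (the finite-dimensional Kuwae--Shioya approximation and tensorization for the former; tightness of the $\EE{\dUpsilon}{\QP}_1$-capacity via the cut-off functions $\rho_{\Kappa_n}$ of Proposition~\ref{p:W2Upsilon} for the latter). The only thing to check is compatibility of hypotheses, which I would do by noting that an \MLDS with $(\Rad{\mssd}{\mssm})$ and $\Dz\subset\bLip(\T,\mssd)$ is in particular an \MLDS, and that $\QP$ satisfying Assumptions~\ref{ass:AC} and~\ref{ass:CWP} satisfies Assumption~\ref{ass:CWP}. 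Thus the proof reads: By Theorem~\ref{t:Rademacher}, $(\dUpsilon,\SF{\dUpsilon}{\QP},\mssd_\dUpsilon)$ satisfies $(\Rad{\mssd_\dUpsilon}{\QP})$; the conclusion then follows from Proposition~\ref{p:QRegSLoc}. \qed

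\begin{proof}
By Theorem~\ref{t:Rademacher}, under the stated assumptions the \EMLDS $\ttonde{\dUpsilon,\SF{\dUpsilon}{\QP},\mssd_\dUpsilon}$ satisfies the Rademacher property $(\Rad{\mssd_\dUpsilon}{\QP})$.
Since $\QP$ satisfies Assumption~\ref{ass:CWP} (being part of the hypotheses), we may apply Proposition~\ref{p:QRegSLoc}, which yields that $\ttonde{\EE{\dUpsilon}{\QP},\dom{\EE{\dUpsilon}{\QP}}}$ is a conservative quasi-regular strongly local Dirichlet form on~$L^2(\QP)$.
\end{proof}
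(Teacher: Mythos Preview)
Your proposal is correct and matches the paper's approach exactly: the paper presents this corollary immediately after Proposition~\ref{p:QRegSLoc} with the sentence ``As a corollary of Proposition~\ref{p:QRegSLoc} and Theorem~\ref{t:Rademacher} we obtain the following,'' and gives no further proof. Your two-step argument---Theorem~\ref{t:Rademacher} establishes $(\Rad{\mssd_\dUpsilon}{\QP})$, then Proposition~\ref{p:QRegSLoc} yields the conclusion---is precisely the intended assembly.
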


When $X$ satisfies~$(\Rad{\mssd}{\mssm})$, the above corollary gives a very mild sufficient condition for the quasi-regularity. In particular, when~$\QP$ is a quasi-Gibbs measure such that~$\ttonde{\EE{\dUpsilon}{\QP},\CylQP{\QP}{\Dz}}$ is closable (i.e.,~\ref{ass:CWP} holds), then the corresponding closure~$\ttonde{\EE{\dUpsilon}{\QP},\dom{\EE{\dUpsilon}{\QP}}}$ is automatically quasi-regular.
For the definition of the quasi-Gibbs measures see Definition~\ref{d:QuasiGibbs} below.
\begin{cor}\label{c:EucQReg}
Let~$(\mcX,\cdc,\mssd)$ be an \MLDS satisfying~$(\Rad{\mssd}{\mssm})$ and assume that $\Dz\subset \bLip(\T,\mssd)$.
Let~$\QP$ be a quasi-Gibbs measure satisfying~\ref{ass:CWP}. Then, $\ttonde{\EE{\dUpsilon}{\QP},\dom{\EE{\dUpsilon}{\QP}}}$ is quasi-regular.
\end{cor}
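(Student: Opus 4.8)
The plan is to reduce the statement to Corollary~\ref{c:RadQReg}. That corollary asserts that $\ttonde{\EE{\dUpsilon}{\QP},\dom{\EE{\dUpsilon}{\QP}}}$ is a conservative, $\T_\mrmv(\Ed)$-quasi-regular, strongly local Dirichlet form on $L^2(\QP)$ whenever $(\mcX,\cdc,\mssd)$ is an \MLDS satisfying $(\Rad{\mssd}{\mssm})$ with $\Dz\subset\bLip(\T,\mssd)$, and $\QP$ satisfies Assumptions~\ref{ass:AC} and~\ref{ass:CWP}. Under the hypotheses of the present corollary all of these inputs are already granted except~\ref{ass:AC}: the base space satisfies $(\Rad{\mssd}{\mssm})$ and $\Dz\subset\bLip(\T,\mssd)$ by assumption, and~\ref{ass:CWP} is assumed outright for~$\QP$. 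Hence the only thing left to verify is that every quasi-Gibbs measure satisfies~\ref{ass:AC}.

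First I would unwind the definition of a quasi-Gibbs measure (Dfn.~\ref{d:QuasiGibbs}): by construction, along a suitable localizing sequence $\seq{E_h}_h$ and for $\QP$-a.e.\ $\eta\in\dUpsilon$, the projected conditional probabilities $\QP^\eta_{E_h}$ on $\dUpsilon(E_h)$ are absolutely continuous with respect to the Poisson--Lebesgue measures $\PP_{\mssm_{E_h}}$ (in fact quasi-Gibbs measures satisfy the stronger conditional equivalence~\ref{ass:CE}). This is exactly the conditional absolute continuity assumption~\ref{ass:CAC}; any ambiguity about which localizing sequence to use is harmless, since any two localizing sequences are mutually cofinal. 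I would then invoke Proposition~\ref{p:CACtoAC}, which establishes $\ref{ass:CAC}\Rightarrow\ref{ass:AC}$: an $\mssm^\otym{n}$-null set $A^\asym{n}$ pulls back, through $\tr_n^{-1}$ and $\Lb$, to a set which is $\PP_{\mssm_{E_h}}$-negligible by the explicit Poisson--Lebesgue computation~\eqref{eq:p:QuasiGibbsAC:1}, hence $\QP^{\eta_{E_h^\complement}}$-negligible, hence $\QP$-negligible after integration in~$\eta$ and passage to the limit $h\to\infty$. Thus $\QP$ satisfies~\ref{ass:AC}. Alternatively, one may simply cite Remark~\ref{r:AC}, where it is recorded that all quasi-Gibbs measures satisfy~\ref{ass:AC}.

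With~\ref{ass:AC} in hand, Corollary~\ref{c:RadQReg} applies verbatim and yields that $\ttonde{\EE{\dUpsilon}{\QP},\dom{\EE{\dUpsilon}{\QP}}}$ is a conservative quasi-regular strongly local Dirichlet form on $L^2(\QP)$; in particular it is quasi-regular, which is the assertion. The only genuine content of the proof is the implication ``quasi-Gibbs $\Rightarrow$ \ref{ass:AC}'', and even this is essentially definitional once Proposition~\ref{p:CACtoAC} is available; I expect the only mildly delicate point to be the bookkeeping about localizing sequences (matching the one witnessing the quasi-Gibbs property with one appearing in~\ref{ass:CAC}), which is immediate. Everything else is a direct citation of results established earlier in the paper.
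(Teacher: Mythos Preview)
Your proposal is correct and matches the paper's approach exactly. The paper states Corollary~\ref{c:EucQReg} without an explicit proof, but the surrounding text (and the discussion in~\S\ref{sss:ExamplesAC}) makes clear that the intended argument is precisely yours: the quasi-Gibbs bound~\eqref{eq:localACquasiGibbs} immediately yields~\ref{ass:CAC}, Proposition~\ref{p:CACtoAC} upgrades this to~\ref{ass:AC}, and then Corollary~\ref{c:RadQReg} applies directly.
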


\subsubsection{Distance and heat-kernel estimates}\label{sss:DistHeatEstimates}
We collect here two results on estimates for the $L^2$-transporta\-tion distance and the heat kernel.

The first one is a type of integral Gaussian estimates for the heat semigroup in terms of the set-to-set $L^2$-transportation distance.
On general Dirichlet spaces, and in fact on most infinite-dimensional spaces, the existence of an absolutely continuous heat kernel measure should not be expected.
Gaussian estimates are therefore only available in integral form. In such form they may be addressed in terms of the maximal functions in Definition~\ref{d:MaximalFunction}.
However, as anticipated in~\S\ref{sss:DistHeatEstimates} and thoroughly discussed in~\cite[\S4]{LzDSSuz20}, maximal functions are hardly computable. 
The next corollary gives an explictly computable upper bound for the heat semigroup, as further detailed in Remark~\ref{r:ExplicitEstimates}.

\begin{cor}[Upper Gaussian estimates]\label{c:HeatKernelEstimateConfig}
Let~$(\mcX,\cdc)$ be an \MLDS, and~$\QP$ be a probability measure on~$\ttonde{\dUpsilon,\A_\mrmv(\Ed)}$ satisfying Assumption~\ref{ass:CWP}.
If~$(\dUpsilon,\SF{\dUpsilon}{\QP},\mssd_\dUpsilon)$ satisfies $(\Rad{\mssd_\dUpsilon}{\QP})$, e.g.\ under the assumptions of Theorem~\ref{t:Rademacher}, then
\begin{equation*}
\TT{\dUpsilon}{\QP}_t(\Lambda_1, \Lambda_2) \leq \sqrt{\QP\Lambda_1\cdot \QP\Lambda_2} \cdot \exp \tonde{-\frac{\mssd_\dUpsilon(\Lambda_1, \Lambda_2)^2}{2t}}\comma\qquad t>0\comma
\end{equation*}
for every~$\Lambda_1,\Lambda_2\in\A_\mrmv(\msE)$ so that~$\mssd_\dUpsilon(\emparg, \Lambda_i)$ is $\A_\mrmv(\msE)$-measurable for either~$i=1,2$.

\begin{proof}
By~\cite[Lem.~4.16]{LzDSSuz20}, we have that
\begin{align*}
-\tonde{\QP\text{-}\essinf_{\eta\in\Lambda_1} \hr{\QP,\Lambda_2}(\eta)}^2 \leq - \tonde{\QP\text{-}\essinf_{\eta\in\Lambda_1} \mssd_\dUpsilon(\eta, \Lambda_2)}^2\comma
\end{align*}
and analogously when exchanging the roles of~$\Lambda_1$ and~$\Lambda_2$. The conclusion now follows from~\cite[Thm.~4.2]{AriHin05}.
\end{proof}
\end{cor}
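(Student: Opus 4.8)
The plan is to reduce the statement about the semigroup $\TT{\dUpsilon}{\QP}_\bullet$ to the general-theory estimate for intrinsic distances of Dirichlet forms, which is exactly the content of the cited results in~\cite{LzDSSuz20, AriHin05}. The key observation is that the maximal function $\hr{\QP,\Lambda_2}$ associated to the set $\Lambda_2$ (Definition~\ref{d:MaximalFunction}) always \emph{dominates} the point-to-set distance $\mssd_\dUpsilon(\emparg,\Lambda_2)$: indeed, whenever $(\dUpsilon,\SF{\dUpsilon}{\QP},\mssd_\dUpsilon)$ satisfies $(\Rad{\mssd_\dUpsilon}{\QP})$, the function $\eta\mapsto \mssd_\dUpsilon(\eta,\Lambda_2)\wedge r$ is (by Proposition~\ref{p:W2Upsilon}\iref{i:p:W2Upsilon:9}) $1$-Lipschitz with respect to $\mssd_\dUpsilon$, vanishes on $\Lambda_2$, and ---~by the Rademacher property~--- lies in $\DzLoc{\QP}$ with square field bounded by $1$; it is thus one of the competitors in the $\QP$-essential supremum defining $\hr{\QP,\Lambda_2}\wedge r$. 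This is precisely the inequality recorded in~\cite[Lem.~4.16]{LzDSSuz20}, which I would invoke directly rather than re-prove.

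First I would fix $\Lambda_1,\Lambda_2\in\A_\mrmv(\msE)$ as in the statement, so that $\mssd_\dUpsilon(\emparg,\Lambda_i)$ is $\A_\mrmv(\msE)$-measurable for $i=1,2$ (this is the only measurability needed to make the following $\QP$-essential-infima well defined; it holds, e.g., whenever $\Lambda_i$ is $\T_\mrmv(\msE)$-closed by Proposition~\ref{p:W2Upsilon}\iref{i:p:W2Upsilon:8}, or more generally, up to $\QP$-completion, by Corollary~\ref{c:MeasurabilitydU}). Next I would record the two chains of inequalities
\begin{align*}
-\tonde{\QP\text{-}\essinf_{\eta\in\Lambda_1}\hr{\QP,\Lambda_2}(\eta)}^2 &\leq -\tonde{\QP\text{-}\essinf_{\eta\in\Lambda_1}\mssd_\dUpsilon(\eta,\Lambda_2)}^2\comma
\\
-\tonde{\QP\text{-}\essinf_{\eta\in\Lambda_2}\hr{\QP,\Lambda_1}(\eta)}^2 &\leq -\tonde{\QP\text{-}\essinf_{\eta\in\Lambda_2}\mssd_\dUpsilon(\eta,\Lambda_1)}^2\comma
\end{align*}
each a consequence of the domination of the maximal function by the point-to-set distance together with the order-reversing effect of the map $t\mapsto -t^2$ on $[0,\infty]$. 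Finally I would apply the Ariyoshi--Hino integral Gaussian bound~\cite[Thm.~4.2]{AriHin05} to the quasi-regular strongly local Dirichlet form $\ttonde{\EE{\dUpsilon}{\QP},\dom{\EE{\dUpsilon}{\QP}}}$ ---~which is indeed conservative, quasi-regular and strongly local by Proposition~\ref{p:QRegSLoc}, the applicability of that proposition being guaranteed by $(\Rad{\mssd_\dUpsilon}{\QP})$~--- obtaining
\begin{equation*}
\TT{\dUpsilon}{\QP}_t(\Lambda_1,\Lambda_2)\leq \sqrt{\QP\Lambda_1\cdot\QP\Lambda_2}\,\exp\tonde{-\frac{1}{2t}\ttonde{\QP\text{-}\essinf_{\eta\in\Lambda_1}\hr{\QP,\Lambda_2}(\eta)}^2}\comma
\end{equation*}
and symmetrically with the roles of $\Lambda_1,\Lambda_2$ exchanged; combining this with the displayed inequality above and with the elementary fact that $\QP\text{-}\essinf_{\eta\in\Lambda_1}\mssd_\dUpsilon(\eta,\Lambda_2)\geq \mssd_\dUpsilon(\Lambda_1,\Lambda_2)$ (where the right-hand side denotes the set-to-set distance $\inf_{\gamma\in\Lambda_1,\eta\in\Lambda_2}\mssd_\dUpsilon(\gamma,\eta)$) yields the claimed bound.

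The only genuinely non-routine point is the verification that $(\dUpsilon,\SF{\dUpsilon}{\QP},\mssd_\dUpsilon)$ satisfying $(\Rad{\mssd_\dUpsilon}{\QP})$ puts us in the hypotheses of both~\cite[Lem.~4.16]{LzDSSuz20} and~\cite[Thm.~4.2]{AriHin05}; but this is exactly what Proposition~\ref{p:QRegSLoc} together with Proposition~\ref{p:ConfigEMTS} (which identifies $\ttonde{\dUpsilon,\T_\mrmv(\Ed),\mssd_\dUpsilon}$ as a complete extended metric-topological space) provide, so no extra work is needed beyond assembling these citations. Everything else is the formal manipulation of essential infima and the monotonicity of $t\mapsto e^{-t^2/2t}$, which I would not spell out in detail. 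Thus the proof is essentially a two-line deduction from~\cite[Lem.~4.16]{LzDSSuz20} and~\cite[Thm.~4.2]{AriHin05}, exactly as the excerpt's own short proof indicates; the work lies entirely in the earlier sections that establish the Rademacher property and the quasi-regularity, which we are entitled to assume here.
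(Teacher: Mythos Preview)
Your proposal is correct and follows essentially the same two-step approach as the paper's proof: invoke \cite[Lem.~4.16]{LzDSSuz20} for the comparison $\hr{\QP,\Lambda_2}\geq \mssd_\dUpsilon(\emparg,\Lambda_2)$ (hence $-\hr{}^2\leq -\mssd_\dUpsilon^2$), then apply the Ariyoshi--Hino bound \cite[Thm.~4.2]{AriHin05}. Your added explanation of \emph{why} the Rademacher property yields the comparison (via the competitor $\mssd_\dUpsilon(\emparg,\Lambda_2)\wedge r$) and of the quasi-regularity needed for \cite{AriHin05} (via Proposition~\ref{p:QRegSLoc}) is accurate and helpful. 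One minor slip: midway you write ``domination of the maximal function \emph{by} the point-to-set distance'', which reverses the direction you (correctly) stated earlier; the displayed inequalities are right, so this is only a wording error.
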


\begin{rem}[Explicit estimates]\label{r:ExplicitEstimates}
Under the same assumptions as in Corollary~\ref{c:HeatKernelEstimateConfig}, let~$U\in\Ed$ be $\T$-open and so that $X\setminus U$ has non-empty $\T$-interior, and fix~$\gamma$, $\eta\in\dUpsilon$.
Further suppose that~$\QP$ has full $\T_\mrmv(\Ed)$-support.
Then, the estimate in Corollary~\ref{c:HeatKernelEstimateConfig}, specialized to specific sets.
For sets~$\Lambda=\Lambda_{\gamma, U}$ of the form~\eqref{eq:W2Upsilon:0}, we have that
\begin{align}\label{eq:SemigroupEstimate}
\TT{\dUpsilon}{\QP}_t(\Lambda_{\gamma, U}, \Lambda_{\eta, U}) \leq \sqrt{\QP \Lambda_{\gamma, U} \cdot \QP\Lambda_{\eta, U}} \cdot \exp\tonde{-\frac{\mssd_\dUpsilon(\gamma_U,\eta_U)^2}{2t}} \semicolon
\end{align}
The above estimate can be made even more explicit in the case of Poisson resp.\ Ruelle-type Gibbs measures, in which case one may compute explicitly, resp.\ sharply estimate, the $\QP$-measure of the sets involved, e.g.\ by means of~\eqref{eq:PoissonRestriction} and~\eqref{eq:PoissonLebesgue}, resp.\ Remark~\ref{r:ExamplesQT}.

\begin{proof}[Proof of~\eqref{eq:SemigroupEstimate}]
Since~$\mssd_\dUpsilon(\emparg, \Lambda_{\gamma, U})$ is continuous (hence measurable) by Proposition~\ref{p:W2Upsilon2}\iref{i:p:W2Upsilon:12}, we may apply Corollary~\ref{c:HeatKernelEstimateConfig} to obtain 
\begin{align*}
\TT{\dUpsilon}{\QP}_t(\Lambda_{\gamma, U}, \Lambda_{\eta, U}) 
\leq& \sqrt{\QP \Lambda_{\gamma, U} \cdot \QP\Lambda_{\eta, U}} \cdot \exp\tonde{-\frac{\mssd_\dUpsilon(\Lambda_{\gamma, U},\Lambda_{\eta, U})^2}{2t}}\comma
\end{align*}
and the conclusion follows by computing~$\mssd_\dUpsilon(\Lambda_{\gamma, U},\Lambda_{\eta, U})\leq\mssd_\dUpsilon(\gamma_U,\eta_U)$.
\end{proof}
\end{rem}

As a second result, we show that the \emph{analytic} properties of~$\ttonde{\EE{\dUpsilon}{\QP},\dom{\EE{\dUpsilon}{\QP}}}$ ---~or, equivalently, the \emph{stochastic} properties of the associated Markov process~--- allow us to make statements about the \emph{geometry} of the $L^2$-transportation distance~$\mssd_\dUpsilon$.

Since~$\mssd_\dUpsilon$ is an extended distance, it is natural to ask: When is the set-to-set distance~$\mssd_\dUpsilon(\Lambda_1,\Lambda_2)$ finite?
In general, it is extremely difficult to answer this question for arbitrary sets~$\Lambda_1$, $\Lambda_2$ only by means of the metric geometry of~$\ttonde{\dUpsilon,\mssd_\dUpsilon}$.
Indeed the answer may depend sensibly on the \emph{shape} of the sets~$\Lambda_i$, and thus on the asymptotic distribution of the points in the support of the elements~$\gamma_i\in\Lambda_i$ at the infiniy of~$X$ (i.e.\ outside $\mssd$-bounded sets).

The next Corollary is particularly relevant, since it shows that \emph{geometric} information on the shape of the sets~$\Lambda_i$ can be substituted by mere \emph{numerical} information on their $\QP$-measure.

\begin{cor}[Finiteness of set-to-set $L^2$-transportation distance]\label{c:DistanceIrreducibile}
Let~$(\mcX,\cdc)$ be an \MLDS, and~$\QP$ be a probability measure on~$\ttonde{\dUpsilon,\A_\mrmv(\Ed)}$ satisfying Assumption~\ref{ass:CWP}.
If~$(\dUpsilon,\SF{\dUpsilon}{\QP},\mssd_\dUpsilon)$ satisfies $(\Rad{\mssd_\dUpsilon}{\QP})$, e.g.\ under the assumptions of Theorem~\ref{t:Rademacher}, and if the form $\ttonde{\EE{\dUpsilon}{\QP},\dom{\EE{\dUpsilon}{\QP}}}$ is irreducible, then
\begin{align*}
\Xi,\Lambda\in \A_\mrmv(\Ed)\comma \QP\Xi, \QP\Lambda >0\comma \mssd_\dUpsilon(\emparg, \Xi) \text{ is $\A_\mrmv(\Ed)$-measurable} \implies \mssd_\dUpsilon(\Lambda, \Xi)<\infty \fstop
\end{align*}

\begin{proof}
Since~$\mssd_\dUpsilon(\emparg, \Xi)$ is $\A_\mrmv(\Ed)$-measurable, it follows from Theorem~\ref{t:Rademacher} and the Definition~\ref{d:MaximalFunction} of maximal function, that~$\mssd_\dUpsilon(\emparg, \Xi)\leq \hr{\QP,\Xi}$ $\QP$-a.e..
As a consequence,
\begin{align*}
\mssd_\dUpsilon(\Lambda, \Xi)\eqdef \inf_{\gamma\in\Lambda} \mssd_\dUpsilon(\gamma, \Xi)\leq \QP\textrm{-}\essinf_{\gamma\in\Lambda} \hr{\QP,\Xi}(\gamma)\defeq \hr{\QP}(\Lambda, \Xi) \fstop
\end{align*}
By irreducibility, and since~$\QP\Lambda,\QP\Xi>0$, we have~$\TT{\dUpsilon}{\QP}_t(\Lambda,\Xi)>0$ for every~$t>0$.
As a consequence,~$\hr{\QP}(\Lambda, \Xi)<\infty$ by combining~\cite[Prop.~5.1 and Prop.~3.11]{AriHin05}, and the conclusion follows.
\end{proof}
\end{cor}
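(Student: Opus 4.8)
The plan is to compare the point-to-set $L^2$-transportation distance with the maximal function~$\hr{\QP,\Xi}$ of~$\ttonde{\EE{\dUpsilon}{\QP},\dom{\EE{\dUpsilon}{\QP}}}$ in the sense of Definition~\ref{d:MaximalFunction}, and then to use irreducibility together with the Ariyoshi--Hino asymptotics to bound the latter. First I would fix~$\Xi\in\A_\mrmv(\Ed)$ with~$\QP\Xi>0$ and with~$\mssd_\dUpsilon(\emparg,\Xi)$ $\A_\mrmv(\Ed)$-measurable, and show that
\begin{equation*}
\mssd_\dUpsilon(\emparg,\Xi)\leq \hr{\QP,\Xi} \as{\QP}\fstop
\end{equation*}
Indeed, for every~$r>0$ the truncation~$\mssd_\dUpsilon(\emparg,\Xi)\wedge r$ lies in~$\Lipu(\mssd_\dUpsilon)$ by Proposition~\ref{p:W2Upsilon}\iref{i:p:W2Upsilon:9}, is $\A_\mrmv(\Ed)$-measurable by hypothesis, vanishes identically on~$\Xi$, and is bounded by~$r$. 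Since~$\ttonde{\dUpsilon,\SF{\dUpsilon}{\QP},\mssd_\dUpsilon}$ satisfies~$(\Rad{\mssd_\dUpsilon}{\QP})$, its $\QP$-class belongs to~$\DzLoc{\QP}$, hence to~$\DzLocB{\QP}$ since it is bounded, with~$\SF{\dUpsilon}{\QP}\ttonde{\mssd_\dUpsilon(\emparg,\Xi)\wedge r}\leq 1$ $\QP$-a.e.. Thus~$\mssd_\dUpsilon(\emparg,\Xi)\wedge r$ is an admissible competitor in the variational definition of~$\hr{\QP,\Xi}\wedge r$, whence~$\mssd_\dUpsilon(\emparg,\Xi)\wedge r\leq \hr{\QP,\Xi}\wedge r$ $\QP$-a.e.; letting~$r\to\infty$ yields the claim. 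This is precisely the content of the companion estimate~\cite[Lem.~4.16]{LzDSSuz20}, which I would invoke directly.

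Next I would pass from pointwise to set-to-set quantities via the elementary comparison of a true infimum with an essential infimum: for any~$\Lambda\in\A_\mrmv(\Ed)$ with~$\QP\Lambda>0$,
\begin{equation*}
\mssd_\dUpsilon(\Lambda,\Xi)\eqdef \inf_{\gamma\in\Lambda}\mssd_\dUpsilon(\gamma,\Xi)\leq \QP\textrm{-}\essinf_{\gamma\in\Lambda}\mssd_\dUpsilon(\gamma,\Xi)\leq \QP\textrm{-}\essinf_{\gamma\in\Lambda}\hr{\QP,\Xi}(\gamma)\defeq \hr{\QP}(\Lambda,\Xi)\fstop
\end{equation*}
It then remains to prove~$\hr{\QP}(\Lambda,\Xi)<\infty$. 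Here I would use that~$\QP\Lambda,\QP\Xi\in(0,1]$ and that the Dirichlet form is irreducible: this gives~$\TT{\dUpsilon}{\QP}_t(\Lambda,\Xi)=\int_\Lambda \TT{\dUpsilon}{\QP}_t\car_\Xi\diff\QP>0$ for every~$t>0$, and combining this with~\cite[Prop.~5.1 and Prop.~3.11]{AriHin05} forces the set-to-set maximal function~$\hr{\QP}(\Lambda,\Xi)$ to be finite. Consequently~$\mssd_\dUpsilon(\Lambda,\Xi)\leq \hr{\QP}(\Lambda,\Xi)<\infty$, which is the assertion.

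The step I expect to be the main obstacle is the inequality~$\mssd_\dUpsilon(\emparg,\Xi)\leq\hr{\QP,\Xi}$ $\QP$-a.e.: this is where both the Rademacher property and the delicate measurability hypothesis on the point-to-set distance are genuinely used, and where one must verify that the $\QP$-class of the truncated point-to-set distance is a bona fide element of the broad local space~$\DzLocB{\QP}$ — not merely of some larger class of functions — so that it competes in the variational definition of~$\hr{\QP,\Xi}$. The remaining steps are a short chain of inequalities plus citations to the theory developed in~\cite{LzDSSuz20} and to the Ariyoshi--Hino estimates in~\cite{AriHin05}.
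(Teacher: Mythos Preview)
Your proposal is correct and follows essentially the same route as the paper's proof: establish~$\mssd_\dUpsilon(\emparg,\Xi)\leq\hr{\QP,\Xi}$ $\QP$-a.e.\ via the Rademacher property (your expanded argument with truncations is exactly the content of the cited~\cite[Lem.~4.16]{LzDSSuz20}), pass to the set-to-set inequality, and conclude finiteness of~$\hr{\QP}(\Lambda,\Xi)$ from irreducibility via the Ariyoshi--Hino estimates. Your write-up is in fact slightly more detailed than the paper's, which compresses the first step into a single sentence.
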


\begin{rem}[About irreducibility]
The irreducibility of the form~$\ttonde{\EE{\dUpsilon}{\QP},\dom{\EE{\dUpsilon}{\QP}}}$ will be the subject of further work in this series.
It is well-studied on configuration spaces over Riemannian manifolds, in which case it is shown to hold for Poisson and grand-canonical Gibbs measures, under the assumption that the corresponding form~$\ttonde{\EE{X}{\mssm},\dom{\EE{X}{\mssm}}}$ be conservative, see~\cite[Thm.~4.3]{AlbKonRoe98} and~\cite[p.~289, Thm.s~6.5, 6.6, and Cor.~6.2]{AlbKonRoe98b}.
\end{rem}

\subsection{Varadhan-type short-time asymptotics}\label{sss:VSTA}
Profiting the ideas in the proof of the continuous Sobolev-to-Lipschitz property,~\S\ref{sss:cSL}, we present here some statements about Varadhan's short-time asymptotics for the heat semigroup of~$\tonde{\EE{\dUpsilon}{\QP},\dom{\EE{\dUpsilon}{\QP}}}$.

In general, it is not easy to pointwise compare the point-to-set distance function~$\mssd_\mssm(\emparg, A)$ and the maximal function~$\hr{\mssm,A}$.
The reason behind this fact lies in the potential lack of measurability of extended point-to-set distances, and in the fact that~$\hr{\mssm,A}$ only depends on the $\mssm$-class of~$A$, whereas~$\mssd(\emparg, A)$ depends quite sensibly on the representative set~$A$; see~\cite[\S4.2]{LzDSSuz20} for the interplay between point-to-set distances and maximal functions.

\subsubsection{Integral Varadhan-type estimates under~\texorpdfstring{\ref{ass:SLTensor}}{SLtensor}}
Everywhere in the following, let~$x_0\in X$ be fixed, and set~$B_r\eqdef B_r^\mssd(x_0)$ for each $r>0$. As customary, for every~$r>0$ we let
\begin{itemize}
\item $\QP^\eta_r\eqdef \QP^\eta_{B_r}$ be defined as in~\eqref{eq:ProjectedConditionalQP};

\item and $u_{r,\eta}\eqdef u_{B_r,\eta}$ be defined as in~\eqref{eq:ConditionalFunction}.
\end{itemize}

\medskip

The form~$\ttonde{\EE{\dUpsilon}{\QP},\dom{\EE{\dUpsilon}{\QP}}}$ satifies the following \emph{approximate} Sobolev-to-$\mssd_\dUpsilon$-Lipschitz property.

\begin{prop}[Approximate $\mssd_\dUpsilon$-Sobolev-to-Lipschitz]\label{p:ReducedSL}
Let~$(\mcX,\cdc,\mssd)$ be an \MLDS satisfying \linebreak \ref{ass:SLTensor} , and~\eqref{eq:p:cSLProjQP:0}, and $\QP$ be a probability measure on~$\ttonde{\dUpsilon,\A_\mrmv(\Ed)}$ satisfying Assumptions \ref{ass:CEr} and \ref{ass:CCr}.
Further assume that the form~$\ttonde{\EE{\dUpsilon}{\QP},\dom{\EE{\dUpsilon}{\QP}}}$ is quasi-regular and let~$u\in\dom{\EE{\dUpsilon}{\QP}}$ with~$\SF{\dUpsilon}{\QP}(u)\leq 1$ $\QP$-a.e..

Then, for every $\A_\mrmv(\Ed)^\QP$-measurable quasi-continuous $\QP$-representative~$\reptwo u$ of~$u$ there exists a $\QP$-conegligible set~$\Omega=\Omega_{\reptwo u}\in \A_\mrmv(\Ed)$ enjoying the following property.
For every~$\eps>0$, every~$\eta\in\Omega$ and every~$\gamma\in\dUpsilon$ with~$\mssd_\dUpsilon(\gamma,\eta)<\infty$ there exists~$\gamma_\eps\in\dUpsilon$ with
\begin{align}\label{eq:p:ReducedSL:0}
\mssd_\dUpsilon(\gamma,\gamma_\eps)<\eps \qquad \text{and} \qquad \abs{\reptwo u(\gamma_\eps)-\reptwo u(\eta)}\leq \mssd_\dUpsilon(\gamma,\eta)+\eps \fstop
\end{align}

\begin{proof}
Fix~$u\in\dom{\EE{\dUpsilon}{\QP}}$ with~$\SF{\dUpsilon}{\QP}(u)\leq 1$ $\QP$-a.e..
For a sequence of radii~$\seq{r_n}_n$ with~$r_n\nearrow_n \infty$ to be determined later, set~$B_n\eqdef B_{r_n}$.
By Proposition~\ref{p:MarginalFormDomains}\iref{i:p:MarginalFormDomains:3}, we may conclude for every~$n\in\N$ that~$\SF{\dUpsilon(B_n)}{\QP^\eta_n}(u_{n,\eta})\leq 1$ $\QP^\eta_n$-a.e.\ on~$\dUpsilon(B_n)$ for $\QP$-a.e.~$\eta\in\dUpsilon$ for every~$\eta$ in some $\QP$-conegligible set~$\Omega_n$.
Let us set~$\Omega_0\eqdef \cap_n \Omega_n$ and note that it is $\QP$-conegligible.
By Proposition~\ref{p:cSLProjQP} for~\ref{ass:SLTensor}, the function~$u_{n,\eta}$ on~$\dUpsilon(B_n)$ has a $\QP^\eta_n$-representative~$\rep{u}^{\eta,n}$ satisfying
\begin{enumerate}[$(a)$]
\item\label{i:p:ReducedSL:1} $\rep{u}^{\eta,n}$ is $\mssd_\dUpsilon$-Lipschitz, hence $\T_\mrmv(\Ed)$-continuous by Proposition~\ref{p:DistTopE}.
\end{enumerate}

By definition of quasi-regularity,~$\ttonde{\EE{\dUpsilon}{\QP},\dom{\EE{\dUpsilon}{\QP}}}$ admits a $\T_\mrmv(\Ed)$-compact nest~$\seq{\Kappa^n}_n$ witnessing the $\EE{\dUpsilon}{\QP}$-quasi-continuity of some $\QP$-representative $\reptwo u$ of~$u$.
Without loss of generality, we may and will assume that~$\Kappa^n\subset \Kappa^{n+1}$ for each~$n\in\N$.
Setting~$\Xi^n\eqdef \Kappa^n\cap \Omega_0$ we conclude that~$\Omega\eqdef \cup_n \Xi^n$ is $\QP$-conegligible.
Note that~$\Omega$ does not depend on the chosen sequence of radii~$\seq{r_n}_n$.

By definition of quasi-continuity,~$\reptwo u^n\eqdef \reptwo u\restr_{\Kappa^n}$ is $\T_\mrmv(\Ed)$-continuous.
Since~$B_n$ is $\T$-open, the projection~$\pr^{B_n}\colon \dUpsilon\to\dUpsilon(B_n)$ is $\T_\mrmv(\Ed)$-continuous.
Since~$\reptwo u^n$ is $\T_\mrmv(\Ed)$-continuous on~$\Kappa^n$, we conclude that
\begin{enumerate}[$(a)$]\setcounter{enumi}{1}
\item\label{i:p:ReducedSL:2} for every~$\eta\in\Omega_0$ and~$n\in\N$ the function $\reptwo u^n_{n,\eta}\colon\gamma \to \reptwo u^n(\gamma_{B_n}+\eta_{B_n^\complement})$ is $\T_\mrmv(\Ed)$-continuous on
\begin{equation*}
\Xi^n_{n,\eta}\eqdef \set{\gamma\in\dUpsilon(B_n) : \gamma+\eta_{B_n^\complement}\in \Xi^n} \fstop
\end{equation*}
\end{enumerate}

For every~$\eta\in\Omega_0$, it follows from the definition of~$\rep u^{\eta,n}$ that
\begin{align*}
\reptwo u_{n,\eta}= \rep u^{\eta,n} \quad \as{\QP^\eta_n} \quad \text{on } \dUpsilon(B_n) \comma
\end{align*}
whence
\begin{align}\label{eq:l:ReducedFormSL:1}
\reptwo u^n_{n,\eta} = \rep u^{\eta ,r} \quad \as{\QP^\eta_n} \quad \text{on } \Xi^n_{n,\eta} \fstop
\end{align}
Let us now show how to improve~\eqref{eq:l:ReducedFormSL:1} to an equality everywhere on~$\Xi^n_{n,\eta}$.
As a consequence of Assumption~\ref{ass:CEr},~$\QP^\eta_n$ has full $\T_\mrmv(\Ed)$-support on~$\dUpsilon(B_n)$ for~$\QP$-a.e.~$\eta\in\dUpsilon$, thus, up to possibly changing~$\Omega_0$ by a $\QP$-negligible set,~$\supp \QP^\eta_n=\dUpsilon(B_n)$ for all~$\eta\in\Omega_0$ for each~$n\in\N$.
By~\ref{i:p:ReducedSL:1} and~\ref{i:p:ReducedSL:2} above, we have that~$\reptwo u^n_{n,\eta}$ and~$\rep u^{n,\eta}\restr_{\Xi^n_{n,\eta}}$ are $\T_\mrmv(\Ed)$-continuous functions, equal a.e.\ w.r.t.\ the measure~$\QP^\eta_n$.
Since the latter has full $\T_\mrmv(\Ed)$-support on~$\dUpsilon$, we conclude that
\begin{align}\label{eq:l:ReducedFormSL:2}
\reptwo u^n_{n,\eta}\equiv \rep u^{\eta,n} \quad \text{everywhere on } \Xi^n_{n,\eta} \fstop
\end{align}

Now, fix~$\eta\in\Omega$ and~$\gamma\in\dUpsilon$ with~$\mssd_\dUpsilon(\gamma,\eta)<\infty$.
By definition of~$\Omega$, there exists~$n_\eta$ so that~$\eta\in \Xi^{n_\eta}$.
Since~$\mssd_\dUpsilon(\gamma,\eta)<\infty$, we may choose $\seq{r_n}_n$ so that~$\gamma B_n=\eta B_n$ for every~$n\in\N$, and, for every~$\eps>0$, there exists~$n_\eps\in\N$ so that
\begin{align}\label{eq:l:ReducedFormSL:3}
\mssd_\dUpsilon(\gamma_{B_n^\complement},\eta_{B_n^\complement})<\eps\qquad \text{and} \qquad \abs{\mssd_\dUpsilon(\gamma_{B_n},\eta_{B_n})-\mssd_\dUpsilon(\gamma,\eta)}<\eps\comma \qquad n\geq n_\eps \fstop
\end{align}
For every~$\eps>0$ and~$n\in \N$ set~$\gamma_\eps\eqdef \gamma_{B_n}+\eta_{B_n^\complement}$ and note that
\begin{align*}
\mssd_\dUpsilon(\gamma,\gamma_\eps)\leq \mssd_\dUpsilon(\gamma_{B_n^\complement},\eta_{B_n^\complement})<\eps\comma \qquad n\geq n_\eps\fstop
\end{align*}

Fix~$m\eqdef n_\eps\vee n_\eta$.
Repsectively by~\eqref{eq:l:ReducedFormSL:2}, the $\mssd_\dUpsilon$-Lipschitz continuity of~$\rep u^{\eta,n}$, and~\eqref{eq:l:ReducedFormSL:3}, we have that
\begin{align*}
\abs{\reptwo u(\gamma_\eps)-\reptwo u(\eta)}=&\ \abs{\reptwo u(\gamma_{B_m}+\eta_{B_m^\complement})- \reptwo u(\eta_{B_m}+\eta_{B_m^\complement})}=\abs{\reptwo u^m_{m,\eta}(\gamma_{B_m})-\reptwo u^m_{m,\eta}(\eta_{B_m})}
\\
=&\ \abs{\rep u^{\eta,m}(\gamma_{B_m})- \rep u^{\eta,m}(\eta_{B_m})}
\leq
\mssd_\dUpsilon(\gamma_{B_m},\eta_{B_m})
\\
\leq&\ \mssd_\dUpsilon(\gamma,\eta)+\eps\fstop
\end{align*}
Combining the above inequality with~\eqref{eq:l:ReducedFormSL:3} concludes the assertion.
\end{proof}
\end{prop}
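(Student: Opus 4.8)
The plan is to localise the problem to the finite-volume pieces $\dUpsilon(B_n)$, where $B_n\eqdef B_{r_n}^\mssd(x_0)$ for an exhausting sequence of radii $r_n\nearrow\infty$, on which Proposition~\ref{p:cSLProjQP} (via Assumption~\ref{ass:SLTensor}) provides a genuine strong Sobolev-to-Lipschitz property for the projected conditional forms $\EE{\dUpsilon(B_n)}{\QP^\eta_n}$. The heuristic is that the ``truncated'' function $\gamma\mapsto\reptwo u(\gamma_{B_n}+\eta_{B_n^\complement})$ both approximates $\reptwo u$ along $\mssd_\dUpsilon$ and, being a conditional slice of $u$, inherits a unit Lipschitz bound on $\dUpsilon(B_n)$; the delicate point is to upgrade the $\QP^\eta_n$-a.e.\ Lipschitz bound into an estimate that may be evaluated at the specific configurations $\gamma_{B_n}$ and $\eta_{B_n}$, which one does by combining quasi-continuity of $\reptwo u$ with the full $\T_\mrmv(\Ed)$-support of $\QP^\eta_n$.

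Concretely: fixing $u\in\dom{\EE{\dUpsilon}{\QP}}$ with $\SF{\dUpsilon}{\QP}(u)\leq 1$ $\QP$-a.e., I would first apply Proposition~\ref{p:MarginalFormDomains}\iref{i:p:MarginalFormDomains:3} to obtain, for each $n$, a $\QP$-conegligible set $\Omega_n\in\A_\mrmv(\Ed)$ on which $\SF{\dUpsilon(B_n)}{\QP^\eta_n}(u_{B_n,\eta})\leq 1$ $\QP^\eta_n$-a.e.\ on $\dUpsilon(B_n)$; set $\Omega_0\eqdef\bigcap_n\Omega_n$. By Assumption~\ref{ass:SLTensor}, the cut-off hypothesis~\eqref{eq:p:cSLProjQP:0}, and Assumptions~\ref{ass:CEr} and~\ref{ass:CCr}, Proposition~\ref{p:cSLProjQP} yields, for $\eta\in\Omega_0$, a $\QP^\eta_n$-representative $\rep u^{\eta,n}$ of $u_{B_n,\eta}$ that is $1$-Lipschitz for $\mssd_\dUpsilon$ on $\dUpsilon(B_n)$, hence $\T_\mrmv(\Ed)$-continuous there by Proposition~\ref{p:DistTopE}. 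Next I would invoke quasi-regularity to fix an increasing $\T_\mrmv(\Ed)$-compact $\EE{\dUpsilon}{\QP}$-nest $\seq{\Kappa^n}_n$ witnessing quasi-continuity of the given representative $\reptwo u$, and set $\Xi^n\eqdef\Kappa^n\cap\Omega_0$, $\Omega\eqdef\bigcup_n\Xi^n$; this $\Omega$ is $\QP$-conegligible and, importantly, independent of the chosen radii. On $\Kappa^n$ the restriction $\reptwo u^n$ of $\reptwo u$ is $\T_\mrmv(\Ed)$-continuous, and since $B_n$ is $\T$-open the projection $\pr^{B_n}$ is $\T_\mrmv(\Ed)$-continuous, so $\reptwo u^n_{n,\eta}\colon\gamma\mapsto\reptwo u^n(\gamma_{B_n}+\eta_{B_n^\complement})$ is $\T_\mrmv(\Ed)$-continuous on $\Xi^n_{n,\eta}\eqdef\set{\gamma\in\dUpsilon(B_n):\gamma+\eta_{B_n^\complement}\in\Xi^n}$. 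For $\eta\in\Omega_0$ one has $\reptwo u_{n,\eta}=\rep u^{\eta,n}$ $\QP^\eta_n$-a.e., hence $\reptwo u^n_{n,\eta}=\rep u^{\eta,n}$ $\QP^\eta_n$-a.e.\ on $\Xi^n_{n,\eta}$; since $\QP^\eta_n$ has full $\T_\mrmv(\Ed)$-support (a consequence of~\ref{ass:CEr}, after discarding a further $\QP$-null set of $\eta$'s) and both functions are $\T_\mrmv(\Ed)$-continuous on $\Xi^n_{n,\eta}$, they coincide there pointwise. Finally, given $\eta\in\Omega$ with $\eta\in\Xi^{n_\eta}$ and $\gamma$ with $\mssd_\dUpsilon(\gamma,\eta)<\infty$, I would choose the radii so that $\gamma B_n=\eta B_n$ for all $n$ and, for prescribed $\eps>0$, pick $n_\eps$ with $\mssd_\dUpsilon(\gamma_{B_n^\complement},\eta_{B_n^\complement})<\eps$ and $\abs{\mssd_\dUpsilon(\gamma_{B_n},\eta_{B_n})-\mssd_\dUpsilon(\gamma,\eta)}<\eps$ for $n\geq n_\eps$; putting $m\eqdef n_\eps\vee n_\eta$ and $\gamma_\eps\eqdef\gamma_{B_m}+\eta_{B_m^\complement}$ gives $\mssd_\dUpsilon(\gamma,\gamma_\eps)\leq\mssd_\dUpsilon(\gamma_{B_m^\complement},\eta_{B_m^\complement})<\eps$, while the pointwise identity above yields $\reptwo u(\gamma_\eps)=\rep u^{\eta,m}(\gamma_{B_m})$ and $\reptwo u(\eta)=\rep u^{\eta,m}(\eta_{B_m})$, whence $\abs{\reptwo u(\gamma_\eps)-\reptwo u(\eta)}\leq\mssd_\dUpsilon(\gamma_{B_m},\eta_{B_m})\leq\mssd_\dUpsilon(\gamma,\eta)+\eps$ by the unit Lipschitz bound on $\rep u^{\eta,m}$.

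The main obstacle I expect is precisely the passage from the $\QP^\eta_n$-a.e.\ identification $u_{B_n,\eta}=\rep u^{\eta,n}$ to a genuinely pointwise statement that can be evaluated at the specific configurations $\gamma_{B_m}$ and $\eta_{B_m}$. This forces the exceptional $\eta$-set $\Omega$ to be chosen uniformly in $n$ and, crucially, not to depend on the exhausting sequence $\seq{r_n}_n$ (so that the radii may be adapted \emph{a posteriori} to the pair $(\gamma,\eta)$ and to $\eps$); it forces the use of the full-support property of $\QP^\eta_n$ from~\ref{ass:CEr} in order to promote an a.e.\ equality of two merely measurable representatives to an everywhere equality; and it requires restricting to the quasi-regular nest $\Kappa^n$ and exploiting $\T$-openness of the balls $B_n$ (hence $\T_\mrmv(\Ed)$-continuity of $\pr^{B_n}$) so that both $\reptwo u^n_{n,\eta}$ and $\rep u^{\eta,n}$ become $\T_\mrmv(\Ed)$-continuous on the relevant set. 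Managing this interplay of disintegration over $\eta$, quasi-continuity of $\reptwo u$, and the finite-volume Sobolev-to-Lipschitz property is the technical heart of the argument; everything else is bookkeeping with the localizing exhaustion and the triangle inequality for $\mssd_\dUpsilon$.
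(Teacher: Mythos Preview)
Your proposal is correct and follows essentially the same route as the paper's proof: localise via Proposition~\ref{p:MarginalFormDomains}\iref{i:p:MarginalFormDomains:3} and Proposition~\ref{p:cSLProjQP}, build the $\QP$-conegligible set~$\Omega$ from the intersection of the conditional exceptional sets with a quasi-regular compact nest, upgrade the $\QP^\eta_n$-a.e.\ identification $\reptwo u^n_{n,\eta}=\rep u^{\eta,n}$ to a pointwise one on~$\Xi^n_{n,\eta}$ via the full $\T_\mrmv(\Ed)$-support of~$\QP^\eta_n$ granted by~\ref{ass:CEr}, and then choose the radii adapted to the pair~$(\gamma,\eta)$ and set~$\gamma_\eps\eqdef\gamma_{B_m}+\eta_{B_m^\complement}$. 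You have also correctly identified the technical crux---that~$\Omega$ must be chosen uniformly in~$n$ and independently of the exhaustion~$\seq{r_n}_n$, so that the radii may be tuned after fixing~$\eta$---which is exactly how the paper proceeds.
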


\begin{thm}[Integral Varadhan-type short-time asymptotics]\label{t:VaradhanSecond}
Let~$(\mcX,\cdc,\mssd)$ be an \MLDS satisfying $(\Rad{\mssd}{\mssm})$ and \ref{ass:SLTensor}, and $\QP$ be a probability measure on~$\ttonde{\dUpsilon,\A_\mrmv(\Ed)}$ satisfying Assumptions~\ref{ass:CEr} and~\ref{ass:CCr}.
Further let~$\Xi\in\Bo{\T_\mrmv(\Ed)}$ be $\mssd_\dUpsilon$-open. Then,
\begin{align}\label{eq:t:VaradhanSecond:0}
\hr{\QP,\Xi}\equiv \mssd_\dUpsilon(\emparg,\Xi) \quad \as{\QP} \fstop
\end{align}
As a consequence,~$\TT{\dUpsilon}{\QP}_t$ satisfies to the Varadhan-type short-time asymptotics
\begin{align}\label{eq:Varadhan}
\lim_{t\downarrow 0} -2t \log \TT{\dUpsilon}{\QP}_t (\Xi,\Lambda)= \QP\text{-}\essinf_{\Lambda}\mssd_\dUpsilon(\emparg,\Xi)^2
\end{align}
for every~$\mssd_\dUpsilon$-open $\Xi\in\Bo{\T_\mrmv(\Ed)}$ and every~$\Lambda\in\A_\mrmv(\Ed)^\QP$.

\begin{proof}
By~\cite[Thm.~1.1]{HinRam03} we have that~$\lim_{t \downarrow 0} -2t\, \log \TT{\dUpsilon}{\QP}_t (\Xi, \Lambda)=\QP\text{-}\essinf_\Lambda \hr{\QP,\Xi}^2$.
Therefore, the second assertion is a consequence of the first one.

Since~\ref{ass:CEr} implies~\ref{ass:CAC}, the form~$\ttonde{\EE{\dUpsilon}{\QP},\dom{\EE{\dUpsilon}{\QP}}}$ is well-defined, densely defined and closable by Theorem~\ref{t:ClosabilitySecond}.
Furthermore, it satisfies~$(\Rad{\mssd_\dUpsilon}{\QP})$ by $(\Rad{\mssd}{\mssm})$ and Theorem~\ref{t:Rademacher}, and it is therefore quasi-regular by Proposition~\ref{p:QRegSLoc}.

Now, let~$\Xi$ be satisfying the assumption.
Since~$\mssd_\dUpsilon(\emparg,\Xi)$ is $\A_\mrmv(\Ed)^*$-measurable by Corollary~\ref{c:MeasurabilitydU}, again by Theorem~\ref{t:Rademacher} we have that $\mssd_\dUpsilon(\emparg,\Xi) \wedge r \in \dom{\EE{\dUpsilon}{\QP}}$ and~$\SF{\dUpsilon}{\QP}\ttonde{\mssd_\dUpsilon(\emparg,\Xi) \wedge r}\leq 1$ for every~$r>0$.
Since~$\mssd_\dUpsilon(\emparg,\Xi) \wedge r$ vanishes everywhere on~$\Xi$, we conclude that
\begin{equation}\label{eq:t:VaradhanSecond:1}
\hr{\QP,\Xi}\geq \mssd_\dUpsilon(\emparg, \Xi) \quad \as{\QP}
\end{equation}
by Definition~\ref{d:MaximalFunction} of maximal function.

In order to show the reverse inequality, fix~$r>0$ and set~$u_r\eqdef \hr{\QP,\Xi}\wedge r\in\dom{\EE{\dUpsilon}{\QP}}$.
Since $\ttonde{\EE{\dUpsilon}{\QP},\dom{\EE{\dUpsilon}{\QP}}}$ is quasi-regular,~$u_r$ admits a ($\A_\mrmv(\Ed)^\QP$-measurable) quasi-continuous $\QP$-representa\-tive~$\reptwo u_r$, additionally satisfying~$\reptwo u_r\geq 0$ everywhere on~$\dUpsilon$ and~$\reptwo u_r\equiv 0$ everywhere on~$\Xi$.
By Proposition~\ref{p:ReducedSL}, for every~$\eps>0$, for every~$\eta$ in a $\QP$-conegligible set~$\Omega_r$ and every~$\gamma\in \Xi$ with~$\mssd_\dUpsilon(\gamma,\eta)<\infty$ there exists~$\gamma_\eps\in\dUpsilon$ so that~\eqref{eq:p:ReducedSL:0} holds with~$\reptwo u_r$ in place of~$\reptwo u$.
Since~$\Xi$ is $\mssd_\dUpsilon$-open, choosing $\eps>0$ sufficiently small we have that~$\gamma_\eps\in \Xi$ as well.
Since~$\reptwo u_r\equiv 0$ everywhere on~$\Xi$, we have that~$\reptwo u_r(\gamma_\eps)=0$, and~\eqref{eq:p:ReducedSL:0} reads
\begin{equation*}
\reptwo u_r(\eta) \leq \mssd_\dUpsilon(\gamma,\eta) +\eps \comma \qquad \gamma\in\Xi\comma \eta\in \Omega \fstop
\end{equation*}
Since~$\eps>0$ is arbitrary, extremizing over~$\gamma\in\Xi$ the above inequality we have that
\begin{equation*}
\reptwo u_r(\eta) \leq \mssd_\dUpsilon(\eta,\Xi) \qquad \eta\in\Omega_r \fstop
\end{equation*}
Since~$r>0$ is arbitrary, and since~$\QP\Omega_r=1$ for every~$r$,
\begin{equation}\label{eq:t:VaradhanSecond:2}
\hr{\QP,\Xi}\leq \mssd_\dUpsilon(\emparg, \Xi) \quad \as{\QP} \fstop
\end{equation}
Combining~\eqref{eq:t:VaradhanSecond:1} with~\eqref{eq:t:VaradhanSecond:2} concludes~\eqref{eq:t:VaradhanSecond:0} the assertion.
\end{proof}
\end{thm}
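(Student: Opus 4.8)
The plan is to first establish the pointwise identification $\hr{\QP,\Xi}\equiv\mssd_\dUpsilon(\emparg,\Xi)$ $\QP$-a.e. by a two-sided argument, and then to read off the short-time asymptotics from it via the Hino--Ram\'irez theorem. Before either step I would record the structural facts about the form: since~\ref{ass:CEr} strengthens~\ref{ass:CAC}, Proposition~\ref{p:CACtoAC} gives~\ref{ass:AC}, and together with~\ref{ass:CCr} (an instance of~\ref{ass:ConditionalClos} for the localizing sequence of balls) this places us in the hypotheses of Theorem~\ref{t:ClosabilitySecond}, so $\ttonde{\EE{\dUpsilon}{\QP},\dom{\EE{\dUpsilon}{\QP}}}$ is a well-defined, densely defined, closable Dirichlet form; by~$(\Rad{\mssd}{\mssm})$ and Theorem~\ref{t:Rademacher} it satisfies~$(\Rad{\mssd_\dUpsilon}{\QP})$, whence it is conservative, quasi-regular and strongly local by Proposition~\ref{p:QRegSLoc}.

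For the inequality $\hr{\QP,\Xi}\geq\mssd_\dUpsilon(\emparg,\Xi)$, I would fix $r>0$ and observe that $\mssd_\dUpsilon(\emparg,\Xi)\wedge r$ is bounded, $\A_\mrmv(\Ed)^*$-measurable by Corollary~\ref{c:MeasurabilitydU}, and $\mssd_\dUpsilon$-Lipschitz with constant at most $1$ by Proposition~\ref{p:W2Upsilon}\iref{i:p:W2Upsilon:9}; hence it lies in $\bLip\ttonde{\dUpsilon,\A_\mrmv(\Ed)^\QP,\mssd_\dUpsilon}$, and Theorem~\ref{t:Rademacher} places it in $\dom{\EE{\dUpsilon}{\QP}}$ with $\SF{\dUpsilon}{\QP}(\emparg)\leq 1$ $\QP$-a.e. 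Since this function vanishes on~$\Xi$, the extremal characterization in Definition~\ref{d:MaximalFunction} of the maximal function gives $\hr{\QP,\Xi}\wedge r\geq\mssd_\dUpsilon(\emparg,\Xi)\wedge r$ $\QP$-a.e., and sending $r\to\infty$ finishes this half.

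The reverse inequality is where the substance lies, and it is essentially a repackaging of the approximate Sobolev-to-Lipschitz property. I would fix $r>0$, set $u_r\eqdef\hr{\QP,\Xi}\wedge r\in\dom{\EE{\dUpsilon}{\QP}}$ (which has $\SF{\dUpsilon}{\QP}(u_r)\leq 1$ $\QP$-a.e. by Definition~\ref{d:MaximalFunction}), and, using quasi-regularity, pick an $\A_\mrmv(\Ed)^\QP$-measurable quasi-continuous $\QP$-representative $\reptwo u_r$ with $\reptwo u_r\geq 0$ and $\reptwo u_r\equiv 0$ on~$\Xi$. Feeding this into Proposition~\ref{p:ReducedSL} (whose hypotheses hold here: \ref{ass:SLTensor}, \ref{ass:CEr}, \ref{ass:CCr}, quasi-regularity, and~\eqref{eq:p:cSLProjQP:0} which follows from $(\Rad{\mssd}{\mssm})$) produces a $\QP$-conegligible $\Omega_r\in\A_\mrmv(\Ed)$ such that for every $\eps>0$, every $\eta\in\Omega_r$ and every $\gamma\in\Xi$ with $\mssd_\dUpsilon(\gamma,\eta)<\infty$ there is $\gamma_\eps$ with $\mssd_\dUpsilon(\gamma,\gamma_\eps)<\eps$ and $\abs{\reptwo u_r(\gamma_\eps)-\reptwo u_r(\eta)}\leq\mssd_\dUpsilon(\gamma,\eta)+\eps$. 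Since $\Xi$ is $\mssd_\dUpsilon$-open, for small $\eps$ one has $\gamma_\eps\in\Xi$, hence $\reptwo u_r(\gamma_\eps)=0$ and $\reptwo u_r(\eta)\leq\mssd_\dUpsilon(\gamma,\eta)+\eps$; letting $\eps\downarrow 0$, optimizing over $\gamma\in\Xi$ (trivially when $\mssd_\dUpsilon(\eta,\Xi)=\infty$), and then intersecting the exceptional sets over $r\in\Q^+$ and letting $r\to\infty$ yields $\hr{\QP,\Xi}\leq\mssd_\dUpsilon(\emparg,\Xi)$ $\QP$-a.e. The genuinely delicate points — only universal (not Borel) measurability of point-to-set distances for an extended distance, the passage to quasi-continuous representatives, and the simultaneous control of the exceptional sets — are exactly what Proposition~\ref{p:ReducedSL} (resting on Proposition~\ref{p:MarginalFormDomains} and the conditional-closability framework of~\S\ref{ss:FurtherClosability}) absorbs, so the main obstacle is already discharged there and what remains is careful bookkeeping.

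Finally, combining the two inequalities gives the identification~\eqref{eq:t:VaradhanSecond:0}. For the consequence~\eqref{eq:Varadhan}, I would apply~\cite[Thm.~1.1]{HinRam03} to the quasi-regular strongly local Dirichlet form $\ttonde{\EE{\dUpsilon}{\QP},\dom{\EE{\dUpsilon}{\QP}}}$ to get $\lim_{t\downarrow 0}-2t\log\TT{\dUpsilon}{\QP}_t(\Xi,\Lambda)=\QP\text{-}\essinf_\Lambda\hr{\QP,\Xi}^2$ for every $\mssd_\dUpsilon$-open $\Xi\in\Bo{\T_\mrmv(\Ed)}$ and every $\Lambda\in\A_\mrmv(\Ed)^\QP$, and substitute the identification to replace $\hr{\QP,\Xi}$ by $\mssd_\dUpsilon(\emparg,\Xi)$.
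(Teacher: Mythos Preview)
Your proposal is correct and follows essentially the same approach as the paper's proof: both establish the structural prerequisites (closability, Rademacher, quasi-regularity), then prove $\hr{\QP,\Xi}\geq\mssd_\dUpsilon(\emparg,\Xi)$ via the Rademacher property applied to truncated point-to-set distances, the reverse inequality via Proposition~\ref{p:ReducedSL} applied to a quasi-continuous representative of $\hr{\QP,\Xi}\wedge r$ exploiting that $\Xi$ is $\mssd_\dUpsilon$-open, and deduce the asymptotics from~\cite[Thm.~1.1]{HinRam03}. Your version is slightly more explicit in checking the hypotheses of Proposition~\ref{p:ReducedSL} (in particular noting that~\eqref{eq:p:cSLProjQP:0} follows from $(\Rad{\mssd}{\mssm})$), but the argument is otherwise identical.
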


\begin{rem}[Wasserstein Universality] \normalfont
The assumptions in Theorem \ref{t:VaradhanSecond} can be verified for a wide class of quasi-Gibbs measures including ---~in the case of the Euclidean space $X=\R^n$: $\mathrm{sine}_\beta$, $\mathrm{Airy}_\beta$, $\mathrm{Bessel}_{\alpha, \beta}$, Ginibre. It is remarkable that the short-time asymptotics of all the aforementioned classes of invariant measures --- in general singular with respect to each other~---  are governed by the same distance function $\mssd_\dUpsilon$. In other words, all these measures belong to the same  \emph{$\mssd_\dUpsilon$-universality class} in the sense that the short-time asymptotic of the corresponding semigroups is governed universally by $\mssd_\dUpsilon$. This universality phenomenon for singular invariant measures was not known, even in the case when~$X=\R^n$. 
This is a characteristic feature of infinite-dimensional spaces.
\end{rem}
\section{Examples}\label{s:Examples}
In this section, we collect examples of base spaces and of probability measures on their configuration spaces satisfying, fully or in part, our assumptions.
As usual, we start from the base spaces, and subsequently move to the configuration spaces.

\subsection{Base spaces}\label{ss:ExamplesBase}
Our results apply to various classes of spaces satisfying  ---~fully or in part~--- our assumptions on~\MLDS's.
Among them are:
\begin{itemize}
\item complete Riemannian manifolds, for which every property of our interest is virtually well-known;
\item ideal sub-Riemannian manifolds, see e.g.~\cite{HajKos00, BarRiz19, LzDSSuz21a};
\item spaces satisfying the Riemannian Curvature-Dimension condition, see~\S\ref{sss:RCD}, including e.g.\ Ricci-limit spaces, finite-dimensional Alexandrov spaces endowed with their Hausdorff measure, and Hilbert spaces endowed with log-concave measures;
\item spaces satisfying Measure Contraction Property, see~\S\ref{sss:RCD};
\item path/loop spaces over Riemannian manifolds, see~\cite{MaRoe00}.
\end{itemize}

In this section, we mainly discuss spaces satisfying synthetic Ricci-curvature lower bounds or the Measure Contraction Property.
However, let us firstly describe more precisely the correspondence betwenn the \MLDS structure and classical objects, at least in the case of Riemannian manifolds.
\begin{ese}[Riemannian manifolds]\label{e:Riemannian}
We say that~$(\mcX,\cdc,\mssd)$ is the \MLDS arising from a Riemannian manifold~$(X,g)$ if~$X$ is a metrizable connected $\mcC^2$-manifold, endowed with a complete Riemannian metric~$g$, the intrinsic distance~$\mssd$ induced by~$g$, the volume measure~$\mssm\eqdef \vol_g$, the localizing ring generated by relatively compact sets, and the square field~$(\cdc,\Dz)$ defined by~$\Dz\eqdef \mcC^\infty_c(X)$ and $\cdc(f)\eqdef \abs{\diff f}_g^2$ for~$f\in\Dz$.
\end{ese}

\subsubsection{Infinitesimally Hilbertian metric measure spaces}
Let~$(\mcX,\mssd)$ be a metric local structure, and denote by~$\bsLip(\mssd)$ the family of all $\mssd$-Lipschitz functions with bounded support.
Further let~$\slo[*]{\emparg}$ be the minimal relaxed slope and~$\Ch[\mssd,\mssm]$ be the Cheeger energy defined in~\S\ref{sss:CheegerE}.

\begin{defs}[Infinitesimal Hilbertianity]\label{d:IH}
A metric local structure~$(\mcX,\mssd)$ is \emph{infinitesimally Hil\-bertian} if $\Ch[\mssd,\mssm]$ is a quadratic functional, viz.
\begin{align*}
2\, \Ch[\mssd,\mssm](f)+2\, \Ch[\mssd,\mssm](g) = \Ch[\mssd,\mssm](f+g)+ \Ch[\mssd,\mssm](f-g) \fstop
\end{align*}
\end{defs}

In this case, a square field operator on~$\mcX$ is defined by polarization via
\begin{align}\label{eq:SFIH}
\cdc(f)\eqdef \slo[*]{f}^2\comma\qquad f\in\Dz\eqdef\bsLip(\mssd)\fstop
\end{align}

\begin{prop}\label{p:IHtoMLDS}
Let~$(\mcX,\mssd)$ be a metric local structure, and~$\cdc$ be defined as~\eqref{eq:SFIH}. If~$(\mcX,\mssd)$ is infinitesimally Hilbertian, then~$(\mcX,\cdc)$ is an \LDS.
If additionally the associated Dirichlet form $\ttonde{\EE{X}{\mssm},\dom{\EE{X}{\mssm}}}$ is quasi-regular, then~$(\mcX,\cdc,\mssd)$ is an \MLDS.
\begin{proof}
Since~$\mssm$ is finite on~$\mssd$-bounded sets (by Definition~\ref{d:EMLS}\iref{i:d:EMLS:2}), we have that~$\Dz\subset L^2(\mssm)$.
In addition, the family of functions~$\Dz\eqdef\bsLip(\mssd)$ is an algebra, and thus straightforwardly satisfies Definition~\ref{d:SF}\iref{i:d:SF:1}.

Since~$(\mcX,\mssd)$ is infinitesimally Hilbertian, then~$\cdc$ is (symmetric non-negative definite) bi\emph{linear}, i.e.\ satisfying Definition~\ref{d:SF}\iref{i:d:SF:2}.
Furthermore, since~$\slo[*]{\emparg}$ satisfies Definition~\ref{d:SF}\iref{i:d:SF:3} by e.g.~\cite[Lem.~3.1.4]{Gig18}.
By e.g.~\cite[Cor.~I.6.1.3]{BouHir91}, we have that~$\cdc$ is a square field operator in the sense of Definition~\ref{d:SF}.

Finally, since~$\EE{X}{\mssm}\eqdef\Ch[\mssd,\mssm]$ is closed by construction, and since all other conditions in Definition~\ref{d:EMLS} are satisfied by definition, it suffices to verify that~$\Dz$ is dense in~$L^2(\mssm)$, which is e.g.~\cite[Prop.~4.1]{AmbGigSav14}, the condition~(4.5) there being readily verified on any~\MLDS.
\end{proof}
\end{prop}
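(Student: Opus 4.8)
The statement to prove is Proposition~\ref{p:IHtoMLDS}: if $(\mcX,\mssd)$ is an infinitesimally Hilbertian metric local structure with $\cdc$ defined by~\eqref{eq:SFIH}, then $(\mcX,\cdc)$ is an \LDS, and if moreover the associated Dirichlet form is quasi-regular, then $(\mcX,\cdc,\mssd)$ is an \MLDS.

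The plan is to verify, one by one, the clauses of Definition~\ref{d:DS} of an \LDS\ (for the first assertion) and then Definition~\ref{d:EMLDS} together with Definition~\ref{d:TLDS} (for the second). First I would record the preliminary observations: a metric local structure is in particular a topological local structure (Definition~\ref{d:EMLS}\iref{i:d:EMLS:0}), so Definition~\ref{d:DS}\iref{i:d:DS:1} holds automatically; and since $\mssm$ is finite on $\mssd$-bounded sets by Definition~\ref{d:EMLS}\iref{i:d:EMLS:2}, every $f\in\bsLip(\mssd)$ lies in $L^2(\mssm)\cap L^\infty(\mssm)$, so $\Dz\eqdef\bsLip(\mssd)$ is indeed a subalgebra of $\Sb(\msE)$ consisting of bounded $\msE$-eventually vanishing functions; its generating the $\sigma$-algebra $\A=\Bo{\T}$ follows from Remark~\ref{r:TLS}\iref{i:r:TLS:7} (the space $\Cz(\msE)$, hence $\bsLip(\mssd)$, separates points). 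This handles Definition~\ref{d:DS}\iref{i:d:DS:1.5}.

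The substantive point is Definition~\ref{d:DS}\iref{i:d:DS:2}: that $\cdc$ as in~\eqref{eq:SFIH} is a square field operator satisfying~\eqref{eq:Ss}. Here the chain of citations is: infinitesimal Hilbertianity makes $f\mapsto\int\slo[*]{f}^2\diff\mssm$ a quadratic functional, hence $\cdc(f,g)\eqdef \tfrac14(\slo[*]{f+g}^2-\slo[*]{f-g}^2)$ is a symmetric non-negative definite bilinear form --- Definition~\ref{d:SF}\iref{i:d:SF:2}; the diffusion/chain-rule property Definition~\ref{d:SF}\iref{i:d:SF:3} follows from the calculus rules for minimal relaxed slopes, for which I would cite~\cite[Lem.~3.1.4]{Gig18} (or the corresponding statement in~\cite{AmbGigSav14}) together with the locality of $\slo[*]{\emparg}$; and the fact that this pointwise-$\mssm$-a.e.\ data genuinely defines a square field operator in the sense of Definition~\ref{d:SF}, i.e.\ that $\cdc$ takes values in $L^\infty(\mssm)$ on $\Dz$ and is well-behaved, is a standard consequence via~\cite[Cor.~I.6.1.3]{BouHir91}. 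Condition~\eqref{eq:Ss} --- that $\cdc(f,g)$ vanishes $\mssm$-a.e.\ whenever $f\equiv 0$ $\mssm$-a.e.\ --- is immediate since $\slo[*]{\emparg}$ depends only on the $\mssm$-class. Finally Definition~\ref{d:DS}\iref{i:d:DS:3}: the form $\EE{X}{\mssm}\eqdef\Ch[\mssd,\mssm]$ is closed (indeed lower semicontinuous on $L^2(\mssm)$) by construction of the Cheeger energy, hence a fortiori the restriction to $\Dz$ is closable, and it is densely defined because $\bsLip(\mssd)$ is $L^2(\mssm)$-dense by~\cite[Prop.~4.1]{AmbGigSav14} --- noting that hypothesis~(4.5) there is verified on any metric local structure because $\mssm$ is Radon with full support and finite on bounded sets, so bounded Lipschitz functions with bounded support approximate bounded Borel functions. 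This completes the \LDS\ claim.

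For the \MLDS\ claim, by Definition~\ref{d:EMLDS} I need $(\mcX,\mssd)$ to be a metric local structure (given) and $(\mcX,\cdc)$ to be a \TLDS\ in the sense of Definition~\ref{d:TLDS}. Items~\ref{i:d:TLDS:1}--\ref{i:d:TLDS:4} of that definition are exactly what was just verified (with the extra remark that $\Dz\subset\Cz(\msE)$ since Lipschitz functions are continuous and have bounded, hence $\msE$-eventually vanishing, support, and that $\Dz$ generates $\T$ by Remark~\ref{r:TLS}\iref{i:r:TLS:7}); item~\ref{i:d:TLDS:5}, quasi-regularity of $\ttonde{\EE{X}{\mssm},\dom{\EE{X}{\mssm}}}$, is precisely the extra hypothesis assumed in the second half of the statement. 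So the conclusion follows by assembling these pieces. The main obstacle --- really the only nonroutine point --- is making sure the diffusion property~\eqref{eq:i:d:SF:3} for $\cdc$ derived from $\slo[*]{\emparg}$ is quoted in the exact pointwise ($\mssm$-a.e.) form demanded by Definition~\ref{d:SF}, as opposed to the integrated version; here one must be careful to invoke the pointwise chain and Leibniz rules for minimal relaxed slopes under infinitesimal Hilbertianity, which is where~\cite[Lem.~3.1.4]{Gig18} (together with the $\mssd$-locality recalled in Remark~\ref{r:Slopes}\iref{i:r:Slopes:1}) does the work. Everything else is bookkeeping against the definitions already laid out in the excerpt.
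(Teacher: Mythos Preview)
Your proposal is correct and follows essentially the same approach as the paper: you verify the clauses of Definitions~\ref{d:DS}, \ref{d:TLDS}, and~\ref{d:EMLDS} in turn, invoking exactly the same external references---\cite[Lem.~3.1.4]{Gig18} for the diffusion property of~$\slo[*]{\emparg}$, \cite[Cor.~I.6.1.3]{BouHir91} to package this as a square field operator, and \cite[Prop.~4.1]{AmbGigSav14} for the $L^2$-density of~$\bsLip(\mssd)$. Your write-up is in fact slightly more thorough than the paper's (you explicitly note the $\sigma$-algebra-generating condition~\ref{i:d:DS:1.5} and the vanishing condition~\eqref{eq:Ss}), but these are routine checks the paper simply leaves implicit.
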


\subsubsection{Synthetic lower Ricci curvature bounds}\label{sss:RCD}
We briefly recall some main facts about synthetic Ricci-curvature lower bounds for metric measure spaces after Lott--Villani~\cite{LotVil09} and Sturm~\cite{Stu06a,Stu06b}.
Since we are only interested in the case of \emph{Riemannian} curvature bounds, we refer to definitions and results in~\cite{AmbGigSav14b,ErbKuwStu15}.

\begin{prop}[Properties of $\RCD(K,\infty)$ spaces]\label{p:PropertiesRCD(Kinfty)}
Let~$(X,\mssd,\mssm)$ be an~$\RCD(K,\infty)$ space, $K\in\R$ (see e.g.~\cite[Thm.~5.1]{AmbGigSav14b}), and denote by~$\ttonde{\Ch[\mssd,\mssm],\dom{\Ch[\mssd,\mssm]}}$ its Cheeger energy.
Then,
\begin{enumerate}[$(i)$]
\item $\ttonde{\Ch[\mssd,\mssm],\dom{\Ch[\mssd,\mssm]}}$ is quadratic, and thus a Dirichlet form, additionally admitting square field operator~$\SF{X}{\mssm}=\slo[w,\mssd]{\emparg}$, see \cite[Thm.~4.18(iv)]{AmbGigSav14b}, and satisfying~$(\Rad{\mssd}{\mssm})$ by definition;

\item\label{i:p:PropertiesRCD(Kinfty):2} $\ttonde{\Ch[\mssd,\mssm],\dom{\Ch[\mssd,\mssm]}}$ is quasi-regular, see~\cite[Lem.~6.7]{AmbGigSav14b} or~\cite[Thm.~4.1]{Sav14};

\item\label{i:p:PropertiesRCD(Kinfty):3} $\ttonde{\Ch[\mssd,\mssm],\dom{\Ch[\mssd,\mssm]}}$ is irreducible (consequence of~\iref{i:p:PropertiesRCD(Kinfty):7} below);

\item\label{i:p:PropertiesRCD(Kinfty):7} $(X,\SF{X}{\mssm},\mssd)$ satisfies~$(\SL{\mssm}{\mssd})$, see~\cite[Thm.~7.2]{AmbGigMonRaj12} after~\cite[Thm.~6.2]{AmbGigSav14b};

\item\label{i:p:PropertiesRCD(Kinfty):8} the intrinsic distance~$\mssd_\mssm$ of~$\ttonde{\Ch[\mssd,\mssm],\dom{\Ch[\mssd,\mssm]}}$ coincides with~$\mssd$, see~\cite[Thm.~7.4]{AmbGigMonRaj12} after~\cite[Thm.~6.10]{AmbGigSav14b};

\item\label{i:p:PropertiesRCD(Kinfty):4} $\ttonde{\Ch[\mssd,\mssm],\dom{\Ch[\mssd,\mssm]}}$ is conservative, see~\cite[Thm.~4]{Stu94}, applicable by~\ref{i:p:PropertiesRCD(Kinfty):8}.
\end{enumerate}
\end{prop}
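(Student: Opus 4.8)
The statement to prove is Proposition~\ref{p:PropertiesRCD(Kinfty)}, collecting the standard properties of an $\RCD(K,\infty)$ space $(X,\mssd,\mssm)$ needed to see that it gives rise to an \MLDS (indeed an infinitesimally Hilbertian one) satisfying $(\Rad{\mssd}{\mssm})$, quasi-regularity, irreducibility, $(\SL{\mssm}{\mssd})$, identification of the intrinsic distance, and conservativeness. Since each item is a citation to the literature, the plan is not to reprove these facts but to assemble them in the correct logical order, checking at each step that the hypotheses of the cited result are met in our setting. First I would recall that, by Definition~\ref{d:EMLS}, an (extended) metric local structure comes with $\mssm$ atomless, $\sigma$-finite, finite on $\mssd$-bounded sets, and with full support, and that the $\RCD(K,\infty)$ hypothesis in the sense of~\cite[Thm.~5.1]{AmbGigSav14b} presupposes $(X,\mssd)$ complete and separable with $\mssm$ a Radon measure having at most exponential volume growth; these are compatible with our standing assumptions once we fix the localizing ring $\msE=\msE_\mssd$ of $\mssd$-bounded $\A$-measurable sets.

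The first item is the infinitesimal Hilbertianity of the Cheeger energy, which is precisely the defining property of the ``Riemannian'' refinement $\RCD$ of $\CD$; by~\cite[Thm.~4.18(iv)]{AmbGigSav14b} the associated Dirichlet form admits the square field operator $\SF{X}{\mssm}=\slo[w,\mssd]{\emparg}$, and $(\Rad{\mssd}{\mssm})$ holds by definition of the Cheeger energy since any $\mssd$-Lipschitz function has minimal weak upper gradient dominated by its Lipschitz constant. For~\iref{i:p:PropertiesRCD(Kinfty):2}, quasi-regularity of $\ttonde{\Ch[\mssd,\mssm],\dom{\Ch[\mssd,\mssm]}}$ is~\cite[Lem.~6.7]{AmbGigSav14b} (alternatively~\cite[Thm.~4.1]{Sav14}); here one must check that the topology used is the one generated by $\mssd$, which on an \MLDS arising from an $\RCD$ space coincides with $\T$ since $\mssd$ metrizes $\T$ in the (non-extended) metric case. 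Then~\iref{i:p:PropertiesRCD(Kinfty):7}, the Sobolev-to-Lipschitz property $(\SL{\mssm}{\mssd})$, is~\cite[Thm.~7.2]{AmbGigMonRaj12} (relying on~\cite[Thm.~6.2]{AmbGigSav14b}), and~\iref{i:p:PropertiesRCD(Kinfty):8}, the identification $\mssd_\mssm=\mssd$ of the intrinsic distance with $\mssd$, is~\cite[Thm.~7.4]{AmbGigMonRaj12} (relying on~\cite[Thm.~6.10]{AmbGigSav14b}); I would note that in our notation $\mssd_\mssm$ is defined in~\eqref{eq:d:IntrinsicDist} and that the cited results apply because, in the locally compact or more generally in the $\RCD$ setting, this definition agrees with the standard one, as recorded in~\cite[Prop.~2.31]{LzDSSuz20}.

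The remaining two items follow by invoking known implications. Conservativeness~\iref{i:p:PropertiesRCD(Kinfty):4} is~\cite[Thm.~4]{Stu94}, which requires a volume-growth condition that is automatically satisfied once $\mssd_\mssm=\mssd$ is established together with the exponential volume growth built into the $\RCD(K,\infty)$ definition; this is why I list it after~\iref{i:p:PropertiesRCD(Kinfty):8}. Finally, irreducibility~\iref{i:p:PropertiesRCD(Kinfty):3}: on a connected space (and an $\RCD$ space is connected, being a length space with full-support measure) a conservative strongly local Dirichlet form whose intrinsic distance is finite and topologically compatible is irreducible, since an invariant set $A$ with $\mssm A, \mssm A^\complement>0$ would force $\SF{X}{\mssm}(\car_A)=0$, contradicting $(\SL{\mssm}{\mssd})$ (which would then make $\car_A$ Lipschitz, hence locally constant, hence — by connectedness — globally constant). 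I would therefore deduce~\iref{i:p:PropertiesRCD(Kinfty):3} as a formal consequence of~\iref{i:p:PropertiesRCD(Kinfty):7} and the connectedness of $X$, exactly as the parenthetical remark ``consequence of~\iref{i:p:PropertiesRCD(Kinfty):7} below'' indicates. The only genuinely delicate point, and thus the main obstacle, is bookkeeping the compatibility of topologies and $\sigma$-algebras: the cited $\RCD$ literature works with the Borel $\sigma$-algebra of the metric topology and with $\mssm$ Radon, whereas Definition~\ref{d:TLS} allows a slightly larger $\A$ between $\Bo{\T}$ and $\Bo{\T}^\mssm$; one must observe that all the statements ($(\Rad{\mssd}{\mssm})$, quasi-regularity, $(\SL{\mssm}{\mssd})$, $\mssd_\mssm=\mssd$, conservativeness, irreducibility) are insensitive to this enlargement because they concern $\mssm$-classes of functions and $\mssm$-negligible modifications, so the extension from $\Bo{\T}$ to $\A$ is harmless.
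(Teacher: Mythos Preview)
Your proposal is correct and matches the paper's approach: the paper provides no separate proof environment for this proposition, instead embedding all justifications as inline citations within the statement itself, which is exactly what you have unpacked and verified. Your additional commentary on the irreducibility argument (via $(\SL{\mssm}{\mssd})$ applied to~$\car_A$ and connectedness) and on the compatibility of $\sigma$-algebras is more explicit than the paper, but entirely in the same spirit.
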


\begin{prop}[Properties of~$\RCD^*(K,N)$ spaces]\label{p:PropertiesRCD(K,N)}
Let~$(X,\mssd,\mssm)$ be an~$\RCD^*(K,N)$ spaces with $K\in\R$, $N\in [1,\infty)$ (see e.g.~\cite[Thm.~2]{ErbKuwStu15}), and denote by~$\ttonde{\Ch[\mssd,\mssm],\dom{\Ch[\mssd,\mssm]}}$ its Cheeger energy.
Then,
\begin{enumerate}[$(i)$]
\item $(X,\mssd,\mssm)$ is an $\RCD(K,\infty)$ space, hence all properties in Proposition~\ref{p:PropertiesRCD(Kinfty)} hold;

\item\label{i:p:PropertiesRCD(K,N):1} $(X,\mssd)$ is locally compact and all closed balls are compact (see~\cite[Cor.~2.4]{Stu06b});


\item\label{i:p:PropertiesRCD(K,N):2} $\ttonde{\Ch[\mssd,\mssm],\dom{\Ch[\mssd,\mssm]}}$ is a regular Dirichlet form;

\item\label{i:p:PropertiesRCD(K,N):3} $(X,\mssd,\mssm)$ is locally doubling and satisfies a local weak $2$-Poincar\'e inequality, see~\cite[Cor.~2.4, Cor.~6.6(i)]{Stu06b};


\item\label{i:p:PropertiesRCD(K,N):5} $(\mcX,\SF{X}{\mssm},\mssd)$ satisfies Assumption~\ref{ass:T};

\item\label{i:p:PropertiesRCD(K,N):6} if every $\mssd$-ball in $(X,\mssd)$ is geodesically convex (e.g.\ if~$(X,\mssd)$ is additionally a $\CAT(0)$ space), then $(\mcX,\SF{X}{\mssm},\mssd)$ satisfies Assumption~\ref{ass:SLTensor}.
\end{enumerate}

\begin{proof}
\iref{i:p:PropertiesRCD(K,N):2} follows from the density of~$\Lip(\mssd)\cap L^2(\mssm)$ in~$\dom{\Ch[\mssd,\mssm]}$ and in~$L^2(\mssm)$, e.g.~\cite[Prop.~4.1]{AmbGigSav14}.

\iref{i:p:PropertiesRCD(K,N):5}
It is noted in~\cite[Rmk.~4.20]{AmbGigSav14b} that whenever~$(X,\mssd,\mssm)$ is an $\RCD(K,\infty)$ space additionally satisfying local doubling and a weak Poincar\'e inequality, then~$\SF{X}{\mssm}(f)\eqdef \slo[\mssd,\mssm,*]{f}=\slo{f}$ for every~$f\in\Lip(\mssd)$, which shows~\ref{ass:T} for~$n=1$.
$\RCD^*(K,N)$ spaces satisfy these assumptions by~\iref{i:p:PropertiesRCD(K,N):3}.
In order to verify~\ref{ass:T} for~$n>1$, it suffices to recall the tensorization of the $\RCD^*(K,N)$ condition, proved in~\cite[Thm.~3.23]{ErbKuwStu15}.

In order to show~\ref{i:p:PropertiesRCD(K,N):6} it suffices to verify that the $n$-fold symmetric product~$B^\sym{n}$ of any $\mssd$-ball~$B$ is as well an~$\RCD^*(K',N')$ space for some appropriate~$N'$,~$K'$, and apply Proposition~\ref{p:PropertiesRCD(Kinfty)}\iref{i:p:PropertiesRCD(Kinfty):7}.
Since geodesically convex subsets of~$\RCD^*(K,N)$ spaces (endowed with the restriction of the reference measure) are again~$\RCD^*(K,N)$ (combine e.g.~\cite[Prop.~4.15]{Stu06a} with~\cite[Thm.~4.19]{AmbGigSav14} for the restriction of infinitesimal Hilbertianity), under the assumption in~\ref{i:p:PropertiesRCD(K,N):6} every ball in~$(X,\mssd)$ is an~$\RCD^*(K,N)$ space.
By tensorization of the $\RCD^*(K,N)$ condition (recalled above), $B_r^\tym{n}$ in an~$\RCD^*(K,nN)$ space for every~$n\in \N$.
Finally, since the action of the symmetric group~$\mfS_n$ on~$B^\tym{n}$ by permutation of coordinates is compact, isometric, and measure-preserving, the quotient space~$B^\sym{n}$ is again an~$\RCD^*(K,nN)$ space by~\cite[Thm.~6.2]{GalKelMonSos18}.
\end{proof}
\end{prop}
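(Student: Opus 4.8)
The statement to prove is Proposition~\ref{p:PropertiesRCD(K,N)}, collecting the properties of $\RCD^*(K,N)$ spaces needed to apply our main results. Most of the items are straightforward consequences of known structural theorems about $\RCD^*(K,N)$ spaces, so the plan is essentially to assemble the right references and check that their hypotheses are met in our framework. The only items requiring genuine argument are \ref{i:p:PropertiesRCD(K,N):5}, the verification of the tensorization assumption~\ref{ass:T}, and \ref{i:p:PropertiesRCD(K,N):6}, the verification of~\ref{ass:SLTensor} on symmetric products of balls.

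First I would dispose of the routine items. Item~(i) is immediate since $\RCD^*(K,N)\Rightarrow \RCD(K,\infty)$ for $N\in[1,\infty)$, so all of Proposition~\ref{p:PropertiesRCD(Kinfty)} applies, which in particular also settles strong local regularity of the Dirichlet form, the Rademacher property $(\Rad{\mssd}{\mssm})$, irreducibility, conservativeness, $(\SL{\mssm}{\mssd})$, and $\mssd_\mssm=\mssd$. Items \ref{i:p:PropertiesRCD(K,N):1} (local compactness and properness of balls) and \ref{i:p:PropertiesRCD(K,N):3} (local doubling and local weak $2$-Poincar\'e) are cited directly from Sturm~\cite[Cor.~2.4, Cor.~6.6(i)]{Stu06b}. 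Item \ref{i:p:PropertiesRCD(K,N):2}, the regularity (as opposed to just quasi-regularity) of $\ttonde{\Ch[\mssd,\mssm],\dom{\Ch[\mssd,\mssm]}}$, follows from \ref{i:p:PropertiesRCD(K,N):1}: since $(X,\mssd)$ is locally compact and $\mssm$ is Radon on it, and $\Lip(\mssd)\cap\Cc(\T)\cap L^2(\mssm)$ is a dense core in $L^2(\mssm)$ and in $\dom{\Ch[\mssd,\mssm]}$ by~\cite[Prop.~4.1]{AmbGigSav14}, the form is regular in the classical sense.

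For \ref{i:p:PropertiesRCD(K,N):5} I would argue as follows. First, for $n=1$: by~\cite[Rmk.~4.20]{AmbGigSav14b}, on an $\RCD(K,\infty)$ space that is additionally locally doubling and satisfies a weak Poincar\'e inequality, the minimal relaxed slope and the local slope coincide on Lipschitz functions, hence $\SF{X}{\mssm}(f)=\slo[\mssd]{f}^2$ $\mssm$-a.e.\ for $f\in\bLip(\mssd,\A)$; this is exactly~\ref{ass:T} with $n=1$, and the hypotheses hold by \ref{i:p:PropertiesRCD(K,N):3}. For $n>1$, the crucial input is the tensorization of the $\RCD^*(K,N)$ condition~\cite[Thm.~3.23]{ErbKuwStu15}: the product $(X^\tym{n},\mssd_\tym{n},\mssm^\otym{n})$ is again $\RCD^*(K,nN)$, hence again locally doubling with a weak Poincar\'e inequality, so by the $n=1$ case applied to the product space, $\SF{X^\tym{n}}{\mssm^\otym{n}}(f^\asym{n})=\slo[\mssd_\tym{n}]{f^\asym{n}}^2$; it then only remains to identify $\SF{X^\tym{n}}{\mssm^\otym{n}}$ with our product square field $\cdc^\otym{n}$ of Notation~\ref{n:ProductCdC}, which holds because both restrict to the same expression on the product algebra $\Dz^\otym{n}$ by Proposition~\ref{p:Products} and infinitesimal Hilbertianity of the product. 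Finally, for \ref{i:p:PropertiesRCD(K,N):6}: under the geodesic-convexity-of-balls hypothesis, a $\mssd$-ball $B$, endowed with $\mssm\restr_B$, is again $\RCD^*(K,N)$ (geodesically convex subsets of $\RCD^*(K,N)$ spaces are $\RCD^*(K,N)$ by~\cite[Prop.~4.15]{Stu06a} combined with the restriction of infinitesimal Hilbertianity~\cite[Thm.~4.19]{AmbGigSav14}); by the tensorization result $B^\tym{n}$ is $\RCD^*(K,nN)$; and since $\mfS_n$ acts on $B^\tym{n}$ isometrically, measure-preservingly, and compactly, the quotient $B^\sym{n}$ is again $\RCD^*(K,nN)$ by~\cite[Thm.~6.2]{GalKelMonSos18}. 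Then $(\SL{\mssm}{\mssd})$ on each $B^\sym{n}$, which is Proposition~\ref{p:PropertiesRCD(Kinfty)}\iref{i:p:PropertiesRCD(Kinfty):7}, together with the coincidence of $\Ch[\mssd,\mssm\restr_{B^\tym{n}}]$ with the finite-dimensional form $\EE{\asym{n}}{\mssm_r}$ of Assumption~\ref{d:ass:cSLTensor}, yields~\ref{ass:SLTensor}.

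The main obstacle I anticipate is bookkeeping in \ref{i:p:PropertiesRCD(K,N):6}: one must be careful that the equivalence between working on $B_r^\sym{n}=B_r^\tym{n}/\mfS_n$ (as an $\RCD$ space with its quotient distance and quotient measure) and working with the symmetrized form $\EE{\asym{n}}{\mssm_r}$ restricted to symmetric functions is genuinely an isometric identification of the relevant Dirichlet spaces — this uses that the quotient map is a local isometry away from the (measure-zero) diagonal strata and that the Cheeger energy does not see these strata. A second, milder subtlety is that $(\SL{\mssm}{\mssd})$ for $\RCD(K,\infty)$ spaces produces a Lipschitz representative in $\Lip^1(\mssd,\A)$, whereas \ref{ass:cSLTensor}/\ref{ass:SLTensor} are phrased for the continuous/measurable Sobolev-to-Lipschitz properties on the product of balls; one should check that local compactness of the balls makes these formulations agree (a Lipschitz function on a proper metric space is automatically continuous and Borel), so that the $\RCD$ Sobolev-to-Lipschitz property indeed implies~\ref{ass:SLTensor} as stated.
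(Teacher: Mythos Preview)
Your proposal is correct and follows essentially the same route as the paper: items~(i)--(iv) by direct citation, item~\ref{i:p:PropertiesRCD(K,N):5} via~\cite[Rmk.~4.20]{AmbGigSav14b} for $n=1$ and tensorization of $\RCD^*(K,N)$~\cite[Thm.~3.23]{ErbKuwStu15} for $n>1$, and item~\ref{i:p:PropertiesRCD(K,N):6} via the chain geodesic convexity $\Rightarrow$ ball is $\RCD^*(K,N)$ $\Rightarrow$ product is $\RCD^*(K,nN)$ $\Rightarrow$ quotient $B^\sym{n}$ is $\RCD^*(K,nN)$ by~\cite[Thm.~6.2]{GalKelMonSos18}, then invoking Proposition~\ref{p:PropertiesRCD(Kinfty)}\iref{i:p:PropertiesRCD(Kinfty):7}. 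The extra bookkeeping you flag (identifying $\SF{X^\tym{n}}{\mssm^\otym{n}}$ with $\cdc^\otym{n}$, and checking that the $\RCD$ Sobolev-to-Lipschitz property matches the formulation in~\ref{ass:SLTensor}) is indeed glossed over in the paper but is routine given infinitesimal Hilbertianity and local compactness; note also that~\ref{ass:SLTensor} is stated for the \emph{product} $B_r^\tym{n}$, so strictly speaking the passage to the symmetric quotient is not needed for that assumption itself---it is relevant only downstream in Proposition~\ref{p:cSLProjPoisson}.
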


The authors are grateful to Professor Shin-ichi Ohta for having pointed to their attention that~$\MCP(K,N)$ spaces as well satisfy some of the present assumptions.

\begin{prop}[Properties of~$\MCP(K,N)$ spaces]
Let~$(X,\mssd,\mssm)$ be an~$\MCP(K,N)$ space, $K\in\R$, $N\in [1,\infty)$ with~$\supp\mssm = X$, see e.g.~\cite[Dfn.~5.1]{Stu06b} or~\cite[Dfn.~2.1]{Oht07b}.
Then,
\begin{enumerate}[$(i)$]
\item\label{i:p:PropertiesMCP:1} $(X,\mssd,\mssm)$ is locally compact and all closed balls are compact, see~\cite[Cor.~2.4]{Stu06b};
\item\label{i:p:PropertiesMCP:2} $(X,\mssd,\mssm,\Ed)$ is a metric local structure (consequence of~\iref{i:p:PropertiesMCP:1} and standard facts).
\end{enumerate}

Additionally assume that
\begin{equation*}
\omega_N(x)\eqdef \lim_{r\to 0} r^{-N} \mssm B^\mssd_r(x) \quad \text{is locally bounded on~$X$} \fstop
\end{equation*}
and denote by~$\ttonde{\EE{X}{\mssm}_N,\dom{\EE{X}{\mssm}_N}}$ the Dirichlet form on~$L^2(\mssm)$ defined in~\cite[p.~173]{Stu06b}.
Further set~$\ttonde{\EE{X}{\mssm},\dom{\EE{X}{\mssm}}}\eqdef \ttonde{\tfrac{1}{N}\EE{X}{\mssm}_N,\dom{\EE{X}{\mssm}_N}}$
Then,
\begin{enumerate}[$(i)$]\setcounter{enumi}{2}
\item\label{i:p:PropertiesMCP:3} $\ttonde{\EE{X}{\mssm},\dom{\EE{X}{\mssm}}}$ is a regular, strongly local Dirichlet form on~$L^2(\mssm)$ with core~$\bsLip(\mssd)$ (see~\cite[Cor.~6.6(i)]{Stu06b});

\end{enumerate}
Thus, if additionally~$\ttonde{\EE{X}{\mssm},\dom{\EE{X}{\mssm}}}$ admits square field operator~$\SF{X}{\mssm}$, then
\begin{enumerate}[$(i)$]\setcounter{enumi}{5}
\item\label{i:p:PropertiesMCP:6} $(X,\SF{X}{\mssm},\mssd)$ is an \MLDS (consequence of~\iref{i:p:PropertiesMCP:1} and~\iref{i:p:PropertiesMCP:2});

\item\label{i:p:PropertiesMCP:7} $(X,\SF{X}{\mssm},\mssd)$ satisfies~$(\Rad{\mssd}{\mssm})$ (consequence of~\iref{i:p:PropertiesMCP:3} and~\cite[Eqn.~6.2]{Stu06b}).
\end{enumerate}
\end{prop}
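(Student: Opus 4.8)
The plan is to verify the items in turn, each being essentially a transcription of a result of Sturm~\cite{Stu06b} into the structural language of~\S\ref{s:Geometry}; no new analysis is involved beyond matching definitions. For~\iref{i:p:PropertiesMCP:1}, an~$\MCP(K,N)$ space with full support satisfies a Bishop--Gromov-type volume estimate, hence is locally doubling and, by~\cite[Cor.~2.4]{Stu06b}, proper (all closed balls compact) and a fortiori locally compact. For~\iref{i:p:PropertiesMCP:2}, I would run through Definition~\ref{d:EMLS} in the case~$\mssd$ finite (i.e.\ that of a metric local structure): properness makes~$(X,\mssd)$ complete and separable and makes~$\mssm$ $\sigma$-finite and finite on~$\mssd$-bounded sets; separability makes~$\Bo{\T}$ countably generated, giving Definition~\ref{d:MS}\iref{i:d:MS:4}; atomlessness of~$\mssm$ is a standard property of the measure contraction property for~$X$ with more than one point (see~\cite{Stu06b}), giving Definition~\ref{d:MS}\iref{i:d:MS:3}; and~$\msE=\msE_\mssd=\Ed$ is exactly the prescribed localizing ring. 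The resulting~$(X,\T,\Bo{\T},\mssm,\Ed)$ is then a topological local structure (Remark~\ref{r:TLS}) and, paired with~$\mssd$, a metric local structure.

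For~\iref{i:p:PropertiesMCP:3}, the local boundedness of~$\omega_N$ is precisely the hypothesis under which~\cite[\S6]{Stu06b} produces the Dirichlet form~$\ttonde{\EE{X}{\mssm}_N,\dom{\EE{X}{\mssm}_N}}$ on~$L^2(\mssm)$, and~\cite[Cor.~6.6(i)]{Stu06b} states that it is regular and strongly local with core~$\bsLip(\mssd)$; since~$\ttonde{\EE{X}{\mssm},\dom{\EE{X}{\mssm}}}\eqdef\ttonde{\tfrac1N\EE{X}{\mssm}_N,\dom{\EE{X}{\mssm}_N}}$ differs from it only by a positive multiplicative constant, the same conclusions hold verbatim. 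For~\iref{i:p:PropertiesMCP:6}, assuming~$\ttonde{\EE{X}{\mssm},\dom{\EE{X}{\mssm}}}$ admits a carr\'e du champ~$\SF{X}{\mssm}$, I would set~$\Dz\eqdef\bsLip(\mssd)$ and~$\cdc\eqdef\SF{X}{\mssm}\restr_{\Dz}$ and check Definition~\ref{d:TLDS}: $\Dz$ is a subalgebra of~$\Cz(\Ed)=\Cbs(\mssd)$ generating~$\T$ (as~$\mssd$ metrizes~$\T$); $\cdc$ is $\class[\mssm]{\Sb(\Ed)}$-valued because~$\cdc(f)\leq\Li[\mssd]{f}^2$ $\mssm$-a.e.\ and~$\cdc(f)$ vanishes off the $\mssd$-bounded set~$\supp f$; and, by~\iref{i:p:PropertiesMCP:3}, the pre-form obtained by integrating~$(\cdc,\Dz)$ over~$\mssm$ is densely defined (as~$\bsLip(\mssd)$ is a core) and its closure is~$\ttonde{\EE{X}{\mssm},\dom{\EE{X}{\mssm}}}$, a regular, hence quasi-regular, Dirichlet form. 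Together with~\iref{i:p:PropertiesMCP:2} this exhibits~$(X,\SF{X}{\mssm},\mssd)$ as an~\MLDS in the sense of Definition~\ref{d:EMLDS}.

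Finally, for~\iref{i:p:PropertiesMCP:7} I would prove~$(\Rad{\mssd}{\mssm})$ by the standard cutoff argument. By~\cite[Eqn.~6.2]{Stu06b}, after the $1/N$-normalization one has~$\cdc(g)\leq\Li[\mssd]{g}^2$ $\mssm$-a.e.\ for every~$g\in\bsLip(\mssd)$. Given~$\rep f\in\Lipu(\mssd,\A)$, fix~$x_0\in X$, put~$\chi_k\eqdef\ttonde{k-\mssd(\emparg,x_0)}^+\wedge 1\in\bsLip(\mssd)$, and note that~$\ttset{B^\mssd_k(x_0)}_k$ is an increasing family of ($\EE{X}{\mssm}$-quasi-)open sets with union~$X$. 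Then~$\rep f\,\chi_{k+1}\in\bsLip(\mssd)\subset\dom{\EE{X}{\mssm}}$, and by locality of~$\SF{X}{\mssm}$ one gets~$\cdc(\rep f\,\chi_{k+1})\leq 1$ $\mssm$-a.e.\ on~$B^\mssd_k(x_0)$, on which~$\rep f\,\chi_{k+1}$ agrees with~$\rep f$; hence~$f\in\domloc{\EE{X}{\mssm}}$ with~$\cdc(f)\leq 1$ $\mssm$-a.e., i.e.~$f\in\DzLoc{\mssm}$, which is~$(\Rad{\mssd}{\mssm})$.

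The only genuinely delicate input is the pair~\iref{i:p:PropertiesMCP:3} together with~\cite[Eqn.~6.2]{Stu06b}: one has to be sure that Sturm's normalization by~$N$ and weighting by~$\omega_N$ yields a bona fide Dirichlet form whose square field is dominated by, and on Lipschitz functions coincides with, the metric slope — this is exactly what makes both the~\MLDS structure~\iref{i:p:PropertiesMCP:6} and the Rademacher property~\iref{i:p:PropertiesMCP:7} go through. Everything else is a routine matching of the definitions of~\S\ref{s:Geometry} against the cited statements, and of the atomlessness against the standard theory of the measure contraction property.
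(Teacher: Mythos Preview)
Your proposal is correct and follows the same route as the paper, which does not give a separate proof but embeds the justification in the parenthetical citations within the statement itself; you have simply unpacked those citations and filled in the routine verifications (Definition~\ref{d:EMLS}/\ref{d:TLDS}/\ref{d:EMLDS} for~\iref{i:p:PropertiesMCP:2} and~\iref{i:p:PropertiesMCP:6}, and the cutoff-localization step from~\cite[Eqn.~6.2]{Stu06b} for~\iref{i:p:PropertiesMCP:7}). One minor remark: for~\iref{i:p:PropertiesMCP:6} the paper cites only~\iref{i:p:PropertiesMCP:1} and~\iref{i:p:PropertiesMCP:2}, but your observation that one also needs the bound~$\cdc(f)\leq\Li[\mssd]{f}^2$ (hence~\iref{i:p:PropertiesMCP:3} and~\cite[Eqn.~6.2]{Stu06b}) to see that~$\cdc$ is~$\class[\mssm]{\Sb(\Ed)}$-valued is well taken.
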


\subsection{Configuration Spaces}\label{ss:ExamplesConfig}
We collect here a variety of examples of probability measures on~$\dUpsilon$ satisfying, fully or in part, our previous assumptions.

\subsubsection{Quasi-Gibbs measures}\label{sss:ExamplesAC}
As a further very broad class of examples, we recall the definition of \emph{quasi-Gibbs measures}.
Several slightly different (possibly \emph{non}-equivalent) definitions for this concept were introduced by H.~Osada in the case $X=\R^n$, see e.g.~\cite[Dfn.~2.1]{Osa13},~\cite[Dfn.~5.1]{Osa19}, or~\cite[Dfn.~2.2]{OsaTan20}.
The definition we give here is closest to the one in~\cite{OsaTan20}.

Let~$(\mcX,\mssd)$ be a metric local structure.
Further let~$\Phi\colon X\rar\R$ be $\A$-measurable, and by~$\Psi\colon X^\tym{2}\rar\R$ be $\A^\otym{2}$-measurable and symmetric.
The function~$\Phi$ will be called the \emph{free potential}, and~$\Psi$ the \emph{interaction potential}.
For fixed~$E\in \Ed$, these potentials define a \emph{Hamiltonian}~$\msH_E\colon \dUpsilon\rar \R$ as
\begin{align*}
\msH_E\colon \gamma\longmapsto \Phi^\trid \gamma_{E}+\tfrac{1}{2}\Psi^\trid \ttonde{\gamma_{E}^\otym{2}}\comma\qquad \gamma\in \dUpsilon\fstop
\end{align*}

\begin{defs}[Quasi-Gibbs measures]\label{d:QuasiGibbs}
We say that a probability measure~$\QP$ on~$\ttonde{\dUpsilon,\A_{\mrmv}(\Ed)}$ is a \emph{$(\Phi,\Psi)$-quasi-Gibbs} measure if for every~$E\in\msE$, every~$k\in\N_0$, and $\QP$-a.e.~$\eta\in\dUpsilon$ there exists a constant~$c_{E,\eta,k}>0$ so that, for $\QP$-a.e.~$\eta\in\dUpsilon$, letting~$\QP^\eta_E$ denote the projected conditional probabilities~\eqref{eq:ProjectedConditionalQP}
\begin{align}\label{eq:localACquasiGibbs}
c_{E,\eta,k}^{-1}\, e^{-\msH_E} \cdot \PP_{\mssm_E}\mrestr{\dUpsilon^\sym{k}(E)} \ \leq \ \QP^\eta_E\mrestr{\dUpsilon^\sym{k}(E)} \ \leq \ c_{E,\eta,k}\, e^{-\msH_E} \cdot \PP_{\mssm_E}\mrestr{\dUpsilon^\sym{k}(E)} \fstop
\end{align}
\end{defs}

\begin{rem}\label{r:QuasiGibbs}
\begin{enumerate*}[$(a)$]
\item\label{i:r:QuasiGibbs:1} As usual, it suffices to assume~\eqref{eq:localACquasiGibbs} for each~$E_n$ in a localizing sequence; in particular, it is sufficient to choose~$E=B^\mssd_r(x_0)$ for every~$r>0$ and some fixed~$x_0\in X$.

\item\label{i:r:QuasiGibbs:2} Since the range of~$\ev_E$ is at most countable, our definition is equivalent to~\cite[Dfn.~2.2]{OsaTan20}, which prefers conditioning to~$\ev_E=k$ over restriction to~$\dUpsilon^\sym{k}(E)=\ev_E^{-1}(k)$.
\end{enumerate*}
\end{rem}

\begin{rem}\label{r:QuasiGibbsEx}
The class of Quasi-Gibbs measures includes all canonical Gibbs measures, and the laws of some determinantal/permnental point processes, as for instance:
\begin{enumerate}
\item\label{i:r:QuasiGibbsEx:1} mixed Poisson measures (Dfn.~\ref{d:MixedPoisson} below);
\item Ruelle type grand-canonical Gibbs measures~\cite{Rue70,AlbKonRoe98b} (cf.~\cite{Osa98}), by definition;
\item\label{i:r:QuasiGibbsEx:3} the laws of some determinantal/permanental point processes, as e.g.:
\begin{itemize}
\item the Dyson interacting Brownian motion, see~\cite[Thm.~2.2]{Osa13};
\item the Ginibre interacting Brownian motion, see~\cite[Thm.~2.3]{Osa13};
\item the $\mathrm{sine}_\beta$,~$\mathrm{Bessel}_{\alpha,\beta}$ and $\mathrm{Airy}_\beta$ processes~\cite{KatTan11}.
\end{itemize}
\end{enumerate}
For further examples of quasi-Gibbs measures when~$X=\R^n$ we refer to~\cite{Osa13, Osa19, OsaTan20}.
\end{rem}

We proceed now to briefly check our assumptions ---~\ref{ass:AC},~\ref{ass:CCr},~\ref{ass:CEr}~--- for the classes of measures in~\eqref{i:r:QuasiGibbsEx:1}--\eqref{i:r:QuasiGibbsEx:3} above.

Let~$\QP$ be a quasi-Gibbs measure.
By Proposition~\ref{p:CACtoAC}, Assumption~\ref{ass:CAC} implies Assumption~\ref{ass:AC}.
We will therefore concentrate on examples of measures satisfying~\ref{ass:CAC}.
It is clear that~\eqref{eq:localACquasiGibbs} implies~\ref{ass:CAC}, since it is in fact a constraint on the Radon--Nikod\'ym density.
Under some additional assumption on the pair of potentials~$(\Phi, \Psi)$ (e.g., in the case~$X=\R^n$, \emph{super-stability} and \emph{lower regularity} in the sense of Ruelle~\cite{Rue70, Osa98}, or the existence of upper semi-continuous bounds~$(\Phi_0,\Psi_0)$ such that~$c\,(\Phi_0,\Psi_0)\leq (\Phi,\Psi)\leq c^{-1}(\Phi_0,\Psi_0)$ for some constant~$c>0$, see~\cite[Eqn.~(A.3), p.~8]{Osa13}), one can show that~\eqref{eq:localACquasiGibbs} implies as well Assumption~\ref{ass:ConditionalClos}, so that the resulting form~$\ttonde{\EE{\dUpsilon}{\QP},\CylQP{\QP}{\Dz}}$ is closable by Theorem~\ref{t:ClosabilitySecond}.

Let~$(\mcX,\cdc,\mssd)$ be an~\TLDS.
For simplicity of notation, let~$\PP_{E,k}\eqdef \pr^E_\pfwd\PP_\mssm \mrestr{\dUpsilon^\sym{k}(E)}$ on~$\dUpsilon^\sym{k}(E)$.
We consider the pre-Dirichlet form~$\ttonde{\EE{\dUpsilon^\sym{k}(E)}{\PP_{E,k}},\CylQP{\PP_{E,k}}{\Dz}}$ on~$L^2(\PP_{E,k})$, defined analogously to~\eqref{eq:VariousFormsB} with~$\PP_{E,k} = (\PP_\mssm)^\eta_E\mrestr{\dUpsilon^\sym{k}(E)}$ (independently of~$\eta\in\dUpsilon$) in place of~$\QP^\eta_E$.
By making use of the techniques in~\S\ref{ss:FurtherClosability}, one can show that this form is well-defined, densely defined, and closable.
For simplicty of notation, we denote its closure by~$\ttonde{\EE{\sym{k}}{\pi},\dom{\EE{\sym{k}}{\pi}}}$.
We give a sufficient condition for~\ref{ass:ConditionalClos} to hold.

\begin{prop}[Sufficent condition for~\ref{ass:ConditionalClos}]
Let~$(\mcX,\cdc,\mssd)$ be an \MLDS, and~$\QP$ be a quasi-Gibbs measure on~$\ttonde{\dUpsilon,\A_\mrmv(\msE)}$ satisfying Assumption~\ref{ass:Mmu}.
Assume further that the density~$\varphi_{E,k}\eqdef e^{-\msH_E}\restr_{\dUpsilon^\sym{k}(E)}$ satisfies~$\varphi_{E,k}\in \dom{\EE{\sym{k}}{\PP}}$ for each~$k\in\N$ and~$E\in\msE$.
Then, the form~$\ttonde{\EE{\dUpsilon}{\QP},\dom{\EE{\dUpsilon}{\QP}}}$ is well-defined and a Dirichlet form on~$L^2(\QP)$.

\begin{proof}
For any constant~$c>0$, denote by~$\ttonde{\EE{\sym{k}}{\PP}_{c\varphi_{E,k}},\dom{\EE{\sym{k}}{\PP}_{c\varphi_{E,k}}}_*}$ the Girsanov-type transform of $\ttonde{\EE{\sym{k}}{\PP},\dom{\EE{\sym{k}}{\PP}}}$ by the potential~$c\varphi_{E,k}$.
Since~$c\varphi_{E,k}\in \dom{\EE{\sym{k}}{\PP}}$, by e.g.~\cite[Thm.~2.2]{CheSun06} this form is a well-defined Dirichlet form on~$L^2(c\varphi_{E,k} \cdot \PP_{E,k})$ with square field operator identical to the square field operator of~$\ttonde{\EE{\sym{k}}{\PP},\dom{\EE{\sym{k}}{\PP}}}$, and additionally satisfying~$\CylQP{\PP_{E,k}}{\Dz}\subset \dom{\EE{\sym{k}}{\PP}_{c\varphi_{E,k}}}_*$, since the former is a core of continuous functions for~$\ttonde{\EE{\sym{k}}{\PP},\dom{\EE{\sym{k}}{\PP}}}$.
It follows from the closability of~$\ttonde{\EE{\sym{k}}{\PP}_{c\varphi_{E,k}},\dom{\EE{\sym{k}}{\PP}_{c\varphi_{E,k}}}_*}$ that the pre-Dirichlet form~$\ttonde{\EE{\sym{k}}{\PP}_{c\varphi_{E,k}},\CylQP{\varphi_{E,k}\cdot \PP_{E,k}}{\Dz}}$ is finite (by Lemma~\ref{l:MmuL1}) and a closable form, with closure~$\ttonde{\EE{\sym{k}}{\PP}_{c\varphi_{E,k}},\dom{\EE{\sym{k}}{\PP}_{c\varphi_{E,k}}}}$ where, possibly,~$\dom{\EE{\sym{k}}{\PP}_{c\varphi_{E,k}}}\subsetneq \dom{\EE{\sym{k}}{\PP}_{c\varphi_{E,k}}}_*$.

By the two-sided bound on densities~\eqref{eq:localACquasiGibbs} and the standard fact~\cite[Prop.~I.3.5]{MaRoe92}, we may conclude that the pre-Dirichlet form~$\ttonde{\EE{\dUpsilon^\sym{k}}{\QP^\eta_{E,k}},\CylQP{\QP^\eta_{E,k}}{\Dz}}$, defined analogously to~\eqref{eq:VariousFormsB} with the measure~$\QP^\eta_{E,k} \eqdef \QP^\eta_E\mrestr{\ev^{-1}(k)}$ in place of~$\QP^\eta_E$, is densely defined and closable on~$L^2(\QP^\eta_{E,k})\cong L^2(c\varphi_{E,k}\PP_{E,k})$ for $c=c_{E,\eta,k}^{\pm 1}>0$ as in~\eqref{eq:localACquasiGibbs} and every~$\eta\in\dUpsilon$.

We may conclude that the form
\begin{equation}\label{eq:p:ClosabilityQGibbs:1}
\EE{\dUpsilon}{\QP^\eta_E}(u)\eqdef \sum_{k=0}^\infty \EE{\dUpsilon^\sym{k}}{\QP^\eta_{E,k}}(u)\comma \qquad u\in\Cyl{\Dz}\comma
\end{equation}
is well-defined, since so is~$\ttonde{\EE{\dUpsilon^\sym{k}}{\QP^\eta_{E,k}},\Cyl{\Dz}}$.
Its closability follows from the closability of the forms~$\ttonde{\EE{\dUpsilon^\sym{k}}{\QP^\eta_{E,k}},\CylQP{\QP^\eta_{E,k}}{\Dz}}$, $k\in\N$, and standard facts, e.g.~\cite[Prop.~I.3.7(i)]{MaRoe92}.
This proves that Assumption~\ref{ass:ConditionalClos} holds for~$\QP$, and concludes the proof of the closability of~$\ttonde{\EE{\dUpsilon}{\QP},\dom{\EE{\dUpsilon}{\QP}}}$ by Theorem~\ref{t:ClosabilitySecond}.
\end{proof}
\end{prop}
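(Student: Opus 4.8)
The plan is to reduce the statement to Theorem~\ref{t:ClosabilitySecond} by checking its two hypotheses for~$\QP$, namely Assumptions~\ref{ass:CAC} and~\ref{ass:ConditionalClos}.

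First I would dispense with Assumption~\ref{ass:CAC}. Fix a localizing sequence~$\seq{E_h}_h$ and set~$E\eqdef E_h$. Restricting the quasi-Gibbs inequality~\eqref{eq:localACquasiGibbs} to each fixed-particle component~$\dUpsilon^\sym{k}(E)$ shows that there the density of~$\QP^\eta_E$ with respect to~$\PP_{\mssm_E}$ is dominated by~$c_{E,\eta,k}\,e^{-\msH_E}$; summing over~$k\in\N_0$ yields~$\QP^\eta_E\ll\PP_{\mssm_E}$ for~$\QP$-a.e.~$\eta$, which is exactly~\ref{ass:CAC}. By Proposition~\ref{p:CACtoAC} this also delivers~\ref{ass:AC}, should it be needed elsewhere.

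The substance of the proof is Assumption~\ref{ass:ConditionalClos}, i.e.\ closability of~$\ttonde{\EE{\dUpsilon(E)}{\QP^\eta_E},\CylQP{\QP^\eta_E}{\Dz}}$ on~$L^2(\QP^\eta_E)$ for~$\QP$-a.e.~$\eta$ and each~$E=E_h$. I would work componentwise along the disjoint decomposition~$\dUpsilon(E)=\bigsqcup_{k\in\N_0}\dUpsilon^\sym{k}(E)$, writing~$\PP_{E,k}\eqdef \pr^E_\pfwd\PP_\mssm\mrestr{\dUpsilon^\sym{k}(E)}$ and~$\QP^\eta_{E,k}\eqdef \QP^\eta_E\mrestr{\dUpsilon^\sym{k}(E)}$, and denoting by~$\ttonde{\EE{\sym{k}}{\PP},\dom{\EE{\sym{k}}{\PP}}}$ the closure of the naturally defined pre-form on~$L^2(\PP_{E,k})$ (its well-posedness and closability being obtained exactly as in~\S\ref{ss:FurtherClosability}, in the much simpler finite-volume, fixed-cardinality situation). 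Since by hypothesis~$\varphi_{E,k}\eqdef e^{-\msH_E}\mrestr{\dUpsilon^\sym{k}(E)}\in\dom{\EE{\sym{k}}{\PP}}$, for any constant~$c>0$ the Girsanov-type transform of~$\ttonde{\EE{\sym{k}}{\PP},\dom{\EE{\sym{k}}{\PP}}}$ by the weight~$c\,\varphi_{E,k}$ is, by~\cite[Thm.~2.2]{CheSun06}, a Dirichlet form on~$L^2(c\,\varphi_{E,k}\cdot\PP_{E,k})$ with square field operator identical to that of~$\EE{\sym{k}}{\PP}$ and admitting the continuous core~$\CylQP{\PP_{E,k}}{\Dz}$ in its extended domain; hence~$\ttonde{\EE{\sym{k}}{\PP}_{c\varphi_{E,k}},\CylQP{\varphi_{E,k}\cdot\PP_{E,k}}{\Dz}}$ is finite by Lemma~\ref{l:MmuL1} and closable. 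Taking~$c=c_{E,\eta,k}^{\pm1}$, the two-sided bound~\eqref{eq:localACquasiGibbs} together with the comparison criterion~\cite[Prop.~I.3.5]{MaRoe92} shows that the pre-form~$\ttonde{\EE{\dUpsilon^\sym{k}}{\QP^\eta_{E,k}},\CylQP{\QP^\eta_{E,k}}{\Dz}}$ is densely defined and closable on~$L^2(\QP^\eta_{E,k})$ for every~$\eta\in\dUpsilon$.

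Finally I would reassemble the components: the form~$\EE{\dUpsilon}{\QP^\eta_E}(u)\eqdef \sum_{k\geq 0}\EE{\dUpsilon^\sym{k}}{\QP^\eta_{E,k}}(u)$ is well-defined on~$\Cyl{\Dz}$, since each summand is, and it is closable by the superposition principle for closable forms~\cite[Prop.~I.3.7(i)]{MaRoe92}. This establishes~\ref{ass:ConditionalClos}, and Theorem~\ref{t:ClosabilitySecond} then yields that~$\ttonde{\EE{\dUpsilon}{\QP},\CylQP{\QP}{\Dz}}$ is well-defined, densely defined, closable, and its closure a Dirichlet form on~$L^2(\QP)$. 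The main obstacle is technical rather than conceptual: one must be sure that the finite-volume forms~$\ttonde{\EE{\sym{k}}{\PP},\dom{\EE{\sym{k}}{\PP}}}$ are themselves genuine well-defined closable Dirichlet forms carrying the expected continuous core, so that the Chen--Sun machinery applies to~$c\,\varphi_{E,k}$; once this is granted, the density comparison and the summation over~$k$ are routine.
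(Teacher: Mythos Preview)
Your proposal is correct and follows essentially the same route as the paper: Girsanov transform via~\cite[Thm.~2.2]{CheSun06} applied to each fixed-cardinality component using~$\varphi_{E,k}\in\dom{\EE{\sym{k}}{\PP}}$, then the two-sided density bound~\eqref{eq:localACquasiGibbs} together with~\cite[Prop.~I.3.5]{MaRoe92} to transfer closability to~$\QP^\eta_{E,k}$, and finally superposition via~\cite[Prop.~I.3.7(i)]{MaRoe92} to obtain~\ref{ass:ConditionalClos} and conclude by Theorem~\ref{t:ClosabilitySecond}. The only cosmetic difference is that you verify~\ref{ass:CAC} inside the proof, whereas the paper records this implication in the discussion preceding the proposition.
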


\begin{rem}
When~$X$ is a Riemannian manifold, the condition~$\varphi_{E,k}\in \dom{\EE{\sym{k}}{\PP}}$ may be further relaxed.
Indeed, it is instrumental only to the closedness of the forms~$\ttonde{\EE{\sym{k}}{\PP}_{c\varphi_{E,k}},\dom{\EE{\sym{k}}{\PP}_{c\varphi_{E,k}}}_*}$, $c>0$, which may alternatively be established on the orbifold~$X^\sym{k}\cong \dUpsilon^\asym{k}(E)$, e.g.\ by the Hamza condition, cf.~\cite{AlbBraRoe89}.
\end{rem}

In order to apply the theory developed in~\S\ref{s:Interplay}, we further need to verify either Assumption~\ref{ass:CEr} or directly the Sobolev-to-Lipschitz property~$(\SL{\QP^\eta_r}{\mssd_\dUpsilon})$.
Since all the quasi-Gibbs measures of our interest are studied in the case of Riemannian manifolds, we restrict ourselves to this generality.
Recall the setting of Example~\ref{e:Riemannian}.

Concerning~\ref{ass:CEr}, the mutual equivalence~$\QP^\eta_r\sim \PP_{\mssm_r}$ is already implicit in the definition of quasi-Gibbs measures.
The bounds on the density is not satisfied by arbitrary quasi-Gibbs measures.
However, in the following Proposition we provide sufficient conditions for Assumption~\ref{ass:CEr} in the case of manifolds of dimension~$d\geq 2$, which includes invariant measures of point processes describing particle systems with hard-core interactions.
In particular, this applies to all classes of measures listed in Remark~\ref{r:QuasiGibbsEx}, except for $\mathrm{sine}_\beta$, $\mathrm{Bessel}_{\alpha,\beta}$, and $\mathrm{Airy}_\beta$, which are measures on~$\dUpsilon(\R)$.

\begin{prop}[Conditional equivalence for quasi-Gibbs measures]\label{p:CondEquivQGibbs}
Let~$(\mcX,\cdc,\mssd)$ be the \MLDS arising from a Riemannian manifold~$(X,g)$ of dimension~$d\geq 2$.
Assume that~$\QP$ is a quasi-Gibbs measure satisfying Assumption~\ref{ass:Mmu} and that
\begin{enumerate}[$(a)$]
\item the free potential~$\Phi$ of~$\QP$ satisfies~$\Phi\in L^\infty_\loc(\mssm)$;
\item the interaction potential~$\Psi$ of~$\QP$ if of the form~$\Psi(x_1,x_2)=\psi\circ \mssd(x_1,x_2)$ for some measurable function~$\psi\colon [0,\infty) \to \R\cup\set{+\infty}$ locally bounded away from~$0$ and~$\infty$ on~$(0,\infty)$.
\end{enumerate}
Then Assumption~\ref{ass:CEr} holds for~$\QP$.

\begin{proof}
By the representation of~$\dUpsilon(B_r)$ as a disjoint union, it suffices to verify the statement for the restricted measure~$\QP^\eta_{r,n}\eqdef \QP^\eta_r\mrestr{B_r^\sym{n}}$.
Set
\begin{align*}
(B_r^\sym{n})_\circ\eqdef \set{(x_1,\dotsc, x_n)\in B_r^\sym{n}: x_i\neq x_j \text{ for } i\neq j} \comma \qquad \Delta_r^\sym{n}\eqdef B_r^\sym{n}\setminus (B_r^\sym{n})_\circ \fstop
\end{align*}
It is readily verified that~$\frac{\diff \QP^\eta_r}{\diff\PP_{\mssm_r}}$ is bounded away from~$0$ and~$\infty$ on
\begin{align*}
\ttonde{ \Delta_r^\sym{n}}_\eps\eqdef \set{\mbfx^\sym{n}\in B_r^\sym{n} : \mssd_\sym{n}(\mbfx,  \Delta_r^\sym{n})>\eps }
\end{align*}
for every~$\eps>0$.
Thus, it suffices to show that~$\Delta_r^\sym{n}$ is $\EE{B_r^\sym{n}}{\PP_{\mssm_r,n}}$-polar.
By the representation~\eqref{eq:PoissonLebesgue}, this is equivalent to say that~$\Delta_r^\sym{n}$ is $\EE{B_r^\sym{n}}{\mssm_r^\sym{n}}$-polar.
This is a standard consequence of the fact that each one-point set in~$X$ is $\EE{X}{\mssm}$-polar, which holds since~$\dim X\geq 2$, cf.\ e.g.~\cite[Lem.~7.17]{LzDS17+}.
\end{proof}
\end{prop}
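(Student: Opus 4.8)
The plan is to reduce the whole statement to a fixed finite particle number and, ultimately, to the polarity of a diagonal. First I would use that $\mssm=\vol_g$ is atomless, so $\PP_\mssm$ — and hence each projected conditional probability $\QP^\eta_{B_r}$, which by Definition~\ref{d:QuasiGibbs} is mutually absolutely continuous with $\pr^{B_r}_\pfwd\PP_\mssm$ on every symmetric component — is concentrated on simple configurations. I would then decompose $\dUpsilon(B_r)=\bigsqcup_{n\in\N_0}\dUpsilon^\sym{n}(B_r)$ under the identification $\dUpsilon^\sym{n}(B_r)\cong B_r^\sym{n}$, note that distinct components are at $\mssd_\dUpsilon$-distance $+\infty$ by Proposition~\ref{p:W2Upsilon}\iref{i:p:W2Upsilon:1} so that $\ttonde{\EE{\dUpsilon(B_r)}{\PP_{\mssm_r}},\dom{\EE{\dUpsilon(B_r)}{\PP_{\mssm_r}}}}$ is a direct sum over these components, and recall via~\eqref{eq:PoissonLebesgue} that the restriction of $\PP_{\mssm_r}$ to $\dUpsilon^\sym{n}(B_r)$ equals a positive constant times $\mssm_r^\sym{n}$, so that the $n$-th summand is $\ttonde{\EE{B_r^\sym{n}}{\mssm_r^\sym{n}},\dom{\cdot}}$ up to that constant. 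Consequently a set $A\subset B_r^\sym{n}$ is $\EE{\dUpsilon(B_r)}{\PP_{\mssm_r}}$-polar iff it is $\EE{B_r^\sym{n}}{\mssm_r^\sym{n}}$-polar, and a nest for the direct-sum form may be assembled componentwise.

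Second, I would control the Radon--Nikod\'ym density. The two-sided bound~\eqref{eq:localACquasiGibbs} gives, on $\dUpsilon^\sym{n}(B_r)$, the estimate $c^{-1}e^{-\msH_{B_r}}\leq \diff\QP^\eta_{B_r}/\diff\PP_{\mssm_r}\leq c\,e^{-\msH_{B_r}}$ with $c=c_{B_r,\eta,n}>0$. Since $\psi$ is finite on $(0,\infty)$, for a simple configuration $\gamma\in\Upsilon^\sym{n}(B_r)$ the Hamiltonian $\msH_{B_r}(\gamma)$ is a finite sum of finite terms, so $e^{-\msH_{B_r}(\gamma)}\in(0,\infty)$; together with the first paragraph this already yields $\QP^\eta_{B_r}\sim\PP_{\mssm_r}$, i.e.\ the equivalence half of Assumption~\ref{ass:CEr}. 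For the \emph{local} bound, let $(\Delta_r^\sym{n})_\eps\subset B_r^\sym{n}$ denote the configurations whose $n$ atoms are pairwise at $\mssd$-distance $>\eps$. On $(\Delta_r^\sym{n})_\eps$ the Hamiltonian $\msH_{B_r}$ is bounded above and below: the free-potential term is controlled by $n\,\norm{\Phi}_{L^\infty(B_r)}$, finite because $B_r$ is relatively compact and $\Phi\in L^\infty_\loc(\mssm)$; the interaction term is controlled because $\psi$ is bounded away from $0$ and $\infty$ on the compact $[\eps,2r]\subset(0,\infty)$, and all pairwise distances of atoms in $B_r$ lie in that interval. Hence $\diff\QP^\eta_{B_r}/\diff\PP_{\mssm_r}$ is bounded away from $0$ and $\infty$ on $(\Delta_r^\sym{n})_\eps$, with bounds depending only on $n,\eps,r$ and the quasi-Gibbs constant.

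Third, I would assemble the nest. For $n\leq 1$ there is no diagonal, so I take the $n$-th component of each $F_k$ to be all of $\dUpsilon^\sym{n}(B_r)$; for $n\geq 2$, on $\dUpsilon^\sym{n}(B_r)$ the topologies $\T_\mrmv(\Ed)$ and $\mssd_\dUpsilon$ coincide (Proposition~\ref{p:DistTopE}, using that closed balls of a complete Riemannian manifold are compact and that $\mssd_\dUpsilon$ there is the quotient distance, Proposition~\ref{p:PoissonLabeling}\iref{i:p:PoissonLabeling:6}), so $\mssd_\dUpsilon(\emparg,\Delta_r^\sym{n})$ is $\T_\mrmv(\Ed)$-continuous there; picking $\eps_k\downarrow 0$ and setting $F_k^{(n)}\eqdef\set{\gamma:\mssd_\dUpsilon(\gamma,\Delta_r^\sym{n})\geq \eps_k}$ gives an increasing sequence of $\T_\mrmv(\Ed)$-closed sets with $F_k^{(n)}$ and $\ttonde{F_{k+1}^{(n)}}^\complement$ at $\mssd_\dUpsilon$-distance $\geq\eps_k-\eps_{k+1}$ by the triangle inequality, and with $F_k^{(n)}\subset(\Delta_r^\sym{n})_{\eps_k}$. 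Then $F_k\eqdef\bigsqcup_n F_k^{(n)}$ is well-separated (distinct components are at infinite $\mssd_\dUpsilon$-distance), and the bounds of the second paragraph hold on $\inter_{\T_\mrmv(\Ed)}F_k$. It remains to choose the $\eps_k$ (componentwise) so that $\klim\Cap_{\EE{\dUpsilon(B_r)}{\PP_{\mssm_r}}}\ttonde{F_k^\complement}=0$; by the first paragraph and countable subadditivity of capacity over the components, this follows once one knows, for each $n\geq 2$, that the shrinking closed neighborhoods $\set{\mssd_\dUpsilon(\emparg,\Delta_r^\sym{n})\leq\eps}$ have $\EE{B_r^\sym{n}}{\mssm_r^\sym{n}}$-capacity tending to $0$ as $\eps\downarrow 0$, hence ultimately once one knows that $\Delta_r^\sym{n}$ itself is $\EE{B_r^\sym{n}}{\mssm_r^\sym{n}}$-polar.

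Finally, the polarity of the fat diagonal $\Delta_r^\sym{n}$ is the substantive point, and the one I expect to demand the most care to write out. I would pull back along the measure-preserving, finite-to-one, isometric quotient $\pr^\sym{n}\colon B_r^\tym{n}\to B_r^\sym{n}$ — under which polarity is preserved and pulls back, as used elsewhere in~\S\ref{ss:ExamplesBase} — so that it suffices to show that the finite union $\bigcup_{i<j}\set{\mbfx\in B_r^\tym{n}:x_i=x_j}$ is $\EE{B_r^\tym{n}}{\mssm_r^\otym{n}}$-polar. For each pair $i<j$, the section of $\set{x_i=x_j}$ in the $i$-th coordinate is, for every value of the remaining coordinates, the singleton $\set{x_j}$, which is $\EE{X}{\mssm}$-polar because $\dim X=d\geq 2$ (\cite[Lem.~7.17]{LzDS17+}); by the product structure~\eqref{eq:CdCTensor} of $\cdc^\otym{n}$ (Proposition~\ref{p:Products}) and the standard slicing criterion for polarity on product Dirichlet spaces, this forces $\set{x_i=x_j}$ to be $\EE{B_r^\tym{n}}{\mssm_r^\otym{n}}$-polar. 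A finite union of polar sets being polar, $\Delta_r^\sym{n}$ is polar, which closes the argument.
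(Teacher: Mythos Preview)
Your proof is correct and follows essentially the same approach as the paper's: reduce to a fixed particle number via the disjoint-union decomposition, observe that the Radon--Nikod\'ym density is two-sidedly bounded on $\eps$-neighborhoods of the off-diagonal (using the local boundedness of~$\Phi$ and of~$\psi$ on compacta of~$(0,\infty)$), and then reduce everything to the $\EE{B_r^\sym{n}}{\mssm_r^\sym{n}}$-polarity of the fat diagonal, which follows from the polarity of singletons in dimension~$d\geq 2$. You simply spell out more of what the paper leaves implicit --- the explicit assembly of the well-separated nest from level sets of~$\mssd_\dUpsilon(\emparg,\Delta_r^\sym{n})$, the Hamiltonian bounds, and the slicing argument for polarity on the product --- but the strategy is the same.
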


Since singletons are not polar for the standard Brownian motion on~$\R$, the proof of Proposition~\ref{p:CondEquivQGibbs} fails on~$\dUpsilon(\R)$.
In fact, the validity of Assumption~\ref{ass:CEr} for a quasi-Gibbs measure~$\QP$ on~$\dUpsilon(\R)$ would imply that the corresponding Hamiltonian~$\msH_r$ is uniformly bounded for every~$r>0$.
In spite of this fact, it is still possible to establish the Sobolev-to-Lipschitz property by other means, as we now show.

\begin{prop}[Sobolev-to-Lipschitz property for quasi-Gibbs measures on manifolds]
Let $(\mcX,\cdc,\mssd)$ be the \MLDS arising from a Riemannian manifold~$(X,g)$ of dimension~$d\geq 1$.
Assume that~$\QP$ is a quasi-Gibbs measure satisfying Assumptions~\ref{ass:Mmu} and~\ref{ass:ConditionalClos}, and that
\begin{enumerate}[$(a)$]
\item there exists a closed $\mssm$-negligible set~$F\subset X$ so that the free potential~$\Phi$ of~$\QP$ satisfies~$\Phi\in L^\infty_\loc(X\setminus F,\mssm)$;
\item there exists a closed $\mssm^\otym{2}$-negligible set~$F^\asym{2}\subset X$ so that the interaction potential~$\Psi$ of~$\QP$ satisfies~$\Psi\in L^\infty_\loc\ttonde{X^\tym{2}\setminus F^\asym{2},\mssm^\otym{2}}$.
\end{enumerate}
Then, the form~$\ttonde{\EE{\dUpsilon(B_r)}{\QP^\eta_r},\dom{\EE{\dUpsilon(B_r)}{\QP^\eta_r}}}$ defined as in~\ref{ass:ConditionalClos} with~$B_r$ in place of~$E$, satisfies~$(\SL{\QP^\eta_r}{\mssd_\dUpsilon})$ for every~$\eta\in\dUpsilon$ and every~$r>0$.

\begin{proof}
Fix~$r>0$.
By the representation of~$\dUpsilon(B_r)$ as a disjoint union, it suffices to verify the statement for the restricted measure~$\QP^\eta_{r,n}\eqdef \QP^\eta_r\mrestr{B_r^\sym{n}}$.
By the assumptions there exists a closed $\mssm^\sym{n}$-negligible set~$F^\sym{n}_r\subset B_r^\sym{n}$ so that~$\msH_{r,n}\eqdef \msH_{B_r}\restr_{\dUpsilon^\sym{n}(B_r)}$ satisfies~$\msH_{r,n}\in L^\infty_\loc \ttonde{B_r^\sym{n}\setminus F^\sym{n}_r}$.

Since~$F\cap B_r$ and~$F^\asym{2}\cap B_r^\tym{2}$ are relatively compact,~$F^\sym{n}_r$ is so as well.
Thus, for every~$\eps>0$ there exists~$m_\eps\in \N$ and points~$\seq{\mbfx^\sym{n}_{i,\eps}}_{i\leq m_\eps}\subset B_r^\sym{n}$ so that~$B^i_\eps\eqdef B^{\mssd_\sym{n}}_\eps(\mbfx^\sym{n}_{i,\eps})$ is a finite open covering of~$F^\sym{n}_r$.
Since~$m_\eps$ is finite, the centers~$\mbfx^\sym{n}_{i,\eps}$ may be chosen in such a way that each connected component of~$Y^\sym{n}_\eps\eqdef B^\sym{n}_r\setminus \cl_{\sym{n}}\ttonde{ \cup_{i\leq n_\eps} B^i_\eps}$ is quasi-convex, with quasi-convexity constant~$C_\eps$ possibly depending on~$\eps>0$.
That is, for every~$\mbfx^\sym{n},\mbfy^\sym{n}$ in the same connected component of~$Y^\sym{n}_\eps$ there exists a curve~$\gamma$ joining them and additionally so that
\begin{align*}
\length(\gamma)\leq C_\eps\, \mssd_\sym{n}(\mbfx^\sym{n},\mbfy^\sym{n}) \fstop
\end{align*}

Since~$\msH_{r,n}\in L^\infty_\loc \ttonde{B_r^\sym{n}\setminus F^\sym{n}_r}$, we have that~$n!\, e^{\mssm B_r}\frac{\diff\QP^\eta_{r,n}}{\diff\PP_{\mssm_r,n}}=\frac{\diff\QP^\eta_{r,n}}{\diff\mssm^\sym{n}_r}$ is uniformly bounded away from~$0$ and infinity on~$Y_\eps^\sym{n}$.
Thus, the weighted Sobolev space~$W^{1,\infty}(Y_\eps^\sym{n},\QP^\eta_{r,n}\mrestr{Y_\eps^\sym{n}})$ coincides with the Sobolev space~$W^{1,\infty}(Y_\eps^\sym{n},\mssm_r^\sym{n}\mrestr{Y_\eps^\sym{n}})$.
Since~$Y_\eps^\sym{n}$ is quasi-convex, the standard characterization of Sobolev spaces by absolute continuity along segments, e.g.~\cite[Thm.~5.6.5]{KufJohFuc77}, implies that
\begin{align}\label{eq:p:SLExample:1}
W^{1,\infty}(Y_\eps^\sym{n},\QP^\eta_{r,n})=W^{1,\infty}(Y_\eps^\sym{n},\mssm_r^\sym{n}) \cong \bLip(Y_\eps^\sym{n},\mssd_\sym{n}) \comma
\end{align}
(here,~$\cong$ denotes an isometry of Banach spaces) and therefore that~$(\SL{\PP_{\mssm_r,n}}{\mssd_\sym{n}})$ holds.

Now, let~$f^\sym{n}\in \DzLocB{\QP^\eta_{r,n}}$.
By locality of~$\SF{\dUpsilon(B_r)}{\QP^\eta_{r,n}}$ and since~$Y_\eps^\sym{n}$ is open, we have that
\begin{align*}
\SF{\dUpsilon(B_r)}{\QP^\eta_{r,n}}(f^\sym{n})\restr_{Y_\eps^\sym{n}}\leq 1 \as{\mssm_r^\sym{n}} \quad \text{on } Y_\eps^\sym{n} \comma
\end{align*}
which shows that~$f^\sym{n}\restr_{Y_\eps^\sym{n}}$ is $1$-Lipschitz w.r.t.~$\mssd_\sym{n}$ by~\eqref{eq:p:SLExample:1}.
Let~$f^\sym{n}_\eps$ be the (upper) constrained McShane extension~\eqref{eq:McShane} of~$f^\sym{n}\restr_{Y_\eps^\sym{n}}$ on~$B_r^\sym{n}$.
Since~$(\mcX,\cdc,\mssd)$ is the local diffusion space arising from a Riemannian manifold, it satisfies~$(\Rad{\mssd}{\mssm})$.
Thus,~$\ttonde{\EE{\dUpsilon^\sym{n}(B_r)}{\PP_{\mssm_r,n}},\dom{\EE{\dUpsilon^\sym{n}(B_r)}{\PP_{\mssm_r,n}}}}$ satisfies~$(\Rad{\mssd_\sym{n}}{\PP_{\mssm_r,n}})$ by Proposition~\ref{p:cSLProjPoisson}\iref{i:p:cSLProjPoisson:2}.
As a consequence,~$f^\sym{n}_\eps$ is as well an element of~$\DzLocB{\PP_{\mssm_r,n}}$.
Since~$F$ is $\mssm$-negligible and~$F^\asym{2}$ is $\mssm^\otym{2}$-negligible, we have that~$\cap_{\eps>0} Y^\sym{n}_\eps$ is $\mssm_r^\sym{n}$-negligible as well.
Therefore,~$L^1(\mssm_r^\sym{n})$-$\lim_{\eps\to 0} f^\sym{n}_\eps=f^\sym{n}$ by Dominated Convergence with dominating function~$\norm{f^\sym{n}}_{L^\infty(\mssm_r^\sym{n})}\car_{B_r^\sym{n}}$.

Summarizing, we have in particular that
\begin{align*}
\sup_\eps\norm{f^\sym{n}_\eps}_{L^\infty(\PP_{\mssm_r,n})}\leq \norm{f^\sym{n}}_{L^\infty(\PP_{\mssm_r,n})}\comma \qquad f^\sym{n}_\eps \in \DzLocB{\PP_{\mssm_r,n}}\comma \qquad f^\sym{n}_\eps\xrightarrow[ \text{w*-$L^\infty(\PP_{\mssm_r,n})$}]{\ \eps\to 0 \ } f^\sym{n} \fstop
\end{align*}
By~\cite[Prop.~2.35]{LzDSSuz21a} we conclude that~$f^\sym{n}\in \DzLocB{\PP_{\mssm_r,n}}$, which verifies the assumptions in Proposition~\ref{p:Locality}\iref{i:p:Locality:1}.
The conclusion then follows from Proposition~\ref{p:Locality}\iref{i:p:Locality:1} and the Sobolev-to-Lipschitz property~$(\SL{\PP_{\mssm_r,n}}{\mssd_\sym{n}})$ shown above.
\end{proof}
\end{prop}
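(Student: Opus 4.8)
The plan is to reduce, as is customary, to the finite--volume symmetric products and there to compare the weighted Dirichlet form associated with~$\QP^\eta_r$ with the Poisson form, exploiting the locality machinery of~\S\ref{sss:RadStoL} together with the Rademacher property of the base manifold. First I would fix~$r>0$ and use the canonical identification $\dUpsilon(B_r)=\bigsqcup_{n\in\N_0} B_r^\sym{n}$, $B_r^\sym{n}\eqdef B_r^{\tym{n}}/\mfS_n$, so that it suffices to prove~$(\SL{\QP^\eta_{r,n}}{\mssd_\sym{n}})$ on each component, where $\QP^\eta_{r,n}\eqdef \QP^\eta_r\mrestr{B_r^\sym{n}}$. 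From the hypotheses on~$\Phi$ and~$\Psi$, and since $F\cap B_r$ and $F^\asym{2}\cap B_r^{\tym{2}}$ are relatively compact, there is a closed, relatively compact, $\mssm_r^\sym{n}$--negligible set $F^\sym{n}_r\subset B_r^\sym{n}$ outside which the Hamiltonian $\msH_{r,n}$ is locally bounded; hence outside $F^\sym{n}_r$ the density $\diff\QP^\eta_{r,n}/\diff\mssm_r^\sym{n}$ is locally bounded away from~$0$ and from~$\infty$.

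Next, for each~$\eps>0$ I would cover $F^\sym{n}_r$ by \emph{finitely many} $\mssd_\sym{n}$--balls of radius~$\eps$, choosing their centres (which is possible precisely because the cover is finite) so that every connected component of the open set $Y^\sym{n}_\eps\eqdef B_r^\sym{n}\setminus \cl_\sym{n}(\text{the closed union of these balls})$ is quasi--convex, with some quasi--convexity constant~$C_\eps$. On~$Y^\sym{n}_\eps$ the two reference measures are comparable, so the weighted Sobolev space $W^{1,\infty}(Y^\sym{n}_\eps,\QP^\eta_{r,n})$ coincides with $W^{1,\infty}(Y^\sym{n}_\eps,\mssm_r^\sym{n})$; by quasi--convexity and the classical characterization of Sobolev functions via absolute continuity along segments (e.g.~\cite{KufJohFuc77}), the latter is isometric to $\bLip(Y^\sym{n}_\eps,\mssd_\sym{n})$, which yields the Sobolev--to--Lipschitz property \emph{localized to}~$Y^\sym{n}_\eps$. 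Given $f^\sym{n}\in\DzLocB{\QP^\eta_{r,n}}$, locality of the square field then forces $\SF{\dUpsilon(B_r)}{\QP^\eta_{r,n}}(f^\sym{n})\leq 1$ $\mssm_r^\sym{n}$--a.e.\ on~$Y^\sym{n}_\eps$, so $f^\sym{n}\restr_{Y^\sym{n}_\eps}$ is $1$--Lipschitz; I would extend it, by the constrained McShane procedure of Lemma~\ref{l:McShane}, to $f^\sym{n}_\eps$ on~$B_r^\sym{n}$. Since the manifold satisfies~$(\Rad{\mssd}{\mssm})$, the Poisson form on~$B_r^\sym{n}$ satisfies~$(\Rad{\mssd_\sym{n}}{\PP_{\mssm_r,n}})$ by Proposition~\ref{p:cSLProjPoisson}\iref{i:p:cSLProjPoisson:2} (via tensorization, Theorem~\ref{t:Tensor}, and~\eqref{eq:PoissonLebesgue}), whence $f^\sym{n}_\eps\in\DzLocB{\PP_{\mssm_r,n}}$.

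Finally, because $F$ and~$F^\asym{2}$ are negligible, $\bigcap_{\eps>0}(B_r^\sym{n}\setminus Y^\sym{n}_\eps)$ is $\mssm_r^\sym{n}$--negligible, so dominated convergence gives $f^\sym{n}_\eps\to f^\sym{n}$ in $L^1(\mssm_r^\sym{n})$ with a uniform $L^\infty$--bound; a closure argument (the weak-$*$ stability of~$\DzLocB{\PP_{\mssm_r,n}}$, cf.~\cite{LzDSSuz20}) then yields $f^\sym{n}\in\DzLocB{\PP_{\mssm_r,n}}$. This is exactly the hypothesis of Proposition~\ref{p:Locality}\iref{i:p:Locality:1}, which, combined with the Sobolev--to--Lipschitz property $(\SL{\PP_{\mssm_r,n}}{\mssd_\sym{n}})$ of the Poisson form (inherited from the manifold by tensorization), transfers~$(\SL{\QP^\eta_{r,n}}{\mssd_\sym{n}})$ back to~$\QP^\eta_{r,n}$; summing over~$n$ concludes. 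The main obstacle I expect is the interplay between the singular sets~$F$,~$F^\asym{2}$ and the geometry of the symmetric products: one must excise only a \emph{finite} neighbourhood of the (relatively compact) lifted singular set, so that the complement decomposes into quasi--convex pieces on which Sobolev~$=$~Lipschitz, and then recombine these local statements into a global one through the McShane extension plus a density/closure step, keeping all the measure comparisons uniform in~$\eps$ and harmless in the limit.
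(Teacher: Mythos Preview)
Your proposal is correct and follows essentially the same route as the paper's proof: reduce to each symmetric product~$B_r^\sym{n}$, excise a finite $\eps$-neighbourhood of the relatively compact singular set to obtain a quasi-convex open set~$Y^\sym{n}_\eps$ on which the density is two-sided bounded and Sobolev${}={}$Lipschitz, McShane-extend the restriction, invoke~$(\Rad{\mssd_\sym{n}}{\PP_{\mssm_r,n}})$ via Proposition~\ref{p:cSLProjPoisson}\iref{i:p:cSLProjPoisson:2}, pass to the limit by weak-$*$ stability of~$\DzLocB{\PP_{\mssm_r,n}}$, and conclude by Proposition~\ref{p:Locality}\iref{i:p:Locality:1}. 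The only cosmetic difference is that the paper cites~\cite[Prop.~2.35]{LzDSSuz21a} for the weak-$*$ closure step where you point to~\cite{LzDSSuz20}; your phrasing of the negligibility of $\bigcap_{\eps>0}(B_r^\sym{n}\setminus Y^\sym{n}_\eps)$ is in fact cleaner than the paper's (which writes~$\cap_{\eps>0} Y^\sym{n}_\eps$, an evident typo).
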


\subsubsection{Integral measures}\label{sss:MixedPoisson}
In this short section, we provide a general abstract method for enlarging the class of measures satisfying our assumptions.
Let us start by recalling the definition of mixed Poisson measures, as an example.
We say that~$\lambda\in\msP(\R^+_0)$ is a \emph{L\'evy measure} if~$\lambda\set{0}=0$ and~$\lambda (1\wedge t)<\infty$.

\begin{defs}[Mixed Poisson measures]\label{d:MixedPoisson}
Let~$\mcX$ be a local structure, and~$\lambda\in\msP(\R^+_0)$ be a L\'evy measure.
The \emph{mixed Poisson measure} with intensity measure~$\mssm$ and L\'evy measure~$\lambda$ is the probability measure on~$\ttonde{\dUpsilon,\A_\mrmv(\msE)}$ defined as
\begin{align*}
\QP_{\lambda,\mssm}=\int_{\R^+} \PP_{s\cdot\mssm}\diff\lambda(s) \fstop
\end{align*}
\end{defs}

It readily verified that \emph{mixed Poisson measures} satisfy most of the previously stated assumptions, and in fact all previously established conclusions.
We note that mixed Poisson measures do not satisfy Assumption~\ref{ass:Closability}, for they do not satisfy any Campbell identity~\ref{ass:GNZ}, which shows the necessity to discuss other assumptions on closability, as in the previous sections.
However, it is not difficult to show that the form~$\ttonde{\EE{\dUpsilon}{\QP_{\lambda,\mssm}},\dom{\EE{\dUpsilon}{\QP_{\lambda,\mssm}}}}$ is closable (for every L\'evy measure~$\lambda$), and that it satisfies both~$(\Rad{\mssd_\dUpsilon}{\QP_{\lambda,\mssm}})$ and~$(\dSL{\mssd_\dUpsilon}{\QP_{\lambda,\mssm}})$.

Mixed Poisson measures are the prototipical example of \emph{integral measures} on~$\dUpsilon$, i.e.\ measures constructed by integration.
The corresponding Dirichlet forms can be understood as direct integrals of Dirichlet forms in the sense of e.g.~\cite{LzDS20}.
In the next proposition we show how to transfer properties of invariant measures to any integral measures constructed from them.

\begin{prop}\label{p:Linearity}
Let~$(\mcX,\cdc,\mssd)$ be an \MLDS, and~$(Z,\Tau,\nu)$ be a standard probability space.
For each~$\zeta\in Z$, let~$\QP_\zeta$ be a probability measures on~$\ttonde{\dUpsilon,\A_\mrmv(\Ed)}$ so that
\begin{enumerate}[$(a)$]
\item\label{i:p:Linearity:0.1} $\QP_\zeta$ has full $\T_\mrmv(\Ed)$-support for $\nu$-a.e.~$\zeta\in Z$;

\item\label{i:p:Linearity:0.2} there exists a family~$\set{\Omega_\zeta}_{\zeta\in Z}\subset \A_\mrmv(\Ed)$ of pairwise disjoint sets so that~$A_\zeta$ is $\QP_\zeta$-conegligible for $\nu$-a.e.~$\zeta\in Z$;

\item\label{i:p:Linearity:0.3} $\ttonde{\EE{\dUpsilon}{\QP_\zeta},\CylQP{\QP_\zeta}{\Dz}}$ is a closable densely defined pre-Dirichlet form.
\end{enumerate}

Finally, define a measure~$\QP$ on~$\ttonde{\dUpsilon,\A_\mrmv(\Ed)}$ as
\begin{equation*}
\QP\eqdef \int_Z \QP_\zeta\diff\nu(\zeta) \fstop
\end{equation*}

Then,
\begin{enumerate}[$(i)$]
\item\label{i:p:Linearity:0} the superposition~$\ttonde{\EE{\dUpsilon}{\QP},\CylQP{\QP}{\Dz}}$ of the forms~$\seq{\EE{\dUpsilon}{\QP_\zeta},\dom{\EE{\dUpsilon}{\QP_\zeta}}}_{\zeta\in Z}$ defined by
\begin{equation}\label{eq:p:Linearity:0}
\EE{\dUpsilon}{\QP}(u)\eqdef\int_Z \EE{\dUpsilon}{\QP_\zeta}(u) \diff\nu(\zeta)
\end{equation}
is a closable densely defined pre-Dirichlet form on~$L^2(\QP)$;
 
\item\label{i:p:Linearity:1} if~$(\Rad{\mssd_\dUpsilon}{\QP_\zeta})$ holds for~$\nu$-a.e.~$\zeta\in Z$, then~$(\Rad{\mssd_\dUpsilon}{\QP})$ holds as well;

\item\label{i:p:Linearity:2} if~$(\cSL{\T_\mrmv}{\QP_\zeta}{\mssd_\dUpsilon})$ ---~or, equivalently,~$(\dSL{\mssd_\dUpsilon}{\QP_\zeta})$
~--- holds for~$\nu$-a.e.~$\zeta\in Z$, then $(\cSL{\T_\mrmv}{\QP}{\mssd_\dUpsilon})$ holds for~$\QP$.
\end{enumerate}
\begin{proof}
\iref{i:p:Linearity:0} is the content of~\cite[\S{V.3.1}]{BouHir91}.
Assumption~\iref{i:p:Linearity:0.2} is necessary to show that, if~$u\in\dom{\EE{\dUpsilon}{\QP}}$, then there exists a suitable representative~$\rep u$ of~$u$ so that~$\class[\QP_\zeta]{\rep u}\in\dom{\EE{\dUpsilon}{\QP_\zeta}}$ for $\nu$-a.e.~$\zeta\in Z$, as discussed in~\cite[\S2.7]{LzDS20}.
Additionally, it ensures the direct integral representation of~$L^2$-spaces,
\begin{equation}\label{eq:p:Linearity:0.5}
L^2(\QP_{\lambda,\mssm})=\dint{\R^+_0} L^2(\PP_{s\cdot \mssm})\diff\lambda(s)\fstop
\end{equation}

\iref{i:p:Linearity:1} Fix~$\rep u\in \Lip(\mssd_\dUpsilon)$.
By assumption,
\begin{align*}
\class[\QP_\zeta]{u}\in\dom{\EE{\dUpsilon}{\QP_\zeta}}\comma \qquad \norm{\SF{\dUpsilon}{\QP_\zeta}(u)}_{L^\infty(\QP_\zeta)}\leq \Li[\mssd_\dUpsilon]{\rep u} \quad \forallae{\nu} \zeta\in Z\comma
\end{align*}
which, together with the equality~\eqref{eq:p:Linearity:0} established in~\iref{i:p:Linearity:0}, proves that~$u\in\dom{\EE{\dUpsilon}{\QP}}$
Furthermore, everywhere on~$\dUpsilon$,
\begin{equation*}
\widehat{\SF{\dUpsilon}{\QP}(v)}=\cdc^\dUpsilon(v)=\int_Z\cdc^\dUpsilon(v)\diff\nu(\zeta)=\int_Z\widehat{\SF{\dUpsilon}{\QP_\zeta}(v)}\diff\nu(\zeta)\comma \qquad v\in\Cyl{\Dz}\comma
\end{equation*}
which extends to
\begin{equation}\label{eq:p:Linearity:1}
\SF{\dUpsilon}{\QP}(v)=\int_Z\SF{\dUpsilon}{\QP_\zeta}(v)\diff\nu(\zeta)\as{\QP}\comma \qquad v\in\dom{\EE{\dUpsilon}{\QP}}\comma
\end{equation}
by density of~$\Cyl{\Dz}$ in~$\dom{\EE{\dUpsilon}{\QP}}$.
As a consequence
\begin{equation}\label{eq:p:Linearity:1.5}
\norm{\SF{\dUpsilon}{\QP}(u)}_{L^\infty(\QP)}= \nu\text{-}\esssup_\zeta \norm{\SF{\dUpsilon}{\QP_\zeta}(u)}_{L^\infty(\QP_\zeta)} \leq  \nu\text{-}\esssup_\zeta \Li[\mssd_\dUpsilon]{u}=\Li[\mssd_\dUpsilon]{u}
\end{equation}
by~\eqref{eq:p:Linearity:1}, which concludes the proof.

\iref{i:p:Linearity:2}
Let~$u\in\dom{\EE{\dUpsilon}{\QP}}\cap \mcC\ttonde{\T_\mrmv(\Ed)}$ be so that~$\norm{\SF{\dUpsilon}{\QP}}_{L^\infty(\QP)}<\infty$.
As a consequence of~\iref{i:p:Linearity:0.1}, $\QP$~has full $\T_\mrmv(\Ed)$-support.
Therefore there exists a unique continuous representative of~$u$, again denoted by~$u$.
Since~$\QP_\zeta$ is a probability with full $\T_\mrmv(\Ed)$-support,~$u$ is as well the unique $\T_\mrmv(\Ed)$-continuous representative of~$\class[\QP_\zeta]{u}$.
Letting~$\zeta_*\in Z$ be such that~$(\cSL{\T_\mrmv(\Ed)}{\mssd_\dUpsilon}{\QP_{\zeta_*}})$ holds, we immediately conclude that~$u$ is $\mssd_\dUpsilon$-Lipschitz.
Furthermore, by the first equality in~\eqref{eq:p:Linearity:1.5} and by the assumption of~$(\cSL{\T_\mrmv(\Ed)}{\mssd_\dUpsilon}{\QP_{\zeta}})$ for $\nu$-a.e.~$\zeta\in Z$,
\begin{equation*}
\Li[\mssd]{u}\leq \nu\text{-}\esssup_\zeta \norm{\SF{\dUpsilon}{\QP_\zeta}}_{L^\infty(\QP_\zeta)}  = \norm{\SF{\dUpsilon}{\QP}}_{L^\infty(\QP)}\fstop \qedhere
\end{equation*}
\end{proof}
\end{prop}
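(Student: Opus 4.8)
\textbf{Proof proposal for Proposition~\ref{p:Linearity}.}
The plan is to reduce assertion~\iref{i:p:Linearity:0} to the theory of superpositions (direct integrals) of Dirichlet forms, and then to deduce~\iref{i:p:Linearity:1} and~\iref{i:p:Linearity:2} from the resulting direct-integral picture together with the pointwise nature of the lifted square field~$\cdc^\dUpsilon$. For~\iref{i:p:Linearity:0} I would first note that~$\CylQP{\QP}{\Dz}$ is dense in~$L^2(\QP)$ by a monotone-class argument: $\Dz$ generates~$\A$ by Definition~\ref{d:DS}\iref{i:d:DS:1.5}, hence~$\Cyl{\Dz}$ generates~$\A_\mrmv(\Ed)$ (cf.\ Remark~\ref{r:DensityQP}\iref{i:r:DensityQP:1}), and this holds for every probability measure, in particular for each~$\QP_\zeta$ and for~$\QP=\int_Z \QP_\zeta\diff\nu(\zeta)$. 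Next, since~$\QP_\zeta\ll\QP$ for every~$\zeta$ and, by assumption~\iref{i:p:Linearity:0.2}, the sets~$\Omega_\zeta$ form a measurable partition with~$\QP_\zeta(\Omega_\zeta)=1$, the map~$u\mapsto (\class[\QP_\zeta]{u})_{\zeta\in Z}$ is a well-defined unitary identification~$L^2(\QP)\cong \int_Z^\oplus L^2(\QP_\zeta)\diff\nu(\zeta)$, which places us exactly in the framework of the direct integrals of Dirichlet forms of~\cite{LzDS20} and of~\cite[\S V.3.1]{BouHir91}. Invoking the superposition principle \cite[Prop.~V.3.1.1]{BouHir91} and using~\iref{i:p:Linearity:0.3}, the superposition~$\ttonde{\EE{\dUpsilon}{\QP},\CylQP{\QP}{\Dz}}$ of~\eqref{eq:p:Linearity:0} is closable and densely defined, and its closure is a Dirichlet form whose domain consists precisely of those~$u\in L^2(\QP)$ admitting a representative~$\rep u$ with~$\class[\QP_\zeta]{\rep u}\in\dom{\EE{\dUpsilon}{\QP_\zeta}}$ for~$\nu$-a.e.~$\zeta$ and~$\int_Z\EE{\dUpsilon}{\QP_\zeta}(\rep u)\diff\nu(\zeta)<\infty$.

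For~\iref{i:p:Linearity:1} I would first record the identity~$\SF{\dUpsilon}{\QP}(v)=\cdc^\dUpsilon(v)=\SF{\dUpsilon}{\QP_\zeta}(v)$ for~$v\in\Cyl{\Dz}$, valid because the lifted square field of a cylinder function is a pointwise-defined function, independent of the measure on~$\dUpsilon$ (Lemma~\ref{l:MaRoeckner}, Theorem~\ref{t:Closability}); extending this by density through the direct integral gives~$\SF{\dUpsilon}{\QP}(u)=\SF{\dUpsilon}{\QP_\zeta}(u)$ $\QP_\zeta$-a.e.\ for~$\nu$-a.e.~$\zeta$ and every~$u\in\dom{\EE{\dUpsilon}{\QP}}$. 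Now fix a bounded~$\mssd_\dUpsilon$-Lipschitz~$\rep u$: by~$(\Rad{\mssd_\dUpsilon}{\QP_\zeta})$, for~$\nu$-a.e.~$\zeta$ one has~$\class[\QP_\zeta]{\rep u}\in\dom{\EE{\dUpsilon}{\QP_\zeta}}$ with~$\norm{\SF{\dUpsilon}{\QP_\zeta}(u)}_{L^\infty(\QP_\zeta)}\le \Li[\mssd_\dUpsilon]{\rep u}$; since this bound is~$\zeta$-independent, $\int_Z\EE{\dUpsilon}{\QP_\zeta}(\rep u)\diff\nu(\zeta)<\infty$, so~\iref{i:p:Linearity:0} yields~$u\in\dom{\EE{\dUpsilon}{\QP}}$, and~$\norm{\SF{\dUpsilon}{\QP}(u)}_{L^\infty(\QP)}=\nu\text{-}\esssup_\zeta \norm{\SF{\dUpsilon}{\QP_\zeta}(u)}_{L^\infty(\QP_\zeta)}\le \Li[\mssd_\dUpsilon]{\rep u}$.

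For~\iref{i:p:Linearity:2} I would use assumption~\iref{i:p:Linearity:0.1} to see that~$\QP$ has full~$\T_\mrmv(\Ed)$-support (any nonempty open set is charged by~$\QP_\zeta$ for~$\nu$-a.e.~$\zeta$, hence by~$\QP$); therefore any~$u\in\dom{\EE{\dUpsilon}{\QP}}\cap\Cb(\T_\mrmv(\Ed))$ with~$\SF{\dUpsilon}{\QP}(u)\le 1$ has a unique~$\T_\mrmv(\Ed)$-continuous representative, which—since each~$\QP_\zeta$ also has full support—is simultaneously the unique continuous~$\QP_\zeta$-representative of~$\class[\QP_\zeta]{u}$. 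Combining~\iref{i:p:Linearity:0} with the square-field identity gives~$\class[\QP_\zeta]{u}\in\dom{\EE{\dUpsilon}{\QP_\zeta}}\cap\Cb(\T_\mrmv(\Ed))$ with~$\SF{\dUpsilon}{\QP_\zeta}(u)\le 1$ $\QP_\zeta$-a.e.\ for~$\nu$-a.e.~$\zeta$. Picking any~$\zeta_\ast$ for which~$(\cSL{\T_\mrmv}{\QP_{\zeta_\ast}}{\mssd_\dUpsilon})$ holds then shows that the continuous representative of~$u$ is~$\mssd_\dUpsilon$-Lipschitz, and taking the~$\nu$-essential supremum over~$\zeta$ yields the sharp constant; the equivalence with~$(\dSL{\mssd_\dUpsilon}{\QP})$ is~\eqref{eq:EquivalenceRadStoL}.

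The \emph{main obstacle} is the rigorous execution of step~\iref{i:p:Linearity:0}: establishing the unitary direct-integral decomposition~$L^2(\QP)\cong\int_Z^\oplus L^2(\QP_\zeta)\diff\nu(\zeta)$ and, more delicately, verifying that the closure of the superposition pre-form coincides with the ``direct integral of the closures'', i.e.\ that membership in~$\dom{\EE{\dUpsilon}{\QP}}$ is always witnessed by a single representative whose~$\QP_\zeta$-classes lie in the respective domains for~$\nu$-a.e.~$\zeta$. This is exactly where the disjointness hypothesis~\iref{i:p:Linearity:0.2} and the machinery of~\cite[\S2.7]{LzDS20} (or~\cite[\S V.3.1]{BouHir91}) are indispensable. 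Once this is in place, parts~\iref{i:p:Linearity:1} and~\iref{i:p:Linearity:2} are essentially bookkeeping about a.e.\ equivalence with respect to the various measures, modulo the pointwise nature of~$\cdc^\dUpsilon$ and the full-support observation.
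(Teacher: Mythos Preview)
Your proposal is correct and follows essentially the same approach as the paper: reduce~\iref{i:p:Linearity:0} to the superposition/direct-integral machinery of~\cite[\S V.3.1]{BouHir91} and~\cite{LzDS20} (using~\iref{i:p:Linearity:0.2} exactly where you indicate), then deduce~\iref{i:p:Linearity:1} and~\iref{i:p:Linearity:2} from the pointwise square-field identity on cylinder functions, its extension by density, the full-support observation, and the~$\nu$-essential supremum. Your identification of the ``main obstacle'' matches precisely what the paper flags as the role of assumption~\iref{i:p:Linearity:0.2}.
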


\begin{cor}
Let~$(\mcX,\cdc,\mssd)$ be an \MLDS, $\QP_{\lambda,\mssm}$ be a mixed Poisson measure on~$\ttonde{\dUpsilon,\A_\mrmv(\Ed)}$.
If either~$(\Rad{\mssd_\dUpsilon}{\PP_\mssm})$,~$(\cSL{\T_\mrmv(\msE)}{\PP_\mssm}{\mssd_\dUpsilon})$, or~\eqref{eq:Cheeger=EE} with~$\QP=\PP_\mssm$ holds, then the same property holds for~$\QP_{\lambda,\mssm}$ in place of~$\PP_\mssm$.
\begin{proof}
It suffices to verify the assumptions of Proposition~\ref{p:Linearity}:
\iref{i:p:Linearity:0.1} is Lemma~\ref{l:SupportPoisson}; 
\iref{i:p:Linearity:0.2} holds since~$\PP_{s\cdot\mssm}\perp \PP_{s'\cdot\mssm}$ for every~$s,s'>0$ with~$s\neq s'$.
This latter fact is a consequence of the celebrated Shorokhod Theorem~\cite{Sko57}.
\iref{i:p:Linearity:0.3} is a consequence of Proposition~\ref{p:MRLifting} together with Remark~\ref{r:PoissonClosability}.
\end{proof}
\end{cor}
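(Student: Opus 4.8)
The plan is to recognise $\QP_{\lambda,\mssm}$ as a superposition of Poisson measures and apply Proposition~\ref{p:Linearity}, with $Z\eqdef\R^+$, $\nu\eqdef\lambda$ (a probability measure on $\Bo{\R^+}$, since $\lambda\set{0}=0$), and $\QP_\zeta\eqdef\PP_{\zeta\cdot\mssm}$. By Definition~\ref{d:MixedPoisson} this is exactly $\QP_{\lambda,\mssm}=\int_Z\QP_\zeta\diff\nu(\zeta)$, and $s\mapsto\PP_{s\cdot\mssm}(A)$ is $\Bo{\R^+}$-measurable for every $A\in\A_\mrmv(\Ed)$ (clear, e.g., from the Laplace-transform characterisation~\eqref{eq:LaplacePoisson}), so the integral is well-posed. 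It then remains to verify hypotheses~\iref{i:p:Linearity:0.1}--\iref{i:p:Linearity:0.3} of that Proposition. Hypothesis~\iref{i:p:Linearity:0.1} is immediate, since $s\cdot\mssm$ is again an atomless Radon measure of full $\T$-support for $s>0$, so $\PP_{s\cdot\mssm}$ has full $\T_\mrmv(\Ed)$-support by Lemma~\ref{l:SupportPoisson}; hypothesis~\iref{i:p:Linearity:0.3} is also immediate, since $\PP_{s\cdot\mssm}$ satisfies Assumption~\ref{ass:Closability} by Remark~\ref{r:PoissonClosability}, whence $\ttonde{\EE{\dUpsilon}{\PP_{s\cdot\mssm}},\CylQP{\PP_{s\cdot\mssm}}{\Dz}}$ is densely defined and closable by Theorem~\ref{t:Closability} (equivalently Proposition~\ref{p:MRLifting}).

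The substantial point is hypothesis~\iref{i:p:Linearity:0.2}: the existence of a pairwise disjoint family $\set{\Omega_s}_{s\in\R^+}\subset\A_\mrmv(\Ed)$ with $\PP_{s\cdot\mssm}\,\Omega_s=1$ for $\lambda$-a.e.\ $s$. In the principal case $\mssm X=\infty$, one fixes a localizing sequence $\seq{E_h}_h$; the random variables $\gamma\mapsto\gamma E_h$ are independent and Poisson distributed (by~\eqref{eq:AKR2.7}), so along a suitable subsequence $\gamma E_{h_k}/\mssm E_{h_k}\to s$ for $\PP_{s\cdot\mssm}$-a.e.\ $\gamma$ by the strong law of large numbers, and one sets $\Omega_s\eqdef\set{\gamma\in\dUpsilon:\lim_k \gamma E_{h_k}/\mssm E_{h_k}=s}$. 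These sets are $\A_\mrmv(\Ed)$-measurable, pairwise disjoint, and $\PP_{s\cdot\mssm}$-conegligible; this is exactly Skorokhod's dichotomy for Poisson measures of proportional intensities~\cite{Sko57}. If instead $\mssm X<\infty$, all the $\PP_{s\cdot\mssm}$ are mutually equivalent, so $\QP_{\lambda,\mssm}\sim\PP_\mssm$ and every property in the statement transfers at once by absolute continuity (Remark~\ref{r:AC}\iref{i:r:AC:2} together with Proposition~\ref{p:Locality}); so one may assume $\mssm X=\infty$ below.

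Granting~\iref{i:p:Linearity:0.1}--\iref{i:p:Linearity:0.3}, Proposition~\ref{p:Linearity}\iref{i:p:Linearity:0} gives closability of $\ttonde{\EE{\dUpsilon}{\QP_{\lambda,\mssm}},\CylQP{\QP_{\lambda,\mssm}}{\Dz}}$, hence Assumption~\ref{ass:CWP} holds for $\QP_{\lambda,\mssm}$ (its intensity $\tonde{\int_{\R^+}s\diff\lambda(s)}\mssm$ being $\Ed$-locally finite, so that Assumption~\ref{ass:Mmu} holds as well), while Assumption~\ref{ass:AC} holds for $\QP_{\lambda,\mssm}$ by Remark~\ref{r:AC}. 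Next one observes that whichever of the three properties is assumed for $\PP_\mssm$ holds likewise for every $\PP_{s\cdot\mssm}$, $s>0$: the base datum $(\mcX,\cdc,\mssd)$ with reference measure $\mssm$ or with $s\cdot\mssm$ has the same $\mssm$-classes, the same carré du champ $\cdc$, and the same strong lifting up to the irrelevant choice in Proposition~\ref{p:MRLifting}, hence the same pointwise lifted operator $\rep\cdc^\dUpsilon$; since $(\Rad{\mssd_\dUpsilon}{\PP_\mssm})$, $(\cSL{\T_\mrmv(\msE)}{\PP_\mssm}{\mssd_\dUpsilon})$, and~\eqref{eq:Cheeger=EE} are phrased solely in terms of $\rep\cdc^\dUpsilon$, $\mssd_\dUpsilon$, and $\T_\mrmv(\Ed)$, they are insensitive to this rescaling. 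Thus Proposition~\ref{p:Linearity}\iref{i:p:Linearity:1} yields $(\Rad{\mssd_\dUpsilon}{\QP_{\lambda,\mssm}})$ and Proposition~\ref{p:Linearity}\iref{i:p:Linearity:2} yields $(\cSL{\T_\mrmv(\msE)}{\QP_{\lambda,\mssm}}{\mssd_\dUpsilon})$. For~\eqref{eq:Cheeger=EE}: $\EE{\dUpsilon}{\QP_{\lambda,\mssm}}\leq\Ch[\mssd_\dUpsilon,\QP_{\lambda,\mssm}]$ follows from the Rademacher property just obtained together with Lemma~\ref{l:EleCh}, and the reverse inequality follows along the route used for $\PP_\mssm$, Theorem~\ref{t:IdentificationCheeger} applying verbatim to $\QP_{\lambda,\mssm}$ (whose assumptions~\ref{ass:AC} and~\ref{ass:CWP} were just checked, the remaining base hypotheses $(\Rad{\mssd}{\mssm})$, $\Dz\subset\bLip(\T,\mssd)$, \ref{ass:T} being the scale-invariant ones that yielded~\eqref{eq:Cheeger=EE} for $\PP_\mssm$ in the first place); alternatively one reruns Lemma~\ref{l:ChleE}, noting that $\slo[\mssd_\dUpsilon]{u}$ is a pointwise upper gradient of each $u\in\Cyl{\Dz}$ (Theorem~\ref{t:CylinderLocLip}) with $\slo[\mssd_\dUpsilon]{u}^2\leq\rep\cdc^\dUpsilon(u)$ off a $\PP_{s\cdot\mssm}$-null set for every $s$, hence off a $\QP_{\lambda,\mssm}$-null set.

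I expect the decisive step to be hypothesis~\iref{i:p:Linearity:0.2}, i.e.\ producing the explicit pairwise disjoint $\A_\mrmv(\Ed)$-measurable family witnessing the mutual singularity of the $\PP_{s\cdot\mssm}$ via Skorokhod's dichotomy (together with the attendant need to split off the case $\mssm X<\infty$ where that singularity fails); the remaining verifications are routine once one observes that the properties in question are invariant under replacing $\mssm$ by $s\cdot\mssm$.
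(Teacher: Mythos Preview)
Your proof is correct and follows the same approach as the paper: reduce to Proposition~\ref{p:Linearity} and verify its three hypotheses via Lemma~\ref{l:SupportPoisson}, Skorokhod's dichotomy, and Proposition~\ref{p:MRLifting} with Remark~\ref{r:PoissonClosability}. You supply considerably more detail than the paper's terse proof---in particular the explicit construction of the disjoint sets~$\Omega_s$, the scale-invariance argument showing the hypotheses on~$\PP_\mssm$ pass to every~$\PP_{s\cdot\mssm}$, and the separate treatment of~\eqref{eq:Cheeger=EE} (which Proposition~\ref{p:Linearity} does not cover directly)---and your handling of the case~$\mssm X<\infty$ is a welcome addition the paper omits.
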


\afterpage{%
    \clearpage
    \thispagestyle{empty}
    \begin{landscape}
        \centering
\begin{table}[htb!]
\centerfloat
\def\arraystretch{1.3}
\begin{tabular}{c | c | c c c c c  c c c}
\multirow{6}{*}{\rotatebox[origin=c]{90}{Assumptions}} & \multirow{3}{*}{on~$\mcX$} & \LDS & \TLDS & \TLDS & \TLDS & \MLDS & $\text{\MLDS}$ & $\text{\MLDS}$ & $\text{\MLDS}$
\\
&&&&&& $(\Rad{\mssd}{\mssm})$& & $(\Rad{\mssd}{\mssm})$ & $(\Rad{\mssd}{\mssm})$
\\
&&&&&&& \ref{ass:cSLTensor} & \ref{ass:cSLTensor} & \ref{ass:T}
\\
\cline{2-10}
&\multirow{3}{*}{on~$\QP$} &&&\ref{ass:ConditionalClos} & \ref{ass:CWP} & \ref{ass:CWP} &&& \ref{ass:CWP}
\\
&&&& \ref{ass:CAC} & \ref{ass:AC} & \ref{ass:AC} & & & \ref{ass:AC}
\\
&& \ref{ass:Closability}&\ref{ass:Closability} &&&& \ref{ass:dcSLConfig} & \ref{ass:dcSLConfig} &
\\
\hline
\multirow{7}{*}{\rotatebox[origin=c]{90}{Conclusions on $\dUpsilon$}} & Closability & \ref{t:Closability} & \ref{p:MRLifting} & \ref{t:ClosabilitySecond} & $*$ & $*$ & \ref{t:ClosabilitySecond} & \ref{t:ClosabilitySecond} & $*$
\\
& $\EE{\dUpsilon}{\QP}=\EE{\infty}{\QP^\asym{\infty}}$ &&&& \ref{c:Isometry} & \ref{c:Isometry}& \ref{c:Isometry}& \ref{c:Isometry}& \ref{c:Isometry}
\\
& $(\Rad{\mssd_\dUpsilon}{\QP})$ &&&&& \ref{t:Rademacher} & & \ref{t:Rademacher} & \ref{t:Rademacher}
\\
& $(\cSL{\T_\mrmv(\Ed)}{\QP}{\mssd_\dUpsilon})$ &&&&& & \ref{t:cSLUpsilon}
\\
& $(\dcSL{\mssd_\dUpsilon}{\QP}{\mssd_\dUpsilon})$ &&&&&& \ref{t:dcSLUpsilon} & \ref{t:dcSLUpsilon}
\\
& $\mssd_\dUpsilon=\mssd_\QP$ &&&&&&& \ref{t:StoL2}&
\\
& $\EE{\dUpsilon}{\QP}=\Ch[\mssd_\dUpsilon,\QP]$ &&&&&&&& \ref{t:IdentificationCheeger}
\\
\hline
\end{tabular}

\bigskip
\caption{ \label{tbl:1}
A summary of results and assumptions on configuration spaces.
The first line block indicates assumptions on the base space.
The second line block indicates assumptions on the reference measure~$\QP$ on~$\dUpsilon$.
The first column indicates the sought conclusion.
For instance, in order to locate the proof of~$(\Rad{\mssd_\dUpsilon}{\QP})$ it suffices to consult the corresponding row, and find out that the assertion is shown in (Theorem)~\ref{t:Rademacher}.
The necessary assumptions ---~\MLDS satisfying~$(\Rad{\mssd}{\mssm})$, and~\ref{ass:AC}~--- are listed on top of the corresponding column.
An asterisk~`$*$' indicates that a conclusion is rather assumed.
}
\end{table}
\end{landscape}
}

\begin{table}[b!]
\def\arraystretch{1.3}
\begin{tabular}{c | c | c c c}
\multirow{13}{*}{\rotatebox[origin=c]{90}{Conclusions}} && \multicolumn{3}{c}{Assumptions}
\\
\cline{3-5}
&&compl.\ Riem.\ mfd. & $\RCD^*(K,N)$ & $\RCD(K,\infty)$ 
\\
\hline
&\LDS & \checkmark & \ref{p:IHtoMLDS} & \ref{p:IHtoMLDS} 
\\
&(quasi-/)regular & \checkmark &\ref{p:PropertiesRCD(K,N)}& \ref{p:PropertiesRCD(Kinfty)} 
\\
& \MLDS & \checkmark & regular+\ref{p:IHtoMLDS} & q.-regular+\ref{p:IHtoMLDS} 
\\
&$(\Rad{\mssd}{\mssm})$ & \checkmark & \ref{p:PropertiesRCD(Kinfty)} & \ref{p:PropertiesRCD(Kinfty)} 
\\
&$(\cSL{\T}{\mssm}{\mssd})$ & \eqref{eq:EquivalenceRadStoL} & \eqref{eq:EquivalenceRadStoL} & \eqref{eq:EquivalenceRadStoL}
\\
&$(\SL{\mssm}{\mssd})$ &\checkmark & \ref{p:PropertiesRCD(Kinfty)} & \ref{p:PropertiesRCD(Kinfty)}
\\
&\ref{ass:SLTensor} &\checkmark & see~\ref{p:PropertiesRCD(K,N)}\ref{i:p:PropertiesRCD(K,N):6} & $?$
\\
&$\mssd=\mssd_\mssm$ &\checkmark & \ref{p:PropertiesRCD(Kinfty)} & \ref{p:PropertiesRCD(Kinfty)}
\\
&$\EE{X}{\mssm}=\Ch[\mssd,\mssm]$ &\checkmark & * & * 
\\
&\ref{ass:T} &\checkmark & \ref{p:PropertiesRCD(K,N)} & $?$ 
\\
\hline
\end{tabular}
\bigskip
\caption{ \label{tbl:2}
A summary of results and assumptions on base spaces.
The first line indicates assumptions on the space. The first column indicates the sought conclusion.
For instance, in order to locate the proof `irreducibility' on `$\RCD(K,\infty)$ spaces' it suffices to consult the corresponding crossing, and find out that the assertion is shown in (Proposition)~\ref{p:PropertiesRCD(Kinfty)}.
A checkmark~`\checkmark' indicates that a conclusion is well-known.
An asterisk~`$*$' indicates that a conclusion is rather assumed.
A question mark~`$?$' indicates that the validity of the assertion in the corresponding row on spaces in the corresponding column is not known.
}
\end{table}

\afterpage{%
    \clearpage
    \thispagestyle{empty}
    \begin{landscape}
        \centering
\begin{table}[htb!] 
\def\arraystretch{1.3}
\begin{tabular}{c | c | c c c c }
\multirow{13}{*}{\rotatebox[origin=c]{90}{\hspace{-5mm} Measure $\QP$}} && \multicolumn{4}{c}{Base space $X$}
\\
\cline{3-6}
&& $\R^n$ & compl.\ Riem.\ mfd. & $\RCD^*(K,N) \cap \CAT(0)$ & path/loop spaces 
\\
\hline
&Poisson & \doublecheck  & \doublecheck & \doublecheck & \ref{p:MRLifting} 
\\
& mixed Poisson  & \doublecheck & \doublecheck & \doublecheck&  \ref{p:MRLifting}
\\
& canonical Gibbs & \checkmark & \checkmark & \checkmark & 
\\
& canonical Gibbs + \ref{ass:dcSLConfig} & \doublecheck & \doublecheck & \doublecheck & 
\\
& $\mathrm{sine}_\beta$/$\mathrm{Bessel}_{\alpha,\beta}$/$\mathrm{Airy}_\beta$ & \doublecheck \quad $n=1$ & $\times$ & $\times$ & $\times$
\\
& Ginibre & \doublecheck \quad $n=2$ & $\times$ & $\times$ & $\times$
\\
&determinantal/permanental & \checkmark & \checkmark & \checkmark  & 
\\
&determinantal/permanental + \ref{ass:dcSLConfig} &  \doublecheck & \doublecheck & \doublecheck & 
\\
&quasi Gibbs &  \checkmark & \checkmark & \checkmark  &
\\
&quasi Gibbs + \ref{ass:dcSLConfig} &  \doublecheck & \doublecheck & \doublecheck  &
\\
\hline
\end{tabular}
\bigskip
\caption[Table3]{\label{tbl:3}
A summary of the main examples to which our main results apply.
The first line indicates assumptions on the base space.
The first column indicates the assumptions on the invariant measure.
We write~`+ \ref{ass:dcSLConfig}' to indicate that~$\QP$ is assumed to satisfy as well Assumption~\ref{ass:dcSLConfig}.
The symbol `$\times$' indicates that the family of measures in the corresponding row is not defined on spaces in the corresponding column.
The number/symbol in each cell indicates the results that hold true given the assumptions on the corresponding row and column.
We write~`\checkmark', resp.~`\doublecheck', whenever the following results hold:
\begin{itemize}
\item[\checkmark] the closability (Thm.s~\ref{p:MRLifting}, or~\ref{t:ClosabilitySecond}), $\Ch[\mssd_\dUpsilon,\QP]=\EE{\dUpsilon}{\QP}$ (Thm.~\ref{t:IdentificationCheeger}), and quasi-regularity (Cor.~\ref{c:RadQReg});
\item[\doublecheck] all the results listed in~\checkmark and additionally: $\mssd_\QP=\mssd_\dUpsilon$ (Thm.~\ref{t:StoL2}), and the Varadhan short-time asymptotics (Thm.~\ref{t:VaradhanSecond}).
\end{itemize}
}%
%
%
\end{table}
\end{landscape}
}

\newpage

{\small

}

\begin{thebibliography}{10}

\bibitem{AlbBraRoe89}
S.~A. Albeverio, J.~F. Brasche, and M.~R{\"{o}}ckner.
\newblock {Dirichlet forms and generalized Schr{\"{o}}dinger operators}.
\newblock In H.~Holden and A.~Jensen, editors, {\em {Schr{\"{o}}dinger
  Operators -- Proceedings of the Nordic Summer School in Mathematics --
  Sandbjerg Slot, S{\o}nderborg, Denmark, August 1-12, 1988}}, volume 345 of
  {\em {Lecture Notes in Physics}}, pages 1--42. {Springer-Verlag}, 1989.

\bibitem{AlbDalKonLyt03}
S.~A. Albeverio, A.~{\relax Yu}. Daletskii, {\relax Yu}.~G. Kondrat'ev, and
  E.~W. Lytvynov.
\newblock {Laplace operators in de{R}ham complexes associated with measures on
  configuration spaces}.
\newblock {\em {J.\ Geom.\ Phys.}}, 47(2-3):259--302, 2003.

\bibitem{AlbDalLyt01}
S.~A. Albeverio, A.~{\relax Yu}. Daletskii, and E.~W. Lytvynov.
\newblock {De Rham Cohomology of Configuration Spaces with Poisson Measure}.
\newblock {\em {J.\ Funct.\ Anal.}}, 185:240--273, 2001.

\bibitem{AlbDalLyt01b}
S.~A. Albeverio, A.~{\relax Yu}. Daletskii, and E.~W. Lytvynov.
\newblock {Laplace operators on differential forms over configuration spaces}.
\newblock {\em {J.~Geom.\ Phys.}}, 37(1-2):15--46, jan 2001.

\bibitem{AlbKonRoe98}
S.~A. Albeverio, {\relax Yu}.~G. Kondratiev, and M.~R{\"{o}}ckner.
\newblock {Analysis and Geometry on Configuration Spaces}.
\newblock {\em {J.\ Funct.\ Anal.}}, 154(2):444--500, 1998.

\bibitem{AlbKonRoe98b}
S.~A. Albeverio, {\relax Yu}.~G. Kondratiev, and M.~R{\"{o}}ckner.
\newblock {Analysis and Geometry on Configuration Spaces: The Gibbsian Case}.
\newblock {\em {J.\ Funct.\ Anal.}}, 157:242--291, 1998.

\bibitem{AmbErbSav16}
L.~Ambrosio, M.~Erbar, and G.~Savar{\'{e}}.
\newblock {Optimal transport, Cheeger energies and contractivity of dynamic
  transport distances in extended spaces}.
\newblock {\em {Nonlinear Anal.}}, 137:77--134, 2016.

\bibitem{AmbGigMonRaj12}
L.~Ambrosio, N.~Gigli, A.~Mondino, and T.~Rajala.
\newblock {Riemannian Ricci curvature lower bounds in metric measure spaces
  with $\sigma$-finite measure}.
\newblock {\em {Trans.\ Amer.\ Math.\ Soc.}}, 367:4661--4701, 2015.

\bibitem{AmbGigSav08}
L.~Ambrosio, N.~Gigli, and G.~Savar{\'{e}}.
\newblock {\em {Gradient Flows in Metric Spaces and in the Space of Probability
  Measures}}.
\newblock {Lectures in Mathematics - ETH Z\"urich}. {Birkh\"{a}user},
  {$2^{\textrm{nd}}$} edition, 2008.

\bibitem{AmbGigSav14}
L.~Ambrosio, N.~Gigli, and G.~Savar{\'{e}}.
\newblock {Calculus and heat flow in metric measure spaces and applications to
  spaces with Ricci bounds from below}.
\newblock {\em {Invent.\ Math.}}, 395:289--391, 2014.

\bibitem{AmbGigSav14b}
L.~Ambrosio, N.~Gigli, and G.~Savar{\'{e}}.
\newblock {Metric measure spaces with Riemannian Ricci curvature bounded from
  below}.
\newblock {\em {Duke Math.~J.}}, 163(7):1405--1490, 2014.

\bibitem{AmbPinSpe15}
L.~Ambrosio, A.~Pinamonti, and G.~Speight.
\newblock {Tensorization of Cheeger energies, the space $H^{1,1}$ and the area
  formula for graphs}.
\newblock {\em {Adv.\ Math.}}, 281:1145--1177, 2015.

\bibitem{AriHin05}
T.~Ariyoshi and M.~Hino.
\newblock {Small-time Asymptotic Estimates in Local Dirichlet Spaces}.
\newblock {\em {Electron.\ J.\ Probab.}}, 10(37):1236--1259, 2005.

\bibitem{BBSW07}
A.~Baddely, I.~B\'ar\'any, R.~Schneider and W.~Weil.
\newblock {\emph{Stochastic Geometry}}.
\newblock{Editor: W.~Weil, with additional contributions by D.~Hug, V.~Capasso, E.~Villa}.
\newblock{{Lecture Notes in Mathematics}}, {Spirnger} 1892, 2004. 



\bibitem{BarRiz19}
D.~Barilari and L.~Rizzi.
\newblock {Sub-Riemannian interpolation inequalities}.
\newblock {\em {Invent.\ Math.}}, 215:977--1038, 2019.

\bibitem{BloKou10}
D.~Blount and M.~A. Kouritzin.
\newblock {On convergence determining and separating classes of functions}.
\newblock {\em {Stoch.\ Proc.\ Appl.}}, 120:1898--1907, 2010.

\bibitem{BocCro13}
L.~Boccardo and G.~Croce.
\newblock {\em {Elliptic Partial Differential Equations}}, volume~55 of {\em
  {Studies in Mathematics}}.
\newblock {De Gruyter}, 2013.

\bibitem{Bog07}
V.~I. Bogachev.
\newblock {\em {Measure Theory}}.
\newblock {Springer-Verlag}, {Berlin}, {2007}.

\bibitem{BouHir91}
N.~Bouleau and F.~Hirsch.
\newblock {\em {Dirichlet forms and analysis on Wiener space}}.
\newblock {De Gruyter}, 1991.

\bibitem{CheSun06}
C.-Z. Chen and W.~Sun.
\newblock {Strong continuity of generalized Feynman--Kac semigroups: Necessary
  and sufficient conditions}.
\newblock {\em {J.\ Funct.\ Anal.}}, 237(2):446--465, aug 2006.

\bibitem{CheMaRoe94}
Z.-Q. Chen, Z.-M. Ma, and M.~R{\"{o}}ckner.
\newblock {Quasi-homeomorphisms of Dirichlet forms}.
\newblock {\em {Nagoya Math. J.}}, 136:1--15, 1994.

\bibitem{ChoParYoo98}
V.~Choi, Y.~M. Park, and H.~J. Yoo.
\newblock {Dirichlet forms and Dirichlet operators for infinite particle
  systems: Essential self-adjointness}.
\newblock {\em {J.\ Math.\ Phys.}}, 39(12):6509--6536, 1998.

\bibitem{Cou19}
D.~Coupier, editor.
\newblock {\emph{Stochastic Geometry ---~Modern Research Frontiers~---}}.
\newblock{{Lecture Notes in Mathematics}}, {Spirnger} 2237, 2019.
  
\bibitem{DaSKonRoe01}
J.~L. da~Silva, {\relax Yu}.~G. Kondratiev, and M.~R{\"{o}}ckner.
\newblock {On a Relation between Intrinsic and Extrinsic Dirichlet Forms for
  Interacting Particle Systems}.
\newblock {\em {Math.\ Nachr.}}, 222:141--157, 2001.

\bibitem{DePRin16}
G.~De~Philippis and F.~Rindler.
\newblock {On the Structure of $\msA$-Free Measures and Applications}.
\newblock {\em {Ann.\ Math.}}, 184:1017--1039, 2016.

\bibitem{LzDS17+}
L.~Dello~Schiavo.
\newblock {The Dirichlet--Ferguson Diffusion on the Space of Probability
  Measures over a Closed Riemannian Manifold}.
\newblock {\em {arXiv:1811.11598}}, 2018.

\bibitem{LzDS19b}
L.~Dello~Schiavo.
\newblock {A Rademacher-type theorem on $L^2$-Wasserstein spaces over closed
  Riemannian manifolds}.
\newblock {\em {J.\ Funct.\ Anal.}}, 278(108397):1--51, 2020.

\bibitem{LzDS20}
L.~Dello~Schiavo.
\newblock {Ergodic Decomposition of Dirichlet Forms via Direct Integrals and
  Applications}.
\newblock {\em {arXiv:2003.01366}}, 2020.

\bibitem{LzDSSuz22a}
L.~Dello~Schiavo and K.~Suzuki.
\newblock {Configuration Spaces over Singular Spaces II}.
\newblock {\em {In preparation}}.

\bibitem{LzDSSuz20}
L.~Dello~Schiavo and K.~Suzuki.
\newblock {On the Rademacher and Sobolev-to-Lipschitz Properties for Strongly
  Local Dirichlet Spaces}.
\newblock {\em {arXiv:2008.01492}}, 2020.

\bibitem{LzDSSuz21a}
L.~Dello~Schiavo and K.~Suzuki.
\newblock {Sobolev-to-Lipschitz Property and Integral Varadhan Short-Time
  Asymptotics for $\QCD$-spaces}.
\newblock {\em {In preparation}}, 2021.

\bibitem{DiB02}
E.~DiBenedetto.
\newblock {\em {Real Analysis}}.
\newblock {Birkh{\"{a}}user Advanced Texts: Basler Lehrb{\"{u}}cher}.
  {Birkh{\"{a}}user/Springer}, {New York}, {$1^{\text{st}}$} edition, 2002.

\bibitem{Eng89}
R.~Engelking.
\newblock {\em {General Topology}}, volume~6 of {\em {Sigma series in pure
  mathematics}}.
\newblock {Heldermann}, {Berlin}, {1989}.

\bibitem{ErbHue15}
M.~Erbar and M.~Huesmann.
\newblock {Curvature bounds for configuration spaces}.
\newblock {\em {Calc.\ Var.}}, 54:307--430, 2015.

\bibitem{ErbKuwStu15}
M.~Erbar, K.~Kuwada, and K.-T. Sturm.
\newblock {On the equivalence of the entropic curvature-dimension condition and
  Bochner's inequality on metric measure spaces}.
\newblock {\em {Invent.\ Math.}}, 201:993--1071, 2015.

\bibitem{Fre00}
D.~H. Fremlin.
\newblock {\em {Measure Theory -- Volume I - IV, V Part I \& II}}.
\newblock {Torres Fremlin (ed.)}, 2000-2008.

\bibitem{Fri87}
J.~Fritz.
\newblock {Gradient Dynamics of Infinite Point Systems}.
\newblock {\em {Ann.\ Probab.}}, 15(2), apr 1987.

\bibitem{GalKelMonSos18}
F.~Galaz-Garc{\'{i}}a, M.~Kell, A.~Mondino, and G.~Sosa.
\newblock {On quotients of spaces with Ricci curvature bounded below}.
\newblock {\em {J.~Funct.~Anal.}}, 275:1368--1446, 2018.

\bibitem{Gig18}
N.~Gigli.
\newblock {Nonsmooth Differential Geometry -- An Approach Tailored for Spaces
  with Ricci Curvature Bounded from Below}.
\newblock {\em {Mem.\ Am.\ Math.\ Soc.}}, 251(1196), 2018.

\bibitem{HajKos00}
P.~Haj{\l}asz and P.~Koskela.
\newblock {Sobolev met Poincar{\'{e}}}.
\newblock {\em {Mem.\ Am.\ Math.\ Soc.}}, 145(688), 2000.

\bibitem{HeiKosShaTys15}
J.~Heinonen, P.~Koskela, N.~Shanmugalingam, and J.~T. Tyson.
\newblock {\em Sobolev Spaces on Metric Measure Spaces -- An Approach Based on
  Upper Gradients}, volume~27 of {\em {New Mathematical Monographs}}.
\newblock {Cambridge University Press}, 2015.

\bibitem{HinRam03}
M.~Hino and J.~A. Ram{\'{i}}rez.
\newblock {Small-Time Gaussian Behavior of Symmetric Diffusion Semigroups}.
\newblock {\em {Ann.\ Probab.}}, 31(3):1254--1295, 2003.

\bibitem{Kal17}
O.~Kallenberg.
\newblock {\em {Random Measures, Theory and Applications}}.
\newblock {Springer}, 2017.

\bibitem{KatTan11}
M.~Katori and H.~Tanemura.
\newblock {Markov Property of Determinantal Processes with Extended Sine, Airy,
  and Bessel Kernels}.
\newblock {\em {Markov Proc.\ Relat.\ Fields}}, 17(4):541--580, 2011.

\bibitem{Kec95}
A.~S. Kechris.
\newblock {\em {Classical Descriptive Set Theory}}, volume 156 of {\em
  {Graduate Texts in Mathematics}}.
\newblock {Springer-Verlag}, {New York}, {1995}.

\bibitem{KonLytRoe02}
{\relax Yu}.~G. Kondratiev, E.~W. Lytvynov, and M.~R{\"{o}}ckner.
\newblock {The Heat Semigroup on Configuration Spaces}.
\newblock {\em {Publ.\ RIMS, Kyoto Univ.}}, 39:1--48, 2002.

\bibitem{KonLytRoe08}
{\relax Yu}.~G. Kondratiev, E.~W. Lytvynov, and M.~R{\"{o}}ckner.
\newblock {Non-equilibrium stochastic dynamics in continuum: the free case}.
\newblock {\em {Condens.\ Matter Phys.}}, 11(4):701--721, 2008.

\bibitem{KonLytVer15}
{\relax Yu}.~G. Kondratiev, E.~W. Lytvynov, and A.~M. Vershik.
\newblock {Laplace operators on the cone of Radon measures}.
\newblock {\em {J.\ Funct.\ Anal.}}, 269(9):2947--2976, 2015.

\bibitem{KufJohFuc77}
A.~Kufner, O.~John, and S.~Fu{\v{c}}{\'{i}}k.
\newblock {\em{Function Spaces}}.
\newblock Monographs and textbooks on mechanics of solids and fluids. Noordhoff International Publishing -- Academia, 1977.

\bibitem{Kuw98}
K.~Kuwae.
\newblock {Functional Calculus for Dirichlet Forms}.
\newblock {\em {Osaka J.~Math.}}, 35:683--715, 1998.

\bibitem{KuwShi03}
K.~Kuwae and T.~Shioya.
\newblock {Convergence of spectral structures: a functional analytic theory and
  its applications to spectral geometry}.
\newblock {\em {Comm.\ Anal.\ Geom.}}, 11(4):599--673, 2003.

\bibitem{Lan77}
R.~Lang.
\newblock {Unendlich-dimensionale Wienerprozesse mit Wechselwirkung -- I.
  Existenz}.
\newblock {\em {Z.~Wahrscheinlichkeitstheorie verw.\ Gebiete}}, 38:55--72,
  1977.

\bibitem{Lan77b}
R.~Lang.
\newblock {Unendlich-dimensionale Wienerprozesse mit Wechselwirkung -- II. Die
  reversiblen Ma{\ss}e sind kanonische Gibbs-Ma{\ss}e}.
\newblock {\em {Z.~Wahrscheinlichkeitstheorie verw.\ Gebiete}}, 35:277--299,
  1977.

\bibitem{LasPen18}
G.~Last and M.~Penrose.
\newblock {\em {Lectures on the Poisson Process}}.
\newblock IMS Textbook. Cambridge University Press, 2018.

\bibitem{LotVil09}
J.~Lott and C.~Villani.
\newblock {Ricci curvature for metric-measure spaces via optimal transport}.
\newblock {\em {Ann.\ Math.}}, 169(3):903--991, may 2009.

\bibitem{MaRoe92}
Z.-M. Ma and M.~R{\"{o}}ckner.
\newblock {\em Introduction to the Theory of (Non-Symmetric) Dirichlet Forms}.
\newblock {Graduate Studies in Mathematics}. Springer, 1992.

\bibitem{MaRoe00}
Z.-M. Ma and M.~R{\"{o}}ckner.
\newblock {Construction of Diffusions on Configuration Spaces}.
\newblock {\em {Osaka J.~Math.}}, 37:273--314, 2000.

\bibitem{McS34}
E.~J. McShane.
\newblock {Extension of Range of Functions}.
\newblock {\em {Bull.\ Amer.\ Math.\ Soc.}}, 40(12):837--842, 1934.

\bibitem{Mec67}
J.~Mecke.
\newblock {Station{\"{a}}re zuf{\"{a}}llige Ma{\ss}e auf lokalkompakten
  Abelschen Gruppen}.
\newblock {\em {Z. Wahrscheinlichkeitstheorie}}, 9:36--58, 1967.

\bibitem{Oht07b}
S.-i.~Ohta.
\newblock {On the measure contraction property of metric measure spaces}.
\newblock {\em {Comment.\ Math.\ Helv.}}, 82:805--828, 2007.

\bibitem{Osa96}
H.~Osada.
\newblock {Dirichlet Form Approach to Infinite-Dimensional Wiener Processes
  with Singular Interactions}.
\newblock {\em {Comm.\ Math.\ Phys.}}, 176:117--131, 1996.

\bibitem{Osa98}
H.~Osada.
\newblock {Interacting Brownian motions with measurable potentials}.
\newblock {\em {Proc.\ Japan Acad.}}, {74(A)}:10--12, 1998.

\bibitem{Osa13}
H.~Osada.
\newblock {Interacting Brownian Motions in Infinite Dimensions with Logarithmic
  Interaction Potentials}.
\newblock {\em {Ann.\ Probab.}}, 41(1):1--49, 2013.

\bibitem{Osa13b}
H.~Osada.
\newblock {Interacting Brownian motions in infinite dimensions with logarithmic
  interaction potentials {II}: Airy random point field}.
\newblock {\em {Stoch.\ Proc.\ Appl.}}, 123(3):813--838, mar 2013.

\bibitem{Osa19}
H.~Osada.
\newblock {Stochastic Analysis of Infinite Particle Systems -- A New
  Development in Classical Stochastic Analysis and Dynamical Universality of
  Random Matrices}.
\newblock preprint, 2019.

\bibitem{OsaTan20}
H.~Osada and H.~Tanemura.
\newblock {Infinite-dimensional stochastic differential equations and tail
  $\sigma$-fields}.
\newblock {\em {Probab.\ Theory Relat.\ Fields}}, 177:1137--1242, 2020.

\bibitem{Pre76}
C.~J. Preston.
\newblock {\em {Random Fields}}, volume 534 of {\em {Lecture Notes in
  Mathematics}}.
\newblock {Springer-Verlag}, 1976.

\bibitem{RoeSch99}
M.~R{\"{o}}ckner and A.~Schied.
\newblock {Rademacher's Theorem on Configuration Spaces and Applications}.
\newblock {\em {J.\ Funct.\ Anal.}}, 169(2):325--356, 1999.

\bibitem{RoeZha04}
M.~R{\"{o}}ckner and T.~S. Zhang.
\newblock {Sample path large deviations for diffusion processes on
  configuration spaces over a Riemannian manifold}.
\newblock {\em {Publ.\ Res.\ I.\ Math.\ Sci., Kyoto Univ.}}, 40(2):385--427,
  2004.

\bibitem{Rue70}
D.~Ruelle.
\newblock {Superstable Interactions in Classical Statistical Mechanics}.
\newblock {\em {Comm.\ Math.\ Phys.}}, 18:127--159, 1970.

\bibitem{Sav14}
G.~Savar{\'{e}}.
\newblock {Self-Improvement of the Bakry--{\'{E}}mery Condition and Wasserstein
  Contraction of the Heat Flow in $\mathrm{RCD}(K,\infty)$ Metric Measure
  Spaces}.
\newblock {\em {Discr.\ Cont.\ Dyn.\ Syst.}}, 34(4):1641--1661, 2014.

\bibitem{Sav19}
G.~Savar{\'{e}}.
\newblock {Sobolev spaces in extended metric-measure spaces}.
\newblock {\em {arXiv:1911.04321v1}}, nov 2019.

\bibitem{Sch99}
B.~Schmuland.
\newblock {Positivity Preserving Forms have the Fatou Property}.
\newblock {\em {Potential Anal.}}, 10:373--378, 1999.

\bibitem{ShiTak74}
T.~Shiga and Y.~Takahashi.
\newblock {Ergodic properties of the equilibrium process associated with
  infinitely many Markovian particles}.
\newblock {\em {Publ.\ RIMS, Kyoto Univ.}}, pages 505--516, 1974.

\bibitem{ShiTak03}
T.~Shirai and Y.~Takahashi.
\newblock {Random point fields associated with certain Fredholm determinants~I:
  fermion, Poisson and boson point processes}.
\newblock {\em {J.\ Funct.\ Anal.}}, 205:414--463, 2003.

\bibitem{SKR01}
J.~da Silva, Y.~Kondratiev, and M.~R{\"{o}}ckner
\newblock {On a Relation between Intrinsic and Extrinsic Dirichlet Forms for Interacting Particle Systems}.
\newblock {\em {Math.\ Nachr.}}, 222(1), 141--157, 2001.

\bibitem{Sko57}
A.~V. Skorokhod.
\newblock {On the differentiability of measures which correspond to stochastic
  processes -- I. Processes with independent increments}.
\newblock {\em {Theor.\ Probab.\ Appl.}}, 2(4):407--432, 1957.

\bibitem{StrMacMus02}
W.~Strauss, N.~D. Macheras, and K.~Musia{\l}.
\newblock {Liftings}.
\newblock In E.~Pap, editor, {\em {Handbook of Measure Theory: In two
  volumes}}, chapter~28, pages 1131--1184. {North Holland}, 2002.

\bibitem{Stu94}
K.-T. Sturm.
\newblock {Analysis on local Dirichlet spaces I. Recurrence, conservativeness
  and $L^p$-Liouville properties}.
\newblock {\em {J. reine angew. Math.}}, 456:173--196, 1994.

\bibitem{Stu06a}
K.-T. Sturm.
\newblock {On the geometry of metric measure spaces. I}.
\newblock {\em {Acta Math.}}, 196:65--131, 2006.

\bibitem{Stu06b}
K.-T. Sturm.
\newblock {On the geometry of metric measure spaces. II}.
\newblock {\em {Acta Math.}}, 196:133--177, 2006.

\bibitem{Sur84}
D.~Surgailis.
\newblock {On multiple Poisson stochastic integrals and associated Markov
  semigroups}.
\newblock {\em {Probab.\ Math.\ Statist.}}, 3(2):217--239, 1984.

\bibitem{Tan96}
H.~Tanemura.
\newblock {A system of infinitely many mutually reffecting Brownian balls in
  $\mathbb R^d$}.
\newblock {\em {Probab.\ Theory Relat.\ Fields}}, 104(3):399--426, sep 1996.

\bibitem{Tsa15}
L.-C. Tsai.
\newblock {Infinite dimensional stochastic differential equations for Dyson's
  model}.
\newblock {\em {Probab.\ Theory Relat.\ Fields}}, 166(3-4):801--850, oct 2015.

\bibitem{Vil09}
C.~Villani.
\newblock {\em {Optimal transport, old and new}}, volume 338 of {\em
  {Grundlehren der mathematischen Wissenschaften}}.
\newblock {Springer-Verlag}, 2009.

\bibitem{Yoo05}
H.~J. Yoo.
\newblock {Dirichlet forms and diffusion processes for fermion random point
  fields}.
\newblock {\em {J.\ Funct.\ Anal.}}, 219:143--160, 2005.

\bibitem{Yos96}
M.~W. Yoshida.
\newblock {Construction of infinite dimensional interacting diffusion processes
  through Dirichlet forms}.
\newblock {\em {Probab.\ Theory Relat.\ Fields}}, 106:265--297, 1996.

\bibitem{Zha01}
T.~S. Zhang.
\newblock {On the small time large deviations of diffusion processes on
  configuration spaces}.
\newblock {\em {Stoch.\ Proc.\ Appl.}}, 91(2):239--254, feb 2001.

\end{thebibliography}
\end{document}